\DeclareMathOperator{\Lk}{Lk}
\DeclareMathOperator{\interior}{int}
\DeclareMathOperator{\boundary}{bd}
\theoremstyle{plain}
\newtheorem*{fact*}{Fact}
\newtheorem{fact}[thmctr]{Fact}
\newcommand{\factlabel}[1]{\label{fact:#1}}
\newcommand{\factref}[1]{\ref{fact:#1}}
\newcommand{\Factref}[1]{Fact~\factref{#1}}
\theoremstyle{definition}
\newtheorem*{rmkdefn*}{Remark/Definition}
\newtheorem{rmkdefn}[thmctr]{Remark/Definition}
\newcommand{\rmkdefnlabel}[1]{\label{rmkdefn:#1}}
\newcommand{\rmkdefnref}[1]{\ref{rmkdefn:#1}}
\newcommand{\Rmkdefnref}[1]{Remark/Definition~\rmkdefnref{#1}}
\newtheorem*{obs*}{Observation}
\newtheorem{obs}[thmctr]{Observation}
\newcommand{\obslabel}[1]{\label{obs:#1}}
\newcommand{\obsref}[1]{\ref{obs:#1}}
\newcommand{\Obsref}[1]{Observation~\obsref{#1}}
\title{Trees of Graphs as Boundaries of Hyperbolic Groups}
\author{Nima Hoda}
\address{Instytut Matematyczny,
  Uniwersytet Wroc\l awski\\
  pl.\ Grun\-wal\-dzki 2/4,
  50--384 Wroc{\l}aw, Poland
  \\ \ \\ DMA, École normale supérieure \\ Université
  PSL, CNRS \\ 75005 Paris, France
  \\ \ \\ Deptartment of Mathematics, Cornell University\\
  Ithaca, NY 14853, USA}
\email{nima@nimahoda.net}
\thanks{The first named author was partially supported by the ERC
  grant GroIsRan and by an NSERC Postdoctoral Fellowship}
\author{Jacek {\'S}wi\c atkowski}
\address{Instytut Matematyczny, Uniwersytet Wroc\l awski\\
  pl.\ Grun\-wal\-dzki 2/4, 50--384 Wroc{\l}aw, Poland}
\email{swiatkow@math.uni.wroc.pl}
\thanks{The second author was partially supported by (Polish) Narodowe Centrum
Nauki, grant no UMO-2017/25/B/ST1/01335.}
\date{\today}
\keywords{hyperbolic group, Gromov boundary, trees of graph, JSJ
  splitting}
\subjclass[2010]{20F65, 
  20F67} 
\begin{document}

\begin{abstract}
  We characterize those 1-ended word hyperbolic groups whose Gromov
  boundaries are homeomorphic to \emph{trees of graphs} (i.e.\ to
  inverse limits of graphs that have particularly simple finitary
  descriptions). These are groups with the simplest connected Gromov
  boundaries of topological dimension 1. The characterization is
  expressed in terms of algebraic properties of the Bowditch JSJ
  splitting of the corresponding groups (i.e.\ the canonical JSJ
  splitting over 2-ended subgroups).
\end{abstract}

\maketitle

\tableofcontents

\section{Introduction}
In this paper we address the following classical and widely open problems:
\begin{enumerate}
\item
to describe explicitly the class of topological spaces which appear as Gromov boundaries
of word hyperbolic groups, and

\item
to characterize those word hyperbolic groups whose Gromov boundary is homeomorphic
to a given topological space.
\end{enumerate}

 Concerning the above problem (1), the paper
\cite{Swiatkowski:dense_amalgam:2016} provides its reduction to the case when the boundary is
connected, i.e., the corresponding group is 1-ended.
Some further partial insight into problem (1) is provided in  \cite{Pawlik:boundaries_markov_compacta:2015},
by showing that Gromov boundaries of
hyperbolic groups
belong to the class of
topological spaces called \emph{Markov compacta}. These are,
roughly speaking, the spaces which can be realized as inverse limits
of sequences of polyhedra built recursively out of an initial finite
polyhedron according to some finite set of modification (or substitution) rules.  In dimension 1, the
easiest to describe reasonably rich class of Markov compacta is that
of so called \emph{regular trees of graphs}, as introduced in
\Secref{reg_tree} of this paper. The main result of this paper is the complete 
characterization of
those 1-ended hyperbolic groups $G$ whose Gromov boundaries 
$\partial G$ are trees of
graphs (see \Mainthmref{1} below). 

In the statement of our main result, we refer to the concept of a
\emph{regular tree of graphs} (as mentioned above), as well as to the
more general concept of a \emph{tree of graphs}, as introduced in
\Secref{tr_gr}. The statement refers also to some features of the
\emph{Bowditch JSJ splitting} of a group $G$, as introduced in
\cite{Bowditch:cut_points:1998}, and recalled in detail in
\Secref{reduced_JSJ}.  This splitting is a canonical splitting of a
word hyperbolic group $G$ over 2-ended subgroups.  It is defined when $G$
is not cocompact Fuchsian.  ($G$ is cocompact Fuchsian if it acts
properly and cocompactly on the hyperbolic plane, a condition
equivalent to $\partial G \cong S^1$.)  Here we mention just a few
details concerning the Bowditch JSJ splitting, which are necessary for
the statement of \Mainthmref{1}.

Technically, the Bowditch JSJ splitting of $G$ is the action of $G$  
on the associated Bass-Serre tree ${T}_G$, called 
\emph{the Bowditch JSJ tree for $G$}.
This tree is naturally bipartitioned into \emph{black} and \emph{white} vertices.
The black vertices are either \emph{rigid} or \emph{flexible}, and their
stabilizers are the essential factors of the splitting (which are also called
rigid and flexible, respectively). The white vertex stabilizers
are maximal 2-ended subgroups of $G$.
The $G$-action on ${T}_G$ preserves all the above mentioned types
of vertices.
We introduce the notion of a \emph{rigid cluster} in $T_G$, as follows.
We say that two rigid vertices in $T_G$ are \emph{adjacent}
if their combinatorial distance in $T_G$ is equal to 2.
A \emph{rigid cluster} is a subtree of $T_G$ spanned on 
any equivalence class of the equivalence relation
on the set of rigid vertices of $T_G$ induced by adjacency.
A \emph{rigid cluster factor} of $G$ is the stabilizing subgroup of any rigid cluster,
i.e., the subgroup consisting of all elements which map this cluster to itself.
In particular, any rigid factor of $G$ is contained in some rigid cluster factor,
but in general rigid cluster factors are larger than rigid factors, and they are in some sense
``combined'' from collections of rigid factors. See 
\Ssecref{reduced_JSJ},
and in particular \Rmkref{3.5}, for more detailed explanations,
and for the description of a related splitting that we call 
the \emph{reduced Bowditch JSJ splitting} of $G$.

Recall finally that a graph is \emph{2-connected} if it is finite,
connected, nonempty, not equal to a single vertex, and cutpoint-free.


\begin{mainthm}
  \mainthmlabel{1} Let $G$ be a 1-ended hyperbolic group that is not
  cocompact Fuchsian.  Then the following conditions are equivalent:
\begin{enumerate}
\item \itmlabel{bd_g_tog} 
  $\partial G$ is homeomorphic to a tree of graphs;
  
\item \itmlabel{bd_g_reg_twocon_tog} 
  $\partial G$ is homeomorphic to a regular tree of 2-connected graphs;
  
\item \itmlabel{g_rigid_cluster_factors_vf} 
each rigid cluster factor of $G$ is virtually free.
\end{enumerate}
\end{mainthm}

As a complement to \Mainthmref{1}, we present also in the paper the
description of the structure of a regular tree of graphs appearing in
condition \pitmref{bd_g_reg_twocon_tog} of this result, given
explicitly in terms of the features of the corresponding Bowditch JSJ
splitting of $G$ (see \Thmref{S.1}, and the description of the
corresponding graphical connecting system $\mathcal{R}_G$ given in
\Secref{S}).  In fact, as we show in
\Rmkref{splitting_generalization}, an explicit description of
$\partial G$ as a regular tree of graphs can be similarly obtained
from a splitting of $G$ coming from a much more general class of
splittings over $2$-ended subgroups with maximal $2$-ended and
virtually free vertex groups.

One can view \Mainthmref{1} as confirming that regular trees of graphs are indeed
the simplest topological spaces in the complexity hierarchy among connected 1-dimensional
Gromov boundaries of hyperbolic groups. \Mainthmref{1} shows also that the zoo of 
connected 1-dimensional Gromov boundaries is quite complicated, 
and there is a big gap in it
between trees of graphs and the boundaries of those groups which contain as
their rigid factors groups whose boundary is either the Sierpinski curve or the Menger
curve (see \Rmkref{15.6} for more detailed discussion
of this issue). The closer understanding of the species inside this gap seems to be
an interesting challenge for future investigations.

As a byproduct of the methods developed for the proof of
\Mainthmref{1}, we present also another result, \Mainthmref{2}
below. In the light of the comments in the previous paragraph, it can
be viewed as characterizing those 1-ended hyperbolic groups $G$ whose
boundaries $\partial G$ belong to some subclass in the class of trees
of graphs, which consists of the simplest spaces among them, namely
the so called \emph{trees of $\theta$-graphs}.  A
\emph{$\theta$-graph} is any connected graph having precisely 2
vertices, whose any edge has both these vertices as its endpoints
(obviously, $\theta$-graphs typically contain multiple edges).  A
$\theta$-graph is \emph{thick} if it has at least three edges.  A
\emph{tree of thick $\theta$-graphs} is a tree of graphs whose all
consituent graphs are thick $\theta$-graphs (compare \Secref{tr_gr}).

\begin{mainthm}
  \mainthmlabel{2} Let $G$ be a 1-ended hyperbolic group that is not
  cocompact Fuchsian. Then the following conditions are equivelent:
\begin{enumerate}
\item \itmlabel{bdg_is_tree_of_theta_graphs}
$\partial G$ is homeomorphic to a tree of thick $\theta$-graphs;
\item 
$\partial G$ is homeomorphic to a regular tree of thick $\theta$-graphs;
\item \itmlabel{g_no_rigid_factors}
$G$ has no rigid factor in its Bowditch JSJ splitting.
\end{enumerate}
\end{mainthm}

\subsection{Examples.}
We now illustrate our results with few examples (see Figures~1--4). The details concerning these examples,
and other examples of a different nature, will be discussed and presented in \Secref{examples}. 
In each of the examples below, the 1-ended hyperbolic group $G_i$ is the fundamental
group of the corresponding ``graph of surfaces'' $M_i$.
Recall also that the Bowditch JSJ splitting of a group $G$ (as well as its reduced
Bowditch JSJ splitting) can be expressed
by means of the related graph of groups decomposition
$\mathcal{G}$ of $G$, and this is 
the approach that we follow
below. In all four discussed examples $G_i$,
$i\in\{ 1, 2, 3, 4 \}$, the edge groups of
the corresponding graph of groups decompositions $\mathcal{G}_i$
coincide with the corresponding incident white vertex groups,
so we skip the indication of the form of these edge groups
in the figures.

\begin{figure}[ht]
    \centering
   
        \includegraphics[width=\textwidth]{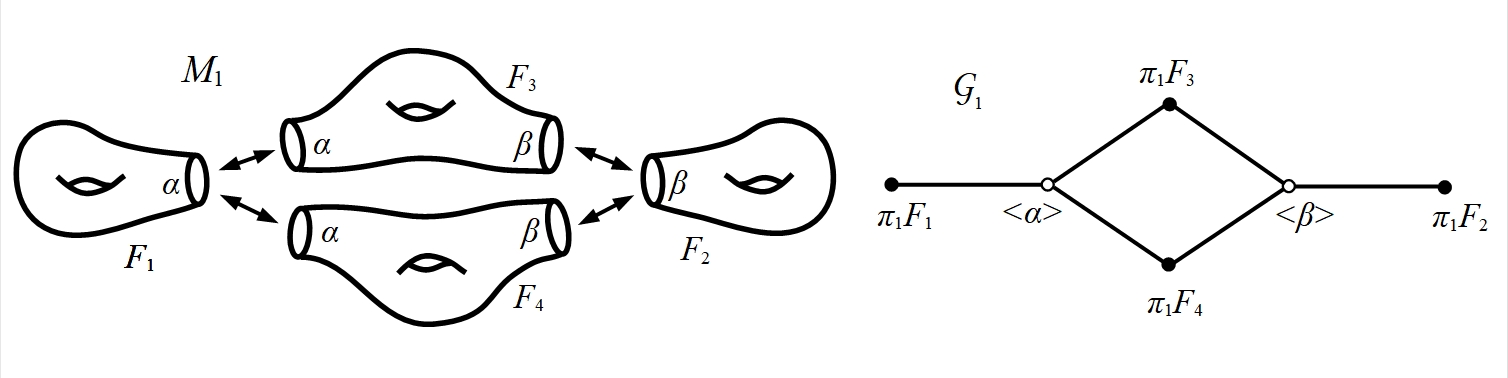}
        \caption{``Graph of surfaces'' $M_1$ and JSJ splitting
        $\mathcal{G}_1$ of its fundamental group $G_1=\pi_1M_1$.}
        \figlabel{fig1}
\end{figure}

Consider the group $G_1=\pi_1M_1$, presented in \Figref{fig1}. 
The Bowditch JSJ 
graph of groups decomposition of $G_1$, denoted $\mathcal{G}_1$, is presented in the right part
of this figure. Since all factors of this decomposition are easily seen to be flexible
(because they are the conjugates of the surface subgroups $\pi_1F_i$ equipped with
peripheral subgroups corresponding to the boundary curves), condition (3) of \Mainthmref{2}
holds for $G_1$, and hence the boundary $\partial G_1$ is homeomorphic
to some regular tree of thick $\theta$-graphs.

\Figref{fig2} shows an example in which the corresponding group $G_2=\pi_1M_2$
contains, up to conjugacy, precisely one rigid factor in its Bowditch JSJ splitting.
This factor corresponds to the subgroup $\pi_1H_0<\pi_1 M_2$, and it is obviously
isomorphic to the free group of rank 2. Moreover, it is not hard to observe that 
the rigid clusters in the Bowditch JSJ tree of $G_2$ coincide with the rigid vertices,
and hence the rigid cluster factors coincide with the rigid factors
(actually, in this case the reduced Bowditch JSJ splitting coincides with the ordinary
Bowditch JSJ splitting). It follows that the group $G_2$ satisfies condition \pitmref{g_rigid_cluster_factors_vf}
of \Mainthmref{1}, and hence its Gromov boundary $\partial G_2$
is homeomorphic to some regular tree of 2-connected graphs.
In \Secref{examples} we will give an explicit description of this regular tree of graphs,
in terms of the so called graphical connecting system $\mathcal{R}_{G_2}$
associated to $G_2$ in the way presented in \Secref{S}.

\begin{figure}[ht]
    \centering
   
        \includegraphics[width=\textwidth]{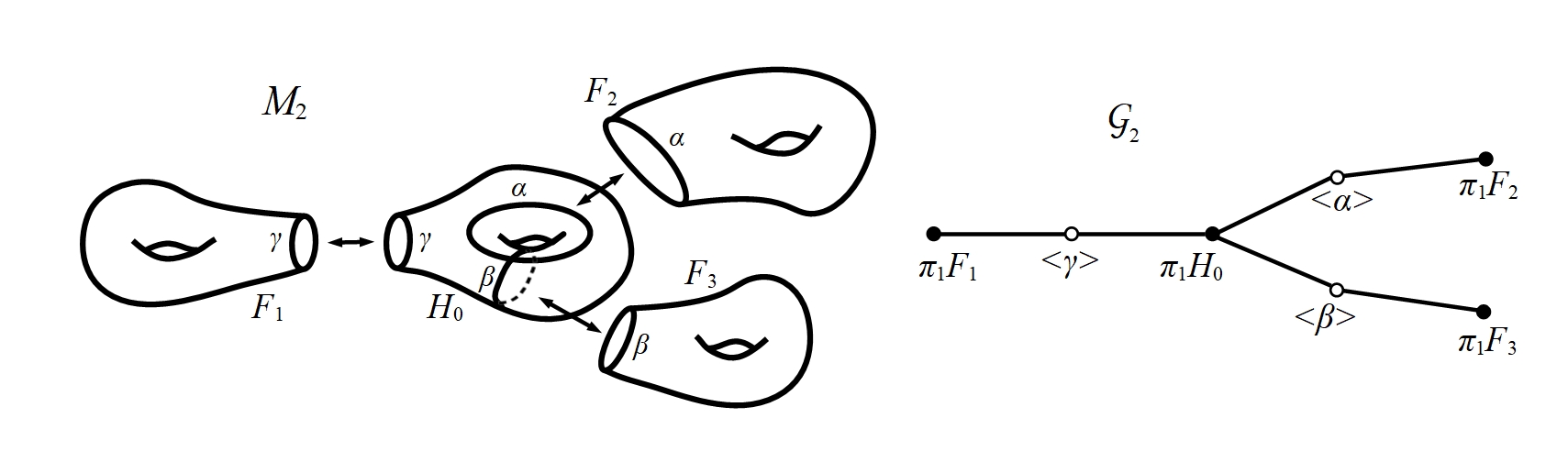}
        \caption{``Graph of surfaces'' $M_2$ and JSJ splitting
        $\mathcal{G}_2$ of its fundamental group $G_2=\pi_1M_2$.}
        \figlabel{fig2}
\end{figure}

The group $G_3=\pi_1M_3$ presented in \Figref{fig3} contains, up to conjugacy,
precisely two rigid factors in its Bowditch JSJ splitting, and they correspond to 
the subgroups $\pi_1H_i<\pi_1M_3$, $i=1,2$. It can be deduced from the
corresponding graph of groups decomposition $\mathcal{G}_3$ of $G_3$ that
the rigid clusters are in this case nontrivial, and that the rigid cluster factors of $G_3$
are the conjugates of the subgroup $\pi_1(H_1\cup H_2)$. (The corresponding
reduced Bowditch JSJ graph of groups decomposition of $G_3$ is presented
as $\mathcal{G}^r_3$ in the same figure.) Since obviously these rigid cluster factors
are free, condition \pitmref{g_rigid_cluster_factors_vf} of \Mainthmref{1} is again satisfied,
and thus the boundary $\partial G_3$ is also homeomorphic to some
regular tree of 2-connected graphs.

\begin{figure}[ht]
    \centering
   
        \includegraphics[width=\textwidth]{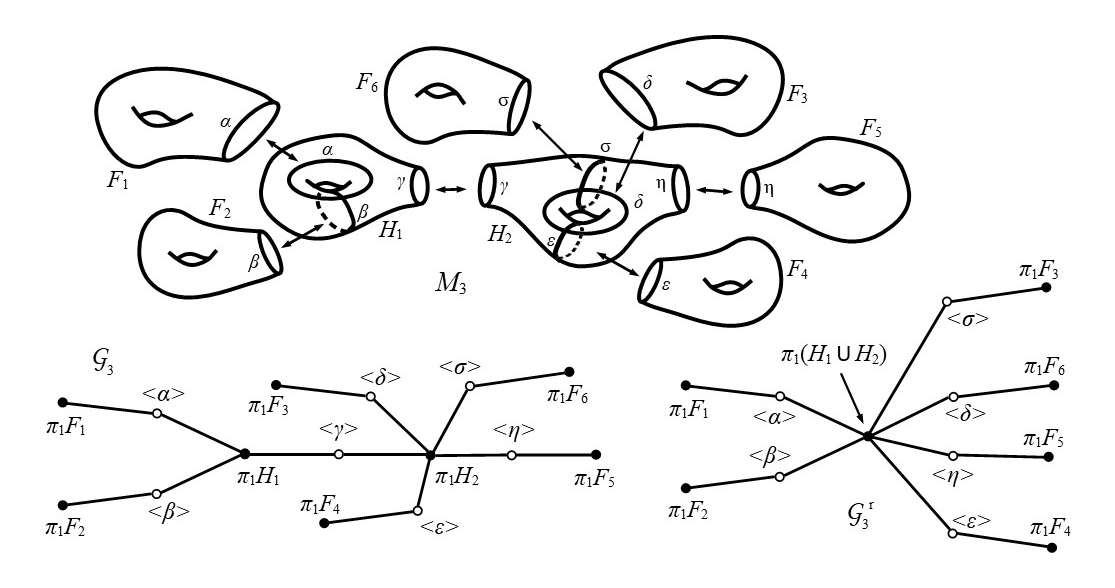}
        \caption{``Graph of surfaces'' $M_3$ and splittings
        $\mathcal{G}_3$ and $\mathcal{G}_3^r$ 
        of its fundamental group $G_3=\pi_1M_3$.}
        \figlabel{fig3}
\end{figure}

Our final example, presented in \Figref{fig4}, shows some 1-ended 
hyperbolic group
$G_4=\pi_1M_4$ whose boundary $\partial G_4$ is not a tree of graphs,
even though its all rigid factors are free. The rigid factors of $G_4$ correspond to the
subgroups $\pi_1H_i<\pi_1M_4$, for $i=1,2$, and the rigid cluster factors correspond
to the subgroup $\pi_1(H_1\cup H_2)$. Since that latter group is the fundamental group
of a closed surface, it is not virtually free, and hence condition \pitmref{g_rigid_cluster_factors_vf} of \Mainthmref{1}
is not fulfilled. As a consequence, the boundary $\partial G_4$ is not homeomorphic
to any tree of graphs.

\begin{figure}[ht]
    \centering
   
        \includegraphics[width=\textwidth]{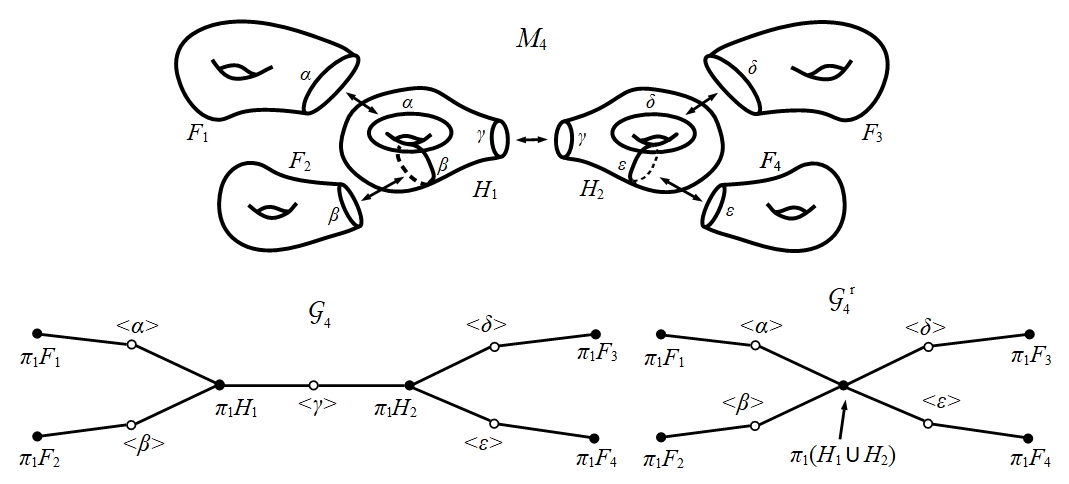}
        \caption{``Graph of surfaces'' $M_4$ and splittings
        $\mathcal{G}_4$ and $\mathcal{G}_4^r$ 
        of its fundamental group $G_4=\pi_1M_4$.}
        \figlabel{fig4}
\end{figure}

\subsection{Organization of the paper.}
The paper is organized as follows. The first part of the paper
(Sections~2--5) focuses on the proofs of the implications \pitmref{bd_g_tog}$\Rightarrow$\pitmref{g_rigid_cluster_factors_vf}
in \Mainthmref{1}, and (1)$\Rightarrow$(3) in \Mainthmref{2}. 
The second part of the paper (Sections~6--13) deals with the proofs of 
the implications \pitmref{g_rigid_cluster_factors_vf}$\Rightarrow$\pitmref{bd_g_reg_twocon_tog}
in \Mainthmref{1}, and (3)$\Rightarrow$(2) in \Mainthmref{2}. The remaining implications
(\pitmref{bd_g_reg_twocon_tog}$\Rightarrow$\pitmref{bd_g_tog}
in \Mainthmref{1}, and (2)$\Rightarrow$(1) in \Mainthmref{2}) necessary to get the full proofs
of Theorems \mainthmref{1} and \mainthmref{2} are obvious.

Our exposition in both parts of the paper consists of many preparations for the proofs,
including the complete descriptions and explanations of the terms appearing in the statements of our two main results.
More precisely, in the first part, the organization is as follows.
In \Secref{tree_sys} we recall the construction called
\emph{the limit of a tree system of metric compacta}.
It is used later, in \Secref{tr_gr}, to describe \emph{trees of graphs}
(and \emph{trees of $\theta$-graphs}), and it is also used throughout the
paper, as the main technical ingredient in the arguments.
In \Ssecref{bowditch_jsj_splitting} we recall the concept of the canonical (Bowditch) JSJ splitting
of a 1-ended hyperbolic group, and we describe related concepts of
the \emph{reduced} Bowditch JSJ splitting, and of \emph{rigid cluster
factors} of this reduced splitting. In \Secref{nec_conditions_tog} we study some configurations
of local cutpoints in trees of graphs (and trees of $\theta$-graphs),
and we deduce the proofs of the implications \pitmref{bd_g_tog}$\Rightarrow$\pitmref{g_rigid_cluster_factors_vf}
in \Mainthmref{1}, and (1)$\Rightarrow$(3) in \Mainthmref{2}. 

In the second part of the paper, the organization is as follows. 
In \Secref{reg_tree} we introduce \emph{regular trees of graphs}.
Sections~7--9 provide various technicalities and terminology which
allow to state in \Secref{S} (as \Thmref{S.1}) the result which describes
explicitly the form of the regular trees of graphs which appear
in our main results. In particular, we discuss in \Secref{E} some graphs
(the so called Whitehead graphs, and their extended variants) which are
naturally associated to the virtually free rigid factors (or rather rigid cluster factors)
of the JSJ splittings. 
Sections~10--12 contain some further technical preparations,
and in \Secref{P} we present the proofs of those parts of Theorems \mainthmref{1} and \mainthmref{2},
which identify the Gromov boundaries of the corresponding groups
as explicitly described regular trees of graphs.

In \Secref{examples}, we discuss the examples presented above in this
introduction, as well as some further examples, of a different sort,
namely some right-angled Coxeter groups.

In \Secref{final}, we make several final comments, including
explaining how the description of $\partial G$ as a regular tree of
graphs in terms of the Bowditch JSJ splitting can be generalized to a
much more general class of splittings.

\subsection{Acknowledgments.}
The second author expresses his thanks to Jason Manning for directing
his attention to Whitehead graphs as objects which could potentially
play a role in solving the problem studied in this paper (which indeed
turned out to be the case).

\section{Tree systems and their limits}
\seclabel{tree_sys}

In this preparatory section we present 
(or rather recall)
the construction called \emph{limit of a tree system of spaces}
(or, less formally, a \emph{tree of spaces}).  This construction is
a slight extension of that presented in Section~1 of \cite{Swiatkowski:trees_metric_compacta:2020}
(see also \cite{Pawlik_Zablocki:multidisks:2016}).
We will use this construction in the next sections to describe and
study \emph{trees of graphs}, the class of spaces on which we focus
in this paper.

\subsection{Some terminology and notation concerning trees}

We will mainly deal with trees $T$ where the vertex set $V_T$ is
endowed with a bipartition $V_T=V_T^b\sqcup V_T^w$ into
\emph{black} and \emph{white} vertices such that every edge of
$T$ has endpoints of different colours. We denote the set of all
(unoriented) edges of $T$ by $E_T$, and for each $e\in E_T$ we denote
by $b(e)$ and $w(e)$ the black and the white endpoint of $e$,
respectively. We will usually denote black vertices with letters $t,s$
and white ones with $u,w$.
We use also other letters, such as $v, z, x, y$, to denote vertices whose colour is
not essential.

We denote by $[v,z]$ the combinatorial (embedded) path in $T$
connecting a vertex $v$ to a vertex $z$.  (If $v,z$ are adjacent,
$[v,z]$ denotes also the edge with the endpoints $v,z$.)  A
\emph{ray} in $T$ is an infinite (embedded) combinatorial path
having an initial vertex. We usually denote rays by $\rho$, and we
denote by $\rho(0)$ the initial vertex and by $e_1(\rho)$ the initial
(unoriented) edge of a ray $\rho$.

For any $v\in V_T$, we denote by $N^T_v$, or simply by $N_v$, the set
of all vertices of $T$ adjacent to $v$. We denote by $d(v)=d_T(v)$ the
\emph{degree} of a vertex $v$ in $T$, i.e., the cardinality of the
set $N^T_v$.

We denote by $\partial T$ the set of \emph{ends} of $T$, i.e., the set
of equivalence classes of rays in $T$ with respect to the relation of
coincidence except possibly at some finite initial parts.  We denote
the end determined by a ray $\rho$ as $[\rho]$.

We consider subtrees of $T$ of some special form, that we call
b-subtrees.  A subtree $S$ of $T$ is a \emph{b-subtree} if for each
white vertex $u$ of $S$ the set $N^T_u$ is contained in the vertex set
of $S$. We apply to b-subtrees $S$ the earlier introduced notation
$V_S$, $V_S^b$, $V_S^w$, $E_S$, $N^S_v$, $d_S(v)$, etc.  For a
b-subtree $S$ of $T$ we define
\[ N_S=\{ u\in V_T: \text{$u\notin V_S$ and $u$ is adjacent to some
    $t\in V_S$} \}. \] Note that $N_S$ consists solely of white
vertices of $T$.  Note also that, in case when $S$ is equal to a
single black vertex $t$, this notation agrees with the earlier
introduced notation for the set $N_t$.

\subsection{Spaces with peripherals and tree systems}

Recall that a family of subsets of a compact metric space is
\emph{null} if for each $\epsilon>0$ all but finitely many of these
subsets have diameters less than $\epsilon$. 
Note 
that nullness of a family of subspaces does not depend on the choice
of a metric compatible with the topology of the corresponding space.

\begin{defn}
  \defnlabel{T.1} A \emph{space with peripherals} is a pair
  $(K,\Omega)$ such that $K$ is a compact metric space and $\Omega$ is
  a countable null family of pairwise disjoint closed subsets of $K$.  We call
  the sets from $\Omega$ \emph{the peripheral subspaces} of $K$, or
  simply \emph{the peripherals} of $K$.
\end{defn}

\begin{ex}
  \exlabel{T.1.5} Say that a graph $\Gamma$ is \emph{essential} if
  it is nonempty and contains no isolated vertex. (We allow that
  $\Gamma$ contains loop edges and multiple edges.)  Given a finite
  essential graph $\Gamma$, we describe the associated space with
  peripherals called \emph{the punctured graph $\Gamma$}, denoted
  $\Gamma^\circ$, as follows. Denote by $|\Gamma|$ the
  \emph{realization} of $\Gamma$, i.e., the underlying topological
  space of $\Gamma$, which is a compact metric space.  For each point
  $x\in|\Gamma|$ consider a \emph{normal neighbourhood} of $x$ in
  $\Gamma$, i.e., a connected closed neighbourhood $B(x)$ of $x$ whose
  intersection with the vertex set of $\Gamma$ either coincides with
  $\{x\}$ (when $x$ is a vertex of $\Gamma$), or is empty. Denote by
  $\interior B(x)$ and $\boundary B(x)$ the interior and the boundary
(in $|\Gamma|$)
  of a normal neighbourhood $B(x)$, respectively.  A \emph{standard
    dense family of normal neighbourhoods in} $\Gamma$ is any family
  of pairwise disjoint normal neighbourhoods in $\Gamma$ which
  contains normal neighbourhoods of all vertices of $\Gamma$, and
  whose union is dense in $|\Gamma|$. Note that each such family is
  automatically countable and null. Moreover, such family is unique up
  to a homeomorphism of the ambient space $|\Gamma|$ which is identity on the
  vertex set of $\Gamma$ and which maps all edges of $\Gamma$ to
  themselves.

  Let ${\mathcal N}_\Gamma$ be a standard dense family of normal
  neighbourhoods in $\Gamma$. Put
  \[ \Gamma^\circ:=|\Gamma|\setminus\bigcup\{
    \interior B:B\in{\mathcal N}_\Gamma \} \] and
  \[ \Omega=\{ \boundary B:B\in{\mathcal N}_\Gamma \}. \] Note that,
  by what was said above, $\Gamma^\circ$ is a compact metric space,
  and $\Omega$ is a countable and null family of closed subsets of $\Gamma^\circ$.
  \emph{The punctured graph $\Gamma$} is the space with peripherals
  $(\Gamma^\circ, \Omega)$.
\end{ex}

Note that the peripheral sets $\boundary B(x)$ of $\Gamma^\circ$
naturally correspond to vertices and edges of $\Gamma$, according to
whether $x$ is a vertex or an interior point of an edge. This
correspondence describes what we call a \emph{type} of a peripheral
set. The space with peripherals $(\Gamma^\circ, \Omega)$ is unique up
to a homeomorphism which respects types of peripheral sets.

If the underlying space $|\Gamma|$ of a graph $\Gamma$ is homeomorphic
to the circle $S^1$, it is not hard to realize that the space with
peripherals $(\Gamma^\circ,\Omega)$ as above does not depend, up to
homeomorphism, on the number of edges in $\Gamma$. This justifies the
notation $(S^1)^\circ$ for this space with peripherals, and the term
\emph{punctured circle}, with which we will be referring to it
later.

Punctured graphs will be used in the next section, to describe trees of graphs.

\begin{defn}
  \defnlabel{T.2}
  A \emph{tree system of spaces} is a tuple 
  $\Theta=(T,\{ K_t \},\{\Sigma_u\},\{\phi_e\})$ such that:
  \begin{enumerate}[label={(TS\arabic*)}]
  \item \itmlabel{(TS1)} $T$ is a countable tree (called the
    \emph{underlying tree} of $\Theta$), equipped with a proper
    colouring of its vertices in black and white (i.e. each edge has
    one of its vertices colored black and the other colored white),
    such that the degree $d(u)$ of each white vertex $u$ is finite;
  \item \itmlabel{(TS2)} to each black vertex $t\in V_T^b$ there is
    associated a nonempty compact metric space $K_t$, called a
    \emph{constituent space} of $\Theta$;
  \item \itmlabel{(TS3)} to each white vertex $u\in V_T^w$ there is
    associated a nonempty compact metric space $\Sigma_u$, called a
    \emph{peripheral space} of $\Theta$;
  \item \itmlabel{(TS4)} to each edge $e\in E_T$ there is associated a
    topological embedding
    \[\phi_e:\Sigma_{w(e)}\to K_{b(e)},\] called a \emph{connecting map} of
    $\Theta$;
  \item \itmlabel{(TS5)} for each black vertex $t\in V_T^b$ the pair
    \[(K_t,\{ \phi_{[t,u]}(\Sigma_u):u\in N_t \})\] is a space with
    peripherals, i.e., the family
    $\Omega_t:=\{ \phi_{[t,u]}(\Sigma_u):u\in N_t \}$ is null and
    consists of pairwise disjoint sets; pairs $(K_t,\Omega_t)$ are
    called the \emph{constituent spaces with peripherals} of
    $\Theta$.
  \end{enumerate}
\end{defn}

Note that, by condition \itmref{(TS5)}, a black vertex of the
underlying tree $T$ of a system $\Theta$ typically has infinite
(countable) degree.

\subsection{Limit of a tree system}\sseclabel{limit}

We now describe,
following the exposition in Section~1 of \cite{Swiatkowski:trees_metric_compacta:2020},   
an object called the \emph{limit} of a tree system
$\Theta=(T,\{ K_t \},\{\Sigma_u\},\{\phi_e\})$, denoted $\lim\Theta$,
starting with its description as a set. Denote by $\#\Theta$ the
quotient
\[(\bigsqcup_{t\in V_T^b}K_t)/\sim\]
of the disjoint union of the sets $K_t$ by the equivalence relation $\sim$
induced by the equivalences 
$\phi_{[u,t]}(x)\sim\phi_{[u,s]}(x)$ for any $u\in V_T^w$, any
$x\in\Sigma_u$, and any vertices $t,s\in N_u$. 
This set may be viewed as obtained from the family
$K_t:t\in V$ as a result of gluings provided by the maps 
\[
\phi_{[u,s]}\phi_{[u,t]}^{-1}:\phi_{[u,t]}(\Sigma_u)\to\phi_{[u,s]}(\Sigma_u).
\]
Observe that, by the fact that the peripheral subspaces in any $K_t$
are pairwise disjoint, the equivalence classes of the relation $\sim$
are easy to describe, and they are all finite (some of them have
cardinality 1, and others have cardinalities equal to the degrees of
the appropriate white vertices of $T$).  Observe also that any set
$K_t$ canonically injects in $\#\Theta$.  Define $\lim\Theta$ to be
the disjoint union $\#\Theta\sqcup \partial T$, where $\partial T$ is
the set of ends of $T$.

To put appropriate topology on the set $\lim\Theta$ we need more
terminology. Given a family $\mathcal A$ of subsets in a set $X$, we
say that $U\subset X$ is \emph{saturated} with respect to
$\mathcal A$ (shortly, \emph{$\mathcal A$-saturated}) if for any
$A\in{\mathcal A}$ we have either $A\subset U$ or $A\cap U=\emptyset$.

For any finite b-subtree $F$ of $T$, denote by 
\[
\Theta_F=(F,\{ K_t:t\in V_F^b \},
\{ \Sigma_u:u\in V_F^w \},
\{ \phi_e:e\in E_F \})
\]
the tree system $\Theta$ \emph{restricted} to $F$. The
set $K_F:=\#\Theta_F$, equipped with the natural quotient topology
(with which it is clearly a metrizable compact space), will be called
the \emph{partial union} of the system $\Theta$ related to $F$.
Observe that any partial union $K_F$ is canonically a subset in
$\#\Theta$, and thus also in $\lim\Theta$.

For any finite b-subtree $F$ and for any $u\in N_F$ denote by $t_u$
the (unique) vertex of $F$ adjacent to $u$ (which is necessarily a
black vertex). Put
${\Omega}_F=\{ \phi_{[u,t_u]}(\Sigma_u):u\in N_F \}$, and view the
elements of this family as subsets in the partial union $K_F$. Observe
that the family ${\Omega}_F$ consists of pairwise disjoint closed
subsets, and that this family is countable and null (so that $(K_F,\Omega_F)$ is a
space with peripherals).  Let $U\subset K_F$ be a subset which is
saturated with respect to $\Omega_F$.  Put
$N_U:=\{ u\in N_F:\phi_{[u,t_u]}(\Sigma_u)\subset U \}$ and
\[
  D_U:=\{ t\in V_T^b: \text{$[u,t]\cap F=\emptyset$ for some
    $u\in N_U$} \}
\]
(i.e. $D_U$ is the set of these vertices $t\in V_T^b\setminus V_F$
for which the shortest path in $T$ connecting $t$ to $F$
passes through a vertex $u\in N_U$).

Again, for any finite b-subtree $F$ of $T$ let $R_F$ be the set of all
rays $\rho$ in $T$ with $\rho(0)\in N_F$, and with $\rho$ disjoint
from $F$.  Note that the map $\rho\to[\rho]$ is then a bijection from
$R_F$ to the set $\partial T$ of all ends of $T$.  Given a subset
$U\subset K_F$ as above (i.e. saturated with respect to $\Omega_F$),
put
\[
  R_U:=\{ \rho\in R_F: \rho(0)\in N_U
  \},
\]
and put also $\partial_U:=\{ [\rho]\in \partial T:\rho\in R_U \}$.

Finally, for any subset $U\subset K_F$ as above, put
\[
{\mathcal G}(U):=U\cup(\bigcup_{t\in D_U}K_t)\cup \partial_U,
\]
where all summands in the above union are viewed as subsets of
$\lim\Theta$; clearly, ${\mathcal G}(U)$ is then also a subset in
$\lim\Theta$.  We consider the topology in the set $\lim\Theta$ given
by the basis $\mathcal B$ consisting of all sets ${\mathcal G}(U)$,
for all finite b-subtrees $F$ in $T$, and all open subsets
$U\subset K_F$ saturated with respect to $\Omega_F$.  We leave it as
an easy and instructive exercise to check that the family $\mathcal B$
is closed under finite intersections, and hence it satisfies the
axioms for a basis of a topology.  Moreover, we have the following
result, which is a slight extension of Proposition~1.C.1 of \cite{Swiatkowski:trees_metric_compacta:2020},
and which can be justified by the same proof as the latter
proposition.

\begin{prop}
  \proplabel{T.3} For any tree system $\Theta$ of metric compacta the
  limit $\lim\Theta$, with topology given by the above described basis
  $\mathcal B$, is a compact metrizable space.
\end{prop}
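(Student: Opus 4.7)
The plan is to verify, in order, that $\mathcal{B}$ is a genuine basis, that $\lim\Theta$ is Hausdorff and second countable, that it is compact, and finally to apply Urysohn's metrization theorem. For the first step, I would confirm the exercise noted just above the proposition: if $\mathcal{G}(U)$ and $\mathcal{G}(U')$ are two basic sets associated to finite b-subtrees $F, F'$ and saturated opens $U \subset K_F$, $U' \subset K_{F'}$, then enlarging to the smallest finite b-subtree $F''$ containing both, and using the natural embedding $K_F \hookrightarrow K_{F''}$, one re-expresses $\mathcal{G}(U) \cap \mathcal{G}(U')$ in the form $\mathcal{G}(U'')$. Hausdorffness is then checked by cases, depending on whether the two points to separate lie in $\#\Theta$ or are ends of $T$; in each case, choosing $F$ large enough either to contain suitable representatives or to separate the tails of the two rays, reduces the problem to separating points in the space with peripherals $(K_F, \Omega_F)$ by $\Omega_F$-saturated opens, which is possible because $\Omega_F$ is null and consists of pairwise disjoint closed sets. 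Since $T$ has only countably many finite b-subtrees and each $K_F$ is second countable, so is $\lim\Theta$.

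The main work lies in proving compactness. Fix an exhaustion $F_1 \subset F_2 \subset \cdots$ of $T$ by finite b-subtrees with $\bigcup_k V_{F_k} = V_T$. For each $k$, let $K_{F_k}^*$ denote the compact metric space obtained from $K_{F_k}$ by collapsing each peripheral subspace $\phi_{[u,t_u]}(\Sigma_u) \in \Omega_{F_k}$ to a single point. I would define a retraction $r_k \colon \lim\Theta \to K_{F_k}^*$ which is the identity on $K_{F_k} \subset \#\Theta$, which sends every $K_s$ with $s \in V_T^b \setminus V_{F_k}$ to the collapsed peripheral point corresponding to the unique $u \in N_{F_k}$ through which the path in $T$ from $F_k$ to $s$ exits $F_k$, and which acts analogously on ends $[\rho] \in \partial T$. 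Continuity of $r_k$ is immediate from the definition of the basis $\mathcal{B}$. Given any sequence $(x_n) \subset \lim\Theta$, a standard diagonal extraction produces a subsequence along which $r_k(x_n)$ converges in $K_{F_k}^*$ to some $y_k$ for every $k$, and the $y_k$ are automatically compatible under the natural quotient maps $K_{F_{k+1}}^* \to K_{F_k}^*$. If some $y_k$ is not a collapsed peripheral point, it lifts canonically to a unique point $z \in K_t$ for some $t \in V_{F_k}^b$, and a direct check that $r_k^{-1}$ of a basic neighborhood of $y_k$ equals a basic neighborhood of $z$ in $\lim\Theta$ shows that $z$ is the limit of the diagonal subsequence. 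Otherwise every $y_k$ is a collapsed peripheral point, and compatibility forces the associated white vertices of $N_{F_k}$ to lie along a single ray $\rho$ in $T$; the diagonal subsequence then converges to the end $[\rho] \in \partial T$.

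Metrizability follows from Urysohn's theorem once regularity is checked, and regularity in turn follows from Hausdorffness together with the fact that, using nullness of $\Omega_F$ and passing to sufficiently large $F$, every basic neighborhood $\mathcal{G}(U)$ of a point admits a smaller basic neighborhood whose closure in $\lim\Theta$ lies in $\mathcal{G}(U)$. The main obstacle throughout is the careful use of the nullness hypothesis on each $\Omega_t$: it is what allows saturated opens to separate peripherals in the Hausdorff step, what ensures the diagonal subsequence actually converges in the ``ends'' case of compactness, and what controls the accumulation of peripheral points in the inverse system of the $K_{F_k}^*$. Apart from these uses of nullness, the argument is a routine adaptation of the standard tree-of-spaces machinery already developed in the antecedent reference.
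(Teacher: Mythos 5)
Your proposal is correct in outline, but it cannot be said to follow ``the paper's proof'', because the paper gives none: \Propref{T.3} is justified only by the remark that it is a slight extension of Proposition~1.C.1 of \cite{Swiatkowski:trees_metric_compacta:2020} and can be proved by the same argument. What you do instead is essentially to build, ahead of time, the standard inverse system ${\mathcal S}_\Theta$ of reduced partial unions $K_F^*$ that the paper only introduces in Subsection~2.6 (cf.\ \Propref{T.6}): your retractions $r_k$ are exactly the projections $\lim\Theta\to K_{F_k}^*$, and the key identity behind both your compactness and regularity steps is ${\mathcal G}(U)=r_F^{-1}\bigl(q_F(U)\bigr)$ for $U\subset K_F$ saturated open, where $q_F\colon K_F\to K_F^*$ is the quotient map. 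This is a legitimate and self-contained route; it buys an explicit proof where the paper only cites one, at the cost of reproving part of what \Propref{T.6} later packages. A few points deserve more care than your sketch gives them, though none is a genuine gap. First, $r_k$ is not ``the identity on $K_{F_k}$'' but the map $q_{F_k}$ (the two prescriptions you give are consistent only because points of the peripherals in $\Omega_{F_k}$ are collapsed). Second, second countability is not immediate from countability of the set of finite b-subtrees, since for each $F$ there are uncountably many saturated opens; you need the observation that saturated opens in $K_F$ correspond to opens of the compact metrizable quotient $K_F^*$ (metrizable because nullness of $\Omega_F$ makes the decomposition upper semicontinuous), so a countable basis of each $K_F^*$ pulls back to a countable subfamily of ${\mathcal B}$ with the basis property. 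Third, passing from sequential compactness to compactness uses Lindel\"of (hence second countability), so the order of your steps matters and should be stated. Finally, in the ``all $y_k$ collapsed'' case you should say explicitly why the white vertices $u_k\in N_{F_k}$ cannot stabilize (any fixed vertex is eventually absorbed into $V_{F_k}$, since the exhaustion covers $V_T$ and white vertices have finite degree), which is what forces them to march off along a ray and lets the argument with $f_{FF_k}^{-1}\bigl(q_F(U)\bigr)$ give convergence to the end. With these details supplied, your argument is sound.
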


\subsection{Functoriality with respect to subtrees}

A tree system can be naturally restricted to any b-subtree of the
underlying tree.
Namely, let
$S\subset T$ be a b-subtree of
$T$. The \emph{restriction} of $\Theta$ to $S$ is the tree system
\[
\Theta_S=(S,\{ K_t:t\in V_S^b \},
\{ \Sigma_u:u\in V_S^w \},
\{ \phi_e:e\in E_S \}).
\]
Next observation follows by a direct argument (compare the proof
of Claim~1 inside the proof of Theorem~3.A.1 in \cite{Swiatkowski:trees_metric_compacta:2020}).

\begin{fact}
  \factlabel{T.4} The natural map
  $\#\Theta_S\cup \partial_S\to\#\Theta\cup \partial_T$ induced by the
  inclusion $S\to T$, and by the identities of the spaces
  $K_t:t\in V_S$, is injective. Moreover, this map viewed as a map
  from $\lim\Theta_S$ to $\lim\Theta$ is a topological embedding.
\end{fact}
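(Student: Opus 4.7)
The plan splits into two parts: first, prove that the set-theoretic map is injective; second, promote this to a topological embedding. For the second part, I would appeal to Proposition T.3: since both $\lim\Theta_S$ and $\lim\Theta$ are compact Hausdorff, a continuous injection from the former to the latter is automatically a homeomorphism onto its image. Thus only injectivity and continuity need to be proved.

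Injectivity is a tree-combinatorial observation resting on the b-subtree hypothesis. The equivalence classes in $\#\Theta$ have the explicit form $\{\phi_{[u,t]}(x) : t \in N_u^T\}$ for a single white vertex $u \in V_T^w$ and some $x \in \Sigma_u$, since by axiom \itmref{(TS5)} the peripheral subspaces of any constituent space $K_t$ are pairwise disjoint. If such a class contains two points coming from $\bigsqcup_{t \in V_S^b} K_t$, then $u$ lies on the unique path in $T$ between two vertices of $S$, hence $u \in V_S^w$; the b-subtree property then forces $N_u^T \subseteq V_S^b$, so the class lies entirely in the $S$-part and coincides with a class of $\#\Theta_S$. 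Injectivity on the ends component follows because the tails of rays in $S$ remain in $S$, so coincidence of tails in $T$ is coincidence in $S$. Finally, $\#\Theta$ and $\partial T$ are disjoint in $\lim\Theta$, so there is no cross-collision between the two pieces.

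For continuity, I would verify that every basic open set $\mathcal{G}(U)$ of $\lim\Theta$ --- determined by a finite b-subtree $F$ of $T$ and an open $\Omega_F$-saturated $U \subseteq K_F$ --- has open preimage in $\lim\Theta_S$. Pointwise, given $p \in \lim\Theta_S$ mapping into $\mathcal{G}(U)$, the natural candidate for a matching $S$-side finite b-subtree is $F' := F \cap S$: this is a finite subtree of $S$, and the b-subtree condition (that $N_u^T \subseteq V_{F'}$ for each white $u \in V_{F'}$) is inherited from the fact that $F$ and $S$ each satisfy it. I would then construct an open $\Omega_{F'}$-saturated $U' \subseteq K_{F'}$ adapted to $p$, enlarging $F'$ by adjoining finitely many edges of $S$ in case $p$ sits outside $K_{F'}$, and verify case-by-case --- for $p$ in some $K_t$ with $t \in V_S^b$ versus $p$ an end of $S$ --- that the image of the resulting basic neighborhood of $p$ in $\lim\Theta_S$ lies in $\mathcal{G}(U)$. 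The main obstacle I anticipate is precisely this bookkeeping in the continuity step: the interaction of $F$ and $S$ can be intricate, especially when the test point is an end and one must control both the constituent-space piece and the set of ends $\partial_{U'}$ simultaneously. By contrast, the injectivity half is a clean combinatorial consequence of the b-subtree property, and the compactness reduction makes the topological conclusion essentially automatic once continuity is secured.
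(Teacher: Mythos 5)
The paper gives no proof of \Factref{T.4}: it is stated as following ``by a direct argument,'' with a pointer to the proof of Claim~1 inside Theorem~3.A.1 of \cite{Swiatkowski:trees_metric_compacta:2020}, so there is nothing in-paper to compare you against; judged on its own, your plan is the natural direct argument and it is sound. The injectivity half is exactly right: the description of the nontrivial $\sim$-classes as $\{\phi_{[u,s]}(x):s\in N^T_u\}$ is the observation the paper itself makes after defining $\#\Theta$ (using \itmref{(TS5)}), and the b-subtree condition forces any class meeting the $S$-part twice to be a class of $\#\Theta_S$; the ends and the absence of cross-collisions are immediate. Reducing the embedding statement to continuity via \Propref{T.3} is also correct. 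For the continuity bookkeeping you defer, be aware of where the actual content lies: $F':=F\cap S$ can be empty (easy separate case); when it is nonempty, $U':=U\cap K_{F'}$ is open and $\Omega_{F'}$-saturated because every peripheral of $K_{F'}$ is one of $K_F$; and the substantive check is not producing a neighborhood of $p$ but verifying that the entire sets $K_t$, $t\in D_{U'}$, and the ends $\partial_{U'}$ land in ${\mathcal G}(U)$ --- this uses that for $s\in V^b_S\setminus V_F$ the vertex of $F$ nearest to $s$ already lies in $F\cap S$, so the paths ``towards $F'$ in $S$'' and ``towards $F$ in $T$'' cross the same vertex of $N_F$, giving $D_{U'}\subseteq D_U$ and $\partial_{U'}\subseteq\partial_U$. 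For $p$ outside $K_{F'}$ or an end of $S$, taking a single black vertex of $S$ sufficiently far in the relevant direction is simpler than enlarging $F'$, and the small open saturated neighborhoods avoiding one prescribed peripheral set exist because the decomposition by peripherals is null (equivalently, the quotient $K_t/\Omega_t$ is metrizable). With these checks your outline closes; there is no gap in the approach.
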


In the case when $S=F$ is a finite b-subtree of $T$, it is not hard to
see that the limit $\lim\Theta_F$ coincides with the partial union
$K_F$.  It follows then from \Factref{T.4} that each of the partial
unions $K_F$ naturally embeds (as a topological space) in
$\lim\Theta$. In particular, each of the constituent spaces $K_t$
naturally embeds in $\lim\Theta$. A similar observation shows that
each of the peripheral spaces $\Sigma_u$ naturally embeds in
$\lim\Theta$. The following result is proved as Proposition~1.C.1 in
\cite{Swiatkowski:trees_metric_compacta:2020}.

\begin{fact}
  \factlabel{T.5} The families $K_t:t\in V^b_T$ and
  $\Sigma_u:u\in V^w_T$, viewed as families of subsets of the limit
  space $\lim\Theta$, are null. Moreover, the family
  $\Sigma_u:u\in V^w_T$ consists of pairwise disjoint subsets.
\end{fact}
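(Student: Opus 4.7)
The statement has three assertions: pairwise disjointness of the $\Sigma_u$'s in $\lim\Theta$, nullness of the family $\{K_t\}$, and nullness of the family $\{\Sigma_u\}$. I would treat them in this order, since the last follows most easily from the other two combined with condition \itmref{(TS5)}.

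The pairwise disjointness of $\{\Sigma_u\}$ follows by unpacking the equivalence relation $\sim$ used to form $\#\Theta$. Its equivalence classes are precisely the ``stars'' $\{\phi_{[u_0,t]}(y_0):t\in N_{u_0}\}$ indexed by pairs $(u_0,y_0)$ with $y_0\in\Sigma_{u_0}$, and two such stars coincide only when they come from the same $u_0$. If a point of $\lim\Theta$ lay in both $\Sigma_u$ and $\Sigma_{u'}$ with $u\neq u'$, we would obtain $\phi_{[u,t]}(y)=\phi_{[u',t']}(y')$ in $K_t$ (in the case $t=t'$), contradicting the pairwise disjointness of peripherals in $(K_t,\Omega_t)$ required by \itmref{(TS5)}; the case $t\neq t'$ is ruled out because the star structure of $\sim$ cannot link two distinct white vertices.

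For the nullness of $\{K_t\}$, I would use compactness of $\lim\Theta$ (\Propref{T.3}) together with the basis $\mathcal B$. Fix $\epsilon>0$ and choose a finite cover of $\lim\Theta$ by basic open sets $\mathcal G(U_1),\dots,\mathcal G(U_n)$ each of diameter $<\epsilon$, say with $U_i\subset K_{F_i}$ for finite b-subtrees $F_i$. Let $F$ be a finite b-subtree of $T$ containing each $F_i$. The key claim is that for every black vertex $t\notin V_F$ and every index $i$ with $K_t\cap\mathcal G(U_i)\neq\emptyset$, in fact $K_t\subset\mathcal G(U_i)$. To see this, decompose $\mathcal G(U_i)=U_i\cup\bigcup_{t'\in D_{U_i}}K_{t'}\cup\partial_{U_i}$ and consider where a point $x\in K_t\cap\mathcal G(U_i)$ can lie: the case $x\in\partial_{U_i}$ is impossible, the case $x\in U_i$ forces $x$ to be identified with a point of a peripheral $\phi_{[u,t_u]}(\Sigma_u)\subset U_i$ for some $u\in N_{U_i}$ lying on the unique $T$-path from $t$ to $F_i$ (hence $t\in D_{U_i}$), and the case $x\in K_{t'}$ for $t'\in D_{U_i}$, $t'\neq t$, forces $t$ and $t'$ to share a common white neighbour $u^*$; the tree structure together with the assumption $t\notin V_F$ then places $t$ in the same branch behind $N_{U_i}$, again giving $t\in D_{U_i}$. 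In each case $K_t\subset\mathcal G(U_i)$, and so only the finitely many $t\in V_F$ can have diameter $\geq\epsilon$.

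Finally, for nullness of $\{\Sigma_u\}$, fix $\epsilon>0$ and let $t_1,\dots,t_k$ be the (finitely many) black vertices with $\diam K_{t_i}\geq\epsilon$, which exist by the previous step. Since $\Sigma_u\subset K_t$ in $\lim\Theta$ for every $t\in N_u$, any $u$ with $\diam\Sigma_u\geq\epsilon$ must satisfy $N_u\subset\{t_1,\dots,t_k\}$. Because $T$ is a tree, two distinct vertices have at most one common neighbour, so there are at most $\binom{k}{2}$ such $u$ with $|N_u|\geq2$. For the remaining case of leaves $u$ adjacent to a single $t_i$, the embedded image of $\Sigma_u$ in $\lim\Theta$ is the peripheral $\phi_{[t_i,u]}(\Sigma_u)\in\Omega_{t_i}$; since nullness is a topological invariant and $\Omega_{t_i}$ is null in $K_{t_i}$ by \itmref{(TS5)}, only finitely many such leaves contribute a $\Sigma_u$ of diameter $\geq\epsilon$ in $\lim\Theta$. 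Summing over the finitely many $t_i$ finishes the argument.

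The principal obstacle is the combinatorial claim inside the nullness argument for $\{K_t\}$ that a constituent space $K_t$ with $t$ far from $F$ is swallowed entirely by a single basic open set of the cover; this is where the tree structure, the saturation hypothesis on $U$, and the b-subtree condition must be used in concert.
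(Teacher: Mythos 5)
Your argument is correct, but there is no in-paper proof to measure it against: the paper proves nothing here, it simply cites Proposition~1.C.1 of \cite{Swiatkowski:trees_metric_compacta:2020} (the same source invoked for \Propref{T.3}), so what you have written is a self-contained replacement for that citation, built directly from the basis $\mathcal B$. The three ingredients you use are the right ones. The star description of the $\sim$-classes (which rests on the disjointness half of \itmref{(TS5)}) gives both the pairwise disjointness of the $\Sigma_u$ and the fact that a class meeting two constituent spaces forces a common white neighbour. Your Lebesgue-type covering argument for $\{K_t\}$ works, and the key claim is true; the only terse spot is the subcase $x\in K_{t'}$ with $t'\in D_{U_i}$, where the clean statement is that the gate of $t$ in $N_{F_i}$ either coincides with the gate of $t'$ or $t$ lies beyond $t'$ on the geodesic to $F_i$, so in either case the geodesic from $t$ to $F_i$ passes through a vertex of $N_{U_i}$; this uses, as you indicate, saturation of $U_i$ and the b-subtree property to exclude $u^*\in V_{F_i}$. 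In the last step you should make explicit that transferring nullness of $\Omega_{t_i}$ to the subspace metric of $\lim\Theta$ is legitimate because $K_{t_i}\hookrightarrow\lim\Theta$ is an embedding (\Factref{T.4} applied to the one-vertex b-subtree) and nullness is independent of the choice of compatible metric, as remarked at the start of Section~2. Finally, your split into vertices $u$ with $|N_u|\ge 2$ (counted by $k(k-1)/2$) and leaves is unnecessary: for every white $u$ and every neighbour $t$, the image of $\Sigma_u$ in $\lim\Theta$ equals the peripheral $\phi_{[u,t]}(\Sigma_u)\subset K_t$, so nullness of $\Omega_{t_i}$ already bounds, for each $i$, the number of white vertices adjacent to $t_i$ with $\diam\Sigma_u\ge\epsilon$, and these exhaust all candidates since any such $u$ has all its neighbours among $t_1,\dots,t_k$.
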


\subsection{An isomorphism of tree systems}
\quad

Let 
$
\Theta=(T,\{ K_t \},\{\Sigma_u\},\{\phi_e\})$ 
and
$\Theta'=(T',\{ K'_t \},\{\Sigma'_u\},\{\phi'_e\})$ be two tree
systems of metric compacta. An \emph{isomorphism}
$F:\Theta\to\Theta'$ is a tuple 
$$
F=(\lambda, \{ f_t:t\in V^b_T \}, \{ g_u:u\in V^w_T \})
$$
such that:
\begin{enumerate}[label=(I\arabic*)]
\item \itmlabel{(I1)} $\lambda:T\to T'$ is an isomorphism of trees
  which respects colours of vertices;
\item \itmlabel{(I2)} for each $t\in V^b_T$ and each $u\in V^w_T$ the maps 
  $f_t:K_t\to K'_{\lambda(t)}$ and $g_u:\Sigma_u\to\Sigma'_{\lambda(u)}$ 
  are homeomorphisms;
\item \itmlabel{(I3)} for each $e\in E_T$ we have
  $\phi'_{\lambda(e)}\circ g_{w(e)}=f_{b(e)}\circ \phi_e$.
\end{enumerate}

An easy consequence of the definition of the limit (of a tree system
of metric compacta) is the following.

\begin{lem}
  \lemlabel{T.5.5} If $\Theta,\Theta'$ are isomorphic tree systems of
  metric compacta then their limits $\lim\Theta$ and $\lim\Theta'$ are
  homeomorphic.
  More precisely, an isomorphism $F:\Theta\to\Theta'$ induces
  (in the obvious way) a homeomorphism $\lim\Theta\to\lim\Theta'$.
\end{lem}

\subsection{An inverse system associated to a tree system}
\sseclabel{inverse_associated}

Let 
\[\Theta=(T,\{ K_t \},\{ \Sigma_u \},\{ \phi_e \})\]
be a tree
system.  For any finite b-subtree $F\subset T$, consider the family
$\Omega_F$ of peripheral subspaces of the partial union $K_F$, as
described in \Ssecref{limit}, and denote by $K_F^*=K_F/\Omega_F$ the quotient of
$K_F$
in which all the sets from $\Omega_F$ are shrunk to points.  More
precisely, let ${\mathcal D}_F$ be the decomposition of $K_F$
consisting of the sets of $\Omega_F$ and the singletons from the
complement $K_F\setminus\bigcup\Omega_F$, and let
$K_F^*=K_F/{\mathcal D}_F$. Since the family $\Omega_F$ is null, the
decomposition ${\mathcal D}_F$ is upper semicontinuous, and hence the
quotient $K_F^*$ is metrizable (see
\cite{Daverman:decompositions:1986}, Proposition~3 on page 14 and
Proposition~2 on page 13). Consequently, $K_F^*$ is a metric
compactum, and we call it the \emph{reduced partial union of $\Theta$}
related to $F$. We denote by $q_F:K_F\to K_F^*$ the quotient map
resulting from the above description of $K_F^*$.

For any pair $F_1\subset F_2$ of finite b-subtrees of $T$, define a
map $f_{F_1F_2}:K_{F_2}^*\to K_{F_1}^*$ as follows.  For each white
vertex $u\in N_{F_1}\cap V_{F_2}$ denote by ${\mathcal V}_u$ the set
of all black vertices $s\in V_{F_2}^b\setminus V_{F_1}$ such that the
shortest path in $T$ connecting $s$ with $F_1$ passes through $u$.
Then the family ${\mathcal V}_u:u\in N_{F_1}\cap V_{F_2}$ is a
partition of $V_{F_2}^b\setminus V_{F_1}$.  For each
$u\in N_{F_1}\cap V_{F_2}$, denote by $S_u$ the subtree of $T$ spanned
on the set ${\mathcal V}_u$, and note that it is actually a subtree of
$F_2$.  Viewing each $K_{S_u}=\lim\Theta_{S_u}$ canonically as a
subset in $K_{F_2}$, consider the corresponding subset
$q_{F_2}(K_{S_u})$ in $K_{F_2}^*$.  Observe that, by shrinking each of
the subsets $q_{F_2}(K_{S_u}):u\in N_{F_1}\cap V_{F_2}$ to a point we
get a quotient of $K_{F_2}^*$ which is canonically homeomorphic to
(and which we identify with) $K_{F_1}^*$. Take the corresponding
quotient map as $f_{F_1F_2}$, and observe that this map is continuous
and surjective.

Given any finite b-subtrees $F_1\subset F_2\subset F_3$ of $T$, it is
not hard to see that $f_{F_1F_2}\circ f_{F_2F_3}=f_{F_1F_3}$.
Consequently, the system
\[ {\mathcal S}_\Theta=(\{ K_F^*:F\in{\mathcal F}_T^b \},\{
  f_{FF'}:F\subset F' \})
\]
is an inverse system of metric compacta over the poset
${\mathcal F}_T^b$ of all finite b-subtrees of $T$. We call it the
\emph{standard inverse system associated to} $\Theta$.

Next result is a slight extension of Proposition~1.D.1 in
\cite{Swiatkowski:trees_metric_compacta:2020}, and its proof is the same as the proof of that
proposition (hence we omit it).

\begin{prop}
  \proplabel{T.6} Let $\Theta$ be a tree system, and let
  ${\mathcal S}_\Theta$ be the standard inverse system associated to
  $\Theta$. Then the limit $\lim\Theta$ is canonically homeomorphic to
  the inverse limit $\lim_{\leftarrow}{\mathcal S}_\Theta$.
\end{prop}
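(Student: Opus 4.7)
The plan is to construct a canonical continuous bijection $\Phi\colon \lim\Theta \to \lim_{\leftarrow}\mathcal{S}_\Theta$ and conclude via the standard fact that a continuous bijection from a compact space to a Hausdorff space is a homeomorphism. Here $\lim\Theta$ is compact and metrizable by the preceding proposition, while $\lim_{\leftarrow}\mathcal{S}_\Theta$ sits as a closed subspace of the product $\prod_F K_F^*$ of compact metric spaces, hence is compact and Hausdorff as well.

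For each finite b-subtree $F$ I define a projection $\pi_F\colon \lim\Theta \to K_F^*$ as follows. If $x \in \#\Theta$ admits a representative in some $K_t$ with $t \in V_F^b$, set $\pi_F(x) := q_F(x)$. Otherwise $x$ lies ``beyond'' $F$: either all of its representatives sit in constituents $K_s$ with $s \notin V_F^b$, or $x = [\rho] \in \partial T$. In both cases the path in $T$ from the support of $x$ (respectively from $\rho$) to $F$ exits through a unique white vertex $u \in N_F$, and I set $\pi_F(x) := q_F(\phi_{[u,t_u]}(\Sigma_u))$, the collapsed peripheral point. I would then verify that $\pi_F$ is well defined on the gluing equivalence, that the family $\{\pi_F\}$ is compatible with the bonding maps, i.e.\ $f_{F_1 F_2}\circ\pi_{F_2}=\pi_{F_1}$ whenever $F_1 \subset F_2$, and that each $\pi_F$ is continuous. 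Compatibility reduces to a case analysis depending on whether $\pi_{F_2}(x)$ is a non-collapsed point of $q_{F_2}(K_t)$ with $t \in V_{F_1}^b$ or instead lies in one of the shrunken subsets $q_{F_2}(K_{S_u})$ that define $f_{F_1 F_2}$. Continuity of $\pi_F$ can be checked directly on the basis $\mathcal B$: for an open $W \subset K_F^*$ the preimage $V := q_F^{-1}(W)$ is open and $\Omega_F$-saturated in $K_F$, and unwinding the definitions gives $\pi_F^{-1}(W)=\mathcal{G}(V)$.

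With $\Phi := (\pi_F)_F$ continuous into $\lim_{\leftarrow}\mathcal{S}_\Theta$ in hand, I turn to bijectivity. For injectivity, given distinct $x, y \in \lim\Theta$, I enlarge $F$ until $\pi_F$ separates them; if $x, y$ lie in a common peripheral $\phi_{[u,t]}(\Sigma_u)$ inside some $K_t$, I include $u$ together with its (finite, by (TS1)) neighbourhood $N_u$ into $F$, which resolves the peripheral inside $K_F$ and lets (TS5) separate the two points. The remaining cases (representatives in distinct constituents, point versus end, or two distinct ends) are handled by choosing $F$ so that the relevant objects exit through different elements of $N_F$. For surjectivity, I take a compatible family $(z_F)$ and split into two cases. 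In Case~A some $z_{F_0}$ is not a collapsed peripheral point, so $z_{F_0}=q_{F_0}(y)$ for a unique $y \in K_{F_0}\setminus\bigcup\Omega_{F_0}$, and compatibility forces $\pi_F(y)=z_F$ for every $F$. In Case~B every $z_F$ is a collapsed peripheral point at some $u_F \in N_F$; compatibility implies the $u_F$'s are nested, in the sense that $u_{F_1}$ lies on the path from $F_1$ to $u_{F_2}$ in $T$. Since no fixed white vertex can remain in $N_F$ once $F$ is enlarged to contain that vertex together with its finite neighbourhood, the sequence $(u_F)$ cannot stabilise and must determine a genuine end $[\rho] \in \partial T$, for which $\pi_F([\rho])=z_F$ for every $F$.

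The hard part will be the bookkeeping for the compatibility check and for Case~B of surjectivity: one must carefully match the decomposition of $F_2 \setminus F_1$ into subtrees $S_u$ against the route by which a point or a ray exits successive finite b-subtrees. Once this is set up, the argument relies only on the standard ingredients (TS1) (finite degree of white vertices) and (TS5) (disjointness of peripherals), which are exactly the properties invoked in the model argument of Proposition~1.D.1 of \cite{Swiatkowski:trees_metric_compacta:2020}.
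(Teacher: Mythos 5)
Your construction of the canonical projections $\pi_F$, the identification $\pi_F^{-1}(W)=\mathcal{G}\bigl(q_F^{-1}(W)\bigr)$ with a basis element, and the compact-to-Hausdorff bijection argument (with the two surjectivity cases, stable point versus escaping exit vertices $u_F$ giving an end) is correct and is essentially the same canonical-map argument that the paper itself invokes by reference, since it omits the proof and declares it identical to that of Proposition~1.D.1 of \cite{Swiatkowski:trees_metric_compacta:2020}. The deferred verifications (well-definedness of $\pi_F$, compatibility with the bonding maps, and the divergence of the $u_F$ along a ray in Case~B) all go through as you indicate, using only (TS1) and (TS5) together with the fact that the poset of finite b-subtrees is directed.
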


\subsection{Consolidation of a tree system}
\sseclabel{consolidation_of_tree_sys}

We describe an operation which turns one tree system of spaces into
another by merging the constituent spaces of the initial system, and
forming a new system out of bigger pieces (corresponding to a family
of pairwise disjoint b-subtrees in the underlying tree of the initial
system).  As we indicate below (see \Thmref{T.7}), this operation does
not affect the limit of a system.

Let $\Theta=(T,\{ K_t \},\{ \Sigma_u \},\{ \phi_e \})$ be a tree
system of metric compacta.  Let $\Pi$ be a \emph{partition of a
  tree $T$ into b-subtrees}, i.e., a family of b-subtrees $S\subset T$
such that the black vertex sets $V_S^b:S\in{\Pi}$ are pairwise
disjoint and cover all of $V_T^b$.  We allow that some of the subtrees
$S\in{\Pi}$ are just single black vertices of $T$.

We define a \emph{consolidation of $\Theta$ with respect to $\Pi$}
to be a tree system 
\[
\Theta^\Pi=
(T^{\Pi}, \{ K_S:S\in\Pi \}, \{ \Sigma^\Pi_u:u\in V^b_{T^\Pi} \}, 
\{ \phi^\Pi_e:e\in E_{T^\Pi} \})
\]
described as follows.  As a set $V^b_{T^{\Pi}}$ of black vertices of
$T^{{\Pi}}$ we take the family $\Pi$, and as a set $V^w_{T^{\Pi}}$ of
white vertices we take the set of those white vertices $u\in V^w_T$
which remain outside all of the subtrees $S\in\Pi$.  As the edge set
$E_{T^{\Pi}}$ we take the set
$\{ e\in E_T:e\notin\cup_{S\in{\Pi}}\,E_S \}$, so that each edge $e$
from this set is meant to connect the white vertex
$w(e)\in V^w_{T^\Pi}$ with the subtree $S\in\Pi$ (viewed as a black
vertex of $T^\Pi$) which contains the vertex $b(e)$ of
$T$. Intuitively speaking, the above described tree $T^\Pi$ is a
``dual tree'' of the partition $\Pi$.

To describe the constituent spaces $K_S$ of the tree system
$\Theta^\Pi$, for any subtree $S\in{\Pi}$ denote by $\Theta_S$ the
restriction of $\Theta$ to $S$, and put $K_S:=\lim\Theta_S$.  To
describe the peripheral spaces $\Sigma_u^\Pi$ and the connecting maps
$\phi_e^\Pi$, note that for any vertex $u\in V^w_{T^\Pi}$ (viewed as a
white vertex of $T$) and any edge $e$ with $w(e)=u$, the map
$\phi_e:\Sigma_u\to K_{b(e)}$ may be viewed as a map to the space
$K_S$ for this $S\in\Pi$ which contains the vertex $b(e)$.  Thus, for
$u$ and $e$ as above, we put $\Sigma_u^\Pi:=\Sigma_u$ and
$\phi_e^\Pi:=\phi_e$.  Now, viewing any subtree $S\in\Pi$ as a vertex
of $T^\Pi$, for any vertex $u\in N^{T^\Pi}_S$ denote by $t^S_u$ this
vertex of the subtree $S$ which is adjacent to $u$ in $T$, and note
that, as a consequence of \Factref{T.5}, the family
\[
\Omega^\Pi_S:=\{ \phi^\Pi_{[u,t^S_u]}(\Sigma^\Pi_u):u\in N^{T^\Pi}_S \}
\]
of subsets of the space $K_S$ is null.
It follows that the pair $(K_S,\Omega^\Pi_S)$ is a space with peripherals.
This justifies that the just described tuple 
\[
\Theta^{\Pi}=(T^{\Pi}, \{ K_S \}, \{ \Sigma^\Pi_u \},
\{ \phi^\Pi_e \})
\]
is a tree system of metric compacta.

The following result has been proved as Theorem~3.A.1 in 
\cite{Swiatkowski:trees_metric_compacta:2020}.

\begin{thm}
  \thmlabel{T.7} For any tree system $\Theta$ of metric compacta, and
  any consolidation $\Theta^{\Pi}$ of $\Theta$, the limits $\lim\Theta$ and
  $\lim\Theta^{\Pi}$ are canonically homeomorphic.
\end{thm}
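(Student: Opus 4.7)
The plan is to exhibit an explicit canonical bijection $\Phi\colon \lim\Theta\to\lim\Theta^\Pi$ and show that it is continuous. Since both spaces are compact metrizable by \Propref{T.3}, any continuous bijection between them is automatically a homeomorphism, which gives the theorem.

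First I would define $\Phi$ piecewise on the partition $\lim\Theta=\#\Theta\sqcup\partial T$. By \Factref{T.4} applied to each $S\in\Pi$, the space $K_S=\lim\Theta_S$ canonically contains every $K_t$ for $t\in V_S^b$ and every end of $S$. For $x\in\#\Theta$ represented by a point in some $K_t$, there is a unique $S\in\Pi$ with $t\in V_S^b$, and I send $x$ to the image of that point under the canonical embedding $K_t\hookrightarrow K_S\subset \#\Theta^\Pi$. For an end $[\rho]\in\partial T$ there are two cases: if the ray $\rho$ is eventually contained in some $S\in\Pi$, then it naturally represents an end of $S$, hence a point of $\lim\Theta_S=K_S$, which I regard as a point of $\lim\Theta^\Pi$; otherwise $\rho$ traverses infinitely many distinct elements of $\Pi$, and the resulting sequence of black vertices of $T^\Pi$ determines a unique ray in $T^\Pi$ whose end is declared to be the image. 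Well-definedness reduces to comparing the equivalence relation $\sim$ used to form $\#\Theta$ with its counterpart for $\#\Theta^\Pi$: the gluings performed at white vertices of $T$ lying inside some $S\in\Pi$ are absorbed into the formation of $K_S$, while the gluings at white vertices $u\in V^w_{T^\Pi}$ are carried out identically in both constructions. Reversing the procedure yields an inverse, so $\Phi$ is a bijection.

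Continuity of $\Phi$ I would verify on the basis $\mathcal B^\Pi$ from Subsection~2.3: a basic open set in $\lim\Theta^\Pi$ has the form $\mathcal{G}^\Pi(U^\Pi)$, where $F^\Pi$ is a finite b-subtree of $T^\Pi$ and $U^\Pi\subset K^\Pi_{F^\Pi}$ is $\Omega^\Pi_{F^\Pi}$-saturated and open. The key identification is that $K^\Pi_{F^\Pi}$ is canonically homeomorphic to $\lim\Theta_{F}$, where $F\subset T$ is the b-subtree with $V_F^b=\bigcup\{V_S^b:S\in\Pi,\;S\in V^b_{F^\Pi}\}$ and $V_F^w=V^w_{F^\Pi}\cup\bigcup\{V_S^w:S\in\Pi,\;S\in V^b_{F^\Pi}\}$. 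Using \Factref{T.4} this lets me transport $U^\Pi$ into an open subset of $\lim\Theta_F\subset\lim\Theta$. Given a point $y\in\Phi^{-1}(\mathcal{G}^\Pi(U^\Pi))$, I would then locate a finite b-subtree $F'\subset F\subset T$ together with an $\Omega_{F'}$-saturated open set $U'\subset K_{F'}$ such that $y\in\mathcal{G}(U')\subset\Phi^{-1}(\mathcal{G}^\Pi(U^\Pi))$; doing this for every $y$ proves the preimage is open.

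The main obstacle is precisely the last approximation step: the subtree $F$ above is in general infinite (each $S\in\Pi$ may itself be infinite), and $U^\Pi$ is only assumed saturated with respect to the ``outer'' peripherals at vertices in $N_{F^\Pi}$, not with respect to the peripherals arising from white vertices of $T$ inside some $S\in\Pi$. Producing a suitable finite $F'$ and a saturated $U'$ from the given data requires a careful use of the compactness of $K^\Pi_{F^\Pi}$ together with the nullness of the families $\Omega_{F'}$ and $\Omega^\Pi_{F^\Pi}$, and it must account for the enlargement of basic open sets across the white vertices of $F$ through which paths from $V_F^b\setminus V_{F'}^b$ to $F'$ travel. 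This bookkeeping is exactly what is carried out in Claim~1 inside the proof of Theorem~3.A.1 of \cite{Swiatkowski:trees_metric_compacta:2020}, and I would follow the same argument to complete the proof.
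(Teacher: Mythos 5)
The paper itself offers no argument for \Thmref{T.7} beyond citing Theorem~3.A.1 of \cite{Swiatkowski:trees_metric_compacta:2020}, and your outline --- the canonical bijection defined separately on $\#\Theta$ and on $\partial T$ (with the dichotomy between rays eventually trapped in some $S\in\Pi$ and rays crossing infinitely many members of $\Pi$), continuity tested on the basic open sets $\mathcal G^\Pi(U^\Pi)$, and compactness plus metrizability from \Propref{T.3} to upgrade a continuous bijection to a homeomorphism --- is exactly the strategy of that cited proof, with the one genuinely delicate step (producing a finite $F'\subset T$ and an $\Omega_{F'}$-saturated $U'$, i.e.\ Claim~1 there) explicitly deferred to the same source. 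So your proposal matches the paper's treatment; just note that, as written, its essential technical content still lives in the reference rather than in your text, which is no worse than what the paper does.
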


\section{The reduced Bowditch JSJ splitting and rigid cluster factors}
\seclabel{reduced_JSJ}

In this section we introduce a new concept of the \emph{reduced Bowditch JSJ
splitting} of a 1-ended hyperbolic group, 
which is a variation of the classical Bowditch JSJ splitting.
In particular, we describe the factors of this splitting, especially the so called
\emph{rigid cluster factors} which appear in the statement of our main result, \Mainthmref{1}
of the introduction. We start with recalling basic features of the 
classical
Bowditch JSJ splitting.

\subsection{The Bowditch JSJ splitting of a 1-ended hyperbolic group}
\sseclabel{bowditch_jsj_splitting}


We refer the reader to \cite{Bowditch:cut_points:1998} or to Section~5
in \cite{Kim_Walsh:planar_boundaries:2022} for a detailed exposition
of the Bowditch JSJ splitting. Here we only mention briefly that this
splitting describes in some clever way the configuration of all local
cutpoints in the Gromov boundary $\partial G$ of a 1-ended hyperbolic
group $G$, and relates it to a certain canonical JSJ splitting of $G$
(so that the Bowditch JSJ tree appearing as part of the data of this
construction is canonically identified with the Bass-Serre tree of the
corresponding JSJ splitting of $G$).

More precisely, let $G$ be a 1-ended hyperbolic group which is not
cocompact Fuchsian (i.e. its Gromov boundary $\partial G$ 
is connected and not
homeomorphic to the circle $S^1$).  Recall that, under these
assumptions, $\partial G$ is a connected and locally connected compact
metrizable space without global cutpoints
\cite{Bowditch:cut_points:1998}.  Moreover, each local cutpoint $p$ of
$\partial G$ has finite degree (where degree of $p$ is, by definition,
the number of ends of a locally compact space
$\partial G\setminus\{ p \}$).  The set of all local cutpoints of
$\partial G$ is split into \emph{classes}, so that any two points from
the same class form a cut pair, any two points in the same class have
the same degree and that this degree is equal to the number of
connected components in the complement of the corresponding cut pair.
This splitting into classes is natural, in the sense that classes are
mapped to classes by homeomorphisms of $\partial G$.  Each class is
either infinite (in which case it consists of points of degree 2) or
it is \emph{of size} 2 (i.e. it consists of precisely 2 points).  Two
classes $\zeta_1$ and $\zeta_2$ are \emph{separated} if they lie in
distinct components of the complement $\bd G \setminus \{x,y\}$ of a
cut pair $x,y \in \eta$ from some other class $\eta$.  An object
called \emph{the Bowditch JSJ tree} for $G$, which we denote by
${T}_G$, has vertices $v$ (accompanied with some quasi-convex
subgroups $G(v)<G$) of the following four types:
  \begin{enumerate}[label=(v\arabic*)]
  \item \itmlabel{(v1)} vertices $v$ related to infinite classes of
    local cutpoints of degree 2, the so called \emph{necklaces}, for
    which the associated groups $G(v)$ are virtually free and coincide
    with the stabilizers of the corresponding necklaces under the
    action of $G$ on $\partial G$; the boundary
    $\partial G(v)\subset\partial G$ coincides then with the closure
    of the corresponding necklace, and it is homeomorphic to the
    Cantor set; such vertices $v$ have countable infinite degree in
    ${T}_G$;
  \item \itmlabel{(v2)} vertices $v$ related to classes of size 2, for
    which the associated groups $G(v)$ are equal to the
    $G$-stabilizers of the corresponding classes; these groups $G(v)$
    are maximal 2-ended subgroups of $G$ and their boundaries
    $\partial G(v)$ coincide with the corresponding classes; the degree
    of such a vertex $v$ in the tree ${T}_G$ is equal to the
    degree of any local cutpoint in the corresponding class, which is
    an integer $\ge2$;
  \item \itmlabel{(v3)} vertices $v$ related to infinite maximal
    families of pairwise not separated classes called \emph{stars},
    for which the associated groups $G(v)$ are equal to the
    $G$-stabilizers of unions of the corresponding stars; the degree of
    such a vertex $v$ in the tree ${T}_G$ is equal to the cardinality
    of the family of classes in the corresponding star, which is always
    infinite countable;
  \item \itmlabel{(v4)} vertices $v$ corresponding to pairs consisting
    of a necklace $C$ and a star $S$ containing this necklace; if we
    denote by $v_1$ and $v_3$ the vertices corresponding to the
    necklace $C$ and to the star $S$, respectively, then
    $G(v)=G({v_1})\cap G({v_3})$, which is a maximal 2-ended subgroup
    of $G$; the degree of such a vertex $v$ in the tree
    ${T}_G$ is equal to 2, and the boundary $\partial G(v)$
    is contained in $C$.
  \end{enumerate}
  The edges $e$ of ${T}_G$, and the associated groups $G(e)$, 
  have the following forms:

  \begin{enumerate}[label=(e\arabic*)]
  \item \itmlabel{(e1)} an edge $e$ connecting a pair of vertices
    $v_1,v_2$ of types (v1) and (v2), respectively; such an edge
    appears if and only if the class of size 2 corresponding to $v_2$
    is contained in the closure of the infinite class corresponding to
    $v_1$; moreover, in this case the class corresponding to $v_2$
    consists of local cutpoints of degrees $\ge3$;
  \item \itmlabel{(e2)} an edge $e$ connecting a pair of vertices
    $v_2,v_3$ of types (v2) and (v3), respectively; such an edge
    appears if and only if the class corresponding to $v_2$ belongs to
    the star corresponding to $v_3$;
  \item \itmlabel{(e3)} given any vertex $v$ of type (v4), let
    $v_1,v_3$ be the vertices related to the corresponding necklace
    and star; for each such $v$ we have in ${T}_G$ the edges
    $e=[v,v_1]$ and $e'=[v,v_3]$, and these are the only edges of
    ${T}_G$ adjacent to $v$.
  \end{enumerate}

In all cases \itmref{(e1)}--\itmref{(e3)} above the associated group
$G(e)$ is the intersection of the two vertex groups $G(v)$
corresponding to the endpoints of $e$. For each edge $e$ the group
$G(e)$ is 2-ended, and it is a finite index subgroup of the
corresponding group $G(v)$, where $v$ is the (unique) vertex of type
\itmref{(v2)} or \itmref{(v4)} in $e$.  We have a natural partition of
the vertices of ${T}_G$ into \emph{black} ones (vertices
of types \itmref{(v1)} and \itmref{(v3)}) and \emph{white} ones
(types \itmref{(v2)} and \itmref{(v4)}).  No two vertices of the same
colour are adjacent in ${T}_G$.  The degrees in
${T}_G$ of all white vertices are finite, and those of the
black vertices are countable infinite.

The tree ${T}_G$ is naturally acted upon by $G$. This action
preserves colours of the vertices, and the groups $G(v)$ and $G(e)$
coincide with the stabilizers of vertices $v$ and of edges $e$, with
respect to this action.

Groups $G(v)$ for vertices $v$ of type \itmref{(v1)} are called
\emph{flexible} or \emph{hanging} factors of the Bowditch JSJ
splitting of $G$, while those for vertices of type \itmref{(v3)} are
called \emph{rigid} factors. The corresponding vertices $v$ are
also called \emph{flexible} and \emph{rigid}, accordingly.  We
will adapt this terminology in the next sections of this paper.

The following observation is well known, and will be explained in
more details later in this paper
(see \Ssecref{F}, \Defnref{F.1.2} and the comments in the paragraph right before this definition; see also \Exref{X.1.1} together with \Defnref{X.1}).

\begin{fact}
  Let $v$ be a black vertex of ${T}_G$, i.e., either a rigid
  or a flexible vertex.  Put $K_v:=\partial G(v)$, and for any vertex
  $u$ of ${T}_G$ adjacent to $v$, $u\in N_v$, put
  $\Sigma_u:=\partial G(u)=\partial G([u,v])$.  Then the pair
  $(K_v,\{ \Sigma_u:u\in N_v \})$ is a space with peripherals.
  Moreover, if $v$ is flexible then this space with peripherals is
  (homeomorphic to) the punctured circle $(S^1)^\circ$.
\end{fact}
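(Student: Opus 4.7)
The plan is to verify in turn: (a) that each $\Sigma_u$ is a closed subset of $K_v$ and the family $\{\Sigma_u : u \in N_v\}$ consists of pairwise disjoint sets; (b) that this family is countable and null; (c) under the flexibility hypothesis on $v$, that $(K_v, \{\Sigma_u\})$ is homeomorphic to $(S^1)^\circ$. The main obstacle is (b), which is the only step requiring a substantive quasi-convexity/convergence input; (a) is formal, and (c) reduces to an unwinding of the Fuchsian-action description of flexible vertices built into Bowditch's construction.

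For (a), each $\Sigma_u = \partial G(u)$ is the limit set of the quasi-convex subgroup $G(u)$ of $G$, so it is closed in $\partial G$. Since $G([u,v]) = G(u) \cap G(v)$ is a finite-index subgroup of $G(u)$, we have $\partial G(u) = \partial G([u,v]) \subseteq \partial G(v) = K_v$, so $\Sigma_u$ is closed in $K_v$. Distinct neighbors $u, u' \in N_v$ are stabilizers either of distinct classes of local cutpoints (type \itmref{(v2)}) or of distinct necklace-star pairs (type \itmref{(v4)}), and the corresponding boundaries are disjoint by the very definition of the equivalence classes underlying Bowditch's construction.

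For (b), countability of $N_v$ is immediate, as $T_G$ is a countable tree. The graph of groups $T_G/G$ is finite, so $N_v$ splits into finitely many $G(v)$-orbits, with orbit representatives $u_1, \dots, u_k$ and edge-group stabilizers $G(e_i) = G(u_i) \cap G(v)$, which are 2-ended and of infinite index in $G(v)$. The orbit of $\Sigma_{u_i}$ under $G(v)$ forms a null family in $\partial G(v)$: since $G(v)$ acts on $\partial G(v)$ as a convergence group and $G(e_i)$ has infinite index in $G(v)$, the translates $g\Sigma_{u_i}$ with $gG(e_i) \in G(v)/G(e_i)$ accumulate only at conical limit points, and hence at most finitely many of them can have diameter exceeding any given $\epsilon > 0$. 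Combining over the finitely many orbits gives nullness of $\{\Sigma_u : u \in N_v\}$.

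For (c), when $v$ is flexible (of type \itmref{(v1)}), Bowditch's analysis identifies $G(v)$ with a convex cocompact Fuchsian subgroup of $G$ whose convex core in $\mathbb{H}^2$ is a compact hyperbolic 2-orbifold with totally geodesic boundary, the boundary circles being $G(v)$-translates (one per conjugacy class) of the geodesics stabilized by the edge subgroups $G([v,u_i])$. Identifying $\partial\mathbb{H}^2$ with $S^1$, the limit set $K_v$ is then $S^1$ minus the dense, disjoint, $G(v)$-invariant family of open arcs at infinity of the components of the complement of the convex core; each such arc has its two endpoints equal to one of the $\Sigma_u$. This family of removed open arcs coincides with the family of interiors of a standard dense family of normal neighbourhoods in a graph $\Gamma$ with $|\Gamma| = S^1$, in the sense of \Exref{T.1.5}, so $(K_v, \{\Sigma_u\}) \cong (S^1)^\circ$.
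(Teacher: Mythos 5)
Your overall skeleton matches the paper's treatment of this Fact: the paper does not prove it on the spot but defers it to the discussion preceding \Defnref{F.1.2} in \Ssecref{F} (disjointness and nullness of the family of boundaries of the conjugate peripheral subgroups) and to \Exref{X.1.1}, which handles the flexible case by citing Bowditch's Section 5 (maximal hanging Fuchsian vertices) and Kim--Walsh. Your steps (a) and (c) are acceptable at that same level of rigor; in (a), for pairs of type-(v4) neighbours the cleaner justification is the one the paper uses, namely that distinct maximal 2-ended subgroups of a hyperbolic group have disjoint limit sets, rather than an appeal to ``the definition of the classes''.

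The genuine gap is in step (b), which you yourself single out as the only substantive step. The inference ``the translates $g\Sigma_{u_i}$ accumulate only at conical limit points, and hence at most finitely many of them can have diameter exceeding any given $\epsilon>0$'' is a non sequitur. The action of $G(v)$ on its limit set is a uniform convergence action, so \emph{every} point of $\partial G(v)$ is a conical limit point; the premise therefore carries no information, and nothing in your argument actually bounds the diameters of the translates. (Perhaps you were recalling dynamical quasiconvexity in the sense of Bowditch--Gerasimov --- for any two disjoint closed subsets of the boundary only finitely many translates of the limit set meet both --- which is equivalent to quasiconvexity and does yield nullness; but that is a different statement, and you neither state nor establish it.) The paper's deferred argument is geometric and immediate: $\partial\bigl(gG(e_i)g^{-1}\bigr)=\partial\bigl(gG(e_i)\bigr)$, the distances $d(e,gG(e_i))$ over distinct cosets diverge, and the cosets $gG(e_i)$ are uniformly quasiconvex, so $\diam \partial\bigl(gG(e_i)\bigr)\to 0$ in any visual metric; summing over the finitely many $G(v)$-orbits in $N_v$ gives nullness. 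If you insist on a purely dynamical proof, you must apply the convergence property to sequences of coset representatives and deal with the case where the repelling point of a subsequence lies in the doubleton $\Sigma_{u_i}$ (for instance by adjusting representatives within their cosets); none of this appears in your proposal, so as written the crucial step is unproved.
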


We now turn to the description of a certain tree system canonically associated
to the Bowditch JSJ splitting of $G$, and to the expression of the Gromov
boundary of $G$ as the limit of this tree system.
Denote by $V_{{T}_G}^b$ and $V_{{T}_G}^w$ the sets
of all black and white vertices of the tree ${T}_G$,
respectively.  Consider the following tree system $\Theta=\Theta(G)$:
\[
  \Theta(G)=({T}_G, \{K_t=\partial G(t):t\in V_{{\mathcal
      T}_G}^b\}, \{ \Sigma_u=\partial G(u):u\in V_{{T}_G}^w
  \}, \{ \phi_e:e\in E_{{T}_G} \}),
\]
where each $\phi_e:\partial G({w(e)})\to \partial G({b(e)})$ is the
natural inclusion.  We will call $\Theta(G)$ \emph{the tree system
  associated to the Bowditch JSJ splitting of $G$}, or concisely
\emph{the tree system associated to $G$}.  
The next result follows fairly directly by the arguments provided
by Alexandre Martin (see Corollary~9.19 in \cite{Martin:nonpos_cplx_of_groups:2014} or Theorem~2.25 in \cite{Cashen_Martin:qi_two_ended:2017}).
In the proof provided below we explain briefly how to apply these results
in our situation, by referring to the exposition in \cite{Cashen_Martin:qi_two_ended:2017}.

\begin{thm}
  \thmlabel{T.9} Let $G$ be a 1-ended hyperbolic group which is not
  cocompact Fuchsian, and let $\Theta(G)$ be the associated tree
  system.  Then the Gromov boundary $\partial G$ is
  homeomorphic to the limit of the tree system $\Theta(G)$, i.e.,
  \[\partial G\cong\lim\Theta(G).\]
\end{thm}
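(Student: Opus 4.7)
My plan is to construct a natural bijection $\Phi\colon\lim\Theta(G)\to\partial G$ from the defining data, check it is continuous, and then invoke compactness to upgrade it to a homeomorphism. The main work is the topology comparison, since the bijection itself is essentially built into the setup of the Bowditch JSJ splitting.

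First I would define $\Phi$ on $\#\Theta(G)$ by sending each point of a constituent space $K_t=\partial G(t)$ to the corresponding point of $\partial G$ under the canonical inclusion induced by the quasiconvex embedding $G(t)\hookrightarrow G$. These inclusions are compatible with the gluings: for a white vertex $u$ adjacent to black vertices $t,s$, both $\phi_{[u,t]}(\Sigma_u)$ and $\phi_{[u,s]}(\Sigma_u)$ map to the same subset $\partial G(u)\subseteq\partial G$, because $G(u)=G(t)\cap G(s)$ up to finite index. So $\Phi$ descends to $\#\Theta(G)$. To extend $\Phi$ to $\partial T_G$, I would send an end $[\rho]$, represented by a ray $\rho=v_0,v_1,v_2,\dots$, to the unique limit in $\partial G$ of the nested sequence of boundaries $\partial G(v_n)$; existence and uniqueness of this limit follows because the corresponding vertex stabilizers along $\rho$ have quasiconvex orbits whose Hausdorff diameters in the Rips complex shrink to zero in the Gromov boundary (a consequence of edge groups being 2-ended and the tree $T_G$ being ``thin'' in the sense of Bowditch).

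Next I would establish bijectivity. Injectivity splits into three cases: two points in the same $K_t$ are separated in $\partial G$; points in distinct $K_t,K_s$ are either glued via a common peripheral or map to distinct points, which is forced by the discrete structure of classes of local cutpoints and stars described in \Ssecref{bowditch_jsj_splitting}; and an end $[\rho]$ is not a local cutpoint of the ``vertex'' type, so cannot lie in any $\partial G(v)$. Surjectivity uses Bowditch's classification: every point of $\partial G$ either lies in the closure of some necklace/star (hence in some $\partial G(v)$, so in $\#\Theta(G)$) or else sits in the complement of all such closures, and the structure of the JSJ tree forces such a point to arise from a unique end of $T_G$.

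For continuity, I would take a basic open set around $\Phi(x)$ in $\partial G$ and produce a basic set $\mathcal{G}(U)$ of the form defined before \Propref{T.3} whose image sits inside it. For $x\in K_t$, a saturated open neighborhood $U\ni x$ in $K_t$ determines the family of peripherals $\phi_{[t,u]}(\Sigma_u)\subset U$, and the corresponding subtrees hanging off these $u$'s account exactly for the parts of $\partial G$ sitting near $x$ outside $\partial G(t)$; that this covers a genuine metric neighborhood of $x$ in $\partial G$ follows from the null property of the family $\{\partial G(v)\}_{v\in V_{T_G}}$ in $\partial G$ (which is the boundary analog of \Factref{T.5}). A similar argument handles basic neighborhoods of an end $[\rho]\in\partial T_G$, where one chooses $F$ large enough along $\rho$ so that the diameter of the partial union beyond $F$ is small. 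The hard step is precisely this diameter control; I expect to import it from Martin's convergence-action theorem for actions on trees of hyperbolic spaces (\cite{Martin:nonpos_cplx_of_groups:2014}, Cor.~9.19, or \cite{Cashen_Martin:qi_two_ended:2017}, Thm.~2.25), which is the analytical content of the argument.

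Once $\Phi$ is a continuous bijection, compactness of $\lim\Theta(G)$ (\Propref{T.3}) and Hausdorffness of $\partial G$ force $\Phi$ to be a homeomorphism. The main obstacle, as indicated, is not the bookkeeping of the bijection but the verification that the basis $\mathcal B$ of \Ssecref{inverse_associated} agrees with the subspace topology induced from $\partial G$; this is the ``straightforward verification'' the authors allude to, and it is most efficiently handled by recognizing both topologies as the unique compact metrizable topology compatible with the boundary inclusions $\partial G(v)\hookrightarrow\partial G$ and with the null-family condition of \Factref{T.5}.
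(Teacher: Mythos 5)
Your route is in substance the same as the paper's: the paper does not prove \Thmref{T.9} directly, but attributes it to Martin, citing Corollary~9.19 of \cite{Martin:nonpos_cplx_of_groups:2014} (equivalently Theorem~2.25 of \cite{Cashen_Martin:qi_two_ended:2017}) and skipping, as a routine verification, the identification of the topology used there with the topology on $\lim\Theta(G)$ defined before \Propref{T.3}. Your construction of the bijection, the continuity check on the basis $\mathcal G(U)$, and the compact-to-Hausdorff upgrade is a sensible way of organizing that skipped verification, but you yourself locate the analytic heart of the argument --- the shrinking of the part of $\partial G$ lying beyond a large finite subtree --- in Martin's convergence theorem, which is exactly the result the paper invokes for the whole statement. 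So the two arguments coincide where it matters.

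Two local points deserve correction. First, along a ray $\rho=v_0,v_1,v_2,\dots$ the sets $\partial G(v_n)$ are \emph{not} nested: consecutive ones meet only in the doubleton $\partial G([v_n,v_{n+1}])$. Moreover, nullness of the family $\{\partial G(v)\}$ (the boundary analogue of \Factref{T.5}) together with the fact that consecutive boundaries intersect does not force convergence of the chain: the diameters tend to $0$ but need not be summable, so points chosen in the $\partial G(v_n)$ could a priori drift. What is actually needed is that the closure of the union of all vertex-group boundaries beyond $v_N$ in the direction of $\rho$ has diameter tending to $0$; this stronger statement is precisely what Martin's theorem supplies, and it is also what underwrites your surjectivity claim and the separation of distinct ends, which as written are asserted rather than proved. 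Since you defer to \cite{Martin:nonpos_cplx_of_groups:2014} for this, the defect is one of formulation rather than substance, but the closing appeal to ``the unique compact metrizable topology compatible with the boundary inclusions and the null-family condition'' is not a valid characterization and should be replaced by a direct comparison of Martin's neighbourhood basis (as in Subsection~2.6.2 of \cite{Cashen_Martin:qi_two_ended:2017}) with the basis $\mathcal B$ of \Secref{tree_sys}, which is the verification the paper says it omits.
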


\begin{rmk}
  The homeomorphism $\partial G\cong\lim\Theta(G)$ in the above
  statement is natural in the sense that it is induced by the
  inclusions $\partial G(t)\subset\partial G$ and
  $\partial G(u)\subset\partial G$, where $t\in V_{{\mathcal T}_G}^b$,
  $u\in V_{{T}_G}^w$.
\end{rmk}

\begin{proof}
We sketch here an explanation of how \Thmref{T.9} follows from 
the result of Alexandre Martin mentioned right before the statement
of this theorem.

In Section~2.6.2 of \cite{Cashen_Martin:qi_two_ended:2017} 
Alexandre Martin and Christopher Cashen 
consider an object $X$ called 
\emph{a proper geodesic hyperbolic tree of spaces}, and the associated
object $\partial X$ (which may be viewed as the associated tree
of boundaries). 
It is not hard to see that any limit $\lim\Theta(G)$, viewed as a set,
can be identied with appropriately chosen Martin's set $\partial X$. 
(Namely, take as the underlying tree $T$ of $X$ the Bass-Serre
tree $T_G$, take as the vertex spaces $X_v$ of $X$ the corresponding
vertex groups $G(v)$, and as the edge spaces $X_e$ the
corresponding edge stabilizers $G(e)$.)
A less clear fact is that in case of $\lim\Theta(G)$ (viewed as special
case of Martin's $\partial X$) the topology on $\partial X$ described in Definitions 2.19
and 2.21 in \cite{Cashen_Martin:qi_two_ended:2017} coincides with the topology on $\lim\Theta(G)$ 
provided in \Ssecref{limit}
of the present paper.   We devote the remaining part of this proof to the 
justification of this coincidence of topologies, noting that then
\Thmref{T.9} follows fairly directly from Theorem~2.22 in \cite{Cashen_Martin:qi_two_ended:2017}.

Martin introduces his topology on $\partial X$ by means of describing
families of neighbourhoods of points. (These neighbourhoods are not necessarily open in the resulting topology, but they contain
open subneighbourhoods of the corresponding points.) 
In the case where $\partial X=\lim\Theta(G)$, we need to show that
the identity map is a homeomorphism from $\partial X$ equipped
with Martin's topology to $\partial X$ equipped with our topology
(as described in \Ssecref{limit}). However, since both these topological
spaces are compact metrizable, it is sufficient to show that
the identity map is continuous.

Suppose we are given a topological space $Z$ whose topology is given
by a family $N(z):z\in Z$ of neighbourhood systems for points $z$, 
and a topological space $Z'$ whose topology is induced 
by a basis $\mathcal{B}$
of open sets.
Recall that then a map $f:Z\to Z'$ is continuous iff for any 
open set $E\in{\mathcal{B}}$
and any $z\in f^{-1}(E)$ there is a neighbourhood $N\in N(z)$ 
of point $z$ such that 
$N\subset f^{-1}(E)$. In particular, if $Z=Z'$ (as sets) then
the identity map $\hbox{id}_Z:Z\to Z'$ is continuous (with respect to the 
topologies mentioned above) iff for any $E\in{\mathcal{B}}$ and any $z\in E$
there is $N\in N(z)$ such that $N\subset E$.

Suppose now that $\partial X=\lim\Theta(G)
$. 
Recall that $\lim\Theta(G)$,
as a set, is the disjoint union of the following subsets: $\partial T_G$,
$\Sigma_u:u\in V^w_{T_G}$ and $\dot K_t:t\in V^b_{T_G}$,
where $\dot K_t=K_t\setminus\bigcup\{ \Sigma_u:u\in N_t \}$.
By a direct application of Definitions 2.19 and 2.21 of \cite{Cashen_Martin:qi_two_ended:2017} in our context
we get that Martin's topology on  $\partial X=\lim\Theta(G)$ is 
described by the following families of neighbourhoods of points:
\begin{enumerate}
\item
for a point $\eta\in\partial T_G\subset\lim\Theta(G)$,
and for any neighbourhood $U$ of $\eta$ in $V_{T_G}\cup\partial T_G$,
we have a neighbourhood $V_U(\eta)$ of $\eta$ given by
$$
V_U(\eta)=(U\cap\partial T_G)\cup \bigcup\{ \dot K_t:t\in U \} \cup
\bigcup\{ \Sigma_u: \{ u \}\cup N_u\subset U \};
$$

\item
for a point $\xi\in\dot K_t\subset\lim\Theta(G)$, 
and for any neighbourhood $U_t$
of $\xi$ in $K_t$, define $\hbox{Cone}_{\{ U_t \}}(\xi)$
as the set consisting of all those $x\in V_{T_G}\cup\partial T_G
\setminus\{ t \}$ for which we have $\varphi_e(\Sigma_{w(e)})\subset U_t$,
where $e$ is the last edge on the path from $x$ to $t$ in $T_G$;
we then have a neighbourhood $V_{\{ U_t \}}(\xi)$ of $\xi$ given by
$$
V_{\{ U_t \}}(\xi)=(\partial T_G\cap \hbox{Cone}_{\{ U_t \}}(\xi))\cup
\bigcup\{ K_s:s\in\hbox{Cone}_{\{ U_t \}}(\xi) \}
\cup U_t;
$$

\item
for a point $\xi\in\Sigma_{u_0}\subset\lim\Theta(G)$, 
and for any family
${\mathcal{U}}=\{ U_{u_0} \}\cup\{ U_t:t\in N_{u_0} \}$ of neighbourhoods
of $\xi$ in the spaces $\Sigma_{u_0}$ and $\{ K_t:t\in N_{u_0} \}$,
respectively, define $\hbox{Cone}_{\mathcal{U}}(\xi)$ as the set 
consisting of all those $x\in V_{T_G}\cup\partial T_G
\setminus(\{ u_0 \}\cup N_{u_0})$ for which we have 
$\varphi_e(\Sigma_{w(e)})\subset U_{b(e)}$,
where $e$ is the last edge on the shortest path in $T_G$ from $x$ to 
$N_{u_0}$;
we then have a neighbourhood $V_{\mathcal{U}}(\xi)$ of $\xi$ given by
$$
V_{\mathcal{U}}(\xi)=(\partial T_G\cap \hbox{Cone}_{\mathcal{U}}(\xi))\cup
\bigcup\{ K_s:s\in\hbox{Cone}_{\mathcal{U}}(\xi) \}
\cup\big[\bigcup\{ U_t:t\in N_{u_0} \}\setminus U_{u_0}\big]\cup
$$
$$
\cup\big[U_{u_0}\cap\bigcap\{U_t:t\in N_{u_0}\}
\big].
$$
\end{enumerate}

We are now ready for showing that the identity map of the set 
$\partial X=\lim\Theta(G)$, for the Martin's topology in the source, 
and for the topology as in \Ssecref{limit} 
in the target, is continuous.
To do this, consider any open set $E\in {\mathcal{B}}$
(where $\mathcal{B}$ is as in \Ssecref{limit}), i.e. a set of form
$E={\mathcal{G}}(W)$ for an appropriate $\Omega_F$-saturated open subset
$W$ in a partial union $K_F$. We will proceed with the argument in the following three cases. First, let $\eta\in E$ be a point which belongs to
$\partial T_G$. We need to find a neighbourhood of the form 
$N=V_U(\eta)$ as in (1) above, such that $N\subset E$.
Let $t_0$ be this vertex of the b-subtree $F$ which is ``closest'' to $\eta$
in $T_G$, and let $e=[u_0,t_0]$ be the last edge on the path in $T_G$
from $\eta$ to $t_0$. Put $U$ to be the set of all those elements $z$
in $(V_{T_G}\cup\partial T_G)\setminus V_F$ for which the path
in $T_G$ from $z$ to $t_0$ passes through $u_0$ (including $z=u_0$).
$U$ is obviously a neighbourhood of $\eta$ in $V_{T_G}\cup\partial T_G$,
and it is not hard to observe that the corresponding neighbourhood $V_U(\eta)$ is a subset of $E$, as required.

In the second case, let $\xi\in E$ be a point which belongs to 
$\dot K_t$, for some $t\in V^b_{T_G}$. 
We need to find a neighbourhood of the form 
$N=V_{\{ U_t \}}(\xi)$ as in (2) above, such that $N\subset E$.
Consider first the subcase when $t\notin V_F$, and let $e$ be the first
edge on the path in $T_G$ connecting $t$ with the nearest vertex of $F$.
Put $U_t=K_t\setminus\varphi_e(\Sigma_{w(e)})$. Obviously,
$U_t$ is then a neighbourhood of $\xi$ in $K_t$, and the corresponding
neighbourhood $V_{\{ U_t \}}(\xi)$ is a subset of $E$, as required.
In the second subcase, let $t\in V_F$. 
Observe that then $\xi\in W$.
Let $e_1,\dots,e_k$ be the edges
of $F$ adjacent to $t$, and put 
$U_t=(K_t\cap W)\setminus\cup_{i=1}^k\varphi_{e_i}(\Sigma_{w(e_i)})$.
Again, it is easily seen that $U_t$ is a neighbourhood of $\xi$ in $K_t$,
and that the corresponding neighbourhood $V_{\{ U_t \}}(\xi)$ 
is a subset of $E$, as required.

In the final third case, let $\xi\in E$ be a point which belongs to
$\Sigma_{u_0}$, for some $u_0\in V^w_{T_G}$.
We need to find a neighbourhood of the form 
$N=V_{\mathcal{U}}(\xi)$ as in (3) above, such that $N\subset E$.
We will do this by considering three subcases related to the 
possible position of $u_0$ with respect to $F$. First,
consider the subcase where $u_0\in V_F$. 
Observe that then $\xi\in W$.
Note also that, since $F$
is a b-subtree of $T_G$, the set $N_{u_0}$ of all neighbours of $u_0$
in $T_G$ is actually contained in $V_F$. 
For each $t\in N_{u_0}$, let ${\mathcal{E}}_t$ be the set of edges of $F$
adjacent to $t$ and distinct from $[u_0,t]$. For each $t\in N_{u_0}$ 
put
$U_t=(W\cap K_t)
\setminus\bigcup\{ \varphi_e(\Sigma_{w(e)}):e\in{\mathcal{E}}_t \}$;
put also $U_{u_0}=W\cap\Sigma_{u_0}$ and 
${\mathcal{U}}=\{ U_{u_0} \}\cup \{ U_t:t\in N_{u_0} \}$.
Note that for each $t\in N_{u_0}$, $U_t$ is a neighbourhood of $\xi$
in $K_t$, and similarly $U_{u_0}$ is a neighbourhood 
of $\xi$ in $\Sigma_{u_0}$. Moreover, it is not hard to see that the
corresponding neigbourhood $V_{\mathcal{U}}(\xi)$ is then contained in $E$,
as required. In the second subcase, we suppose that $u_0\notin V_F$,
but $N_{u_0}\cap F\ne\emptyset$.
For each $t\in N_{u_0}\setminus V_F$ put $U_t=K_t$.
For $t\in N_{u_0}\cap V_F$, let ${\mathcal{E}}_t^F$
be the set of all edges in $F$ adjacent to $t$, and put 
$U_t=
(W\cap K_t)\setminus\bigcup\{ \varphi_e(\Sigma_{w(e)}):e\in{\mathcal{E}}_t \}$.
Put also $U_{u_0}=\Sigma_{u_0}$ and 
${\mathcal{U}}=\{ U_{u_0} \}\cup \{ U_t:t\in N_{u_0} \}$.
As before, 
note that for each $t\in N_{u_0}$, $U_t$ is a neighbourhood of $\xi$
in $K_t$, and similarly $U_{u_0}$ is a neighbourhood 
of $\xi$ in $\Sigma_{u_0}$. Likewise, it is not hard to see that the
corresponding neigbourhood $V_{\mathcal{U}}(\xi)$ is then contained in $E$,
as required.
Finally, in the third subcase, we suppose that 
$N_{u_0}\cap F=\emptyset$. Denote by $t_1\in N_{u_0}$
this vertex which lies on the path from $u_0$ to $F$.
Denote also by $u_1$ the white vertex adjacent to $t_1$ and
distinct from $u_0$ which also lies on the path from $u_0$ to $F$.
For any $t\in N_{u_0}\setminus\{ t_1 \}$, put $U_t=K_t$,
and put 
$U_{t_1}=K_{t_1}\setminus\varphi_{[u_1,t_1]}(\Sigma_{u_1})$.
Put $U_{u_0}=\Sigma_{u_0}$ and 
${\mathcal{U}}=\{ U_{u_0} \}\cup \{ U_t:t\in N_{u_0} \}$,
and note again that $V_{\mathcal{U}}(\xi)$ is contained in $E$,
as required.

This finishes the proof.

\end{proof}

\subsection{The reduced Bowditch JSJ splitting and its rigid cluster factors}
\sseclabel{reduced_JSJ}

Let $G$ be a 1-ended hyperbolic group which is not cocompact Fuchsian
and let ${T}_G$ be the Bowditch JSJ tree of $G$.  Let $\sim$ be the
smallest equivalence relation on the vertex set of ${T}_G$ for which
$u \sim v$ whenever $u$ and $v$ are rigid vertices at distance $2$.
Equivalence classes of $\sim$ are either white vertex singletons,
flexible vertex singletons or rigid classes (i.e. sets of rigid
vertices).  The convex hulls of the rigid classes are the \emph{rigid
  clusters} of ${T}_G$.  Each rigid cluster of ${T}_G$ is a subtree of
$T_G$ (possibly a single vertex).

It is not hard to observe that embedded cycles in the quotient graph 
${T}_G / {\sim}$ have
length two, and there are no loop edges in this quotient.  
So, by replacing any multi-edge of
${T}_G / {\sim}$ with a single edge, we obtain a tree. 
The latter tree contains some vertices of degree 1, namely those which
correspond to the white vertices in ${T}_G$ whose all neighbours are rigid.
We delete all those vertices (and all edges adjacent to them), thus
obtaining a tree denoted as ${T}^r_G$, which
we call the \emph{reduced Bowditch JSJ tree} for $G$.  The action
of $G$ descends to ${T}^r_G$ and there is a natural map
${T}_G \to {T}^r_G$ which is $G$-equivariant.
(This $G$-equivariant map can be described as composition of the
quotient map ${T}_G\to{T}_G/{\sim}$ with the map which collapses
to points all those edges of the tree obtained from ${T}_G/{\sim}$
which are adjacent to the degree 1 vertices.)
Technically speaking, the \emph{reduced Bowditch JSJ splitting} for $G$
is the above induced action of $G$ on the tree ${T}^r_G$.

We now describe closer the nature of the just defined reduced Bowditch
JSJ splitting, and in particular of its factors.  Observe that the
bipartite structure of the tree ${T}_G$ descends to ${T}^r_G$, so that
we can speak of black and white vertices of the latter. The white
vertices of ${T}^r_G$ are naturally in bijective correspondence with
those white vertices of ${T}_G$ which are adjacent to at least one
flexible vertex. Their stabilizers (for the induced action of $G$ on
${T}^r_G$) coincide with the corresponding stabilizers for the
$G$-action on ${T}_G$, and so they are maximal 2-ended subgroups of
$G$.  There are two kinds of black vertices in ${T}^r_G$.  The first
kind of black vertices of ${T}^r_G$ are those in bijective
correspondence with the flexible vertices of ${T}_G$, and so we still
call them \emph{flexible}.  Their stabilizers under the action of $G$
on ${T}^r_G$ do not change, and we call them the \emph{flexible
  factors} of the reduced Bowditch JSJ splitting of $G$.  The second
kind of black vertices of ${T}^r_G$ are those in bijective
correspondence with the rigid clusters of ${T}_G$, and we call them
the \emph{rigid cluster vertices} of ${T}^r_G$. Their stabilizers,
which coincide with the stabilizers of the corersponding rigid
clusters in the $G$-action on ${T}_G$, are called the \emph{rigid
  cluster factors} of the reduced Bowditch JSJ splitting of $G$.

The next observations follow directly from the above description.

\begin{fact}\factlabel{3.3} 
Let $v'$ be a flexible vertex of the reduced Bowditch tree ${T}^r_G$,
and let $v$ be the vertex of ${T}_G$ corresponding to $v'$
(so that the vertex stabilizers $G(v')$ and $G(v)$ coincide).
Then the family of edge stabilizers of the edges adjacent to $v'$ in ${T}^r_G$
coincides with the family of edge stabilizers of the edges adjacent to $v$ in
${T}_G$.

\end{fact}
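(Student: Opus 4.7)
The plan is to produce a natural bijection between the edges of $T_G$ incident to $v$ and the edges of $T^r_G$ incident to $v'$, and then to invoke the already-established equalities of vertex stabilizers $G(v)=G(v')$ and $G(u)=G(u')$ for each white $u\in N_v$ (with image $u'$). Since the Bowditch JSJ tree action has no edge inversions and this property passes to $T^r_G$, the stabilizer of an edge equals the intersection of the stabilizers of its endpoints, so the desired equality of the two families of edge stabilizers will follow edge-by-edge.

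The main technical step is to track the three operations used to build $T^r_G$---namely, pass to $T_G/{\sim}$, replace each length-two cycle by a single edge, and prune degree-one white vertices---and to check that none of them affects the local combinatorics at $v$. Since $v$ is flexible, its $\sim$-class is the singleton $\{v\}$, and each white neighbour $u\in N_v$ is likewise its own $\sim$-class. A multi-edge in $T_G/{\sim}$ from $\{v\}$ to a class $[z]$ would require two distinct $u_1,u_2\in N_v$ lying in $[z]$; but the $\sim$-class of a white vertex is a singleton, forcing $[u_1]\ne[u_2]$, a contradiction. Hence the multi-edge reduction does nothing at $\{v\}$. Moreover, each $u\in N_v$ has a flexible neighbour (namely $v$) and so is not among the degree-one vertices pruned in the last step. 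This yields the required bijection $u\mapsto u'$ from $N_v$ to $N_{v'}$, and hence from edges at $v$ in $T_G$ to edges at $v'$ in $T^r_G$.

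With this bijection in hand, one simply reads off
\[
\Stab_{T^r_G}([v',u'])=G(v')\cap G(u')=G(v)\cap G(u)=\Stab_{T_G}([v,u]),
\]
so the two families of edge stabilizers coincide. I foresee no substantive obstacle here: the only delicate point is ruling out multi-edges and prunings at $v$, and this is immediate from the bipartiteness of $T_G$ together with the flexibility (and hence singleton-class status) of $v$.
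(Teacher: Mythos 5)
Your proof is correct, and it is essentially the argument the paper has in mind: the paper states Fact~3.3 without proof, asserting it "follows directly from the above description," and your write-up is precisely that direct verification (flexible and white $\sim$-classes are singletons, so no multi-edge identification or pruning occurs at $v$, giving the bijection $N_v\to N_{v'}$, after which edge stabilizers are computed as intersections of endpoint stabilizers, with colour-preservation ruling out edge inversions). Nothing further is needed.
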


\begin{fact}\factlabel{3.4}
Each rigid cluster vertex $v$ of the reduced Bowditch tree ${T}^r_G$
is \emph{isolated}, i.e., there is no rigid cluster vertex in ${T}^r_G$
at distance 2 from $v$. Equivalently, all vertices of ${T}^r_G$
lying at distance 2 from a rigid cluster vertex $v$ are flexible.
\end{fact}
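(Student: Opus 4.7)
The plan is to argue by contradiction directly from the construction of ${T}^r_G$, using the defining property of the equivalence relation $\sim$. Suppose $v'_1 \ne v'_2$ are two rigid cluster vertices of ${T}^r_G$ at distance $2$. Since ${T}^r_G$ inherits its bipartite black/white structure from ${T}_G$, and since both $v'_1$ and $v'_2$ are black, the (unique) length-$2$ path connecting them passes through a white middle vertex $w' \in V^w_{{T}^r_G}$.

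By the construction recalled just before the statement, $w'$ corresponds bijectively to a white vertex $w \in V^w_{{T}_G}$ that has at least one flexible neighbour (this is why $w$ survived the removal of the degree-$1$ vertices). Moreover, each edge $[v'_i, w']$ of ${T}^r_G$ arises, under the composition ${T}_G \to {T}_G/{\sim} \to {T}^r_G$, from at least one edge of ${T}_G$ of the form $[v_i, w]$, where $v_i$ is a rigid vertex belonging to the rigid cluster labelled by $v'_i$. So there exist rigid vertices $v_1, v_2 \in V_{{T}_G}$ with $v_i \in v'_i$ (viewing the cluster as its set of rigid vertices) and both adjacent to $w$ in ${T}_G$.

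But then $v_1$ and $v_2$ are two rigid vertices of ${T}_G$ at combinatorial distance exactly $2$ (connected via the common neighbour $w$), so by the very definition of $\sim$ we have $v_1 \sim v_2$. Hence $v_1$ and $v_2$ lie in the same $\sim$-class, and consequently in the same rigid cluster, contradicting $v'_1 \ne v'_2$. This shows the first formulation of the statement. The second, equivalent formulation is then automatic, because any vertex of ${T}^r_G$ at distance $2$ from a black vertex is itself black (by bipartiteness), and the only two kinds of black vertices in ${T}^r_G$ are the flexible ones and the rigid cluster ones. The only conceivable subtlety is making the lift of an edge of ${T}^r_G$ back to an edge of ${T}_G$ precise, but this is immediate from the two-step description of ${T}^r_G$ (quotient by $\sim$, then multi-edge collapse), and does not constitute a real obstacle.
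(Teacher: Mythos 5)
Your argument is correct, and it is precisely the direct verification that the paper has in mind: Fact~\ref{3.4} is stated there without proof as following "directly from the above description" of ${T}^r_G$, and your contradiction argument (lifting the two edges of the length-$2$ path to edges $[v_1,w]$, $[v_2,w]$ of ${T}_G$ and invoking the defining relation $v_1\sim v_2$) is exactly that omitted routine check. No gaps; the lifting step you flag is indeed unproblematic since no edge of ${T}_G$ is collapsed by $\sim$ and white vertices are singleton classes.
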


\begin{rmk}\rmklabel{3.5}
  Given a rigid cluster $S\subset{T}_G$, and the corresponding rigid
  cluster factor $G(v_S)$, consider the action of $G(v_S)$ on $S$.
  The vertex stabilizers of this action are easily seen to coincide
  with the corresponding vertex stabilizers of the $G$-action on
  ${T}_G$.  Thus, this action describes some splitting of the rigid
  cluster factor $G(v_S)$, along maximal 2-ended subgroups, whose
  factors are some appropriate rigid factors of the Bowditch JSJ
  splitting of $G$. This observation is helpful for understanding the
  nature of rigid cluster factors, even though we will make no
  essential use of it.
\end{rmk}

Since the factors of any splitting of a hyperbolic group along
quasi-convex subgroups are also quasi-convex (see e.g. Proposition~1.2
in \cite{Bowditch:cut_points:1998}), and since 2-ended subgroups are
quasi-convex, we get the following.

\begin{cor}\corlabel{cluster_quasiconvex}
Each rigid cluster factor of a 1-ended hyperbolic group $G$ is quasi-convex in $G$.
In particular, each rigid cluster factor is a non-elementary hyperbolic group.
\end{cor}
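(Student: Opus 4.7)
The plan is to apply Proposition~1.2 of \cite{Bowditch:cut_points:1998}, as already hinted at by the statement of the corollary: that proposition asserts that vertex stabilizers of any splitting of a hyperbolic group over quasi-convex subgroups are themselves quasi-convex. Thus it suffices to exhibit each rigid cluster factor as a vertex group of a splitting of $G$ whose edge groups are quasi-convex.

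First I would observe that, by the construction in Subsection~3.2, the action of $G$ on the reduced Bowditch JSJ tree $T^r_G$ is simplicial and $G$-equivariant, and by Bass--Serre theory (subdividing to avoid edge inversions if necessary) it expresses $G$ as the fundamental group of a graph of groups. The rigid cluster factors are precisely the vertex groups associated to the rigid cluster vertices of this splitting. The edge stabilizers can be read off by tracing the two-step collapse $T_G \to T_G/{\sim} \to T^r_G$: each edge of $T^r_G$ corresponds to an edge of $T_G$ and inherits its stabilizer, which by Subsection~3.1 is a 2-ended subgroup of $G$. Since 2-ended subgroups of a word hyperbolic group are quasi-convex, Bowditch's proposition applies and each rigid cluster factor is quasi-convex in $G$. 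Because quasi-convex subgroups of word hyperbolic groups are themselves word hyperbolic, every rigid cluster factor is hyperbolic.

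It remains to verify non-elementariness. For this I would invoke \Rmkref{3.5}: the action of $G_S$ on the rigid cluster $S$ has vertex stabilizers equal to the corresponding $G$-stabilizers from the original tree, so $G_S$ contains, as a subgroup, the rigid factor $G(v)$ for any rigid vertex $v$ of $S$. Such a factor is the $G$-stabilizer of an infinite star of classes of local cutpoints, so its Gromov boundary is infinite and it is therefore non-elementary; hence $G_S$ is non-elementary as well. The one point requiring some care is checking that the two-step collapse producing $T^r_G$ does not conflate edge stabilizers in a problematic way, but since the contracted edges are precisely those incident to the degree-1 vertices deleted from $T_G/{\sim}$, no merging of distinct edge stabilizers occurs; this is bookkeeping rather than a genuine obstacle.
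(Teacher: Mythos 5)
Your argument is correct and follows essentially the paper's own route: the rigid cluster factors are vertex stabilizers of the reduced Bowditch JSJ splitting, whose edge groups are quasi-convex since they lie in maximal 2-ended subgroups, so Proposition~1.2 of Bowditch gives quasi-convexity (hence hyperbolicity), and the containment of a non-elementary rigid factor $G(v)\subset G_S$ gives non-elementariness, a point the paper leaves implicit. One small imprecision: an edge of $T^r_G$ joining a white vertex $u$ to a rigid cluster vertex need not "inherit" the stabilizer of a single edge of $T_G$ — when several edges from $u$ into the cluster $S$ are identified, its stabilizer is $G(u)\cap G_S$, which may be strictly larger — but this is harmless, since that stabilizer is still a subgroup of the 2-ended group $G(u)$ and hence quasi-convex, which is all the argument needs.
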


The reduced Bowditch JSJ splitting of a group $G$ induces in a natural way
a  tree system described as follows:
\[
\Theta^r(G):=({T}^r_G, \{ K_t=\partial G(t):t\in V^b_{{T}^r_G} \},
\{ \Sigma_u=\partial G(u):u\in V^w_{{T}^r_G} \}, 
\{ \varphi_e:e\in E_{{T}^r_G} \}),
\]
where each $\phi_e:\partial G({w(e)})\to \partial G({b(e)})$ is the
natural inclusion.
The 
same argument as in the proof
of \Thmref{T.9} yields also the following result.

\begin{thm}
  \thmlabel{T.9+} Let $G$ be a 1-ended hyperbolic group which is not
  cocompact Fuchsian, and let $\Theta^r(G)$ be the associated tree
  system (related to the reduced Bowditch JSJ splitting of $G$).  
Then the Gromov boundary $\partial G$ is naturally
  homeomorphic to the limit of the tree system $\Theta^r(G)$, i.e.,
  \[\partial G\cong\lim\Theta^r(G).\]
\end{thm}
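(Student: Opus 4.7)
The plan is to prove \Thmref{T.9+} by the exact same route that yields \Thmref{T.9}, namely by invoking Alexandre Martin's theorem (Corollary~9.19 of \cite{Martin:nonpos_cplx_of_groups:2014} or Theorem~2.25 of \cite{Cashen_Martin:qi_two_ended:2017}), now for the reduced Bowditch JSJ splitting in place of the ordinary one. That theorem identifies $\partial G$ with the limit of the tree system of Gromov boundaries of vertex stabilizers associated to any splitting of $G$ along quasi-convex subgroups, and the naturality clause is built into its statement.

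First I would check that $\Theta^r(G)$ is in fact a tree system of metric compacta in the sense of \Defnref{T.2}. Conditions \itmref{(TS1)}--\itmref{(TS4)} hold by construction, and the only substantive point is \itmref{(TS5)}: for each black vertex $t\in V^b_{{T}^r_G}$, the family $\Omega_t=\{\partial G(u):u\in N_t\}$ must consist of pairwise disjoint closed subsets of $\partial G(t)$ and be null. Pairwise disjointness is the standard fact that distinct maximal $2$-ended subgroups of a hyperbolic group have disjoint Gromov boundaries, and nullness follows from the same cocompactness-on-the-star argument already used for $\Theta(G)$ (this relies on quasi-convexity of $G(t)$ together with the 2-endedness of the peripheral subgroups $G(u)$).

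To apply Martin's theorem it then remains to verify that every edge and vertex stabilizer of the reduced Bowditch JSJ splitting is quasi-convex in $G$. The white vertex stabilizers are maximal $2$-ended subgroups, hence quasi-convex; the edge stabilizers are $2$-ended subgroups of these, so also quasi-convex. Flexible vertex stabilizers coincide with their counterparts in the unreduced splitting (cf.\ Fact~\ref{3.3}) and are therefore quasi-convex by the properties of the Bowditch JSJ recalled earlier in this section. The remaining case---quasi-convexity of the rigid cluster factors---is precisely Corollary~\ref{cluster_quasiconvex}.

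With all stabilizers quasi-convex, Martin's theorem supplies the desired homeomorphism $\partial G\cong\lim\Theta^r(G)$, and naturality (in the sense of the remark following \Thmref{T.9}) is automatic because the connecting maps of $\Theta^r(G)$ are themselves the natural inclusions of subgroup boundaries. The routine check that Martin's topology on the limit set coincides with the one described before \Propref{T.3} is the same one already skipped in the proof of \Thmref{T.9}. Consequently the only new ingredient beyond \Thmref{T.9} is Corollary~\ref{cluster_quasiconvex}, and no substantive further obstacle remains. (A more direct derivation from \Thmref{T.9} via a consolidation in the sense of \Thmref{T.7} seems unavailable, because a rigid cluster in $T_G$ need not be a b-subtree: a white vertex internal to the cluster may have flexible neighbours, and excluding it would disconnect the cluster. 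This is why the detour through Martin's theorem is the natural route.)
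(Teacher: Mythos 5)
Your proposal is correct and follows essentially the same route as the paper, whose proof of this statement is just the remark that the comments preceding \Thmref{T.9} (i.e.\ the Martin/Cashen--Martin theorem for splittings of a hyperbolic group over $2$-ended subgroups) apply verbatim to the reduced splitting. You merely make explicit the checks the paper leaves implicit -- condition \itmref{(TS5)} for $\Theta^r(G)$ and quasi-convexity of the stabilizers, the only new ingredient being Corollary~\ref{cluster_quasiconvex} -- so no gap remains.
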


\section{General trees of graphs}
\seclabel{tr_gr}

In this section we describe trees of graphs---a class of topological spaces which is
the main object of study in this paper. As we will see, these spaces are compact metrizable,
and the most interesting subclass (that of trees of 2-connected graphs) 
consists of spaces that are
connected, locally connected, cutpoint-free, and of topological dimension 1. 
Throughout all of this section we make use of the terminology and notation introduced in \Secref{tree_sys}.
We start with basic definitions.

\begin{defn}
  \defnlabel{G.1} A \emph{tree system of punctured graphs} is a tree
  system of spaces in which:
  \begin{enumerate}[label=(\arabic*)]
  \item \itmlabel{G.1(1)} each constituent space with peripherals is a
    punctured graph (as described in \Exref{T.1.5});
  \item \itmlabel{G.1(2)} each white vertex of the underlying tree has
    degree 2.
  \end{enumerate}
  We will use the notation
  \[ \Theta=(T,\{ \Gamma_t^\circ \}, \{ \Sigma_u \}, \{ \phi_e \})
  \] for a tree system of punctured graphs (where for each
  $t\in V^b_T$, $\Gamma_t$ is some finite essential graph, i.e.,
  nonempty and without isolated vertices).
\end{defn}

\begin{defn}
  \defnlabel{G.2} A \emph{tree of graphs} is any topological space
  which is homeomorphic to the limit of a tree system of punctured
  graphs.
\end{defn}

\subsection{Trees of graphs as inverse limits of graphs}
\sseclabel{tog_as_inv_lims_of_graphs}

We will now present an alternative description of trees of graphs, as
inverse limits of some special inverse systems of graphs.  (As we will
see below, these inverse systems in fact coincide, up to isomorphism,
with the systems ${\mathcal S}_\Theta$ as described in \Ssecref{inverse_associated}, associated to the
corresponding tree systems $\Theta$ of punctured graphs.)  
This alternative approach to trees of graphs is
related to (and generalizes) that given in Section~2 of \cite{Swiatkowski:refl_trees_of_graphs:2021}, where
only a special case of the so called \emph{reflection trees of
  graphs} has been treated, using a similar description.

Let $\Theta$ be a tree system of punctured graphs.  For any graph
$\Gamma_t$ whose punctured version $\Gamma_t^\circ$ appears as a
constituent space in $\Theta$, denote by $D_t\subset|\Gamma_t|$ a
countable dense subset of the realization $|\Gamma_t|$ containing all
vertices of $\Gamma_t$.  Note that such $D_t$ is unique up to a
homeomorphism of $|\Gamma_t|$ that is the identity on the vertices and
that preserves all the edges. Consider also the inverse system
${\mathcal S}_\Theta=[\{ K_F^{\ast} \}, \{ f_{F_1F_2} \}]$ associated
to $\Theta$.  Observe that, viewing any black vertex $t\in V^b_T$ as a
b-subtree of $T$, and denoting by
$\{ [\phi_{[u,t]}(\Sigma_u)]:u\in N_t \}$ the set of points of the
quotient space $K_t^{\ast}$ obtained from $\Gamma_t^\circ$ by
shrinking the peripheral sets $\phi_{[u,t]}(\Sigma_u):u\in N_t $,
there is a homeomorphism $h_t:K_t^{\ast}\to|\Gamma_t|$ such that:
\begin{enumerate}[label=(\arabic*)]
\item \itmlabel{G.X(1)} $h_t$ maps the subset
  $\{ [\phi_{[u,t]}(\Sigma_u)]:u\in N_t \}$ of $K_t^*$ onto the subset
  $D_t$ of $|\Gamma_t|$;
\item \itmlabel{G.X(2)} $h_t$ respects types of the shrunk peripheral
  sets $[\phi_{[u,t]}(\Sigma_u)]$, i.e., if $\phi_{[u,t]}(\Sigma_u)$ is
  the boundary of a normal neighbourhood of some vertex of $\Gamma$,
  then $h_t([\phi_{[u,t]}(\Sigma_u)])$ is equal to this vertex, and if
  $\phi_{[u,t]}(\Sigma_u)$ is the boundary of a normal neighbourhood
  of some interior point of some edge $a$ of $\Gamma$, then
  $h_t([\phi_{[u,t]}(\Sigma_u)])$ is equal to some interior point of
  the same edge $a$.
\end{enumerate}
To better understand $h_t$, consider an edge $e \subset |\Gamma_t|$
and its punctured version
$e^{\circ} \subset \Gamma_t^{\circ} \subset |\Gamma_t|$.  Up to
homeomorphism, we can view $e^{\circ}$ as the standard Cantor set on a
closed interval $I \subset e$.  Let $h \colon I \to I$ be the Cantor
function with respect to $e^{\circ}$ and note that $h$ descends to a
continuous map $h^e \colon (e^\circ)^* \to I$ from the image 
$(e^\circ)^*$ of
$e^\circ$ in $K_t^{\ast}$.  Then $h_t|_{(e^\circ)^*}$ can be taken to be the
composition of $h^e$ with an orientation preserving homeomorphism
$I \to e$ which maps dyadic rationals in $I$ onto the subset $D_t\cap e$.

The above identifications of the spaces $K_t^*$ of the inverse system
${\mathcal S}_\Theta$ with realizations $|\Gamma_t|$ of the graphs
$\Gamma_t$ induce further identifications of all other spaces $K_F^*$
of this inverse system with certain graphs $X_F$ which are obtained
out of the graphs $\Gamma_t:t\in V^b_F$ by means of operations of
``connected sum'' for graphs. We now describe these graphs $X_F$,
together with the description of maps between them, which correspond
to the bonding maps of the inverse system ${\mathcal S}_\Theta$.

Let $X$ be the realization of a finite graph, and let $x\in X$. The
\emph{blow-up} of $X$ at $x$ is the space obtained from $X$ by
deleting $x$, and by taking the completion of the resulting space
which consists of adding as many points as the number of connected
components into which $x$ splits its normal neighbourhood $B(x)$ in
$X$. We denote this blow-up by $X^\#(x)$, and the set of points added
at the completion by $P_x$ (we use the term \emph{blow-up divisor at
  $x$} for this set $P_x$).  Similarly, if $J$ is a finite subset of
$X$, the \emph{blow-up} of $X$ at $J$, denoted $X^\#(J)$, is the space
obtained from $X$ by performing blow-ups at all points of $J$ (in
arbitrary order).  Moreover, for any two finite subsets
$J_1\subset J_2$ of $X$, denote by
$\rho^X_{J_2J_1}: X^\#(J_2)\to X^\#(J_1)$ the \emph{blow-down} map
which maps the sets $P_x:x\in J_2\setminus J_1$ to the corresponding
points $x$, and which is the identity on the remaining part of
$X^\#(J_2)$.

Now, let $F$ be a finite b-subtree of the underlying tree $T$ of the tree
system $\Theta$. 
Recalling that we have identified graphs $|\Gamma_t|$ with the corresponding
spaces $K_t^*$,
for each $t\in V^b_F$ consider the subset 
$J_t^F\subset|\Gamma_t|$ given by
$J_t^F:=\{ h_t([\phi_{[t,u]}(\Sigma_u)]):u\in N^F_t \}$.
Define $X_F$ as quotient of the disjoint union  
\[
  \bigsqcup_{t\in V^b_F} |\Gamma_t|^\#(J^F_t)
\]
by the equivalence relation $\sim$ determined by the following
gluing maps.  Recall that, by condition \itmref{G.1(2)} of
\Defnref{G.1}, for any white vertex $u$ of $F$ there are exactly two
black vertices in $F$ adjacent to $u$. Denoting these adjacent black
vertices by $t$ and $s$, consider the blow-up divisors
$P_{h_t([\phi_{[t,u]}(\Sigma_u)])} \subset |\Gamma|^\#(J_t^F)$ and
$P_{h_s([\phi_{[s,u]}(\Sigma_u)])} \subset |\Gamma|^\#(J_s^F)$.  Note
that there is a natural bijection
\[
  b_{s,t}: P_{h_s([\phi_{[s,u]}(\Sigma_u)])} \to P_{h_t([\phi_{[t,u]}(\Sigma_u)])}
\]
induced by the map
$\phi_{[t,u]}\circ\phi^{-1}_{[s,u]}:\phi_{[s,u]}(\Sigma_u) \to
\phi_{[t,u]}(\Sigma_u)$ (under the canonical identification of each of
the two divisors with the corresponding peripheral subspace), and that
$b_{t,s}=b_{s,t}^{-1}$. The equivalence relation $\sim$ is then
determined by all of the maps $b_{s,t}$ as above related to all white
vertices $u$ of $F$.  Note that the described quotient space $X_F$
may be viewed as obtained by appropriate iterated connected sum of the
family $\Gamma_t:t\in V^b_F$ of graphs
(see the paragraph right before \Lemref{puncture_component_closure} for a precise description of the concept of connected sum for graphs).  Note also that $X_F$ is in a
natural way homeomomorphic to the space $K^*_F$, via homeomorphism
induced by the homeomorphisms $h_t^{-1}:|\Gamma_t|\to K^*_t$, for
$t\in V^b_F$.

In order to make our alternative description of the inverse system
${\mathcal S}_\Theta$ complete, we need to describe the bonding maps
between the spaces $X_F$ which, under the above identifications with
the spaces $K_F^*$, correspond to the bonding maps $f_{FF'}$.  To do
this, consider any two finite b-subtrees $F\subset F'$ of $T$, and
define the map $\pi_{FF'}:X_{F'}\to X_F$ to be a natural map from the
larger connected sum to the smaller one, given as follows.  For
$t\in V^b_F$, the restriction of $\pi_{FF'}$ to
$|\Gamma_t|^\#(J_t^{F'})$ (viewed as a subset in $X_{F'}$) coincides
with the blow-down map $\rho^{|\Gamma_t|}_{J_t^{F'}J_t^F}$ (whose
image $|\Gamma_t|^\#(J_t^F)$ is viewed as a subset in $X_F$).  For
$t\in V^b_{F'}\setminus V^b_F$, the subset
$|\Gamma_t|^\#(J_t^{F'})\subset X_{F'}$ is mapped by $\pi_{FF'}$ to
the point
$h_s([\phi_{[u,s]}(\Sigma_u)])\in |\Gamma_s|^\#(J_s^F)\subset X_F$,
where $s\in V^b_F$ is the vertex of the subtree $F$ that is closest to
$t$, and where $u\in V^w_{F'}$ is the vertex adjacent to $s$ on the
shortest path in $T$ from $t$ to $s$.

We skip a straightforward verification of the following fact, the second part of which
follows from the first part in view of \Propref{T.6}.

\begin{fact}
  \factlabel{G.3} The inverse systems
  ${\mathcal S}_\Theta=[\{ K_F^* \}, \{ f_{FF'} \}]$ and
  $[ \{ X_F \}, \{ \pi_{FF'} \} ]$ are isomorphic.  As a consequence,
  we have
  \[
    \lim\Theta\cong \lim_{\longleftarrow}[ \{ X_F \}, \{ \pi_{FF'} \} ].
  \]
\end{fact}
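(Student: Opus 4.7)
The plan is to exhibit, for each finite b-subtree $F$, a homeomorphism $H_F \colon K_F^{\ast} \to X_F$ and then verify that these commute with the two families of bonding maps. Once such an isomorphism of inverse systems is established, the homeomorphism $\lim\Theta \cong \lim_{\longleftarrow} [\{X_F\}, \{\pi_{FF'}\}]$ follows immediately from \Propref{T.6}.

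For the construction of $H_F$, I would exploit that both $K_F^{\ast}$ and $X_F$ are assembled from pieces indexed by $V_F^b$. Observe that in $K_F^{\ast}$, the peripherals of $\Omega_F$ (indexed by external white vertices $u \in N_F$) are disjoint from the internal peripherals (indexed by $u \in V_F^w$); consequently $K_F^{\ast}$ is canonically homeomorphic to the space obtained by first shrinking, for each $t \in V_F^b$, only the external peripherals of $K_t$ to produce an intermediate quotient $K_t^{(F)}$, and then gluing the $K_t^{(F)}$ along their surviving internal peripherals via the maps $\phi_{[t,u]} \circ \phi_{[s,u]}^{-1}$. On the $X_F$ side, $|\Gamma_t|^{\#}(J_t^F)$ is by definition the blow-up of $|\Gamma_t|$ at exactly the points corresponding to internal peripherals, and the gluing is by the bijections $b_{s,t}$. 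The prescribed homeomorphism $h_t \colon K_t^{\ast} \to |\Gamma_t|$ then lifts to a homeomorphism $h_t^{(F)} \colon K_t^{(F)} \to |\Gamma_t|^{\#}(J_t^F)$: off the internal peripherals it agrees with $h_t$ (composed with the canonical embedding of the unblown part of $|\Gamma_t|$ into $|\Gamma_t|^{\#}(J_t^F)$), while on an internal peripheral $\phi_{[t,u]}(\Sigma_u)$, which in the punctured-graph setting is the finite discrete set $\partial B(x)$ at $x := h_t([\phi_{[t,u]}(\Sigma_u)])$, it is the natural bijection of $\partial B(x)$ with the blow-up divisor $P_x$ given by direction of approach at $x$. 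I would then define $H_F$ by assembling the $h_t^{(F)}$.

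Well-definedness of $H_F$ reduces to checking that the two gluing prescriptions correspond under the $h_t^{(F)}$: for each internal $u \in V_F^w$ with black neighbours $t, s$, the bijection of peripherals induced by $\phi_{[t,u]} \circ \phi_{[s,u]}^{-1}$ must match $b_{s,t}$ under the identifications $\phi_{[t,u]}(\Sigma_u) \cong P_{h_t([\phi_{[t,u]}(\Sigma_u)])}$ and $\phi_{[s,u]}(\Sigma_u) \cong P_{h_s([\phi_{[s,u]}(\Sigma_u)])}$. But this is precisely the definition of $b_{s,t}$, so compatibility is automatic. Continuity of $H_F$ follows from the universal property of the quotient topology on $K_F^{\ast}$, and since $K_F^{\ast}$ is compact and $X_F$ is Hausdorff, the continuous bijection $H_F$ is a homeomorphism. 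For naturality under the bonding maps, given $F \subset F'$ I would verify $H_F \circ f_{FF'} = \pi_{FF'} \circ H_{F'}$ piecewise on each $|\Gamma_t|^{\#}(J_t^{F'}) \subset X_{F'}$: for $t \in V_F^b$, both compositions restrict to the blow-down $\rho^{|\Gamma_t|}_{J_t^{F'} J_t^F}$, corresponding under $H_{F'}$ to the collapsing of $q_{F'}(K_{S_u})$ in the definition of $f_{FF'}$ for each $u \in N_F \cap V_{F'}^w$; for $t \in V_{F'}^b \setminus V_F^b$, both compositions collapse the whole piece to the appropriate point of the nearest $|\Gamma_s|^{\#}(J_s^F)$ with $s \in V_F^b$. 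Condition \itmref{G.X(2)}, that $h_t$ respects types of peripherals, ensures the target points on the two sides agree.

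The main obstacle is the topological verification that $h_t^{(F)}$ is a homeomorphism at the internal peripherals: on the $K$ side one un-shrinks a single point of $K_t^{\ast}$ back into the finite set $\phi_{[t,u]}(\Sigma_u)$, while on the $X$ side one blows up. Although the underlying bijection is by \emph{matching directions}, one must check that a sequence in $K_t$ approaching a point of $\phi_{[t,u]}(\Sigma_u)$ from a particular direction corresponds, under $h_t$, to a sequence in $|\Gamma_t|$ approaching $x$ from the matching component of $B(x) \setminus \{x\}$. This is essentially an unpacking of the explicit description of $h_t$ on punctured edges via the Cantor function immediately following conditions \itmref{G.X(1)}--\itmref{G.X(2)}, and the remaining compatibility checks then reduce to bookkeeping.
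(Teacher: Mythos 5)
Your proposal is correct and follows exactly the route the paper intends: the paper itself declares this verification "straightforward" and omits it, having already set up the identification of $X_F$ with $K_F^*$ as "induced by the homeomorphisms $h_t^{-1}$" and having defined $\pi_{FF'}$ and $b_{s,t}$ precisely so as to mirror $f_{FF'}$ under that identification. Your assembly of $H_F$ from the intermediate quotients $K_t^{(F)}$, the direction-of-approach matching justified by the edgewise Cantor-function description of $h_t$, and the piecewise check of $H_F \circ f_{FF'} = \pi_{FF'}\circ H_{F'}$ is just a careful writing-out of that omitted verification, with the genuinely delicate point (continuity at the un-shrunk peripherals versus the blow-up divisors) correctly identified and correctly resolved.
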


  Expression of a tree of graphs $\lim\Theta$ as the inverse limit of
  graphs $X_F$, as above, turns out to be very useful for establishing
  various topological properties of trees of graphs.
The next corollary records the most obvious such properties
(which follow directly from the corresponding well known properties of
inverse limits, e.g. the estimate of the topological dimension of the limit by supremum of the topological dimensions of the terms in the inverse system). 
Further properties, for a subclass of trees of 2-connected graphs, 
are discussed in the next subsection. 

\begin{cor}\corlabel{top_dim_estimate}
Any tree of graphs is a compact metrizable topological space of topological
dimension $\le1$.
\end{cor}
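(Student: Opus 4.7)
The plan is to deduce both assertions directly from the description of a tree of graphs as an inverse limit of graphs, given by \Factref{G.3}, together with standard facts from dimension theory.

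First, by \Defnref{G.2}, any tree of graphs is (by definition, up to homeomorphism) a limit $\lim\Theta$ of some tree system $\Theta$ of punctured graphs. Compactness and metrisability then follow immediately from \Propref{T.3}, which asserts these properties for the limit of any tree system of metric compacta.

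For the dimension bound, I would invoke \Factref{G.3}, which identifies $\lim\Theta$ with the inverse limit of the system of graphs $[\{X_F\}, \{\pi_{FF'}\}]$ indexed by finite b-subtrees $F$ of the underlying tree. Each space $X_F$ is constructed by iterated connected sum from finitely many finite graphs $\Gamma_t$ (for $t\in V_F^b$), so $X_F$ is itself homeomorphic to the realization of a finite graph. In particular, each $X_F$ is a compact metric space of topological (covering) dimension $\le 1$.

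The final step is to apply the standard result of dimension theory that for an inverse system of compact metric spaces the topological dimension of the inverse limit is bounded above by the supremum of the topological dimensions of the terms of the system (see, e.g., Engelking, \emph{Theory of Dimensions Finite and Infinite}). This yields
\[
\dim(\lim\Theta) \;\le\; \sup_F \dim(X_F) \;\le\; 1,
\]
completing the proof. There is no serious obstacle: the only point requiring a moment's care is to confirm that the inverse limit is taken over a directed set (which it is, since finite b-subtrees of $T$ are closed under finite unions) so that the cited dimension-theoretic estimate applies.
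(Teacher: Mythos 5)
Your proposal is correct and follows essentially the same route as the paper: compactness and metrisability come from \Propref{T.3}, and the dimension bound comes from the identification of $\lim\Theta$ with the inverse limit of the finite graphs $X_F$ (\Factref{G.3}) together with the standard estimate $\dim(\varprojlim) \le \sup_F \dim(X_F) \le 1$. Nothing further is needed.
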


\subsection{Trees of 2-connected graphs}

We now discuss a subclass in the class of trees of graphs, so called
\emph{trees of 2-connected graphs}, which is most interesting from
the perspective of studying the topology of Gromov boundaries
of hyperbolic groups. As we will see, the spaces from this subclass are
connected, locally connected, cutpoint-free, 
and they have topological dimension equal to 1.  

A graph is \emph{2-connected} if it is finite, connected, nonempty,
not equal to a single vertex, and contains no cutpoint.  We say
that a tree system of punctured graphs
$\Theta=(T,\{ \Gamma_t^\circ \}, \{ \Sigma_u \}, \{ \phi_e \})$ in
which all the graphs $\Gamma_t$ are 2-connected is a \emph{tree system
  of punctured 2-connected graphs}; likewise, a \emph{tree of
  2-connected graphs} is the limit of a tree system of punctured
2-connected graphs. 

In our analysis of trees of 2-connected graphs we will need the following
rather well known observation.

\begin{fact}
  \factlabel{compact_connected} Let
  ${\mathcal S} = (\{ K_{\alpha}\}_{\alpha},\{ f_{\alpha
    \alpha'}\}_{\alpha \le \alpha'})$ be an inverse system of compact metric
  spaces
  with surjective bonding maps.
  Let $C$ be
  a closed subspace of some $K_{\alpha}$.  Then the preimage
  $f_{\alpha}^{-1}(C)$ of $C$ in the inverse limit of $\mathcal S$ 
  is disconnected if and only if
  $f_{\alpha \hat \alpha}^{-1}(C)$ is disconnected for some
  $\hat \alpha \ge \alpha$.  
\end{fact}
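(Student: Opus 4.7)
The backward implication is essentially formal: if $f_{\alpha\hat\alpha}^{-1}(C) = D_1 \sqcup D_2$ is a partition into disjoint nonempty closed subsets of $K_{\hat\alpha}$, then writing $f_\alpha = f_{\alpha\hat\alpha} \circ f_{\hat\alpha}$ exhibits $f_\alpha^{-1}(C)$ as the disjoint union of the closed sets $f_{\hat\alpha}^{-1}(D_1)$ and $f_{\hat\alpha}^{-1}(D_2)$. Both are nonempty because, in all the applications relevant to the paper, the bonding maps are surjective and hence so is each projection $f_{\hat\alpha}: Z \to K_{\hat\alpha}$; in the general setting one replaces each $D_i$ by its trace on $f_{\hat\alpha}(Z)$.

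The forward implication is the substantive content. I would start by writing $f_\alpha^{-1}(C) = A \sqcup B$ with $A,B$ disjoint nonempty compact. Since $Z$ is a compact metric space, normality gives disjoint open sets $U,V \subseteq Z$ with $A \subseteq U$ and $B \subseteq V$. Because the basic open sets of the inverse limit topology have the form $f_\gamma^{-1}(W)$, compactness of $A$ and $B$ together with directedness of the index set produce a single $\hat\alpha \geq \alpha$ and open $W_A, W_B \subseteq K_{\hat\alpha}$ with
\[
A \subseteq f_{\hat\alpha}^{-1}(W_A) \subseteq U, \qquad B \subseteq f_{\hat\alpha}^{-1}(W_B) \subseteq V.
\]
Disjointness of $U$ and $V$ then yields $f_{\hat\alpha}^{-1}(W_A \cap W_B) = \emptyset$, and the inclusion $f_\alpha^{-1}(C) \subseteq f_{\hat\alpha}^{-1}(W_A) \cup f_{\hat\alpha}^{-1}(W_B)$ yields $f_{\hat\alpha}^{-1}\bigl(f_{\alpha\hat\alpha}^{-1}(C) \setminus (W_A \cup W_B)\bigr) = \emptyset$.

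The main technical step is now to upgrade these two \emph{globally empty} preimages in $Z$ to sets that \emph{already vanish} at some finite stage $\hat\alpha'$. For each $\beta \geq \hat\alpha$, I would introduce the compact subspaces
\[
C_\beta := f_{\alpha\beta}^{-1}(C) \setminus \bigl(f_{\hat\alpha\beta}^{-1}(W_A) \cup f_{\hat\alpha\beta}^{-1}(W_B)\bigr), \qquad D_\beta := f_{\alpha\beta}^{-1}(C) \cap f_{\hat\alpha\beta}^{-1}(W_A \cap W_B)
\]
of $K_\beta$. These assemble into two inverse subsystems of $\mathcal{S}$ whose inverse limits embed canonically as the (empty, by the previous paragraph) preimages in $Z$. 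The classical fact that an inverse limit of nonempty compact Hausdorff spaces over a directed index set is nonempty then forces $C_{\hat\alpha'} = D_{\hat\alpha'} = \emptyset$ for some $\hat\alpha' \geq \hat\alpha$. Setting $W'_A := f_{\hat\alpha\hat\alpha'}^{-1}(W_A)$ and $W'_B := f_{\hat\alpha\hat\alpha'}^{-1}(W_B)$, one obtains a partition of $f_{\alpha\hat\alpha'}^{-1}(C)$ into the two disjoint relatively open sets $W'_A \cap f_{\alpha\hat\alpha'}^{-1}(C)$ and $W'_B \cap f_{\alpha\hat\alpha'}^{-1}(C)$, which are nonempty because they contain $f_{\hat\alpha'}(A)$ and $f_{\hat\alpha'}(B)$ respectively.

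The principal obstacle is precisely the compactness argument in the previous paragraph: without it, one would know only that the offending subsets \emph{in the limit} disappear, not that they vanish at some finite stage. Everything else is standard bookkeeping with the inverse limit topology and normality of compact metric spaces.
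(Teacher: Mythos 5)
The paper states this Fact without proof (it is quoted as a ``rather well known observation''), so your argument has to stand on its own. The backward implication and the first half of your forward implication (covering $A$ and $B$ by pullbacks $f_{\hat\alpha}^{-1}(W_A)\subseteq U$, $f_{\hat\alpha}^{-1}(W_B)\subseteq V$ of open sets from a single stage, using directedness and compactness) are fine, \emph{granted} surjectivity of the projections $f_\beta\colon Z\to K_\beta$. That hypothesis does hold for the systems ${\mathcal S}_\Theta$ of the paper, since the bonding maps $f_{FF'}$ are surjective, and it is genuinely needed: without it the ``if'' direction of the Fact is simply false (a stage can be disconnected while the limit preimage is a point), and your proposed repair by passing to traces on $f_{\hat\alpha}(Z)$ does not rescue it, because one of the two traces may be empty.

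The genuine gap is in your treatment of $D_\beta = f_{\alpha\beta}^{-1}(C)\cap f_{\hat\alpha\beta}^{-1}(W_A\cap W_B)$. You call these sets compact and apply the theorem that an inverse limit of nonempty compact Hausdorff spaces is nonempty; but $D_\beta$ is the intersection of a closed set with an \emph{open} set, hence in general not closed in $K_\beta$, and the nonemptiness theorem fails for non-closed sets. The inference ``the preimage in $Z$ is empty, hence $D_{\hat\alpha'}=\emptyset$ at some finite stage'' is not valid in general: for non-surjective systems one can have nested compact images whose intersection just misses a fixed open set, so that $f_{\hat\alpha}^{-1}(W_A\cap W_B)=\emptyset$ in $Z$ while every $D_\beta$ is nonempty. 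Two standard repairs: (i) in the paper's setting invoke surjectivity of $f_{\hat\alpha}$, which turns $f_{\hat\alpha}^{-1}(W_A\cap W_B)=\emptyset$ directly into $W_A\cap W_B=\emptyset$, so the $D_\beta$ are vacuously empty and only your (correct) $C_\beta$ argument is needed; or (ii) in general, shrink the basic neighbourhoods so that in addition $f_{\hat\alpha}^{-1}(\overline{W_A})\subseteq U$ and $f_{\hat\alpha}^{-1}(\overline{W_B})\subseteq V$ (possible since each $K_\gamma$ is metric, so $f_\gamma^{-1}(W)$ may be replaced by $f_\gamma^{-1}(W')$ with $\overline{W'}\subseteq W$), and run your compactness argument on the closed sets $f_{\alpha\beta}^{-1}(C)\cap f_{\hat\alpha\beta}^{-1}(\overline{W_A})\cap f_{\hat\alpha\beta}^{-1}(\overline{W_B})$, which contain the $D_\beta$. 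You may also note a shorter route to the forward direction: if every $f_{\alpha\beta}^{-1}(C)$ were connected, then $f_\alpha^{-1}(C)$, being the limit of the inverse subsystem of continua $\{f_{\alpha\beta}^{-1}(C)\}_{\beta\ge\alpha}$, would itself be connected.
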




For what follows, we use the notation $\pi_0(X)$ to denote the set of
connected components of a space $X$.


\begin{lem}\lemlabel{pair_link_map}
  Let $X$ be the realization of a 2-connected graph and let $x$ and
  $y$ be two points of $X$.  The inclusion
  $\iota_{x,y} \colon P_x \hookrightarrow X^{\#}(\{x,y\})$ is
  $\pi_0$-surjective.  Consequently, the map $\iota_{x,y}$ is
  $\pi_0$-bijective if and only if $\deg(x)$ is equal to the number of
  components of $X^{\#}(\{x,y\})$.
\end{lem}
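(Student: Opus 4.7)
The plan is to reduce the claim to a statement about components of the ordinary complement $X \setminus \{x,y\}$, and then to invoke 2-connectedness. Since $P_x$ is finite of cardinality $\deg(x)$, $\pi_0$-surjectivity of $\iota_{x,y}$ will immediately yield the ``consequently'' clause as a counting statement, so the entire content of the lemma lies in establishing surjectivity.

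First I would set up the identification of components. Choose disjoint normal neighbourhoods $B(x)$ and $B(y)$ and let $B_1, \dots, B_{\deg(x)}$ denote the components of $B(x) \setminus \{x\}$; these are in canonical bijection with $P_x$. The space $X^{\#}(\{x,y\})$ contains $X \setminus \{x,y\}$ as an open dense subspace, and each point $p_i \in P_x$ is a limit point precisely of $B_i$. A direct verification from the definition of the blow-up as a completion shows that the components of $X^{\#}(\{x,y\})$ are in bijection with those of $X \setminus \{x,y\}$: a component $C$ of $X \setminus \{x,y\}$ corresponds to $C$ together with all blow-up points whose associated local branches lie in $C$. In particular, $\iota_{x,y}$ is $\pi_0$-surjective if and only if every component $C$ of $X \setminus \{x,y\}$ contains some local branch at $x$, equivalently $x \in \overline{C}$ (closure taken in $X$).

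The heart of the proof is then showing $x \in \overline{C}$ for every such $C$. Suppose for contradiction that $x \notin \overline{C}$. Since $X$ is locally connected, $C$ is both open and closed in $X \setminus \{x,y\}$, so $\overline{C} \subseteq C \cup \{x,y\}$, and our assumption gives $\overline{C} \subseteq C \cup \{y\}$. Hence $C$ is closed in $X \setminus \{y\}$. It is also open in $X \setminus \{y\}$, because $C$ is open in $X \setminus \{x,y\}$ and, by $x \notin \overline{C}$, the point $x$ admits a neighbourhood in $X$ disjoint from $C$. Since $X$ is 2-connected, $y$ is not a cutpoint, so $X \setminus \{y\}$ is connected and nonempty (as $X$ is not reduced to a single vertex). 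Therefore $C = X \setminus \{y\}$, contradicting $x \in X \setminus \{y\}$ and $x \notin C$.

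This establishes $\pi_0$-surjectivity. For the consequence, because $|P_x| = \deg(x)$ and $\iota_{x,y}$ is $\pi_0$-surjective, $\iota_{x,y}$ is $\pi_0$-bijective if and only if $|\pi_0(X^{\#}(\{x,y\}))| = \deg(x)$. The only step that requires real care is the bookkeeping of the blow-up topology in the first paragraph, needed to see that the components of $X^{\#}(\{x,y\})$ really do match those of $X \setminus \{x,y\}$; the 2-connectedness argument itself is short and self-contained.
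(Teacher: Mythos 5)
Your proof is correct, but it runs along a slightly different track than the paper's. The paper's argument stays entirely at the level of blow-ups: it considers the blow-down map $X^{\#}(\{x,y\}) \to X^{\#}(y)$, which identifies all of $P_x$ to the single point $x$, notes that $X^{\#}(y)$ is connected because $X$ has no cutpoints, and concludes that a component of $X^{\#}(\{x,y\})$ missing $P_x$ would map to a proper nonempty clopen subset of $X^{\#}(y)$ — two sentences, no mention of $X \setminus \{x,y\}$. You instead first identify $\pi_0\bigl(X^{\#}(\{x,y\})\bigr)$ with $\pi_0(X \setminus \{x,y\})$ (this is exactly the first assertion of \Rmkref{cut_pair_map}, which the paper records separately without proof, so leaving it as a ``direct verification'' is acceptable and creates no circularity), reduce surjectivity to the claim that $x \in \overline{C}$ for every component $C$ of $X \setminus \{x,y\}$, and then run the clopen argument inside $X \setminus \{y\}$ using that $y$ is not a cutpoint. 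Both proofs hinge on the same mechanism (cutpoint-freeness plus a clopen/connectedness contradiction); the paper's version buys brevity by never leaving the blow-up category, while yours buys a concrete picture of which branches at $x$ attach to which components, at the cost of the bookkeeping in your first paragraph. Two small points of care: your choice of disjoint normal neighbourhoods and your final contradiction (``contradicting $x \in X \setminus \{y\}$'') tacitly assume $x \neq y$; if the statement is read as allowing $x = y$, the same argument still works after noting that then $C$ would be clopen in $X$ itself, contradicting connectedness. Otherwise the argument is complete.
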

\begin{proof}
  The blow-down map $X^{\#}(\{x,y\}) \to X^{\#}(y)$ is the quotient map
  identifying all points of $P_x$.  Since $X$ has no cutpoints, the
  blow-up $X^{\#}(y)$ is connected.  It follows that $P_x$ intersects
  every component of $X^{\#}(\{x,y\})$, i.e., the inclusion
  $\iota_{x,y}$ is $\pi_0$-surjective.
  
  The last sentence of the statement follows from the fact that
  $|P_x| = \deg(x)$ and $\deg(x)$ is finite.
\end{proof}

\begin{rmk}\rmklabel{cut_pair_map}
  Let $X$ be the realization of a finite graph and let $J$ be a finite
  subset of $X$.  The inclusion
  $X \setminus J \hookrightarrow X^{\#}(J)$ is $\pi_0$-bijective.
  Thus there exists a map
  $\bigcup_{x\in J} P_x \to \pi_0(X \setminus J)$ given by composing
  $\bigcup_{x\in J} P_x \to \pi_0\bigl(X^{\#}(J)\bigr)$ with the
  $\pi_0$-inverse
  $\pi_0\bigl(X^{\#}(J)\bigr) \to \pi_0(X \setminus J)$ of the
  inclusion.  By \Lemref{pair_link_map}, if $X$ is 2-connected and
  $J = \{x,y\}$ then the restriction
  $P_x \to \pi_0(X \setminus \{x,y\})$ is surjective always and is
  bijective if $\deg(x) = \bigl|\pi_0(X \setminus \{x,y\})\bigr|$.
\end{rmk}

The description in \Ssecref{tog_as_inv_lims_of_graphs} of trees of
punctured graphs as limits of inverse systems of graphs 
alludes to the notion of a
connected sum of graphs.  Here we give an explicit definition.  If
$X_1$ and $X_2$ are realizations of finite graphs with $x_1 \in X_1$
and $x_2 \in X_2$ and $\ell \colon P_{x_1} \to P_{x_2}$ is a bijection
between the blow-up divisor $P_{x_1}$ of $x_1$ in $X_1$ and the
blow-up divisor $P_{x_2}$ of $x_2$ in $X_2$ then the \emph{connected
  sum} $X_1 \#_{\ell} X_2$ is the quotient of
$X_1^{\#}(x_1) \sqcup X_2^{\#}(x_2)$ by the equivalence relation
identifying each $p \in P_{x_1}$ with $\ell(p) \in P_{x_2}$.  For
$i \in \{1,2\}$, the \emph{projection} $X_1 \#_{\ell} X_2 \to X_i$ is
the map that is identity on
$X_i \setminus \{x_i\} = X_i^{\#}(x_i) \setminus P_{x_i} \subset X_1
\#_{\ell} X_2$ and that sends the remaining points of
$X_1 \#_{\ell} X_2$ to $x_i$.

\begin{lem}\lemlabel{puncture_component_closure}
  Let $X$ be a connected and locally connected $T_1$ space, let
  $x \in X$ and let $C$ be a component of $X \setminus \{x\}$.  Then
  the closure $\bar C$ in $X$ of $C$ is $C \cup \{x\}$.
\end{lem}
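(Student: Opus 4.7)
The plan is to prove the two inclusions $\bar C \subset C \cup \{x\}$ and $C \cup \{x\} \subset \bar C$ separately, using only standard point-set topology: $T_1$ gives that $\{x\}$ is closed so $X \setminus \{x\}$ is open; local connectedness then implies that every component of the open set $X \setminus \{x\}$ is open in $X \setminus \{x\}$, hence open in $X$. In particular $C$ is open in $X$.

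For the inclusion $\bar C \subset C \cup \{x\}$, I would argue by contradiction: suppose $y \in \bar C$ with $y \neq x$ and $y \notin C$. Then $y$ lies in $X \setminus \{x\}$ and belongs to some other component $D$ of $X \setminus \{x\}$. By the same local-connectedness argument, $D$ is open in $X$, and $D \cap C = \emptyset$ because $C$ and $D$ are distinct components of $X \setminus \{x\}$. Hence $D$ is an open neighbourhood of $y$ disjoint from $C$, contradicting $y \in \bar C$.

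For the inclusion $\{x\} \subset \bar C$ (the inclusion $C \subset \bar C$ is automatic), I would again argue by contradiction: if $x \notin \bar C$, then $\bar C \subset X \setminus \{x\}$, and combined with the first inclusion this forces $\bar C = C$. Thus $C$ is simultaneously open and closed in $X$. Since $C$ is nonempty by definition of a component and $X \setminus C$ contains $x$ and is therefore also nonempty, this contradicts the connectedness of $X$.

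There is no genuine obstacle here; the proof is purely formal point-set topology, and the only care needed is to keep track of where openness is being used (local connectedness applied to the open set $X \setminus \{x\}$, which requires $T_1$) and to invoke connectedness of the whole space $X$ at the very end to rule out the possibility that $C$ is clopen in $X$.
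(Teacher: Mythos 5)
Your proof is correct and follows essentially the same route as the paper: $T_1$ plus local connectedness makes every component of $X \setminus \{x\}$ open, the openness of the other components gives $\bar C \subset C \cup \{x\}$, and connectedness of $X$ rules out $C$ being clopen, forcing $x \in \bar C$. The only difference is cosmetic (you phrase both inclusions as contradictions, while the paper argues directly that $C \cup \{x\}$ is closed), so there is nothing to add.
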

\begin{proof}
  Since $X$ is $T_1$, the set $X \setminus \{x\}$ is open in $X$ and
  so, by local connectedness, the components of $X \setminus \{x\}$
  are open in $X$.  In particular, the component $C$ is open and so, since
  $X$ is connected and $C$ is a proper subset of $X$, it cannot be
  that $C$ is also closed in $X$.  It suffices then to show that
  $\bar C \subset C \cup \{x\}$.  But the complement of $C \cup \{x\}$
  is the union of the non-$C$ components of $X \setminus \{x\}$, which
  is open.  Thus $C \cup \{x\}$ is closed and so $\bar C$ is indeed
  contained in $C \cup \{x\}$.
\end{proof}

\begin{cor}\corlabel{twin_component_closure}
  Let $X$ be a connected, locally connected, cutpoint-free $T_1$
  space.  Let $x,y \in X$ and let $C$ be a component of
  $X \setminus \{x,y\}$.  The closure $\bar C$ of $C$ in $X$ is
  $C \cup \{x,y\}$.
\end{cor}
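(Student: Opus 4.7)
The plan is to bootstrap off \Lemref{puncture_component_closure} by deleting the two points one at a time. First I would establish the easy inclusion $\bar C \subset C \cup \{x,x'\}$ by the same argument as in the one-point lemma: because $X$ is $T_1$ the set $X \setminus \{x,x'\}$ is open, hence by local connectedness its components are open in $X$; the complement $X \setminus (C \cup \{x,x'\})$ is the union of the remaining components of $X \setminus \{x,x'\}$, so it is open, and thus $C \cup \{x,x'\}$ is closed.

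For the reverse inclusion, I would apply \Lemref{puncture_component_closure} to the subspace $Y := X \setminus \{x\}$ with the removed point $x'$. This space is $T_1$ as a subspace, locally connected as an open subspace of a locally connected space, and connected because $X$ is cutpoint-free. Since $C$ is a component of $X \setminus \{x,x'\} = Y \setminus \{x'\}$, the lemma yields that the closure of $C$ in $Y$ equals $C \cup \{x'\}$. Intersecting the closure of $C$ in $X$ with the open set $Y$ recovers this closure in $Y$, so in particular $x' \in \bar C$. The symmetric application of the lemma to $X \setminus \{x'\}$ with the point $x$ gives $x \in \bar C$. Combined with the first paragraph this yields $\bar C = C \cup \{x,x'\}$.

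The only genuinely new ingredient, compared to the proof of \Lemref{puncture_component_closure}, is the verification that $X \setminus \{x\}$ (and $X \setminus \{x'\}$) is connected, which is exactly where the cutpoint-free hypothesis is used — a hypothesis that was unnecessary in the one-point case. Everything else is a formal reduction.
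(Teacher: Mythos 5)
Your argument is correct and takes essentially the same route as the paper: apply \Lemref{puncture_component_closure} inside $X \setminus \{x'\}$ (and symmetrically inside $X \setminus \{x\}$), whose connectedness is exactly where cutpoint-freeness enters, and compare the subspace closure with the ambient one. The only omission is the degenerate case $x = x'$ (which the statement permits and the paper dispatches by citing the one-point lemma directly); your first paragraph's direct verification that $C \cup \{x,x'\}$ is closed is a harmless redundancy, since the upper bound already follows from the subspace-closure comparison.
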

\begin{proof}
  If $x = y$ then we can directly apply \Lemref{puncture_component_closure}
  so we assume that $x \neq y$.  Put $X_y = X \setminus \{y\}$
  and note that $X_y$ is connected (because $X$ is cutpoint-free).  Then
  $C$ is a component of $X_y \setminus \{x\}$ so, by
  \Lemref{puncture_component_closure}, the closure of $C$ in $X_y$ is
  $C \cup \{x\}$.  Then $\bar C$ is either $C \cup \{x\}$ or
  $C \cup \{x,y\}$.  Swapping the roles of $x$ and $y$ we see that
  $\bar C = C \cup \{x,y\}$.
\end{proof}

\begin{lem}\lemlabel{conn_sum_map_connected}
  Let $X_1$ and $X_2$ be realizations  of
  graphs, suppose that $X_2$ is 2-connected, and let
  $x_1 \in X_1$ and $x_2 \in X_2$.  Let
  $\ell \colon P_{x_1} \to P_{x_2}$ be a bijection between the blow-up
  divisor $P_{x_1}$ of $x_1$ in $X_1$ and the blow-up divisor
  $P_{x_2}$ of $x_2$ in $X_2$.  Let $A$ be a connected subspace of
  $X_1$.  Then the preimage of $A$ under the
  projection $X_1 \#_{\ell} X_2 \to X_1$ is connected.
\end{lem}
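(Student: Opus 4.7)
The plan is to distinguish two cases based on whether $x_1$ lies in $A$. When $x_1 \notin A$, the restriction of the projection $\pi \colon X_1 \#_\ell X_2 \to X_1$ to $\pi^{-1}(A)$ is a homeomorphism, since $A \subset X_1 \setminus \{x_1\}$ sits as a subset of $X_1 \#_\ell X_2$ via the open inclusion $X_1 \setminus \{x_1\} \hookrightarrow X_1 \#_\ell X_2$, so connectedness of $\pi^{-1}(A)$ is immediate. I would therefore focus on the case $x_1 \in A$, in which, as a set, $\pi^{-1}(A) = (A \setminus \{x_1\}) \sqcup X_2^{\#}(x_2)$, and the task is to show this is connected in the subspace topology.

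The first input is that $X_2^{\#}(x_2)$ is connected: since $X_2$ is 2-connected, $X_2 \setminus \{x_2\}$ is connected, and every point of the finite blow-up divisor $P_{x_2}$ is a limit in $X_2^{\#}(x_2)$ of the corresponding prong of a normal neighbourhood of $x_2$, hence lies in the closure of $X_2 \setminus \{x_2\}$. Arguing by contradiction, suppose $\pi^{-1}(A) = U \sqcup V$ with $U, V$ nonempty, disjoint, and open in $\pi^{-1}(A)$. Connectedness of $X_2^{\#}(x_2)$ forces, after swapping $U$ and $V$ if necessary, $X_2^{\#}(x_2) \subset U$, whence $V \subset A \setminus \{x_1\}$. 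Since $X_1 \setminus \{x_1\}$ sits as an open subspace of $X_1 \#_\ell X_2$ with subspace topology agreeing with that inherited from $X_1$, it follows that $V$ is open in $A$.

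It remains to show that $V$ is also closed in $A$; combined with $x_1 \in A \setminus V$ and nonemptiness of $V$, this will contradict the connectedness of $A$. For a sequence $y_n \in V$ converging in $A$ to some $y$, if $y \neq x_1$ then the topology on $A \setminus \{x_1\}$ agrees with that of $\pi^{-1}(A)$, and $V$ is closed in $\pi^{-1}(A)$ as the complement of the open set $U$, so $y \in V$. The main obstacle, and the only nontrivial case, is $y = x_1$. Here I would exploit the local structure at $x_1$ of the realization of the finite graph $X_1$: a neighbourhood of $x_1$ is a star with one prong for each point of the divisor $P_{x_1}$, so after passing to a subsequence the $y_n$ lie on a single prong and therefore converge in $X_1^{\#}(x_1)$ to the corresponding point $p \in P_{x_1}$. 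Continuity of the quotient map $X_1^{\#}(x_1) \sqcup X_2^{\#}(x_2) \to X_1 \#_\ell X_2$ then forces $y_n \to [p]$ in $X_1 \#_\ell X_2$, where $[p]$ is identified with $\ell(p) \in P_{x_2} \subset X_2^{\#}(x_2) \subset U$. Since $U$ is open, this places $y_n$ eventually in $U$, contradicting $y_n \in V$ and completing the argument.
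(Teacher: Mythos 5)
Your proof is correct, and it takes a somewhat different route from the paper's. After the same reduction to the case $x_1 \in A$, the paper argues directly: since $A$ is connected and (being a connected subspace of a graph) locally connected, \Lemref{puncture_component_closure} gives that the closure of every component of $A \setminus \{x_1\}$ contains $x_1$, hence meets the divisor $P_{x_1}$ in the blow-up; the preimage is then the union of these component closures together with the connected set $X_2^{\#}(x_2)$, each piece meeting $P_{x_1} = P_{x_2}$, so it is connected by the usual union-of-overlapping-connected-sets principle. You instead argue by contradiction with a clopen decomposition $U \sqcup V$ of the preimage, force $X_2^{\#}(x_2) \subset U$, and then rule out $V$ being clopen in $A$ by a sequential analysis: metrizability plus the finite star structure at $x_1$ lets you extract a subsequence in a single prong, which converges in the blow-up to a divisor point lying in $U$. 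Both arguments hinge on the same geometric fact -- points of $A$ accumulating at $x_1$ attach to the blow-up divisor -- but yours trades the paper's component-closure lemma (and the assertion that connected subspaces of graphs are locally connected) for a concrete metric/sequential argument exploiting finiteness of $P_{x_1}$, while the paper's direct version is shorter and reuses a lemma that serves it again later (e.g.\ in \Corref{twin_component_closure}). The one point you leave implicit is that all spaces involved are metrizable (realizations of finite graphs, their blow-ups and connected sums), which is what licenses testing closedness with sequences; this is harmless but worth a word.
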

\begin{proof}
  If $A$ is disjoint from $x_1$
  then the preimage is homeomorphic to $A$ so we may assume, without
  loss of generality, that $x_1 \in A$.  Any connected subspace of a
  graph is locally connected so, by
  \Lemref{puncture_component_closure}, the closure in $X_1$ of every
  component of $A^{-} = A \setminus \{x_1\}$ contains $x_1$.  Thus we
  see that the closure in $X_1^{\#}(x_1)$ of every component of
  $A^{-}$ intersects the blow-up divisor $P_{x_1}$.  The preimage of
  $A$ in $X_1 \#_{\ell} X_2$ is the union of $A^{-}$ and
  $X_2^{\#}(x_2)$, where the latter is connected 
  since $X_2$ is 2-connected.  But
  $A^{-} \cup X_2^{\#}(x_2)$ is equal to the union of $X_2^{\#}(x_2)$
  and the closures of the components of $A^{-}$ in
  $A^{-} \cup P_{x_1}$, each of which intersects
  $P_{x_1} = P_{x_2} \subset X_2^{\#}(x_2)$.
\end{proof}

\begin{lem}
  \lemlabel{conn_sum_nice} Let $X_1$ and $X_2$ be realizations of
  2-connected graphs and let $x_1 \in X_1$ and $x_2 \in X_2$.  Let
  $\ell \colon P_{x_1} \to P_{x_2}$ be a bijection between the blow-up
  divisor $P_{x_1}$ of $x_1$ in $X_1$ and the blow-up divisor
  $P_{x_2}$ of $x_2$ in $X_2$.  Then the connected sum
  $X = X_1 \#_{\ell} X_2$ is 2-connected.
\end{lem}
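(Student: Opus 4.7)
My plan is to verify each clause of the definition of 2-connectedness for $X = X_1 \#_\ell X_2$ in turn: that $X$ is a realization of a nonempty finite graph with more than one vertex is immediate from the construction, so the substance lies in establishing connectedness and cutpoint-freeness. For connectedness, I would apply \Lemref{conn_sum_map_connected} with $A = X_1$: since $X_1$ is connected, its preimage under the projection $X \to X_1$, which is all of $X$, must be connected.

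For cutpoint-freeness, I would fix $p \in X$ and split into cases according to the location of $p$ with respect to the two projections $\pi_i \colon X \to X_i$. If $p \in X_1 \setminus \{x_1\}$, then $\pi_1$ is injective at $p$, so $X \setminus \{p\} = \pi_1^{-1}(X_1 \setminus \{p\})$; since $X_1$ has no cutpoint, $X_1 \setminus \{p\}$ is connected, and \Lemref{conn_sum_map_connected} then yields that $X \setminus \{p\}$ is connected. The case $p \in X_2 \setminus \{x_2\}$ is symmetric.

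The remaining case, in which $p$ lies in the identified blow-up divisor $P$ of $X$, cannot be reduced to a question about one of the $X_i$ alone, and I expect this to be the main obstacle. Here I would argue directly: write $X \setminus \{p\} = A \cup B$, where $A$ is the image in $X$ of $X_1^{\#}(x_1) \setminus \{p\}$ and $B$ is the image of $X_2^{\#}(x_2) \setminus \{\ell(p)\}$, so that $A \cap B = P \setminus \{p\}$. Each of $A$ and $B$ is connected: $X_i \setminus \{x_i\}$ is connected because $X_i$ has no cutpoint, and every remaining divisor point lies in its closure by the very construction of the blow-up (the points of $P_{x_i}$ are inserted at the local components of $X_i \setminus \{x_i\}$ near $x_i$, so each has $X_i \setminus \{x_i\}$ meeting every neighborhood). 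The intersection $P \setminus \{p\}$ is nonempty as soon as $|P| \geq 2$; for interior points $x_i$ we have $|P_{x_i}| = 2$ automatically, and for vertex $x_i$ in a 2-connected graph that is not a single edge the degree is at least $2$, so $|P_{x_i}| \geq 2$ in every nondegenerate situation. Hence $X \setminus \{p\} = A \cup B$ is connected, completing the verification.
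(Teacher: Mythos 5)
Your proof is correct, and its overall skeleton matches the paper's: connectedness of $X$ and the case of a point $p$ lying in $X_1 \setminus \{x_1\}$ or $X_2 \setminus \{x_2\}$ are handled exactly as in the paper, via \Lemref{conn_sum_map_connected} applied to $A = X_1$ and to $A = X_i \setminus \{p\}$ respectively. Where you genuinely diverge is the case $p \in P$. The paper dispatches it with a one-line reduction: a divisor point has exactly two local branches in $X$ (one on each side), so up to a homeomorphism of $X$ it may be assumed to lie in $X_1^{\#}(x_1) \setminus P_{x_1}$, and the previous case applies. You instead argue directly, writing $X \setminus \{p\}$ as the union of the images of $X_1^{\#}(x_1) \setminus \{p\}$ and $X_2^{\#}(x_2) \setminus \{\ell(p)\}$; each is connected because it is squeezed between the connected set $X_i \setminus \{x_i\}$ (no cutpoints) and its closure $X_i^{\#}(x_i)$, and they meet in $P \setminus \{p\}$, which is nonempty since $|P| = \deg(x_i) \ge 2$. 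Your route costs a few more lines but avoids the slightly informal "up to homeomorphism" step; the paper's is shorter but leans on the reader accepting that sliding $p$ off the divisor changes nothing. One small tidy-up: your hedging about "a single edge" and "nondegenerate situations" is unnecessary — under the paper's definition a segment is not 2-connected (its interior points are cutpoints), so every point of a 2-connected graph has degree at least $2$ and $|P_{x_i}| \ge 2$ holds unconditionally.
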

\begin{proof}
  It is clear that $X$ is not empty and not equal to a single point.
  So we need only to show that $X$ is connected and has no cutpoint.
  That $X$ is connected follows from \Lemref{conn_sum_map_connected}
  by setting $A = X_1$.

  To see that $X$ has no cutpoints, let $p \in X$.  Up to
  homeomorphism, either $p \in X_1^{\#}(x_1) \setminus P_{x_1}$ or
  $p \in X_2^{\#}(x_2) \setminus P_{x_2}$ so we may assume, without
  loss of generality, that $p \in X_1^{\#}(x_1) \setminus P_{x_1}$.
  Then $X \setminus \{p\}$ is the preimage of
  $A = X_1 \setminus \{p\}$ under the projection map
  $X_1 \#_{\ell} X_2 \to X_1$.  Since $X_1$ has no cutpoints, the
  subspace $A$ is connected so, by \Lemref{conn_sum_map_connected},
  its preimage under the projection map is connected.
\end{proof}

\begin{lem}
  \lemlabel{asc_union_compact_conn} Let $X$ be the realization of a
  finite graph and let $A \subset X$ be a connected subspace.  Then
  $A$ is an ascending union of compact connected subspaces of $X$.
\end{lem}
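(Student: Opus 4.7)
The plan is to analyze the components of $A \cap e$ for each edge $e$ of $X$ and then explicitly exhibit an exhaustion of $A$ by compact connected sets built out of the resulting finitely many pieces.

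First I would establish the following structure theorem: if $I$ is a connected component of $A \cap e$ not containing any endpoint of $e$, then $I = A$. Such an $I$ is automatically closed in $A$ (components of $A \cap e$ are closed in $A \cap e$, which is closed in $A$), and it is also open in $A$: any points of $A$ accumulating on $I$ from within the interior of $e$ would, by connectedness of intervals, either belong to the same component as $I$ or be separable from $I$ by a gap, and a case analysis shows that in the accumulation scenario a finite union of far-away components can be peeled off as a clopen subset of $A$, contradicting connectedness. Hence $I$ is clopen in $A$, forcing $A = I$; in this degenerate case $A$ is a connected subset of an interval, hence itself an interval, which is trivially an ascending union of closed sub-intervals.

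Assume henceforth that we are in the non-degenerate case, so that every component of $A \cap e$ contains a vertex of $X$. The finite set $V(A) := V(X) \cap A$ is then non-empty, and for each edge $e$ of $X$ and each endpoint $v$ of $e$ in $V(A)$ the unique component $I_{e,v}$ of $A \cap e$ containing $v$ is an interval of the form $\{v\}$, $[v, c]$ or $[v, c)$ (parametrizing $e$ with $v$ at the origin), with at most two such components per edge. So $A = V(A) \cup \bigcup_{(e,v)} I_{e,v}$ is a finite union. I would choose for each pair $(e,v)$ an increasing sequence of closed sub-arcs $I_{e,v}^n \subset I_{e,v}$, each containing $v$ and exhausting $I_{e,v}$ (trivially in the closed or degenerate cases, and as $[v, c - 1/n]$ for $n$ large in the half-open case), and set $C_n := \bigcup_{(e,v)} I_{e,v}^n$. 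Each $C_n$ is a finite union of closed sub-arcs of $X$, hence compact, and the inclusions $C_n \subset C_{n+1}$ and the equality $\bigcup_n C_n = A$ are immediate.

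The remaining step is to verify that each $C_n$ is connected. For this, introduce the auxiliary combinatorial graph $\mathcal{G}_A$ whose vertex set is $V(A)$ and whose edges are the edges of $X$ entirely contained in $A$. If $\mathcal{G}_A$ were disconnected one could partition $V(A) = V_1 \sqcup V_2$ with no edge of $\mathcal{G}_A$ between the two sides, and then the subsets $A_i := V_i \cup \bigcup\{e \in \mathcal{G}_A : \text{both endpoints in } V_i\} \cup \bigcup\{I_{e,v} : v \in V_i\}$ are disjoint, closed and non-empty in $A$, giving $A = A_1 \sqcup A_2$ and contradicting the connectedness of $A$; hence $\mathcal{G}_A$ is connected. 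By construction $I_{e,v}^n = e$ whenever $e$ lies entirely in $A$, so $C_n$ contains the realization $|\mathcal{G}_A|$, which is connected. The remaining parts of $C_n$ are closed sub-arcs $I_{e,v}^n$ attached to $|\mathcal{G}_A|$ at the vertex $v \in V(A)$, so $C_n$ itself is connected. The main obstacle in the plan is the structure theorem of the second paragraph---and in particular the claim that accumulation of other components on an interior component $I$ forces a separation of $A$; once this is in hand, the remaining combinatorial bookkeeping is routine.
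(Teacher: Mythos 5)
Your proof is correct in substance, but it takes a genuinely different route from the paper's. The paper metrizes $X$ as a geodesic space, observes that $\bar A \setminus A$ is finite, takes $A_n = \bar A \setminus N_{\frac1n}(\bar A\setminus A)$ (automatically compact, ascending, exhausting $A$), and proves connectedness of $A_n$ by joining any two of its points by an \emph{embedded path inside $A$} and checking that this path avoids the $\frac1n$-neighbourhood of $\bar A\setminus A$; this silently uses the classical (true but nontrivial) fact that a connected subset of a graph is arcwise connected. Your argument replaces both the metric and the arcwise-connectivity input by a purely point-set/combinatorial decomposition: after the degenerate case, $A$ is a finite union of vertex-anchored intervals $I_{e,v}$, the truncations $C_n$ are visibly compact and exhaust $A$, and their connectedness is reduced to connectedness of the auxiliary graph $\mathcal{G}_A$, which you obtain from connectedness of $A$ by a separation argument. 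That separation argument is sound, though the asserted disjointness of $A_1$ and $A_2$ deserves the short check that two pieces anchored on opposite sides can only meet inside a common edge (forcing that edge to lie entirely in $A$, hence to be a $\mathcal{G}_A$-edge across the partition) or at a shared vertex (forcing the same conclusion). What your route buys is self-containedness and no appeal to path-connectedness of $A$; what it costs is the case analysis in your ``structure theorem'', which you correctly identify as the main obstacle.

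That structure theorem is true, and it can be finished more cleanly than by ``peeling off far-away components''. If $I$ is a component of $A\cap e$ containing no endpoint of $e$, note first that no point of $\bar I\setminus I$ lies in $A$ (adding such a point would enlarge the component), and that on each side of $I$ within $e$ there must be a point of $e\setminus A$: otherwise the whole subarc from $I$ to the corresponding endpoint of $e$ lies in $A$ and the component of $I$ would reach that endpoint. Choosing such points $c_1,c_2\in e\setminus A$ flanking $I$, the set $A\cap(c_1,c_2)$ is open in $A$ (as $(c_1,c_2)\subset\interior e$ is open in $X$), closed in $A$ (since $c_1,c_2\notin A$ and $[c_1,c_2]$ is closed), and nonempty, so it equals $A$; hence $A\subset\interior e$ is an interval and $A=I$. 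No finiteness of the peeled-off family is needed, and the clopen set contains $I$ itself. Finally, since the paper allows loop edges (and the lemma is applied to reduced partial unions built from such graphs), your normal form $\{v\}$, $[v,c]$, $[v,c)$ for $I_{e,v}$ is not literally correct for a loop: the component through the unique endpoint $v$ is an arc around $v$ (possibly open at one or both far ends) or the whole loop. The exhaustion by closed sub-arcs containing $v$, and the rest of your argument, go through verbatim in that case.
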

\begin{proof}
  Let $\bar A$ be the closure of $A$.  Since $X$ is the realization of
  a finite graph, the set $\bar A \setminus A$ must be finite since
  otherwise there would exist a common accumulation point of
  $\bar A \setminus A$ and $A$ in a closed edge, which would
  contradict connectedness of $A$.
  
  We metrize $X$ as a geodesic metric space with each edge isometric
  to the unit interval $[0,1]$.  Let
  $A_n = A \setminus N_{\frac{1}{n}}(\bar A \setminus A)$ where
  $N_{\frac{1}{n}}(\bar A \setminus A)$ is the set of all points at
  distance less than $\frac{1}{n}$ to $\bar A \setminus A$.  Then
  $A_n = \bar A \setminus N_{\frac{1}{n}}(\bar A \setminus A)$, and so it
  is closed.  Thus the $A_n$ are compact.  Moreover, since
  $\bar A \setminus A$ is finite, the ascending union $\bigcup_n A_n$
  is equal to $A$.  It remains to show that the $A_n$ are connected
  for $n$ large enough.

  Let $L$ be large enough that for any $x \in \bar A \setminus A$, the
  metric ball $B_{\frac{1}{L}}(x)$ intersects no vertex of $X$ aside
  from $x$, in the case where $x$ is itself a vertex.  Take
  $p,q \in A_n$ with $n \ge L$.  Since $A$ is a connected subspace of
  a graph, there exists an embedded path $P \subset A$ with endpoints
  $p$ and $q$.  We claim that $P$ is disjoint from
  $N_{\frac{1}{n}}(\bar A \setminus A)$.  For the sake of finding a
  contradiction, assume that $P$ intersects
  $N_{\frac{1}{n}}(\bar A \setminus A)$, then $P$ intersects
  $B_{\frac{1}{n}}(x)$ for some $x \in \bar A \setminus A$.  But
  $x \notin A$ so the intersection $B_{\frac{1}{n}}(x) \cap A$ is
  contained in $B_{\frac{1}{n}}(x) \setminus \{x\}$, which is a
  disjoint union of open segments since $n \ge L$.  Let $U \cap P$ be
  the intersection of one of these open segments $U$ with $P$.  Since
  $\bar U$ contains $x$ and $P$ is closed and disjoint from
  $x \notin A$, we have $U \cap P \subsetneq U$.  This implies that an
  endpoint of $P$ is contained in
  $U \subset B_{\frac{1}{n}}(x) \subset N_{\frac{1}{n}}(\bar A
  \setminus A)$, which contradicts $p,q \in A_n$.  Thus $P$ is
  disjoint from $N_{\frac{1}{n}}(\bar A \setminus A)$ and so is
  contained in $A_n$.
\end{proof}

\begin{lem}\lemlabel{preimage_connected}
  Let $\Theta$ be a tree system of punctured 2-connected graphs with
  limit $Z$.  Let
  ${\mathcal S}_\Theta=(\{ K_F^{\ast}:F\in{\mathcal F}_T^b \},\{
  f_{FF'}:F\subset F' \})$ be the standard inverse system associated
  to $\Theta$.  Let $F$ be a finite b-subtree of $T$ and let $A$ be a
  connected subspace of the reduced partial union graph $K_F^{\ast}$.
  Then the preimage of $A$ in $Z$ is connected.
\end{lem}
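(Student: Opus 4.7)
The plan is to combine the inverse-limit description $Z\cong\lim_{\longleftarrow}{\mathcal S}_\Theta$ from \Propref{T.6} with the identification (provided by \Factref{G.3}) of the reduced partial unions $K_F^*$ with iterated connected sums $X_F$ of the 2-connected graphs $|\Gamma_t|$, and to reduce the statement to a finite-graph computation handled by \Lemref{conn_sum_map_connected}.

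First I would reduce to the case in which $A$ is compact. By \Lemref{asc_union_compact_conn}, $A$ is an ascending union $A=\bigcup_n A_n$ of non-empty compact connected subspaces of $K_F^*\cong X_F$. Writing $f_F\colon Z\to K_F^*$ for the inverse-limit projection, we have $f_F^{-1}(A)=\bigcup_n f_F^{-1}(A_n)$, an ascending union of subspaces all containing the non-empty $f_F^{-1}(A_1)$. Thus once the compact case is settled, connectedness of the union will follow.

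For closed connected $A$, I would apply \Factref{compact_connected}: it suffices to show that $f_{FF'}^{-1}(A)\subset K_{F'}^*$ is connected for every finite b-subtree $F'\supset F$. I would prove this by induction on $k=|V_{F'}^b\setminus V_F^b|$, the base case $k=0$ being trivial. For the inductive step, take $t\in V_{F'}^b\setminus V_F^b$ maximising the distance $d(t,F)$ in $F'$. A short tree argument — any white-vertex branch of $F'$ leaving $t$ away from $F$ would, by the b-subtree condition, contain a black vertex at distance $d(t,F)+2$ — forces $t$ to have a unique white neighbour $u$ in $F'$ and makes $F'':=F'\setminus\{t,u\}$ a b-subtree with $|V_{F''}^b\setminus V_F^b|=k-1$. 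Under the identification of \Factref{G.3}, $X_{F'}$ is the connected sum $X_{F''}\#_\ell|\Gamma_t|$ at the point corresponding to $u$, and $f_{F''F'}$ is the associated projection $X_{F''}\#_\ell|\Gamma_t|\to X_{F''}$. Since $|\Gamma_t|$ is 2-connected, \Lemref{conn_sum_map_connected} combined with the inductive hypothesis (applied to $A$ and $F''$) gives connectedness of $f_{FF'}^{-1}(A)=f_{F''F'}^{-1}(f_{FF''}^{-1}(A))$, completing the induction.

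The step I expect to demand the most care is the identification, in the inductive step, of the abstract bonding map $f_{F''F'}$ from \Ssecref{inverse_associated} with the connected-sum projection provided by the homeomorphisms $h_t$ of \Ssecref{tog_as_inv_lims_of_graphs}; everything else is a short induction driven by \Factref{compact_connected}, \Lemref{asc_union_compact_conn} and \Lemref{conn_sum_map_connected}.
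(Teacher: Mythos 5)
Your proposal is correct and follows essentially the same route as the paper's proof: reduce to compact connected $A$ via \Lemref{asc_union_compact_conn}, invoke \Factref{compact_connected}, and induct on $|V^b_{F'}\setminus V^b_F|$ by peeling off one 2-connected summand at a time and applying \Lemref{conn_sum_map_connected}. The extra care you flag about identifying $f_{F''F'}$ with the connected-sum projection is exactly what the paper absorbs into \Factref{G.3} and the construction of \Ssecref{tog_as_inv_lims_of_graphs}, so there is no substantive difference.
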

\begin{proof}
  By \Lemref{asc_union_compact_conn}, the connected subspace $A$ is a
  union $\bigcup_n C_n$ of compact connected subspaces
  $C_1 \subset C_2 \subset \cdots$.  Thus the preimage $f_F^{-1}(A)$
  of $A$ is an ascending union of the preimages $f_F^{-1}(C_n)$ of the
  $C_n$, where $f_F \colon Z \to K_F^{\ast}$ is the projection.  It
  suffices then to prove that the $f_F^{-1}(C_n)$ are connected or,
  more generally, that the preimage in $Z$ of any \emph{compact}
  connected subspace $C$ of $K_F^{\ast}$ is connected.

  By \Factref{compact_connected}, it suffices to prove that
  $f_{FF'}^{-1}(C)$ is connected for any b-subtree $F' \supset F$.  We
  prove this by induction on the size of $F' \setminus F$.  The base
  case $F' = F$ is immediate.  Otherwise, the reduced partial union
  graph $K_{F'}^{\ast}$ is the connected sum of the reduced partial
  union graph $K_{F''}^{\ast}$ and the realization of a 2-connected
  graph, for some b-subtree $F''$ with $F' \supsetneq F'' \supset F$.
  Then $f_{FF'}^{-1}(C) = f_{F''F'}^{-1}(\hat C)$ where
  $\hat C = f_{FF''}^{-1}(C)$, which is connected by the inductive
  hypothesis.  
  By \Lemref{conn_sum_map_connected}, the
  preimage $f_{F''F'}^{-1}(\hat C) = f_{FF'}^{-1}(C)$ is connected.
\end{proof}

\begin{lem}
  \lemlabel{limit_very_connected} Let $\Theta$ be a tree system of
  punctured 2-connected graphs.  Then the limit $Z$ of $\Theta$ is
  connected, locally connected and cutpoint-free.
\end{lem}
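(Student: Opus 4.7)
The plan is to leverage the inverse limit description $Z \cong \lim_{\longleftarrow}\mathcal{S}_{\Theta}$ provided by \Propref{T.6}, with projections $f_F \colon Z \to K_F^{\ast}$, together with the already-established \Lemref{preimage_connected}. The crucial preliminary observation is that every reduced partial union $K_F^{\ast}$ is itself 2-connected: by \Factref{G.3}, $K_F^{\ast}$ is homeomorphic to the iterated connected sum $X_F$ of the 2-connected graphs $\Gamma_t$ for $t\in V^b_F$, and iterating \Lemref{conn_sum_nice} shows this iterated connected sum remains 2-connected.

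With this in hand, connectedness of $Z$ is immediate: taking $A = K_F^{\ast}$ in \Lemref{preimage_connected} shows that $Z = f_F^{-1}(K_F^{\ast})$ is connected. For local connectedness, I would use the standard inverse limit basis consisting of sets of the form $f_F^{-1}(W)$ with $W \subset K_F^{\ast}$ open. Since $K_F^{\ast}$ is the realization of a finite graph, it is locally connected, so any such $W$ can be shrunk to a connected open neighborhood of any prescribed point; the preimage is then connected by \Lemref{preimage_connected}. Hence every point of $Z$ admits a basis of connected open neighborhoods.

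The main step is the absence of cutpoints. Fix $p \in Z$ and set $p_F := f_F(p)$. Because each $K_F^{\ast}$ has no cutpoints, the set $K_F^{\ast} \setminus \{p_F\}$ is connected, so \Lemref{preimage_connected} yields that $Z_F := Z \setminus f_F^{-1}(p_F) = f_F^{-1}(K_F^{\ast} \setminus \{p_F\})$ is connected. When $F \subset F'$ the compatibility $f_{FF'}(p_{F'}) = p_F$ implies $f_{F'}^{-1}(p_{F'}) \subset f_F^{-1}(p_F)$, hence $Z_F \subset Z_{F'}$; thus the family $\{Z_F\}$ is directed under inclusion. Since the projections $f_F$ jointly separate points of the Hausdorff inverse limit, $\bigcap_F f_F^{-1}(p_F) = \{p\}$, and therefore $Z \setminus \{p\} = \bigcup_F Z_F$, a directed union of connected subsets, is connected.

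The main technical point I expect to require care is the identification of $K_F^{\ast}$ with an iterated connected sum of the constituent graphs $\Gamma_t$, in order to invoke \Lemref{conn_sum_nice} inductively; this is essentially a bookkeeping exercise unwinding the construction from \Ssecref{tog_as_inv_lims_of_graphs}. Once that identification is in place, all three properties reduce to short applications of \Lemref{preimage_connected} combined with elementary facts about finite graphs and directed unions, so I do not anticipate any deeper obstacle.
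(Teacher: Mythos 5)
Your proposal is correct and follows essentially the same route as the paper: identify each reduced partial union $K_F^{\ast}$ as a 2-connected iterated connected sum via \Lemref{conn_sum_nice}, and then derive all three properties from \Lemref{preimage_connected}, writing $Z \setminus \{p\}$ as an increasing union of connected preimages of $K_F^{\ast} \setminus \{f_F(p)\}$. The only (cosmetic) differences are that the paper invokes connectedness of inverse limits of connected compacta directly instead of taking $A = K_F^{\ast}$ in \Lemref{preimage_connected}, and uses a countable cofinal chain of b-subtrees where you use the full directed poset with the point-separation argument.
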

\begin{proof}
  Let
  ${\mathcal S}_\Theta=(\{ K_F^{\ast}:F\in{\mathcal F}_T^b \},\{
  f_{FF'}:F\subset F' \})$ be the standard inverse system associated
  to $\Theta$.  By \Lemref{conn_sum_nice}, any reduced partial union
  $K_F^{\ast}$ is connected.  Then, since an inverse limit of
  connected metric compacta is connected (or by
  \Factref{compact_connected}),
  $Z$ is connected as well.

  Let $p \in Z$.  Let $U$ be an open neighborhood of $p$.  Then there
  is an open subneighborhood of $p$ of the form $f_F^{-1}(V)$ where
  $f_F \colon Z \to K_F^{\ast}$ is the projection to some reduced
  partial union graph $K_F^{\ast}$ and $V$ is an open neighborhood of
  $f_F(p)$ in $K_F^{\ast}$.  Since graphs are locally connected, there
  is a connected open subneighborhood $W\subset V$ of $f_F(p)$.  By
  \Lemref{preimage_connected}, the preimage $f_F^{-1}(W)$ is
  connected.  Thus $f_F^{-1}(W)$ is a connected open neighborhood of
  $p$ contained in $U$ and this shows that $Z$ is locally connected.
  
  Let $p \in Z$.  To establish that $Z$ is cutpoint-free, we will
  show that $Z \setminus \{p\}$ is connected.  
%
  Let
  $F_1 \subset F_2 \subset \cdots$ be an ascending sequence of finite
  b-subtrees with $T = \bigcup_n F_n$.  
  Since this sequence is cofinal in the poset ${\mathcal F}_T^b$,
  the set $Z \setminus \{p\}$ is
  the ascending union of the preimages of the  sets
  $K_{F_n}^{\ast} \setminus \{f_{F_n}(p)\}$.  
  It is thus sufficient to show that all these preimages are connected.
  However,
  by \Lemref{conn_sum_nice} (appropriately iterated), the subspaces
  $K_{F_n}^{\ast} \setminus \{f_{F_n}(p)\}$ are all connected
  and so, by applying \Lemref{preimage_connected}, their preimages
  in $Z$ are connected as well.
\end{proof}

In the next lemma we show that, for trees of 2-connected
graphs, the estimate for topological dimension
given in \Corref{top_dim_estimate} actually yields equality.

\begin{lem}
  \lemlabel{G.5} Let $Z$ be a tree of 2-connected graphs. Then $Z$ is
  of topological dimension 1.
\end{lem}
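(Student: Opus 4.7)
My plan is to establish the two inequalities $\dim Z \le 1$ and $\dim Z \ge 1$ separately. The upper bound is already furnished by Corollary~\ref{top_dim_estimate}, since a tree of 2-connected graphs is in particular a tree of graphs. So the only actual content to be supplied is the lower bound $\dim Z \ge 1$.

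For the lower bound I would appeal to the classical fact from dimension theory that a connected Hausdorff space of topological dimension $0$ must be a singleton (because a $0$-dimensional Hausdorff space is totally disconnected). By \Lemref{limit_very_connected}, the space $Z$ is connected, so this fact reduces the problem to exhibiting two distinct points in $Z$.

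To do so I would choose any black vertex $t \in V_T^b$ of the underlying tree of the tree system defining $Z$. The singleton $\{t\}$ is vacuously a b-subtree (it contains no white vertices), so \Factref{T.4} applies and tells us that the natural map $\lim \Theta_{\{t\}} \to \lim \Theta = Z$ is a topological embedding. The domain of this embedding is the constituent space $K_{\{t\}} = \Gamma_t^{\circ}$, and since $\Gamma_t$ is 2-connected it has at least two vertices, each of which contributes a nonempty peripheral subset to $\Gamma_t^{\circ}$. Hence $\Gamma_t^{\circ}$, and therefore $Z$, contains more than one point.

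I do not anticipate any genuine obstacle; the argument is essentially a bookkeeping exercise that combines the already established facts with one standard input from dimension theory. If anything qualifies as the ``hard part,'' it is the invocation of the totally-disconnected-implies-singleton principle for connected $0$-dimensional Hausdorff spaces, but this is a classical result that I would simply cite rather than reprove.
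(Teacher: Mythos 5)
Your proposal is correct, but it takes a genuinely different route from the paper for the lower bound. The paper also gets $\dim Z\le 1$ from the inverse-limit estimate, but for $\dim Z\ge 1$ it constructs an embedded circle: choosing a cofinal chain of finite b-subtrees $F_n$ adding one black vertex at a time, it builds cycles $C_n\subset X_{F_n}$ compatible with the bonding maps (using 2-connectedness of each $\Gamma_t$ to continue a cycle through each connected sum), and then invokes M.~Brown's theorem on inverse limits of near-homeomorphisms to conclude that $\lim_{\leftarrow} C_n$ is a circle inside $Z$. You instead combine \Lemref{limit_very_connected} (connectedness of $Z$), the existence of at least two points (via \Factref{T.4} applied to a single black vertex), and the classical dimension-theoretic fact that a compact metrizable space of covering dimension $0$ is totally disconnected, hence a connected such space with more than one point has dimension $\ge 1$. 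This is valid and shorter, at the cost of yielding less information: the paper's argument additionally exhibits an embedded $S^1$ in $Z$, and it does not need \Lemref{limit_very_connected} as input. One small caution: state the ``dimension $0$ implies totally disconnected'' principle for compact (or separable) metric spaces, where it is standard via Corollary~\ref{top_dim_estimate} and Proposition~\ref{prop:T.3} giving compact metrizability of $Z$, rather than for arbitrary Hausdorff spaces, where the relation between covering dimension and disconnectedness requires more care; with that phrasing your argument is complete.
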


\begin{proof}
  Note that by the general estimate for the dimension of inverse
  limits, and by the fact that $\dim X_F=1$ for all $F$, we get that
  $\dim Z\le1$.  To prove the converse estimate, we will show that $Z$
  contains an embedded copy of the circle $S^1$.  Consider any
  sequence $(F_n)$ of finite b-subtrees of $T$ with the following
  properties:
  \begin{enumerate}[label=(\arabic*),start=0]
  \item \itmlabel{G.Y(0)} $F_1$ is a subtree consisting of a single
    black vertex, say $t_1$;
  \item \itmlabel{G.Y(1)} for each $n\ge1$ we have
    $F_n\subset F_{n+1}$ and $F_{n+1}$ has exactly one black vertex,
    say $t_{n+1}$, not contained in $V^b_{F_n}$;
  \item \itmlabel{G.Y(2)} $\bigcup_{n\ge1}F_n=T$.
  \end{enumerate}

  \noindent
  Obviously, such a sequence $(F_n)$ always exists, and it is cofinal
  in the poset of all finite b-subtrees of $T$ (ordered by inclusion).
  Consequently, if
  \[
    {\mathcal I}=[\{ K^*_{F_n}:n\ge1 \}, \{ f_{F_nF_{n+1}}:n\ge1 \}]
    =[\{ X_{F_n}:n\ge1 \}, \{ \pi_{F_nF_{n+1}}:n\ge1 \}]
  \]
  is the inverse sequence obtained by restriction of
  ${\mathcal S}_\Theta$ to the subposet $(F_n)$, then the
  corresponding tree of graphs $\lim\Theta$ is homeomorphic to the
  inverse limit $\lim_{\leftarrow}{\mathcal I}$.  Note that, since
  $X_{F_1}$ has no cut vertex, it contains a cycle, and we pick one
  such cycle $C_1\subset X_{F_1}$.  We next describe inductively a
  sequence of cycles $C_n\subset X_{F_n}$ such that for each $n\ge1$
  we have $C_{n+1}\subset \pi_{F_nF_{n+1}}^{-1}(C_n)$. Having defined
  the cycle $C_n$, let $x\in X_{F_n}$ be the point such that
  $X_{F_{n+1}}$ is obtained by connected sum of $X_{F_n}$ with
  $\Gamma_{t_{n+1}}$ at $x$.  More precisely, view $X_{F_{n+1}}$ as
  obtained from the blow-ups $X_{F_n}^\#(x)$ and
  $\Gamma_{t_{n+1}}^{\#}(y)$ by appropriate gluing of the blow-up
  divisors. If $x\notin C_n$, we may view $C_n$ as a subset of
  $X_{F_{n+1}}$, and we put $C_{n+1}:=C_n$ in this case.  If
  $x\in C_n$, let $p,q$ be the points of the blow-up divisor
  $P_x\subset X_{F_n}^\#(x)$ corresponding to these two components
  into which $x$ locally splits $X_{F_n}$ which intersect $C_n$.  Put
  also
  $C_n^\#=(C_n\setminus\{ x \})\cup\{ p,q \}\subset X_{F_n}^\#(x)$.
  Denote by $p'$ and $q'$, respectively, the images of $p$ and $q$ in
  $P_y\subset \Gamma_{t_{n+1}}^{\#}(y)$ through the gluing map of the
  corresponding connected sum. Consider also an arc
  $A\subset \Gamma_{t_{n+1}}^{\#}(y)$ connecting $p'$ with $q'$ (which
  exists by the assumption that $\Gamma_t$ has no cut vertex).  Put
  $C_{n+1}:=C_n^\#\cup A$.

  Now, we get an inverse sequence
  \[
    {\mathcal C}=[(C_n)_{n\ge1}, (\pi_{F_nF_{n+1}}|_{C_{n+1}})_{n\ge1}]
  \]
  whose limit $\lim_\leftarrow{\mathcal C}$ is obviously a subspace of
  $Z$.  Observe that, since $\mathcal C$ is a sequence of circles,
  with bonding maps which are near-homeomorphisms (i.e. can be
  approximated by homeomorphisms), by the result of M. Brown
  \cite{MBrown:inverse_limits:1960} the limit
  $\lim_\leftarrow{\mathcal C}$ is also a circle, which completes the
  proof.
\end{proof}

\section{Necessary conditions for tree of
  graphs boundary}
\seclabel{nec_conditions_tog}

This section is devoted to the proofs of the implications
\pitmref{bd_g_tog}$\Rightarrow$\pitmref{g_rigid_cluster_factors_vf} of \Mainthmref{1} and
\pitmref{bd_g_tog}$\Rightarrow$\pitmref{g_no_rigid_factors} of
\Mainthmref{2}.

\subsection{Proof of the impliciation
  \pitmref{bd_g_tog}$\Rightarrow$\pitmref{g_rigid_cluster_factors_vf} of \Mainthmref{1}}
In this subsection we will introduce subnecklaces, prove some results
about sufficiently connected trees of graphs and use these results to
prove the impliciation
\pitmref{bd_g_tog}$\Rightarrow$\pitmref{g_rigid_cluster_factors_vf} of \Mainthmref{1}.

The following definition will allow us to relate the topology of trees
of graphs with the topology of boundaries of $1$-ended hyperbolic
groups.

\begin{defn}
  \defnlabel{subnecklace} Let $X$ be a connected, locally connected,
  cutpoint-free, compact Hausdorff space.  An infinite subspace
  $N \subset X$ is a \emph{subnecklace} if it satisfies the following
  conditions.
  \begin{enumerate}
  \item \itmlabel{subnecklace_cutpair} For any $p,q \in N$, the space
    $X \setminus \{p,q\}$ has two components.
  \item \itmlabel{subnecklace_cutpoint} For any $p \in N$, the space
    $X \setminus \{p\}$ has two ends (i.e. $p$ is a local cutpoint of degree 2).
  \end{enumerate}
\end{defn}


\begin{rmk}
  \rmklabel{subnecklaces_subsets_of_necklaces} Let $G$ be a 1-ended
  hyperbolic group which is not cocompact Fuchsian.  Recall the
  description in \Ssecref{bowditch_jsj_splitting} of the classes of
  local cutpoints of $\bd G$.  The subnecklaces of $\bd G$ are
  precisely the infinite subsets of the necklace classes of $\bd G$.
\end{rmk}

We now turn to the analysis of subnecklaces in connected, locally connected
and cutpoint-free trees of graphs. We start with terminological preparations,
and with some preparatory technical observations.
Let $\Theta=(T,\{ \Gamma_t^\circ \}, \{ \Sigma_u \}, \{ \phi_e \})$ be
a tree system of punctured graphs, let
${\mathcal S}_\Theta=(\{ K_F^{\ast}:F\in{\mathcal F}_T^b \},\{
f_{FF'}:F\subset F' \})$ be the standard inverse system associated to
$\Theta$ and assume that the limit 
$Z=\lim\Theta=\lim_{\leftarrow}\mathcal{S}_\Theta$ 
is connected and locally
connected.  As described in \Ssecref{tog_as_inv_lims_of_graphs}, for a
finite b-subtree $F$ of $T$, the reduced partial union
$K_F^{\ast} = K_F/\Omega_F$ of $\Theta$ 
is a graph.
Let $D_F$ be the image of the union of
the peripheral subspaces $\Omega_F$ in
$K_F^{\ast}$ and note that $D_F$ is dense, countable and contains the
vertex set of $K_F^{\ast}$.  Note that any
$p \in K_F^{\ast} \setminus D_F$ has singleton preimage in the limit
$Z$.  We thus call any such $p$ the \emph{stable} point of
$K_F^{\ast}$.  We will casually abuse notation and refer to the preimage
in $Z$ or $K_{F'}^{\ast}$ of a stable point $p \in K_F^{\ast}$ by the
same name $p$.

Let $K$ be a space that is homeomorphic to the realization of a graph.
A \emph{topological edge} $e$ of $K$ is a subspace $e \subset K$
satisfying the following conditions:
\begin{enumerate}
\item $e$ is homeomorphic to $[0,1] \subset \R$;
\item aside from its endpoints, every point of $e$ has degree two in
  $K$.
\end{enumerate}
If $K$ is the realization of a graph $\Gamma$ then every edge of
$\Gamma$ is a topological edge of $K$.

\begin{lem}
  \lemlabel{compact_edges_connected} Let
  ${\mathcal S}_\Theta=(\{ K_F^{\ast}:F\in{\mathcal F}_T^b \},\{
  f_{FF'}:F\subset F' \})$ be the standard inverse system associated
  to a tree system of punctured graphs $\Theta$ and assume that the
  limit $Z$ is connected, locally connected and has no cutpoints.  Let
  $F$ be a finite b-subtree of $T$ and let $e$ be a topological edge of
  $K_F^{\ast}$ with stable endpoints.  Then $f_F^{-1}(e)$ is
  connected, where $f_F\colon Z \to K_F^{\ast}$ is the projection.
\end{lem}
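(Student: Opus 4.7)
I would apply the standard closed-map connectedness principle: if $h \colon X \to Y$ is a closed continuous surjection onto a connected space $Y$ with every fiber connected, then $X$ is connected. Since $Z$ and $K_F^{\ast}$ are compact Hausdorff, $f_F$ is closed, and the restriction $f_F|_{f_F^{-1}(e)} \colon f_F^{-1}(e) \to e$ is a closed continuous surjection onto the connected space $e \cong [0,1]$. It thus suffices to show every fiber $f_F^{-1}(y)$ with $y \in e$ is connected.

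The key input is the following ``two-sided'' lemma at each degree-$2$ white vertex of $T$: for any $u \in V_T^w$ with $|\Sigma_u| = 2$, writing $T \setminus \{u\} = T_1 \sqcup T_2$ (both b-subtrees since $u$ is white of degree $2$), both sub-limits $Z_i = \lim \Theta_{T_i}$ are connected. By \Factref{T.4}, the $Z_i$ embed as closed subspaces of $Z$ satisfying $Z = Z_1 \cup Z_2$ and $Z_1 \cap Z_2 = P_u = \{a, b\}$. To prove $Z_2$ is connected, suppose for contradiction that $Z_2 = A \sqcup B$ is a clopen partition with $A, B$ non-empty. If $P_u \subset A$ (so $B \cap P_u = \emptyset$, hence $B \cap Z_1 = \emptyset$), then $Z = (Z_1 \cup A) \sqcup B$ is a non-trivial clopen partition of $Z$, contradicting connectedness of $Z$. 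Otherwise, WLOG $a \in A$ and $b \in B$, so $(Z_1 \cup A) \cap B = \{b\}$, and
\[
  Z \setminus \{b\} = \bigl((Z_1 \cup A) \setminus \{b\}\bigr) \sqcup (B \setminus \{b\}).
\]
Both pieces are closed in $Z \setminus \{b\}$; the first contains the infinite space $Z_1 \setminus \{b\}$, and the second is non-empty because $b$ is not isolated in $Z_2$ (it is an accumulation point in $K_{t_2} = \Gamma_{t_2}^{\circ}$, for $t_2$ the black neighbor of $u$ in $T_2$). This gives a non-trivial clopen partition of $Z \setminus \{b\}$, contradicting the cutpoint-freeness of $Z$.

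To finish, I observe that every interior point $y$ of $e$ has degree $2$ in $K_F^{\ast}$, so if $y$ is peripheral (i.e.\ $y \in D_F$, corresponding to some $u_y \in N_F$) then $|\Sigma_{u_y}| = 2$, and the key lemma applies to $u_y$. Using the consolidation \Thmref{T.7} of $\Theta$ over the partition $\{F\} \cup \{S_u : u \in N_F\}$ (where $S_u$ is the b-subtree on the non-$F$ side of $u$), I would identify the fiber $f_F^{-1}(y)$: it is a singleton when $y$ is stable, and coincides with $\lim \Theta_{S_{u_y}}$ (one of the $Z_i$ of the key lemma) when $y$ is peripheral. In both cases the fiber is connected, and the closed-map principle then delivers the conclusion. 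The main obstacle is the key lemma, which requires a careful combination of connectedness, cutpoint-freeness, and the infinite size of the constituent spaces; the remaining steps are routine.
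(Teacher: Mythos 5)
Your argument is correct, but it takes a genuinely different route from the paper's. The paper reduces the statement to the finite stages via \Factref{compact_connected}: assuming some $f_{FF'}^{-1}(e)$ is disconnected, it shows that either $K_{F'}^{\ast}$ itself disconnects (contradicting connectedness of $Z$) or the two open sets $C\setminus\{p\}$ and $K_{F'}^{\ast}\setminus C$ exhibit the stable endpoint $p$ as a cutpoint of $Z$; no analysis of the fibers of $f_F$ is needed. You instead work fiberwise in $Z$: since interior points of $e$ have degree $2$, their peripherals are doubletons, the fiber over a stable point is a singleton and the fiber over a peripheral point is the branch sub-limit $\lim\Theta_{T_2}$ glued to the rest of $Z$ along a two-point set; your key lemma then shows each such branch is connected using only connectedness and cutpoint-freeness of $Z$ (the case analysis over where $\{a,b\}$ falls in a putative clopen partition of $Z_2$ is sound, and the non-isolation of $b$ in $\Gamma_{t_2}^{\circ}$, needed to rule out $B=\{b\}$, does hold because the removed normal neighbourhoods are pairwise disjoint closed sets), after which the closed-map connectedness criterion finishes the proof. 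What each approach buys: the paper's proof stays inside the graph approximations and only needs \Factref{compact_connected} plus stability of $p$, whereas yours yields a stronger reusable intermediate fact -- connectedness of every branch sub-limit, hence of every fiber over a degree-$2$ peripheral point, in the spirit of \Lemref{preimage_connected} but without any $2$-connectedness hypothesis. The price is the identification $f_F^{-1}(y)=\lim\Theta_{S_{u_y}}$ for peripheral $y\in e$, which you only sketch via \Thmref{T.7}; it is indeed routine given \Propref{T.6} and \Factref{T.4} (at every finite stage $F'\supset F$ the preimage of $y$ is exactly the shrunk copy of $K_{S_{u_y}\cap F'}$, and these cohere to the embedded branch limit), but in a full write-up this compatibility should be spelled out rather than asserted.
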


\begin{rmk*}
Note that in the case when $Z$ is a tree of 2-connected graphs, 
the assertion  above follows by \Lemref{preimage_connected}. 
Here we will get this assertion in a more general setting.
\end{rmk*}

\begin{proof}
  Let $p$ and $q$ be the endpoints of $e$.  By
  \Factref{compact_connected}, it suffices to prove that
  $f_{FF'}^{-1}(e)$ is connected for every $F' \supset F$.  For the
  sake of reaching a contradiction, suppose that $e' = f_{FF'}^{-1}(e)$
  is not connected for some $F' \supset F$.  Let $C$ be the component
  of $e'$ that contains the endpoint $p$.  Since $e'$ is closed in
  $K_{F'}^{\ast}$, the set $V = K_{F'}^{\ast} \setminus C$ is open and
  nonempty in $K_{F'}^{\ast}$.  Since $e'$ is obtained from $e$ by
  performing finitely many connected sum operations, the set
  $U = C \setminus \{p\}$ must also be nonempty.

  We will argue that $q \notin C$ so that
  $U = C \cap \bigl(e' \setminus \{p,q\}\bigr) = C \cap
  f_{FF'}^{-1}(e \setminus \{p,q\})$.  For the sake of reaching a contradiction,
  suppose $C$ contains $q$ in addition to $p$.  Let $C'$ be the union
  of the remaining components of $e'$.  Then $C'$ is open in
  $f_{FF'}^{-1}(e \setminus \{p,q\})$, which is open in $K_{F'}^{\ast}$, so $C'$
  is open in $K_{F'}^{\ast}$.  On the other hand, since $e'$ has
  finitely many components (being homeomorphic to a finite graph), we
  see that $C'$ is closed in $e'$, which is closed in $K_{F'}^{\ast}$,
  so $C'$ is closed in $K_{F'}^{\ast}$.  Then $K_{F'}^{\ast}$ is
  disconnected. 
  Since the bonding maps in ${\mathcal S}_\Theta$ are obviously surjective,
  this contradicts connectedness of $Z$ by
  \Factref{compact_connected}.  So we have
  $U = C \cap f_{FF'}^{-1}(e \setminus \{p,q\})$.

  Since $e'$ has finitely many components, the component $C$ must be
  open in $e'$.  Then $U = C \cap f_{FF'}^{-1}(e \setminus \{p,q\})$ is open
  in $f_{FF'}^{-1}(e \setminus \{p,q\})$, which is open in $K_{F'}^{\ast}$.
  We conclude that $U$ and $V$ are nonempty disjoint open sets of
  $K_{F'}^{\ast}$ with $U \sqcup V = K_{F'}^{\ast} \setminus \{p\}$.
  By taking preimages in $Z$ and recalling that $p$ is a stable point
  we see that the preimage of $p$ in $Z$ is a cutpoint of $Z$, a
  contradiction.
\end{proof}

\begin{cor}
  \corlabel{open_edges_conn} Let
  ${\mathcal S}_\Theta=(\{ K_F^{\ast}:F\in{\mathcal F}_T^b \},\{
  f_{FF'}:F\subset F' \})$ be the standard inverse system associated
  to a tree system of punctured graphs $\Theta$ and assume that the
  limit $Z$ is connected, locally connected and has no cutpoints.  Let
  $F$ be a finite b-subtree of $T$ and let $e$ be an open interval of
  a topological edge of $K_F^{\ast}$.  Then $f_F^{-1}(e)$ is connected,
  where $f_F\colon Z \to K_F^{\ast}$ is the projection.
\end{cor}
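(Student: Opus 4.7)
The plan is to reduce to the previous \Lemref{compact_edges_connected} by exhausting the open interval $e$ by closed subintervals with stable endpoints. The key observation is that any closed subinterval of a topological edge of $K_F^*$ is itself a topological edge of $K_F^*$ (the defining condition only constrains the interior points, requiring them to have degree $2$ in $K_F^*$). So if I can find stable points arbitrarily close to each endpoint of $e$, I can exhaust $e$ by closed topological subedges with stable endpoints and apply \Lemref{compact_edges_connected} to each.

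First I would note that the set $D_F \subset K_F^*$ of images of peripheral subspaces is countable, so its intersection with the closed topological edge containing $e$ (which is homeomorphic to $[0,1]$) is countable, hence its complement -- the stable points -- is dense in this topological edge, and in particular dense in $e$. Choose two sequences of stable points $a_n, b_n \in e$ with $a_n \to a$ and $b_n \to b$ monotonically, where $a, b$ are the endpoints of $e$, and with $a_n < b_n$ (using the order inherited from the parameterization $[0,1]$). Then the closed intervals $[a_n,b_n]$ are topological edges of $K_F^*$ with stable endpoints, they form an ascending sequence, and $\bigcup_n [a_n,b_n] = e$.

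Taking preimages under $f_F$, I obtain
\[
  f_F^{-1}(e) \;=\; \bigcup_{n} f_F^{-1}([a_n,b_n]),
\]
an ascending union of subspaces of $Z$. By \Lemref{compact_edges_connected}, each $f_F^{-1}([a_n,b_n])$ is connected. An ascending union of connected subspaces is connected, so $f_F^{-1}(e)$ is connected, as desired.

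There is no real obstacle here beyond verifying the density of stable points, which is immediate from countability of $D_F$, and the elementary fact that any closed subinterval of a topological edge is again a topological edge; the corollary then falls out of \Lemref{compact_edges_connected} by exhaustion.
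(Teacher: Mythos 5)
Your proof is correct and follows essentially the same route as the paper: the paper's proof also writes $e$ as an ascending union of topological edges with stable endpoints and applies \Lemref{compact_edges_connected} to each. Your additional remarks (density of stable points from countability of $D_F$, and that closed subintervals are again topological edges) just make explicit what the paper leaves implicit.
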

\begin{proof}
  Every such $e$ is an ascending union
  $e_1 \subset e_2 \subset \cdots$ of topological edges with stable
  endpoints.  Then $f_F^{-1}(e)$ is an ascending union of the spaces
  $f_F^{-1}(e_n)$, which are all connected by
  \Lemref{compact_edges_connected}.
\end{proof}

\begin{lem}\lemlabel{singleton_local_basis}
  Let $f \colon X \to Y$ be a continuous surjective map of compact
  metric spaces and let $p \in Y$ have singleton preimage $\{\hat p\}$
  under $f$.  Let $U$ be an open neighbourhood of $\hat p$ in $Z$.
  Then there exists an open neighborhood $V$ of $p$ such that
  $f^{-1}(V) \subset U$.
\end{lem}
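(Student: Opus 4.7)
The plan is to exploit the fact that a continuous map from a compact space to a Hausdorff space is closed, and then choose $V$ to be the complement in $Y$ of the image of the complement of $U$ in $X$ (interpreting the ``$Z$'' in the statement as $X$, which is the only sensible reading in context).

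More precisely, I would first observe that $X \setminus U$ is closed in $X$, hence compact by compactness of $X$. By continuity, the image $f(X \setminus U)$ is then compact in $Y$, and since $Y$ is compact metric (hence Hausdorff), this image is closed in $Y$. I then set
\[
V := Y \setminus f(X \setminus U).
\]
To check that $p \in V$, note that $f^{-1}(p) = \{\hat p\} \subset U$, so no point of $X \setminus U$ is mapped to $p$; hence $p \notin f(X \setminus U)$, i.e. $p \in V$. Since $f(X \setminus U)$ is closed, $V$ is open, so $V$ is an open neighbourhood of $p$.

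Finally, to verify $f^{-1}(V) \subset U$, suppose $x \in f^{-1}(V)$; then $f(x) \notin f(X \setminus U)$, and in particular $x \notin X \setminus U$, so $x \in U$. This completes the argument. There is no real obstacle here: the statement is a special case of the standard fact that proper continuous maps to Hausdorff spaces pull open neighbourhoods of a point back to arbitrarily small neighbourhoods of its fiber, and everything reduces to the closed map lemma applied to the compact space $X$.
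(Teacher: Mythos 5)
Your proof is correct, and it takes a genuinely different (and in fact more economical) route than the paper. The paper argues by contradiction using the metric and sequential compactness: assuming no such $V$ exists, it picks points $p_n \in X \setminus U$ with $f(p_n) \in B_{1/n}(p)$, extracts a convergent subsequence, and contradicts the singleton-fiber hypothesis via continuity. You instead give a direct construction: since $X \setminus U$ is compact, its image $f(X \setminus U)$ is compact, hence closed in $Y$, and $V := Y \setminus f(X \setminus U)$ is an open neighbourhood of $p$ with $f^{-1}(V) \subset U$ — essentially the closed map lemma. Your version buys generality (it works for compact Hausdorff spaces, with no appeal to the metric, sequences, or even surjectivity of $f$) and yields an explicit $V$, while the paper's sequential argument is the more pedestrian route that stays entirely within the metric-space toolkit it uses elsewhere. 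You were also right to read the stray ``$Z$'' in the statement as $X$; that is indeed a typo carried over from the context in which the lemma is applied.
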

\begin{proof}
  Suppose no such $V$ exists.  Then, for each $n \in \N$, there exists
  a point $p_n \in X \setminus U$ such that
  $f(p_n) \in B_{\frac{1}{n}}(p)$.  Since $X$ is compact, there is a
  convergent subsequence $(p_{n_k})_k$.  The limit $p'$ of this
  subsequence is also contained in the closed set $X \setminus U$.  In
  particular $p' \neq \hat p$.  But, by continuity of $f$, we have
  $f(p') = \lim_{k \to \infty}f(p_{n_k}) = p$, contradicting
  $f^{-1}(p) = \{\hat p\}$.
\end{proof}

\begin{prop}
  \proplabel{edge_necklace} Let
  ${\mathcal S}_\Theta=(\{ K_F^{\ast}:F\in{\mathcal F}_T^b \},\{
  f_{FF'}:F\subset F' \})$ be the standard inverse system associated
  to a tree system of punctured graphs $\Theta$ and assume that the
  limit $Z$ is connected, locally connected and has no cutpoints.  Let
  $F$ be a finite b-subtree of $T$ and let $e$ be a topological edge of
  the graph $K_F^{\ast}$.  Then the preimage $N$ in $Z$ of the set of
  stable points of $e$ is a subnecklace of $Z$.
\end{prop}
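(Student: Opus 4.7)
The plan is to verify the three defining properties of a subnecklace for the set $N = f_F^{-1}(\{\text{stable points of }e\})$. Infiniteness is immediate: the non-stable points of $K_F^*$ (images of peripherals) form a countable set, so the stable points of $e$ form an uncountable subset, and each has singleton preimage in $Z$.

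For the two-ends condition, fix a stable $p\in e$ and write $\hat p$ for its (singleton) preimage in $Z$. Since the vertices of $K_F^*$ lie in the countable set $D_F$, every stable point of $K_F^*$ has degree $2$ there; in particular $p$ has a neighborhood basis in $K_F^*$ of small open intervals $V$, and by \Lemref{singleton_local_basis} the preimages $f_F^{-1}(V)$ form a neighborhood basis of $\hat p$ in $Z$. Each $V\setminus\{p\}$ is a disjoint union of two open sub-intervals $V_L, V_R$ of a topological edge of $K_F^*$, so by \Corref{open_edges_conn} their preimages are connected, and $f_F^{-1}(V)\setminus\{\hat p\} = f_F^{-1}(V_L)\sqcup f_F^{-1}(V_R)$ has exactly two connected components. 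Taking the cofinal family of compacta $K = Z\setminus f_F^{-1}(V)$ in $Z\setminus\{\hat p\}$, the inverse limit defining the ends of $Z\setminus\{\hat p\}$ has exactly two elements.

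For the cutpair condition at stable $p,q\in e$, let $A = (p,q)_e$ be the open arc of $e$ from $p$ to $q$ and let $B = K_F^*\setminus [p,q]_e$, so $K_F^*\setminus\{p,q\} = A\sqcup B$ with both summands open and nonempty (for $B$: $K_F^*$ cannot be an arc, as every stable interior point of an arc is a cutpoint of $K_F^*$, which would give a cutpoint of $Z$). Correspondingly $Z\setminus\{\hat p,\hat q\} = f_F^{-1}(A)\sqcup f_F^{-1}(B)$, and $f_F^{-1}(A)$ is connected by \Corref{open_edges_conn}. The main obstacle is showing $f_F^{-1}(B)$ is connected; this is where the no-cutpoint hypothesis on $Z$ is crucial.

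First, I show $B$ is connected in $K_F^*$: the space $K_F^*\setminus\{p\}$ is connected (no-cutpoint of $Z$ at $\hat p$), and removing the further stable point $q$, which has local degree $2$ in $K_F^*\setminus\{p\}$, produces a space with at most two components; combined with the decomposition $A\sqcup B$ into disjoint nonempty opens and the fact that $A$ is connected and clopen in $K_F^*\setminus\{p,q\}$, the two components coincide with $A$ and $B$, so $B$ is connected. Second, I analyze the components of the open set $f_F^{-1}(B)\subset Z$: any component has boundary contained in $f_F^{-1}([p,q]_e)$, but cannot meet $f_F^{-1}(A)$ because $\overline B\cap A=\emptyset$ in $K_F^*$, and cannot be empty because $Z$ is connected, so every component has $\hat p$ or $\hat q$ in its closure. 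The local analysis at $\hat p$ shows that within $f_F^{-1}(B)$ only the connected set $f_F^{-1}(V_L)$ approaches $\hat p$, so a unique component $C_p$ contains $\hat p$ in its closure, and similarly a unique $C_q$ contains $\hat q$; hence $f_F^{-1}(B) = C_p\cup C_q$. Were $C_p\neq C_q$, then $Z\setminus\{\hat q\} = \bigl(f_F^{-1}(A)\cup C_p\cup\{\hat p\}\bigr)\sqcup C_q$ would be a disjoint union of nonempty open sets (openness of the first at $\hat p$ follows from the local basis of $\hat p$ in $Z$), contradicting that $\hat q$ is not a cutpoint of $Z$. Thus $C_p=C_q$, $f_F^{-1}(B)$ is connected, and $Z\setminus\{\hat p,\hat q\}$ has exactly two components.
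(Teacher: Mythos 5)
Your proof is correct, and its overall skeleton coincides with the paper's: both verify the two conditions of \Defnref{subnecklace} using \Lemref{singleton_local_basis} and \Corref{open_edges_conn}, both use the same decomposition of $Z\setminus\{\hat p,\hat q\}$ into $f_F^{-1}\bigl((p,q)\bigr)$ and the preimage of $K_F^{\ast}\setminus[p,q]$, and both derive the crucial connectedness of the latter piece by contradicting cutpoint-freeness of $Z$; your treatment of the two-ends condition is essentially identical to the paper's. Where you diverge is in the mechanics of that connectedness step. The paper first normalizes $e$ so that its endpoints $a,b$ are unstable, assumes a separation $U\sqcup V$ of $Y=f_F^{-1}\bigl(K_F^{\ast}\setminus[p,q]\bigr)$, and then enlarges $V$ by the connected set $f_F^{-1}\bigl((p,b)\bigr)$ (which meets $Y$ exactly in $f_F^{-1}\bigl((q,b)\bigr)$) to manufacture a separation of $Z\setminus\{\hat p\}$. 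You instead count components of $f_F^{-1}(B)$ directly: their boundaries lie in $\{\hat p,\hat q\}$ (and are nonempty by connectedness of $Z$), at most one component accumulates at each of $\hat p$ and $\hat q$ because locally only the connected set $f_F^{-1}(V_L)$ lies on the $B$-side, and two distinct components would separate $Z\setminus\{\hat q\}$. Your route avoids the paper's unstable-endpoint normalization and the auxiliary arcs reaching $b$, at the cost of a slightly longer local analysis; note also that your preliminary step showing $B$ itself is connected in $K_F^{\ast}$ is never used afterwards and could be deleted. One small point worth making explicit if you write this up: when $p$ is an endpoint of the topological edge $e$, the interval $V_L$ on the far side of $p$ lies in a topological edge of $K_F^{\ast}$ (every stable point has degree $2$ since the vertex set of $K_F^{\ast}$ lies in $D_F$), so \Corref{open_edges_conn} still applies; this is exactly the issue the paper sidesteps by enlarging $e$.
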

\begin{proof}
  The stable points of $K_F^{\ast}$ are contained in the interiors of
  edges so, if an endpoint of $e$ is 
  stable we can slightly enlarge
  $e$ to avoid this.  Thus, without loss of generality, the endpoints
  of $e$ are unstable.  We will start by proving that $N$ satisfies
  condition \pitmref{subnecklace_cutpair} of \Defnref{subnecklace}.
  Let $a$ and $b$ be the endpoints of $e$.  Let $p$ and $q$ be
  distinct stable points of $e$.  Then $p, q \in e \setminus \{a,b\}$.
  For points $x, y \in e$ let $(x,y)$ denote the open interval of $e$
  bounded by $x$ and $y$.  Without loss of generality, the components
  of $e \setminus \{a,b,p,q\}$ are $(a,p)$, $(p,q)$ and $(q,b)$.
  By \Corref{open_edges_conn}, the preimage
  $f_F^{-1}\bigl((p,q)\bigr)$ is connected.  So, to ensure that
  $Z \setminus \{p,q\}$ has two components it suffices to prove that
  $Y = Z \setminus \Bigl(f_F^{-1}\bigl((p,q)\bigr)\cup \{p,q\}\Bigr)$
  is connected.

  For the sake of reaching a contradiction assume that $Y$ is
  disconnected.  Then there exist open subsets $U$ and $V$ of $Z$ such
  that $Y \subset U \cup V$, $Y \cap U \neq \emptyset$ and
  $Y \cap V \neq \emptyset$.  Since $Y$ is open in $Z$ we can assume
  that $U$ and $V$ are subsets of $Y$ so that $U \sqcup V = Y$.  By
  \Corref{open_edges_conn}, the preimages $f_F^{-1}\bigl((p,b)\bigr)$
  and $f_F^{-1}\bigl((q,b)\bigr)$ are connected.  The latter preimage
  $f_F^{-1}\bigl((q,b)\bigr)$ is contained in $Y$ so, without loss of
  generality, we have $f_F^{-1}\bigl((q,b)\bigr) \subset V$.  The
  intersection $Y \cap f_F^{-1}\bigl((p,b)\bigr)$ is equal to
  $f_F^{-1}\bigl((q,b)\bigr) \subset V$ so
  $V' = V \cup f_F^{-1}\bigl((p,b)\bigr)$ is open and disjoint from
  $U$.  But then
  $U \cup V' = Y \cup f_F^{-1}\bigl((p,b)\bigr) = Z \setminus \{p\}$
  which is a contradiction since $Z$ has no cutpoints.  Thus $Y$ is
  connected.  This completes the proof of condition
  \pitmref{subnecklace_cutpair}.

  It remains to prove that $N$ satisfies condition
  \pitmref{subnecklace_cutpoint} of \Defnref{subnecklace}.  Let $p$
  be a stable point of $e$.  Then $p \in e \setminus \{a,b\}$.  Let
  $(U_n)_{n \in \N}$ be a descending sequence of open intervals of $e$
  that contain $p$ such that $\cap_n U_n = \{p\}$.  By
  \Lemref{singleton_local_basis} and \Corref{open_edges_conn}, the
  sets $\bigl(f_F^{-1}(U_n)\bigr)_n$ form a basis of connected open
  neighbourhoods of $p \in Z$.  Since $U_n \setminus \{p\}$ is
  disconnected and $f_F$ is surjective, the preimage
  $f_F^{-1}(U_n\setminus\{p\})=f_F^{-1}(U_n) \setminus \{p\}$ 
  in $Z$ must also be disconnected.
  Since $U_n \setminus \{p\}$ has two components, each of which is an
  open interval of $e$, the preimage $f_F^{-1}(U_n) \setminus \{p\}$
  must have exactly two components.  It follows that
  $Z \setminus \{p\}$ has exactly two ends, which completes the proof
  of condition \pitmref{subnecklace_cutpoint}.
\end{proof}

The next result presents some property crucial
from the perspective of our application of subnecklaces in the 
main argument of this subsection (provided at its very end). 

\begin{prop}
  \proplabel{necklace_intersection} Let $Z$ be a tree of
  graphs. Assume that $Z$ is connected, locally connected and has no
  cutpoints.  Let $A \subset Z$ be closed, connected and of
  cardinality at least $2$.  Then, for some subnecklace $N$ of $Z$,
  the intersection $A \cap N$ is uncountable.
\end{prop}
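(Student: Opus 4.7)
My plan is to produce the required subnecklace via \Propref{edge_necklace} and then count points of $A \cap N$ by a cardinality argument. Concretely, I would locate a finite b-subtree $F$ of $T$ and a topological edge $e$ of the graph $K_F^{\ast}$ such that $f_F(A) \cap e$ is uncountable, where $f_F \colon Z \to K_F^{\ast}$ is the standard projection. Then the subnecklace $N := f_F^{-1}(\{\text{stable points of } e\})$ supplied by \Propref{edge_necklace} will intersect $A$ in an uncountable set, since each stable point of $K_F^{\ast}$ has singleton preimage in $Z$.

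To find such $F$ and $e$, I would start with any two distinct points $a, b \in A$. Because $Z$ sits canonically in $\lim_{\leftarrow}\mathcal{S}_\Theta$, the projections $f_F$ separate points of $Z$, so some finite b-subtree $F$ satisfies $f_F(a) \neq f_F(b)$. Then $f_F(A)$ is a compact connected subspace of the graph $K_F^{\ast}$ containing at least two points, hence a non-degenerate continuum. A standard fact says that every non-degenerate continuum has cardinality at least $2^{\aleph_0}$ (equivalently: a connected Hausdorff space with more than one point has no isolated points, and a compact metric space with no isolated points is uncountable). Therefore $f_F(A)$ is uncountable. Since $K_F^{\ast}$ is a finite graph, it is the union of only finitely many of its topological edges, so the pigeonhole principle yields at least one topological edge $e$ with $f_F(A) \cap e$ uncountable.

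To finish, observe that the dense subset $D_F \subset K_F^{\ast}$ of images of peripheral subspaces is countable, so only countably many points of $e$ are non-stable. Removing them from the uncountable set $f_F(A) \cap e$ leaves an uncountable subset $S$ of stable points of $e$ that lies inside $f_F(A)$. For each $p \in S$ the preimage $f_F^{-1}(p)$ is a singleton $\{\hat p\}$ by definition of stability, and the condition $p \in f_F(A)$ forces $\hat p \in A$. Hence $p \mapsto \hat p$ is an injection $S \hookrightarrow A \cap N$, showing $A \cap N$ is uncountable. The main technical inputs are really just the uncountability of non-degenerate continua and the pigeonhole argument across the finite set of topological edges of $K_F^{\ast}$; everything else is an unwinding of the relevant definitions together with the already established \Propref{edge_necklace}.
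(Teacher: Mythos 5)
Your proof is correct and follows essentially the same route as the paper's: project $A$ to a reduced partial union $K_F^{\ast}$, find a topological edge $e$ meeting the image substantially, invoke \Propref{edge_necklace}, and pull back stable points (which have singleton preimages) to land in $A$. The only difference is in the counting step: the paper observes that the connected image $f_F(A)$, having at least two points, contains an entire topological edge $e$ (so the whole subnecklace lies in $A$), whereas you use uncountability of nondegenerate continua plus pigeonhole over the finitely many edges of $K_F^{\ast}$, which yields only uncountably many stable points of $e$ inside $f_F(A)$ --- but that is all the statement requires.
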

\begin{proof}
  Let
  ${\mathcal S}_\Theta=(\{ K_F^{\ast}:F\in{\mathcal F}_T^b \},\{
  f_{FF'}:F\subset F' \})$ be the standard inverse system associated
  to a tree system of punctured graphs $\Theta$ whose limit is $Z$.
  Since $A$ has cardinality at least $2$, for some finite b-subtree
  $F$, the image $f_F(A)$ in $K_F^{\ast}$ has cardinality at least
  $2$.  Since $A$ is connected, so is $f_F(A)$.  But $K_F^{\ast}$ is a
  graph so then $f_F(A)$ must contain a topological edge $e$ of
  $K_F^{\ast}$.  But $e$ contains uncountably many stable points and
  by \Propref{edge_necklace}, the preimages of these stable points in
  $Z$ form a subnecklace. Since these preimages are also easily seen
  to be contained in $A$, the assertion follows.
\end{proof}

Later in this subsection, in the main argument, we will confront the 
property of subnecklaces established above, in \Propref{necklace_intersection},
with the following property of necklaces in Gromov boundaries
of hyperbolic groups.

\begin{lem}
  \lemlabel{rigid_necklace_intersection} Let $G$ be a $1$-ended
  hyperbolic group that is not cocompact Fuchsian.  Let $H$ be a rigid
  cluster factor of $G$ and let $N$ be a necklace class of $\bd G$.
  Then $\bd H \cap N$ has cardinality at most $2$.
\end{lem}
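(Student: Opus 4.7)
The plan is to identify the unique necklace vertex $v_N\in T_G$ whose boundary contains $N$, note that $v_N$ lies outside the rigid cluster $S$ stabilised by $H$, and exploit the two-point cut pair at the white vertex $u_0\in T_G$ adjacent to $v_N$ on the geodesic toward $S$ to squeeze $\partial H\cap\partial G(v_N)$ into a set of size at most two. By the Bowditch description recalled in Subsection~\ssecref{bowditch_jsj_splitting}, $N$ determines a unique type-(v1) vertex $v_N\in T_G$ with $N\subset\partial G(v_N)$. The rigid cluster $S$ is spanned by an equivalence class of type-(v3) vertices, and its white vertices are therefore all of type (v2) (a type-(v4) white vertex has only one rigid neighbour and cannot lie on a path between two rigid vertices), so $v_N\notin S$. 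Let $u_0$ be the white vertex of $T_G$ adjacent to $v_N$ on the geodesic from $v_N$ to $S$. Since $u_0$ has type (v2) or (v4), its stabiliser is $2$-ended and $P_0:=\partial G(u_0)$ consists of exactly two points, both lying in $\partial G(v_N)$ since $u_0\in N_{v_N}$.

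The topological core of the argument is that $P_0$ is a cut pair of $\partial G$: removing $P_0$ yields finitely many components, in bijection with the $T_G$-neighbours of $u_0$ (the number of components equals the order of a cutpoint in the class of $u_0$, which coincides with the $T_G$-degree of $u_0$; cf.\ Subsection~\ssecref{bowditch_jsj_splitting}). Removing the edge $[v_N,u_0]$ from $T_G$ yields two subtrees $T_N$ (containing $v_N$) and $T_S$ (containing $u_0$ and all of $S$); let $C_{v_N}$ be the component of $\partial G\setminus P_0$ associated to the neighbour $v_N\in N_{u_0}$. Then $\partial G(v_N)\subset C_{v_N}\cup P_0$, while a direct inspection of the tree-of-spaces structure (using disjointness of distinct peripherals from \Factref{T.5} together with the fact that identifications among the $K_v$ and $\Sigma_u$ happen only along a common peripheral) shows that $\partial G(v)\subset C_t\cup P_0$ for every $v\in T_S$ and the appropriate $t\in N_{u_0}\setminus\{v_N\}$. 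In particular $\partial G(v)\cap C_{v_N}=\emptyset$ for every $v\in S$.

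The final step upgrades this to $\partial H\cap C_{v_N}=\emptyset$. The rigid cluster factor $H$ acts on $S$ cocompactly with vertex stabilisers $G(v)$, $v\in S$ (\Rmkref{3.5}), and a standard limit-set argument then represents every $p\in\partial H$ as the limit of some sequence $h_n\in H$ for which the image $h_nv_0\in S$ (for a fixed $v_0\in S$) either (i) stays bounded, in which case $p\in\partial G(v)$ for some $v\in S$, or (ii) escapes to an end $\xi$ of $S$, in which case $p$ is the type-(b) point of $\partial G\cong\lim\Theta(G)$ corresponding to $\xi$. Case (i) lands in $A:=\bigcup_{v\in S}\partial G(v)\subset\partial G\setminus C_{v_N}$ by the previous paragraph, and case (ii) lands outside $C_{v_N}$ because any ray representing $\xi$ lies in $S\subset T_S$, so a direct unwrapping of the basis $\mathcal{B}$ of the topology on $\lim\Theta(G)$ described before \Propref{T.3} locates $p$ in a component of $\partial G\setminus P_0$ other than $C_{v_N}$. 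Consequently $\partial H\cap\partial G(v_N)\subset P_0$, and hence $|\partial H\cap N|\leq|P_0|=2$. The main obstacle is the dichotomy (i)/(ii), which rests on the cocompactness of the $H$-action on $S$ together with a careful translation between the metric geometry of $H$ inside $G$ and the tree-of-spaces geometry of $\partial G$.
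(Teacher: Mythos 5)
Your strategy is genuinely different from the paper's and could in principle be carried out, but as written it has two real gaps, and it is considerably heavier than what is needed. The paper's proof is essentially algebraic: the closure of $N$ is $\bd K$ for a flexible factor $K$; since $H$ and $K$ are stabilizers of distinct vertices of the reduced tree ${T}^r_G$, the intersection $H\cap K$ fixes the path between those vertices and hence lies in an edge stabilizer, so it is finite or $2$-ended; quasiconvexity of $H$ and $K$ together with Swenson's theorem ($\bd H\cap\bd K=\bd(H\cap K)$) then gives $|\bd H\cap N|\le 2$ in three lines, with no analysis of the topology of $\lim\Theta(G)$ at all.

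The first gap in your argument is the step you yourself flag as ``the main obstacle'': the containment $\bd H\subseteq\bigcup_{v\in S}\bd G(v)\cup\Ends(S)$. This is the entire content of your proof, and it is only sketched. To make it precise you would need either to show that this set is closed and $H$-invariant in $\lim\Theta(G)$ (closedness is delicate because $S$ need not be locally finite, so one must invoke nullity of the family $\{\bd G(v)\}$ from \Factref{T.5} and unwind the basis of the topology), or to run a genuine quasiconvexity argument translating convergence of $h_nx_0$ in the Cayley graph into the dichotomy (i)/(ii); neither is done, and the dichotomy as stated is not even exhaustive without passing to subsequences. The second gap is the cut-pair step: you justify ``components of $\bd G\setminus P_0$ biject with the $T_G$-neighbours of $u_0$, and $\bd G(v_N)\subset C_{v_N}\cup P_0$'' by appealing to ``the order of a cutpoint in the class of $u_0$''. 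This does not apply verbatim when $u_0$ has type (v4): then $\bd G(u_0)$ is a pair of points in the closure of the necklace, there is no associated infinite class of local cutpoints of degree $\deg_{T_G}(u_0)$, and one has to argue separately that this pair separates. Even in the (v2) case, knowing that the \emph{number} of components equals $\deg_{T_G}(u_0)$ does not by itself confine each branch (in particular $\bd G(v_N)\setminus P_0$) to a single component; that confinement requires Bowditch's structure theory or an argument you have not supplied. Note, moreover, that this whole piece of machinery is avoidable: in $\lim\Theta(G)$ two black vertex spaces at tree-distance $\ge 4$ are disjoint and at distance $2$ meet exactly in the shared white peripheral (equivalently, by quasiconvexity, $\bd G(v)\cap\bd G(v_N)=\bd\bigl(G(v)\cap G(v_N)\bigr)\subseteq\bd G(u_0)=P_0$ since $G(v)\cap G(v_N)$ fixes the path through $u_0$), and $\Ends(S)$ is disjoint from $\bd G(v_N)$; so once the limit-set containment is established, you get $\bd H\cap\bd G(v_N)\subseteq P_0$ directly, with no components or cut pairs needed. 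In short: the approach is workable but the decisive step is missing, and the separation step is both partially misjustified and superfluous; the paper's route via $H\cap K$ and Swenson's theorem is the efficient one here.
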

\begin{proof}
  The closure of $N$ is the limit set $\bd K$ of a flexible factor $K$
  of $G$.  Since $H$ and $K$ are stabilizers of vertices of the
  reduced Bowditch JSJ tree $\mathcal{T}^r_G$ the intersection
  $H \cap K$ is contained in an edge stabilizer of $\mathcal{T}^r_G$.
  Thus $H \cap K$ is either finite or $2$-ended.  Since $H$ and $K$
  are both quasiconvex
  (e.g. by an observation in the paragraph right before \Corref{cluster_quasiconvex}), 
  we have $\bd H \cap \bd K = \bd (H \cap K)$ as
  subspaces of $\bd G$ \cite[Corollary to
  Theorem~13]{swenson:quasiconvex_groups:1997}.  Thus
  $\bd H \cap N \subset \bd H \cap \bd K = \bd (H \cap K)$ and so
  $\bd H \cap N$ has cardinality at most $2$.
\end{proof}

Before proving the main result, we recall two more properties of
hyperbolic groups which we need for the argument.

\begin{lem}\lemlabel{one_ended_subgroup}
  Let $G$ be a hyperbolic group and assume that $G$ is not virtually
  free (in particular, it is not finite and not virtually cyclic).  
  Then $G$ has a $1$-ended quasi-convex subgroup.
\end{lem}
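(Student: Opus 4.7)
The plan is to use Dunwoody's accessibility theorem together with Stallings' theorem on ends of groups. Since $G$ is finitely presented (being hyperbolic), by Dunwoody's accessibility theorem there exists a finite graph of groups decomposition of $G$ whose edge groups are all finite and whose vertex groups are each either finite or one-ended. I would invoke this as the starting point of the argument.

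Next I would show that some vertex group of this decomposition is one-ended. If, on the contrary, every vertex group were finite, then $G$ would be the fundamental group of a finite graph of finite groups, which by the Karrass--Pietrowski--Solitar characterization (equivalently, a consequence of Stallings' theorem applied iteratively) is virtually free; this contradicts our hypothesis. Hence at least one vertex group $H$ is one-ended.

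The remaining step is to verify that $H$ is quasi-convex in $G$. This is a standard fact: vertex groups of a splitting of a hyperbolic group over finite edge groups are quasi-convex, since finite subgroups are trivially quasi-convex and being quasi-convex is preserved under amalgamation along finite subgroups (this can be deduced, for example, from a Bass--Serre tree argument together with the fact that the action of $G$ on the Bass--Serre tree is acylindrical and the vertex stabilizers act cocompactly on their fixed vertex sets). Combining, $H$ is the required one-ended quasi-convex subgroup of $G$.

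The main (mild) obstacle is just producing a clean citation for the two black-box ingredients: Dunwoody accessibility guaranteeing termination of the splitting over finite edge groups, and the quasi-convexity of vertex groups in such a splitting of a hyperbolic group. Neither is deep in this context, but both should be cited precisely rather than reproved. No clever construction is needed — the assertion follows by assembling these two standard structural results with Stallings' theorem.
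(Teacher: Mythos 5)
Your proposal is correct and follows essentially the same route as the paper: Dunwoody accessibility to get a splitting over finite edge groups, the fact that a finite graph of finite groups is virtually free to force a one-ended vertex group, and quasi-convexity of vertex groups in a splitting of a hyperbolic group over finite (hence quasi-convex) subgroups. The paper cites Bowditch's result that factors of splittings over quasi-convex subgroups are quasi-convex for the last step, which is a cleaner citation than your acylindricity sketch, but the argument is the same.
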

\begin{proof}
  By the Dunwoody's Accessibility Theorem
  \cite{dunwoody:accessibility:1985}, there is a splitting of $G$ as a
  finite graph of groups whose edge groups are finite and whose vertex
  groups have at most $1$ end.   
  Since graph of finite groups always yields a virtually free group
  (see Proposition~11 in \cite{Serre:trees:2003}), at least one of the 
  vertex groups
  above must be 1-ended. Finally, this vertex group is quasi-convex
  e.g. by the discussion in the paragraph right before \Corref{cluster_quasiconvex}.
\end{proof}

Next rather well known result follows e.g. by 
 \cite[
  Theorem~2.28]{kapovich_benakli:boundaries:2002}.

\begin{lem}\lemlabel{bdry_not_singleton}
  Let $G$ be a hyperbolic group.  Then the boundary $\bd G$ cannot be
  a singleton.
\end{lem}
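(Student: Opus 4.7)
The plan is to split the argument into cases according to the order of $G$, and in each case show either that $\partial G$ is empty or that $\partial G$ contains at least two distinct points.

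First I would handle the easy case: if $G$ is finite, then $G$ acts cocompactly on a bounded Cayley graph, so $\partial G = \emptyset$ and the statement is trivial. So I reduce to the case that $G$ is infinite. A standard fact about hyperbolic groups (provable e.g.\ via the Bestvina--Mess argument that an infinite hyperbolic group cannot be a union of finitely many conjugacy classes of finite subgroups, or by Gromov's classification of elementary cases) says that any infinite hyperbolic group contains an element $g$ of infinite order.

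Next I would use the dynamics of such a $g$ on the Cayley graph. Since every infinite-order element in a hyperbolic group acts as a loxodromic isometry, the orbit map $n \mapsto g^n$ in the Cayley graph of $G$ is a quasi-geodesic. Its two halves $(g^n)_{n \ge 0}$ and $(g^{-n})_{n \ge 0}$ converge in $\partial G$ to two \emph{distinct} points $g^{+\infty}$ and $g^{-\infty}$, namely the attracting and repelling fixed points of $g$ on $\partial G$. In particular $|\partial G| \ge 2$, so $\partial G$ cannot be a singleton.

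The main (only) obstacle is to invoke the right black-box facts: (i) that an infinite hyperbolic group contains an element of infinite order, and (ii) that such an element has two distinct fixed points on $\partial G$. Both appear in the Kapovich--Benakli survey \cite{kapovich_benakli:boundaries:2002} cited just above the statement (in particular Theorem~2.28 there already records the stronger dichotomy that $\partial G$ is either empty, two points, or uncountable), so the proof reduces to an appeal to that reference. If one wants a self-contained argument, one simply notes that $g^{\pm\infty}$ are distinguished by the sign of a Busemann function associated to the orbit $\langle g \rangle$, since the orbit is a bi-infinite quasi-geodesic.
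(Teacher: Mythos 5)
Your proposal is correct, and it matches the paper, which proves this lemma simply by citing Theorem~2.28 of \cite{kapovich_benakli:boundaries:2002} (the dichotomy that $\bd G$ is empty, two points, or infinite). Your added sketch via an infinite-order (loxodromic) element with distinct attracting and repelling fixed points is the standard argument behind that cited fact and is sound.
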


We are now ready to prove the main result of this section: the
impliciation
\pitmref{bd_g_tog}$\Rightarrow$\pitmref{g_rigid_cluster_factors_vf} of \Mainthmref{1}.  We restate
and prove the implication below.

\begin{thm}
  Let $G$ be a 1-ended hyperbolic group that is not cocompact
  Fuchsian.  If $\partial G$ is homeomorphic to a tree of graphs then
  each rigid cluster factor of $G$ is virtually free
\end{thm}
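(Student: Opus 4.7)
The plan is to argue by contrapositive: suppose some rigid cluster factor $H$ of $G$ is \emph{not} virtually free, and derive a contradiction with the topological constraints on $\partial G$ assumed to be a tree of graphs.

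First, I would produce inside $H$ a source of connected boundary. By Corollary~\ref{cluster_quasiconvex}, $H$ is quasi-convex in $G$, hence itself a (non-elementary) hyperbolic group. Since $H$ is hyperbolic but not virtually free, \Lemref{one_ended_subgroup} yields a $1$-ended quasi-convex subgroup $K \le H$. Transitivity of quasi-convexity (for a quasi-convex subgroup of a quasi-convex subgroup) makes $K$ quasi-convex in $G$, so $\partial K$ sits naturally as a closed subspace of $\partial G$ contained in $\partial H$. Being the boundary of a $1$-ended hyperbolic group, $\partial K$ is connected, and by \Lemref{bdry_not_singleton} it has at least two points.

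Next, I would feed this subspace into the key topological input. Because $G$ is $1$-ended and not cocompact Fuchsian, $\partial G$ is connected, locally connected and cutpoint-free. Under the standing hypothesis that $\partial G$ is homeomorphic to a tree of graphs, \Propref{necklace_intersection} applies to the closed connected subset $\partial K \subset \partial G$ of cardinality $\geq 2$, yielding a subnecklace $N$ of $\partial G$ such that $\partial K \cap N$ is uncountable. By \Rmkref{subnecklaces_subsets_of_necklaces}, $N$ is contained in a necklace class $N'$ of $\partial G$, so
\[
\partial H \cap N' \;\supseteq\; \partial K \cap N' \;\supseteq\; \partial K \cap N
\]
is uncountable as well. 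But \Lemref{rigid_necklace_intersection} asserts that for the rigid cluster factor $H$ and the necklace class $N'$, the intersection $\partial H \cap N'$ has cardinality at most $2$. This is the sought contradiction, completing the proof.

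The main (already-resolved) obstacle in this plan is the interaction between the tree-of-graphs structure and the necklace classes: it is \Propref{necklace_intersection} (built on the edge/subnecklace analysis in \Propref{edge_necklace}) and \Lemref{rigid_necklace_intersection} (built on quasi-convexity of rigid cluster factors and Swenson's intersection formula for quasi-convex subgroups) that do all the real work. The present argument is essentially a short assembly of these tools, with the one subtle point being to produce a suitably connected closed set \emph{inside} $\partial H$ with at least two points, which is precisely the role of the $1$-ended quasi-convex subgroup supplied by \Lemref{one_ended_subgroup}.
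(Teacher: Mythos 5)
Your proposal is correct and follows essentially the same route as the paper's own proof: produce a $1$-ended quasi-convex subgroup $K$ of the non-virtually-free rigid cluster factor via \Lemref{one_ended_subgroup}, apply \Propref{necklace_intersection} together with \Rmkref{subnecklaces_subsets_of_necklaces} to $\partial K$, and contradict \Lemref{rigid_necklace_intersection}. The only cosmetic difference is that you spell out transitivity of quasi-convexity, which the paper leaves implicit.
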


\begin{proof}
  For the sake of reaching a contradiction, assume that $\bd G$ is a
  tree of graphs and $G$ has a rigid cluster factor $H$ that is
  not virtually free.  Then, by \Lemref{one_ended_subgroup}, the
  rigid cluster factor $H$ has a $1$-ended quasi-convex subgroup $K$.
  Then, since $H$ is quasi-convex in $G$, the boundary $\bd K$ is a
  closed subspace of $\bd G$.  Since $K$ is $1$-ended, the boundary
  $\bd K$ is connected.  By \Lemref{bdry_not_singleton}, the boundary
  $\bd K$ has at least two points.  Moreover, the boundary $\bd G$ is
  connected, locally connected and has no cutpoints
  \cite[Proposition~7.1,
  Theorem~7.2]{kapovich_benakli:boundaries:2002}.  Thus we can apply
  \Propref{necklace_intersection} to the subspace
  $\bd K \subset \bd G$ to conclude that $\bd K$ has uncountable
  intersection with some subnecklace $N' \subset \bd G$.  Recalling
  \Rmkref{subnecklaces_subsets_of_necklaces}, we see that $\bd K$ has
  uncountable intersection with a necklace class $N$ of $\bd G$.
  Then, since $\bd K \subset \bd H$, we see that $\bd H$ has
  uncountable intersection with $N$.  This contradicts
  \Lemref{rigid_necklace_intersection}.
\end{proof}

\subsection{Twin graphs and trees of thick $\theta$-graphs}
\sseclabel{twin_graphs}

Recall that a \emph{$\theta$-graph} is a finite graph with two
vertices, at least one edge and no loop edges.  Note that a
$\theta$-graph is 2-connected if it has at least two edges.  We say
that a $\theta$-graph is \emph{thick} if it has at least three edges.
In this subsection we will study connectedness properties of finite
connected sums of $\theta$-graphs which we call \emph{twin graphs} and
characerize in terms of their Bowditch cut pairs.  We will use these
results in the following subsection to study trees of thick
$\theta$-graphs and to prove the implication
\pitmref{bdg_is_tree_of_theta_graphs}$\Rightarrow$\pitmref{g_no_rigid_factors}
of \Mainthmref{2}.

In the remainder of this section we treat graphs as $1$-dimensional
cellular complexes and thus do not differentiate between a graph and
its geometric realization.

\begin{defn}
  Let $X$ be a 2-connected graph.  Two points $x$ and $y$ of $X$ are
  \emph{twins} if the degree of $x$, the degree of $y$ and the number
  of components of $X \setminus \{x,y\}$ all coincide.  That is, the
  points $x$ and $y$ are distinct and they are in the same Bowditch
  class as described in \Ssecref{bowditch_jsj_splitting}.
\end{defn}

\begin{rmk}\rmklabel{twin_blowup}
  The subspace $X \setminus \{x,y\}$ has the same number of components
  as the blow-up $X^{\#}(\{x, y\})$.  Thus if
  $\iota_{x,y} \colon P_{x} \to \pi_0\bigl(X^{\#}(\{x, y\})\bigr)$ is
  the map sending each point of the blow-up divisor to its component
  in the blow-up and $\iota_{y,x}$ is defined similarly then, by
  \Lemref{pair_link_map}, the points $x$ and $y$ are twins if and only
  if $\iota_{x,y}$ and $\iota_{y,x}$ are injective.
\end{rmk}

\begin{rmk}\rmklabel{deg2_inf_twins}
  Any degree $2$ point of any 2-connected graph has infinitely many
  twins.  Indeed, any two points in the interior of the same
  topological edge are twins.
\end{rmk}

The following proposition, which justifies the terminology
\emph{twin}, is proved by Bowditch in the more general setting of
Peano continua without global cutpoints.

\begin{lem}[Bowditch {\cite[Lemma~3.8]{Bowditch:cut_points:1998}}]\lemlabel{twins_unique}
  Let $X$ be a 2-connected graph and let $x \in X$.  If
  $\deg(x) \ge 3$ then $x$ has at most one twin.
\end{lem}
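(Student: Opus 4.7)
The plan is to argue by contradiction. Assume $y$ and $z$ are two distinct twins of $x$, so $\deg(x) = \deg(y) = \deg(z) = d \ge 3$, and derive a contradiction by counting the components of $X \setminus \{x, y, z\}$ in two ways.

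The first step computes the total count $N := |\pi_0(X \setminus \{x, y, z\})|$. By \Rmkref{twin_blowup}, the twin hypotheses for $\{x, y\}$ and $\{x, z\}$ imply that the inclusions $P_x \hookrightarrow X^{\#}(\{x, y\})$, $P_y \hookrightarrow X^{\#}(\{x, y\})$, $P_x \hookrightarrow X^{\#}(\{x, z\})$ and $P_z \hookrightarrow X^{\#}(\{x, z\})$ are all $\pi_0$-injective. Since further blowing up only refines components, the inclusions $P_x, P_y, P_z \hookrightarrow X^{\#}(\{x, y, z\})$ are $\pi_0$-injective too. Blowing $z$ back down (identifying $P_z$ to a single point) merges exactly $|P_z| = d$ distinct components of $X^{\#}(\{x, y, z\})$ into one, so $N - (d - 1) = |\pi_0(X^{\#}(\{x, y\}))| = d$, yielding $N = 2d - 1$.

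The second step identifies $2(d-1)$ specific components explicitly. Let $C_1, \ldots, C_d$ be the components of $X \setminus \{x, y\}$ with $z \in C_1$, and $C'_1, \ldots, C'_d$ those of $X \setminus \{x, z\}$ with $y \in C'_1$. For each $i \ge 2$ the set $C_i$ is connected and disjoint from $z$, and by maximality it is also a component of $X \setminus \{x, y, z\}$. I will show $C_i \subset C'_1$: by \Corref{twin_component_closure}, the closure of $C_i$ in $X$ is $C_i \cup \{x, y\}$, so its closure in $X \setminus \{x, z\}$ is $C_i \cup \{y\}$; as $C_i$ sits in some component $C'_{\pi(i)}$ of $X \setminus \{x, z\}$, which is closed by local connectedness, we conclude $y \in C'_{\pi(i)}$ and hence $\pi(i) = 1$. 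Symmetrically, $C'_j \subset C_1$ for $j \ge 2$. These $2(d - 1)$ components are pairwise distinct, and since $N = 2d - 1$, exactly one additional component $D \subset C_1 \cap C'_1$ remains.

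The third step counts components of $X \setminus \{x, y, z\}$ whose closure in $X$ contains $x$. By $\pi_0$-injectivity of $P_x \hookrightarrow X^{\#}(\{x, y, z\})$ together with $|P_x| = d$, this count equals $d$. On the other hand, each of the $2(d - 1)$ components $C_i$, $C'_j$ ($i, j \ge 2$) has $x$ in its closure, while $D$ contributes $\epsilon \in \{0, 1\}$, so the count is $2(d - 1) + \epsilon$. Solving $d = 2(d - 1) + \epsilon$ forces $\epsilon = 2 - d \le -1$ when $d \ge 3$, the desired contradiction. The key technical step is the inclusion $C_i \subset C'_1$, which is short but is the only place where topology beyond bookkeeping is invoked, combining \Corref{twin_component_closure} with the closedness of components in the locally connected space $X \setminus \{x, z\}$.
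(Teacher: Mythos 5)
Your argument is correct, but note that the paper does not prove this lemma at all: it is imported from Bowditch (Lemma~3.8 of the cut-points paper), where it is established in the much more general setting of Peano continua without global cutpoints. Your proof is thus a genuinely different route — a self-contained counting argument special to graphs, built entirely from the paper's own blow-up toolkit (\Rmkref{twin_blowup}, \Corref{twin_component_closure}); what it buys is independence from the external reference, at the cost of generality the paper does not need here. Two remarks. First, your argument is longer than necessary: the exact count $N=2d-1$ and the extra component $D$ are not needed. Since every component of $X\setminus\{x,y,z\}$ with $x$ in its closure corresponds to a component of $X^{\#}(\{x,y,z\})$ meeting $P_x$, there are at most $|P_x|=d$ such components, while your $2(d-1)$ pairwise distinct components $C_i$, $C'_j$ ($i,j\ge 2$) all have $x$ in their closure by \Corref{twin_component_closure}; the inequality $2(d-1)>d$ for $d\ge 3$ already yields the contradiction, so the first step can be dropped. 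Second, two facts are used implicitly and deserve a sentence each: that blowing $z$ back down merges precisely the components of $X^{\#}(\{x,y,z\})$ meeting $P_z$ and no others (components disjoint from $P_z$ are clopen and remain clopen in the quotient, so the count drops by exactly $d-1$), and that a component of $X\setminus\{x,y,z\}$ has $x$ in its $X$-closure exactly when the corresponding blow-up component meets $P_x$. Both are standard for blow-ups of finite graphs and are at the level of detail the paper itself permits, so they are expository, not mathematical, gaps.
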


\begin{defn}
  A point $x$ of a graph $X$ is an \emph{essential vertex} if it has
  degree other thant $2$.  Note that in a $2$-connected graph, any
  essential vertex has degree at least $3$.
\end{defn}

\begin{defn}
  A 2-connected graph $X$ is a \emph{twin graph} if every point in $X$
  has a twin.  If $x \in X$ is an essential vertex then we denote the
  unique twin of $x$ by $\bar x$ and we call $x$ and $\bar x$
  \emph{essential twins}.
\end{defn}

\begin{ex}
  \exlabel{theta_twin} Any 2-connected $\theta$-graph is a twin
  graph.
\end{ex}

\begin{prop}\proplabel{connected_sum_twins}
  Let $X_1$ and $X_2$ be 2-connected graphs with $x_1 \in X_1$ and
  $x_2 \in X_2$.  Let $\ell \colon P_{x_1} \to P_{x_2}$ be a bijection
  between the blow-up divisor $P_{x_1}$ of $x_1$ in $X_1$ and the
  blow-up divisor $P_{x_2}$ of $x_2$ in $X_2$.  Let $X$ be the
  connected sum $X_1 \#_{\ell} X_2$ and let
  $f_i \colon X_1 \#_{\ell} X_2 \to X_i$ be the projection, for each
  $i \in \{1,2\}$.
  \begin{enumerate}
  \item \itmlabel{twins_in_part} Let $i \in \{1,2\}$ and let
    $x, y \in X_i$ with $x \neq x_i$ and $y \neq x_i$.  Then $x$ and
    $y$ are twins in $X$ if and only if $x$ and $y$ are twins in
    $X_i$.
  \item \itmlabel{twins_bw_parts} For each $i \in \{1,2\}$, let
    $y_i \in X_i$ with $y_i \neq x_i$, let
    $\iota_{x_i,y_i} \colon P_{x_i} \to \pi_0\bigl(X_i^{\#}(\{x_i,
    y_i\})\bigr)$ be the map sending each point of the blow-up divisor
    to its component in the blow-up and let $\sigma_i$ be the
    partition of $P_{x_i}$ into fibers of $\iota_{x_i,y_i}$.  Then
    $y_1$ and $y_2$ are twins in $X$ if and only if
    $\deg(y_i) = \bigl|\pi_0\bigl(X_i \setminus \{ x_i, y_i
    \}\bigr)\bigr|$, for each $i$, and $\ell(\sigma_1) = \sigma_2$.
    In this case, for each $i \in \{1,2\}$, the inclusion
    $X_i^{\#}(\{x_i,y_i\}) \hookrightarrow X^{\#}(\{y_1,y_2\})$
    induces a bijection on connected components.
  \end{enumerate}
\end{prop}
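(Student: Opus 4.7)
The plan is to analyze $X \setminus \{p, q\}$ by decomposing it along the common gluing divisor $P$ (the image of $P_{x_1} = P_{x_2}$ in $X$), so that its components correspond to an explicit combinatorial gluing of the components of the two punctured summands.

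For part \pitmref{twins_in_part}, note first that $\deg_X(x) = \deg_{X_i}(x)$ and $\deg_X(y) = \deg_{X_i}(y)$, since the connected sum only alters the degree at $x_i$. It remains to prove that the projection $f_i \colon X \to X_i$ induces a bijection $\pi_0(X \setminus \{x,y\}) \leftrightarrow \pi_0(X_i \setminus \{x,y\})$. For each component $C$ of $X_i \setminus \{x,y\}$, \Lemref{conn_sum_map_connected} gives that $f_i^{-1}(C)$ is connected (the unique nontrivial fiber of $f_i$ is $X_j^{\#}(x_j)$, connected since $X_j$ is 2-connected). Because components of $X_i \setminus \{x,y\}$ are open by local connectedness of $X_i$, their preimages under $f_i$ are pairwise disjoint, open, connected, and exhaust $X \setminus \{x,y\}$, so they are exactly its components.

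For part \pitmref{twins_bw_parts}, the key observation is that $X_i^{\#}(x_i) \setminus \{y_i\}$ has the same components as $X_i^{\#}(\{x_i,y_i\})$, since adjoining the divisor $P_{y_i}$ merely attaches extra limit points to existing components. Thus $\sigma_i$ records which components of $X_i^{\#}(x_i) \setminus \{y_i\}$ contain which points of $P_{x_i}$, and by \Lemref{pair_link_map} every such component meets $P_{x_i}$. Reassembling the two sides through $\ell$, the components of $X \setminus \{y_1, y_2\}$ are in bijection with the blocks of the join partition $\sigma_1 \vee \ell^{-1}(\sigma_2)$ on $P_{x_1}$. Combining this with the estimate $|\sigma_i| = |\pi_0(X_i \setminus \{x_i,y_i\})| \le \deg(y_i)$ (from \Lemref{pair_link_map} applied with the roles of $x_i$ and $y_i$ swapped), I obtain
\[
  |\pi_0(X \setminus \{y_1,y_2\})| \;\le\; \min(|\sigma_1|,|\sigma_2|) \;\le\; \min(\deg(y_1),\deg(y_2)) \;=\; \min(\deg_X(y_1),\deg_X(y_2)).
\]
The twin condition forces equality throughout, yielding simultaneously $|\sigma_i| = \deg(y_i)$ (equivalently, $\deg(y_i) = |\pi_0(X_i \setminus \{x_i,y_i\})|$) and $|\sigma_1 \vee \ell^{-1}(\sigma_2)| = |\sigma_1| = |\sigma_2|$, the latter being equivalent to $\ell(\sigma_1) = \sigma_2$; conversely these two conditions reverse all the inequalities.

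For the closing ``moreover'', the components of $X_i^{\#}(\{x_i,y_i\})$ are indexed by blocks of $\sigma_i$, while those of $X^{\#}(\{y_1,y_2\})$ (equivalently of $X \setminus \{y_1,y_2\}$) are indexed by blocks of $\sigma_1 \vee \ell^{-1}(\sigma_2) = \sigma_i$; the natural inclusion sends each component of the former into the unique component of the latter containing it, giving the bijection. The main obstacle I anticipate is the precise formulation of this ``join of partitions'' step in part \pitmref{twins_bw_parts}: making rigorous that the components of the glued space $X \setminus \{y_1,y_2\}$ are exactly the equivalence classes generated by the two one-sided partitions on $P$, and then converting the tight equality of component counts into the clean algebraic condition $\ell(\sigma_1) = \sigma_2$.
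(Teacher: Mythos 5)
Your proof is correct. Part \pitmref{twins_in_part} is essentially the paper's argument: both reduce to the observation that the projection $f_i$, which collapses the connected blow-up of the other 2-connected summand to a point, preserves degrees at $x,y$ and induces a bijection on components of the complement of $\{x,y\}$ (you justify the latter via \Lemref{conn_sum_map_connected} applied componentwise; the paper states the same connectedness fact directly).

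For part \pitmref{twins_bw_parts} you take a genuinely different route. The paper proves the forward implication contrapositively in two separate cases, each time exhibiting an explicit connected subset of a blow-up containing two points of $P_{y_1}$ or $P_{y_2}$ and invoking \Rmkref{twin_blowup}, and handles the converse via the intersection relation between $\pi_0\bigl(X_1^{\#}(\{x_1,y_1\})\bigr)$ and $\pi_0\bigl(X_2^{\#}(\{x_2,y_2\})\bigr)$. You instead prove one structural identification, namely that components of $X\setminus\{y_1,y_2\}$ correspond to blocks of $\sigma_1\vee\ell^{-1}(\sigma_2)$, and then a single counting chain forces all equalities from the twin condition; both implications and the closing bijection statement fall out of the same indexing, which is a gain in uniformity. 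The step you flag is indeed the only delicate one, but it goes through routinely: every component of $X_i^{\#}(x_i)\setminus\{y_i\}$ meets the divisor by \Lemref{pair_link_map} (via the $\pi_0$-bijection with $X_i^{\#}(\{x_i,y_i\})$), and since the two pieces $X_i^{\#}(x_i)\setminus\{y_i\}$ are closed in $X\setminus\{y_1,y_2\}$ with finitely many components, any union of their components chained through divisor points is clopen, so distinct join-blocks lie in distinct components. The paper's own converse relies on the cognate assertion that the $\sim$-pairs biject with components of $X^{\#}(\{y_1,y_2\})$, stated at a similar level of detail.
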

\begin{proof}
  We begin by proving \pitmref{twins_in_part}.  By symmetry, we may
  assume that $i = 1$.  Since $X_2$ is connected and cutpoint-free,
  the blow-up $X_2^{\#}(x_2)$ is connected.  Then the restriction
  $X_1 \#_{\ell} X_2 \setminus \{x,y\} \to X_1 \setminus \{x,y\}$ of
  the projection $X_1 \#_{\ell} X_2 \to X_1$ (which is the quotient
  identifying $X_2^{\#}(x_2)$ to a point) preserves the number of
  connected components.  Then, since $x$ and $y$ have the same degrees
  in $X_1 \#_{\ell} X_2$ as they do in $X_1$, they are twins in
  $X_1 \#_{\ell} X_2$ if and only if they are twins in $X_1$.
  
  We now prove \pitmref{twins_bw_parts}.  Suppose first that
  $\deg(y_i) \neq \bigl|\pi_0\bigl(X_i \setminus \{ x_i, y_i
  \}\bigr)\bigr|$.  Without loss of generality, we can assume $i = 1$.
  Notice that $\deg(y_1)$ is equal to the cardinality of the blow-up
  divisor $P_{y_1}$ and that
  $\bigl|\pi_0\bigl(X_1 \setminus \{ x_1, y_1 \}\bigr)\bigr|$ is equal
  to the number of components of the blow-up $X_1^{\#}(\{x_1,y_1\})$.
  Then, by \Lemref{pair_link_map}, we have distinct
  $p, p' \in P_{y_1}$ that are contained in the same component $C$ of
  $X_1^{\#}(\{x_1,y_1\})$.  But $X_1^{\#}(x_1)$ is a subspace of $X$
  with $y_1 \in X_1^{\#}(x_1)$ and $y_2 \notin X_1^{\#}(x_1)$ so
  $X_1^{\#}(\{x_1,y_1\})$ is a subspace of $X^{\#}(\{y_1,y_2\})$.
  Then $C$ is a connected subspace of $X^{\#}(\{y_1,y_2\})$ containing
  $p,p' \in P_{y_1}$, which by \Rmkref{twin_blowup}, implies that
  $y_1$ and $y_2$ are not twins.

  Suppose now that $\ell(\sigma_1) \neq \sigma_2$.  Without loss of
  generality (by swapping, if necessary, the indices $i \in \{1,2\}$)
  there are $p, p' \in P_{x_1}$ contained in the same component $C_1$
  of $X_1^{\#}(\{x_1,y_1\})$ such that $\ell(p)$ and $\ell(p')$ are
  contained in distinct components $C_2$ and $C_2'$ of
  $X_2^{\#}(\{x_2,y_2\})$.  By \Lemref{pair_link_map}, there are
  $p_2, p_2' \in P_{y_2}$ with $p_2 \in C_2$ and $p_2' \in C_2'$.  We
  can view $X^{\#}(\{y_1,y_2\})$ as the connected sum of
  $X_1^{\#}(y_1)$ and $X_2^{\#}(y_2)$ at $x_1$ and $x_2$, along
  $\ell \colon P_{x_1} \to P_{x_2}$.  Then $C_1 \cup C_2 \cup C_2'$ is
  a connected subspace of $X^{\#}(\{y_1,y_2\})$ containing both $p_2$
  and $p_2'$ which, by \Rmkref{twin_blowup}, implies that $y_1$ and
  $y_2$ are not twins.  Thus we have proved the forward implication of
  \pitmref{twins_bw_parts}.

  To prove the reverse implication of \pitmref{twins_bw_parts} assume,
  for each $i$, that
  $\deg(y_i) = \bigl|\pi_0\bigl(X_i \setminus \{ x_i, y_i
  \}\bigr)\bigr|$ and that $\ell(\sigma_1) = \sigma_2$.  By
  \Rmkref{twin_blowup}, it suffices to prove that
  $X^{\#}(\{y_1,y_2\})$, $X_1^{\#}(\{ x_1, y_1 \})$ and
  $X_2^{\#}(\{ x_2, y_2 \})$ all have the same number of components.
  For components $C_1$ of $X_1^{\#}(\{ x_1, y_1 \})$ and $C_2$ of
  $X_2^{\#}(\{ x_2, y_2 \})$, let $C_1 \sim C_2$ be the intersection
  relation: $C_1 \cap C_2 \neq \emptyset$ in $X^{\#}(\{y_1,y_2\})$.
  The equality $\ell(\sigma_1) = \sigma_2$ implies that the
  intersection relation induces a bijection between
  $\pi_0\bigl(X_1^{\#}(\{ x_1, y_1 \}))$ and
  $\pi_0\bigl(X_2^{\#}(\{ x_2, y_2 \}))$.  Moreover, the $\sim$-pairs
  are in bijection with the components of $X^{\#}(\{y_1,y_2\})$ so we
  are done.  This also makes clear that
  $X_i^{\#}(\{x_i,y_i\}) \hookrightarrow X^{\#}(\{y_1,y_2\})$ is a
  bijection on components for each $i$.
\end{proof}

\begin{cor}\corlabel{connected_sum_deg2_twins}
  Let $X_1$ and $X_2$ be 2-connected graphs with $x_1,y_1 \in X_1$ and
  $x_2,y_2 \in X_2$.  Assume that the $x_i$ and $y_i$ are all distinct
  and all have degree $2$.  Let $X = X_1 \# X_2$ be a connected sum of
  $X_1$ and $X_2$ at $x_1$ and $x_2$.  Then $y_1$ and $y_2$ are twins
  in $X$ if and only if $x_i$ and $y_i$ are twins in $X_i$, for each
  $i$.  In this case, for each $i \in \{1,2\}$, the inclusion
  $X_i^{\#}(\{x_i,y_i\}) \hookrightarrow X^{\#}(\{y_1,y_2\})$ induces
  a bijection on connected components.
\end{cor}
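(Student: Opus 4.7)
The plan is to deduce this corollary as a specialization of \Propref{connected_sum_twins}\pitmref{twins_bw_parts} to the case where all the distinguished points have degree $2$. The key simplifying feature is that when $\deg(x_i) = \deg(y_i) = 2$, the blow-up divisors $P_{x_i}$ and $P_{y_i}$ each have cardinality $2$, which leaves only two possible partitions (trivial or discrete) for any $\sigma_i$ on $P_{x_i}$.

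First I would observe that, since $\iota_{x_i,y_i} \colon P_{x_i} \to \pi_0\bigl(X_i^{\#}(\{x_i,y_i\})\bigr)$ is surjective by \Lemref{pair_link_map} and $|P_{x_i}| = 2$, the partition $\sigma_i$ of $P_{x_i}$ into fibers is the discrete partition if and only if $\bigl|\pi_0\bigl(X_i^{\#}(\{x_i,y_i\})\bigr)\bigr| = 2$, equivalently $\bigl|\pi_0(X_i \setminus \{x_i,y_i\})\bigr| = 2$. Since $\deg(x_i) = \deg(y_i) = 2$, this equality is exactly the condition that $x_i$ and $y_i$ are twins in $X_i$. Thus both conditions appearing in \Propref{connected_sum_twins}\pitmref{twins_bw_parts}, namely $\deg(y_i) = \bigl|\pi_0(X_i \setminus \{x_i,y_i\})\bigr|$ and the equality $\ell(\sigma_1) = \sigma_2$, reduce (given the degree-$2$ hypothesis) to statements about whether the pairs $x_i,y_i$ are twins.

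For the forward direction (if $x_i$ and $y_i$ are twins in $X_i$ for each $i$), both $\sigma_1$ and $\sigma_2$ are the discrete partition of a two-element set, and any bijection $\ell$ between two-element sets sends the discrete partition to the discrete partition, so $\ell(\sigma_1) = \sigma_2$ holds automatically; likewise the degree equality holds. Hence \Propref{connected_sum_twins}\pitmref{twins_bw_parts} yields that $y_1$ and $y_2$ are twins in $X$, and also supplies the bijection on components $\pi_0\bigl(X_i^{\#}(\{x_i,y_i\})\bigr) \to \pi_0\bigl(X^{\#}(\{y_1,y_2\})\bigr)$.

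For the reverse direction, if $y_1$ and $y_2$ are twins in $X$, then the proposition gives $\deg(y_i) = \bigl|\pi_0(X_i \setminus \{x_i,y_i\})\bigr|$, which combined with $\deg(x_i) = \deg(y_i) = 2$ says that $x_i$ and $y_i$ are twins in $X_i$. I do not expect any serious obstacle: the entire argument is a bookkeeping reduction of the general result to the degree-$2$ setting, and the only thing to verify carefully is the observation, above, that in the two-element case the partition $\sigma_i$ is determined by whether $x_i$ and $y_i$ form a twin pair.
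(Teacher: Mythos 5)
Your proposal is correct and is exactly the route the paper intends: the corollary is stated as an immediate specialization of \Propref{connected_sum_twins}\pitmref{twins_bw_parts}, with no separate proof given, and your bookkeeping (surjectivity of $\iota_{x_i,y_i}$ from \Lemref{pair_link_map} forcing $\sigma_i$ to be discrete precisely when $x_i,y_i$ are twins, so that $\ell(\sigma_1)=\sigma_2$ is automatic and the degree condition encodes the twin condition) is precisely the verification the paper leaves to the reader. The final bijection-on-components claim is, as you note, inherited directly from the last sentence of the proposition.
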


\begin{prop}\proplabel{twin_sum}
  Any connected sum of twin graphs is a twin graph.
\end{prop}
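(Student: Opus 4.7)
The plan is to show directly that every point $x$ of $X = X_1 \#_\ell X_2$ has a twin by splitting into three disjoint types: (a) $x \in X_1 \setminus \{x_1\}$; (b) $x \in X_2 \setminus \{x_2\}$; and (c) $x$ is the common image in $X$ of some $p \in P_{x_1}$ and $\ell(p) \in P_{x_2}$. Cases (a) and (b) are symmetric. \Lemref{conn_sum_nice} guarantees that $X$ is $2$-connected, so the notion of twin makes sense in $X$ in the first place.

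Case (c) is the easiest: each glued point has local degree $1$ on each side of the gluing, hence degree $2$ in $X$, so \Rmkref{deg2_inf_twins} immediately supplies infinitely many twins. For case (a), I first use that $X_1$ is a twin graph to pick a twin $y$ of $x$ inside $X_1$. If $y \neq x_1$, then by \Propref{connected_sum_twins}\pitmref{twins_in_part} the same $y$ is a twin of $x$ in $X$ and we are done. Otherwise every twin of $x$ in $X_1$ equals $x_1$, and since \Rmkref{deg2_inf_twins} would otherwise provide infinitely many twins of $x$ inside $X_1$ (at most one of which could be $x_1$), we must have $\deg(x) \ge 3$.

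In this remaining subcase I pick any twin $\bar x_2$ of $x_2$ in $X_2$ (which exists since $X_2$ is a twin graph) and invoke \Propref{connected_sum_twins}\pitmref{twins_bw_parts} with $y_1 = x$ and $y_2 = \bar x_2$. The twin relations $x \sim x_1$ in $X_1$ and $\bar x_2 \sim x_2$ in $X_2$ force $\deg(y_i) = |\pi_0(X_i \setminus \{x_i, y_i\})|$ for each $i$, and via \Rmkref{twin_blowup} they also force each $\sigma_i$ to be the partition of $P_{x_i}$ into singletons, whence $\ell(\sigma_1) = \sigma_2$ is automatic. No substantive obstacle arises: the structural content is already packaged in \Propref{connected_sum_twins}, and the proof is a bookkeeping case split that dispatches each type of point via the appropriate part of that proposition.
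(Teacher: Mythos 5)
Your proposal is correct and follows essentially the same route as the paper's proof: degree-2 points are dispatched by \Rmkref{deg2_inf_twins}, a twin avoiding the summing point is handled by \Propref{connected_sum_twins}\pitmref{twins_in_part}, and in the remaining case one pairs the point with a twin of $x_2$ via \Propref{connected_sum_twins}\pitmref{twins_bw_parts}, using \Rmkref{twin_blowup} to see the fiber partitions are singletons so that $\ell(\sigma_1)=\sigma_2$ holds automatically. The only cosmetic difference is that the paper reduces to essential vertices at the outset, whereas you fold the degree-2 points into your case analysis.
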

\begin{proof}
  Let $X_1$ and $X_2$ be twin graphs with points $x_1 \in X_1$ and
  $x_2 \in X_2$.  Let $\ell \colon P_{x_1} \to P_{x_2}$ be a bijection
  of blow-up divisors.  By \Lemref{conn_sum_nice}, the connected sum
  $X = X_1 \#_{\ell} X_2$ is 2-connected so we need only show that
  every point has a twin.  By \Rmkref{deg2_inf_twins}, it suffices to
  consider an essential vertex, i.e., a point $y_1 \in X$ of degree at
  least $3$.  Then, without loss of generality, we can assume
  $y_1 \in X_1 \setminus \{x_1\} \subset X$.  Let $\bar y_1$ be the
  twin of $y_1$ in $X_1$, which is unique by \Lemref{twins_unique}.
  By \Propref{connected_sum_twins}\pitmref{twins_in_part}, if
  $\bar y_1 \neq x_1$ then $\bar y_1$ remains the twin of $y_1$ in
  $X$.  So we can assume that $\bar y_1 = x_1$ so that
  $\deg(x_2) = \deg(x_1) = \deg(y_1) \ge 3$.  Let $y_2$ be the twin of
  $x_2$ in $X_2$.  We will prove that $y_2$ is the twin of $y_1$ in
  $X$.  Since $x_1$ and $y_1$ are twins in $X_1$ and $x_2$ and $y_2$
  are twins in $X_2$, the degree condition of
  \Propref{connected_sum_twins}\pitmref{twins_bw_parts} is satisfied.
  Moreover, by \Rmkref{twin_blowup}, the fibers of $\iota_{x_i,y_i}$
  are singletons so $\ell$ preserves the partitions of the $P_{x_i}$
  into fibers.  Thus the remaining condition of
  \Propref{connected_sum_twins}\pitmref{twins_bw_parts} is satisfied
  and $y_1$ and $y_2$ are twins in $X$.
\end{proof}


  

We will need the following modified version of a result of Bowditch.

\begin{lem}[Bowditch {\cite[Lemma~3.18]{Bowditch:cut_points:1998}}]\lemlabel{bowditch_twins_not_separated}
  Let $X$ be a connected, locally connected, cutpoint-free $T_1$ space
  and let $x, x' \in X$.  Assume that $X \setminus \{x,x'\}$ has at
  least three components.  If $y, z \in X$ are contained in distinct
  components $C_y$ and $C_z$ of $X \setminus \{x, x'\}$ then
  $X \setminus \{y,z\}$ is connected.
\end{lem}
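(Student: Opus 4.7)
The plan is to argue by contradiction. Suppose $X \setminus \{y,z\} = U \sqcup V$ for nonempty disjoint open sets $U, V$. The key observation will be that any connected piece of $V$ must be a proper clopen subset of either $X$ or of $X \setminus \{y\}$, each of which is forbidden by our hypotheses.

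First I would exploit the third component assumption. Let $C$ be a component of $X \setminus \{x,x'\}$ distinct from $C_y$ and $C_z$. By \Corref{twin_component_closure}, the closure $\bar C = C \cup \{x,x'\}$ is connected and contained in $X \setminus \{y,z\}$, so it lies entirely in $U$ or in $V$. Say $\bar C \subset U$, so $\{x,x'\} \subset U$. Then for \emph{every} component $C' \ne C_y, C_z$ of $X \setminus \{x,x'\}$, again by \Corref{twin_component_closure}, $\bar{C'} = C' \cup \{x,x'\}$ is connected and meets $U$ through $x$, hence $\bar{C'} \subset U$. Therefore
\[
V \;\subset\; (C_y \setminus \{y\}) \cup (C_z \setminus \{z\}).
\]
Without loss of generality assume $V$ meets $C_y$.

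Next I would pick a component $V_0$ of $V$ meeting $C_y$ and analyze its closure in $X$. Since $X$ is locally connected and $V$ is open in $X$, $V_0$ is open in $X$; since $V$ is closed in $X\setminus\{y,z\}$ (being the complement of $U$), $V_0$ is also closed in $X\setminus\{y,z\}$. Moreover $V_0$ is connected and avoids $\{x,x'\} \subset U$, so it lies in a single component of $X \setminus \{x,x'\}$; since it meets $C_y$, we get $V_0 \subset C_y \setminus \{y\}$. Taking closures in $X$, monotonicity and \Corref{twin_component_closure} give $\overline{V_0} \subset \overline{C_y} = C_y \cup \{x,x'\}$. On the other hand, $\overline{V_0} \cap (X\setminus\{y,z\}) = V_0$ because $V_0$ is closed in $X\setminus\{y,z\}$, so $\overline{V_0} \subset V_0 \cup \{y,z\}$. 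Intersecting the two inclusions (and noting $z, x, x' \notin C_y$ while $y \in C_y$) yields
\[
\overline{V_0} \;\subset\; V_0 \cup \{y\}.
\]

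Finally I would split into the two possible cases and derive the contradiction. In the first case, $\overline{V_0} = V_0$, so $V_0$ is a nonempty clopen subset of $X$, and $V_0 \ne X$ since $V_0 \subset C_y \setminus \{y\}$, contradicting connectedness of $X$. In the second case, $\overline{V_0} = V_0 \cup \{y\}$; then $V_0 = \overline{V_0} \cap (X \setminus \{y\})$ is closed in $X \setminus \{y\}$, while $V_0$ is also open in $X \setminus \{y\}$. It is nonempty and, since the third component $C$ is a nonempty subset of $X \setminus \{y\}$ disjoint from $V_0$, it is proper. But $X \setminus \{y\}$ is connected because $X$ is cutpoint-free—again a contradiction. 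Thus $X \setminus \{y,z\}$ is connected.

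The main obstacle, I expect, is the closure computation for $V_0$: one needs both that $V_0$ lies in $C_y$ (so that its closure cannot pick up $x$, $x'$, or anything outside $\bar C_y$) and that $V_0$ is closed in $X \setminus \{y,z\}$ (so that only the points $y,z$ can be added when taking closure in $X$). Once these two constraints are combined, the rest of the argument is a straightforward clopen/connectedness dichotomy.
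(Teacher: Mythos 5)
Your argument is correct: $V_0$ is indeed open in $X$ (since $X$ is $T_1$ and locally connected), closed in $X \setminus \{y,z\}$, and trapped in $C_y \setminus \{y\}$; the two closure constraints force $\overline{V_0} \subset V_0 \cup \{y\}$, and the final clopen dichotomy against connectedness of $X$, respectively of $X \setminus \{y\}$, goes through. The route, however, is organized differently from the paper's, even though both hinge on \Corref{twin_component_closure} and on the existence of a third component $C$. The paper applies the corollary to the pair $\{y,z\}$ as well: any component $C'$ of $X \setminus \{y,z\}$ disjoint from $\{x,x'\}$ would satisfy $\overline{C'} = C' \cup \{y,z\}$, making $C_y \cup \overline{C'} \cup C_z$ a connected subset of $X \setminus \{x,x'\}$, which is absurd; hence $X \setminus \{y,z\}$ has at most two components, one containing $x$ and the other $x'$, and $\overline{C} = C \cup \{x,x'\}$ then bridges them inside $X \setminus \{y,z\}$. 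You never apply the corollary to $\{y,z\}$, and instead invoke cutpoint-freeness a second time, directly, through connectedness of $X \setminus \{y\}$. Note also that your proof contains a shortcut you did not take: since $V$ is clopen in $X \setminus \{y,z\}$, your $V_0$ is actually a full component of $X \setminus \{y,z\}$, so \Corref{twin_component_closure} applied to $\{y,z\}$ yields $z \in \overline{V_0}$ at once, contradicting $\overline{V_0} \subset \overline{C_y} = C_y \cup \{x,x'\}$; this collapses your closure computation and case analysis and essentially recovers the paper's first paragraph. Both proofs use the full strength of the corollary (you need $x, x' \in \overline{C}$ to place $\{x,x'\}$ in $U$), so neither is more elementary; the paper's is simply the tighter packaging of the same idea.
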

\begin{proof}
  No component $C'$ of $X \setminus \{y,z\}$ can be disjoint from
  $\{x,x'\}$.  Otherwise, by \Corref{twin_component_closure}, we would
  have $\bar C' = C' \cup \{y,z\}$ so that $C_y \cup \bar C' \cup C_z$
  would be a connected subspace of $X \setminus \{x, x'\}$
  contradicting the fact that $C_y$ and $C_z$ are distinct components
  of $X \setminus \{x, x'\}$.  Thus $X \setminus \{y,z\}$ has at most
  two components, each of which contains either $x$ or $x'$.

  Assume for the sake of finding a contradiction that
  $X \setminus \{y,z\}$ is disconnected.  Then, by the previous
  paragraph, it has exactly two components $C_x$ and $C_{x'}$ which
  contain $x$ and $x'$, respectively.  Since $X \setminus \{x, x'\}$
  has at least three components, there is a component $C$ of
  $X \setminus \{x, x'\}$ that is distinct from $C_y$ and $C_z$.  By
  \Corref{twin_component_closure}, we have $\bar C = C \cup \{x,x'\}$
  so that $C_x \cup \bar C \cup C_{x'}$ is a connected subspace of
  $X \setminus \{y, z\}$ contradicting the fact that $C_x$ and
  $C_{x'}$ are distinct components of $X \setminus \{y, z\}$.
\end{proof}

\begin{lem}
  \lemlabel{inner_twin} Let $X$ be a twin graph not homeomorphic to
  the circle.  There exists an essential twin pair $\{x, \bar x\}$ of
  $X$ such that at most a single component of
  $X \setminus \{x, \bar x\}$ contains any essential vertices of $X$.
\end{lem}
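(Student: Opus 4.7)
The plan is to identify the essential twin pair with the ``smallest'' component containing essential vertices, and show that it satisfies the conclusion. Let $E$ denote the finite set of essential vertices of $X$. Since $X$ is 2-connected but not a circle, it must contain an essential vertex (otherwise every point would have degree $2$, forcing $X$ to be a circle by connectedness). By \Lemref{twins_unique} combined with the twin graph hypothesis, every essential vertex has a unique twin, which is itself essential (since twins have the same degree). Consider the family $\mathcal{P}$ of pairs $(P, C)$ where $P$ is an essential twin pair of $X$ and $C$ is a component of $X \setminus P$ with $C \cap E \neq \emptyset$. If $\mathcal{P}$ is empty, the conclusion of the lemma holds trivially for every essential twin pair; otherwise, since $E$ is finite, we may select $(P_0, C_0) \in \mathcal{P}$ with $|C_0 \cap E|$ minimal. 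Write $P_0 = \{x_0, \bar x_0\}$.

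I claim that either $P_0$ itself satisfies the lemma, or one can produce from it another essential twin pair $P_1$ that does. Assume some other component $C'_0$ of $X \setminus P_0$ contains an essential vertex. Pick an essential vertex $y \in C_0$ and let $\bar y$ be its unique essential twin. Since $\deg(x_0) \geq 3$, the space $X \setminus P_0$ has at least three components; I would use \Lemref{bowditch_twins_not_separated} to show that $\bar y \in C_0 \cup P_0$. Indeed, if $\bar y$ were to lie in a component of $X \setminus P_0$ distinct from $C_0$, then $y$ and $\bar y$ would lie in distinct components, so the Bowditch lemma would force $X \setminus \{y, \bar y\}$ to be connected, contradicting that $\{y, \bar y\}$ is an essential twin pair with $\deg(y) \geq 3$ components.

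Now set $P_1 = \{y, \bar y\}$. Using \Corref{twin_component_closure}, the closure in $X$ of every component $D$ of $X \setminus P_0$ is $D \cup P_0$. I would use this to show that in both cases ($\bar y \in C_0$ and $\bar y \in P_0$), the subspace $(X \setminus C_0) \cap (X \setminus P_1)$ remains connected and is therefore contained in a single ``large'' component $B$ of $X \setminus P_1$, while every remaining component of $X \setminus P_1$ lies entirely in $C_0 \setminus P_1$. Any such remaining (``inner'') component $D$ satisfies $|D \cap E| \leq |C_0 \cap E| - 1$, because $y \in C_0 \cap E$ is excluded from $D$. The minimality of $(P_0, C_0)$ then forces $D \cap E = \emptyset$, so only $B$ contains essential vertices of $X$. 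Since $B$ itself contains at least one essential vertex (for instance, any point of $P_0 \setminus P_1$, together with the essential vertices in $C'_0$), the pair $P_1$ satisfies the conclusion.

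The main technical obstacle is the bookkeeping in the two cases $\bar y \in C_0$ and $\bar y \in P_0$: specifically, verifying that every component of $C_0 \setminus P_1$ whose closure in $X$ meets $P_0 \setminus P_1$ gets absorbed into the large component $B$, while the remaining inner components lie entirely in $C_0 \setminus P_1$. This rests on the closure formula from \Corref{twin_component_closure} and on the observation that for any component $D$ of $X \setminus P_0$ and any $p \in P_0$, the set $D \cup \{p\}$ is connected, since $p$ is a limit point of $D$.
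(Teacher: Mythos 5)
Your argument is correct and is essentially the paper's proof in extremal form: both pick an essential vertex $y$ in a component of $X\setminus\{x_0,\bar x_0\}$ that contains essential vertices, use \Lemref{bowditch_twins_not_separated} to keep its twin $\bar y$ in that same component and \Corref{twin_component_closure} to see that everything outside that component remains connected in $X\setminus\{y,\bar y\}$, the paper organizing this as an induction on a strictly decreasing complexity while you minimize $|C\cap E|$ over all pairs and conclude in one step. One cosmetic remark: your case $\bar y\in P_0$ is vacuous, since $x_0$ has degree at least $3$, so by \Lemref{twins_unique} its unique twin is $\bar x_0$, and $\bar y\in P_0$ would force $y\in P_0$, contradicting $y\in C_0$.
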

\begin{proof}
  For an essential twin pair $\{x, \bar x\}$, let $n_{\{x, \bar x\}}$
  be the maximal number of essential vertices contained in a component of
  $X \setminus \{x, \bar x\}$.  The \emph{complexity} of
  $\{x, \bar x\}$ is the nonnegative integer
  $c_{\{x, \bar x\}} = n - n_{\{x, \bar x\}} - 2$, where $n$ is the
  number of essential vertices of $X$.  Note that $c_{\{x, \bar x\}}$
  is the number of essential vertices in all but a ``largest'' component of
  $X \setminus \{x, \bar x\}$.  So it suffices to find an essential
  twin pair $\{x, \bar x\}$ for which $c_{\{x, \bar x\}} = 0$.  This
  is the base case of an induction starting with an arbitrary
  essential twin pair $\{x, \bar x\}$.  It suffices to show that if
  $c_{\{x, \bar x\}} > 0$ then we can find an essential twin pair
  $\{y, \bar y\}$ for which $c_{\{y, \bar y\}} < c_{\{x, \bar x\}}$.
  Let $C'$ be a component of $X \setminus \{x, \bar x\}$ containing
  $n_{\{x, \bar x\}}$ essential vertices.  Since
  $n_{\{x, \bar x\}} = n - c_{\{x, \bar x\}} - 2 < n - 2$, some other
  component $C$ of $X \setminus \{x, \bar x\}$ contains an essential
  vertex $y$.  We will show that
  $c_{\{y, \bar y\}} < c_{\{x, \bar x\}}$.

  By \Lemref{bowditch_twins_not_separated}, the twin $\bar y$ of $y$ is also
  contained in $C$.  By \Corref{twin_component_closure}, the closure
  $\bar C'$ of $C'$ is $C' \cup \{x, \bar x\}$.  Then
  $C' \cup \{x, \bar x\}$ is connected and so must be contained in
  some component of $X \setminus \{y, \bar y\}$.  But
  $C' \cup \{x, \bar x\}$ contains $n_{\{x, \bar x\}} + 2$ essential
  vertices so $n_{\{y, \bar y\}} \ge n_{\{x, \bar x\}} + 2$ and so
  $c_{\{y, \bar y\}} = n - n_{\{y, \bar y\}} - 2 \le n - n_{\{x, \bar
    x\}} - 4 < c_{\{x, \bar x\}}$, as required.
\end{proof}

\begin{thm}\thmlabel{twin_iff_thetasum}
  Let $X$ be a graph that is not homeomorphic to the circle.  Then $X$
  is a twin graph if and only if it can be obtained from thick
  $\theta$-graphs by finitely many connected sum operations.
\end{thm}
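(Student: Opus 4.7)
I would prove the two implications separately. For $(\Leftarrow)$, \Exref{theta_twin} combined with \Propref{twin_sum} shows $X$ is a twin graph; that $X$ is not homeomorphic to the circle follows by induction on the number of connected sum operations, using that in any connected sum $X_1 \#_\ell X_2$ every essential vertex of each $X_i$ distinct from the gluing vertex remains essential in the sum (the gluing introduces only new degree-$2$ points), so iterated connected sums of thick $\theta$-graphs retain at least two essential vertices.

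For $(\Rightarrow)$ I would induct on the number $n$ of essential vertices of $X$. By \Lemref{twins_unique} the unique twin of an essential vertex is itself essential of the same degree, so $n$ is even; and $n \geq 2$ because any $2$-connected graph with no essential vertices is (topologically) the circle. The base case $n=2$ is direct: $2$-connectedness rules out self-loops at either essential vertex, so $X$ consists of two essential vertices joined by $d = \deg(x) \geq 3$ arcs, i.e., a thick $\theta$-graph. For the inductive step $n \geq 4$, I would invoke \Lemref{inner_twin} to choose an essential twin pair $\{x,\bar x\}$ such that exactly one component $C$ of $X \setminus \{x,\bar x\}$ contains essential vertices, while the other $d-1$ components (where $d = \deg(x)$, by \Rmkref{twin_blowup}) are topological arcs $\alpha_1,\ldots,\alpha_{d-1}$. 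Tracing the unique local branch of $x$ (resp.\ $\bar x$) into $C$ through degree-$2$ points until meeting the first essential vertex produces $y_x \in C$ (resp.\ $y_{\bar x} \in C$). Finally, pick degree-$2$ points $p \in (x,y_x)$ and $q \in (\bar x, y_{\bar x})$ in the interiors of these \emph{threads}.

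The key claim is that $X \setminus \{p,q\}$ has exactly two components: a \emph{peripheral} one containing $\{x,\bar x,\alpha_1,\ldots,\alpha_{d-1}\} \cup (x,p) \cup (\bar x,q)$ and a \emph{core} equal to $C \setminus \bigl((x,p] \cup (\bar x,q]\bigr)$. Granting this, $X = X_1 \#_\ell X_2$ where $X_1$ is obtained by collapsing the core together with $\{p,q\}$ to a single degree-$2$ point $x_1$, and $X_2$ symmetrically; topologically $X_1$ is a thick $\theta$-graph with essential vertices $x, \bar x$ of degree $d$ joined by $\alpha_1,\ldots,\alpha_{d-1}$ together with one further arc through $x_1$ (the concatenation of the two threads). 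The graph $X_2$ is $2$-connected (from $2$-connectedness of $X$ via continuity of the quotient map $X \to X_2$), is not the circle (since it retains $y_x$), and contains precisely the $n-2$ essential vertices of $X$ lying in $C$; by \Lemref{bowditch_twins_not_separated} the $X$-twin of each such vertex also lies in $C$, and by \Propref{connected_sum_twins}\pitmref{twins_in_part} this twin persists in $X_2$, while degree-$2$ points of $X_2$ have twins by \Rmkref{deg2_inf_twins}. Hence $X_2$ is a twin graph satisfying the inductive hypothesis.

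The principal technical obstacle is proving the two-component structure of $X \setminus \{p,q\}$, in particular the connectedness of the core $C \setminus \bigl((x,p] \cup (\bar x,q]\bigr)$. The crucial point is that $(x,p]$ and $(\bar x,q]$ are \emph{terminal} subarcs of $C$: each attaches to the remainder of $C$ only at its single endpoint $p$ or $q$, and no path in $C$ between two points outside such a subarc can usefully pass through it. Removing them therefore preserves connectedness, with $y_x$ and $y_{\bar x}$ remaining in the core and serving as hubs—each component of $C \setminus (x, y_x]$ admits $y_x$ as a limit point and so is reached within the core by one of the local branches of $y_x$, with the analogous statement for $y_{\bar x}$.
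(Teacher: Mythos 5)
Your proposal is correct and takes essentially the same route as the paper: both use \Lemref{inner_twin} to pick an essential twin pair $\{x,\bar x\}$ whose complement has a single component $C$ containing essential vertices, split off a thick $\theta$-graph by cutting at two degree-$2$ points on the threads from $x$ and $\bar x$ into $C$, and conclude by induction on the number of essential vertices using \Propref{connected_sum_twins} (plus \Rmkref{deg2_inf_twins} and the twin-location argument via \Lemref{bowditch_twins_not_separated}). Your ``two components of $X\setminus\{p,q\}$'' claim is exactly the paper's assertion that $X^{\#}(p_x,p_{\bar x})$ has two components, and your sketched justification of it is sound.
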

\begin{proof}
  The ``if'' part follows from \Propref{twin_sum} and the fact that
  thick $\theta$-graphs are twin graphs.

  To prove the ``only if'' part, suppose $X$ is a twin graph.  By
  \Lemref{inner_twin}, there exists a pair of essential twins $x$ and
  $\bar x$ of $X$ such that at most a single component $C$ of
  $X \setminus \{x, \bar x\}$ contains any essential vertices of $X$.
  It follows from \Corref{twin_component_closure} that any component
  of $X \setminus \{x, \bar x\}$ without essential vertices is a
  single open topological edge with endpoints $x$ and $\bar x$.  Thus
  if $C$ also contains no essential vertices then $X$ is a thick
  $\theta$-graph.  This is the base case of an induction on the number
  of essential vertices of $X$.

  Suppose $C$ contains at least one essential vertex.  By
  \Lemref{pair_link_map} there is a topological edge $e_x$ with
  endpoint $x$ and whose interior is contained in $C$.  Similarly,
  there is a topological edge $e_{\bar x}$ with endpoint $\bar x$ and
  whose interior is contained in $C$.  Let $p_x$ be a point in the
  interior of $e_x$ and let $p_{\bar x}$ be a point in the interior of
  $e_{\bar x}$.  Then $X^{\#}(\{p_{x}, p_{\bar x}\})$ has two components,
  one of which contains $x$ and $\bar x$ and the other of which is
  naturally a subspace of $C$.  Gluing together the pairs of points of
  the blow-up divisors
  $P_{p_x} \cup P_{p_{\bar x}} \subset X^{\#}(\{p_{x}, p_{\bar x}\})$ that are
  contained in the same component results in a disjoint union of two
  graphs: a thick $\theta$-graph $X_1$ with essential vertices $x$ and
  $\bar x$ and a graph $X_2$ with two less essential vertices than
  $X$.  Moreover, the original graph $X$ is a connected sum of $X_1$
  and $X_2$.  Thus, by induction, it suffices to show that $X_2$ is a
  twin graph.  This follows immediately from
  \Propref{connected_sum_twins}(1).
\end{proof}

\begin{lem}\lemlabel{deg2_nontwins_separated}
  Let $X$ be a twin graph and let $x,y \in X$ be distinct and of
  degree $2$.  If $X \setminus \{x,y\}$ is connected then $x$ and $y$
  are contained in distinct components of $X \setminus \{v, \bar v\}$
  for some essential twin pair $v,\bar v$.
\end{lem}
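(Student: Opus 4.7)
The plan is to induct on the number $n$ of essential vertices of $X$, using \Thmref{twin_iff_thetasum} and \Lemref{inner_twin} as the main structural tools. First I would rule out that $X$ is homeomorphic to a circle, since then $X\setminus\{x,y\}$ would be disconnected; so $X$ has essential vertices, and by \Lemref{twins_unique} these occur in twin pairs, yielding $n\ge 2$ even.

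For the base case $n=2$, two-connectedness together with having exactly two essential vertices forces $X$ to be a thick $\theta$-graph, so the two essential vertices $v,\bar v$ form the desired essential twin pair: $X\setminus\{v,\bar v\}$ is a disjoint union of at least three open arcs, and if $x$ and $y$ lay in the same arc the open sub-arc between them would be an isolated component of $X\setminus\{x,y\}$, contradicting our hypothesis.

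For the inductive step $n\ge 4$, apply \Lemref{inner_twin} to produce an essential twin pair $v,\bar v$ such that at most one component $C$ of $X\setminus\{v,\bar v\}$ contains essential vertices. If $x,y$ already lie in distinct components we are done; otherwise they share a common component, which by the base-case argument cannot be one of the arc components, so both lie in $C$. Invoking the decomposition built in the proof of \Thmref{twin_iff_thetasum}, I would write $X = X_1 \#_{\ell} X_2$, where $X_1$ is a thick $\theta$-graph with essential vertices $v,\bar v$ and $X_2$ has $n-2\ge 2$ essential vertices; the connected-sum point $q$ lies in the interior of a topological edge and can be chosen disjoint from $\{x,y\}$. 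That $X_2$ is itself a twin graph follows by combining \Propref{connected_sum_twins}\pitmref{twins_in_part} (for points other than $q$) with \Rmkref{deg2_inf_twins} (for $q$ itself).

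The remaining step is to transfer the separation back from $X_2$ to $X$. The projection $\pi\colon X\to X_2$ is continuous and surjective with singleton fibres off $q$, so connectedness of $X\setminus\{x,y\}$ passes to $X_2\setminus\{x,y\}=\pi(X\setminus\{x,y\})$ and the inductive hypothesis yields an essential twin pair $V,\bar V$ of $X_2$ separating $x$ and $y$. Since essential vertices have degree at least $3$, we have $V,\bar V\neq q$, so \Propref{connected_sum_twins}\pitmref{twins_in_part} ensures $V,\bar V$ remain an essential twin pair in $X$. Finally, $X\setminus\{V,\bar V\}$ is obtained from $X_2\setminus\{V,\bar V\}$ by re-attaching the connected preimage $\pi^{-1}(q)$ at the single point $q\notin\{V,\bar V\}$, so components of the two spaces correspond bijectively under $\pi$ and the separation of $x,y$ persists. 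The main obstacle I anticipate is the bookkeeping in the decomposition $X = X_1 \#_{\ell} X_2$: checking that $X_2$ inherits the twin-graph property and that $\pi$ behaves correctly with respect to components and twin pairs on both sides.
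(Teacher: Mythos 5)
Your argument is correct, and it rests on the same foundations as the paper's proof (the connected-sum structure of twin graphs from \Thmref{twin_iff_thetasum} and the transfer of twin pairs via \Propref{connected_sum_twins}\pitmref{twins_in_part}), but it is organized along a noticeably different route. The paper inducts on the number of $\theta$-graph factors: it splits off an arbitrary thick $\theta$-summand, normalizes the sum point to have degree $2$, and then runs a case analysis on where $x,y$ sit, using the fact that degree-$2$ non-twins have connected complement to feed the inductive hypothesis to the other summand. You instead induct on the number of essential vertices and choose the $\theta$-summand adapted to the configuration: \Lemref{inner_twin} supplies the pair $v,\bar v$ with at most one ``large'' component $C$, the case of $x,y$ in an arc component is killed directly (the open sub-arc between them is clopen in $X\setminus\{x,y\}$, cf.\ \Corref{twin_component_closure}), and re-running the splitting construction from the proof of \Thmref{twin_iff_thetasum} places both $x,y$ in the complementary summand, so no further case analysis is needed; connectivity of $X_2\setminus\{x,y\}$ then comes for free as a continuous image of $X\setminus\{x,y\}$ under the projection. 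Your version buys a cleaner inductive step and a soft argument for connectivity, at the cost of re-entering the proof of \Thmref{twin_iff_thetasum} rather than quoting its statement. Two points to tighten: (i) choosing the sum point $q$ off $\{x,y\}$ is not quite enough --- you must choose the cut points on $e_v,e_{\bar v}$ between $v$ (resp.\ $\bar v$) and $x,y$, so that $x,y$ genuinely land on the $X_2$-side rather than inside the new edge of the $\theta$-summand; (ii) the final ``components correspond bijectively'' step is not literally a re-attachment at a single point (the fibre over $q$ is glued along the two divisor points), but it is exactly \Lemref{conn_sum_map_connected} applied to the projection $X\to X_2$, using $2$-connectedness of the $\theta$-summand; likewise your claim that $X_2$ is again a twin graph tacitly uses that $X_2$ is $2$-connected, a verification the paper also leaves implicit.
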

\begin{proof}
  Since $X \setminus \{x,y\}$ is connected, the twin graph $X$ cannot
  be homeomorphic to the circle.  By \Thmref{twin_iff_thetasum}, it is
  a connected sum of thick $\theta$-graphs.  We proceed by induction
  on the number of $\theta$-graph factors.  In the base case $X$ is a
  $\theta$-graph and so $x$ and $y$ must be contained in distinct
  components of $X \setminus \{v, \bar v\}$ where $\{v, \bar v\}$ is
  the unique essential twin pair of $X$.

  In the inductive step, we can write $X$ as a connected sum
  $X_1 \# X_2$ where $X_1$ is a twin graph and $X_2$ is a thick
  $\theta$-graph and the connected sum is performed at points
  $x_1 \in X_1$ and $x_2 \in X_2$.  Since performing a connected sum
  with a $\theta$-graph at an essential vertex does not change the
  topology, we can assume that $\deg(x_1)=\deg(x_2)=2$.  Up to
  homeomorphism, we can assume that $x$ and $y$ are not contained in
  the blow-up divisor $P_{x_1} = P_{x_2} \subset X_1 \# X_2$.  In fact, we can
  assume that $x$ and $y$ are not contained in
  $e \setminus \{v, \bar v, x_2\} \subset X_1 \# X_2$ where $e$ is the
  topological edge of $X_2$ containing $x_2$ and $\{v, \bar v\}$ is
  the unique essential twin pair of $X_2$.  We consider first the case
  where one of the points, say $x$, is contained in
  $X_2 \setminus e \subset X_1 \# X_2$.  Then $y$ cannot be contained
  in the topological edge of $X_2$ that contains $x$ since otherwise
  $X \setminus \{x,y\}$ would be disconnected.  Hence $x$ and $y$ are
  contained in distinct components of the complement of
  $\{v, \bar v\}$, which is a twin pair of $X$ by
  \Propref{connected_sum_twins}\pitmref{twins_in_part}.  It remains
  then to consider the case where
  $x, y \in X_1 \setminus \{x_1\} \subset X_1 \# X_2$.  By
  \Propref{connected_sum_twins}\pitmref{twins_in_part}, the degree $2$
  points $x$ and $y$ are not twins in $X_1$ and so
  $X_1 \setminus \{x,y\}$ is connected.  Then, by induction, there
  exists an essential twin pair $\{w, \bar w\}$ of $X_1$ separating
  $x$ from $y$.  By
  \Propref{connected_sum_twins}\pitmref{twins_in_part}, the pair
  $\{w, \bar w\}$ continues to be a twin pair in $X_1 \# X_2$.  Since the connected
  sum operation with the $\theta$-graph $X_2$ preserves components of
  the complement of $\{w, \bar w\}$ we see that $\{w, \bar w\}$
  continues to separate $x$ and $y$ in $X = X_1 \# X_2$.
\end{proof}

\subsection{Proof of the impliciation \pitmref{bdg_is_tree_of_theta_graphs}$\Rightarrow$\pitmref{g_no_rigid_factors}
  of \Mainthmref{2}}

We now turn our attention from twin graphs towards trees of thick
$\theta$-graphs in order to prove the implication
\pitmref{bdg_is_tree_of_theta_graphs}$\Rightarrow$\pitmref{g_no_rigid_factors}
of \Mainthmref{2}, i.e., we will show that if $G$ is a $1$-ended
hyperbolic group that is not cocompact Fuchsian and $\bd G$ is a tree
of thick $\theta$-graphs then $G$ has no rigid JSJ factors.

Recall from \Ssecref{bowditch_jsj_splitting} that the \emph{degree} of
a point $p$ in a compact space $X$ is the number of ends of the
locally compact space $X \setminus\{ p \}$.  Since $X$ is compact, any
compact subset of $X\setminus\{ p \}$ is the complement of some
open neighborhood $U$ of $p$.  If $U$ is also connected then the
degree of $p$ is at least the number of components of
$U \setminus \{p\}$ since, in this case, each component of
$U \setminus \{p\}$ contains at least one end of $X \setminus \{ p\}$.
It follows that, if $p$ has a basis $\{U_{\alpha}\}_{\alpha}$ of
connected open neighborhoods then the degree of $p$ is the supremum of
the number of components of the $U_{\alpha} \setminus \{p\}$.

\begin{lem}\lemlabel{degree_characterization}
  Let $\Theta$ be a tree system of punctured thick $\theta$-graphs,
  with underlying tree $T$.  Let $Z$ be the limit of $\Theta$ and let
  $x \in Z$.  Then $x$ has degree at least $3$ if and only if there
  exists a finite b-subtree $F_0 \subset T$ such that, for any finite
  b-subtree $F \supset F_0$, the projection of $x$ to the reduced
  partial union $K_{F}^{\ast}$ has degree at least $3$.  In this case,
  for any $F \supset F_0$, the degree of the projection of $x$ to
  $K_{F}^{\ast}$ is equal to the degree of $x$.
\end{lem}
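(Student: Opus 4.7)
The plan is to analyze the projections $x_F \in K_F^*$ by cases, using the inverse limit description $Z \cong \varprojlim K_F^*$ (\Propref{T.6}) and the identification of each $K_F^*$ with a connected sum of thick $\theta$-graphs (\Factref{G.3}). Points of $Z = \#\Theta \sqcup \partial T$ fall into three types: (a) stable interior edge points of some constituent $\Gamma_t^\circ$; (b) peripheral points identified across adjacent constituents via a white vertex $u$; and (c) ends of $T$.

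In case (a), the projection $x_F$ is a fixed degree-$2$ point of $|\Gamma_t| \subset K_F^*$ for any $F \ni t$, so the hypothesis fails and $\deg_Z(x) = 2$. In case (b), enlarging any candidate $F_0$ to contain both black neighbors of $u$ makes $u$ interior, forcing $\deg(x_F) = 2$ at the corresponding peripheral point in the connected sum; again the hypothesis fails and $\deg_Z(x) = 2$. In case (c), the projection $x_F$ is the shrink point of the gateway white vertex $u_F \in N_F$ (the first white vertex outside $F$ on any ray representing $x$), so $\deg(x_F) = |\Sigma_{u_F}|$. If some $u$ on the ray beyond every candidate $F_0$ has $|\Sigma_u| = 2$, both sides of the iff fail: the hypothesis fails by taking $F$ with that $u$ as gateway, and $\deg_Z(x) \leq 2$ by the pinching at such $u$ (where only two arms thread between consecutive constituents). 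Otherwise, the tail of the ray consists entirely of vertex-type peripherals, and a matching argument (both vertices of a thick $\theta$-graph have the same degree, equal to the number of edges, and adjacent $\theta$-graphs on the ray must match this common degree at their shared vertex-type peripheral) shows that $|\Sigma_u|$ stabilizes at a common value $d \geq 3$ on the tail of the ray.

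It remains to show that in this last situation, $\deg_Z(x) = d$. I would construct $d$ pairwise disjoint arms in $Z$ approaching $x$, one per edge of each $\theta$-graph on the ray, threaded consistently through the vertex-type peripheral identifications, and argue these yield $d$ distinct ends of $Z \setminus \{x\}$. The main obstacle is precisely this last step: by cutpoint-freeness of $Z$ (\Lemref{limit_very_connected}), the space $Z \setminus \{x\}$ is connected, so the $d$ ends cannot be detected merely by counting components of a single punctured basic neighborhood $\mathcal{G}(U_n) \setminus \{x\}$. Instead, I would use a cofinal family of compact exhaustions of $Z \setminus \{x\}$ by sets of the form $K_n = (Z \setminus \mathcal{G}(U_n)) \cup L_n$, with $L_n \subset Z_{>n}$ compact and chosen to cut the sub-tree limit $Z_{>n}$ down to a small neighborhood of $x$ within $Z_{>n}$; then $Z \setminus (K_n \cup \{x\})$ separates into the $d$ arms plus a residual piece around $x$, and iteration, using that $Z_{>n}$ is itself a tree of thick $\theta$-graphs with $x$ as an end, makes this refinement stable and yields exactly $d$ ends, matching $\deg(x_F) = d$.
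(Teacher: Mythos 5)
Your case trichotomy (stable interior points, glued peripheral points, ends of $T$) and the stabilization-of-degrees observation along the tail of a ray are reasonable organizational devices, but the actual content of the lemma is never established. In cases (a) and (b) you simply assert $\deg_Z(x)=2$, and in the edge-type subcase of (c) you assert $\deg_Z(x)\le 2$ ``by pinching''; these claims are precisely the forward implication of the lemma and need an argument, not a declaration. In the vertex-type subcase of (c), where you must show $\deg_Z(x)=d\ge 3$, you explicitly defer to a scheme involving undefined objects ($Z_{>n}$, the compacta $L_n$, the ``residual piece'') and an unproven iterative refinement, and you give no reason why the resulting end count is exactly $d$ rather than merely at least $d$ or at most $d$. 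So both nontrivial implications of the statement remain open in your write-up.

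Moreover, the obstacle you cite --- that cutpoint-freeness of $Z$ prevents detecting the ends of $Z\setminus\{x\}$ by counting components of punctured small neighborhoods --- is a misconception, and it steers you away from the argument that actually works. The number of ends of $Z\setminus\{x\}$ is exactly the stabilized number of components of $V\setminus\{x\}$ as $V$ runs over a neighborhood basis of $x$; connectedness of $Z\setminus\{x\}$ is irrelevant. This is how the paper proceeds: take $V=f_F^{-1}(V_F)$ for a small connected neighborhood $V_F$ of the projection $x_F$ in $K_F^{\ast}$ (an interval when $\deg(x_F)=2$, a normal neighborhood when $\deg(x_F)\ge 3$); $V$ is connected by \Lemref{preimage_connected}, and $V\setminus\{x\}$ is the ascending union over $\hat F\supset F$ of the preimages of $V_{\hat F}\setminus\{x_{\hat F}\}$, where $V_{\hat F}$ is obtained from $V_F$ by connected sums with thick $\theta$-graphs. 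A sum performed at the projection of $x$ does not change the topology (the new projection is the remaining essential vertex of the $\theta$-graph summand, of the same degree), and a sum performed elsewhere preserves the components of the complement of the projection because $\theta$-graphs are $2$-connected (\Lemref{pair_link_map}, \Rmkref{cut_pair_map}); combined with \Lemref{preimage_connected} applied componentwise, this yields that $V\setminus\{x\}$ has exactly $\deg(x_{F_0})$ components (respectively, at most two in the low-degree case), with inclusions bijective on components. This neighborhood-level component count is the missing core of your proposal; without it, or a worked-out substitute, the proof does not go through.
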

\begin{proof}
  We first prove the contrapositive of the forward implication.
  Assume that for any finite b-subtree $F_0 \subset T$ there exists a
  finite b-subtree $F \supset F_0$ such that the projection $x_F$ of
  $x$ to the reduced partial union $K_F^{\ast}$ has degree less than
  $3$.  Let $U$ be an open neighborhood of $x$ in $Z$.  Then there
    exists a reduced partial union $K_F^{\ast}$ and an open
    neighborhood $V_F$ of the projection $x_F$ of $x$ in $K_F^{\ast}$
    such that the preimage of $V_F$ in $Z$ is contained in $U$.  By
  our assumption, we can choose $K_F^{\ast}$ so that $x_F$ has degree
  $2$ and so we can choose $V_F$ so that it is homeomorphic to an open
  interval.  By \Lemref{preimage_connected}, the preimage $V$ of $V_F$
  in $Z$ is connected.  By the paragraph preceding the statement of the
    lemma, it suffices to show that $V \setminus \{x\}$ has at most
    two components in order to establish that $x$ has degree at most
    $2$.  Now, let $K_{\hat F}^{\ast}$ be any reduced partial union
  with $\hat F \supset F$, let $V_{\hat F}$ be the preimage of $V_F$
  in $K_{\hat F}^{\ast}$ and let $x_{\hat F}$ be the projection of $x$
  to $K_{\hat F}^{\ast}$.  Then $V_{\hat F}$ is homeomorphic to the
  complement of a point of degree $2$ in a finite connected sum of
  $2$-connected graphs.  To see this, view $V_F$ as the complement of
  a point in the circle $S^1$ and then perform the connected sum
  operations required to obtain $K_{\hat F}^{\ast}$ from $K_F^{\ast}$.
  It follows then from \Lemref{pair_link_map} and
  \Rmkref{cut_pair_map} that $V_{\hat F} \setminus \{x_{\hat F}\}$ has
  at most two components.  But $V \setminus \{x\}$ is the ascending
  union of the preimages of the
  $V_{\hat F} \setminus \{x_{\hat F}\} \subset K_{\hat F}^{\ast}$ as
  $\hat F$ ranges over all finite b-subtrees containing $F$.  Hence
  $V$ itself has at most two components.  This establishes the forward
  implication.

  To prove the reverse implication, assume that $F_0$ is a finite
  b-subtree and that in any finite b-subtree $F \supset F_0$, the
  projection $x_F$ of $x$ to the reduced partial union $K_F^{\ast}$
  has degree at least $3$.  Since $K_F^{\ast}$ is obtained from
  $K_{F_0}^{\ast}$ by a sequence of connected sum operations with
  thick $\theta$-graphs and the projection of $x$ to each of these has
  degree at least $3$, we see that the degree of $x_F$ is equal to the
  degree of the projection $x_{F_0}$ of $x$ in $K_{F_0}^{\ast}$.
  Indeed, the degree is unchanged when the connected sum is performed
  away from the projection of $x$ and, if the connected sum is
  performed at the projection of $x$, then the projection of $x$ in
  the new reduced partial union can only be at the remaining essential
  vertex of the $\theta$-graph summand, which has the same degree.

  It remains to show that $\deg(x) = \deg(x_{F_0})$.  Let $U$ be an
  open neighborhood of $x$ and let $F \supset F_0$ be large enough
  that a normal neighborhood $V_F$ of the projection $x_F$ of $x$ in
  $K_F^{\ast}$ has the property that the preimage $V$ of $V_F$ in $Z$
  is contained in $U$.  By \Lemref{preimage_connected}, the preimage
  $V$ is connected.  It suffices to prove that $V \setminus \{x\}$ has
  the same number of components as $V_F \setminus \{x_F\}$, which has
  $\deg(x_{F_0})$-many components.  As above, we will describe
  $V \setminus \{x\}$ as an ascending union of preimages of spaces
  $V_{\hat F} \setminus \{x_{\hat F}\}$ where $V_{\hat F}$ is the
  preimage of $V_F$ in a reduced partial union $K_{\hat F}^{\ast}$
  with $\hat F \supset F$ and $x_{\hat F}$ is the projection of $x$ to
  $K_{\hat F}^{\ast}$.  But $V_{\hat F}$ is obtained from $V_F$ by
  performing a series of connected sum operations with thick
  $\theta$-graphs.  These connected sum operations do not change the
  topology when performed at the projection of $x$, since we are
  summing with a thick $\theta$-graph and the new projection of $x$ is
  at the remaining essential vertex of the thick $\theta$-graph.  When
  the connected sum operation is performed away from the projection of
  $x$ it does not disconnect (or join) the components of the
  complement of the projection of $x$, since we are summing with a
  $\theta$-graph and these are $2$-connected.  Thus we see that
  $V_{\hat F} \setminus \{x_{\hat F}\}$ has the same number of
  components as $V_F \setminus \{x_F\}$ and the inclusion of the
  preimage of $V_F \setminus \{x_F\}$ in
  $V_{\hat F} \setminus \{x_{\hat F}\}$ is bijective on components.
  Thus, by \Lemref{preimage_connected}, the preimages of the
  $V_{\hat F} \setminus \{x_{\hat F}\}$ in $Z$ with $\hat F$ ranging
  over a cofinal sequence of finite b-subtrees containing $F$
  expresses $V \setminus \{x\}$ as an ascending union of spaces, each
  having $\deg(x_{F_0})$-many components and where the inclusion of
  one in the next induces a bijection on components.  It follows that
  $V \setminus \{x\}$ has $\deg(x_{F_0})$-many components.
\end{proof}

\begin{lem}\lemlabel{finset_deg2}
  Let $\Theta$ be a tree system of punctured thick $\theta$-graphs,
  with underlying tree $T$.  Let $Z$ be the limit of $\Theta$, let
  $S \subset Z$ be a finite set of degree $2$ points and let
  $F_0 \subset T$ be a finite b-subtree.  Then there exists a finite
  b-subtree $F \supset F_0$ such that the projection of each $x \in S$
  to the reduced partial union $K_F^{\ast}$ has degree $2$.
\end{lem}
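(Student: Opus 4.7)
The plan is to prove the lemma by finite induction on the points of $S$, at each step applying \Lemref{degree_characterization} to a single point of $S$ and extending $F$ in a controlled way that preserves progress on the already-handled points.  The obvious attempt---pick $F_i \supset F_0$ with $\deg((x_i)_{F_i}) = 2$ for each $i$ and then take $F = F_1 \cup \cdots \cup F_n$---runs into the difficulty that the cofinal set $\{F : \deg(x_F) = 2\}$ furnished by \Lemref{degree_characterization} is in general not upward closed: enlarging a b-subtree $F_1$ to some $F \supset F_1$ can push $\deg((x_1)_F)$ back up to a value $\ge 3$, since a connected-sum insertion at the shrunk peripheral $(x_1)_{F_1}$ may carry the projection of $x_1$ onto an essential vertex of the newly inserted thick $\theta$-graph.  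The remedy is first to separate all projections in $K_F^{\ast}$ and then to confine each subsequent extension to a single ``direction'' in $T$, leaving the projections of the other points untouched.

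Enumerate $S = \{x_1,\ldots,x_n\}$.  Since $Z \cong \lim_{\leftarrow}\mathcal{S}_\Theta$ by \Propref{T.6} and the $x_i$ are pairwise distinct points of $Z$, there exists a finite b-subtree $F_\ast \supset F_0$ such that the projections $(x_i)_{F_\ast}$ are pairwise distinct in $K_{F_\ast}^{\ast}$.  This separation property is automatically preserved by any further extension, because the bonding maps $f_{FF'}$ are well-defined functions and would otherwise force equality of the $(x_i)_{F_\ast}$.  Starting from $F_\ast$ I process $x_1,\ldots,x_n$ in order.  At stage $i$, if $\deg((x_i)_F) = 2$ already, there is nothing to do; otherwise $(x_i)_F$ must be a shrunk peripheral, for if it were a stable point then by \Lemref{degree_characterization} we would have $\deg(x_i) = \deg((x_i)_F) \ge 3$, contradicting $\deg(x_i) = 2$.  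The shrunk peripheral $(x_i)_F$ corresponds to the unique white vertex $u \in N_F$ lying on the combinatorial path in $T$ from $F$ toward the location of $x_i$ (a ray, if $x_i \in \partial T$, or a finite path terminating at the black vertex whose $\Gamma_t$ contains $x_i$).  I adjoin to $F$ this $u$ together with the next black vertex beyond it, and repeat.  Each such step is a connected sum of $K_F^{\ast}$ with a thick $\theta$-graph performed at the single point $(x_i)_F$.  Since $\deg(x_i) = 2$, \Lemref{degree_characterization} rules out the existence of any b-subtree above which the projection of $x_i$ has degree $\ge 3$ forever, so finitely many steps suffice to bring the projection of $x_i$ down to degree exactly $2$.

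The main obstacle---and the reason for the separation step---is to verify that these one-directional extensions do not spoil the already-achieved equalities $\deg((x_j)_F) = 2$ for $j < i$.  This rests on the elementary observation that performing a connected sum of a graph with a thick $\theta$-graph at a point $p$ leaves the local structure of the resulting graph near every point other than $p$ unchanged.  By the preserved separation, the point $p = (x_i)_F$ at which each extension at stage $i$ is performed is distinct from $(x_j)_F$ for every $j \ne i$; consequently $\deg((x_j)_{F'}) = \deg((x_j)_F)$ for all such $j$ whenever $F'$ is obtained from $F$ by one such extension.  When the induction terminates after processing $x_n$, the resulting finite b-subtree $F \supset F_0$ satisfies $\deg(x_F) = 2$ for every $x \in S$, as required.
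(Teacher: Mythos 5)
Your argument is correct and is essentially the paper's proof: both treat the points of $S$ one at a time and rest on the same two ingredients, namely \Lemref{degree_characterization} and the observation that a connected sum operation changes the degree of a projection only when it is performed at that projection, so that only the operations along the chain of b-subtrees growing toward the current point can affect its degree. Your preliminary step of first separating the projections of the points of $S$ (and noting that separation persists under enlargement) is a mild refinement of the paper's device of performing, inside a suitable $F_{+}$, only those connected sums occurring at the projection of the current point; it makes explicit that these operations cannot interfere with the points already handled.
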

\begin{proof}
  We proceed by induction on $|S|$.  For the base case $|S|=0$ we can
  take $F = F_0$.  Assume now that $|S| > 0$ and $x_0 \in S$.  By
  induction, we have a finite b-subtree $F_{-} \supset F_0$ such that
  the projection of each $x \in S \setminus \{x_0\}$ to the reduced
  partial union $K_{F_{-}}^{\ast}$ has degree $2$.  By
  \Lemref{degree_characterization} there exists a finite b-subtree
  $F_{+} \supset F_{-}$ such that the projection of $x_0$ to the
  reduced partial union $K_{F_{+}}^{\ast}$ has degree $2$.  Recall
  that $K_{F_{+}}^{\ast}$ is obtained from $K_{F_{-}}^{\ast}$ by a
  series of connected sum operations, each of which corresponds to
  growing the b-subtree inside of $F_{+}$.  For a point $x \in S$,
  these operations do not change the degree of the projection of $x$
  unless the connected sum is performed at the projection.  It follows
  that if we perform only those operations that occur at the
  projection of $x_0$, we will obtain a reduced partial union
  $K_F^{\ast}$, with $F_{-} \subset F \subset F_{+}$, such that the
  degrees of projections of the points of $S \setminus \{x_0\}$ are as
  they are in $K_{F_{-}}^{\ast}$ and the degree of the projection of
  $x_0$ is as it is in $K_{F_{+}}^{\ast}$.
\end{proof}



\begin{lem}\lemlabel{deg2_pair_2_comps}
  Let $\Theta$ be a tree system of punctured thick $\theta$-graphs,
  with underlying tree $T$.  Let $Z$ be the limit of $\Theta$, let
  $x, y \in Z$ be degree $2$ points.  Then $Z \setminus \{x,y\}$ has
  at most $2$ components.
\end{lem}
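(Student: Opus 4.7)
The plan is to reduce the claim to an analogous statement about the reduced partial unions $K_F^{\ast}$, where the twin-graph tools developed earlier apply, and then transfer the conclusion back up to $Z$ through the inverse limit via an ascending-union argument.

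First I would apply \Lemref{finset_deg2} iteratively to produce a nested cofinal sequence $F_1 \subset F_2 \subset \cdots$ of finite b-subtrees of $T$ with $\bigcup_n F_n = T$ such that, for each $n$, both projections $x_n := f_{F_n}(x)$ and $y_n := f_{F_n}(y)$ have degree $2$ in $K_n := K_{F_n}^{\ast}$.  Since $x \neq y$ and the projections $f_{F_n}$ separate points of the inverse limit $Z$, I may also arrange $x_n \neq y_n$. By the description in \Ssecref{tog_as_inv_lims_of_graphs}, each $K_n$ is a finite iterated connected sum of thick $\theta$-graphs; since any thick $\theta$-graph is a twin graph (\Exref{theta_twin}), \Propref{twin_sum} implies that each $K_n$ is itself a twin graph, and in particular a $2$-connected graph.

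I would then show that $K_n \setminus \{x_n, y_n\}$ has at most $2$ components. By \Corref{twin_component_closure} applied in the connected, locally connected, cutpoint-free $T_1$ space $K_n$, every component $C$ of $K_n \setminus \{x_n, y_n\}$ satisfies $\bar C = C \cup \{x_n, y_n\}$. Since $x_n$ has degree $2$ in $K_n$, it admits a neighbourhood homeomorphic to an open interval, and any component $C$ of $K_n \setminus \{x_n, y_n\}$ must contain one of the two local components of this neighbourhood minus $x_n$; hence there are at most two components in total.

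Set $U_n := f_{F_n}^{-1}\bigl(K_n \setminus \{x_n, y_n\}\bigr)$. Applying \Lemref{preimage_connected} to each of the at most two components of $K_n \setminus \{x_n, y_n\}$, I see that $U_n$ is a disjoint union of at most two connected open subsets of $Z$. The sequence $(U_n)$ is ascending, because the bonding map sends $\{x_{n+1}, y_{n+1}\}$ into $\{x_n, y_n\}$, and the identity $\bigcup_n U_n = Z \setminus \{x, y\}$ follows from the fact that projections of the inverse limit separate points. An elementary observation about ascending unions then finishes the proof: if $U = \bigcup_n U_n$ with each $U_n$ having at most $k$ components, then so does $U$, since given any $m$ components $V_1, \dots, V_m$ of $U$, one may choose $N$ large enough that each $V_i$ meets $U_N$, forcing $U_N$ to have at least $m$ components. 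Applied with $k = 2$ this yields $|\pi_0(Z \setminus \{x, y\})| \le 2$. The main technical nuisance is the bookkeeping in the first step---arranging the $F_n$ to be simultaneously nested, cofinal, and to satisfy the conclusion of \Lemref{finset_deg2} for both $x$ and $y$---after which the remainder is essentially routine given the earlier machinery.
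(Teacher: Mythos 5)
Your proof is correct and follows essentially the same route as the paper: a cofinal sequence of b-subtrees with degree-$2$ projections via \Lemref{finset_deg2}, the bound of two components at the level of the reduced partial unions, connected preimages via \Lemref{preimage_connected}, and the ascending-union transfer to $Z$. The only cosmetic difference is that you bound $\bigl|\pi_0\bigl(K_{F_n}^{\ast}\setminus\{x_n,y_n\}\bigr)\bigr|$ by combining \Corref{twin_component_closure} with a local analysis at the degree-$2$ point (the twin-graph detour via \Propref{twin_sum} is not needed), whereas the paper simply invokes \Rmkref{cut_pair_map}; these amount to the same argument.
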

\begin{proof}
  By \Lemref{finset_deg2}, there exists a cofinal sequence of
  b-subtrees $F_0 \subset F_1 \subset F_2 \subset \cdots$ such that
  $x$ and $y$ project to points $x_{F_n}$ and $y_{F_n}$ of degree $2$
  in the reduced partial unions $K_{F_n}^{\ast}$.  By
  \Rmkref{cut_pair_map}, each
  $K_{F_n}^{\ast} \setminus \{x_{F_n}, y_{F_n}\}$ has at most two
  components, each of which has connected preimage in $Z$ by
  \Lemref{preimage_connected}.  But $Z \setminus \{x,y\}$ is the
  ascending union of the preimages of the
  $K_{F_n}^{\ast} \setminus \{x_{F_n}, y_{F_n}\}$ and so has at most
  two components.
\end{proof}

\begin{lem}\lemlabel{deg2_limit_twins}
  Let $\Theta$ be a tree system of punctured thick $\theta$-graphs,
  with underlying tree $T$.  Let $Z$ be the limit of $\Theta$ and let
  $x,y \in Z$ be degree $2$ points. Then $Z \setminus \{x,y\}$ is
  connected if and only if there exists a finite b-subtree
  $F \subset T$ such that the projections $x_F$ and $y_F$ of $x$ and
  $y$ to the reduced partial union $K_F^{\ast}$ have degree $2$ and
  $K_F^{\ast} \setminus \{x_F,y_F\}$ is connected.  In this case, for
  any finite b-subtree $\hat F \supset F$ for which $x,y$ have degree
  $2$ projections $x_{\hat F},y_{\hat F}$ in the reduced partial union
  $K_{\hat F}^{\ast}$, the complement
  $K_{\hat F}^{\ast} \setminus \{x_{\hat F},y_{\hat F}\}$ is
  connected.
\end{lem}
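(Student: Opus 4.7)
The plan is to build a cofinal chain $(F_n)$ of finite b-subtrees with $x,y$ having degree $2$ projections in each $K_{F_n}^{\ast}$, as supplied by \Lemref{finset_deg2}, and to track connectedness of $V_{F_n}:=K_{F_n}^{\ast}\setminus\{x_{F_n},y_{F_n}\}$ along this chain. By \Rmkref{cut_pair_map} each $V_{F_n}$ has at most two components; by \Propref{connected_sum_twins}, $V_{F_n}$ has exactly two components precisely when $x_{F_n},y_{F_n}$ are a twin pair in $K_{F_n}^{\ast}$.

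The crucial monotonicity step is: \emph{if $F\subset\hat F$ both have degree $2$ projections and $V_F$ is connected, then $V_{\hat F}$ is connected.} I would prove this by writing $K_{\hat F}^{\ast}$ as obtained from $K_F^{\ast}$ by a finite sequence of connected sums with thick $\theta$-graphs and analyzing one step at a time. When a connected sum is performed at a point other than the current projections of $x$ and $y$, \Propref{connected_sum_twins}\pitmref{twins_in_part} preserves non-twin status and hence connectedness of the relevant complement. When the sum is at $x_F$ (the case of $y_F$ being symmetric), condition (a) of \Propref{connected_sum_twins}\pitmref{twins_bw_parts} would require $\deg(y_F)=|\pi_0(V_F)|$; since $V_F$ is connected this becomes $2\neq 1$, so the new pair cannot be twins, and by \Rmkref{cut_pair_map} the complement remains connected. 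This directly establishes the last statement of the lemma.

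For the $\Leftarrow$ direction of the iff, given $F$ with $V_F$ connected, extend to a cofinal chain via \Lemref{finset_deg2}; by monotonicity every $V_{F_n}$ is connected, so by \Lemref{preimage_connected} each preimage $\tilde V_n=f_{F_n}^{-1}(V_{F_n})$ is a connected open subset of $Z$. Since any point distinct from $x$ and $y$ differs from them at some finite level of the inverse limit, $Z\setminus\{x,y\}=\bigcup_n\tilde V_n$, an ascending union of connected sets, and hence connected.

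For the $\Rightarrow$ direction I argue the contrapositive: suppose every $V_{F_n}$ in a cofinal chain has two components $C_1^n,C_2^n$. By \Lemref{conn_sum_map_connected} applied iteratively to the bonding map $\pi_n\colon K_{F_{n+1}}^{\ast}\to K_{F_n}^{\ast}$, each $\pi_n^{-1}(C_i^n)$ is connected and hence lies in a single component of $V_{F_{n+1}}$, inducing a map $\sigma\colon\{1,2\}\to\{1,2\}$ on components. The main obstacle is establishing that $\sigma$ is bijective: if both $C_1^n$ and $C_2^n$ mapped into one component of $V_{F_{n+1}}$, then the other component of $V_{F_{n+1}}$ would be confined to $\pi_n^{-1}(\{x_{F_n},y_{F_n}\})\setminus\{x_{F_{n+1}},y_{F_{n+1}}\}$; by \Lemref{conn_sum_nice} these fibers are themselves $2$-connected subgraphs attached to the rest of $K_{F_{n+1}}^{\ast}$ at the two blow-up divisor points of $x_{F_n}$ (resp.\ $y_{F_n}$), and a twin-pair analysis via \Propref{connected_sum_twins} shows that removing the degree-$2$ point $x_{F_{n+1}}$ (resp.\ $y_{F_{n+1}}$) cannot detach the fiber from the rest while $V_{F_n}$ retains its two-component twin structure. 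With $\sigma$ bijective, consistently labeled ascending unions $\tilde C_i=\bigcup_n\tilde C_i^n$ yield two disjoint connected subsets whose union is $Z\setminus\{x,y\}$; they are separated because a common limit point would have to lie in some $\tilde V_n$, contradicting the openness and disjointness of the components of $\tilde V_n$. Hence $Z\setminus\{x,y\}$ is disconnected, contradicting our hypothesis.
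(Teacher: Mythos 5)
Your overall architecture coincides with the paper's: the cofinal chain supplied by \Lemref{finset_deg2}, the bound of at most two components from \Rmkref{cut_pair_map}, a monotonicity statement for connectedness of the punctured reduced partial unions, and the passage to $Z$ via \Lemref{preimage_connected} and ascending unions. The genuine gap is in your justification of the monotonicity step. You decompose the passage from $K_F^{\ast}$ to $K_{\hat F}^{\ast}$ into single connected sums with thick $\theta$-graphs and, when a sum is performed at the current projection of $x$, you conclude that ``the new pair cannot be twins, and by \Rmkref{cut_pair_map} the complement remains connected.'' The inference ``not twins $\Rightarrow$ connected complement'' is only valid when both points of the pair have degree $2$: for a pair consisting of a degree $2$ point and a point of degree at least $3$, the complement can have two components even though the pair is not a twin pair (twins require equal degrees). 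And in your one-$\theta$-graph-at-a-time decomposition the intermediate projections of $x$ and $y$ need not have degree $2$: after summing at $x_F$, the projection of $x$ may land on an essential vertex of the $\theta$-graph summand (this is precisely the phenomenon that makes \Lemref{finset_deg2} necessary at all); only at the levels $F$ and $\hat F$ is degree $2$ guaranteed. So the step, as justified, does not go through at those intermediate stages. It can be repaired either by a direct connectivity argument there (the punctured summand, glued to the connected old part along the two divisor points of $x_F$, stays connected), or by the paper's device: group all the new material of $\hat F \setminus F$ into whole branch summands $K_{F_v}^{\ast}$, which are $2$-connected by \Lemref{conn_sum_nice}, so that every connected sum is performed at a point of $K_F^{\ast}$, where both the old and the new projections have degree $2$ by the choice of the chain, and \Corref{connected_sum_deg2_twins} applies verbatim.

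A second, smaller soft spot is in your forward direction: the bijectivity of the induced map on the two components is exactly the content of the final sentence of \Corref{connected_sum_deg2_twins} (combined with preservation of components for sums performed away from the pair, via \Lemref{conn_sum_map_connected}); your ``twin-pair analysis via \Propref{connected_sum_twins}'' names the right tool but does not actually carry out the argument, and as written it inherits the same intermediate-degree issue as above. Once the monotonicity step is repaired as indicated, the bijection on components, the two-ascending-unions conclusion, and the last sentence of the lemma all fall out of the same computation, exactly as in the paper.
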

\begin{rmk*}
  In the last sentence of \Lemref{deg2_limit_twins}, the condition
  that $x_{\hat F}$ and $y_{\hat F}$ have degree $2$ can be removed.
  We avoid proving the more general statement since we do not need it
  here.
\end{rmk*}
\begin{proof}[Proof of \Lemref{deg2_limit_twins}]
  By \Lemref{finset_deg2} there is a cofinal chain
  $F_0 \subset F_1 \subset F_2 \subset \cdots$ of finite b-subtrees of
  $T$ such that the projections $x_{F_i}$ and $y_{F_i}$ of $x$ and $y$
  have degree $2$ in the reduced partial unions $K_{F_{i}}^{\ast}$.
  Then $Z \setminus \{x,y\}$ is the ascending union of the preimages
  of the $K_{F_{i}}^{\ast} \setminus \{x_{F_i}, y_{F_i}\}$.
  
  By \Rmkref{cut_pair_map}, each
  $K_{F_i}^{\ast} \setminus \{x_{F_i}, y_{F_i}\}$ has either $1$ or
  $2$ components, corresponding to whether or not $x_{F_i}$ and
  $y_{F_i}$ are twins in $K_{F_i}^{\ast}$.  For some $i$, let $V$ be
  the set of vertices of $F_{i+1} \setminus F_i$ that are adjacent to
  $F_i$ and, for each $v \in V$, let $F_v$ be the component of
  $F_{i+1} \setminus \{v\}$ that does not contain $F_i$.  The reduced
  partial union $K_{F_{i+1}}^{\ast}$ is obtained from $K_{F_i}^{\ast}$
  by performing connected sum operations (in arbitrary order) with the
  reduced partial unions $K_{F_v}^{\ast}$ at various points of
  $K_{F_i}^{\ast}$.  By \Lemref{conn_sum_nice}, the $K_{F_v}^{\ast}$
  are all $2$-connected so the connected sum operations performed at
  points of $K_{F_i}^{\ast} \setminus \{x_{F_i},y_{F_i}\}$ preserves
  the components of the complement of the projection of $\{x,y\}$.  By
  \Corref{connected_sum_deg2_twins}, a connected sum operation
  performed at $x_{F_i}$ or $y_{F_i}$ can only reduce the number of
  components of the complement of the projection of $\{x,y\}$ and will
  otherwise preserve the components.  It follows that if
  $K_{F_{i}}^{\ast} \setminus \{x_{F_i}, y_{F_i}\}$ is connected then
  so is $K_{F_{i+1}}^{\ast} \setminus \{x_{F_{i+1}}, y_{F_{i+1}}\}$
  and that if
  $K_{F_{i+1}}^{\ast} \setminus \{x_{F_{i+1}}, y_{F_{i+1}}\}$ has $2$
  components then the inclusion of the preimage of
  $K_{F_{i}}^{\ast} \setminus \{x_{F_i}, y_{F_i}\}$ in
  $K_{F_{i+1}}^{\ast} \setminus \{x_{F_{i+1}}, y_{F_{i+1}}\}$ induces
  a bijection on components.

  We are now ready to prove the statement.  To prove the reverse
  implication, assume that there exists a finite b-subtree $F$ as in
  the statement, i.e., a finite b-subtree $F \subset T$ such that the
  projections $x_F$ and $y_F$ of $x$ and $y$ to the reduced partial
  union $K_F^{\ast}$ have degree $2$ and
  $K_F^{\ast} \setminus \{x_F,y_F\}$ is connected.  Then we can obtain
  a cofinal chain of finite b-subtrees
  $F_0 \subset F_1 \subset F_2 \subset \cdots$ as above with
  $F = F_0$.  By the above argument, the
  $K_{F_{i}}^{\ast} \setminus \{x_{F_i}, y_{F_i}\}$ are all connected.
  By \Lemref{preimage_connected}, the preimages of the
  $K_{F_{i}}^{\ast} \setminus \{x_{F_i}, y_{F_i}\}$ in $Z$ are then
  also connected so that their ascending union, which is equal to
  $Z \setminus \{x,y\}$, must also be connected.

  To prove the reverse implication, assume that no such $F$ exists as
  in the statement.  Then the
  $K_{F_{i}}^{\ast} \setminus \{x_{F_i}, y_{F_i}\}$ all have $2$
  components for a cofinal chain of finite b-subtrees
  $F_0 \subset F_1 \subset F_2 \subset \cdots$ as above.  As argued
  above, the inclusion of the preimage of
  $K_{F_{i}}^{\ast} \setminus \{x_{F_i}, y_{F_i}\}$ in
  $K_{F_{i+1}}^{\ast} \setminus \{x_{F_{i+1}}, y_{F_{i+1}}\}$ induces
  a bijection on components.  Thus the preimages of the
  $K_{F_{i}}^{\ast} \setminus \{x_{F_i}, y_{F_i}\}$ in $Z$ express
  $Z \setminus \{x,y\}$ as a disjoint union of two ascending unions of
  connected open subspaces.  It follows that $Z \setminus \{x,y\}$ has
  two components.

  The last sentence of the statement follows by the same argument
  given above showing that
  $K_{F_{i+1}}^{\ast} \setminus \{x_{F_{i+1}}, y_{F_{i+1}}\}$ has no
  more components than
  $K_{F_i}^{\ast} \setminus \{x_{F_i}, y_{F_i}\}$.
\end{proof}


\begin{lem}\lemlabel{essential_lifts}
  Let $\Theta$ be a tree system of punctured thick $\theta$-graphs,
  with underlying tree $T$.  Let $Z$ be the limit of $\Theta$.  Let
  $F_0 \subset T$ be a finite b-subtree and let $v$ be an essential
  vertex of the reduced partial union $K_{F_0}^{\ast}$.  There exists
  a unique $\tilde v \in Z$ such that
  \begin{enumerate}
  \item $\tilde v$ projects to $v$ in $K_{F_0}^{\ast}$ and
  \item for any finite b-subtree $F \supset F_0$, the projection of
    $\tilde v$ to $K_F^{\ast}$ has the same degree as $v$.
  \end{enumerate}
  If $u$ is another essential vertex of $K_{F_0}^{\ast}$ then the
  preimage in $Z$ of each component of
  $K_{F_0}^{\ast} \setminus \{u,v\}$ is contained in a component of
  $Z \setminus \{\tilde u, \tilde v\}$ and the map
  $\pi_0(K_{F_0}^{\ast} \setminus \{u,v\}) \to \pi_0(Z \setminus
  \{\tilde u, \tilde v\})$ induced by these inclusions is a bijection.
\end{lem}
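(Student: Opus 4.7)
The plan is to build $\tilde v \in Z$ as a coherent family of projections $(v_F)_{F \supset F_0}$ to the graphs $K_F^{\ast}$ of the inverse system $\mathcal{S}_\Theta$; this suffices since the b-subtrees $F \supset F_0$ are cofinal in $\mathcal{F}_T^b$, together with \Propref{T.6}. Define $v_F$ by induction along chains $F_0 \subset F_1 \subset \cdots$ of single-vertex extensions: set $v_{F_0} := v$, and for $F' = F \cup \{t\}$ where $t$ is attached to $F$ through a white vertex $u$ with attachment points $y \in \Gamma_t$ and $p \in K_F^{\ast}$, set $v_{F'} := v_F$ if $p \neq v_F$ and $v_{F'} := \bar y$ (the essential vertex of $\Gamma_t$ distinct from $y$) if $p = v_F$. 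Since peripherals glued in a connected sum must have matching sizes and $\deg(v_F) = \deg(v) \ge 3$, whenever $p = v_F$ the attachment point $y$ must be an essential vertex of the thick $\theta$-graph $\Gamma_t$, so $\bar y$ is well defined with $\deg(\bar y) = \deg(v)$ in $K_{F'}^{\ast}$ (no other sums touching $\bar y$ occur in this extension). A routine check confirms this definition is independent of the chain of extensions and compatible with the bonding maps $f_{FF'}$. The resulting $\tilde v \in Z$ satisfies (1) and (2), and \Lemref{degree_characterization} yields $\deg(\tilde v) = \deg(v)$.

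For uniqueness, let $\tilde v' \in Z$ also satisfy (1) and (2), with projections $v_F'$. Along any chain $F_0 \subset F_1 \subset \cdots \subset F_n = F$ of single-vertex extensions, I show by induction that $v_{F_i}' = v_{F_i}$. Assuming $v_{F_i}' = v_{F_i}$, we have $v_{F_{i+1}}' \in f_{F_i F_{i+1}}^{-1}(v_{F_i})$. If the sum at $F_{i+1} = F_i \cup \{t_{i+1}\}$ is not at $v_{F_i}$, this fiber is the singleton $\{v_{F_i}\} = \{v_{F_{i+1}}\}$. If the sum is at $v_{F_i}$, the fiber is the attached image of $\Gamma_{t_{i+1}}^{\#}(y)$ in $K_{F_{i+1}}^{\ast}$, whose only point of degree $\ge 3$ is $\bar y = v_{F_{i+1}}$ (interior edge points and points of $P_y$ all have degree $2$); hence $v_{F_{i+1}}' = v_{F_{i+1}}$ by condition (2). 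Thus $\tilde v = \tilde v'$.

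For the components statement, for each $F \supset F_0$ define $\Phi_F : \pi_0(K_{F_0}^{\ast} \setminus \{u,v\}) \to \pi_0(K_F^{\ast} \setminus \{u_F, v_F\})$ sending $C$ to the unique component containing $f_{F_0 F}^{-1}(C)$, which is connected by \Lemref{preimage_connected}. Bijectivity is proved by induction along chains of single-vertex extensions. In the step $F \to F' = F \cup \{t\}$: if the sum is at a point $p \notin \{u_F, v_F\}$ then $(u_{F'}, v_{F'}) = (u_F, v_F)$ and attaching the 2-connected $\Gamma_t$ at a single point of one component of $K_F^{\ast} \setminus \{u_F, v_F\}$ does not alter the component count (essentially \Lemref{conn_sum_map_connected}); if the sum is at $v_F$ (symmetric for $u_F$), then $\Gamma_t^{\#}(\{y, \bar y\})$ decomposes into $\deg(v)$ arcs, each connecting one point of $P_y$ to one point of $P_{\bar y}$, so in the glued blow-up $K_{F'}^{\ast,\#}(\{u_F, \bar y\}) = (K_F^{\ast,\#}(\{u_F, v_F\}) \sqcup \Gamma_t^{\#}(\{y, \bar y\}))/\sim$ (with $\sim$ identifying $P_{v_F}$ with $P_y$ via the connecting bijection $\ell$), each component of $K_F^{\ast,\#}(\{u_F, v_F\})$ extends uniquely via the attached arcs to a component of the whole, yielding a natural bijection on $\pi_0$.

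Finally, observe that $Z \setminus \{\tilde u, \tilde v\} = \bigcup_{F \supset F_0} f_F^{-1}(K_F^{\ast} \setminus \{u_F, v_F\})$ as an ascending union of open subsets (a point $p$ lies outside $\{\tilde u, \tilde v\}$ iff some projection $f_F(p)$ differs from both $u_F$ and $v_F$, by a routine compatibility argument), with each $f_F^{-1}(K_F^{\ast} \setminus \{u_F, v_F\})$ decomposing into its connected pieces $f_F^{-1}(C_F)$ over $C_F \in \pi_0(K_F^{\ast} \setminus \{u_F, v_F\})$ by \Lemref{preimage_connected}. The compatibility of the $\Phi_F$ under bonding inclusions then yields the stated bijection $\pi_0(K_{F_0}^{\ast} \setminus \{u, v\}) \cong \pi_0(Z \setminus \{\tilde u, \tilde v\})$ via $C \mapsto$ the component of $Z \setminus \{\tilde u, \tilde v\}$ containing $f_{F_0}^{-1}(C)$. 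The main obstacle is the Case 2 analysis in the components step, requiring careful tracking of how blow-up components match through the connecting bijection $\ell$ at each connected sum operation at $u_F$ or $v_F$.
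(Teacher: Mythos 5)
Your proposal is correct and follows essentially the same route as the paper: the lift $\tilde v$ is built as a coherent thread over single-vertex extensions with the same two-case rule (unchanged if the sum is elsewhere, the opposite essential vertex of the new $\theta$-graph if the sum is at $v_F$), uniqueness comes from the degree condition at each stage, and the $\pi_0$-statement is proved step-by-step for single extensions and then passed to $Z$ as an ascending union via \Lemref{preimage_connected}. The only cosmetic difference is in the case of a sum at $v_F$, where you match components through the blow-up arcs of $\Gamma_t^{\#}(\{y,\bar y\})$ while the paper deformation-retracts onto the closure $K_F^{\ast}\setminus\{u_F,v_F\}\cup P_{v_F}$ — the same idea — and your citation of \Lemref{preimage_connected} for connectivity of bonding-map preimages should really be to (iterated) \Lemref{conn_sum_map_connected}, a minor imprecision only.
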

\begin{proof}
  We will define $\tilde v$ as the limit of a system of lifts
  $v_F \in K_F^{\ast}$ of $v$ where $F \supset F_0$.  These lifts need
  to be consistent with respect to the bonding maps so it suffices to
  show how to lift $v_F$ to $v_{\hat F}$ for finite b-subtrees
  $\hat F \supset F \supset F_0$ for which no b-subtree lies properly
  between $\hat F$ and $F$.  Then the reduced partial union
  $K_{\hat F}^{\ast}$ is obtained from $K_F^{\ast}$ by performing a
  connected sum operation with a $\theta$-graph $K_t^{\ast}$.  In the
  case that the connected sum operation is performed away from $v_F$
  then we let $v_{\hat F} = v_F$ in $K_{\hat F}^{\ast}$.  In the case
  that the connected sum operation is performed at $v_F$, we let
  $v_{\hat F}$ be the essential vertex of $K_t^{\ast}$ that is not
  involved in the connected sum operation.  Notice that, in either
  case, $v_{\hat F}$ is the unique vertex of $K_{\hat F}^{\ast}$ of
  degree $\deg(v_F)$ that projects onto $v_F$ in $K_F^{\ast}$.  Thus
  the limit $\tilde v \in Z$ of the $v_F$ is the unique point in $Z$
  projecting to $v$ in $K_{F_0}^{\ast}$ and projecting to an essential
  vertex of degree $\deg(v_{F_0})$ in every $K_F^{\ast}$ with
  $F \supset F_0$.

  We now prove the final part of the lemma.  Let $u$ be another
  essential vertex of $K_{F_0}^{\ast}$.  As above, consider finite
  b-subtrees $\hat F \supset F \supset F_0$ for which no b-subtree
  lies properly between $\hat F$ and $F$ and decompose
  $K_{\hat F}^{\ast}$ as $K_F^{\ast} \# K_t^{\ast}$.  Since
  $\theta$-graphs are $2$-connected, if the connected sum operation is
  performed away from $u_F$ and $v_F$ then each component of
  $K_F^{\ast} \setminus \{u_F, v_F\}$ has connected preimage in
  $K_{\hat F}^{\ast} \setminus \{u_{\hat F},v_{\hat F}\} = K_{\hat
    F}^{\ast} \setminus \{u_F,v_F\}$.  Moreover, the inclusion in
  $K_{\hat F}^{\ast} \setminus \{u_F,v_F\}$ of the preimage of
  $K_F^{\ast} \setminus \{u_F, v_F\}$ is bijective on components.  If
  the connected sum operation is performed at $u_F$ or $v_F$, say at
  $v_F$, then $K_F^{\ast} \setminus \{u_F, v_F\}$ is a subspace of
  $K_{\hat F}^{\ast} \setminus \{u_{\hat F},v_{\hat F}\} = K_{\hat
    F}^{\ast} \setminus \{u_F,v_{\hat F}\}$.  The closure of
  $K_F^{\ast} \setminus \{u_F, v_F\}$ in
  $K_{\hat F}^{\ast} \setminus \{u_F,v_{\hat F}\}$ is
  $K_F^{\ast} \setminus \{u_F, v_F\} \cup P_{v_F}$ where $P_{v_F}$ is
  the blow-up divisor of $v_F$.  We see that
  $K_{\hat F}^{\ast} \setminus \{u_F,v_{\hat F}\}$ is obtained from
  $K_F^{\ast} \setminus \{u_F, v_F\} \cup P_{v_F}$ by gluing a copy of
  the half-open interval $[0,1)$ along $0$ to each point in $P_{v_F}$.
  Thus $K_{\hat F}^{\ast} \setminus \{u_F,v_{\hat F}\}$ has a
  deformation retraction onto
  $K_F^{\ast} \setminus \{u_F, v_F\} \cup P_{v_F}$ and it follows that
  the inclusion of $K_F^{\ast} \setminus \{u_F, v_F\}$ in
  $K_{\hat F}^{\ast} \setminus \{u_{\hat F},v_{\hat F}\}$ is bijective
  on components.  Let $F_0 \subset F_1 \subset F_2 \subset \cdots$ be
  a cofinal sequence of finite b-subtrees 
  such that for each $n\ge0$ no b-subtree lies properly between $F_n$
  and $F_{n+1}$.
  For each $n$, let
  $X_n = K_{F_n}^{\ast} \setminus \{u_{F_n}, v_{F_n}\}$ and let
  $\tilde X_n$ be the preimage in $Z$ of $X_n$.  Then
  $Z \setminus \{\tilde u, \tilde v\}$ is the ascending union of the
  $\tilde X_n$.  By \Lemref{preimage_connected}, the preimage in $Z$
  of any component of any $X_n$ is connected.  Consequently, the
  preimage in $Z$ of any component of
  $X_0 = K_{F_0}^{\ast} \setminus \{u, v\}$ is contained in a
  component of $Z \setminus \{\tilde u, \tilde v\}$.  By the above
  argument, the inclusion of $\tilde X_n$ in $\tilde X_{n+1}$ induces
  a bijection on components.  It follows that the map
  $\pi_0(K_{F_0}^{\ast} \setminus \{u,v\}) \to \pi_0(Z \setminus
  \{\tilde u, \tilde v\})$ sending a component $C$ of
  $K_{F_0}^{\ast} \setminus \{u, v\}$ to the component of
  $Z \setminus \{\tilde u, \tilde v\}$ that contains the preimage of
  $C$ is bijective.
\end{proof}

Recall the description of Bowditch classes in
\Ssecref{bowditch_jsj_splitting}: a pair of points $\{x, y\}$ is a
\emph{Bowditch cut pair} if $\deg(x) = \deg(y) \ge 2$ and the common
degree of $x$ and $y$ is equal to the number of connected components
in the complement of $\{x,y\}$.  The \emph{degree} of $\{x,y\}$ is
defined as the common degree of $x$ and $y$.  In the case of graphs we
have used the terminology \emph{twin pairs} for Bowditch cut pairs.
We will also consider Bowditch cut pairs in the case of trees of
punctured thick $\theta$-graphs.

\begin{cor}\corlabel{separation_by_bowditch_cutpair}
  Let $\Theta$ be a tree system of punctured thick $\theta$-graphs,
  with underlying tree $T$.  Let $Z$ be the limit of $\Theta$ and let
  $x,y \in Z$.  Assume that there exists a finite b-subtree
  $F \subset T$ such that the projections $x_F$ and $y_F$ of $x$ and
  $y$ in the reduced partial union $K_F^{\ast}$ are contained in
  distinct components of the complement
  $K_F^{\ast} \setminus \{v, \bar v\}$ of an essential twin pair
  $\{v, \bar v\}$.  Then there exists a Bowditch cut pair $\{z, z'\}$
  in $Z$ of degree $\deg(v)$ such that
  \begin{enumerate}
  \item $z$ and $z'$ project to $v$ and $\bar v$, and
  \item $x$ and $y$ are contained in distinct components of
    $Z \setminus \{z, z'\}$.
  \end{enumerate}
\end{cor}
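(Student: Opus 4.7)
The plan is to derive this corollary as essentially a direct combination of \Lemref{essential_lifts} with the definition of an essential twin pair. The hypothesis gives us the finite b-subtree $F$ and the essential twin pair $\{v, \bar v\}$ in the reduced partial union $K_F^{\ast}$, and \Lemref{essential_lifts} already packages together everything we need to lift both vertices and their component-structure up to $Z$.

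First I would apply \Lemref{essential_lifts} to the essential vertices $v$ and $\bar v$ of $K_F^{\ast}$ to obtain canonical lifts $z := \tilde v$ and $z' := \tilde{\bar v}$ in $Z$, which projecting to $v$ and $\bar v$ respectively gives condition (1). By the last clause of \Lemref{essential_lifts}, the natural map $\pi_0(K_F^{\ast} \setminus \{v,\bar v\}) \to \pi_0(Z \setminus \{z, z'\})$ induced by preimage inclusions is a bijection. Since $x_F$ and $y_F$ lie in distinct components of $K_F^{\ast} \setminus \{v, \bar v\}$, their preimages lie in distinct components of $Z \setminus \{z,z'\}$; but $x$ and $y$ lie in these preimages, so condition (2) is established.

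It remains to verify that $\{z,z'\}$ is a Bowditch cut pair of degree $\deg(v)$. By construction (the second bullet of \Lemref{essential_lifts}), for every finite b-subtree $\hat F \supset F$ the projection of $z$ to $K_{\hat F}^{\ast}$ has degree $\deg(v) \ge 3$, so the criterion of \Lemref{degree_characterization} applies and gives $\deg(z) = \deg(v)$; the same argument applied to $z'$ yields $\deg(z') = \deg(\bar v) = \deg(v)$, using that $v$ and $\bar v$ are twins. Finally, the bijection $\pi_0(K_F^{\ast} \setminus \{v,\bar v\}) \to \pi_0(Z \setminus \{z, z'\})$ supplied by \Lemref{essential_lifts} shows that $Z \setminus \{z,z'\}$ has exactly as many components as $K_F^{\ast} \setminus \{v, \bar v\}$, which by the twin property of $\{v,\bar v\}$ equals $\deg(v)$. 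Hence $\deg(z) = \deg(z') = \deg(v)$ coincides with the number of components of $Z \setminus \{z, z'\}$, so $\{z,z'\}$ is indeed a Bowditch cut pair of degree $\deg(v)$.

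There is no real obstacle here: all of the topological content has already been absorbed into \Lemref{essential_lifts} and \Lemref{degree_characterization}. The only point to be slightly careful about is that we must cite \Lemref{degree_characterization} (rather than just reading off the degree from a single reduced partial union) to pass from the degrees of all the projections $z_{\hat F}$ to the actual degree of $z$ in the limit; this is precisely what the lemma was set up to do.
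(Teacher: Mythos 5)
Your proposal is correct and follows essentially the same route as the paper's proof: lift $v,\bar v$ via \Lemref{essential_lifts}, get the degrees from \Lemref{degree_characterization}, and read off both the component count and the separation of $x$ from $y$ from the final $\pi_0$-bijection clause of \Lemref{essential_lifts}. Your version just spells out the bookkeeping (twin property giving $\deg(\bar v)=\deg(v)$ and the component count equal to $\deg(v)$) that the paper leaves implicit.
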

\begin{proof}
  Let $z, z' \in Z$ be the lifts of $v$ and $\bar v$ given by
  \Lemref{essential_lifts}.  \Lemref{degree_characterization} ensures
  that $z$ and $z'$ have the same degree as $v$.  The final
  sentence of \Lemref{essential_lifts} ensures that
  $Z \setminus \{z,z'\}$ has $\deg(z)$-many components and
  separates $x$ from $y$.
\end{proof}

\begin{cor}\corlabel{deg3_pairs_project_to_twins}
  Let $\Theta$ be a tree system of punctured thick $\theta$-graphs,
  with underlying tree $T$.  Let $Z$ be the limit of $\Theta$.  Let
  $\{x,x'\}$ be a Bowditch cut pair of $Z$ of degree $d \ge 3$.
  Then there exists a finite b-subtree $F_0 \subset T$ such that, for
  any finite b-subtree $F \supset F_0$, the projections of $x$ and
  $x'$ to the reduced partial union $K_{F}^{\ast}$ are twins of
  degree $d$.
\end{cor}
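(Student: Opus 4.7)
The plan is to directly combine \Lemref{degree_characterization} and \Lemref{essential_lifts}. Write $d = \deg(x) = \deg(x')$. Since $d \ge 3$, \Lemref{degree_characterization} applied to $x$ produces a finite b-subtree $F_x \subset T$ such that the projection $x_F$ of $x$ to $K_F^{\ast}$ has degree $d$ for every finite b-subtree $F \supset F_x$, and likewise gives an $F_{x'}$ for $x'$. Let $F_0$ be any finite b-subtree containing both $F_x$ and $F_{x'}$. Then for every $F \supset F_0$, the projections $x_F, x'_F$ to $K_F^{\ast}$ are essential vertices of degree $d$.

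Fix such an $F \supset F_0$. The points $x_F$ and $x'_F$ are essential vertices of $K_F^{\ast}$, so \Lemref{essential_lifts} (applied with $F$ playing the role of $F_0$) provides unique lifts $\widetilde{x_F}$ and $\widetilde{x'_F}$ in $Z$. Since $x$ (resp.\ $x'$) projects to $x_F$ (resp.\ $x'_F$) and the degree of its projection to $K_{F'}^{\ast}$ equals $d = \deg(x_F)$ for every $F' \supset F$ by our choice of $F_0$, the uniqueness clause of \Lemref{essential_lifts} forces $x = \widetilde{x_F}$ and $x' = \widetilde{x'_F}$.

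Now the final assertion of \Lemref{essential_lifts} gives a bijection
\[
  \pi_0\bigl(K_F^{\ast} \setminus \{x_F, x'_F\}\bigr) \to \pi_0\bigl(Z \setminus \{x, x'\}\bigr)
\]
induced by preimages. Because $\{x, x'\}$ is a Bowditch cut pair of degree $d$, the target has exactly $d$ components, and hence so does $K_F^{\ast} \setminus \{x_F, x'_F\}$. Combined with the equality $\deg(x_F) = \deg(x'_F) = d$, this is precisely the assertion that $\{x_F, x'_F\}$ is a twin pair of degree $d$ in $K_F^{\ast}$, completing the proof.

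There is no real obstacle here; the content of the corollary is essentially packaged into the two preceding lemmas, and the only point requiring a small argument is the identification $x = \widetilde{x_F}$, $x' = \widetilde{x'_F}$, which follows from the uniqueness statement in \Lemref{essential_lifts} once $F_0$ has been chosen large enough to stabilize the degrees of both projections.
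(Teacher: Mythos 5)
Your proof is correct and follows essentially the same route as the paper's: stabilize the degrees of both projections via \Lemref{degree_characterization}, identify $x$ and $x'$ with the canonical lifts of their projections via the uniqueness clause of \Lemref{essential_lifts}, and then read off the twin property from the $\pi_0$-bijection in the final sentence of that lemma. Your write-up is only slightly more explicit than the paper's (in choosing $F_0$ containing both subtrees and in invoking uniqueness to get $x=\widetilde{x_F}$, $x'=\widetilde{x'_F}$), but the argument is the same.
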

\begin{proof}
  By \Lemref{degree_characterization}, there exists a finite b-subtree
  $F_0 \subset T$ such that, for any finite b-subtree $F \supset F_0$,
  the projections $x_F$ and $x'_F$ of $x$ and $x'$ in the
  reduced partial union $K_F^{\ast}$ have degree $d$.  Then $x$ and
  $x'$ are the canonical lifts of $x_F$ and $x'_F$ given by
  \Lemref{essential_lifts}.  It follows from the last sentence of
  \Lemref{essential_lifts} that
  $K_F^{\ast} \setminus \{x_F, x'_F\}$ has the same number of
  components, $d$, as does $Z \setminus \{x, x'\}$ and so $x_F$
  and $x'_F$ are twins in $K_F^{\ast}$.
\end{proof}

\begin{lem}\lemlabel{separating_bowditch_pairs}
  Let $\Theta$ be a tree system of punctured thick $\theta$-graphs,
  with underlying tree $T$.  Let $Z$ be the limit of $\Theta$, and let
  $\{x,x'\}$ and $\{y, y'\}$ be disjoint Bowditch cut pairs of
  $Z$. 
  If
  $\{x, x'\}$ has degree at least $3$ and $\{y, y'\}$ has
  degree at least $3$ then there exists a Bowditch cut pair
  $\{z, z'\}$ of degree $2$ such that $\{x, x'\}$ and
  $\{y, y'\}$ are contained in distinct components of
  $Z \setminus \{z, z'\}$.
\end{lem}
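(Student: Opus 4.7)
The plan is to locate the desired degree-$2$ separating pair first at the level of a sufficiently large reduced partial union $K_F^\ast$, and then lift it to $Z$. By \Corref{deg3_pairs_project_to_twins} together with the disjointness of $\{x,x'\}$ from $\{y,y'\}$, I would choose a finite b-subtree $F \subset T$ so that all four projections $x_F, x'_F, y_F, y'_F$ to $K_F^\ast$ are pairwise distinct and each of $\{x_F, x'_F\}$, $\{y_F, y'_F\}$ is a twin pair in $K_F^\ast$ of the same degree as its original Bowditch cut pair. Because $\{y_F, y'_F\}$ is a twin pair of degree $\geq 3$, \Lemref{bowditch_twins_not_separated} forces $y_F$ and $y'_F$ to lie in a common component $C$ of $K_F^\ast \setminus \{x_F, x'_F\}$. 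By \Lemref{pair_link_map} and \Rmkref{twin_blowup}, there is a unique topological edge $e_{x_F}$ of $K_F^\ast$ incident to $x_F$ whose interior lies in $C$, and similarly a unique $e_{x'_F}$ incident to $x'_F$; these are distinct, since otherwise their common interior would constitute an essential-vertex-free component of $K_F^\ast \setminus \{x_F, x'_F\}$ and hence could not contain $y_F$ or $y'_F$. Using that $D_F$ is countable while the edge interiors are uncountable, I would pick stable points $z_F \in e_{x_F}^\circ$ and $z'_F \in e_{x'_F}^\circ$, with unique preimages $z, z' \in Z$.

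The combinatorial heart of the argument is the claim that $K_F^\ast \setminus \{z_F, z'_F\}$ has exactly two components, one containing $\{x_F, x'_F\}$ and the other containing $\{y_F, y'_F\}$. To prove it, I would decompose $K_F^\ast = \overline{C} \cup \overline{C_2} \cup \cdots \cup \overline{C_d}$ glued along $\{x_F, x'_F\}$, where $C_2, \ldots, C_d$ are the remaining components of $K_F^\ast \setminus \{x_F, x'_F\}$ and $\overline{C_i} = C_i \cup \{x_F, x'_F\}$ by \Corref{twin_component_closure} (applicable since $K_F^\ast$ is $2$-connected by \Lemref{conn_sum_nice}). The twin-pair structure at $\{x_F, x'_F\}$ forces each of $x_F, x'_F$ to have degree $1$ in $\overline{C}$, incident respectively to $e_{x_F}$ and $e_{x'_F}$. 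Removing $z_F, z'_F$ from $\overline{C}$ therefore peels off two half-open pendant arcs $[x_F, z_F)$ and $[x'_F, z'_F)$ from a connected middle piece $M$ containing the essential vertices $y_F$ and $y'_F$. Reattaching the $\overline{C_i}$ along $\{x_F, x'_F\}$ then fuses the two pendants with them into a single complementary component containing $\{x_F, x'_F\}$, leaving $M$ as the other.

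Finally, I would lift this separation to $Z$. Since $z$ and $z'$ have unique preimages and are never identified with any peripheral image, their projections $z_{F'}, z'_{F'}$ to any $K_{F'}^\ast$ with $F' \supset F$ remain stable degree-$2$ points, so no connected-sum operation is performed at them. By \Corref{connected_sum_deg2_twins} applied in the style of the proof of \Lemref{deg2_limit_twins}, the complement $K_{F'}^\ast \setminus \{z_{F'}, z'_{F'}\}$ retains its two components with the natural inclusion from $K_F^\ast \setminus \{z_F, z'_F\}$ bijective on components. Writing $Z \setminus \{z, z'\}$ as the ascending union of the preimages of these complements, it has at least two components; by \Lemref{deg2_pair_2_comps} it has at most two, hence exactly two, with $\{x, x'\}$ in one and $\{y, y'\}$ in the other by bijectivity on components. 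Combining \Lemref{degree_characterization} with the cutpoint-freeness of $Z$ (\Lemref{limit_very_connected}) then yields $\deg_Z(z) = \deg_Z(z') = 2$, so $\{z, z'\}$ is the desired Bowditch cut pair of degree $2$. The main obstacle will be the combinatorial claim of the second paragraph, which requires carefully pinning down the local structure of $K_F^\ast$ at the twin pair $\{x_F, x'_F\}$ to identify exactly the right number of complementary components; the lifting in the third paragraph is then essentially a recycling of the machinery already set up for \Lemref{deg2_limit_twins}.
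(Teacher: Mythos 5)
Your proposal is correct, but it reaches the separating pair by a genuinely different route than the paper. After the common reduction (both arguments use \Corref{deg3_pairs_project_to_twins} and the uniqueness of canonical lifts from \Lemref{essential_lifts} to get two distinct essential twin pairs $\{x_F,x'_F\}$, $\{y_F,y'_F\}$ in a reduced partial union $K_F^{\ast}$), the paper argues globally: viewing $K_F^{\ast}$ as a connected sum of thick $\theta$-graphs and normalizing so that all sums are performed at degree-$2$ points, \Propref{connected_sum_twins} forces the two essential twin pairs into distinct $\theta$-graph summands, and the blow-up divisor $\{q_1,q_2\}$ of a degree-$2$ connect-sum point between them is already a separating pair of stable points whose complement has exactly two components by $2$-connectedness; the lift to $Z$ is then immediate from \Lemref{preimage_connected} and \Lemref{degree_characterization}. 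You instead argue locally at $\{x_F,x'_F\}$: \Lemref{bowditch_twins_not_separated} places $y_F,y'_F$ in a single complementary component $C$, the twin condition via \Lemref{pair_link_map} and \Rmkref{twin_blowup} gives unique edges from $x_F$ and from $x'_F$ into $C$, and you choose stable points on these and count components by hand. The paper's route is shorter because the summand structure hands it the separating pair for free; your route avoids the normalization of the connected-sum decomposition but must carry out the component count explicitly. That count, which you rightly flag as the main obstacle, is in fact fine: with \Corref{twin_component_closure} and the fact that $x_F,x'_F$ have degree $1$ in $\overline{C}$, the pendant arcs $[x_F,z_F)$, $[x'_F,z'_F)$ and the connected middle piece $M$ are exactly the components of $\overline{C}\setminus\{z_F,z'_F\}$, and reattaching the remaining $\overline{C_i}$ (there is at least one, since the degree is $\ge 3$) fuses the two pendants into a single component containing $\{x_F,x'_F\}$.

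Two minor repairs. First, in the lifting step \Corref{connected_sum_deg2_twins} is not the right citation: since $z_F,z'_F$ are stable, no connected sum is ever performed at them, and what you need is only that sums performed elsewhere preserve the components of the complement, i.e.\ \Lemref{conn_sum_nice} and \Lemref{conn_sum_map_connected}. Simpler still, because $z,z'$ have singleton preimages, $Z\setminus\{z,z'\}$ is the full preimage of $K_F^{\ast}\setminus\{z_F,z'_F\}$, so applying \Lemref{preimage_connected} to each of its two components gives exactly two components in one stroke; this also makes the "ascending union" vacuous (each term is already all of $Z\setminus\{z,z'\}$) and removes the need for \Lemref{deg2_pair_2_comps}, which as written you invoke before having established $\deg_Z(z)=\deg_Z(z')=2$. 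Second, pairwise distinctness of the four projections is automatic from uniqueness of canonical lifts, not something to arrange by the choice of $F$ (though choosing $F$ large enough also works).
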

\begin{proof}
  It follows from \Corref{deg3_pairs_project_to_twins} that there exists a finite
  b-subtree $F_0 \subset T$ such that, for any finite b-subtree
  $F \supset F_0$, the projection $\{x_F, x'_F\}$ of
  $\{x, x'\}$ in the reduced partial union $K_F^{\ast}$ is a twin
  pair of the same degree as $\{x, x'\}$ and the projection
  $\{y_F, y'_F\}$ of $\{y, y'\}$ in $K_F^{\ast}$ is a twin
  pair of the same degree as $\{y, y'\}$.  Then
  $x, x', y, y'$ are the canonical lifts of
  $x_F, x'_F, y_F, y'_F$ given by \Lemref{essential_lifts} and
  so $\{x_F, x'_F\}$ and $\{y_F, y'_F\}$ must be distinct essential twin
  pairs of $K_F^{\ast}$.  Recall that $K_F^{\ast}$ is obtained from
  thick $\theta$-graphs by a series of connected sum operations.  By
  first performing the connected sum operations at essential vertices,
  we can assume that the connected sum operations take place only at
  degree $2$ points so that, by
  \Propref{connected_sum_twins}\pitmref{twins_in_part}, the connected
  sum operations preserve essential twins.  Then $\{x_F, x'_F\}$
  and $\{y_F, y'_F\}$ correspond to essential twin pairs in distinct thick
  $\theta$-graph summands and, consequently, some blow-up divisor
  $\{q_1, q_2\} = P_q \subset K_F^{\ast}$ of a degree $2$ point $q$ at
  which a connected sum operation was performed separates
  $\{x_F, x'_F\}$ and $\{y_F, y'_F\}$ into distinct
  components.  Since connected sums of thick $\theta$-graphs are
  2-connected, the complement $K_F^{\ast} \setminus \{q_1, q_2\}$ has
  two components.  Since $q_1$ and $q_2$ are stable points (i.e. no
  connected sum operation is performed at these points to obtain a
  larger reduced partial union) they have singleton preimages
  $\{\tilde q_1\}$ and $\{\tilde q_2\}$ in $Z$ and
  $Z \setminus \{\tilde q_1, \tilde q_2\}$ is the preimage in $Z$ of
  $K_F^{\ast} \setminus \{q_1, q_2\}$.  Then
  \Lemref{preimage_connected} implies that
  $Z \setminus \{\tilde q_1, \tilde q_2\}$ has $2$ components and
  \Lemref{degree_characterization} implies that $\tilde q_1$ and
  $\tilde q_2$ have degree $2$ so that we can take
  $\{z, z'\} = \{\tilde q_1, \tilde q_2\}$.
\end{proof}

Recall from \Ssecref{bowditch_jsj_splitting} that the reflexive
closure of the Bowditch cut pair relation is an equivalence relation.
We call the equivalence classes \emph{Bowditch classes}.  Recall that
a Bowditch class $\zeta$ of degree at least $3$ has cardinality
$|\zeta| = 2$, i.e., consists of a single Bowditch cut pair.

We are now ready to prove the main result of this section: the
implication
\pitmref{bdg_is_tree_of_theta_graphs}$\Rightarrow$\pitmref{g_no_rigid_factors}
of \Mainthmref{2}.  We restate and prove the implication below.

\begin{thm}
  Let $G$ be a $1$-ended hyperbolic group which is not cocompact
  Fuchsian.  If $\bd G$ is homeomorphic to a tree of thick
  $\theta$-graphs then $G$ has no rigid factor in its (Bowditch) JSJ
  splitting.
\end{thm}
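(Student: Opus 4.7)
The plan is to argue by contradiction: assume $\partial G \cong Z$ is a tree of thick $\theta$-graphs while $T_G$ has a rigid vertex $v$, whose star $S$ is by definition an infinite family of pairwise non-separated Bowditch classes, and deduce that no such family in $Z$ can contain more than two elements.

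The first step is to pin down the Bowditch classes of $Z$: I would show that every size-$2$ class has degree at least $3$, equivalently, that every Bowditch cut pair of degree $2$ in $Z$ lies in an infinite (necklace) class. Given a Bowditch cut pair $\{x,y\}$ of degree $2$, \Lemref{finset_deg2} furnishes a finite b-subtree $F$ with $x_F,y_F$ of degree $2$ in $K_F^{\ast}$; that $Z\setminus\{x,y\}$ is disconnected together with \Lemref{deg2_limit_twins} forces $K_F^{\ast}\setminus\{x_F,y_F\}$ to be disconnected as well, so $x_F,y_F$ are degree-$2$ twins of $K_F^{\ast}$ and thus (by \Rmkref{deg2_inf_twins}) lie in the interior of a common topological edge $e$. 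After replacing $F$ by a larger b-subtree if necessary to ensure $x_F$ and $y_F$ are stable, I would pick a stable interior point $p\in e$ distinct from $x_F,y_F$: since no connected sum operation is ever performed at $p$, its degree remains $2$ in every $K_{\hat F}^{\ast}$ with $\hat F\supset F$, so by the contrapositive of \Lemref{degree_characterization} its unique lift $\tilde p\in Z$ has degree $2$. Because $x_F$ and $p$ are degree-$2$ twins in $K_F^{\ast}$, the complement $K_F^{\ast}\setminus\{x_F,p\}$ has two connected components, whose preimages in $Z$ are connected by \Lemref{preimage_connected} and together exhaust $Z\setminus\{x,\tilde p\}$; hence $\{x,\tilde p\}$ is a cut pair of $Z$. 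Ranging over uncountably many choices of $p$ populates the Bowditch class of $x$ with infinitely many points, so it is a necklace class.

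Granting this, the rest of the argument is fast. Two distinct size-$2$ classes $\zeta_1=\{x,x'\}$ and $\zeta_2=\{y,y'\}$ both have degree $\ge 3$ by the first step, and since Bowditch classes partition the local cutpoints along the cut-pair relation, no cross pair such as $\{x,y\}$ is a Bowditch cut pair; \Lemref{separating_bowditch_pairs} then yields a degree-$2$ separating cut pair, which by the first step belongs to a necklace — automatically a third class — so $\zeta_1$ and $\zeta_2$ are separated. Analogously, two distinct necklaces $\eta_1,\eta_2$ with $x_i\in\eta_i$ are not joined by a cut pair, so \Lemref{deg2_limit_twins} provides $F$ with $K_F^{\ast}\setminus\{x_{1,F},x_{2,F}\}$ connected; since $K_F^{\ast}$ is a connected sum of thick $\theta$-graphs — a twin graph not homeomorphic to $S^1$ by \Propref{twin_sum} and \Thmref{twin_iff_thetasum} — \Lemref{deg2_nontwins_separated} produces an essential twin pair $\{w,\bar w\}$ separating $x_{1,F}$ from $x_{2,F}$, which \Corref{separation_by_bowditch_cutpair} lifts to a Bowditch cut pair of degree $\deg(w)\ge 3$ — a size-$2$ class, hence distinct from $\eta_1,\eta_2$ — separating $\eta_1$ from $\eta_2$.

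Combining the last two steps, any family of pairwise non-separated Bowditch classes in $Z$ contains at most one size-$2$ class and at most one necklace, so has cardinality at most $2$. This contradicts the fact that the star $S$ of the rigid vertex $v$ is infinite, completing the proof. The main technical obstacle is the first step: one must handle ``stable'' versus ``unstable'' projections carefully, since an unstable $x_F$ makes $f_F^{-1}(x_F)$ strictly larger than $\{x\}$ and spoils the component-counting identification of $Z\setminus\{x,\tilde p\}$ with a preimage of $K_F^{\ast}\setminus\{x_F,p\}$; I expect this is resolved by enlarging $F$ along the branch of $T$ rooted at $x_F$ until $x$ projects to a stable interior point of a topological edge, which must happen in finitely many steps because $x\in \#\Theta$ rather than in $\partial T$.
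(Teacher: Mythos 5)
The decisive gap is in your Step~1. From the fact that $x_F$ and $y_F$ are degree-$2$ twins of $K_F^{\ast}$ you conclude, citing \Rmkref{deg2_inf_twins}, that they lie in the interior of a common topological edge; but that remark only gives the implication ``same edge $\Rightarrow$ twins'', and the converse is false for connected sums of thick $\theta$-graphs. Concretely, form the connected sum of two thick $\theta$-graphs at interior points $x_1,x_2$ of edges $a_1,a_2$, choosing the bijection of the two-point blow-up divisors to swap sides: the cut edges reassemble into two topological edges $E_1$ (from an essential vertex of the first summand to one of the second) and $E_2$ (joining the remaining pair), and interior points $y_1\in E_1$, $y_2\in E_2$ are degree-$2$ twins of the sum (this is exactly \Corref{connected_sum_deg2_twins}, which is insensitive to the gluing bijection) although they lie on distinct topological edges. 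So there need be no common edge $e$ on which to choose your auxiliary stable point $p$, and the construction of infinitely many cut-pair partners collapses. The second hole is the one you flag yourself: a degree-$2$ cut-pair point of $Z$ may lie in $\partial T\subset Z$, in which case its projections are never stable no matter how you enlarge $F$, and you assert $x\in\#\Theta$ without justification. So, whether or not the statement of Step~1 is true, the argument offered does not prove it.

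The good news is that Step~1 is not needed, and once it is removed your remaining two paragraphs are essentially the paper's proof, merely organized by class size instead of by degree. Since the star of a rigid vertex is an \emph{infinite} family of pairwise non-separated classes, pigeonhole gives either two classes of degree at least $3$ — handled exactly as you do via \Lemref{separating_bowditch_pairs}, whose statement already places the two \emph{pairs} (hence the two classes) in distinct components, and the separating pair has degree $2$, so it lies in a different class automatically — or two distinct classes of degree $2$, where your ``necklace'' argument runs verbatim without ever using that the classes are infinite: the chosen points are not a cut pair, so by \Lemref{deg2_pair_2_comps} (which you should cite) their complement is connected, and \Lemref{deg2_limit_twins}, \Thmref{twin_iff_thetasum}, \Lemref{deg2_nontwins_separated} and \Corref{separation_by_bowditch_cutpair} produce a Bowditch cut pair of degree at least $3$ separating the two chosen points. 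One further step is still missing there: ``separated'' is a condition on the classes, not on the chosen representatives, so you must promote separation of $x_1$ from $x_2$ to separation of the full classes; since the separating pair has degree at least $3$, its complement has at least three components and \Lemref{bowditch_twins_not_separated} (which you never invoke) does exactly this — any cut-pair partner of $x_1$ lying in a different component would force the complement of that cut pair to be connected, a contradiction. With these repairs your argument coincides with the proof in the paper.
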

\begin{proof}
  Let $T$ be the underlying tree of the tree system of 
  punctured graphs whose
  limit is homeomorphic to $\bd G$.  We identify $\bd G$ with this
  limit throughout the proof.
  
  Recall from \Ssecref{bowditch_jsj_splitting} that a rigid factor of
  $G$ corresponds to a vertex of type (v3) in the Bowditch JSJ
  tree $T_G$.  Such a vertex corresponds to an infinite maximal family
  $\mathcal{F}$ of pairwise not separated Bowditch classes of $\bd G$.
  So for any two classes $\zeta, \eta \in \mathcal{F}$ there does not
  exist a Bowditch class $\xi$ and $x,y \in \xi$ such that $\zeta$ and
  $\eta$ are contained in distinct components of
  $\bd G \setminus \{x,y\}$.

  We split the rest of the proof into the following two cases:
  I. $\mathcal{F}$ has two distinct classes $\zeta$ and $\eta$ of
  degree at least $3$; II. $\mathcal{F}$ has two distinct classes
  $\zeta$ and $\eta$ of degree $2$.

  \paragraph*{\textbf{Case I. $\mathcal{F}$ has two distinct classes $\zeta$
    and $\eta$ of degree at least $3$}}
  Then $\zeta$ and $\eta$ each consists of a single Bowditch cut pair
  of degree at least $3$.  Then, by
  \Lemref{separating_bowditch_pairs}, there is a Bowditch cut pair
  $\{z, z'\}$ of degree $2$ such that $\zeta$ and $\eta$ are
  contained in distinct components of $\bd G \setminus \{z, z'\}$,
  contradicting the fact that $\zeta$ and $\eta$ are not separated.

  \paragraph*{\textbf{Case II. $\mathcal{F}$ has two distinct classes $\zeta$ and
    $\eta$ of degree $2$}}
  Take $x \in \zeta$ an $y \in \eta$.  By \Lemref{deg2_pair_2_comps},
  there are at most two components of $\bd G \setminus \{x,y\}$.  But
  $\zeta$ and $\eta$ are distinct Bowditch classes so the points $x$
  and $y$ do not form a Bowditch cut pair.  Thus
  $\bd G \setminus \{x,y\}$ does not have two components and so it
  must be connected.  By \Lemref{deg2_limit_twins}, there is a finite
  b-subtree $F \subset T$ for which the projections $x_F$ and $y_F$ of
  $x$ and $y$ in the reduced partial union $K_F^{\ast}$ have degree
  $2$ and the complement $K_F^{\ast} \setminus \{x_F, y_F\}$ is
  connected.  By \Thmref{twin_iff_thetasum}, the reduced partial sum
  $K_F^{\ast}$ is a twin graph.  By \Lemref{deg2_nontwins_separated},
  there is an essential twin pair $v, \bar v$ such that $x_F$ and
  $y_F$ are contained in distinct components of
  $K_F^{\ast} \setminus \{v, \bar v\}$.  Then, by
  \Corref{separation_by_bowditch_cutpair}, the points $x$ and $y$ are
  contained in distinct components $C_x$ and $C_y$ of the complement
  $\bd G \setminus \{z, z'\}$ of a Bowditch cut pair
  $\{z, z'\}$ in $\bd G$ of degree $\deg(v) \ge 3$.  Since moreover, by
  \Lemref{bowditch_twins_not_separated}, we have $\zeta \subset C_x$
  and $\eta \subset C_y$, this yields a contradiction with the fact that $\zeta$ and
  $\eta$ are not separated.
\end{proof}

\section{Regular trees of graphs}
\seclabel{reg_tree}

In this section we describe a class of tree systems of punctured
graphs which depend uniquely on certain finite collections of defining
data (which we call \emph{graphical connecting systems}, see
\Defnref{R.1} below).  We call the limits of such tree systems the
\emph{regular trees of graphs}.  Although we do not justify this 
in the present paper (since this is not essential for our results), 
the regular trees of graphs belong to a larger
class of spaces called Markov compacta, which are 
described by certain recursive
procedures based on some finite collections of data (see
\cite{Dranishnikov:cohomol_markov:2007,Pawlik:boundaries_markov_compacta:2015,Czapp:constructive:2022}). Since,
due to the main result from
\cite{Pawlik:boundaries_markov_compacta:2015},
Gromov boundaries of
hyperbolic groups are all homeomorphic to Markov compacta, the regular
trees of graphs are good candidates for spaces that can be potentially
realized as Gromov boundaries of some hyperbolic groups.  Actually,
the main goal of the present paper is to characterize those 1-ended
hyperbolic groups $G$ whose Gromov boundaries $\bd G$ are
(homeomorphic to some) trees of graphs, and to show that these trees
of graphs are then regular.

Recall that in this paper we allow graphs to have multiple edges and
loop edges.  Given a graph $\Gamma$ and its vertex $v$, \emph{the
  link of $\Gamma$ at $v$}, denoted $\Lk(v)$, is the set of oriented
edges of $\Gamma$ issuing from $v$. Equivalently, $\Lk(v)$ can be
described as the blow-up divisor $P_v$ of
  the blow-up $|\Gamma|^\#(v)$ at $v$
  (as described in \Ssecref{tog_as_inv_lims_of_graphs}).

Given an oriented edge $\epsilon$ of a graph $\Gamma$, we denote by
$\bar\epsilon$ the same edge taken with the opposite orientation;
we denote by $|\varepsilon|$ the unoriented edge which underlies
the oriented edge $\varepsilon$.

\begin{defn} \defnlabel{R.1} A \emph{graphical connecting system}
is a triple $\mathcal{R}=(\Gamma, \mathbf{a}, \mathcal{A})$ such that:
  \begin{enumerate}[label=(\arabic*)]
  \item \itmlabel{R.1(1)} $\Gamma=\sqcup_{j=1}^k\Gamma_j$ is the
    disjoint union of a finite nonempty collection of connected finite
    graphs $\Gamma_j$, each of which is nonempty and not equal to a
    single vertex (in particular, $k\ge1$);
  \item \itmlabel{R.1(2)}
    $\mathbf{a}=(a,\{ \alpha_v \}_{v\in V_\Gamma})$ is a pair
    consisting of an involution $a:V_\Gamma\to V_\Gamma$ of the vertex
    set of $\Gamma$ which preserves degrees of vertices, and of
    bijections $\alpha_v:\Lk(v)\to\Lk(a(v))$ such that for any
    $v\in V_\Gamma$ we have $\alpha_{a(v)}=\alpha_v^{-1}$; we will call $\mathbf{a}$ the $V$-\emph{involution} 
      of $\mathcal{R}$;
  \item \itmlabel{R.1(3)} $\mathcal{A}$ is a set of pairs
    $(\epsilon_1,\epsilon_2)$ of (not necessarily distinct) oriented
    edges of $\Gamma$ closed under the involutions
    $(\epsilon_1,\epsilon_2)\to(\epsilon_2,\epsilon_1)$ and
    $(\epsilon_1,\epsilon_2)\to(\bar\epsilon_1,\bar\epsilon_2)$, and
    such that for any oriented edge $\epsilon$ of $\Gamma$ there is at
    least one pair in $\mathcal{A}$ containing $\epsilon$; we will call
    $\mathcal{A}$ the set of \emph{$E$-connections} of
    $\mathcal{R}$;
  \item \itmlabel{R.1(4)} we require also the following
    \emph{transitivity} property: for any two distinct components
    $\Gamma_i,\Gamma_j$ of $\Gamma$ there is a sequence
    $j(0),j(1),\dots,j(m)$ of indices from $\{ 1,\dots,k\}$ such that
    $j(0)=i$, $j(m)=j$, and for each $1\le n\le m$ at least one of the
    following two conditions holds:
    \begin{enumerate}[label=(\roman*)]
    \item \itmlabel{R.1(4)(i)} for some $v\in V_{\Gamma_{j(n-1)}}$
      we have $a(v)\in V_{\Gamma_{j(n)}}$;
    \item \itmlabel{R.1(4)(ii)} for some oriented edge
      $\epsilon$ of $\Gamma_{j(n-1)}$ there is an oriented edge
      $\epsilon'$ of $\Gamma_{j(n)}$ such that
      $(\epsilon,\epsilon')\in{\mathcal A}$.
    \end{enumerate}
  \end{enumerate}
\end{defn}

Before presenting the next crucial definition (\Defnref{6.2})
we need some further terminological and notational preparations.
Let ${\mathcal N}_\Gamma$ be a standard family of normal
neighbourhoods of $\Gamma$, as described in \Exref{T.1.5}, and let
$\Gamma^\circ:=|\Gamma|\setminus\bigcup\{ \interior B:B\in{\mathcal
  N}_\Gamma \}$ be the space with peripherals, called the punctured
graph $\Gamma$, as described in the same example.  Denote by
$\Omega^E_\Gamma= \{ {\rm bd}(B):B\in{\mathcal N}_\Gamma, B\cap
V_\Gamma=\emptyset \}$ the set of those peripherals in $\Gamma^\circ$
which correspond to those $B\in{\mathcal N}_\Gamma$  
which are normal neighbourhoods
of interior points of the edges of
$\Gamma$. Similarly, put
$\Omega_\Gamma^V=\{ {\rm bd}(B):B\in{\mathcal N}_\Gamma, B\cap
V_\Gamma\ne\emptyset \}$.  We will call the elements of the above sets
\emph{E-peripherals} and \emph{V-peripherals},
respectively. Observe that each E-peripheral is a set of cardinality
2, i.e., a doubleton. Each such set can be equipped with one of the two
possible orders, so that it becomes an ordered pair of points, and we
will call any such order an \emph{orientation} associated to this
E-peripheral.  We denote by $\widetilde\Omega^E_\Gamma$ the set of all
oriented E-peripherals of $\Gamma^\circ$, i.e., the set
$\{ (p,q):\{ p,q \}\in\Omega^E_\Gamma \}$.  Observe that each of the
oriented E-peripherals may be either compatible or incompatible with
the fixed orientation of the edge $e$ of $\Gamma$ in the interior of
which it is contained. 
Actually, in the definition below we use the convention 
of identifying orientations of E-peripherals of $\Gamma^\circ$
with the corresponding compatible orientations of the edges of $\Gamma$
containing those peripherals.
For any oriented peripheral
$\delta\in\widetilde\Omega^E_\Gamma$, we denote by $|\delta|$ the
corresponding underlying (unoriented) peripheral, and by $\bar\delta$
the same peripheral $|\delta|$ oriented oppositely to $\delta$.

\begin{defn}\defnlabel{6.2}
Given a graphical connecting system $\mathcal{R}=(\Gamma, \mathbf{a}, \mathcal{A})$,
with $\Gamma$ split into connected components $\Gamma_j$,
we say that a tree system of punctured graphs
$\Theta=(T, \{ K_t \}, \{ \Sigma_u \}, \{ \varphi_e \})$
is \emph{compatible} with $\mathcal{R}$,
if the following conditions are satisfied: 

\begin{enumerate}

\item
$T$ is the unique tree bipartitioned into black and white vertices such that
the degree of any white vertex is 2, and the degree of any black vertex is countable infinite;

\item
for any black vertex $t$ of $T$ there is an index $j(t)$ such that 
$K_t\cong\Gamma^\circ_{j(t)}$
(i.e. $K_t$ is identified with $\Gamma^\circ_{j(t)}$, as a space with peripherals);

\item
for each white vertex $u$ of $T$, if $t_1,t_2$ are its (black) neighbours, then
\begin{enumerate}
\item
if the image $\varphi_{[u,t_1]}(\Sigma_u)$ is a V-peripheral of 
$K_{t_1}\cong\Gamma^\circ_{j(t_1)}$, and if it corresponds to a vertex 
$v\in V_{\Gamma_{j(t_1)}}$, then $\Gamma_{j(t_2)}$ is the component of
$\Gamma$ which contains the vertex $a(v)$, and the image
$\varphi_{[u,t_2]}(\Sigma_u)$
is the V-peripheral of $K_{t_2}\cong\Gamma^\circ_{j(t_2)}$
corresponding to $a(v)$; moreover, the composition map 
$\varphi_{[u,t_2]}\circ\varphi_{[u,t_1]}^{-1}$ coincides with
the bijection $\alpha_v$ (after natural identifications of the vertex links
with the sets of elements of the V-peripherals, at the vertices $v$ and $a(v)$);
\item
if $\varphi_{[u,t_1]}(\Sigma_u)$ is an E-peripheral of 
$K_{t_1}\cong\Gamma^\circ_{j(t_1)}$, then $\varphi_{[u,t_2]}(\Sigma_u)$ 
is an E-peripheral of 
$K_{t_2}\cong\Gamma^\circ_{j(t_2)}$;
moreover, if $e_i$ is the edge of $\Gamma$ containing 
$\varphi_{[u,t_i]}(\Sigma_u)$, for $i=1,2$, and if $\varepsilon_1$ is any oriented edge
with $|\varepsilon_1|=e_1$, then denoting by $\varepsilon_2$ the oriented edge
related to $e_2$ whose orientation is induced from that of $\varepsilon_1$
via $\varphi_{[u,t_2]}\circ\varphi_{[u,t_1]}^{-1}$, we have 
$(\varepsilon_1,\varepsilon_2)\in\mathcal{A}$;
 \end{enumerate}

\item
for any black vertex $t$ of $T$ 
and 
for any oriented edge $\varepsilon$ of $\Gamma_{j(t)}$, denote by 
$\widetilde\Omega(\varepsilon)$ the family of those oriented E-peripherals in
$K_t\cong\Gamma^\circ_{j(t)}$ which are contained in $|\varepsilon|$, and which
are oriented consistently with the orientation of $\varepsilon$;
put also $O_\varepsilon:=\{ \varepsilon':(\varepsilon,\varepsilon')\in\mathcal{A} \}$, and
consider the map 
$f_\varepsilon:\widetilde\Omega(\varepsilon)\to O_\varepsilon$
described as follows:
if
$\omega\in\widetilde\Omega(\varepsilon)$ and $u$ is the white vertex
of $T$ for which $|\omega|=\varphi_{[u,t]}(\Sigma_u)$ and $t'$ is the
black vertex of $T$ adjacent to $u$ and distinct from $t$ then
$f_\varepsilon(\omega)$ is the oriented edge $\varepsilon'$ of
$\Gamma_{j(t')}$ containing $\varphi_{[u,t']}(\Sigma_u)$ and oriented
in the way induced by the map
$\varphi_{[u,t']}\circ\varphi_{[u,t]}^{-1}$ from the orientation of
$\varepsilon$; it is required that for each
$\varepsilon'\in O_\varepsilon$ the union
$\bigcup\{ |\omega|: \omega\in\widetilde\Omega(\varepsilon) \hbox{ and
}f_\varepsilon(\omega)=\varepsilon'\}$ is dense (and in particular
nonempty) in $|\varepsilon|\cap\Gamma^\circ$.

\end{enumerate}

\end{defn}

Though technically involved, the above definitions 
(i.e. Definitions \defnref{R.1} and \defnref{6.2}) are in fact quite natural,
and their significance stems from the following lemma, in view of which
to any graphical connecting system $\mathcal{R}$ there is associated
some unique up to isomorphism tree system of punctured graphs
(which we denote $\Theta[{\mathcal{R}}]$),
and some unique up to
homeomorphism topological space having the form of a tree of graphs
(which we denote $\mathcal{X}(\mathcal{R})$, and which is given as 
$\mathcal{X}(\mathcal{R}):=\lim\Theta[{\mathcal R}]$).

\begin{lem}\lemlabel{6.3}
For any graphical connecting system $\mathcal{R}$ a tree system of punctured graphs
compatible with $\mathcal{R}$ exists. Moreover, any two tree systems of punctured
graphs compatible with $\mathcal{R}$ are isomorphic.
\end{lem}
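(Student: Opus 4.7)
For existence, I build a compatible $\Theta[\mathcal{R}]$ by a level-by-level recursion along the canonical bipartite tree $T$ of \Defnref{6.2}\itmref{R.1(1)}. Start by fixing any black root vertex $t_0 \in V_T^b$, choose any $j_0 \in \{1,\ldots,k\}$, set $j(t_0) := j_0$, and identify $K_{t_0}$ with $\Gamma_{j_0}^\circ$ (together with its family of V- and E-peripherals). Inductively, given a labelled black vertex $t$, I place the countably many white neighbours of $t$ in bijection with the peripherals of $K_t \cong \Gamma_{j(t)}^\circ$: one white vertex for each V-peripheral and countably many for the E-peripherals on each oriented edge. For a white neighbour $u$ corresponding to a V-peripheral at $v \in V_{\Gamma_{j(t)}}$, the second black neighbour $t'$ is forced to carry the label of the component of $\Gamma$ containing $a(v)$, and its connecting map $\varphi_{[u,t']}$ is forced by the requirement $\varphi_{[u,t']}\circ\varphi_{[u,t]}^{-1}=\alpha_v$. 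For a white neighbour corresponding to an E-peripheral on $\varepsilon$, I freely choose $\varepsilon' \in A_\varepsilon$ and assign $j(t')$ to be the index of the component containing $|\varepsilon'|$. The only subtle point is clause \pitmref{R.1(4)(ii)} of compatibility, i.e. density: on each $\varepsilon$ the countable set of E-peripherals must be distributed among the finite set $A_\varepsilon$ so that each class is dense in $|\varepsilon|\cap\Gamma^\circ$, which is achieved by a round-robin assignment along a fixed enumeration.

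For uniqueness, I build an isomorphism $F=(\lambda,\{f_t\},\{g_u\})$ between two compatible systems $\Theta,\Theta'$ by a back-and-forth argument on $T$ and $T'$ (which are abstractly isomorphic by \Defnref{6.2}\itmref{R.1(1)}). Using transitivity \Defnref{R.1}\itmref{R.1(4)} one checks that every index $j \in \{1,\ldots,k\}$ is realised as $j(t)$ for some $t$ in each of $T,T'$; pick base black vertices $t_0,t'_0$ with $j(t_0)=j(t'_0)$ and define $f_{t_0}$ to be any homeomorphism $K_{t_0} \to K'_{t'_0}$ coming from the common identification with $\Gamma_{j(t_0)}^\circ$ which (on each edge $\varepsilon$) maps each density class of E-peripherals assigned to $\varepsilon'\in A_\varepsilon$ in $\Theta$ bijectively onto the corresponding class in $\Theta'$; such $f_{t_0}$ exists because any bijection between two countable dense subsets of an open interval extends to an orientation-preserving self-homeomorphism of the interval (and V-peripheral locations are forced). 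Then propagate outward: each white neighbour $u$ of $t_0$ has image $\varphi_{[u,t_0]}(\Sigma_u)$ in $K_{t_0}$, and $f_{t_0}$ carries this onto some peripheral of $K'_{t'_0}$ which, by construction, equals $\varphi'_{[u',t'_0]}(\Sigma'_{u'})$ for a unique white neighbour $u'$ of $t'_0$; set $\lambda(u):=u'$ and define $g_u$ and $f_t$ (for the far black neighbour $t$ of $u$) as the induced maps — these are well-defined and consistent with \Defnref{6.2}\pitmref{R.1(3)} because the V-involution $\mathbf{a}$ and the E-connection data force the same recipe on both sides.

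The back-and-forth is used at the step where one extends $f_t$ across each newly reached black vertex: having fixed one new V-peripheral or E-peripheral image, the remaining peripheral assignments are chosen by interleaving enumerations of peripherals in $K_t$ and $K'_{\lambda(t)}$, exactly as in Cantor's classical argument that any two countable dense linear orders without endpoints are isomorphic. This guarantees that every vertex in both $T$ and $T'$ is eventually reached and every E-peripheral is matched. An easy transfinite bookkeeping verifies that the resulting $(\lambda,\{f_t\},\{g_u\})$ satisfies \itmref{(I1)}--\itmref{(I3)} of the definition of an isomorphism.

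The main obstacle is the uniqueness half, and specifically the back-and-forth at E-peripherals: a naive forward-only recursion does not suffice because at each stage one has chosen only finitely many E-peripheral matchings and must leave room both to realise prescribed matchings later and to cover all remaining peripherals on both sides. Density of each E-connection class in $|\varepsilon|\cap\Gamma^\circ$, together with the standard fact that homeomorphisms between dense countable subsets of intervals extend globally, is precisely what makes this bookkeeping go through; once it does, the global isomorphism assembles without further difficulty.
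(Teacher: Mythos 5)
Your overall strategy is the same as the paper's: a recursive, level-by-level construction of a compatible system for existence (with the only real content being the distribution of the E-peripherals on each edge among the finitely many admissible targets in $A_\varepsilon$ so that each class is dense), and a recursive/back-and-forth construction of an isomorphism for uniqueness, anchored at a pair of constituents with the same component label (found via the transitivity condition \itmref{R.1(4)}) and propagated outward using the fact that a class-preserving correspondence between the dense families of E-peripherals on an edge, with one peripheral's image prescribed, extends to a homeomorphism. The paper packages this last fact once and for all as a labelling $\lambda$ of the oriented E-peripherals of the model punctured graph subject to (L1)--(L3), together with \Claimref{R.2} asserting its uniqueness up to a structure-preserving homeomorphism (including the refined version prescribing the image of one peripheral); you instead make the analogous choices vertex-by-vertex, which is an equivalent organization.

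Two local statements in your write-up are incorrect as phrased, though both are easily repaired and do not affect the architecture. First, a ``round-robin assignment along a fixed enumeration'' of the E-peripherals on an edge does not by itself make each class dense: for a badly chosen enumeration the classes can accumulate in disjoint subintervals. You need to choose the partition so that each class meets every basic open subset of $|\varepsilon|\cap\Gamma^\circ$, e.g.\ by cycling through the labels while running over a countable basis of the edge (the paper simply asserts that a labelling satisfying the density condition (L3) ``clearly exists''). Second, it is not true that \emph{any} bijection between two countable dense subsets of an open interval extends to a self-homeomorphism; only order-preserving (or order-reversing) bijections do. What you actually need, and what your later appeal to Cantor's back-and-forth supplies, is an order-preserving, class-preserving bijection with one prescribed value, which then extends; this is exactly the content of \Claimref{R.2} and its final assertion. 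With these two corrections your argument matches the paper's proof.
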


\begin{proof}
Given any graphical connecting system 
$\mathcal{R}=(\Gamma , \mathbf{a}, \mathcal{A})$,
we describe a tree system $\Theta[{\mathcal R}]$ of
punctured graphs 
which is by construction compatible with $\mathcal{R}$. 
After this rather long construction we provide also some less detailed hints
for concluding the uniqueness up to isomorphism.

Recall that we denote by $\widetilde\Omega_\Gamma^E$ the set of
all oriented E-peripherals in the punctured graph $\Gamma^\circ$.
We will now describe some auxiliary labelling $\lambda$ of the family
$\widetilde\Omega_\Gamma^E$.  The labels of this labelling will be in
the set $O_\Gamma$ of all oriented edges of $\Gamma$.  
Observe that,
by condition \itmref{R.1(3)} of \Defnref{R.1}, 
for any oriented edge $\epsilon$ of
$\Gamma$
the corresponding set $O_\varepsilon$ (as defined in the statement of
condition (4) of \Defnref{6.2}) is
nonempty.  Take as $\lambda$ any labelling of
$\widetilde\Omega_\Gamma^E$ such that:
\begin{enumerate}[label=(L\arabic*)]
\item \itmlabel{(L1)} for any $\epsilon\in O_\Gamma$, and for
  any oriented peripheral $\delta$ contained in the interior of
  $|\epsilon|$ whose orientation is compatible with that of
  $\epsilon$, the label of $\delta$ is in the subset $O_\epsilon$,
  i.e., $\lambda(\delta)\in O_\epsilon$;
\item \itmlabel{(L2)} for any
  $\delta\in\widetilde\Omega_\Gamma^E$, we have
  $\lambda(\bar\delta)= \overline{\lambda(\delta)}$;
\item \itmlabel{(L3)} for each $\epsilon\in O_\Gamma$ and any
  $\epsilon'\in O_\epsilon$ the union of the family
  \[
    \{ |\delta|\in\Omega^E_\Gamma:
    \text{$|\delta|\subset\interior |\epsilon|, \delta$ is oriented compatibly
      with $\epsilon$, and $\lambda(\delta)=\epsilon'$}\}
  \]
  is dense in $|\epsilon|\cap\Gamma^\circ$.
\end{enumerate}

It is not hard to realize (and we omit the details) that a labelling
$\lambda$ as above exists, and is unique in the sense of the first
part of the following claim.  We omit the proof, which is a standard
back-and-forth argument.

\begin{claim} \claimlabel{R.2} For any two labellings
  $\lambda,\lambda'$ as above, satisfying conditions
  \itmref{(L1)}--\itmref{(L3)}, there is a homeomorphism
  $h:|\Gamma|\to |\Gamma|$ which fixes all vertices of $\Gamma$, which
  preserves all edges of $\Gamma$, which preserves the family
  ${\mathcal N}_\Gamma$, and whose restriction
  $h^\circ=h|_{\Gamma^\circ}$ is a homeomorphism of $\Gamma^\circ$
  such that 
  $\lambda'(h^\circ(\delta))=\lambda(\delta)$ for any
  $\delta\in\widetilde\Omega_\Gamma^E$ (where $h^\circ(\delta)$ is
  viewed as an oriented E-peripheral with orientation induced from
  $\delta$ via $h^\circ$).  Moreover, if
  $\delta,\delta'\in\widetilde\Omega^E_\Gamma$ are contained in the
  interior of the same edge of $\Gamma$, if they are consistently
  oriented, and if $\lambda(\delta)=\lambda'(\delta')$, then $h$ above
  can be chosen so that $h^\circ(\delta)=\delta'$.
\end{claim}

We are now ready to describe the tree system $\Theta[{\mathcal R}]=(T,
\{ \Gamma_t^\circ \}, \{ \Sigma_u \}, \{ \phi_e \} )$.  As the
underlying tree $T$ take the unique bipartite tree, with the vertex
set partitioned into black and white vertices, such that the degree of
any white vertex is equal to 2, and the degree of any black vertex is
countable infinite.  Now, we describe resursively the punctured graphs
$\Gamma_t^\circ$ associated to the black vertices of $T$. Let $t_0$ be
any black vertex of $T$ (which we will view as the initial
vertex). Put $\Gamma_{t_0}$ to be any connected component $\Gamma_{i}$
of $\Gamma$.  Fix also some auxiliary bijection $\beta_{t_0}$ between
the set $N_{t_0}$ of all (white) vertices of $T$ adjacent to $t_0$ and
the set of peripherals in the punctured graph $\Gamma_{t_0}^\circ$.
(We will be choosing appropriate bijections $\beta_t:t\in V^b_T$
simultaneously with the description of the constituent spaces
$\Gamma_t^\circ$; these bijections will play an essential role at
later stages of the construction of the tree system $\Theta[{\mathcal
R}]$.)  Now, for each $k\ge0$, denote by $T_k$ the b-subtree of $T$
spanned by all vertices of $T$ lying at distance $\le 2k$
from $t_0$.  Suppose that for all $t\in V^b_{T_k}$ we have already
chosen the punctured graphs $\Gamma_t^\circ$ and the corresponding
bijections $\beta_t$ from the sets $N_t$ (of all neighbours of $t$ in
the tree $T$) to the sets of peripherals in $\Gamma_t^\circ$, so that
the following conditions hold:
\begin{enumerate}[label=(R\arabic*)]
\item \itmlabel{(R1)} each $\Gamma_t$ is a copy of one of the
  components of $\Gamma$, and we denote this component by
  $\Gamma_{i(t)}\subset\Gamma$;
\item \itmlabel{(R2)} for any two black vertices $t,t'$ of $T_k$
  lying at combinatorial distance 2, denoting by $u$ the unique common
  neighbour in $T$ of those two vertices, identifying the constituent
  spaces $\Gamma_t^\circ$, $\Gamma_{t'}^\circ$ with the corresponding
  punctured components $\Gamma_{i(t)}^\circ$,
  $\Gamma_{i(t')}^\circ \subset\Gamma^\circ$, viewing them naturally
  as subspaces in $\Gamma^\circ$, and viewing $\lambda$ as the
  labelling of E-peripherals in both $\Gamma_t^\circ$ and
  $\Gamma_{t'}^\circ$ (via the above mentioned identifications), we
  have
  \begin{enumerate}
  \item \itmlabel{(R2)(a)} if $\beta_t(u)$ is an E-peripheral
    contained in the interior of an edge $|\epsilon|$ of $\Gamma_t$
    (where $\epsilon\in O_\Gamma$) then $\beta_{t'}(u)$ is also an
    E-peripheral; moreover, there is a choice of orientation on the
    edge containing the peripheral $\beta_{t'}(u)$, with which we
    denote this edge by $\epsilon'$, such that if we orient
    $\beta_t(u)$ and $\beta_{t'}(u)$ consistently with $\epsilon$ and
    $\epsilon'$, thus getting the oriented peripherals
    $\delta,\delta'$, respectively, we have
    $\lambda(\delta)=\epsilon'$ and $\lambda(\delta')=\epsilon$;
  \item \itmlabel{(R2)(b)} if $\beta_t(u)$ is a V-peripheral, i.e.,
    $\beta_t(u)={\rm bd}(B)$ for $B$ a normal neighborhood of a vertex
    $v$ of $\Gamma_{i(t)}$ then $\beta_{t'}(u)$ is also a
    V-peripheral; moreover, if $v'$ is the corresponding vertex of
    $\Gamma_{i(t')}$ for $\beta_{t'}(u)$, then, viewing $v,v'$ as
    vertices of $\Gamma$, we have $a(v)=v'$.
  \end{enumerate}
\end{enumerate}

\noindent As the main recursive step of the description of constituent
spaces, we describe now the spaces $\Gamma_t^\circ$, and the
associated bijections $\beta_t$, for all vertices
$t\in V^b_{T_{k+1}}\setminus V^b_{T_k}$, so that the above conditions
\itmref{(R1)} and \itmref{(R2)} will be satisfied with $k$ replaced by
$k+1$. Consider any vertex $t\in V^b_{T_{k+1}}\setminus V^b_{T_k}$,
and denote by $s$ the unique vertex in $V^b_{T_k}$ at combinatorial
distance 2 from $t$, and by $u$ the unique common neighbour of both
$t$ and $s$ (which is a white vertex). Suppose first that the subspace
$\beta_s(u)\subset\Gamma_s^\circ$ is an E-peripheral, i.e., it is
contained in the interior of a single edge, say $|\epsilon|$, of
$\Gamma_s$, and denote by $\delta$ this same peripheral oriented
consistently with $\epsilon$.  Recall that, under our identification
conventions, the oriented peripheral $\delta$ has a label, and denote
this label by $\epsilon'$. Put $\Gamma_t$ to be a copy of this component of
$\Gamma$ which contains $|\epsilon'|$, and denote this component by
$\Gamma_{i(t)}$. Take as $\beta_t$ any bijection from $N_t$ to the set
of peripherals of $\Gamma_t^\circ$ for which $\beta_t(u)$ is contained
in the interior of $|\epsilon'|$ and, after orienting it consistently
with $\epsilon'$, it is labeled with $\epsilon$.  Suppose now that
$\beta_s(u)$ is a V-peripheral, and denote by $v$ this vertex of
$\Gamma_s$ for which $\beta_s(u)=\partial B$, where $B$ is a normal
neighbourhood of $v$.  Identifying $\Gamma_s$ with the component
$\Gamma_{i(s)}$ of $\Gamma$, put $v':=a(v)$. Denote by $\Gamma_{i(t)}$
this component of $\Gamma$ which contains $v'$, and put
$\Gamma_t=\Gamma_{i(t)}$.  Finally, take as $\beta_t$ any bijection
from $N_t$ to the set of peripherals of $\Gamma_t^\circ$ for which
$\beta_t(u)=\partial B'$, where $B'$ is a normal neighbourhood of
$v'$.  We skip a straightforward verification that this provides data
as required.

We now turn to describing the sets $\Sigma_u$ and the maps $\phi_e$ of
$\Theta[{\mathcal R}]$. Let $u$ be any white vertex of $T$, and denote
by $t,s$ the two black vertices adjacent to $u$. Suppose first that
$\beta_t(u)$ is an E-peripheral of $\Gamma_t^\circ$, i.e., it is
contained in the interior of some edge, say $|\epsilon|$, of
$\Gamma_t$.  Observe that, by the construction, the peripheral
$\beta_s(u)$ is then also contained in the interior of an edge of the
corresponding graph $\Gamma_s$, and denote this edge by
$|\epsilon'|$. Then $\epsilon$ and $\epsilon'$ induce orientations on
$\beta_t(u)$ and $\beta_s(u)$ giving us oriented E-peripherals
$\delta$ and $\delta'$, respectively.  Without loss of generality
(changing $\epsilon'$ into $\overline{\epsilon'}$ and $\delta'$ into
$\overline{\delta'}$, if necessary) we have
$\lambda(\delta)=\epsilon'$ and $\lambda(\delta')=\epsilon$.  Put
$\Sigma_u$ to be a set consisting of two points, with discrete
topology, and consider an auxiliary order on this set.  Put
$\phi_{[t,u]}:\Sigma_u\to|\delta|$ and
$\phi_{[s,u]}:\Sigma_u\to|\delta'|$ to be the bijections which
preserve the distinguished orders.

Finally, suppose that $\beta_t(u)$ is a V-peripheral, and denote by
$v$ this vertex of $\Gamma_t$ for which $\beta_t(u)=\partial B$, where
$B$ is a normal neighbourhood of $v$. Similarly, denote by $v'$ this
vertex of $\Gamma_s$ for which $\beta_s(u)=\partial B'$, where $B'$ is
a normal neighbourhood of $v'$. Note that, by (R2)(b), we have
$a(v)=v'$, and that, identifying $\Gamma_t$ and $\Gamma_s$ with the
corresponding components of $\Gamma$, we have a bijection
$\hat\alpha_v:\beta_t(u)\to\beta_s(u)$ naturally induced by the
bijection $\alpha_v$. Take as $\Sigma_u$ any set of the same
cardinality as $\beta_t(u)$ (and as $\beta_s(u)$), with discrete
topology. Take as the maps $\phi_{[u,t]}$ and $\phi_{[u,s]}$ any
bijections for which the composition
$\phi_{[u,s]}\circ\phi_{[u,t]}^{-1}$ coincides with $\hat\alpha_v$.

Having completed the construction of $\Theta[\mathcal{R}]$,
we skip the details of a verification that this system is compatible with $\mathcal{R}$.
We also note that an isomorphism between any two tree systems compatible
with $\mathcal{R}$, as in the second assertion of the lemma,
can be constructed recursively, in a straightforward way,
using \Claimref{R.2} (and more precisely, its final assertion). To
start such recursive construction, one needs to refer to the
transitivity condition \itmref{R.1(4)} of \Defnref{R.1}. We omit
further details.
\end{proof}


\begin{ex} \exlabel{R.4} For any finite connected graph $\Gamma$, let
  ${\mathcal R}^r_\Gamma=(\Gamma, \mathbf{a}, \mathcal{A})$ be a
  graphical connecting system in which $a={\rm id}_{V_\Gamma}$,
  $\alpha_v={\rm id}_{\Lk(v)}$ for each $v\in V_\Gamma$, and
  ${\mathcal A}=\{ (\epsilon,\epsilon):\epsilon\in O_\Gamma \}$, where
  $O_\Gamma$ denotes the set of all oriented edges of $\Gamma$.  We
  call ${\mathcal R}^r_\Gamma$ \emph{the reflection connecting system
    for $\Gamma$}. The associated tree systems
  $\Theta[{\mathcal R}^r_\Gamma]$ lead to a class of regular trees of
  graphs (see \Defnref{R.5} below) which are called \emph{reflection
    trees of graphs} and which have been described and studied earlier
  in \cite{Swiatkowski:refl_trees_of_graphs:2021}.
\end{ex}

\begin{defn} \defnlabel{R.5} A \emph{regular tree of graphs} is any
space of the form ${\mathcal X}({\mathcal R})=\lim\Theta[{\mathcal R}]$, 
where $\mathcal R$ is a
graphical connecting system,
and where $\Theta[{\mathcal R}]$ is the (unique up to an isomorphism)
tree system of punctured graphs compatible with $\mathcal R$. 
Moreover, we call any space  ${\mathcal X}({\mathcal R})$
as above
\emph{the regular tree of graphs for} $\mathcal{R}$.
\end{defn}

\begin{defn} \defnlabel{R.6} A graphical connecting system ${\mathcal
R}=(\Gamma, \mathbf{a}, \mathcal{A})$ is \emph{2-connected} if all the
connected components $\Gamma_j$ of $\Gamma$ are 2-connected.
\end{defn}

Note that, if $\mathcal R$ is 2-connected, then obviously the
corresponding system $\Theta[{\mathcal R}]$ is a tree system of
punctured 2-connected graphs, and consequently, the regular tree of
graphs ${\mathcal X}(\mathcal R)$ is then connected, locally
connected, cutpoint-free and has topological dimension 1 (see
\Lemref{limit_very_connected} and \Lemref{G.5} above).  We call the
spaces ${\mathcal X}(\mathcal R)$ as above \emph{the regular trees of
  2-connected graphs}.

\section{$V$-trees of graphs and their decompositions into core and
  arms}
\seclabel{V}

In this section we begin preparations for the proof of the implication \pitmref{g_rigid_cluster_factors_vf}$\Rightarrow$\pitmref{bd_g_reg_twocon_tog}
in \Mainthmref{1} 
(and the implication (3)$\Rightarrow$(2) of \Mainthmref{2}). The V-tree systems as discussed in this section will appear later as
subsystems in some special regular tree systems of graphs
that we will be considering, and their closer understanding is thus
a first step in understanding the latter, and in relating the latter with Gromov
boundaries of hyperbolic groups that are under our interest.

\begin{defn}[connecting $V$-system]
  \defnlabel{V.1}
A \emph{connecting $V$-system} is a pair ${\mathcal V}=(\Gamma, {\mathbf a})$ such that:
\begin{enumerate}
\item[(1)] $\Gamma$ is a finite graph which is essential (i.e. is nonempty and has no isolated vertex), 
and we assume it has no loop edges; 
\item[(2)] ${\mathbf a}=(a,\{ \alpha_v \}_{v\in V_\Gamma})$
is a pair consisting
of an involution $a:V_\Gamma\to V_\Gamma$ of the vertex set of $\Gamma$
which preserves degrees of vertices, and of bijections
$\alpha_v:\hbox{Lk}(v)\to\hbox{Lk}(a(v))$ such that for any $v\in V_\Gamma$
we have 
$\alpha_{a(v)}=\alpha_v^{-1}$,
where links are taken in $\Gamma$.
\end{enumerate}
\end{defn}

A connecting $V$-system is thus a structure consisting of slightly less data
than a graphical connecting system, as described in \Defnref{R.1},
and we insist on graphs $\Gamma$ to have a more restrictive form (no loop edges).
Intuitively, a $V$-system structure contains only the information about connected sum operations
at vertices of the involved copies of graphs, as encoded in the $V$-involution 
$\mathbf a$. Although in our later applications graphs $\Gamma$
of $V$-systems will be typically connected, we do not need to assume this
here, in the description of the general concept.

We now pass to describing a certain tree system $\Theta[{\mathcal V}]=\Theta[\Gamma,{\mathbf a}]$ canonically
associated to a connecting $V$-system $\mathcal V$. Its limit $\lim\Theta[{\mathcal V}]$
will be called \emph{the $V$-tree of graphs for $\mathcal V$}, and it will be denoted
${\mathcal X}_V({\mathcal V})$, or ${\mathcal X}_V(\Gamma,{\mathbf a})$, 
where the role of the subscript $V$ is just to indicate
the $V$-tree nature of the resulting space. Actually, the $V$-tree 
${\mathcal X}_V({\mathcal V})$ will be natuarally a space with peripherals,
and the description of a family of peripherals will be a part of its definition.

Similarly as in \Secref{reg_tree}, we consider a standard family ${\mathcal N}_\Gamma$
of normal neighbourhoods in $\Gamma$, as described in 
Example \exref{T.1.5},
and the space with peripherals $\Gamma^\circ$, called the punctured graph
$\Gamma$, described in the same example. Consider also the set 
$\Omega^V_\Gamma$ of $V$-peripherals of $\Gamma^\circ$,
as described right after \Defnref{R.1}.

As the underlying tree $T$ of the $V$-tree system 
$\Theta[{\mathcal V}]=(T, \{ \Gamma_t^\circ \}, \{ \Sigma_u \}, \{ \phi_e \})$
we take the unique tree, with the vertex set bipartitioned into black
and white vertices, such that the degree of any white vertex is equal to 2,
and the degree of any black vertex coincides with the (finite) cardinality of the
vertex set $V_\Gamma$ of $\Gamma$.
The system $\Theta[{\mathcal V}]$ is then described uniquely up to an
isomorphism of tree systems by the following requirements:

\begin{enumerate}
\item[(v1)] for each black vertex $t$ of $T$ the constituent space
with peripherals
at $t$ is equal to the pair $(\Gamma^\circ,\Omega^V_\Gamma)$;
this means in particular that the vertices in $N_t$ (i.e. the vertices adjacent to $t$)
are in fixed bijective correspondence with the family $\Omega_\Gamma^V$;
\item[(v2)] for each white vertex $u$ of $T$, denote by $e,e'$
the two edges of $T$ adjacent to $u$, and by $P_e,P_{e'}$ the $V$-peripherals
of $\Gamma_{b(e)}^\circ$  and $\Gamma_{b(e')}^\circ$ associated to $u$
via the bijective correspondences mentioned above in condition (v1);
then,
denoting by $v,v'$ the vertices of $\Gamma$ corresponding to 
the $V$-peripherals $P_e,P_{e'}$, respectively,
we have $a(v)=v'$; moreover,
 the set $\Sigma_u$ and the bijections
$\phi_e:\Sigma_u\to P_e,\phi_{e'}:\Sigma_u\to P_{e'}$ 
are given so that the composition 
$\phi_{e'}\circ\phi_e^{-1}:P_e\to P_{e'}$
coincides with the map $\alpha_v$,
where we refer to the natural identifications $P_e={\rm Lk}(v,\Gamma)$
and $P_{e'}={\rm Lk}(v',\Gamma)$.
\end{enumerate}

\noindent
We skip a straightforward justification of the fact that a tree system
satisfying the above requirements exists and is unique up to
an isomorphism of tree systems (the argument is similar to that presented
in \Secref{reg_tree} for the tree system $\Theta[{\mathcal R}]$;
see the proof of \Lemref{6.3}).

Now, let ${\mathcal X}_V({\mathcal V}):=\lim\Theta[{\mathcal V}]$. 
We describe the
family $\Omega_V({\mathcal V})$ of natural peripheral subsets for ${\mathcal X}_V({\mathcal V})$.
Let $P$ be any $E$-peripheral in any constituent space $\Gamma^\circ_t$,
$t\in V^b_T$.
Clearly, $P$ is then naturally a subset of the limit $\lim\Theta[{\mathcal V}]$.
Moreover, for any two distinct $E$-peripherals $P,P'$ as above, 
the induced subsets of $\lim\Theta[{\mathcal V}]$ are disjoint. 
It is also not hard
to realize (by deducing it e.g. from Fact \factref{T.4} in view of 
Fact \factref{T.5}) that the family $\Omega_V({\mathcal V})$
of all such subsets $P\subset\lim\Theta[{\mathcal V}]$,
for all $t\in V^b_T$, is null, and thus we take it 
as the family of natural peripherals for ${\mathcal X}_V({\mathcal V})$.

\begin{ex}[$V$-tree of segments]
  \exlabel{V.2}
A graph which coincides with a single edge will be called \emph{the segment},
and we donote this graph by $I$. The \emph{punctured segment} $I^\circ$
is then a space with peripherals in which two of the peripheral sets are 
singletons, while the others are doubletons. If we delete from the set
of peripherals of $I^\circ$ the two singleton peripherals, we get a new
space with peripherals 
(with the same underlying space, but smaller family of peripherals)
which we call \emph{the internally punctured segment},
and which we denote $I^\circ_{\rm int}$. Alternatively, $I^\circ_{\rm int}$
can be described as obtained by deleting from $I$ the interiors of a dense and null
family of pairwise disjoint normal neighbourhoods of points lying in 
the interior of $I$, and by taking the boundaries of those neighbourhoods
as peripherals.

Observe that there are exactly two essentially distinct connecting $V$-systems
${\mathcal V}=(\Gamma,{\mathbf a})$ with $\Gamma=I$: the first one with the involution
$a:V_I\to V_I$ equal to the identity of the vertex set $V_I$, 
and the second one with $a$ equal
to the transposition of the doubleton $V_I$.
In both cases the resulting $V$-tree ${\mathcal X}_V({\mathcal V})$ will be called a
\emph{$V$-tree of segments}. Moreover, we obviously have the following.
\end{ex}

\begin{fact}
  \factlabel{V.3}
  Any $V$-tree of segments is, as a space with peripherals,
  homeomorphic to the internally punctured segment
  $I^\circ_{\rm int}$.
\end{fact}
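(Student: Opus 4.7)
The plan is to build an explicit homeomorphism of spaces with peripherals $\mathcal{X}_V(\mathcal{V}) \to I^\circ_{\rm int}$ by realising the former as a concrete closed subset of $[0,1]$.

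First I would observe that since $\Gamma=I$ has $|V_I|=2$ vertices, every black vertex of the underlying tree $T$ of $\Theta[\mathcal{V}]$ has degree $2$, so combined with white vertices having degree $2$ this forces $T$ to be a bi-infinite path. Enumerate its black vertices in order as $(t_n)_{n\in\mathbb{Z}}$ and denote its two ends by $\omega_-,\omega_+$. Since the two vertices of $I$ each have degree $1$, each V-peripheral of the constituent space $I^\circ$ is a singleton, so the connecting maps $\phi_e$ at white vertices merely identify pairs of points. This renders the distinction between the two possible choices of $V$-involution $\mathbf{a}$ irrelevant at the level of the limit space: both produce the same chain of copies of $I^\circ$ glued at endpoints.

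Next, I would fix a strictly increasing sequence $(x_n)_{n\in\mathbb{Z}}\subset(0,1)$ with $x_n\to 0$ as $n\to-\infty$ and $x_n\to 1$ as $n\to+\infty$, and for each $n$ place a homeomorphic copy $K_n\subset[x_n,x_{n+1}]$ of $I^\circ$ whose two singleton V-peripherals are precisely $\{x_n\}$ and $\{x_{n+1}\}$ and whose E-peripherals are doubletons bounding a dense null family of removed open subintervals of $[x_n,x_{n+1}]$. Since $x_{n+1}-x_n\to 0$ as $|n|\to\infty$, the full collection of removed intervals is a dense null family of pairwise disjoint open subintervals of $(0,1)$. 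Setting $Y:=\{0,1\}\cup\bigcup_{n\in\mathbb{Z}} K_n$, the space $Y$ is a compact subset of $[0,1]$, and by comparison with \Exref{V.2} it is precisely the underlying space of $I^\circ_{\rm int}$, with peripherals the doubletons bounding the removed intervals. I then define $h\colon\mathcal{X}_V(\mathcal{V})\to Y$ by sending the constituent $I^\circ$ at each $t_n$ onto $K_n$ via the chosen homeomorphism and sending $\omega_-,\omega_+$ to $0,1$. Well-definedness on $\#\Theta[\mathcal{V}]$ holds because at each white vertex between $t_n$ and $t_{n+1}$ the two singleton V-peripherals are both sent to the shared point $x_{n+1}$, and the bijection $\phi_{[u,t_{n+1}]}\phi_{[u,t_n]}^{-1}$ coming from $\alpha_v$ is forced to be the unique identification of singletons. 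By construction $h$ matches E-peripherals of $\mathcal{X}_V(\mathcal{V})$ bijectively with the peripherals of $I^\circ_{\rm int}$.

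The remaining step, and the main technical obstacle, is to verify that $h$ is a homeomorphism. Compactness of both sides reduces this to continuity, which I would check against the basis $\mathcal B$ described just before \Propref{T.3}. For a finite b-subtree $F$ (necessarily a finite sub-chain of $T$), the partial union $K_F$ corresponds under $h$ to the union of the relevant $K_n$; an open subset $U\subset K_F$ saturated with respect to $\Omega_F$ translates to an open saturated subset of this sub-chain of $Y$, and the basis set $\mathcal{G}(U)$ corresponds to the subset of $Y$ obtained by adjoining the intervals $[x_m,x_{m+1}]$ for each $K_m$ lying outside $F$ and selected by the saturation, which is manifestly open in $Y$. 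Conversely, every relatively open subset of $Y$ is a union of such sets. The only delicate point is that neighborhoods of $0$ and $1$ in $Y$ must be matched with neighborhoods of $\omega_\pm$; this follows because cofinally such neighborhoods arise from choosing $F$ to be a long initial or terminal sub-chain and $U$ to contain the singleton V-peripheral at the far end of $F$, which translate under $h$ to intersections of $Y$ with half-open intervals of the form $[0,x_n)$ or $(x_n,1]$ and hence form neighborhood bases of $0$ and $1$ in $Y$.
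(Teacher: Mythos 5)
Your argument is correct: the paper itself offers no proof of this fact (it is asserted as obvious immediately after Example \exref{V.2}), and your explicit model --- identifying the underlying tree as a bi-infinite path, gluing the copies of $I^\circ$ end-to-end along their singleton V-peripherals inside $[0,1]$ with the two ends of $T$ sent to $0$ and $1$, and checking openness of images of the basis sets $\mathcal{G}(U)$ --- is exactly the intended reasoning, carried out in detail. The only point worth noting is that your identification of $Y$ with $I^\circ_{\rm int}$ implicitly uses the (standard, and implicitly assumed in the paper) uniqueness of such a space with peripherals up to peripheral-respecting homeomorphism, which your construction in fact realizes directly via the alternative description in Example \exref{V.2}.
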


We now pass to describing some natural decomposition of any $V$-tree of graphs
${\mathcal X}_V({\mathcal V})$ into convenient pieces. 
These pieces will be called the \emph{core} and
the \emph{arms} 
of ${\mathcal X}_V({\mathcal V})$,
and they will have a form of spaces with peripherals. We will show (in \Lemref{V.9})
that decomposition into these pieces has a tree-like nature,
i.e., that the space ${\mathcal X}_V({\mathcal V})$ is canonically homeomorphic to a
certain tree of these pieces. This new expression of ${\mathcal X}_V({\mathcal V})$ as a tree
of spaces does not coincide with the original expression of this space 
as a $V$-tree of graphs, and this reinterpretation of the spaces of the form 
${\mathcal X}_V({\mathcal V})$ will be one of our tools in the proof of
the implications \pitmref{g_rigid_cluster_factors_vf}$\Rightarrow$\pitmref{bd_g_reg_twocon_tog}
in \Mainthmref{1} and (3)$\Rightarrow$(2) in \Mainthmref{2}.

Before describing the pieces and the decomposition,
we need to introduce more terminology. 
A \emph{punctured edge} of a punctured graph $\Gamma^\circ$ is the intersection
of $\Gamma^\circ$ (viewed as subspace of $\Gamma$) with any edge $e$ of 
$\Gamma$, and we denote such a punctured edge by $e^\circ$. 
Obviously, since we assume that $\Gamma$ contains no loop edges, each
$e^\circ$ naturally has the structure of a space with peripherals
with which it is homeomorphic to the punctured segment $I^\circ$.
The \emph{endpoints} of a punctured edge $e^\circ$ are the two points of 
$e^\circ$ which correspond to the two singleton peripherals of this space.
The set of all endpoints in all punctured edges of $\Gamma^\circ$
is clearly in a natural bijective correspondence with the union of all vertex links
of the graph $\Gamma$, and we will identify each such endpoint with
the corresponding point in the appropriate vertex link.
Now, given a $V$-tree system 
$\Theta[{\mathcal V}]=(T, \{ \Gamma_t^\circ \}, \{ \Sigma_u \}, \{ \phi_e \})$,
we say that
punctured edges $e_1^\circ$ of $\Gamma_{t_1}^\circ$ and $e_2^\circ$ of 
$\Gamma_{t_2}^\circ$ are \emph{$\Theta$-adjacent}
if some endpoints $x_1,x_2$ in $e_1^\circ,e_2^\circ$ get identified
in the limit space ${\mathcal X}_V({\mathcal V})$, i.e.,
if $t_1,t_2$ are distinct black vartices of $T$ with common adjacent white vertex,
which we denote $u$,  and if we have
$x_i\in \phi_{[t_i,u]}(\Sigma_u)$, for $i=1,2$, and
$\phi_{[t_1,u]}^{-1}(x_1)=\phi_{[t_2,u]}^{-1}(x_2)$.
We then say that $e_1^\circ$ and $e_2^\circ$ are $\Theta$-adjacent
\emph{at their endpoints $x_1,x_2$}.
Note that if $t_1,t_2,u$ are as above, and if $e_1^\circ$ is a punctured
edge of $\Gamma_{t_1}^\circ$ having an endpoint 
$x_1\in\varphi_{[t_1,u]}(\Sigma_u)$, then there is precisely one
punctured edge $e_2^\circ$ in $\Gamma_{t_2}^\circ$ which is
$\Theta$-adjacent to $e_1^\circ$.

\begin{defn}[{line in $\Theta[{\mathcal V}]$}]
  \defnlabel{V.4}
  A \emph{line} in a $V$-tree system $\Theta[{\mathcal V}]$ is a bi-infinite sequence
$(e_n^\circ)_{n\in \Z}$ of punctured edges from the punctured graphs
$\Gamma_t^\circ:t\in V^b_T$ such that for each $n$ the edge $e_n^\circ$
is $\Theta$-adjacent to $e_{n-1}^\circ$ at one of its endpoints, 
and it is $\Theta$-adjacent 
to $e_{n+1}^\circ$
at its other endpoint.
We do not distinguish sequences as above which differ by a shift 
or by inversion of the indices (i.e. we view two such sequences as representing the same line). 
\end{defn}

Obviously, since $\Gamma$ has no loop edges,
any punctured edge in any constituent graph $\Gamma_t^\circ$
belongs to a unique line in $\Theta[{\mathcal V}]$.
It is also not hard to observe that in each $V$-tree system $\Theta[{\mathcal V}]$
for which $\Gamma$ has more than two vertices,
there are many (actually countably infinitely many) lines. 
We denote the set of all lines in $\Theta[{\mathcal V}]$ by 
${\mathcal L}({\mathcal V})$.
Since each term $e_n^\circ$ of any line $L$ is a punctured edge in some copy $\Gamma_t$
of the graph $\Gamma$, we will often refer to it as an \emph{appearance} in $L$
of the corresponding (unpunctured) edge of $\Gamma$.

Let $L=(e^\circ_n)_{n\in \Z}$ be a line in $\Theta[{\mathcal V}]$, and for each $n$
denote by $t_n$ this black vertex of $T$ for which $e_n^\circ$ is a punctured
edge of $\Gamma_{t_n}^\circ$, and by $u_n$ the unique white vertex of $T$
adjacent to both $t_{n-1}$ and $t_n$. The bi-infinite sequence
\[\dots,u_n,t_n,u_{n+1},t_{n+1},\dots\]
is then a (bi-infinite) combinatorial geodesic in $T$, which we denote by
$\gamma_L$ and call \emph{the geodesic induced by $L$.}
(We view such geodesics as subsets in the vertex set $V_T$ rather than as
sequences, so that a shift of the indices or reversal of the order of terms
yields the same geodesic.)
We make a record of the following properties of lines and their induced geodesics,
whose straightforward proofs we omit. (These proofs are based on the easily seen 
observation that lines,
viewed as sequences of appearances of edges from $\Gamma$, are periodic.)

\begin{lem}
  \lemlabel{V.4.1}
  \begin{enumerate}
\item[(1)]
Given two distinct lines $L_1\ne L_2$ in $\Theta[{\mathcal V}]$,
their corresponding geodesics $\gamma_{L_1},\gamma_{L_2}$ either
coincide, or have bounded intersection in $T$ (including the case
of empty intersection). In particular, the doubleton subsets 
$\partial\gamma_{L_1},\partial\gamma_{L_2}$ in $\partial T$ either coincide or are disjoint.
\item[(2)]
There is a universal finite bound on the cardinality of a set of lines $L$
in $\Theta[{\mathcal V}]$
which share the corresponding doubletons $\partial\gamma_L$.
\end{enumerate}
\end{lem}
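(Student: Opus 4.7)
The plan is to encode each line $L=(e_n^\circ)_{n\in Z}$ as a deterministic periodic orbit in the finite set of oriented edges of $\Gamma$, whence the periodicity hinted at in the statement becomes manifest. Define a map $\Phi\colon O_\Gamma \to O_\Gamma$ by sending an oriented edge $\epsilon$ with terminal vertex $v$ and terminal half-edge $h\in\Lk(v)$ to the unique oriented edge with initial vertex $a(v)$ and initial half-edge $\alpha_v(h)\in\Lk(a(v))$; the relation $\alpha_{a(v)}=\alpha_v^{-1}$ exhibits $\Phi$ as a bijection. For each line $L$, orient $e_n^\circ$ from its entry endpoint (shared with $e_{n-1}^\circ$) to its exit endpoint (shared with $e_{n+1}^\circ$) and let $\epsilon_n \in O_\Gamma$ be the resulting oriented edge of $\Gamma$; the description of the maps $\phi_e$ of $\Theta[\mathcal{V}]$ together with condition (v2) forces $\epsilon_{n+1}=\Phi(\epsilon_n)$ for all $n$. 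Since $\Phi$ is a permutation of the finite set $O_\Gamma$, the sequence $(\epsilon_n)_{n\in Z}$ is purely periodic of some period $p_L$; the derived sequence of vertex pairs $(v_n^-,v_n^+)$, where $v_n^-$ and $v_n^+$ are the initial and terminal vertex of $\epsilon_n$, then has period dividing $p_L$.

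For part~(1), suppose the intersection $\gamma_{L_1}\cap \gamma_{L_2}$ is unbounded; it then contains a ray, and after reindexing I may assume $t_n^{L_1}=t_n^{L_2}$ for all $n\ge 0$. Reading off the vertex of $\Gamma$ which labels the common white vertex between $\Gamma_{t_{n-1}}$ and $\Gamma_{t_n}$ forces $v_n^{-,1}=v_n^{-,2}$ and $v_n^{+,1}=v_n^{+,2}$ for all $n\ge 1$; write this common value as $(v_n^-,v_n^+)$ and let $q$ be its smallest period, so that $q$ divides each $p_{L_i}$. For any $n\in Z$ and $i\in\{1,2\}$, choose $k$ with $n+kp_{L_i}\ge 1$; then by periodicity of $L_i$ one has $v_n^{\pm,i}=v_{n+kp_{L_i}}^{\pm,i}=v_{n+kp_{L_i}}^{\pm}$, and since $q\mid p_{L_i}$ the right-hand side depends only on $n\bmod q$. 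Hence $v_n^{-,1}=v_n^{-,2}$ and $v_n^{+,1}=v_n^{+,2}$ for \emph{every} $n\in Z$; combined with $t_0^{L_1}=t_0^{L_2}$, this pins down the entire sequence $(t_n^{L_i})_{n\in Z}$ (forward and backward neighbours of $t_n^{L_i}$ in $T$ are determined by $v_n^{+,i}$ and $v_n^{-,i}$), so $\gamma_{L_1}=\gamma_{L_2}$. The ``in particular'' clause is then immediate: two bi-infinite geodesics in a tree sharing any end at infinity share a ray to that end, so the above argument applies.

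For part~(2), fix a bi-infinite geodesic $\gamma$ in $T$ and consider the lines $L$ with $\gamma_L=\gamma$. By the forward determinism of $\Phi$, such a line $L$ is determined (as a sequence up to shift and inversion) by the single oriented edge $\epsilon_0^L$, and the condition $\gamma_L=\gamma$ prescribes its terminal vertex $v_0^+$. There are at most $|\Lk(v_0^+)|\le \max_{v\in V_\Gamma}|\Lk(v)|$ such oriented edges, yielding the required universal bound, which depends only on $\Gamma$. The only delicate accounting step, which I expect to be the principal issue rather than a true obstacle, is the propagation argument in part~(1): although two distinct lines' full $\epsilon$-sequences may genuinely differ on a shared geodesic tail when $\Gamma$ contains parallel edges, only the $(v_n^\pm)$-sequences need be compared, and their agreement spreads to all of $Z$ by the pure periodicity of each $L_i$ via the divisibility $q\mid p_{L_i}$.
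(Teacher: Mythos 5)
Your proof is correct and takes essentially the approach the paper intends: the paper omits the argument, saying only that it rests on the observation that lines, viewed as sequences of appearances of edges of $\Gamma$, are periodic, and your successor permutation $\Phi$ of $O_\Gamma$ (forced by the gluing via $\alpha_v$ in condition (v2)) is precisely a rigorous form of that observation, with the period/gcd propagation and the $\max_v|\Lk(v)|$ count supplying the remaining routine details. Nothing further is needed.
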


Consider the equivalence relation $\sim_\Theta$ on the set of edges of the graph $\Gamma$
described as follows: we have $e\sim_\Theta e'$
iff $e$ and $e'$ have appearances in the same line $L\in{\mathcal L}({\mathcal V})$.
Note that this equivalence relation is well defined since if some two lines $L_1,L_2$ contain
appearances of the same edge of $\Gamma$, then any edge of $\Gamma$ appears in $L_1$
if and only if it appears in $L_2$. Thus, we get a partition of the edge set $E_\Gamma$
into classes of the relation $\sim_\Theta$.

\begin{defn}[arm of ${\mathcal X}_V({\mathcal V})$]
  \defnlabel{V.5}
An \emph{arm} in a $V$-tree of graphs ${\mathcal X}_V({\mathcal V})$ is its subspace
$A_L$ of the following form. Given any line $L\in{\mathcal L}({\mathcal V})$,
$L=(e_n^\circ)_{n\in Z}$, take as $A_L$ the union of the set
$\partial\gamma_L\subset\partial T$ (consisting of the two ends 
of the geodesic $\gamma_L$)
and the image in $\#\Theta[{\mathcal V}]$ of 
the union $\bigcup_{n\in Z}e_n^\circ$.
We view $A_L$ as a space with peripherals, where the family of peripherals
consists of only one peripheral set, namely $\partial\gamma_L$.
We distinguish also in $A_L$ some other subsets, namely the ones corresponding
to all doubleton peripherals in all punctured edges $e_n^\circ$ constituting $A_L$;
we call these subsets the \emph{supplementary peripherals} of $A_L$, in contrast with
the above mentioned subset $\partial\gamma_L$, which we call the \emph{basic
peripheral} of $A_L$.
\end{defn}

For each arm $A_L$ we consider the subspace topology induced from
${\mathcal X}_V({\mathcal V})$. It is not hard to observe 
by referring to Fact \factref{T.4} that, when equipped with this
topology, any arm $A_L$ naturally gets the form of a $V$-tree of segments.
In particular, by Fact \factref{V.3}, we obtain the following.

\begin{fact}
  \factlabel{V.6} Any arm $A_L$, as a space equipped with the family
  of all supplementary peripherals, is homeomorphic to the internally
  punctured segment $I^\circ_{\rm int}$. Moreover, its basic
  peripheral subset $\partial\gamma_L$ coincides then with the set of
  endpoints of $I^\circ_{\rm int}$.
\end{fact}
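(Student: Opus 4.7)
The plan expands on the observation stated just before \Factref{V.6} that each arm $A_L$ naturally has the form of a $V$-tree of segments, and then invokes \Factref{V.3}. Concretely, I would associate to the line $L=(e_n^\circ)_{n\in Z}$ and its induced geodesic $\gamma_L=\ldots, u_{n}, t_{n}, u_{n+1}, \ldots$ a tree system $\Theta_L$ whose underlying tree is $\gamma_L$ itself (a bi-infinite path with alternating black/white vertices of combinatorial degree $2$), whose constituent space at each $t_n$ is the punctured edge $e_n^\circ \subset \Gamma_{t_n}^\circ$ viewed as a punctured segment (with its two singleton endpoint peripherals), whose peripheral space $\Sigma_{u_n}$ is the singleton inherited from $\Theta[\mathcal{V}]$, and whose connecting maps $\phi_{[t_{n-1}, u_n]}, \phi_{[t_n, u_n]}$ send this singleton to the shared endpoint of $e_{n-1}^\circ$ and $e_n^\circ$ dictated by $\Theta$-adjacency.

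Next, I would verify that $\Theta_L$ is isomorphic to $\Theta[\mathcal{V}_L]$ for some connecting $V$-system $\mathcal{V}_L=(I,\mathbf{a}_L)$. The underlying tree of $\Theta[\mathcal{V}_L]$ is the unique bipartite tree in which every vertex has degree $2$, which matches $\gamma_L$; each constituent space is a copy of $I^\circ$; and the singleton peripheral spaces and trivial connecting maps of $\Theta_L$ match those arising from some involution $\mathbf{a}_L$ of $V_I$ (either the identity or the swap, determined by how consecutive endpoints of $e_n^\circ$'s sit at corresponding vertices under the $V$-involution $\mathbf{a}$ of $\mathcal{V}$). Hence $\lim\Theta_L$ is a $V$-tree of segments in the sense of \Exref{V.2}. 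I would then show that the inclusion-induced map $\#\Theta_L \sqcup \partial\gamma_L \to A_L$ is a bijection respecting supplementary peripherals (since both sides are built from the same punctured edges and the same endpoint identifications inherited from $\Theta[\mathcal{V}]$, plus the same two-point set $\partial\gamma_L$), and that this bijection is a homeomorphism: both spaces are compact Hausdorff ($\lim\Theta_L$ by \Propref{T.3} and $A_L$ as a closed subspace of the compact $\mathcal{X}_V(\mathcal{V})$), so a direct comparison of basic open sets $\mathcal{G}(U)$ indexed by finite b-subtrees of $\gamma_L$ yields continuity.

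Finally, applying \Factref{V.3} to $\mathcal{V}_L$ I obtain a homeomorphism of spaces with peripherals $A_L \cong \lim\Theta_L \cong I^\circ_{\text{int}}$. The basic peripheral $\partial\gamma_L$ of $A_L$ coincides with $\partial T_{\mathcal{V}_L}$ under the first homeomorphism, and under the homeomorphism of \Factref{V.3} the set $\partial T_{\mathcal{V}_L}$ (the two ends of the bi-infinite underlying tree) corresponds precisely to the two endpoints of $I$ in $I^\circ_{\text{int}}$. Composing the two homeomorphisms gives the desired identification of $\partial\gamma_L$ with the set of endpoints of $I^\circ_{\text{int}}$.

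The main obstacle is the homeomorphism check in the second step: the subspace topology on $A_L \subset \mathcal{X}_V(\mathcal{V})$ is a priori richer than the $V$-tree topology coming from $\Theta_L$, since open neighbourhoods in $\mathcal{X}_V(\mathcal{V})$ arise from finite b-subtrees of the large tree $T$, not just from b-subtrees of $\gamma_L$. One must argue that, because the supplementary peripherals of $A_L$ come entirely from the doubleton peripherals of the $e_n^\circ$ and the ``transverse'' pieces of $\Gamma_{t_n}^\circ$ (i.e.\ the constituent spaces outside the edges of $L$) contribute nothing to $A_L$, any basic open set of $\mathcal{X}_V(\mathcal{V})$ intersected with $A_L$ is already captured by a basic open set associated to a finite b-subtree of $\gamma_L$.
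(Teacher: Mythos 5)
Your proposal is correct and follows essentially the same route the paper intends: the paper simply observes (without written proof) that each arm $A_L$, with its subspace topology, "naturally gets the form of a $V$-tree of segments" over the geodesic $\gamma_L$ and then invokes \Factref{V.3}, which is exactly the argument you flesh out, including the genuine point that needs checking (that the subspace topology on $A_L$ agrees with the limit topology of the restricted system, which follows since a continuous bijection from the compact $\lim\Theta_L$ to the Hausdorff $A_L$ is a homeomorphism). Your identification of $\partial\gamma_L$ with the endpoints of $I^\circ_{\rm int}$ likewise matches the paper's intended reading of \Factref{V.3} (compare \Exref{V.8.1}).
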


\begin{defn}[core of ${\mathcal X}_V({\mathcal V})$]
  \defnlabel{V.7}
  The \emph{core} of ${\mathcal X}_V({\mathcal V})=\lim\Theta[{\mathcal V}]$ is the subspace 
$C({\mathcal V})$
corresponding to the
subset $\partial T\subset\lim\Theta[{\mathcal V}]$. It is equipped (as part of its 
structure) with the family of doubleton subsets
$\Omega^c_{\mathcal V}:=\{ \partial\gamma_L:L\in{\mathcal L}
({\mathcal V}) \}$
(with which it is a space with peripherals, see Remark \rmkref{V.8}.2 below).
\end{defn}

\begin{rmk}
  \rmklabel{V.8}
\begin{enumerate}
\item[1.] Using the fact that the tree $T$ is locally finite, it is not hard to see
that the subspace topology in the subset $\partial T\subset\lim\Theta[{\mathcal V}]$ coincides with the standard topology in $\partial T$, and hence $\partial T$
is a Cantor set (except the elementary cases when $\Gamma$ has two vertices, in which
$\partial T$ is a doubleton).
\item[2.] The family $\Omega^c_{\mathcal V}$ of subsets of the core is null.
To see this, it is sufficient to
note that the distances in $T$ from a fixed
base point to the geodesics $\gamma_L$  diverge to $\infty$. This, in turn, easily follows from the fact
that only finitely many geodesics of the form $\gamma_L$ passes
through any vertex of $T$, and the fact that $T$ is locally finite.
The family $\Omega^c_{\mathcal V}$ consists also of sets that are pairwise disjoint,
see \Lemref{V.4.1}(1).
\item[3.] The sets in the family $\Omega^c_{\mathcal V}$, viewed as subsets
of the $V$-tree ${\mathcal X}_V({\mathcal V})$, clearly coincide with the basic peripherals in all arms of this $V$-tree.
\end{enumerate}
\end{rmk}

\begin{ex}[core and arm in a $V$-tree of segments]
  \exlabel{V.8.1}
Note that for any connecting $V$-system of segments ${\mathcal V}=(I,{\mathbf a})$
(as described in \Exref{V.2}), there is precisely one line $L$ in this system.
The $V$-tree of segments ${\mathcal X}_V({\mathcal V})$ has then precisely one arm, $A_L$,
and the core $C({\mathcal V})$ is a doubleton which coincides with the boundary
$\partial\gamma_L$ and with the set of endpoints of the arm $A_L$.
Actually, as a space with peripherals the $V$-tree ${\mathcal X}_V({\mathcal V})$
coincides with the arm $A_L$ equipped with its supplementary peripherals,
and the basic peripheral of $A_L$ coincides with the core $C({\mathcal V})$.
\end{ex}

We now generalize V-trees of segments to V-trees of $\theta$-graphs.
We start with describing the $\theta$-graphs---a family of graphs which
appear explicitly in the statement of \Mainthmref{2}, and which also appear implicitly
in \Mainthmref{1}, since the regular trees of graphs mentioned in conditions (2) and (3)
of this result necessarily involve certain $\theta$-graphs 
(compare \Thmref{S.1}
and the description of the graphical connecting system ${\mathcal R}_G$
in \Secref{S}).

\begin{ex}[V-tree of $\theta$-graphs]
  \exlabel{V.8.2}
A $\theta$-\emph{graph}  is a finite connected graph $\Xi$ which has precisely two vertices
and no loop edges. The number of edges is thus a natural number $k\ge1$, and each edge
has both vertices of $\Xi$ as its endpoints. We will denote the $\theta$-graph with $k$ edges
by $\Xi_k$. Obviously, $\Xi_1$ is simply the segment, as described in \Exref{V.2}.

Generalizing the exposition of \Exref{V.2}, we define \emph{the internally punctured
$\theta$-graph $(\Xi_k)^\circ_{{\rm int}}$} as the space with peripherals 
obtained from $\Xi_k$
by deleting the interiors of any dense family of pairwise disjoint 
normal neighbourhoods of non-vertex points
of $\Xi_k$, and by taking the boundaries of those neighbourhoods as peripherals.
Such a space is obviously unique up to a homeomorphism respecting peripherals,
its peripherals are all doubletons, and vertices of $\Xi_k$ belong to this space,
and are not contained in any peripheral. 

Observe that any connecting $V$-system ${\mathcal V}=(\Xi_k,{\mathbf a})$ of $\theta$-graphs
has precisely $k$ lines $L_1,\dots,L_k$. Moreover, the arms $A_1,\dots,A_k$ of the
corresponding $V$-tree of $\theta$-graphs ${\mathcal X}_V({\mathcal V})$ have a common pair
of endpoints, and this pair coincides with the core $C({\mathcal V})$
of this $V$-tree. The set of peripherals of ${\mathcal X}_V({\mathcal V})$ coincides with the union
of the sets of supplementary peripherals of the arms $A_i$. Thus, as a space with peripherals,
${\mathcal X}_V({\mathcal V})$ is homeomorphic to the internally punctured $\theta$-graph 
$(\Xi_k)^\circ_{\rm int}$.
The core $C({\mathcal V})$ of this $V$-tree coincides then with the vertex set of $\Xi_k$
(viewed as the subset of $(\Xi_k)^\circ_{\rm int}$). The only peripheral of this core
coincides with the whole of the core.

For our later purposes, we now describe some special connecting $V$-system 
${\mathcal V}_{\Xi_k}=(\Xi_k,{\mathbf a}_k)$, which we call \emph{the standard connecting $V$-system
for $\Xi_k$}. The $V$-involution ${\mathbf a}_k=(a_k,\{ \alpha_u, \alpha_v \})$
consists of the transposition $a_k$ of the two vertices $u,v$ of $\Xi_k$,
and both $V$-maps $\alpha_u:\hbox{Lk}(u)\to\hbox{Lk}(v)$, $\alpha_v:\hbox{Lk}(v)\to\hbox{Lk}(u)$
are ``tautological'' in the following sense: for any element $p$ in the link $\hbox{Lk}(u)$ or
$\hbox{Lk}(v)$ its image through the corresponding $V$-map 
coincides with this element in the other
vertex link which is induced by the same edge of $\Xi_k$ as $p$.
Observe that all $k$ lines in so described standard system ${\mathcal V}_{\Xi_k}$
consist of appearances of a single edge of the graph $\Xi_k$, and those edges are distinct
for distict lines.
\end{ex}

Coming back to general connecting $V$-systems $\mathcal V$,
we now describe the tree system $\Psi[{\mathcal V}]$ consisting of the core and the arms of the $V$-tree ${\mathcal X}_V({\mathcal V})$ as the constituent spaces of this system. The underlying tree $T_\Psi$
of $\Psi[{\mathcal V}]$
is described as follows. There are three kinds of vertices in $T_\Psi$:
\begin{enumerate}
\item[(1)] a single ``central'' black vertex $t_c$ corresponding to the core of ${\mathcal X}_V({\mathcal V})$;
\item[(2)] black vertices $t_A$ corresponding to arms $A$ of 
${\mathcal X}_V({\mathcal V})$;
\item[(3)] white vertices $u_P$ corresponding to peripherals 
$P\in\Omega^c_{\mathcal V}$.
\end{enumerate}

\noindent
The set of edges is described as follows.
For each of the white vertices $u_P$ there is an edge $[u_P,t_c]$
in $T_\Psi$. For each arm $A=A_L$, putting $P=\partial\gamma_L$,
there is an edge $[t_A,u_P]$ in $T_\Psi$. Note that, due to \Lemref{V.4.1}(2),
the tree $T_\Psi$ has finite degree at each white vertex $u_P$.

As the constituent spaces for $\Psi[{\mathcal V}]$ we take
$K_{t_c}=C({\mathcal V})$ and $K_{t_A}=A$. As the peripheral spaces
for $\Psi[{\mathcal V}]$ we take $\Sigma_{u_P}=P$, and as the maps 
$\phi_{[u,t]}:\Sigma_u\to K_t$ we take the natural inclusions
of the forms $P\subset C[{\mathcal V}]$ and $\partial\gamma_L\subset A_L$.

We have the following expression of the space ${\mathcal X}_V({\mathcal V})$ as a tree of spaces.

\begin{lem}
  \lemlabel{V.9} The inclusions of the form
  $A_L\subset {\mathcal X}_V({\mathcal V})$, for all
  $L\in{\mathcal L}({\mathcal V})$, and
  $C[{\mathcal V}]\subset {\mathcal X}_V({\mathcal V})$ induce a well
  defined map
  $i_{\mathcal V}:\lim\Psi[{\mathcal V}]\to {\mathcal X}_V({\mathcal
    V})$, and this map is a homeomorphism.
\end{lem}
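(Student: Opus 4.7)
The plan is to show that $i_{\mathcal V}$ is a well-defined continuous bijection between compact metrizable spaces (compactness of both limits is given by \Propref{T.3}), and then to invoke compactness to conclude that it is a homeomorphism. I would organize the argument in four steps.

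First, I would verify well-definedness. The only identifications occurring in $\lim\Psi[{\mathcal V}]$ happen along the peripheral spaces $\Sigma_{u_P}=P=\partial\gamma_L$, which are attached simultaneously to the arm $A_L$ (as its set of endpoints, by \Factref{V.6}) and to the core $C[{\mathcal V}]$ (as one of its peripherals, by \Defnref{V.7}). Both inclusions map $P$ to the same doubleton $\partial\gamma_L\subset\partial T\subset{\mathcal X}_V({\mathcal V})$, so the disjoint-union map $\bigsqcup_{L} A_L\sqcup C[{\mathcal V}]\to{\mathcal X}_V({\mathcal V})$ factors through the quotient defining $\lim\Psi[{\mathcal V}]$ and yields $i_{\mathcal V}$.

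Second, I would establish bijectivity. Recall ${\mathcal X}_V({\mathcal V})=\#\Theta[{\mathcal V}]\sqcup\partial T$. Every point of $\partial T$ lies in the image of the core $C[{\mathcal V}]$. Every point of $\#\Theta[{\mathcal V}]$ is represented by a point in some $\Gamma_t^\circ$; since $\Gamma$ has no loop edges, such a point lies in a uniquely determined punctured edge $e^\circ\subset\Gamma_t^\circ$, and this punctured edge belongs to a unique line $L$, hence to the arm $A_L$. This gives surjectivity. For injectivity, two points of constituent spaces of $\Psi[{\mathcal V}]$ can have the same image in ${\mathcal X}_V({\mathcal V})$ only when they correspond via a $\Psi$-gluing along some $\Sigma_{u_P}$: either an endpoint of an arm with the corresponding peripheral of the core, or two endpoints of distinct arms $A_{L_1},A_{L_2}$ sharing the same boundary pair $P=\partial\gamma_{L_1}=\partial\gamma_{L_2}$. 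In the latter case, \Lemref{V.4.1}(2) bounds the number of such arms, matching precisely the finite degree of $u_P$ in $T_\Psi$, and the corresponding identifications are indeed built into $\lim\Psi[{\mathcal V}]$.

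The main technical step is continuity. A basic open set in ${\mathcal X}_V({\mathcal V})=\lim\Theta[{\mathcal V}]$ has the form ${\mathcal G}(U)$ for a finite b-subtree $F\subset T$ and an open $\Omega_F$-saturated subset $U\subset K_F$. Given such ${\mathcal G}(U)$, I would construct a finite b-subtree $F_\Psi\subset T_\Psi$ (containing $t_c$ together with finitely many vertices $u_P$ and the incident arm vertices $t_{A_L}$) and an open $\Omega_{F_\Psi}$-saturated subset $U_\Psi$ of the corresponding partial union of $\Psi[{\mathcal V}]$, with $i_{\mathcal V}({\mathcal G}(U_\Psi))\subseteq {\mathcal G}(U)$, exhibiting $i_{\mathcal V}^{-1}({\mathcal G}(U))$ as a union of basic open sets. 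The finiteness of $F_\Psi$ relies on the nullness of the peripheral family $\Omega^c_{\mathcal V}=\{\partial\gamma_L\}$ (\Rmkref{V.8}), which ensures that only finitely many arms extend outside $F$ and fail to be captured entirely by $U$. Once continuity is established, a continuous bijection between compact Hausdorff spaces is automatically a homeomorphism.

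The main obstacle is Step 3: the combinatorics of $T$ (whose black vertices correspond to entire constituent graphs $\Gamma_t$) and of $T_\Psi$ (whose black vertices correspond to the single core and to arms built from slivers of many $\Gamma_t$) are quite different, and carrying out the precise construction of $F_\Psi$ and $U_\Psi$ above—matching a $\Theta$-basic neighborhood to a $\Psi$-basic neighborhood—requires a careful analysis of how the geodesics $\gamma_L$ traverse $T$ and how the arms accumulate onto the core.
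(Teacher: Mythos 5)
Your proposal is correct and follows essentially the same route as the paper: the paper's proof simply observes that $i_{\mathcal V}$ is a bijection and that, both limits being compact metrizable (\Propref{T.3}), it suffices to check continuity, which it leaves as an exercise on the topology of limits of tree systems. Your extra detail on well-definedness and injectivity (using \Lemref{V.4.1}) and your sketched pull-back of basic open sets ${\mathcal G}(U)$ are exactly the content of that exercise, so nothing in your plan diverges from the paper's argument.
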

\begin{proof}
The induced map $i_{\mathcal V}$ is obviously a bijection. 
Since both spaces $\lim\Psi[{\mathcal V}]$ and ${\mathcal X}_V({\mathcal V})$ are compact
and metrizable,
it is enough to show that the map $i_{\mathcal V}$ is continuous.
We leave it to the reader as a straightforward exercise concerning the topology
on limits of tree systems. 
\end{proof}

\begin{ex}[wedge of arms]
  \exlabel{V.9.1}
We present here some alternative expression of the $\mathcal V$-tree of $\theta$-graphs
${\mathcal X}_{V}[{\mathcal V}_{\Xi_k}]$ as tree of arms. This expression, a bit 
simpler than that provided by the tree system $\Psi[{\mathcal V}_{\Xi_k}]$, 
will be used in the proof of 
the implication \pitmref{g_rigid_cluster_factors_vf}$\Rightarrow$\pitmref{bd_g_reg_twocon_tog}
of \Mainthmref{1} given in \Secref{P}.

Consider the tree system $\Psi_w[{\mathcal V}_{\Xi_k}]$ described as follows.
The underlying tree $T_w$ of $\Psi_w[{\mathcal V}_{\Xi_k}]$ consists of a single
white vertex $u_c$ and precisely $k$ black vertices $t_A$ corresponding to the arms
$A$ of ${\mathcal X}_V({\mathcal V}_{\Xi_k})$, and of precisely $k$ edges of the form $[u_c,t_A]$.
As the constituent spaces for $\Psi_w[{\mathcal V}_{\Xi_k}]$ we take $K_{t_A}=A$,
and as the peripheral space we take $\Sigma_{u_c}=C[{\mathcal V}_{\Xi_k}]$.
As the maps $\phi_{[u_c,t_A]}:\Sigma_{u_c}\to K_{t_A}$
we take the natural inclusions of the form $C[{\mathcal V}_{\Xi_k}]\subset A$.

Since, by an easy observation, we have a natural identification 
$\lim\Psi_w[{\mathcal V}_{\Xi_k}]=\lim\Psi[{\mathcal V}_{\Xi_k}]$,
it follows then from \Lemref{V.9}
 that the inclusions 
$A\subset {\mathcal X}_{V}({\mathcal V}_{\Xi_k})$ induce a natural identification of the limit space
$\lim\Psi_w[{\mathcal V}_{\Xi_k}]$ with the $V$-tree ${\mathcal X}_{V}({\mathcal V}_{\Xi_k})$.
\end{ex}

As the last subject in this section, we discuss orientability of arms in V-trees of graphs.

\begin{defn}[orientable line and arm]
  \defnlabel{V.10}
A line $L$ in a V-tree system $\Theta[{\mathcal V}]$ is \emph{orientable} if any two appearances
$e_k^\circ$, $e_m^\circ$ in $L$ of any edge $e$ of $\Gamma$ are consistently oriented.
The latter means that in the corresponding arm $A_L$, viewed as an internally punctured segment,
the orientations of subsegments $e_k^\circ$ and $e_m^\circ$ induced from a fixed orientation
of $e$ are consistent.  A line $L$ is \emph{non-orientable}, if some edge $e$ of $\Gamma$ has some
two inconsistently oriented appearances in $L$.  Accordingly, we speak of
orientability and non-orientability of the corresponding arms $A_L$.
\end{defn}

We have the following useful characterization of orientability of lines (and arms).

\begin{lem}
  \lemlabel{V.10a} Let ${\mathcal V}=(\Gamma, {\mathbf a})$ be a
  connecting V-system.  Then the V-tree system $\Theta[{\mathcal V}]$
  contains a nonorientable line if and only if there is
  $v\in V_\Gamma$ such that $a(v)=v$ and there is $p\in\hbox{Lk}(v)$
  such that $\alpha_v(p)=p$. In particular, if the involution
  $a:V_\Gamma\to V_\Gamma$ is fixed point free then all lines in the
  V-tree system $\Theta[{\mathcal V}]$ are orientable.
\end{lem}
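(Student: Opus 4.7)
The plan is to encode each line of $\Theta[{\mathcal V}]$ as a cyclic orbit of a \emph{step permutation} $\sigma$ on the finite set $O_\Gamma$ of oriented edges of $\Gamma$, and then to analyse nonorientability of lines through the interaction of $\sigma$ with the orientation-reversal involution $\bar{\cdot}\colon O_\Gamma\to O_\Gamma$. For $\epsilon\in O_\Gamma$ with head $\eta(\epsilon)\in V_\Gamma$, write $p_\eta(\epsilon)\in\Lk(\eta(\epsilon))$ for the link element corresponding to $\epsilon$, and define $\sigma(\epsilon)$ to be the unique oriented edge whose tail is $a(\eta(\epsilon))$ and whose link element at that tail is $\alpha_{\eta(\epsilon)}(p_\eta(\epsilon))$. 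Since $a$ is an involution and each $\alpha_v$ is a bijection, $\sigma$ is a bijection of $O_\Gamma$. Unwinding the construction of $\Theta[{\mathcal V}]$, the identification between an appearance $e_n^\circ$ in a line and the next appearance $e_{n+1}^\circ$ is implemented exactly by $\sigma$ acting on the appropriate orientation of $e_n$; hence each line of $\Theta[{\mathcal V}]$ is the bi-infinite unrolling of a $\sigma$-orbit, and the line is nonorientable precisely when its orbit contains both $\epsilon$ and $\bar\epsilon$ for some $\epsilon\in O_\Gamma$.

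A direct computation using $\alpha_{a(v)}=\alpha_v^{-1}$ yields the key identity $\sigma\circ\bar{\cdot}=\bar{\cdot}\circ\sigma^{-1}$ on $O_\Gamma$. The reverse implication of the lemma then follows at once: if $v\in V_\Gamma$ satisfies $a(v)=v$ and $\alpha_v(p)=p$ for some $p\in\Lk(v)$, then the oriented edge $\epsilon$ with head $v$ and link element $p$ at its head satisfies $\sigma(\epsilon)=\bar\epsilon$ by direct calculation, so the line through $\epsilon$ is nonorientable. The same calculation read backwards handles the ``base case'' of the forward implication: whenever $\sigma(\epsilon^*)=\overline{\epsilon^*}$ for some $\epsilon^*\in O_\Gamma$, equating tails and link elements at tails on both sides forces $v:=\eta(\epsilon^*)$ to be a fixed point of $a$ and $p:=p_\eta(\epsilon^*)\in\Lk(v)$ to be a fixed point of $\alpha_v$.

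It remains to reduce the general forward implication to this base case. Assume $\sigma^n(\epsilon)=\bar\epsilon$ for some $n\ge 1$, write $\ell$ for the length of the $\sigma$-orbit $O$ of $\epsilon$ and $\epsilon_i:=\sigma^i(\epsilon)$. Iterating the key identity (by induction on $n$) gives $\sigma^n\circ\bar{\cdot}=\bar{\cdot}\circ\sigma^{-n}$, from which $\overline{\epsilon_i}=\epsilon_{(n-i)\bmod\ell}$; so $\bar{\cdot}$ acts on $O$ by $i\mapsto n-i\pmod\ell$. Since $\Gamma$ has no loop edges, $\bar{\cdot}$ is fixed-point free on $O_\Gamma$ and hence on $O$, so the congruence $2i\equiv n\pmod\ell$ has no solution. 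This rules out $\ell$ odd (where $\langle 2\rangle=\mathbb{Z}/\ell\mathbb{Z}$) and forces $n$ odd when $\ell$ is even. Then $n-1$ is even and the congruence $2i\equiv n-1\pmod\ell$ has a solution $i^*$; setting $\epsilon^*:=\epsilon_{i^*}$ gives $\sigma(\epsilon^*)=\epsilon_{i^*+1}=\epsilon_{n-i^*}=\overline{\epsilon^*}$, reducing to the base case and producing the desired fixed points of $a$ and $\alpha_v$. The final assertion of the lemma is then immediate: if $a$ is fixed-point free then the conclusion of the forward direction cannot hold, so no line is nonorientable. The main obstacle I anticipate is the careful bookkeeping required to set up $\sigma$ correctly (keeping straight which link is involved at each end of an edge) and to verify the key identity; once these foundations are laid, the cyclic-group argument producing $\epsilon^*$ is short.
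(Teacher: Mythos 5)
Your proof is correct and takes essentially the same route as the paper's: the paper also uses the fact that each oriented appearance has a uniquely determined successor (your $\sigma$) and reduces a pair of oppositely oriented appearances of an edge to an \emph{adjacent} such pair, which immediately forces $a(v)=v$ and $\alpha_v(p)=p$. Your conjugation identity $\sigma\circ\bar{\cdot}=\bar{\cdot}\circ\sigma^{-1}$ and the mod-$\ell$ parity argument are simply a careful formalization of the step the paper passes over with ``clearly implies.''
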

\begin{proof}
If there are $v$ and $p$ such that $a(v)=v$ and $\alpha_v(p)=p$,
denote by $e$ this edge of $\Gamma$ issuing from $v$ which corresponds to $p$.
The equalities above obviously imply that in any line $L$ containing an appearance of $e$
we have another appearance of $e$, adjacent to this first appearance, and oppositely oriented.
Thus any such line $L$ is nonorientable.

To prove the converse implication, observe that all appearances of any
oriented edge $\epsilon$ of $\Gamma$ in lines of $\Theta[{\mathcal V}]$ are followed by
adjacent appearances
of some other oriented edge uniquely determined by $\epsilon$
(which is not necessarily distinct from $\epsilon$). 
Thus, if some line contains oppositely oriented appearances
of some edge, it must also contain, somewhere in between, oppositely oriented and \emph{adjacent} appearances of some 
other edge, which clearly implies existence of $v$ and $p$ as in the statement.
This completes the proof.
\end{proof}

\begin{rmkdefn}
  \rmkdefnlabel{V.11} 
\begin{enumerate}
\item[(1)]
If a line $L$ is orientable, and if we fix an order of the doubleton $\partial \gamma_L$ 
(i.e. of the set of endpoints of the arm $A_L$), then this order induces orientations
at all those edges of $\Gamma$, which have appearances in $L$. We 
call such orientations of the above mentioned edges the 
\emph{order-induced} orientations (with respect to the above fixed order of 
$\partial \gamma_L$). 

\item[(2)] Let ${\mathcal V}=(\Gamma,{\mathbf a})$ be a connecting $V$-system,
and let $P\subset{\mathcal C}({\mathcal V})$ be a peripheral of the core ${\mathcal C}({\mathcal V})$
of the $V$-tree ${\mathcal X}_V({\mathcal V})$. Consider the family ${\mathcal L}_P$ of all lines
$L$ in $\Theta[{\mathcal V}]$ such that $\partial \gamma_L=P$. Note that ${\mathcal L}_P$
is nonempty and finite. Define the set of \emph{edges of $\Gamma$ associated to $P$},
denoted $E_\Gamma^P$,
as the set of all edges which have an appearance in any line $L\in{\mathcal L}_P$.
\item[(3)]
Suppose that the $V$-involution map $a:V_\Gamma\to V_\Gamma$
of $\mathcal{V}$ is fixed point free. Let $P$ be some doubleton
peripheral as in 
(2) above, and suppose that an edge $e$ of $\Gamma$ has appearances
in two distinct lines $L_1,L_2\in\mathcal{L}_P$.
Fix some order of $P$.
The argument similar as in the proof of \Lemref{V.10a}
shows that the order-induced (from the above fixed order of $P$)
orientations of $e$ determined by the two above mentioned appearances
of $e$ in $L_1$ and $L_2$ coincide.
It follows that any fixed order of the doubleton $P$
uniquely induces orientations on all edges from the set $E_\Gamma^P$. 
We again call the so described orientations the \emph{order-induced} orientations (with respect to the above fixed order of 
the peripheral $P$). 
\end{enumerate}
\end{rmkdefn}

\begin{obs}
  \obslabel{V.12} Note that in any standard connecting $V$-system 
  ${\mathcal V}_{\Xi_k}$ of $\theta$-graphs
(as described in \Exref{V.8.2}), 
each line (and consequently
each arm in the corresponding $V$-tree 
${\mathcal X}_V({\mathcal V}_{\Xi_k})$) is orientable. 
Note that, fixing an order for the common set of endpoints of the arms of 
${\mathcal X}_V({\mathcal V}_{\Xi_k})$, 
we get order-induced orientations of the edges
of the $\theta$-graph $\Xi_k$ which are consistent (in the sense that they induce
the same order on the set of two vertices of $\Xi_k$).
\end{obs}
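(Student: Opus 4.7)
The plan is to verify the two assertions separately, essentially by unpacking definitions and invoking earlier material in this section.

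For orientability of every line (and hence every arm), I would simply apply \Lemref{V.10a}. By the definition in \Exref{V.8.2} of the standard V-system ${\mathcal V}_{\Xi_k}$, the vertex involution $a_k$ is the transposition of the two vertices of $\Xi_k$ and so has no fixed points at all. The last sentence of \Lemref{V.10a} therefore yields at once that every line in $\Theta[{\mathcal V}_{\Xi_k}]$ is orientable, hence so is every arm in ${\mathcal X}_V({\mathcal V}_{\Xi_k})$.

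For the consistency of order-induced orientations, I would use the explicit structure of ${\mathcal V}_{\Xi_k}$ recorded in \Exref{V.8.2}: the $k$ lines $L_1,\dots,L_k$ consist of appearances of distinct edges $e_1,\dots,e_k$ that exhaust the edges of $\Xi_k$, and the common endpoint set of the arms (which coincides with the core $C({\mathcal V}_{\Xi_k})$) is canonically identified with the vertex set $\{u,v\}$ of $\Xi_k$ under the homeomorphism ${\mathcal X}_V({\mathcal V}_{\Xi_k}) \cong (\Xi_k)^\circ_{\rm int}$ of \Exref{V.8.2}. Fixing an order $u<v$ on this doubleton orients each arm $A_{L_i}$ from $u$ to $v$. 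Using that the gluings in the standard system are tautological (the maps $\alpha_u,\alpha_v$ identify the link element at $u$ corresponding to an edge $e$ with the link element at $v$ corresponding to the same $e$), one then checks by induction along the geodesic $\gamma_{L_i}$ that in every appearance of $e_i$ along $A_{L_i}$ the two endpoints of this appearance correspond coherently to $u,v$ via the standard identification $\Gamma_t^\circ \cong \Xi_k^\circ$. Consequently, the arm orientation from $u$ to $v$ induces on each appearance (and hence on $e_i$ itself) the orientation of $e_i$ running from $u$ to $v$ in $\Xi_k$.

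Since this conclusion is reached with the same order $u<v$ for every $i$, the induced orientations of all edges $e_1,\dots,e_k$ give the same order on the vertex set $\{u,v\}$ of $\Xi_k$, as claimed. I expect the only real (and quite modest) technical point in the whole proof to be the inductive bookkeeping mentioned above: one must verify that the orientability of $L_i$ (already established) combined with the tautological nature of the $\alpha$-maps forces the local ``$u$-endpoint'' and ``$v$-endpoint'' of consecutive appearances of $e_i$ to fit together into two globally well-defined endpoints of $A_{L_i}$, which then necessarily coincide with the two core points labelled $u$ and $v$. Once this is recorded, the consistency statement is immediate.
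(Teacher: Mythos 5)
Your proposal is correct and follows exactly the route the paper intends: the paper states this as an unproved observation, with orientability coming straight from the fixed-point-free case of \Lemref{V.10a} and the consistency being the routine verification, via the tautological maps $\alpha_u,\alpha_v$ of \Exref{V.8.2}, that moving toward a fixed end of the (line-shaped) underlying tree always crosses from the $u$-peripheral to the $v$-peripheral in every constituent copy, uniformly for all $k$ lines. The only cosmetic caveat is that the identification of the core with $\{u,v\}$ is canonical only up to swapping the two points, but this does not affect the conclusion, since the claim is merely that all edges receive the same induced order.
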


More complicated examples of connecting $V$-systems and $V$-trees will appear in \Ssecref{Y},
in connection with abstract virtually free factors and their Whitehead graphs.

\section{Abstract factors}
\seclabel{abs_fac}

In this section we prepare some further tools and terminology
necessary for our proof of the implication \pitmref{g_rigid_cluster_factors_vf}$\Rightarrow$\pitmref{bd_g_reg_twocon_tog} of
\Mainthmref{1} (and the implication (3)$\Rightarrow$(2) of \Mainthmref{2}).  In
particular, we discuss the so called \emph{Whitehead graphs} associated
to virtually free factors of the considered groups $G$.  Such graphs
(or graphs closely related to them) will appear in our expressions of
the Gromov boundaries $\partial G$ as regular trees of graphs.

\subsection{Abstract factors and their decorations}
\sseclabel{F}

\begin{defn}[abstract factors]
  \defnlabel{F.1}
An \emph{abstract factor} is a pair $(K,\{ H_i \}_{i\in I})$, where $K$ is a
nonelementary hyperbolic group and $\{ H_i \}_{i\in I}$ is a finite collection
of maximal 2-ended subgroups of $K$ which are pairwise non-conjugate.
Actually, we will view $\{ H_i \}_{i\in I}$ rather as a family of conjugacy classes of
maximal 2-ended subgroups, and we will not distinguish abstract factors
which have the same $K$, and for which the conjugacy classes of the subgroups
coincide. 
\end{defn}

\begin{ex}
  \exlabel{F.1.1}
  Given a 1-ended hyperbolic group $G$, any black vertex group $G(v)$ of the 
Bowditch JSJ splitting of $G$
(as well as of the reduced Bowditch JSJ splitting of $G$), 
equipped with the family ${\mathcal H}_v$ of conjugacy classes
of its edge subgroups $G([v,u])=G(v)\cap G(u)$ with $u\in N_v$, is an abstract factor.
In particular, we may apply all of the discussion below to such factors
of the corresponding splittings.
\end{ex}

It is obviously true that the conjugate subgroup $gH_ig^{-1}<K$ depends
only on the coset in $K/H_i$ to which the conjugating element $g$
belongs.  Moreover, since $H_i$ is maximal 2-ended, it coincides with
the stabilizer of $\bd H_i$ in $\bd G$ (the stabilizer is a 2-ended
subgroup containing $H_i$) which implies that $H_i$ is
self-normalizing.  Thus the cosets and conjugates of $H_i$ are in
bijective correspondence and we will freely refer to a conjugate
$gH_ig^{-1}$ by way of its corresponding coset $gH_i\in K/H_i$.  Since
it is well known that for distinct maximal 2-ended subgroups $H,H'$ of
a hyperbolic group $K$ their boundaries $\partial H, \partial H'$
viewed as subsets in the Gromov boundary $\partial K$ are disjoint
(see e.g. the sentence right before Lemma~1.1 in
\cite{Bowditch:cut_points:1998}), it follows that the family of
boundaries of the conjugates
\[
\partial (gH_ig^{-1}):i\in I, g\in K/H_i \leqno{(1)}
\]
consists of pairwise disjoint subsets of $\partial K$. 
It is also not hard to observe that this family of subsets of $\partial K$ is null.
To see it, note that any conjugate $gH_ig^{-1}$ lies at finite Hausdorff distance
(for any word metric $d_K$ in $K$) from the coset $gH_i$, and thus we have
$\partial(gH_ig^{-1})=\partial (gH_i)$. Moreover, 
for any fixed $i\in I$ and for the identity element $e\in K$ the distances
\[
d_K(e,gH_i):gH_i\in K/H_i
\]
clearly diverge to infinity. Since the cosets $gH_i$ of $K/H_i$ are uniformly
quasiconvex in $K$ 
(in particular, they lie at uniformly finite distance from geodesics in $K$
connecting their boundary points), it follows that the diameters 
$\hbox{diam}(\partial(gH_i)):gH_i\in K/H_i$
converge to $0$, which obviously implies that the family (1) is null.
This allows us to introduce the following concept.

\begin{defn}[space with peripherals induced by an abstract factor]
  \defnlabel{F.1.2}
The \emph{space with peripherals
associated to an abstract factor $(K,\{ H_i \}_{i\in I})$} is the pair
\[
(\partial K, \{ \partial (gH_ig^{-1}):i\in I, gH_i\in K/H_i \}).
\]
Peripherals of the form $\partial(gH_ig^{-1})$ are called $H_i$-\emph{peripherals},
or peripherals \emph{of type} $H_i$.
\end{defn}

We now pass to discussing orientability phenomona for peripheral subgroups
of abstract factors.

\begin{defn}
  \defnlabel{F.2} A 2-ended group $H$ is \emph{orientable} if each
  element $h\in H$ fixes $\partial H$ pointwise. Otherwise $H$ is
  \emph{non-orientable}.  If $H$ is orientable, an \emph{orientation}
  for $H$ is a choice of any order on the set $\partial H$.
\end{defn}

Note that $H$ as above is non-orientable iff for some $h\in H$ the
induced homeomorphism of $\partial K$ transposes the points of the
doubleton $\partial H$. Assuming now that $H$ is a maximal 2-ended
subgroup of a hyperbolic group $K$, if $H$ is orientable then any
conjugate of $H$ in $K$ is also orientable, and a similar property
holds for non-orientability. Moreover, if $H$ is orientable, any fixed
orientation for $H$ induces orientations for all conjugates
$gHg^{-1}$.  In particular, an orientation for $H$ induces also orders
on the boundary sets $\partial(gHg^{-1})$ (which are doubletons).
(Here we use the fact that the stabilizer of the doubleton
$\partial H\subset\partial K$ coincides with $H$.)
As a matter of facts, $H$ is non-orientable iff for some $h\in H$ the left
multiplication by $h$ reverses the ends of $H$, so that non-orientability
(and orientability) is in fact an intrinsic property of $H$. However, for the purposes of this paper, the corresponding earlier metioned ``extrinsic''
manifestation of this property is more important.

Coming back to the space with peripherals associated to an abstract factor
$(K,\{ H_i \})$, we say that an $H_i$-peripheral is \emph{orientable} if the corresponding
subgroup $H_i$ is orientable; otherwise an $H_i$-peripheral is \emph{non-orientable}.
An \emph{orientation} for an orientable peripheral is a choice of an order,
and obviously it can be compatible or incompatible with a fixed orientation
of the corresponding 2-ended subgroup $H_i$.

\begin{defn}[decoration of an abstract factor]
  \defnlabel{F.3}
\emph{Orientation data} for an abstract factor  $(K,\{ H_i \}_{i\in I})$
is information about orientability and non-orientability of the subgroups $H_i$
(and thus of all subgroups from their conjugacy classes) and, for each orientable
subgroup $H_i$, a choice of an orientation.
Such orientation data induces a \emph{decoration of the associated space with peripherals},
which consists of information about the types ($H_i$-types) of the peripherals,
and about their orientability, and if
a peripheral is orientable, this decoration indicates also 
one of the orientations of
this peripheral, namely this one which is 
compatible with the fixed orientation of the corresponding
2-ended subgroup.
\end{defn}

We make a record of the following obvious observations,
which follow from the well known properties of the 
action
of a hyperbolic group on its boundary.

\begin{lem}
  \lemlabel{F.4}
Let  $(K,\{ H_i \}_{i\in I})$ be an abstract factor.
\begin{enumerate}
\item[(1)]
Each homeomorphism of the action of $K$ on $\partial K$ preserves any
decoration of the associated space with peripherals. 
More precisely, this means that any such homeomorphism preserves the types of the
peripherals, it maps non-orientable
peripherals to non-orientable ones, and it maps orientable peripherals to
orientable ones, and in the latter case, it preserves the orientations indicated by
the decoration.
\item[(2)] For any given type $H_i$, $K$ acts transitively on the peripherals of this type.
\item[(3)] 
If a peripheral $P$
in the associated space with peripherals is non-orientable, then there is a homeomorphism
of this space with peripherals which preserves types $H_i$ of all peripherals,
preserves any decoration, and maps $P$ to itself via the transposition.
\end{enumerate}
\end{lem}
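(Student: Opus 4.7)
The plan is to deduce all three assertions from two structural facts: (a) every $k \in K$ acts as a self-homeomorphism of $\partial K$; (b) the $H_i$-peripherals are parametrized by the coset space $K/H_i$ via the map $gH_i \mapsto \partial(gH_ig^{-1})$, and under this identification the $K$-action on the set of $H_i$-peripherals is the left multiplication action on $K/H_i$. Observation (b) is well-defined because the stabilizer in $K$ of the doubleton $\partial(gH_ig^{-1})$ is 2-ended and contains $gH_ig^{-1}$, hence equals $gH_ig^{-1}$ by maximality.

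Part (2) is then immediate: given two $H_i$-peripherals $\partial(gH_ig^{-1})$ and $\partial(g'H_ig'^{-1})$, the element $g'g^{-1}$ carries the first to the second.

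For part (1), I would check three things for an arbitrary $k \in K$. Type preservation follows from the identity $k \cdot \partial(gH_ig^{-1}) = \partial((kg)H_i(kg)^{-1})$. Preservation of orientability is immediate from the conjugacy-invariance already observed in the text right after \Defnref{F.2}: if some $h \in H_i$ transposes $\partial H_i$, then $khk^{-1} \in kH_ik^{-1}$ transposes $k \cdot \partial H_i$, and vice versa. The delicate point is preservation of the chosen orientation. The decoration fixes an orientation on each orientable $H_i$ and transports it to every conjugate $gH_ig^{-1}$ via the action of $g$. One must check that this transported orientation depends only on the coset $gH_i$, not on the representative: two representatives differ by an element of $H_i$, which, by orientability of $H_i$, fixes $\partial H_i$ pointwise. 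Once this well-definedness is set up, $K$-equivariance of the orientation assignment is tautological and the action of $k$ automatically preserves it.

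For part (3), using (2) together with the $K$-invariance of the decoration already established in (1), I may assume $P = \partial H_i$. Non-orientability of $H_i$ provides an element $h \in H_i$ acting on $\partial K$ by a homeomorphism that transposes the two points of $P$. Since $h \in H_i$, it stabilizes $P$ setwise, so it maps $P$ to itself via the transposition; by part (1) applied to $k = h$, this homeomorphism preserves the full decoration on all other peripherals. The only step requiring genuine care is the coset-invariance of induced orientations in part (1); all other verifications are direct unpacking of the definitions and standard facts about maximal 2-ended subgroups of hyperbolic groups.
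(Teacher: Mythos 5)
Your proof is correct, and it is exactly the intended argument: the paper states \Lemref{F.4} without proof as ``obvious observations,'' and your write-up simply unpacks the facts already recorded in \Ssecref{F} (equivariance $k\cdot\partial(gH_ig^{-1})=\partial((kg)H_i(kg)^{-1})$, the stabilizer of $\partial H_i$ equalling $H_i$ by maximality, and conjugation-invariance of orientability), including the one genuinely delicate point, namely that the transported orientation depends only on the coset $gH_i$ because elements of an orientable $H_i$ fix $\partial H_i$ pointwise. No gaps; nothing to add.
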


\subsection{Flexible factors}
\sseclabel{X}

\begin{defn}[abstract flexible factor]
  \defnlabel{X.1}
An abstract factor $(K,\{ H_i \}_{i\in I})$ is called an
\emph{abstract flexible factor} if the
associated space with peripherals is homeomorphic (as a space with peripherals) to the
punctured circle $(S^1)^\circ$.
\end{defn}

\begin{ex}
  \exlabel{X.1.1}
Let $G(v)$ be a flexible factor of the Bowditch JSJ-splitting 
of a 1-ended 
hyperbolic group $G$.
Then $(G(v),{\mathcal H}_v)$, i.e., the abstract factor associated to $G(v)$
as in \Exref{F.1.1}, 
is an abstract flexible factor, in the sense of \Defnref{X.1}. For justification, see Section~5
in \cite{Bowditch:cut_points:1998}, where flexible factors are called \emph{maximal hanging Fuchsians (MHF)},
or the paragraph after Definition~5.9 in \cite{Kim_Walsh:planar_boundaries:2022}.

Similarly, let $G(v')$ be a flexible factor of the reduced Bowditch splitting of a 1-ended
hyperbolic group $G$. In view of \Factref{3.3}, the abstract factor $(G(v'),{\mathcal H}_{v'})$
associated to $G(v')$ is also a flexible abstract factor.
\end{ex}




\begin{rmk}
  \rmklabel{X.1.2}
\begin{enumerate}
\item[(1)]Since, as a topological space, $(S^1)^\circ$ is a Cantor set, it follows that the underlying
group $K$ of an abstract flexible factor is a virtually free group.
\item[(2)] Recall that the natural embedding of the punctured circle $(S^1)^\circ$ 
in the circle $S^1$ is uniquely described, up to a homeomorphism of the ambient $S^1$,
by the following condition: for any peripheral doubleton $P$ of $(S^1)^\circ$
there is a segment $I_P$ contained in $S^1$ whose set of endpoints coincides with $P$
and whose interior is disjoint with $(S^1)^\circ$. We will call this embedding
\emph{the natural extension of $(S^1)^\circ$ to a circle}. Each segment $I_P$
as above will be called \emph{the segment induced by $P$}. Note that any homeomorphism
$h$ of $(S^1)^\circ$ preserving the family of peripherals can be extended
in an essentially unique way to a homeomorphism $\bar h:S^1\to S^1$ of the
natural extension.
Thus,
it makes sense to speak of \emph{orientation-preserving} and \emph{orientation-reversing}
homeomorphisms $h$, by referring to the corresponding property of the induced $\bar h$.
\end{enumerate}
\end{rmk}

\Rmkref{X.1.2}(2) leads to the following concept.

\begin{defn}[orientability of an abstract flexible factor]
  \defnlabel{X.2}
An abstract flexible factor $(K,\{ H_i \}_{i\in I})$ is \emph{orientable}
if for each $g\in K$ the induced homeomorphism of $\partial K=(S^1)^\circ$
is orientation-preserving (in the sense explained in \Rmkref{X.1.2}(2)). An \emph{orientation}
of such a factor is a choice of orientation for the corresponding extended circle $S^1$.
\end{defn}

Note that an abstract flexible factor $(K,\{ H_i \}_{i\in I})$ is \emph{non-orientable}
if for some $g\in K$  the induced homeomorphism of $\partial K=(S^1)^\circ$
is orientation-reversing. The next lemma relates orientability properties of an abstract
flexible factor $(K,\{ H_i \}_{i\in I})$ with orientability properties of its peripheral 
subgroups $H_i$. We omit the straightforward proof.

\begin{lem}
  \lemlabel{X.3}
Let ${\mathcal F}=(K,\{ H_i \}_{i\in I})$ be an abstract flexible factor.
\begin{enumerate}
\item[(1)] If some peripheral subgroup $H_i$ is non-orientable (in the sense of
\Defnref{F.2}) then $\mathcal F$ is also non-orientable.
\item[(2)] If $\mathcal F$ is orientable then all of its peripheral subgroups are also orientable.
\end{enumerate}
\end{lem}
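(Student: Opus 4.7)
The plan is to prove (1) directly and to observe that (2) is just its contrapositive (since ``orientable'' and ``non-orientable'' are exhaustive and mutually exclusive for both flexible factors and their peripheral subgroups). The idea is to interpret the two notions of orientability in a common framework: the extended homeomorphism $\bar h \colon S^1 \to S^1$ provided by \Rmkref{X.1.2}(2), and then use the elementary topological fact that a self-homeomorphism of a closed arc is orientation-preserving iff it fixes the two endpoints (rather than swapping them).

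First I would fix $H_i$ and let $h \in H_i$ be arbitrary. The element $h$ normalizes $H_i$ (trivially, by $hH_ih^{-1} = H_i$), hence the induced homeomorphism of $\partial K \cong (S^1)^\circ$ preserves the peripheral doubleton $\partial H_i$ as a set. Passing to the extension $\bar h \colon S^1 \to S^1$ from \Rmkref{X.1.2}(2), the segment $I_{\partial H_i} \subset S^1$ induced by $\partial H_i$ is therefore mapped onto itself by $\bar h$ (it is the unique segment of $S^1$ with endpoints $\partial H_i$ whose interior is disjoint from $(S^1)^\circ$, and $\bar h$ preserves all these characterizing properties).

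Next I would use the key dichotomy: on $I_{\partial H_i}$, the map $\bar h$ is a self-homeo\-morphism of a closed arc, so it either fixes both endpoints (in which case it is order-preserving on that arc, hence $\bar h$ is orientation-preserving on $S^1$) or swaps them (in which case it is order-reversing on that arc, hence $\bar h$ is orientation-reversing on $S^1$). Thus $\bar h$ is orientation-preserving precisely when $h$ fixes $\partial H_i$ pointwise. Applying this to every $h \in H_i$ yields the equivalence: $H_i$ is orientable (in the sense of \Defnref{F.2}) if and only if the extension $\bar h$ is orientation-preserving for every $h \in H_i$.

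To conclude (1): if $H_i$ is non-orientable, there exists $h \in H_i$ with $\bar h$ orientation-reversing on $S^1$, and since $h \in K$ this already witnesses non-orientability of $\mathcal{F}$ by \Defnref{X.2}. Assertion (2) then follows as the contrapositive. There is no real obstacle here; the only point worth care is verifying that the canonical extension $\bar h$ sends $I_{\partial H_i}$ to itself, which is immediate from the uniqueness clause in the construction of the natural extension of $(S^1)^\circ$ to a circle.
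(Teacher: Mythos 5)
Your argument is correct; the paper omits the proof of \Lemref{X.3} as ``straightforward'', and your route---extending each induced boundary homeomorphism to $\bar h\colon S^1\to S^1$ via \Rmkref{X.1.2}(2), noting $\bar h(I_{\partial H_i})=I_{\partial H_i}$ by the uniqueness of the induced segment, and using that a circle homeomorphism carrying this arc onto itself is orientation-preserving exactly when it fixes the two endpoints rather than swapping them---is precisely the natural argument the authors evidently intend, with (2) as the contrapositive of (1). The only step you use tacitly is that the homeomorphism of $\partial K$ induced by $h\in H_i\subset K$ preserves the whole family of peripherals (so that the extension $\bar h$ of \Rmkref{X.1.2}(2) exists at all); this is exactly \Lemref{F.4}(1), and a citation there closes that small gap.
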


The reader should keep in mind that, given an abstract flexible factor $\mathcal F$,  
orientability of all its peripheral subgroups does not
necessarily lead to orientability of the factor. For example, if $K$ is the fundamental
group of a compact connected hyperbolic non-orientable surface with non-empty boundary, and if we take as its peripherals  the subgroups corresponding to boundary components,
then the obtained flexible factor is non-orientable, though all its peripheral subgroups
are cyclic, and hence orientable.

\begin{defn}[compatibility of orientations of a flexible factor and of
  its peripheral subgroup]
\defnlabel{X.4}
Given an orientable abstract flexible factor ${\mathcal F}=(K,\{ H_i \}_{i\in I})$, with a fixed orientation,
and an orientable peripheral subgroup $H\in \{ H_i \}_{i\in I}$, also with a fixed
orientation (i.e. an order of the doubleton $P=\partial H$), we say that these orientations are \emph{compatible} if the orientation induced from $P$ on the induced segment
$I_P$ in the extended circle $S^1$ coincides with the orientation
of this segment induced from
the orientation of the whole $S^1$ which represents the fixed orientation of $\mathcal F$.
\end{defn}

We make a record of the two observations below, which follow easily from
the fact that each orbit of a hyperbolic group in the action on its Gromov boundary
is dense.

\begin{fact}
  \factlabel{X.5}
Let ${\mathcal F}=(K,\{ H_i \}_{i\in I})$ be an abstract flexible factor.

\begin{enumerate}
\item[(1)] Suppose that $\mathcal F$ is orientable, with one of the orientations fixed.
If we also fix one of the orientations of a peripheral subgroup $H_i$,
then either the induced orientations of all $H_i$-peripherals are compatible
with the orientation of $\mathcal F$, or they are all incompatible.
In any case, the union of all $H_i$-peripherals is dense in $\partial K=(S^1)^\circ$.

\item[(2)] Suppose that $\mathcal F$ is non-orientable. Consider the extended circle $S^1$
for $\partial K=(S^1)^\circ$, and fix one of its orientations. 
If a subgroup $H_i$ is orientable, and if one of its orientations is fixed,
consider the induced orientations for all conjugates $gH_ig^{-1}$,
and the corresponding orientations of all $H_i$-peripherals.
Then the family of those $H_i$-peripherals whose just described orientations
 are compatible
(respectively, incompatible) with the above fixed orientation of $S^1$ is non-empty,
and the union of this family is dense in $\partial K=(S^1)^\circ$.
\end{enumerate}
\end{fact}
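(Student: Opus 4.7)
The plan is to package both parts of the statement through a single orientation homomorphism. Define
$\mathrm{or}\colon K \to \{\pm 1\}$ by sending $g$ to $+1$ if $g$ acts on the extended circle $S^1$ (as in \Rmkref{X.1.2}(2)) as an orientation-preserving homeomorphism, and to $-1$ otherwise. This is a well-defined homomorphism because the orientation sign multiplies under composition, and its kernel $K^+$ has index $1$ in the orientable case \pitmref{1} and index $2$ in the non-orientable case \pitmref{2}.

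The key technical step I would carry out next is to verify that the pushforward orientation $\omega_g := g_{\ast}\omega_0$ on the doubleton $\partial(gH_ig^{-1})$ depends only on the coset $gH_i$; this uses orientability of $H_i$ in the sense of \Defnref{F.2}, since every $h \in H_i$ fixes $\partial H_i$ pointwise and hence satisfies $h_{\ast}\omega_0 = \omega_0$. From the behaviour of $\mathrm{or}(g)$ on the extension to $S^1$, the compatibility of $\omega_g$ with $\Omega$ equals the compatibility of $\omega_0$ with $\Omega$ when $g \in K^+$ and is flipped when $g \notin K^+$. Coset-invariance of this rule forces $H_i \subseteq K^+$, so the partition of $K/H_i$ into ``compatible'' and ``incompatible'' cosets is precisely the pullback of the partition of $K$ into cosets of $K^+$.

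With this setup, part \pitmref{1} is immediate: when $\mathcal F$ is orientable we have $K^+ = K$, so every coset yields the same compatibility verdict, and the two choices of $\omega_0$ recover the two cases in the statement. For part \pitmref{2}, both cosets of $K^+$ in $K$ are non-empty, hence so are the compatible and incompatible families of $H_i$-peripherals.

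Finally, for density, the union of all $H_i$-peripherals (in case \pitmref{1}) is the $K$-orbit of the finite set $\partial H_i$, and the union of the compatible (respectively incompatible) $H_i$-peripherals in case \pitmref{2} is a $K^+$-orbit (respectively a translate of one) of $\partial H_i$. Since $K^+$ has finite index in the non-elementary hyperbolic group $K$, it is itself non-elementary hyperbolic with the same Gromov boundary $\partial K$, and therefore its action on $\partial K$ is minimal; hence any $K^+$-orbit of a point of $\partial K$, and so any $K^+$-translate of $\partial H_i$, is dense. I expect the only genuinely delicate point to be the coset-well-definedness argument that yields $H_i \subseteq K^+$; the rest is a formal unpacking and an appeal to minimality of a finite-index subgroup's action on the boundary.
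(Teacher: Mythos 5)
Your argument is correct and follows essentially the same route the paper intends: the paper records \Factref{X.5} without a detailed proof, noting only that it ``follows easily from the fact that each orbit of a hyperbolic group in the action on its Gromov boundary is dense,'' and your density step (minimality of the boundary action of $K$, and of the finite-index subgroup $K^{+}$, which is again non-elementary hyperbolic with the same boundary) is precisely that ingredient. The remaining content of your write-up --- the orientation homomorphism $\mathrm{or}\colon K\to\{\pm1\}$, the coset-well-definedness of the induced orientations $g_{\ast}\omega_0$ (using that elements of an orientable $H_i$ fix $\partial H_i$ pointwise, whence $H_i\subseteq K^{+}$), and the resulting identification of the compatible/incompatible families with the two $K^{+}$-cosets --- is exactly the bookkeeping the paper leaves implicit, carried out correctly.
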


\subsection{Abstract virtually free factors and their Whitehead
  graphs}
\sseclabel{Y}

In this subsection we first recall the concept of a Whitehead graph,
as applied to abstract factors that are free.
We then extend all of our discussion 
to abstract virtually free factors. The main result in this subsection, \Corref{Y.6},
identifies the space with peripherals associated to an abstract virtually free factor
with the core of the associated V-tree of Whitehead graphs. This result will play
a crucial role in our proof of the implication \pitmref{g_rigid_cluster_factors_vf}$\Rightarrow$\pitmref{bd_g_reg_twocon_tog} of \Mainthmref{1}, 
given in \Secref{P}, and it will be applied
to virtually free rigid cluster factors of the reduced Bowditch JSJ splitting of a group $G$.

\begin{defn}
  \defnlabel{Y.1} An \emph{abstract free factor} is an abstract factor
  $(F,\{ H_i \})$ in which $F$ is a finitely generated non-abelian
  free group.
\end{defn}

\begin{rmk}
  Note that, since $F$ is torsion free, each of the subgroups $H_i$ is
  a maximal infinite cyclic subgroup of $F$.
\end{rmk}

To each abstract free factor $(F,{\mathcal H})$, equipped with a basis $B$ of free
generators for $F$, there is associated some finite graph, called 
the \emph{Whitehead graph} of the factor, see Section~2E in \cite{Cashen:line_patterns:2016}. 
We now recall the description of this graph.

\begin{defn}[Whitehead graph of an abstract free factor]
  \defnlabel{Y.2} Given an abstract free factor  ${\mathcal Q}=(F,\{ H_i \})_{1\le i\le k}$, 
let  $B=(x_1,\dots,x_m)$ be some 
base set of free generators
for $F$. For each $1\le i\le k$  fix a generator $h_i$ of the cyclic subgroup $H_i$, 
and denote by $w_i$ the unique cyclically
reduced word over $B\cup B^{-1}$ in the conjugacy class of $h_i$. 
(Here we view $w_i$ as a cyclic word, with letters occuring in cyclic order
rather than in linear one.)
The \emph{Whitehead graph}
for $\mathcal Q$ with respect to $B$ is the graph $W=W_{\mathcal Q}$
with the vertex set 
$V_{W_{\mathcal Q}}=\{ x_1,\dots,x_m,x_1^{-1},\dots,x_m^{-1} \}$
and with one edge connecting distinct vertices $x,y\in V_{W_{\mathcal Q}}$ for every occurence
of the subword $x^{-1}y$ in the cyclic words $w_1,\dots,w_k$.
(We use the convention that if $w_i=x$ has length 1 then there is precisely one occurence of $xx$ in $w_i$, and no occurence of any other word of length 2; 
moreover, if $w_i=uz$ has length 2, then $w_i$ has two subwords of length 2,
namely $uz$ and $zu$.)
\end{defn}

\begin{rmk}
  \rmklabel{Y.2.1}

  \begin{enumerate}
\item[(1)]
Observe that, since the subgroups $H_i<F$ are maximal cyclic, their generators are not
the powers in $F$, and hence the cyclic words $w_i$ are not periodic. For this reason,
any two occurences of the same subword in any fixed $w_i$ are essentially distinct.
This dispels some potential doubts concerning the above definition.

\item[(2)] Note that the Whitehead graph $W_{\mathcal Q}$ is obviously
  a graph with no loop edges, but it may contain multiple
  edges. $W_{\mathcal Q}$ depends not only on the factor $\mathcal Q$,
  but also on the choice of a basis $B$ in the free group $F$.  The
  choices for which the number of edges in corresponding Whitehead
  graphs is minimal lead to so called \emph{minimal Whitehead graphs}
  for the factor $\mathcal Q$.  As we will show later (see
  \Propref{M.2}), in the situations considered in this paper the
  minimal Whitehead graphs will always be 2-connected (i.e. finite,
  connected, nonempty, not equal to a single vertex and
  cutpoint-free), so in particular they will be essential
  (i.e. nonempty and with no isolated vertices).  Since all our
  further observations apply to any choice of a minimal Whitehead
  graph for a factor $\mathcal Q$, we will not bother about this
  choice, and we will speak of \emph{the} Whitehead graph associated
  to $\mathcal Q$, meaning any of the minimal Whitehead graphs as
  above.
\end{enumerate}

\end{rmk}

Any Whitehead graph $W_{\mathcal Q}$ which is essential
 is canonically equipped with
some $V$-involution ${\mathbf a}_{\mathcal Q}$, so that it yields an associated
connecting $V$-system ${\mathcal V}_{\mathcal Q}=(W_{\mathcal Q},{\mathbf a}_{\mathcal Q})$. 
This $V$-involution consists of
the involution $a_{\mathcal Q}:V_{W_{\mathcal Q}}\to V_{W_{\mathcal Q}}$ given by
$a_{\mathcal Q}(x)=x^{-1}$ for each $x\in V_{W_{\mathcal Q}}$
(where we use the convention that $(x_i^{-1})^{-1}=x_i$), and of the maps 
$\alpha_x:\hbox{Lk}(x)\to\hbox{Lk}(x^{-1})$ described as follows. 
For any $y\in V_{W_{\mathcal Q}}$ and any occurence of $y$ in words $w_1,\dots,w_k$
consider the subwords of the form $xy$ and $yz$, one for each of the two forms, 
which extend this occurence of $y$, and denote by $\sigma,\tau$ the corresponding occurences
of these subwords, respectively.
Denote by $e_{\sigma}$ and $e_{\tau}$ the edges of $W$ corresponding to the
above occurences of $xy$ and $yz$. Denote also by
$[e_{\sigma}]_y$ and $[e_{\tau}]_{y^{-1}}$ the points in the links 
$\hbox{Lk}(y)$ and $\hbox{Lk}(y^{-1})$ induced in those links by
the edges $e_{\sigma}$ and $e_{\tau}$, respectively.
Finally, put $\alpha_y([e_{\sigma}]_y)=[e_{\tau}]_{y^{-1}}$ 
and $\alpha_{y^{-1}}([e_{\tau}]_{y^{-1}})=[e_{\sigma}]_y$;
note that
the collection of all assignments as above fully describes all the maps
$\alpha_y:y\in V_{W_{\mathcal Q}}$, and that the conditions required in \Defnref{V.1}
are satisfied.
We skip straightforward details of the justification.

The next observation follows directly from \Lemref{V.10a}, in view of the fact
that the involution 
$a_{\mathcal Q}$, as given above, is fixed point free.

\begin{lem}
  \lemlabel{Y.2a} For any essential Whitehead graph $W_{\mathcal Q}$
  (associated to an abstract free factor $\mathcal Q$), each line in
  the corresponding V-tree system $\Theta[{\mathcal V}_{\mathcal Q}]$
  is orientable.
\end{lem}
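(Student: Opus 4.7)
My plan is to apply Lemma V.10a directly, invoking its final assertion which states that if the involution $a \colon V_\Gamma \to V_\Gamma$ is fixed point free, then all lines in the V-tree system $\Theta[{\mathcal V}]$ are orientable.

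First I would recall the definition of the V-involution $\mathbf{a}_{\mathcal Q}$ associated to $W_{\mathcal Q}$, given right before the statement. The vertex involution is defined by $a_{\mathcal Q}(x) = x^{-1}$ for each $x \in V_{W_{\mathcal Q}} = \{x_1,\dots,x_m,x_1^{-1},\dots,x_m^{-1}\}$, with the convention that $(x_i^{-1})^{-1} = x_i$.

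Next I would observe that this involution has no fixed points. Indeed, if $a_{\mathcal Q}(x) = x$ for some $x \in V_{W_{\mathcal Q}}$, then $x = x^{-1}$ as elements of the free group $F$, which forces $x^2 = 1$. But $F$ is a torsion-free group (being free and non-abelian), so this is impossible. Hence $a_{\mathcal Q}$ acts without fixed points on $V_{W_{\mathcal Q}}$.

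Finally, since $a_{\mathcal Q}$ is fixed point free, the last sentence of \Lemref{V.10a} immediately yields that every line in $\Theta[{\mathcal V}_{\mathcal Q}]$ is orientable, which completes the proof. There is no real obstacle here; the statement is essentially a direct consequence of the torsion-freeness of $F$ combined with the previously established characterization of orientability in \Lemref{V.10a}.
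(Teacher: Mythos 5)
Your proof is correct and follows essentially the same route as the paper, which also deduces the lemma directly from \Lemref{V.10a} using the fact that the involution $a_{\mathcal Q}(x)=x^{-1}$ on $V_{W_{\mathcal Q}}=\{x_1,\dots,x_m,x_1^{-1},\dots,x_m^{-1}\}$ has no fixed points. The torsion-freeness observation is exactly the reason $x\neq x^{-1}$ for every vertex, so nothing is missing.
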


We now show that, for any abstract free factor ${\mathcal Q}=(F, \{ H_i \}_{i\in I})$,
the core $C({\mathcal V}_{\mathcal Q})$ of the $V$-tree ${\mathcal X}_V({\mathcal V}_{\mathcal Q})$
naturally carries an action of the group $F$.
To do this, we will first identify naturally the underlying tree $T$ of the $V$-tree system
$\Theta[{\mathcal V}_{\mathcal Q}]$ with the first barycentric subdivision of the
Cayley graph ${\rm Cay}(F,B)$ of the group $F$ with respect to its
basis $B=\{ x_1,\dots,x_m \}$.

Recall that the tree $T$ is bipartite, with all white vertices of
degree 2.  Thus $T$ is the first barycentric subdivision of a certain
tree, which we denote $T_*$. The vertices of $T_*$ are naturally
identified with the black vertices of $T$. Our goal is to identify
$T_*$ with ${\rm Cay}(F,B)$.  To do this, we associate to each edge
$e$ of $T_*$ some orientation and some label from the generating set
$B$.  Let $t,s$ be the endpoints of $e$, and view them as black
vertices of $T$. Let $u$ be the barycenter of $e$, which we view as an
appropriate white vertex of $T$ (the unique white vertex of $T$
adjacent to both $t$ and $s$).  The constituent spaces at $t$ and $s$
of the tree system $\Theta[{\mathcal V}_{\mathcal Q}]$, denoted
$\Gamma_t^\circ$ and $\Gamma_s^\circ$, are the copies of the punctured
Whitehead graph $W_{\mathcal Q}$. The images in $\Gamma_t^\circ$ and
in $\Gamma_s^\circ$ of the peripheral space $\Sigma_u$ are some
$V$-peripherals related to vertices of $\Gamma_t$ and $\Gamma_s$, and
we denote these vertices by $z$ and $y$, respectively. Identifying
both $z$ and $y$ as vertices of the Whitehead graph $W_{\mathcal Q}$,
we have $z,y\in B\cup B^{-1}$, and by the form of the associated
$V$-involution $a_{\mathcal Q}$, we have $y=z^{-1}$. Without loss of
generality, assume that $z\in B$.  Orient then the edge $e$
consistently with the order $(z,y)$, and label it with $z$.  It is
then easy to observe that for any vertex $v$ of $T_*$ we have exactly
$m$ oriented edges issuing from $v$, labelled bijectively by the
elements from $B$, and exactly $m$ oriented edges incoming to $v$,
also labelled bijectively with the elements of $B$. Thus the tree
$T_*$ is locally isomorphic to the Cayley graph ${\rm Cay}(F,B)$,
which is also a tree.  To get an explicit identification of $T_*$ with
${\rm Cay}(F,B)$, fix a vertex of $T_*$ and identify it with the
vertex of ${\rm Cay}(F,B)$ corresponding to the unit of $F$. This
identification obviously extends to a unique label preserving
isomorphism of oriented graphs, and we take this isomorphism as the
desired identification.

Given the identification of $T_*$ with ${\rm Cay}(F,B)$ as above, the
natural action of $F$ on ${\rm Cay}(F,B)$ induces an action of $F$ on
$T_*$. This clearly yields also an action of $F$, by automorphisms, on
$T$.  Since the core $C({\mathcal V}_{\mathcal Q})$ coincides with the
boundary $\partial T$, we equip it with the induced action of $F$ on
$\partial T$.  Observe that, being orientation and label preserving,
the above described action of $F$ on $T$ preserves also the lines of
the $V$-tree system $\Theta[{\mathcal V}_{\mathcal Q}]$, as explained
in some detail in the proof of \Lemref{Y.3}.  As a consequence, the
induced action of $F$ on the core
${\mathcal C}({\mathcal V}_{\mathcal Q})=\partial T$ preserves the
peripherals in this core.

Note that, since $F$ is a free group, its Gromov boundary $\partial F$ is the Cantor set.
Thus, the space with peripherals associated to an abstract free factor is a Cantor set
equipped with some family of doubleton subsets.
The next lemma exhibits an important relationship between this space with peripherals
and the $V$-tree of Whitehead graphs induced by the corresponding factor.

\begin{lem}
  \lemlabel{Y.3} Let ${\mathcal Q}$ be an abstract free factor  equipped with an essential
Whitehead graph $W_{\mathcal Q}$, and let 
${\mathcal V}_{\mathcal Q}=(W_{\mathcal Q},{\mathbf a}_{\mathcal Q})$ be the connecting
$V$-system associated to it (as described above).
Then the space with peripherals associated
to $\mathcal Q$ is homeomorphic (as a space with peripherals) to the core
of the $V$-tree of graphs ${\mathcal X}_V({\mathcal V}_{\mathcal Q})$,
through an $F$-equivariant homeomorphism (i.e. a homeomorphism which
respects the $F$-actions on the two spaces).
\end{lem}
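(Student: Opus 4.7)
The plan is first to exhibit a canonical $F$-equivariant homeomorphism between $\partial F$ and the core $C(\mathcal{V}_\mathcal{Q}) = \partial T$, and then to show that it sends peripheral subspaces on one side to peripheral subspaces on the other. For the first part, I would use the identification of $T_*$ (the tree obtained from the underlying tree $T$ of $\Theta[\mathcal{V}_\mathcal{Q}]$ by viewing black vertices as vertices and contracting each white vertex edge-pair to a single edge) with the Cayley graph $\Cay(F, B)$ described in the paragraphs preceding the statement. Since $T$ is the first barycentric subdivision of $T_*$, we have canonical homeomorphisms $\partial T \cong \partial T_* \cong \partial \Cay(F,B) \cong \partial F$. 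The $F$-action on $T$ was constructed to be label-preserving and to correspond to the left $F$-action on $\Cay(F,B)$, so the induced homeomorphism $\Phi \colon \partial F \to C(\mathcal{V}_\mathcal{Q})$ is $F$-equivariant by construction.

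The substance of the lemma then lies in showing that $\Phi$ matches the two families of peripherals: $\{ \partial(g H_i g^{-1}) : i\in I, g\in F/H_i \}$ on the $\partial F$ side and $\Omega^c_{\mathcal{V}_\mathcal{Q}} = \{\partial \gamma_L : L \in \mathcal{L}(\mathcal{V}_\mathcal{Q})\}$ on the core side. The key observation is that lines in $\Theta[\mathcal{V}_\mathcal{Q}]$ correspond bijectively to axes of conjugates of the generators $h_i$. Indeed, unwinding the definition of $W_\mathcal{Q}$ together with the associated $V$-involution $\mathbf{a}_\mathcal{Q}$, each line $L = (e_n^\circ)_{n \in \mathbb{Z}}$ is forced to be a bi-infinite sequence of consecutive appearances of 2-letter subwords of some cyclic word $w_i^{\pm 1}$: if $e_{n-1}$ comes from the subword occurrence $\sigma = xy$ in $w_i$, then the shared white-vertex endpoint determines, by the definition of the link map $\alpha_y$, that $e_n$ comes from the subword occurrence $\tau = yz$ extending $\sigma$ on the right. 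Hence $L$ is periodic with period $|w_i|$, and reading the edge labels along the geodesic $\gamma_L$ in $T_* = \Cay(F,B)$ yields a bi-infinite periodic word whose period is a cyclic conjugate of $w_i^{\pm 1}$. Therefore $\gamma_L$ is the translation axis of some conjugate $gh_ig^{-1}$, so $\partial \gamma_L = \partial(g H_i g^{-1})$ under $\Phi$.

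The converse inclusion is symmetric: given a conjugate $gh_ig^{-1}$, its translation axis $A \subset \Cay(F,B)$ is a bi-infinite geodesic whose edge labels spell out (periodically) a cyclic rotation of $w_i^{\pm 1}$. Lifting $A$ to the geodesic $\gamma$ in $T$ and reading off, through the identification of constituent spaces with the punctured Whitehead graph $W_\mathcal{Q}^\circ$, the sequence of edges traversed in each copy, produces a bi-infinite sequence $(e_n^\circ)$ in which consecutive edges are $\Theta$-adjacent exactly because of the rule described above. This sequence is a line $L$ with $\gamma_L = \gamma$, so $\partial \gamma_L = \partial(g H_i g^{-1})$. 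Combining the two directions gives the required bijection of peripheral families, and the continuity and $F$-equivariance of $\Phi$ ensure that this bijection is realized by a homeomorphism of spaces with peripherals.

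The main obstacle, and the step requiring the most bookkeeping, is the identification of lines with axes of conjugates of the $h_i$. One must carefully track how the combinatorics of 2-letter subwords encoded in $W_\mathcal{Q}$ interact with the involution $a(x)=x^{-1}$ and the link maps $\alpha_y$, in particular verifying that reading labels along $\gamma_L$ in the direction of $L$ reproduces precisely the cyclic word $w_i$ (as opposed to a reversed or shifted variant that would fail to lie in the correct conjugacy class), and checking that lines arising from distinct conjugates are genuinely distinct as lines. Once this correspondence is in place, the topological and equivariance claims reduce to assembling the already established identification $\partial T \cong \partial F$ with the two descriptions of peripherals.
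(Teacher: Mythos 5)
Your proposal is correct and follows essentially the same route as the paper: both use the boundary homeomorphism coming from the identification of $T$ with the barycentric subdivision of $\Cay(F,B)$, note its $F$-equivariance, and match peripherals by identifying the geodesic $\gamma_L$ of each line with the translation axis of a conjugate $gw_ig^{-1}$, whose endpoints coincide with $\partial(gH_ig^{-1})$. The only difference is that you spell out the combinatorial periodicity argument for the line--axis correspondence, which the paper compresses into a single observation.
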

\begin{proof}
Recall that the space with peripherals associated to ${\mathcal Q}=(F,\{ H_i \}_{i\in I})$ is the pair
\[
(\partial F, \{ \partial(gH_ig^{-1}) : i\in I, g\in F/H_i \}),
\]
and that the peripherals of the core $C({\mathcal V}_{\mathcal Q})$ have the form $\partial\gamma_L$,
where $L$ runs through the set of all lines of the system $\Theta[{\mathcal V}_{\mathcal Q}]$. 
As a required homeomorphism between the core $C({\mathcal V}_{\mathcal Q})=\partial T$
and the associated space with peripherals $\partial F=\partial [{\rm Cay}(F,B)]$
take the homeomorphism induced on the boundaries by the indentification 
of $T$ with the first barycentric subdivision of ${\rm Cay}(F,B)$,
as described right above the statement of the lemma. 
Denote this homeomorphism by $h$.
It is obvious that $h$ is $F$-equivariant, by the definition of the action of $F$ on $\partial T$.
We need to show that $h$ preserves the families of the peripherals.
 
Let $L$ be a line of the $V$-tree system 
$\Theta[{\mathcal V}_{\mathcal Q}]$.
Observe that any two consecutive edges of $L$ correspond then to 
some consecutive occurences of subwords of form $xy$ and $yz$
in precisely one of the cyclically reduced words $w_i$. 
We will emphasize this fact by saying that the line $L$ \emph{corresponds} 
to this generator $w_i$  (of the corresponding peripheral subgroups $H_i$). 
Let $P=\partial\gamma_L$ 
be the corresponding peripheral
of the core $C({\mathcal V}_{\mathcal Q})$. Observe that the geodesic $\gamma_L$ coincides then,
via the identification of $T_*$ with ${\rm Cay}(F,B)$, with the axis of the automorphism of ${\rm Cay}(F,B)$
given by left multiplication through some conjugate $gw_ig^{-1}$. Since this axis stays at finite
Hausdorff distance in ${\rm Cay}(F,B)$ from the subgroup $gH_ig^{-1}<F$, it has the same
boundary at infinity as this subgroup, and thus we get 
\[h(\partial\gamma_L)=\partial(gH_ig^{-1}).
\]
So $h$ maps peripherals to peripherals.
A similar argument shows that $h^{-1}$ also maps peripherals to peripherals,
and thus $h$ is a homeomorphism of spaces with peripherals, which completes the proof.
\end{proof}

\begin{rmk}
  \rmklabel{Y.3.1}
  Observe that the choices of the generators $h_i$ in the subgroups $H_i<F$ determine (uniquely) the choices of the generators $gh_ig^{-1}$ in the conjugate
  subgroups $gH_ig^{-1}<F$, and hence also the
  orders
on all peripherals in the space of peripherals $\partial F$ associated to $\mathcal Q$.
On the other hand, each $w_i$ induces orientations on all edges of any line $L$
in $\Theta[{\mathcal V}_{\mathcal Q}]$  corresponding
to $w_i$. It follows that the choices of $w_i$ induce orientations 
on all geodesics $\gamma_L$,
and thus they induce orders on all peripherals in the core $C({\mathcal V}_{\mathcal Q})$.
It is not hard to see that the homeomorphism $h$ from the above proof of \Lemref{Y.3} preserves the above
mentioned orders of the peripherals.
\end{rmk}

We now pass to virtually free abstract factors.

\begin{defn}
  \defnlabel{Y.3b} An \emph{abstract virtually free factor} is an abstract factor 
$\hat{\mathcal Q}=(\hat F, \{ \hat H_i \})$
in which $\hat F$ is a finitely generated infinitely ended virtually free group.
\end{defn}

Consider the associated space with peripherals for $\hat{\mathcal Q}$
\[
(\partial\hat F, \{ \partial\hat H_i^g : 1\le i\le k, \,g\in\hat F/\hat H_i \})
\]
(here we again use the convention of conjugating through a coset $g\in\hat F/\hat H_i$, 
since the result of
the conjugation does not depend on a choice of a coset representative).
Our goal is to give an appropriate extension of the concept of Whitehead graph
associated to a factor, which will apply to virtually free factors as well.
To do this, we simply 
associate to $\hat{\mathcal Q}$ some appropriate free factor structure for a finite index free
subgroup of $\hat F$. The details are provided below, with culmination in \Corref{Y.6}.

Let $F$ be a free finite index subgroup of $\hat F$.
For each $i\in\{ 1,\dots,k \}$ and any $\hat g\in\hat F$ consider the subgroup
$\hat H_i^{\hat g}\cap F<  F$, which is torsion-free and of finite index in 
$\hat H_i^{\hat g}$, and which is hence infinite cyclic. Since we clearly have
$\hbox{Stab}_F\partial\hat H_i^{\hat g}=\hat H_i^{\hat g}\cap F$,
this subgroup is a maximal 2-ended subgroup of $F$. Note that $F$ acts
on the set $\{\hat H_i^{\hat g}\cap F:\hat g\in\hat F/\hat H_i\}$
of such subgroups by conjugations, as in
\[
f(\hat H_i^{\hat g}\cap F)f^{-1}=\hat H_i^{f\cdot\hat g}\cap F.
\]
Since $F$ has finite index in $\hat F$, the above action by conjugations 
has finitely many orbits. Denote by $g_{i,j}:1\le j\le k_i$ some elements
of $\hat F$ such that\
\[
\hat H_i^{g_{i,j}}\cap F:1\le j\le k_i
\]
is a family of representatives of all orbits of the above action of $F$ by conjugation.
Since the stabilizer of each such representative satisfies
\[
\hbox{Stab}_F(\hat H_i^{g_{i,j}}\cap F)=\hat H_i^{g_{i,j}}\cap F,
\]
its orbit can be described as the set
\[
\{\hat H_i^{f\cdot g_{i,j}}\cap F : f\in F/(\hat H_i^{g_{i,j}}\cap F)\}.
\]
Put
\[
{\mathcal P}:=\{ \hat H_i^{g_{i,j}}\cap F : 1\le i\le k, \,\, 1\le j\le k_i \}.
\]
Then $(F,{\mathcal P})$ is obviously an abstract free factor.
Since we clearly have $\partial F=\partial\hat F$ and 
$\partial(\hat H_i^{g_{i,j}}\cap F)=\partial \hat H_i^{g_{i,j}}$,
we get the following.

\begin{lem}
  \lemlabel{Y.4}
  The spaces with peripherals 
\[
(\partial\hat F, \{ \partial\hat H_i^g : 1\le i\le k, \,g\in\hat F/\hat H_i \})
\hbox{\quad  and  \quad} (\partial F, \{ \partial H^f:H\in{\mathcal P},\, f\in F/H \})
\]
are canonically homeomorphic.
\end{lem}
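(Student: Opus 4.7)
The plan is to exhibit the claimed homeomorphism as literally the identity on underlying sets once $\partial F$ is canonically identified with $\partial \hat F$, and then verify that the two families of peripherals coincide as families of subsets.

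First, I would identify the underlying spaces. Since $F$ has finite index in $\hat F$, the inclusion $F\hookrightarrow \hat F$ is a quasi-isometry of hyperbolic groups, so it induces a canonical homeomorphism $\partial F\cong\partial\hat F$. More generally, I would invoke the following standard fact: if $G$ is word hyperbolic and $K\leq G$ is a finite-index (hence quasi-convex) subgroup then the inclusion $\partial K\hookrightarrow \partial G$ induced by $K\hookrightarrow G$ is a homeomorphism onto $\partial G$. Applied to the maximal $2$-ended subgroups, this fact says that $\partial(\hat H_i^{\hat g}\cap F)=\partial\hat H_i^{\hat g}$ as subsets of $\partial F=\partial\hat F$, since $\hat H_i^{\hat g}\cap F$ is a finite-index subgroup of the $2$-ended group $\hat H_i^{\hat g}$ (the index divides $[\hat F:F]$).

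Next, I would verify that the two families of peripherals coincide. For the forward direction, take a peripheral $\partial\hat H_i^{\hat g}$ on the $\hat F$-side. By the previous step it equals $\partial(\hat H_i^{\hat g}\cap F)$. By the orbit decomposition set up in the paragraph preceding the lemma, $\hat H_i^{\hat g}\cap F$ lies in the $F$-conjugation orbit of some representative $\hat H_i^{g_{i,j}}\cap F\in\mathcal{P}$, so there exists $f\in F$ with $\hat H_i^{\hat g}\cap F=f(\hat H_i^{g_{i,j}}\cap F)f^{-1}=H^f$ where $H=\hat H_i^{g_{i,j}}\cap F$. Hence $\partial\hat H_i^{\hat g}=\partial H^f$ appears in the $F$-side family. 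For the reverse direction, each peripheral $\partial H^f$ with $H=\hat H_i^{g_{i,j}}\cap F\in\mathcal{P}$ and $f\in F$ satisfies $H^f=\hat H_i^{fg_{i,j}}\cap F$, so $\partial H^f=\partial\hat H_i^{fg_{i,j}}$, which is a peripheral on the $\hat F$-side.

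There is no serious obstacle here: the argument is essentially bookkeeping once the two general facts (quasi-isometric groups have homeomorphic boundaries; finite-index subgroups of a hyperbolic group have the same boundary as subset of the ambient boundary) are in hand. The one point to be mildly careful about is that the parameterizations of the two families by cosets $\hat g\in\hat F/\hat H_i$ and pairs $(H,f)$ with $f\in F/H$ can give the same peripheral multiple times on each side, so the verification is strictly at the level of the sets of subsets, not of labeled families. This is automatic since we are comparing two subsets of the power set of $\partial F=\partial\hat F$, and the orbit argument above gives mutual inclusion.
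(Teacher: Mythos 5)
Your proof is correct and follows essentially the same route as the paper, which deduces the lemma in one line from the two facts you spell out: $\partial F=\partial\hat F$ (finite index) and $\partial(\hat H_i^{\hat g}\cap F)=\partial\hat H_i^{\hat g}$ for the 2-ended subgroups, combined with the $F$-orbit bookkeeping set up just before the statement. Your more explicit verification that the two families coincide as sets of subsets is exactly the intended (and there omitted) argument.
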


We now extend the notion of a Whitehead graph to virtually free abstract factors.

\begin{defn}
  \defnlabel{Y.5}
Let $\hat{\mathcal Q}=(\hat F, \{ \hat H_i \})$ be a virtually free abstract factor.
The \emph{Whitehead graph} $W=W_{\hat{\mathcal Q}}$ of this factor is any graph of the following form.
Choose any free subgroup $F<\hat F$ of finite index, and a basis $B$ for $F$, and consider
the induced peripheral system $\mathcal P$ of subgroups of $F$, as described above. Take
as $W_{\hat{\mathcal Q}}$ the Whitehead graph for the free factor 
${\mathcal Q}=(F,{\mathcal P})$ with respect to $B$.
Take also as the accompanying connecting $V$-system ${\mathcal V}_{\hat{\mathcal Q}}$
the $V$-system ${\mathcal V}_{\mathcal Q}$ for $W_{\mathcal Q}$.
\end{defn}

\begin{rmk}\rmklabel{8.24}
Similarly as in the case of a free abstract factor, the Whitehead graph
$W_{\hat{\mathcal Q}}$ for $\hat{\mathcal Q}$ is not uniquely determined, and depends
on the choices of $F$ and $B$. 
In our later applications of this concept, we will deal with any choice of a subgroup $F$,
and such a choice of a basis $B$ for $F$ that the corresponding Whitehead graph
for $(F,{\mathcal P})$ is minimal. As we will show
(see Propositions \propref{M.1} and \propref{M.2}), 
under such choices the Whitehead
graphs appearing in our context will always be 2-connected, and in particular essential.
Since all further statements concerning the graph $W_{\hat{\mathcal Q}}$
apply equally well to all choices as above, we will speak of \emph{the} Whitehead graph
associated to the factor $\hat{\mathcal Q}$, meaning any of the graphs as described above.
\end{rmk}

As a consequence of the above definition, and in view of \Lemref{Y.4}, we have 
the following corollary to \Lemref{Y.3}.

\begin{cor}
  \corlabel{Y.6}
Let $\hat{\mathcal Q}$ be a virtually free abstract factor, and 
suppose that the associated Whitehead graph $W_{\hat{\mathcal Q}}$ is essential. 
Let ${\mathcal V}_{\hat{\mathcal Q}}=(W_{\hat{\mathcal Q}},{\mathbf a}
)$ 
be the associated connecting $V$-system.
Then the associated space with
peripherals for $\hat{\mathcal Q}$ is homeomorphic (as a space with peripherals) to the core
${\mathcal C}({\mathcal V}_{\hat{\mathcal Q}})$
of the corresponding $V$-tree of graphs ${\mathcal X}_V({\mathcal V}_{\hat{\mathcal Q}})$.
\end{cor}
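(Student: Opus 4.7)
The proof proposal is essentially a chain of identifications, using Lemma \lemref{Y.3} and Lemma \lemref{Y.4} together with the very definition of the Whitehead graph for a virtually free factor (Definition \defnref{Y.5}).

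First, I would unpack the definition. By Definition \defnref{Y.5}, the Whitehead graph $W_{\hat{\mathcal Q}}$ is, by construction, the Whitehead graph $W_{\mathcal Q}$ of an auxiliary abstract free factor ${\mathcal Q}=(F,{\mathcal P})$, where $F<\hat F$ is a finite index free subgroup and $\mathcal P$ is the induced peripheral system described in the paragraph preceding \Lemref{Y.4}. Likewise, ${\mathcal V}_{\hat{\mathcal Q}}$ is, by definition, equal as a connecting $V$-system to ${\mathcal V}_{\mathcal Q}$, so the $V$-tree of graphs and its core coincide: ${\mathcal X}_V({\mathcal V}_{\hat{\mathcal Q}})={\mathcal X}_V({\mathcal V}_{\mathcal Q})$ and ${\mathcal C}({\mathcal V}_{\hat{\mathcal Q}})={\mathcal C}({\mathcal V}_{\mathcal Q})$ as spaces with peripherals.

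Next, since $W_{\mathcal Q}=W_{\hat{\mathcal Q}}$ is assumed essential, \Lemref{Y.3} applies to $\mathcal Q$ and yields a homeomorphism, respecting peripherals, between the space with peripherals associated to the abstract free factor $\mathcal Q$ and the core ${\mathcal C}({\mathcal V}_{\mathcal Q})$. On the other hand, \Lemref{Y.4} provides a canonical homeomorphism of spaces with peripherals between the one associated to $\hat{\mathcal Q}$ and the one associated to $\mathcal Q$. Composing these two homeomorphisms gives the desired homeomorphism of spaces with peripherals between the space associated to $\hat{\mathcal Q}$ and the core ${\mathcal C}({\mathcal V}_{\hat{\mathcal Q}})$.

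There is essentially no obstacle to overcome here, as all the work has been done in the preceding lemmas and in the definition; the only thing that needs a brief remark is that the bijective correspondence of peripherals from \Lemref{Y.4} (matching a peripheral $\partial\hat H_i^g$ with the corresponding peripheral $\partial(\hat H_i^g\cap F)=\partial\hat H_i^g$ of $\mathcal Q$) is exactly the one used by \Lemref{Y.3}, so the composed map is genuinely a homeomorphism of spaces with peripherals and not just of the underlying topological spaces. I would state this verification explicitly but would not belabor it, since it is immediate from the equality $\partial(\hat H_i^g\cap F)=\partial\hat H_i^g$ noted right before \Lemref{Y.4}.
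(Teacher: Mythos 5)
Your proposal is correct and follows exactly the route the paper intends: the corollary is presented there as an immediate consequence of \Defnref{Y.5} combined with \Lemref{Y.4} and \Lemref{Y.3}, which is precisely your chain of identifications. The brief remark about the peripherals matching via $\partial(\hat H_i^{g}\cap F)=\partial\hat H_i^{g}$ is a sensible (and harmless) addition to what the paper leaves implicit.
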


We now discuss some further features of Whitehead graphs associated to virtually free abstract
factors. We refer to the notation as above in this section, additionally denoting by $\mathcal H$
the set of conjugacy classes of the peripheral subgroups in an abstract virtually free
factor $\hat{\mathcal Q}=(\hat F, \{ \hat H_i \})$. We first describe some natural labelling 
$\lambda:E_{W_{\hat{\mathcal Q}}}\to{\mathcal H}$ of the edges of the Whitehead graph
$W_{\hat{\mathcal Q}}$, with values in the set $\mathcal H$. Any edge $e$ in $W_{\hat{\mathcal Q}}$
corresponds to an occurence of a subword of length 2 in a cyclically reduced
word $w$ from the conjugacy class of a generator of precisely one cyclic peripheral subgroup
$\hat H_i^{g_{i,j}}\cap F\in{\mathcal P}$. We then put $\lambda(e)=[\hat H_i]$,
where $[\hat H_i]\in{\mathcal H}$ denotes the conjugacy class of the subgroup 
$\hat H_i$ in $\hat F$. So described $\lambda$ is clearly surjective, and it splits the set
$E_{W_{\hat{\mathcal Q}}}$ into subsets corresponding to the conjugacy classes in $\mathcal H$.
Typically, the subset corresponding to a fixed label $[\hat H_i]$ consists of more than
one edge, and actually its cardinality is equal to the sum of the lengths $|w|$ of those
cyclically reduced words $w$ which correspond to the generators of the subgroups
$\hat H_i^{g_{i,j}}\cap F:1\le j\le k_i$. We make an obvious observation that
each of the maps $\alpha_x$ from the involution system 
${\mathbf a}=(a, \{ \alpha_x \})$ associated
to $W_{\hat{\mathcal Q}}$ preserves the labelling $\lambda$, in the following sense.
If $e$ is an edge of $W_{\hat{\mathcal Q}}$ adjacent to $x$, and $e'$ is an edge
adjacent to $a(x)$, then denoting by $e_x$ and $e'_{a(x)}$ the points in the corresponding
vertex links induced by the corresponding edges, 
the equality $\alpha_x(e_x)=e'_{a(x)}$ implies that $\lambda(e)=\lambda(e')$.


The following observations concern some further features of the $V$-tree system 
$\Theta[{\mathcal V}_{\hat{\mathcal Q}}]$
associated to a virtually free abstract factor $\hat{\mathcal Q}$.

\begin{obs}
  \obslabel{Y.7}
\begin{enumerate}
\item[(1)]
Since the subgroups $\hat H_i^{g_{i,j}}\cap F$ are maximal cyclic in $F$,
no two lines in $\Theta[{\mathcal V}_{\hat{\mathcal Q}}]$ have the same ends, i.e.,
for any peripheral $P$ of the core $C({\mathcal V}_{\hat{\mathcal Q}})$
the set ${\mathcal L}_P=\{ L:\partial\gamma_L=P \}$ consists of a single line.

\item[(2)] It follows from the above observation (1) that 
for any peripheral $P$ of the core $C({\mathcal V}_{\hat{\mathcal Q}})$
there is precisely one arm in the V-tree ${\mathcal X}_V({\mathcal V}_{\hat{\mathcal Q}})$
which is attached to $C({\mathcal V}_{\hat{\mathcal Q}})$ along $P$.


\item[(3)] Since, due to \Lemref{Y.2a}, the line $L\in\mathcal{L}_P$,
  as above in (1), is orientable in
  $\Theta[{\mathcal V}_{\hat{\mathcal Q}}]$, any fixed order of the
  peripheral $P$ induces orientations on all the edges of
  $W_{\hat{\mathcal Q}}$ associated to $P$ (i.e. all edges having
  appearances in $L$).  In accordance with \Rmkdefnref{V.11}(1), we
  call such orientations \emph{order-induced} (with respect to the
  fixed order of $P$).
\end{enumerate}
\end{obs}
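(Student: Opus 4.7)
The plan is to prove the three parts in order, with the main work concentrated in part (1); parts (2) and (3) then follow essentially formally.

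For part (1), I would first unwind the identification of the unsubdivided tree $T_\ast$ with the Cayley graph $\Cay(F,B)$ set up just before \Lemref{Y.3}. Under this identification, a line $L=(e_n^\circ)_{n\in\mathbb{Z}}$ in $\Theta[{\mathcal V}_{\hat{\mathcal Q}}]$ determines a bi-infinite word obtained by reading generator labels along $\gamma_L$ in $\Cay(F,B)$; tracing the $V$-involution $\mathbf{a}_{\mathcal Q}$ through consecutive copies of $W_{\hat{\mathcal Q}}$ shows that this word glues together the length-$2$ subwords corresponding to the edges $e_n^\circ$. By the construction of $W_{\hat{\mathcal Q}}$ from the cyclically reduced generators of the subgroups $\hat H_i^{g_{i,j}}\cap F$, this bi-infinite word is periodic, one period being a cyclic shift of a cyclically reduced generator of some $F$-conjugate $H_L$ of a peripheral subgroup.

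The second step is to argue that the assignment $L\mapsto H_L:=\Stab_F(\partial\gamma_L)$ is a bijection between lines of $\Theta[{\mathcal V}_{\hat{\mathcal Q}}]$ and $F$-conjugates of the subgroups $\hat H_i^{g_{i,j}}\cap F$. The previous paragraph yields one direction; conversely, any such conjugate $H$ has a unique translation axis in $\Cay(F,B)$ and, through the Whitehead graph construction, a well-defined line tracing that axis. The maximal cyclicity hypothesis is used precisely here to guarantee that the correspondence is bijective rather than many-to-one: maximality forces each generator $w_{i,j}$ to be a primitive element of $F$, so its cyclic word has minimal period, and distinct conjugates of peripheral subgroups give distinct cyclic words (not different powers of a common primitive root), hence distinct lines. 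I expect this bookkeeping---the precise dictionary between lines, primitive cyclic words and maximal cyclic subgroups of $F$---to be the main obstacle.

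With the bijection in hand, part (1) follows quickly: if $L_1\ne L_2$ shared $\partial\gamma_{L_1}=\partial\gamma_{L_2}=P$, then $H_{L_1}$ and $H_{L_2}$ would be distinct maximal $2$-ended subgroups of $F$ with the same boundary doubleton $P\subset\partial F$, contradicting the standard fact that distinct maximal $2$-ended subgroups of a hyperbolic group have disjoint Gromov boundaries. Parts (2) and (3) are then short. Part (2) is immediate from the construction of $\Psi[{\mathcal V}_{\hat{\mathcal Q}}]$ recalled before \Lemref{V.9}: the arms incident to the white vertex $u_P$ are indexed by ${\mathcal L}_P$, which by part (1) is a singleton. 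Part (3) is a direct application of \Rmkdefnref{V.11}(3): the unique line $L\in{\mathcal L}_P$ is orientable by \Lemref{Y.2a}, so any chosen order on $P=\partial\gamma_L$ induces a consistent orientation on each appearance in $L$ of each edge of $W_{\hat{\mathcal Q}}$, that is, on each edge of $E_{W_{\hat{\mathcal Q}}}^P$.
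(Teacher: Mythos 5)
Your proposal follows essentially the same route as the paper, which states this as an observation justified exactly by the identification underlying \Lemref{Y.3} (under $T_*\cong\Cay(F,B)$, each line $L$ traces the axis of a conjugate of a peripheral generator and $h(\partial\gamma_L)=\partial(gH_ig^{-1})$) together with maximal cyclicity for part (1), and by the line--arm correspondence plus \Lemref{Y.2a} and \Rmkdefnref{V.11} for parts (2) and (3), just as you argue. The ``bookkeeping'' you flag (primitivity of the $w_{i,j}$ forcing a unique line over a given axis, and pairwise non-conjugacy of the induced peripheral subgroups of $F$ ruling out distinct cyclic words sharing an axis) is precisely the content the paper compresses into the clause ``since the subgroups $\hat H_i^{g_{i,j}}\cap F$ are maximal cyclic in $F$.''
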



\section{The (extended) Whitehead graph of a virtually free rigid cluster factor of $G$}
\seclabel{E}

Let $v$ be a rigid cluster vertex in the tree $T^r_G$ of the reduced Bowditch JSJ splitting of a hyperbolic group $G$ (see \Secref{reduced_JSJ}).
We then call the group $G(v)$ a \emph{rigid cluster factor} of $G$.
Denote by $N_v$ the set of vertices in $T^r_G$ adjacent to $v$, and denote by
${\mathcal H}_v$ the set of the $G(u)$-conjugacy classes of the subgroups $G(u)\cap G(v):u\in N_v$.
Since the family ${\mathcal H}_v$ is finite,
the pair $(G(v),{\mathcal H}_v)$ is an abstract factor, which we denote briefly by ${\mathcal Q}_v$.
Denote also by $(X_v,{\mathcal D}_v)$ the space with peripherals associated to ${\mathcal Q}_v$,
where ${\mathcal D}_v$ is the family of peripheral (doubleton) subsets of $X_v=\partial G(v)$.

The next lemma exhibits some consequence of \Factref{3.4}
(which says that the rigid cluster vertices of ${T}^r_G$ are isolated)
for the structure
of the reduced Bowditch JSJ splitting of $G$.

\begin{lem}
  \lemlabel{E.1}
Let $v$ be a rigid cluster vertex of the tree ${T}^r_G$.
Then for any
$u\in N_v$ we have $G(u)\cap G(v)=G(u)$. As a consequence, each $G(u)$ as above
is a subgroup in $G(v)$, and the family ${\mathcal H}_v$ actually consists of the conjugacy
classes of the subgroups $G(u):u\in N_v$.
\end{lem}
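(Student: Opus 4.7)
The plan is to unravel the definitions and exploit the fact that rigid cluster vertices of $T^r_G$ correspond to $\sim$-equivalence classes of rigid vertices of $T_G$, where $\sim$ is the equivalence generated by being at combinatorial distance $2$. Let $v$ be a rigid cluster vertex of $T^r_G$ and let $S \subset T_G$ be the corresponding rigid cluster. By the description of the reduced JSJ splitting, $G(v) = \mathrm{Stab}_G(S)$. Let $u \in N_v$; then $u$ is a white vertex of $T^r_G$, and by the construction of $T^r_G$ it corresponds to a white vertex of $T_G$ (still denoted $u$) which is adjacent in $T_G$ to at least one rigid vertex of $S$ (this adjacency being the reason the edge $[v,u]$ exists in $T^r_G$).

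The crucial observation that I would then record is that every rigid neighbour of $u$ in $T_G$ lies in $S$. Indeed, any two rigid neighbours of $u$ in $T_G$ are at combinatorial distance $2$, hence equivalent under $\sim$, hence lie in the same rigid cluster; since at least one of them lies in $S$, they all do. This step, though short, is really the heart of the argument, since it is what lets us translate ``fixing $u$'' into ``stabilising $S$.''

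Given this, I would take an arbitrary $g \in G(u)$ and argue as follows. Since the $G$-action on $T_G$ preserves vertex colours and Bowditch types (as recalled in Subsection \ssecref{bowditch_jsj_splitting}), $g$ sends rigid vertices to rigid vertices and preserves the equivalence relation $\sim$; hence $g$ permutes the set of rigid clusters. Because $g$ fixes $u$, it permutes the rigid neighbours of $u$ among themselves, and by the previous observation all such neighbours lie in $S$. Thus $g \cdot S$ is a rigid cluster sharing at least one vertex with $S$, and therefore $g \cdot S = S$. This gives $g \in \mathrm{Stab}_G(S) = G(v)$, so $G(u) \subseteq G(v)$ and consequently $G(u) \cap G(v) = G(u)$.

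The remaining conclusion — that the family $\mathcal{H}_v$ consists of the conjugacy classes of the subgroups $G(u)$ with $u \in N_v$ — then follows immediately from the definition of $\mathcal{H}_v$ as the family of conjugacy classes of the subgroups $G(u) \cap G(v)$. I do not anticipate any obstacle beyond the clean statement of the ``all rigid neighbours of $u$ lie in the same cluster'' observation, which is really the only non-formal ingredient.
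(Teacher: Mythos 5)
Your proof is correct and rests on the same key ingredient as the paper's: the impossibility of two distinct rigid cluster vertices sharing a white neighbour. The paper phrases this downstairs in $T^r_G$ as a contradiction argument — if $G(u)\cap G(v)\subsetneq G(u)$ then the $G(u)$-orbit of $v$ would produce a second rigid cluster vertex adjacent to $u$, contradicting the isolation statement of Fact \ref{3.4} — whereas you work upstairs in $T_G$ and show directly that every $g\in G(u)$ stabilizes the cluster $S$, in effect re-deriving that isolation fact via your observation that all rigid neighbours of $u$ lie in a single cluster.
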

\begin{proof}
Suppose on the contrary that $G(u)\cap G(v)$ is a proper subgroup of $G(u)$. This implies that
the orbit of the vertex $v$ under the action of $G(u)$ on $T^r_G$ contains at least one
vertex $v'\ne v$. By $G$-invariance, this vertex $v'$ is also a rigid cluster vertex, 
and since it is
adjacent to $u$, we get a contradiction with the fact that $v$ is isolated
(i.e. with \Factref{3.4}). 
\end{proof}

\subsection{The Whitehead graph $W_v$ of a virtually free rigid cluster
factor $G(v)$}
\sseclabel{E.1}

Suppose now that the group $G(v)$ (at a rigid cluster vertex $v$ of $T^r_G$) is a virtually free group,
and recall that it is then necessarily non-elementary (see \Corref{cluster_quasiconvex}). 
Obviously, ${\mathcal Q}_v=(G(v),{\mathcal H}_v)$ 
is then an abstract
virtually free factor. Denote by $W_v$ any minimal Whitehead graph for ${\mathcal Q}_v$, as described in \Ssecref{Y}
(see \Defnref{Y.5} and \Rmkref{8.24}), 
for any choice of a finite index free subgroup of $G(v)$. 

In the next two propositions we derive basic properties of the graphs $W_v$ as above.

\begin{prop}
  \proplabel{M.1}
Let ${\mathcal Q}=(F,{\mathcal H})$ be a free abstract factor, and let $W_{\rm min}$
be a minimal Whitehead graph for $\mathcal Q$. Then the following conditions are
equivalent:
\begin{enumerate}
\item[(1)] $W_{\rm min}$ is a 2-connected graph (i.e.  finite,
  connected, nonempty, not equal to a single vertex, and
  cutpoint-free);
\item[(2)] $F$ has no splitting as a free product relative to
  $\mathcal H$ (i.e. a splitting as a free product with nontrivial
  factors where the classes in $\mathcal H$ are all elliptic).
\end{enumerate}
\end{prop}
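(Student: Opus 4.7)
This equivalence is essentially a relative version of Whitehead's classical theorem on free groups, and the plan is to prove both implications via contrapositive using Whitehead's algorithm.

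For $(2) \Rightarrow (1)$, suppose $W_{\min}$ fails to be 2-connected; the goal is to extract from this failure a nontrivial free product splitting of $F$ relative to $\mathcal{H}$. The key initial observation is that the inversion involution $\iota \colon x \mapsto x^{-1}$ on the vertex set $V_W = B \cup B^{-1}$ preserves the edge set of any Whitehead graph: an edge $\{x^{-1}, y\}$ arising from a cyclic subword $xy$ of some $w_i$ is paired with the edge $\{y^{-1}, x\} = \iota(\{x^{-1},y\})$ coming from the cyclic neighbor $yx$. Hence $\iota$ permutes the connected components of $W_{\min}$. In the simplest subcase, when $W_{\min}$ is disconnected and $\iota$ fixes each component setwise, the partition $V_W = \bigsqcup_l(B_l \cup B_l^{-1})$ induced by the components yields a splitting $F = \langle B_1 \rangle * \cdots * \langle B_p \rangle$ relative to $\mathcal{H}$, since each cyclically reduced $w_i$ traces out a closed path in $W_{\min}$ lying in a single component. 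The remaining configurations---$\iota$-swapped components, or a connected graph with a cut vertex---would be reduced to this simplest subcase by Whitehead's algorithm: in each such case one produces a length-non-increasing Whitehead automorphism, and iterated application exposes a basis in which the graph separates along an $\iota$-invariant cut.

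For $(1) \Rightarrow (2)$, I again argue by contrapositive. Given a nontrivial relative splitting $F = A * B$, pick a basis $B_F = B_A \sqcup B_B$ of $F$ adapted to the splitting, and cyclically reduce each $w_i$ in this basis, so that it lies entirely in $B_A \cup B_A^{-1}$ or entirely in $B_B \cup B_B^{-1}$. The Whitehead graph with respect to $B_F$ then has no edges between the two $\iota$-invariant subsets $B_A \cup B_A^{-1}$ and $B_B \cup B_B^{-1}$, and is therefore disconnected. I would then verify that length-reducing Whitehead moves preserve the existence of some $\iota$-invariant disconnecting or cut-vertex structure---reflecting the basis-independence of the relative splitting---so that the minimal Whitehead graph also fails to be 2-connected.

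The main obstacle lies in the cut-vertex case of $(2) \Rightarrow (1)$: while a cut vertex of a minimal Whitehead graph classically provides a length-preserving Whitehead automorphism, verifying that iterated application genuinely produces a disconnection into $\iota$-invariant components (and hence a free product splitting compatible with $\mathcal{H}$) requires careful bookkeeping of how these automorphisms act on the cyclic words defining $\mathcal{H}$, and a robust induction on the combinatorial complexity of the minimum-length graph.
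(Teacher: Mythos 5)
Your proposal attempts to reprove, from scratch, the two classical facts that the paper simply cites, and the substantive content is missing. First, your ``key initial observation'' is false: the inversion $\iota\colon x\mapsto x^{-1}$ is \emph{not} in general an automorphism of a Whitehead graph. An occurrence of the subword $xy$ contributes the edge $\{x^{-1},y\}$, but its cyclic neighbours are whatever letters happen to precede $x$ and follow $y$; the subword $yx$ need not occur at all. For example, for the primitive word $w=abc$ in $F(a,b,c)$ the Whitehead graph has exactly the three edges $\{a^{-1},b\}$, $\{b^{-1},c\}$, $\{c^{-1},a\}$, which is not $\iota$-invariant. Consequently the components of a disconnected Whitehead graph need not partition the vertex set into inverse-closed pieces $B_l\cup B_l^{-1}$, so your ``simplest subcase'' derivation of a relative free splitting from disconnectedness does not go through as stated; dealing with non-inverse-closed disconnections (and with cut vertices) is exactly where the real work lies, via a length-reducing Whitehead automorphism argument.

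Second, both directions of your argument defer precisely those hard steps to unproven claims. In $(2)\Rightarrow(1)$ the cut-vertex case is the Whitehead/Stallings cut vertex lemma (a cut vertex in the Whitehead graph of a cyclically reduced multiword yields a strictly length-reducing Whitehead automorphism, so a \emph{minimal} graph can have no cut vertex), and in $(1)\Rightarrow(2)$ the passage from the disconnected graph of a splitting-adapted basis to the \emph{minimal} Whitehead graph requires peak reduction (Whitehead's theorem); you acknowledge both as ``obstacles'' but do not resolve them, so what you have is a plan rather than a proof. By contrast, the paper's proof is a short citation argument: minimal Whitehead graphs of a free abstract factor have no cut vertex (Lemma~2.3 of \cite{Cashen:cyclic_splittings:2017}), and by Proposition~2.6 of \cite{Cashen:line_patterns:2016} the existence of a nontrivial free splitting of $F$ relative to $\mathcal H$ is equivalent to disconnectedness of some (equivalently every) minimal Whitehead graph; the proposition is an immediate consequence. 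If you want a self-contained proof, you must actually establish these two results (or reproduce their proofs), not just invoke ``careful bookkeeping.''
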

\begin{proof}
We say that a vertex $x$ of a graph $X$ is a \emph{cut vertex} if the space
$|X|\setminus\{ x \}$ has more connected components than $|X|$. 
It is an easy observation that any minimal Whitehead graph of a free abstract factor
has no cut vertex (see e.g. Lemma~2.3 in \cite{Cashen:cyclic_splittings:2017}) so any component is either cutpoint-free or an isolated edge. 
On the other hand, by Proposition~2.6 from \cite{Cashen:line_patterns:2016},
the following three conditions are equivalent:
\begin{enumerate}
\item[(1)] $F$ splits as a free product relative to $\mathcal H$,  
\item[(2)] every minimal Whitehead graph for $\mathcal Q$
is not connected, 
\item[(3)] some minimal Whitehead graph for $\mathcal Q$ is not connected.
\end{enumerate}

The proposition follows directly from these two observations and the
fact that $F$ is non-elementary.
\end{proof}

\begin{prop}
  \proplabel{M.2}
Let $G$ be a 1-ended hyperbolic group which is not cocompact Fuchsian,
and let ${\mathcal Q}_v=(G(v), {\mathcal H}_v)$ be any rigid cluster factor of the Bowditch JSJ splitting for $G$. If the group $G(v)$ is virtually free, 
then any minimal Whitehead graph for ${\mathcal Q}_v$ (for any choice of a finite index 
free subgroup $F<G(v)$) is 2-connected.
\end{prop}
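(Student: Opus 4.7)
The plan is to prove the contrapositive via \Propref{M.1}: assuming the minimal Whitehead graph fails to be 2-connected, I will build a free-product splitting of $F$ relative to $\mathcal{P}$, promote this to a splitting of $G(v)$ over a finite subgroup relative to $\mathcal{H}_v$, and finally refine the reduced Bowditch JSJ of $G$ at $v$ to obtain a nontrivial splitting of $G$ over a finite subgroup---contradicting Stallings' theorem since $G$ is 1-ended.

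First, by \Propref{M.1}, if the minimal Whitehead graph fails 2-connectedness then there is a nontrivial free-product decomposition $F=F_{1}\ast F_{2}$ in which every subgroup from $\mathcal{P}$ is conjugate into $F_{1}$ or into $F_{2}$. Second---and this is the step I expect to be the main obstacle---I would promote this to a nontrivial splitting of $G(v)$ over a finite subgroup with every member of $\mathcal{H}_v$ elliptic. The key input is that $F$ has finite index in $G(v)$ and, by the construction recalled in \Ssecref{Y}, each $\hat{H}_{i}^{g_{i,j}}\cap F\in\mathcal{P}$ has finite index in the corresponding conjugate of $\hat{H}_{i}$, so the relative end invariants $\tilde{e}(F,\mathcal{P})$ and $\tilde{e}(G(v),\mathcal{H}_v)$ coincide and are both at least $2$; the Dunwoody--Sageev--Swenson splitting theorem then produces a nontrivial splitting of $G(v)$ with $\mathcal{H}_v$ elliptic whose edge group is either finite or commensurable with a peripheral, and the virtual freeness of $G(v)$ together with the maximality of the members of $\mathcal{H}_v$ as 2-ended subgroups of $G(v)$ is meant to rule out the latter possibility.

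Finally, I would refine $T^{r}_{G}$ at $v$ using this splitting of $G(v)$: \Lemref{E.1} identifies each edge stabilizer incident to $v$ in $T^{r}_{G}$ with the corresponding peripheral $G(u)$, so one may $G$-equivariantly replace each translate of $v$ by the Bass--Serre tree of the new splitting of $G(v)$, reattaching each incident edge of $T^{r}_{G}$ to the unique vertex of that Bass--Serre tree whose stabilizer contains (a conjugate of) its edge group. The resulting $G$-tree has a $G$-orbit of edges with finite stabilizer, yielding for $G$ a nontrivial splitting over a finite subgroup; since $G$ is 1-ended, Stallings' theorem forbids this, and the contradiction forces the minimal Whitehead graph to be 2-connected.
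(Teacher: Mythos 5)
Your skeleton is the same as the paper's: argue the contrapositive via \Propref{M.1}, promote the relative free splitting of $F$ to a splitting of $G(v)$ over a finite subgroup relative to ${\mathcal H}_v$, and then refine the JSJ tree to contradict $1$-endedness of $G$ via Stallings. Your concluding refinement step is fine (the paper merely asserts it), and the target of your middle step is exactly the paper's Claim. The genuine gap is in how you justify that middle step. The ``Dunwoody--Sageev--Swenson splitting theorem'' you invoke is not a theorem of the form you need: the results usually referred to by those names produce splittings over slender or two-ended subgroups from multi-endedness of a pair $(G,H)$ (or from codimension-one subgroups), not splittings over finite subgroups from an end invariant relative to a finite family; and your assertion that the relative end invariants of $(F,{\mathcal P})$ and $(G(v),{\mathcal H}_v)$ coincide is stated, not proved --- it depends on which notion of relative ends is used, and no invariance under finite-index passage with the induced peripheral structure is established or cited. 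Worse, even granting your dichotomy ``edge group finite or commensurable with a peripheral,'' your way of excluding the second alternative does not work: a virtually free group certainly admits nontrivial splittings over two-ended subgroups relative to ${\mathcal H}_v$ (this is the ambient JSJ situation), and maximality of the members of ${\mathcal H}_v$ as two-ended subgroups does not prevent an edge group from being a finite-index subgroup of a peripheral. This exclusion is not a technicality: if the splitting of $G(v)$ you obtain is over a two-ended subgroup, your refinement of $T^r_G$ only exhibits a splitting of $G$ over a two-ended subgroup, which is no contradiction with $1$-endedness, and the argument collapses.

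The step can be repaired exactly as in the paper. Since the members of ${\mathcal H}_v$ are maximal two-ended and pairwise non-conjugate, both $(G(v),{\mathcal H}_v)$ and $(F,{\mathcal P})$ are relatively hyperbolic pairs. By Tran \cite{Tran:various_boundaries_relhyp:2013}, their Bowditch boundaries are the quotients of $\partial G(v)=\partial F$ obtained by collapsing the peripheral limit sets, so by \Lemref{Y.4} these two Bowditch boundaries are canonically homeomorphic; and by Proposition~10.1 of \cite{Bowditch:relatively_hyperbolic:2012}, connectedness of the Bowditch boundary is equivalent to non-splitting over a finite subgroup relative to the peripheral structure. Your free splitting of $F$ relative to ${\mathcal P}$ therefore disconnects $\partial_B(F,{\mathcal P})\cong\partial_B(G(v),{\mathcal H}_v)$, and the same proposition yields the required splitting of $G(v)$ over a finite subgroup relative to ${\mathcal H}_v$; after that, your refinement of $T^r_G$ and the appeal to Stallings go through as you describe.
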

\begin{proof}
  Suppose on the contrary that some minimal Whitehead graph $W$ for
  ${\mathcal Q}_v$ (related to a finite index free subgroup $F$ of
  $G(v)$, and to the family $\mathcal P$ of peripheral subgroups of
  $F$ induced from ${\mathcal H}_v$, as described in \Ssecref{Y}, right before \Lemref{Y.4}) is not 2-connected.  It
  follows from \Propref{M.1} that $F$ splits as a free product relative to
  $\mathcal P$.  Our next goal is to deduce from this fact the
  following.

\begin{claim}
  \claimlabel{9.4}
  $G(v)$ splits relative to ${\mathcal H}_v$ over a finite subgroup
  (i.e. acts on a tree without global fixed points such that edge
  stabilizers are finite and subgroups from ${\mathcal H}_v$ are
  conjugate into vertex stabilizers).
\end{claim}

To prove the claim,
note that since $G(v)$ is hyperbolic and the subgroups from ${\mathcal H}_v$ 
are maximal 2-ended 
and pairwise nonconjugate, $G(v)$
is relatively hyperbolic with respect to ${\mathcal H}_v$, and the same is true
for the pair $(F,{\mathcal P})$. We will argue by referring to some results
from the theory of relatively hyperbolic groups. 
In particular, we will refer to the properties of the so called \emph{Bowditch boundary} 
$\partial_B(G^\star,{\mathcal Q}^\star)$ of a relatively hyperbolic
group $G^\star$ (with respect to a family ${\mathcal Q}^\star=\{ Q_i \}$ of subgroups). By a result of Tran \cite{Tran:various_boundaries_relhyp:2013}, if $G^\star$ is hyperbolic then  $\partial_B(G^\star,{\mathcal Q}^\star)$ is homeomorphic to the quotient of
the Gromov boundary $\partial G^\star$ through a decomposition
provided by the family of subspaces 
\[\partial (gQ_ig^{-1}): Q_i\in{\mathcal Q^\star},\,\,g\in G^\star.\]
Note that, in view of \Lemref{Y.4}, this means that
the Bowditch boundaries $\partial_B(G(v),{\mathcal H}_v)$
and $\partial_B(F,{\mathcal P})$ are canonically homeomorphic.

By Proposition~10.1 in \cite{Bowditch:relatively_hyperbolic:2012}, the
Bowditch boundary $\partial_B(G^\star,{\mathcal Q}^\star)$ is
connected iff $G^\star$ does not split over a finite subgroup relative
to ${\mathcal Q}^\star$. Since $F$ splits over the trivial group
relative to $\mathcal{P}$, it follows that the Bowditch boundary
$\partial_B(F,{\mathcal P})$ is not connected. As a consequence, the
Bowditch boundary $\partial_B(G(v),{\mathcal H}_v)$ (which is
homeomorphic to $\partial_B(F,{\mathcal P})$) is also not
connected. By the same Proposition~10.1 in
\cite{Bowditch:relatively_hyperbolic:2012}, $G(v)$ splits over a
finite subgroup relative to ${\mathcal H}_v$, which completes the
proof of the claim.

Coming back to the proof of \Propref{M.2},
it is not hard to observe that if $G$ contains a rigid cluster factor which splits over
a finite subgroup relative to its peripheral subgroups, the group
$G$ itself also splits nontrivially over  a finite subgroup
(as an amalgameted free product or an HNN-extension).
Due to a well known result of Stallings, the latter implies that $G$ is not 1-ended,
contradicting one of our assumptions, and thus the proposition follows. 
\end{proof}

As we have already mentioned in \Rmkref{8.24}, 
in our further considerations we will use the convention of choosing
as ``the Whitehead graph $W_v$'' any minimal Whitehead graph
for $\mathcal{Q}_v$. In particular, $W_v$ will be always 2-connected.

\subsection{The extended Whitehead graph $\widetilde W_v$ of a virtually free rigid cluster
factor $G(v)$}
\sseclabel{E.2}

Under the notation of \Ssecref{E.1},
each conjugacy class $\xi\in{\mathcal H}_v$ is represented by a group $G(u)$ for some
$u\in N_v$ (see \Lemref{E.1}). Denote by $n_\xi$ the degree of the vertex $u$ in the tree $T^r_G$.
This number $n_\xi$ is independent of the choice of $G(u)$ as above.
Recall also (e.g. from the description in Sections \ssecref{bowditch_jsj_splitting} and \ssecref{reduced_JSJ})
that for each $\xi\in{\mathcal H}_v$ we have $n_\xi\ge2$.

Let $W_v$ be the Whitehead graph for ${\mathcal Q}_v$, as described 
in \Ssecref{E.1}.
The \emph{extended Whitehead graph} $\widetilde W_v$ for ${\mathcal Q}_v$ is the graph obtained
from the $W_v$ as follows. 
As the vertex set $V_{\widetilde W_v}$ we take the set $V_{W_v}$ of vertices of $W_v$.
To describe the edge set,
consider the labelling $\lambda$ of edges of $W_v$, as described in \Ssecref{Y}, right
after \Corref{Y.6}, and denote it by $\lambda_v$
(in order to distinguish it from the analogous labellings for Whitehead graphs
$W_{v'}$ related to other rigid cluster vertices $v'$ of $T^r_G$).
For any $\xi\in{\mathcal H}_v$, 
replace each edge $e$ of $W_v$ labelled with $\xi$ (i.e. such that $\lambda_v(e)=\xi$)  by $n_\xi-1$ edges
$e_1,\dots,e_{n_\xi-1}$ having the same endpoints as $e$.
We will call $e_1,\dots,e_{n_\xi-1}$ as above the \emph{edges of $\widetilde W_v$ corresponding
to an edge $e$ of $W_v$}. 
(As we will see later, the reason for choosing the number $n_\xi-1$
comes from the fact that the vertex $u$ of $T^r_G$, as above,
has precisely $n_\xi-1$ flexible neighbours, in addition to the single rigid
neighbour $v$.)
As part of its structure,
equip the so described graph $\widetilde W_v$ with its own involution system 
$\tilde{\mathbf a}_v=(\tilde a,\{ \tilde\alpha_x \})$ obtained from the corresponding involution
system ${\mathbf a}_v=(a,\{ \alpha_x \})$ for $W_v$ as follows.
Identifying naturally the vertices of $\widetilde W_v$ with the vertices of $W_v$,
put $\tilde a:=a$. Moreover, 
referring to the notation concerning vertex links as introduced right before
\Obsref{Y.7},
for any pair of edges $e,e'$ of $W_v$ such that $\alpha_x(e_x)=e'_{a(x)}$,
put $\tilde\alpha_x((e_i)_x):=(e'_i)_{a(x)}$ for $i=1,\dots,n_\xi-1$ 
(where $\xi=\lambda_v(e)$, and at the same time $\xi=\lambda_v(e')$, 
due to invariance of the labelling under the maps
$\alpha_x$). Finally, as the last part of the structure of the extended Whitehead graph, 
we equip it with the  labelling $\tilde\lambda_v$ of the edge set $E_{\widetilde W_v}$,
induced from the labelling $\lambda_v:E_{W_v}\to{\mathcal H}_v$ 
as follows.
As the set of values for this new labelling we take the set 
$\widetilde{\mathcal H}_v:=\{ (\xi,j):\xi\in{\mathcal H}_v, 1\le j\le n_\xi-1 \}$, and we put
$\tilde\lambda_v(e_i):=(\lambda_v(e),i)$ for any edge $e$ of $W_v$
and any $i=1,\dots,n_\xi-1$. Obviously, this induced labelling
$\tilde\lambda_v$ is invariant under the maps $\tilde\alpha_x$ of the involution system
$\tilde{\mathbf a}_v$, in the same sense as $\lambda_v$ is invariant under the maps $\alpha_x$.
Moreover, again similarly as with $\lambda_v$, this induced labelling $\tilde\lambda_v$
associates the same label to some two edges of $\widetilde W_v$ if and only if 
copies of these two edges are involved in a common arm of the $V$-tree
${\mathcal X}_V((\widetilde W_v,\tilde{\mathbf a}_v))$.
In particular, we may say that $\tilde\lambda_v$ uniquely
associates the labels from $\widetilde{\mathcal H}_v$
to arms in ${\mathcal X}_V((\widetilde W_v,\tilde{\mathbf a}_v))$.

The next observation is a fairly direct consequence of \Propref{M.2}.

\begin{fact}
  \factlabel{E.2}
The extended Whitehead graph $\widetilde W_v$ of any virtually free 
rigid cluster factor of the group $G$ is 2-connected (in particular, it is essential).
\end{fact}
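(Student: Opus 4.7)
The plan is to derive Fact~\factref{E.2} directly from Proposition~\propref{M.2} by observing that the construction of the extended Whitehead graph $\widetilde W_v$ from the minimal Whitehead graph $W_v$ can only preserve or improve connectivity properties. More precisely, I would begin by recalling from Proposition~\propref{M.2} that under our hypotheses on $G$, every minimal Whitehead graph $W_v$ associated to the virtually free rigid cluster factor ${\mathcal Q}_v = (G(v), {\mathcal H}_v)$ is already 2-connected.

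Next, I would use the standing fact (recorded in Subsection~\ssecref{bowditch_jsj_splitting} in the description of the Bowditch JSJ tree, and explicitly reused at the start of Subsection~\ssecref{E.2}) that every white vertex $u$ of $T_G^r$ has order $n_\xi \ge 2$, where $\xi \in {\mathcal H}_v$ is the conjugacy class of $G(u)$. Therefore $n_\xi - 1 \ge 1$ for every label $\xi$ appearing on the edges of $W_v$. Consequently, the passage from $W_v$ to $\widetilde W_v$ described in Subsection~\ssecref{E.2}, which replaces each edge $e$ labelled $\xi$ by the parallel edges $e_1, \dots, e_{n_\xi - 1}$, never deletes an edge: the vertex set is preserved, and for every edge of $W_v$ at least one edge with the same endpoints appears in $\widetilde W_v$.

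The final step is then a standard graph-theoretic observation: a multigraph $\widetilde W$ obtained from a 2-connected graph $W$ by adjoining (zero or more) additional edges parallel to existing edges, without changing the vertex set, is again 2-connected. Indeed, the underlying simple graph of $\widetilde W$ coincides with that of $W$, so removing any single vertex leaves a multigraph whose underlying simple graph agrees with that of $W$ minus the same vertex, which is connected by 2-connectedness of $W$; nonemptiness, connectedness, and not being reduced to a point are immediate. Applying this observation to $W = W_v$ and $\widetilde W = \widetilde W_v$ yields that $\widetilde W_v$ is 2-connected, and hence essential, since any 2-connected graph (having at least two vertices and being connected) contains no isolated vertex.

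I do not anticipate any serious obstacle: the real work has already been done in Proposition~\propref{M.2}, which required results from the theory of relatively hyperbolic groups (Tran's identification of the Bowditch boundary with the decomposition quotient of the Gromov boundary, and Bowditch's Proposition~10.1 linking connectedness of that boundary to splittings over finite subgroups). Once $W_v$ is known to be 2-connected and the bound $n_\xi \ge 2$ is invoked, Fact~\factref{E.2} is a direct consequence of the elementary stability of 2-connectedness under the addition of parallel edges.
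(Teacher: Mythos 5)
Your argument is correct and is exactly the route the paper intends: the paper states Fact~\factref{E.2} as a ``fairly direct consequence'' of \Propref{M.2}, and your filling-in — the bound $n_\xi\ge 2$ guaranteeing that no edge of $W_v$ disappears, plus the elementary stability of 2-connectedness under adding parallel copies of existing edges — is precisely that omitted verification. No substantive difference from the paper's approach.
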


We now pass to basic observations concerning $V$-trees of extended Whitehead graphs.

\begin{lem}
  \lemlabel{E.4}
Let $v$ be a rigid cluster vertex in the tree $T^r_G$ 
of the reduced Bowditch JSJ splitting
for $G$, and let ${\mathcal V}_v=(W_v,{\mathbf a}_v)$ and
$\widetilde{\mathcal V}_v=(\widetilde W_v,\tilde{\mathbf a}_v)$ be the associated
connecting $V$-systems.
\begin{enumerate}
\item[(1)] The core of the $V$-tree of graphs ${\mathcal X}_V(\widetilde{\mathcal V}_v)$
has a canonical identification (as a space with peripherals) with the core of ${\mathcal X}_V({\mathcal V}_v)$.
As a consequence, the 
associated space with peripherals for the factor
${\mathcal Q}_v=(G(v),{\mathcal H}_v)$ is homeomorphic (as a space with peripherals) to the core
of the $V$-tree of extended Whitehead graphs ${\mathcal X}_V(\widetilde{\mathcal V}_v)$.
\item[(2)] The $V$-tree ${\mathcal X}_V(\widetilde{\mathcal V}_v)$ can be obtained from the
$V$-tree ${\mathcal X}_V({\mathcal V}_v)$ by leaving the core unchanged and by
replacing each arm $A$ in the latter with $n_\xi-1$ copies
of this arm
(sharing only their endpoints, which coincide with the endpoints of $A$), 
where $\xi=\lambda_v(e)$ for any edge $e$ of $W_v$ involved
in the arm $A$. In particular, the $V$-tree ${\mathcal X}_V(\widetilde{\mathcal V}_v)$ has a 
description 
as a tree of arms and the core, 
$\Psi[\widetilde{\mathcal V}_v]$,
as presented
in \Lemref{V.9} and in the paragraphs right before it.
\end{enumerate}
\end{lem}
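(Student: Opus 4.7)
The plan is to exploit the explicit relationship between $W_v$ and $\widetilde W_v$: the extended graph is obtained from $W_v$ by replacing each edge $e$ of label $\xi$ with $n_\xi-1$ parallel copies, and the involution $\tilde{\mathbf a}_v$ is designed precisely so that each $\tilde\alpha_x$ maps the $i$-th copy of an edge to the $i$-th copy of its image under $\alpha_x$, preserving both the copy index and the labelling. Since both V-tree systems $\Theta[{\mathcal V}_v]$ and $\Theta[\widetilde{\mathcal V}_v]$ have underlying trees realising the same bipartite combinatorial type (black vertices of degree $|V_{W_v}|=|V_{\widetilde W_v}|$, white vertices of degree $2$), I would first identify these two underlying trees as a single tree $T$, using the canonical vertex correspondence $V_{W_v}=V_{\widetilde W_v}$ to align the bijections between $N_t$ and the V-peripherals of the respective constituent spaces.

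With this identification in place, I would analyse the lines. Unwinding \Defnref{V.4} and using the index-preserving nature of each $\tilde\alpha_x$, lines of $\Theta[\widetilde{\mathcal V}_v]$ are in canonical bijection with pairs $(L,i)$, where $L$ is a line of $\Theta[{\mathcal V}_v]$ (necessarily with edges of a single label $\xi\in{\mathcal H}_v$) and $i\in\{1,\dots,n_\xi-1\}$; the lift of $(L,i)$ is simply the sequence obtained by taking the $i$-th copy of each edge of $L$. Moreover the induced geodesic $\gamma_{(L,i)}$ in $T$ depends only on the sequence of graph-vertices across which the edges of the line transition, not on the copy index, so $\gamma_{(L,i)}=\gamma_L$. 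Consequently, the families of core peripherals $\{\partial\gamma_L:L\in{\mathcal L}({\mathcal V}_v)\}$ and $\{\partial\gamma_{L'}:L'\in{\mathcal L}(\widetilde{\mathcal V}_v)\}$ coincide as sets of subsets of $\partial T$. This yields the first assertion of (1), and the ``as a consequence'' part then follows by applying \Corref{Y.6} to ${\mathcal Q}_v$ together with its Whitehead graph $W_v$, which is essential (in fact 2-connected by \Factref{E.2}).

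For part (2), I would use the tree-of-arms-and-core presentation from \Lemref{V.9}. Both $\Psi[{\mathcal V}_v]$ and $\Psi[\widetilde{\mathcal V}_v]$ share the same central core vertex and the same white vertices (indexed by the common set of core peripherals identified in part (1)). At each white vertex $u_P$ with $P=\partial\gamma_L$, the system $\Psi[{\mathcal V}_v]$ has a single attached arm $A_L$ by \Obsref{Y.7}(1), whereas $\Psi[\widetilde{\mathcal V}_v]$ has $n_\xi-1$ attached arms $A_{(L,i)}$, where $\xi$ is the common label of edges in $L$. The main obstacle, and the bulk of the technical verification, is the canonical identification of each $A_{(L,i)}$ with $A_L$ as spaces with peripherals sharing the same basic peripheral $\partial\gamma_L$: this requires confirming that the ``forget the copy index'' map between the underlying lines extends to an isomorphism of the associated V-tree systems of segments (which are all, abstractly, internally punctured segments by \Factref{V.6}) respecting the basic peripheral $\partial\gamma_L$ along which the arm is glued to the core. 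Once this is established, \Lemref{V.9} immediately yields the desired description of ${\mathcal X}_V(\widetilde{\mathcal V}_v)$ as the result of replacing each arm of ${\mathcal X}_V({\mathcal V}_v)$ by $n_\xi-1$ copies attached to the core along the shared endpoints.
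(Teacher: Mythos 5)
Your proposal is correct and follows essentially the same route as the paper: identify the underlying trees of $\Theta[{\mathcal V}_v]$ and $\Theta[\widetilde{\mathcal V}_v]$ via the common vertex set and involution, observe that each line $L$ of the original system lifts to exactly $n_\xi-1$ lines with the same induced geodesic, and conclude that the cores (with their peripherals) coincide, with part (2) following because arms are induced by lines. Your proposal is in fact slightly more explicit than the paper's proof (e.g.\ invoking \Corref{Y.6} for the ``consequence'' in (1) and \Lemref{V.9} for (2)); the only cosmetic slip is attributing essentialness of $W_v$ to \Factref{E.2}, which concerns $\widetilde W_v$, whereas 2-connectedness of $W_v$ itself comes from \Propref{M.2}.
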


\begin{proof}
Since the vertex sets of the graphs $W_v$ and $\widetilde W_v$ canonically coincide,
and since the involutions $a:V_{W_v}\to V_{W_v}$ and 
$\tilde a_v:V_{\widetilde W_v}\to V_{\widetilde W_v}$ also coincide,
there is a canonical identification of the underlying trees $T$ and $\widetilde T$ of the tree
systems $\Theta[{\mathcal V}_v]$ and $\Theta[\widetilde{\mathcal V}_v]$
associated to the connecting $V$-systems ${\mathcal V}_v$ and $\widetilde{\mathcal V}_v$.
This identification results from the fact that the vertices in both trees correspond to
(punctured) copies of the graphs, and edges correspond to gluings of the $V$-peripherals
associated to vertices, according to the schemes provided by the $V$-involutions.
Now, let $L=(e_n^\circ)_n$ be a line in 
$\Theta[\widetilde{\mathcal V}_v]$, and recall that $\xi:=\lambda_v(e_n)$
is then independent of $n$.
Under the above identification of $T$ with $\widetilde T$, it is not hard to observe that to each line
$(e_n^\circ)_n$ in $\Theta[{\mathcal V}_v]$ there correspond $n_\xi-1$ lines
$((e_n)_i^\circ)_n:i=1,\dots,n_\xi-1$ in the tree system $\Theta[\widetilde{\mathcal V}_v]$,
and that there are no other lines in the latter tree system. Moreover, the geodesics in the tree
$\widetilde T$ corresponding to the lines $((e_n)_i^\circ)_n:i=1,\dots,n_\xi-1$ 
all coincide, and they are equal to the geodesic corresponding to the line $(e_n^\circ)_n$
under our identification of $T$ with $\widetilde T$. Since the cores under our interest
coincide with the Cantor sets appearing as boundaries of the corresponding trees,
and since the peripherals in the cores correspond to doubletons given by ends of geodesics
induced by lines, part (1) of the lemma clearly follows. Part (2) follows similarly due to the fact
that arms in the $V$-trees are induced by lines.
\end{proof}

We now pass to discussing some properties of the decomposition of the V-tree
${\mathcal X}_V(\widetilde{\mathcal V}_v)$ into core and arms. We begin with a natural
concept of types of the peripherals in the core $C(\widetilde{\mathcal V}_v)$.

\begin{defn}
  \defnlabel{E.5}
Under notation as in \Defnref{V.7},
let $P=\partial\gamma_L$ 
be a peripheral in the core $C(\widetilde{\mathcal V}_v)$ (which is identified with the core $C(\mathcal V_v)$ as in \Lemref{E.4}(1)), and let ${\mathcal H}_v$
be the set of conjugacy classes of the peripheral subgroups of the abstract factor
$G(v)$. Recall that ${\mathcal L}_P$ is the set of lines of $\mathcal V_v$ whose corresponding arms contain $P$ and that there is necessarily only one such line so that ${\mathcal L}_P$ is a singleton (see \Obsref{Y.7}). The \emph{type} of $P$ is defined as the element
$\xi=\lambda_v(e)\in{\mathcal H}_v$, where $e$ is any edge of $W_v$ whose copy appears
in the unique line $L\in{\mathcal L}_P$ (all such edges have the same label,
and the line is unique by \Obsref{Y.7}(1)).
\end{defn}

This assignment of types to the peripherals of the core $C(\widetilde{\mathcal V}_v)$
is natural in the sense provided by the following observation,
which can be deduced directly from the definitions and descriptions of the involved objects.

\begin{fact}
  \factlabel{E.6}
Let $(X_v,{\mathcal D}_v)$ be the space with peripherals associated to 
a virtually free rigid cluster factor
$(G(v),{\mathcal H}_v)$.
Under the identification of $(X_v,{\mathcal D}_v)$ with the core $C(\widetilde{\mathcal V}_v)$,
the types (from ${\mathcal H}_v$) of the peripherals in ${\mathcal D}_v$, as described in 
\Defnref{F.1.2}, coincide with the types of the corresponding peripherals of
$C(\widetilde{\mathcal V}_v)$, as described in \Defnref{E.5} above.
\end{fact}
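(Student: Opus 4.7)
The plan is to unwind the chain of identifications that yields the homeomorphism $(X_v,\mathcal{D}_v)\cong C(\widetilde{\mathcal{V}}_v)$, and then to check that under this chain, the ``$\mathcal{H}_v$-label'' attached on each side of the correspondence to a given peripheral is literally assigned by the same procedure. Specifically, I would factor the identification of Lemma \ref{lem:E.4}(1) as
\[
C(\widetilde{\mathcal{V}}_v)\;\xrightarrow{\ \cong\ }\;C(\mathcal{V}_v)\;\xrightarrow{\ \cong\ }\;\partial F\;\xrightarrow{\ \cong\ }\;\partial G(v)=X_v,
\]
where the first arrow comes from the proof of Lemma \ref{lem:E.4}(1) (identifying lines in $\Theta[\widetilde{\mathcal{V}}_v]$ with lines in $\Theta[\mathcal{V}_v]$ via the $n_\xi-1$ parallel edges), the second arrow is the $F$-equivariant homeomorphism of Lemma \ref{lem:Y.3} (with $F<G(v)$ a finite-index free subgroup and its induced peripheral family $\mathcal{P}$ as in \Ssecref{F}), and the last identification is Lemma \ref{lem:Y.4}.

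Next I would trace a single peripheral $P\subset C(\widetilde{\mathcal{V}}_v)$ through these arrows. Under the first arrow, $P$ is sent to $\partial\gamma_{L}\subset C(\mathcal{V}_v)$ for a unique line $L\in\mathcal{L}({\mathcal{V}_v})$, and any edge $e$ of $W_v$ with a copy appearing in $L$ satisfies, by definition of the labelling recalled right after \Corref{Y.6}, $\lambda_v(e)=[\hat H_i]\in\mathcal{H}_v$, where $\hat H_i\in\mathcal{H}_v$ is the unique class such that the cyclically reduced word $w$ associated with $L$ represents a generator of some peripheral subgroup of the form $\hat H_i^{g_{i,j}}\cap F\in\mathcal{P}$. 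Thus, by \Defnref{E.5}, the type of $P$ in $C(\widetilde{\mathcal{V}}_v)$ is precisely $[\hat H_i]$.

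Then I would read off the type on the other side: by the proof of Lemma \ref{lem:Y.3} (and \Rmkref{Y.3.1}), the second arrow sends $\partial\gamma_L$ to $\partial\!\left(f(\hat H_i^{g_{i,j}}\cap F)f^{-1}\right)$ for some $f\in F$, because the geodesic $\gamma_L$ becomes the axis in $\mathrm{Cay}(F,B)$ of left multiplication by some $F$-conjugate of the generator $w$ of $\hat H_i^{g_{i,j}}\cap F$. Now by Lemma \ref{lem:Y.4} (which is simply the observation that $\partial(\hat H_i^{g_{i,j}}\cap F)=\partial\hat H_i^{g_{i,j}}$, so conjugates by $F$-elements identify with the corresponding conjugates of $\hat H_i$ inside $G(v)$), the third arrow sends this to $\partial\!\left((fg_{i,j})\hat H_i(fg_{i,j})^{-1}\right)\in\mathcal{D}_v$. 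By \Defnref{F.1.2}, this peripheral of $(X_v,\mathcal{D}_v)$ has type $[\hat H_i]\in\mathcal{H}_v$, which matches the type computed above.

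The whole argument is really just a bookkeeping exercise once one keeps the identifications straight, and the main obstacle (such as it is) is purely notational: one must be careful that the representative $\hat H_i$ appearing in \Defnref{E.5} (via the labelling $\lambda_v$ defined through $\mathcal{P}$) is genuinely the same one appearing in \Defnref{F.1.2}. But this is built into the construction of $\mathcal{P}$ in \Ssecref{F}: the labelling $\lambda_v$ was defined precisely by recording which $\hat H_i\in\mathcal{H}_v$ gave rise via intersection with $F$ to the peripheral subgroup of $F$ whose generator's cyclically reduced word contains the edge, so the two type assignments agree by construction.
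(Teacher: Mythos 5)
Your proposal is correct: the paper gives no separate argument for \Factref{E.6}, asserting only that it ``follows directly from the definitions and descriptions of the involved objects,'' and your chain of identifications (\Lemref{E.4}(1), then \Lemref{Y.3} sending $\partial\gamma_L$ to the limit set of the corresponding $F$-conjugate of $\hat H_i^{g_{i,j}}\cap F$, then \Lemref{Y.4}) is exactly the bookkeeping that verification amounts to. So you take essentially the same approach, just spelled out explicitly.
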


As a consequence of \Obsref{Y.7}(1)(2) and \Lemref{E.4}(2), we get the following.

\begin{cor}
  \corlabel{E.7}
Let $P$ be any peripheral in the core $C(\widetilde{\mathcal V}_v)$,
and let $\xi\in{\mathcal H}_v$ denote its type.
Then there is precisely
$n_\xi-1$ arms in ${\mathcal X}_V(\widetilde{\mathcal V}_v)$ which are attached to 
$C(\widetilde{\mathcal V}_v)$ along $P$. Moreover, among these arms we have precisely 
one arm for each label $(\xi,j):1\le j\le n_\xi-1$ (for the labelling $\tilde\lambda_v$).
\end{cor}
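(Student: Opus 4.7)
The plan is to simply chain together \Obsref{Y.7}(1)(2) applied to the unextended V-system ${\mathcal V}_v$ with the explicit arm-by-arm description of ${\mathcal X}_V(\widetilde{\mathcal V}_v)$ provided by \Lemref{E.4}(2), and then read off the labels from the definition of $\tilde\lambda_v$.

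First I would identify, via \Lemref{E.4}(1), the peripheral $P \subset C(\widetilde{\mathcal V}_v)$ with the corresponding peripheral $P' \subset C({\mathcal V}_v)$ of the unextended core. Since ${\mathcal V}_v$ is the connecting $V$-system associated (in the sense of \Ssecref{Y}) to the abstract virtually free factor ${\mathcal Q}_v=(G(v),{\mathcal H}_v)$, \Obsref{Y.7}(1) applies: the set ${\mathcal L}_{P'}$ of lines $L$ in $\Theta[{\mathcal V}_v]$ with $\partial\gamma_L=P'$ is a singleton $\{L'\}$, and \Obsref{Y.7}(2) then gives exactly one arm $A'$ in ${\mathcal X}_V({\mathcal V}_v)$ attached to the core along $P'$. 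Let $e$ be any edge of $W_v$ whose copy appears in $L'$; by \Defnref{E.5}, $\xi = \lambda_v(e)$ is the type of $P$.

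Next I would invoke \Lemref{E.4}(2): passing from ${\mathcal X}_V({\mathcal V}_v)$ to ${\mathcal X}_V(\widetilde{\mathcal V}_v)$ leaves the core unchanged and replaces the single arm $A'$ with $n_\xi - 1$ arms $A_1,\dots,A_{n_\xi-1}$ sharing the same pair of endpoints as $A'$, i.e.\ all attached to $C(\widetilde{\mathcal V}_v)$ along $P$. Since all other arms of ${\mathcal X}_V(\widetilde{\mathcal V}_v)$ arise from other arms of ${\mathcal X}_V({\mathcal V}_v)$ and are thus attached along different peripherals of the core, these $n_\xi - 1$ arms exhaust the arms attached along $P$. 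This establishes the first assertion.

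Finally, for the labelling statement, recall from the construction of $\widetilde W_v$ in Subsection \ssecref{E.2} that each edge $e$ of $W_v$ labelled $\xi$ is replaced by the $n_\xi - 1$ parallel edges $e_1,\dots,e_{n_\xi-1}$, with $\tilde\lambda_v(e_i)=(\xi,i)$. Chasing through the line-to-arm correspondence, the arms $A_1,\dots,A_{n_\xi-1}$ of ${\mathcal X}_V(\widetilde{\mathcal V}_v)$ attached along $P$ are precisely those involving, respectively, the copies $e_1,\dots,e_{n_\xi-1}$ of each edge $e$ appearing in $L'$. Since, as noted right before \Factref{E.2}, two edges of $\widetilde W_v$ receive the same $\tilde\lambda_v$-label if and only if their copies lie in the same arm, each $A_i$ has a well-defined label, and these labels are exactly the $n_\xi - 1$ distinct values $(\xi,1),\dots,(\xi,n_\xi-1)$, one per arm. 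This is essentially bookkeeping; the only place one has to be careful is in tracking the identifications of cores and lines provided by \Lemref{E.4}, but the combinatorics of replacing $e$ by its parallel copies $e_i$ makes the label distribution transparent.
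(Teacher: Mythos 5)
Your proposal is correct and follows exactly the route the paper intends: the paper states \Corref{E.7} as an immediate consequence of \Obsref{Y.7}(1)(2) and \Lemref{E.4}(2) without writing out details, and your argument simply fills in that chain (one arm per peripheral in the unextended $V$-tree, replaced by $n_\xi-1$ endpoint-sharing copies in the extended one, with the labels read off from the definition of $\tilde\lambda_v$ and the remark preceding \Factref{E.2}).
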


As a final element in this section,
we make a record of a few further rather obvious observations concerning 
orientation phenomena in the extended Whitehead
graphs and in their associated V-tree systems.

\begin{rmk}
  \rmklabel{E.5}
\begin{enumerate}
\item[(1)]
Note that the edges $e_1,\dots,e_{n_\xi-1}$ which correspond to an edge $e$ of $W_v$
in the above description of 
the extended Whitehead graph $\widetilde W_v$ have the same endpoints as
$e$  (under the canonical identification of the vertex sets of $W_v$ and $\widetilde W_v$).
It follows that any orientation for $e$ induces naturally orientations for all $e_1,\dots,e_{n_\xi-1}$.
\item[(2)] Let $P$ be a peripheral of the core $C(\widetilde{\mathcal V}_v)$. Then it is also a peripheral
of the core $C({\mathcal V}_v)$. Recall that $E_{W_v}^P$ and $E_{\widetilde W_v}^P$
denote then the sets of edges  associated to $P$ in the corresponding graphs $W_v$ and $\widetilde{W}_v$, 
as described in \Rmkdefnref{V.11}(2).
Using the same notation as in the previous remark, we have that $e\in E_{W_v}^P$ if and only if
all edges $e_1,\dots,e_{n_\xi-1}$ belong to $E_{\widetilde W_v}^P$.
\item[(3)] Under notation as in the above remark (2), if we fix an order of the peripheral $P$,
we get \emph{order-induced} orientations at all edges of $\widetilde W_v$ associated to $P$ (see \Obsref{Y.7}(3)).
Moreover, these orientations are compatible (in the sense of remark (1) above) with the
order-induced orientations of the edges of $W_v$ associated to $P$
(with respect to the same fixed order of $P$). 
\end{enumerate}
\end{rmk}

\section{Structure of a regular tree of graphs for $\partial G$ in terms of the reduced 
Bow\-ditch JSJ 
splitting of $G$}
\seclabel{S}

Let $G$ be a 1-ended hyperbolic group which is not cocompact Fuchsian,
and suppose it satisfies condition \pitmref{g_rigid_cluster_factors_vf} of \Mainthmref{1} (i.e. suppose that each rigid cluster factor of the reduced Bowditch JSJ splitting of $G$ is virtually free).
In this section we associate to $G$ 
some graphical connecting system ${\mathcal R}_G=(\Gamma,{\mathbf a}, {\mathcal A})$ (see \Defnref{R.1})
for which, as we will show in \Secref{P}, the regular tree of graphs ${\mathcal X}({\mathcal R}_G)$ is 
homeomorphic to the Gromov boundary $\partial G$. 
This connecting system ${\mathcal R}_G$ will be explicitly determined by various
features of the reduced Bowditch JSJ splitting of $G$, 
and its description constitutes a natural complement to condition \pitmref{bd_g_reg_twocon_tog} of \Mainthmref{1},
and to condition (2) of \Mainthmref{2}.
We thus rephrase the implication \pitmref{g_rigid_cluster_factors_vf}$\Rightarrow$\pitmref{bd_g_reg_twocon_tog} in \Mainthmref{1} as a more precise statement, 
presented below as \Thmref{S.1}.
(We postpone the proof of \Thmref{S.1}, and hence of the corresponding
implication \pitmref{g_rigid_cluster_factors_vf}$\Rightarrow$\pitmref{bd_g_reg_twocon_tog} in \Mainthmref{1}, until \Secref{P}.)
We start by
describing the graph $\Gamma$ appearing as part of the
data in ${\mathcal R}_G$.

Let $Z$ be a fixed set of representatives of orbits of the action of $G$ on the subset of
the vertex set 
of $T^r_G$ consisting of the vertices of the following two kinds:
\begin{enumerate}
\item[(1)] rigid cluster vertices;
\item[(2)] white vertices of type (v2) (as described in Sections \ssecref{bowditch_jsj_splitting} and \ssecref{reduced_JSJ})
which are not adjacent to any rigid cluster vertex.
\end{enumerate}

\noindent
If $z\in Z$ is of type (v2), put $\Gamma_z:=\Xi_{k(z)}$ (the $\theta$-graph), 
where $k(z)$ is the degree of $z$ in $T^r_G$
(which is easily seen to coincide with the degree of the corresponding vertex in $T_G$);
note that, by the description of $T_G$ and $T^r_G$ given in \Secref{reduced_JSJ}, 
for each such $z$ we necessarily have $k(z)\ge3$. 
If $z\in Z$ is a rigid cluster vertex, put $\Gamma_z:=\widetilde W_z$
(the extended Whitehead graph for $G(z)$, as described in \Secref{E}). Finally, put 
$\Gamma:=\bigsqcup_{z\in Z}\Gamma_z$ (the disjoint union of the graphs $\Gamma_z$).

We now describe the $V$-involution $\mathbf a$ appearing as part of data in ${\mathcal R}_G$.
Recall that the extended Whitehead graphs $\widetilde W_z$ come equipped with
$V$-involutions ${\mathbf{\tilde a}}_z=(\tilde a_z,\{ (\tilde\alpha_z)_v:v\in V_{\Gamma_z} \})$.
Equip also all the $\theta$-graphs 
$\Gamma_z=\Xi_{k(z)}\subset\Gamma$ with their standard $V$-involutions
(with which they form standard connecting $V$-systems of $\theta$-graphs),
as described in the last paragraph of 
\Exref{V.8.2}, and denote these involutions also by 
${\mathbf{\tilde a}}_z=(\tilde a_z,\{ (\tilde\alpha_z)_v:v\in V_{\Gamma_z} \})$.
Define the $V$-involution ${\mathbf a}=(a,\{ \alpha_v\})$ for $\Gamma$ 
as follows. 
The involution $a:V_\Gamma\to V_\Gamma$
is given by $a(v):=\tilde a_z(v)$ for each $v\in V_{\Gamma_z}\subset V_\Gamma$.
Moreover, for each $v\in V_\Gamma$, if $v$ is a vertex of a component $\Gamma_z$,
put $\alpha_v=(\tilde\alpha_z)_v$.

It remains to describe the set $\mathcal A$ of $E$-connections for $\Gamma$ appearing in ${\mathcal R}_G$. 
For this we need some preparations which concern the action of $G$ on the tree $T^r_G$. 
For each vertex $z\in Z$ of type (v2), fix any bijective correspondence between the set of edges
of the $\theta$-graph $\Gamma_z=\Xi_{k(z)}$ and the set $N_z$ of vertices of $T^r_G$
adjacent to $z$ (which are all flexible). 
Denote this bijection by $\lambda_z^+$.
Fix also an identification of the vertex set of $\Gamma_z$
(which consists of 2 vertices) with the doubleton $\partial G(z)\subset\partial G$.
Next, recall that
for each rigid cluster vertex $z\in Z$, we have a natural bijective correspondence between the set 
of orbits in $N_z$ under the action of $G(z)$ (i.e. the quotient $N_z/G(z)$) 
and the set ${\mathcal H}_z$
(of conjugacy classes of peripheral subgroups of $G(z)$, 
under the view of  $G(z)$ as an abstract factor, see \Exref{F.1.1}).
This correspondence is given by assigning to any orbit $[u]\in N_z/G(z)$
(with $u\in N_z$ as any representative of this orbit) the conjugacy class of the subgroup
$G(u)<G(z)$ (note that, due to \Lemref{E.1}, 
the groups $G(u)$ are indeed contained in $G(z)$). For any $\xi\in{\mathcal H}_z$, we fix some representative $u_\xi\in N_z$
of the orbit in $N_z/G(z)$ corresponding to $\xi$, and we also fix a bijective correspondence
between the vertices in the set $N_{u_\xi}\setminus\{ z \}$ (which are all flexible, by the fact that
$z$ is isolated, see \Factref{3.4}) and the set of numbers
$1,2,\dots,n_\xi-1$, where $n_\xi$ is the degree of $u_\xi$ in $T^r_G$.
This induces a bijective correspondence between the set 
$\widetilde{\mathcal H}_z=\{ (\xi,j):\xi\in{\mathcal H}_z, 1\le j\le n_\xi-1 \}$
and the set $N^2_z$ of all those vertices $y$ in $T^r_G$ lying at distance 2 from $z$,
for which the shortest path between $y$ and $z$ in $T^r_G$ 
passes through one of the vertices $u_\xi$.
We denote by $\lambda^+_z:E_{\widetilde W_z}\to N^2_z$ the labelling 
obtained from $\tilde\lambda_z$
(where $\tilde\lambda_z$ is the labelling described in \Secref{E},
right before Fact \factref{E.2})
by replacing the labels from $\widetilde{\mathcal H}_z$ with the corresponding
elements from $N^2_z$.

To describe the set $\mathcal A$,
consider the set $Y$ of orbits of the action of $G$ on the set of all flexible
vertices of $T^r_G$. The set $\mathcal A$ will be given as the (disjoint) 
union of the sets ${\mathcal A}_y:y\in Y$, which we call \emph{blocks}. We now pass to describing
these blocks ${\mathcal A}_y$.
Fix any orbit $y\in Y$.
For any $z\in Z$ of type $(v2)$, let $y^z_i:i\in I_{y,z}$ be the set of all vertices adjacent
to $z$ in the orbit $y$. Similarly, for any rigid cluster vertex $z\in Z$,  let $y^z_i:i\in I_{y,z}$ 
be the set of all vertices from the set $N^2_z$ in the orbit $y$. (Note that for some $z$,
the index sets $I_{y,z}$ may be empty.)
Now, if $z\in Z$ is of type (v2), each $y_i^z$ as above belongs to $N_z$, 
and hence  the preimage 
$(\lambda^+_z)^{-1}(y_i^z)$ is a subset of the edge set of $\Gamma_z=\Xi_{k(z)}$ 
consistsing of a single edge, namely the edge labelled with $y_i^z$. 
If $z\in Z$ is a rigid cluster vertex, each $y_i^z$ as above belongs to the set $N^2_z$, 
and hence the preimage 
$(\lambda^+_z)^{-1}(y_i^z)$ is a subset of the edge set of the graph 
$\Gamma_z=\widetilde W_z$, namely it also consists of these edges of $\widetilde W_z$
which are labelled with $y_i^z$ (this time there may be more than one such edge).
Put 
\[
E_y=\bigcup_{z\in Z}\bigcup_{i\in I_{y,z}} (\lambda^+_z)^{-1}(y_i^z).
\]
It is not hard to observe
that the family $E_y:y\in Y$ is then a partition of the edge set of $\Gamma$, i.e., each edge of
$\Gamma$ belongs to precisely one of the sets $E_y$.

For any orbit $y\in Y$, fix any vertex $v_y\in y$, and note that the group $G(v_y)$ is,
up to an isomorphism (a conjugation in $G$), independent of this choice.
If $G(v_y)$ is non-orientable (when viewed as a flexible factor),
denote by $E^\pm_y$ the set of all oriented edges of $\Gamma$ (with both orientations) whose underlying
non-oriented edges are in $E_y$. Put ${\mathcal A}_y:=E^\pm_y\times E^\pm_y$.
On the other hand,
if $G(v_y)$ is orientable, in order to describe the set ${\mathcal A}_y$, 
we need to take more subtle aspects of orientations into account.
In particular, we will now describe how to associate to any edge
$\varepsilon\in E_y$ some distinguished orientation, with which 
it will be denoted $\varepsilon^\#$.
To this aim, fix in advance, in a $G$-invariant way, 
orientations for all flexible factors $G(v)$ with $v\in y$.
For any edge $\varepsilon\in E_y$, let $\Gamma_z$ be the component of $\Gamma$ to which
$\varepsilon$ belongs. Put $y_\varepsilon=\lambda_z^+(\varepsilon)$,
note that it belongs to the orbit $y$,
and consider the orientation of the flexible factor $G(y_\varepsilon)$ as fixed above.
This orientation induces orientations of all peripheral subgroups of this factor
(in the sense of compatibility of orientations as in \Defnref{X.4}; see also \Lemref{X.3}(2)).
To describe the oriented edge $\varepsilon^\#$, 
we proceed separately in the following two cases.

\emph{Case 1.}
If $z$ is a rigid cluster vertex 
(so that $\Gamma_z$ is the extended Whitehead graph for 
the rigid cluster factor ${\mathcal Q}_z = (G(z),\mathcal{H}_z)$),
put $\xi=\lambda_z(\varepsilon)\in{\mathcal H}_z$. 
We then have $y_\varepsilon\in N_{u_\xi}\setminus\{ z \}$,
so the fixed orientation of $G(y_\varepsilon)$ induces an orientation of 
its peripheral subgroup $G(y_\varepsilon)\cap G(u_\xi)$.
This obviously determines an 
order of the
doubleton $\partial G(u_\xi)$.
Since $G(u_\xi)$ is a peripheral subgroup of the factor $G(z)$ (compare \Lemref{E.1}),
$\partial G(u_\xi)$ is a peripheral of the space with peripherals associated to $\mathcal{Q}_z$.
Identifying the latter naturally with the core $C(\widetilde{\mathcal V}_z)$
of the associated $V$-tree ${\mathcal X}_V(\widetilde{\mathcal V}_v)$ (as in \Lemref{E.4}(1)),
$\partial G(u_\xi)$ is a peripheral of this core.
Since $\varepsilon$ is an edge of the extended Whitehead graph $\widetilde W_z=\Gamma_z$,
and since it belongs to edges associated to the peripheral $\partial G(u_\xi)$,
it carries the order-induced orientation (with respect to the above described order
of $\partial G(u_\xi)$).  (See \Rmkref{E.5}(3).)
We denote by $\varepsilon^\#$ the edge $\varepsilon$ with this order-induced orientation.

\emph{Case 2.}
If $z$ is of type (v2) (and non-adjacent to any rigid cluster vertex), $\Gamma_z=\Xi_{k(z)}$
is a $\theta$-graph. The edge $\varepsilon$ corresponds then to the unique vertex $y_i^z\in N_z$.
Again, since $G(y_i^z)$ is oriented,
it induces the order on the boundary set $\partial G(z)$, and hence
(by our earlier fixed identification) on the vertex set of $\Gamma_z$.
The latter induces an orientation on $\varepsilon$, and we again denote by $\varepsilon^\#$
the edge $\varepsilon$ with this induced orientation.

Put $E_y^\#:=\{ \varepsilon^\#:\varepsilon\in E_y \}$, and put also
\[
{\mathcal A}_y:=
\{ (o_1,\bar o_2): o_1,o_2\in E_y^\# \} \cup
\{ (\bar o_1,o_2): o_1,o_2\in E_y^\# \}
\]
where $\bar o_i$ denotes the oppositely oriented edge $o_i$,
for $i=1,2$.

Having defined all the blocks ${\mathcal A}_y$, we put ${\mathcal A}=\bigcup_{y\in Y}{\mathcal A}_y$.
We skip the straightforward verification of the fact 
that the so described set ${\mathcal A}$ does not
depend on the choices of the set of representatives $Z$, and the representatives $v_y$ 
of the orbits $y\in Y$.
This completes the description of the graphical connecting system ${\mathcal R}_G$.
The fact that ${\mathcal R}_G$ 
satisfies the transitivity property (4) from \Defnref{R.1}
follows, roughly speaking, from the fact that the tree $T^r_G$ is connected,
and we omit the details.

Given the graphical connecting system ${\mathcal R}_G$, as introduced above, we may
state the following result, which describes explicitly the form of a regular tree of graphs
which appears only implicitly in our statement of Therorem A
(in condition (2)). This result, 
\Thmref{S.1} below, may be viewed as the third main result of the paper.
Its proof, after necessary preparations provided in the next two sections, will be presented in \Secref{P}.

\begin{thm}
  \thmlabel{S.1}
Let $G$ be a 1-ended hyperbolic group which is not cocompact Fuchsian,
and suppose that each rigid cluster factor of its reduced Bowditch JSJ splitting
is virtually free.
%
Let 
${\mathcal R}_G$ be the graphical connecting system associated to $G$
(as described above in this section).
Then the Gromov boundary $\partial G$ is homeomorphic to the 
regular tree of graphs ${\mathcal X}({\mathcal R}_G)$.
\end{thm}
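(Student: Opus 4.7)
The plan is to combine \Thmref{T.9+} (which gives $\partial G \cong \lim \Theta^r(G)$) with a consolidation of $\Theta[\mathcal R_G]$ so as to match the two limits. The underlying tree $\tilde T$ of $\Theta[\mathcal R_G]$ has black vertices corresponding to copies of the components $\Gamma_z$ of $\Gamma$ (namely extended Whitehead graphs $\widetilde W_v$ for rigid cluster vertices $v$, and $\theta$-graphs $\Xi_{k(z)}$ for white vertices $z$ of type (v2) not adjacent to rigid clusters), joined through degree-$2$ white vertices via the V-involution $\mathbf a$ at vertex-type peripherals and via the set $\mathcal A$ of E-connections at edge-type peripherals. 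Since $\mathbf a$ is defined component-wise on $\Gamma$, following only V-peripheral connections partitions the black vertices of $\tilde T$ into \emph{V-tree cones}; each cone together with its internal white vertices is a b-subtree whose restriction of $\Theta[\mathcal R_G]$ is canonically isomorphic to $\Theta[\widetilde{\mathcal V}_v]$ or $\Theta[\mathcal V_{\Xi_{k(z)}}]$.

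I would first invoke \Thmref{T.7} to reduce $\lim \Theta[\mathcal R_G]$ to the limit of its consolidation along V-tree cones; by \Lemref{V.9}, \Corref{Y.6}, \Lemref{E.4}, and \Exref{V.8.2}, each cone constituent $\mathcal X_V(\widetilde{\mathcal V}_v)$ or $\mathcal X_V(\mathcal V_{\Xi_{k(z)}})$ decomposes into a core — canonically identified with $\partial G(v)$ or $\partial G(z)$ respectively, as a space with peripherals matching the $2$-ended edge groups appearing in $T^r_G$ — and a family of internally punctured arms attached at the core peripherals. Next I would examine how arms from distinct cones are joined through the E-connection white vertices of $\tilde T$: because $\mathcal A$ is partitioned into blocks $\mathcal A_y$, one per flexible orbit $y \in Y$, this gluing can be analyzed block by block. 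The crucial structural claim is that, for each $y$, the arms glued via $\mathcal A_y$ together with the relevant core peripherals assemble into a space canonically homeomorphic to $(S^1)^\circ \cong \partial G(v')$ for any representative $v' \in y$, with cyclic order matching that of the flexible factor $G(v')$ and with peripherals matching the $2$-ended subgroups adjacent to $v'$ in $T^r_G$. Granted this, the rigid cluster cores and these assembled $(S^1)^\circ$-pieces are glued along the boundary doubletons $\partial G(u)$ in exactly the pattern of $\Theta^r(G)$, and invoking \Thmref{T.9+} then gives $\lim \Theta[\mathcal R_G] \cong \partial G$.

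The hard part will be the structural claim above — that a single block $\mathcal A_y$ really produces the punctured circle $\partial G(v')$ with the correct cyclic order. This requires a delicate combinatorial argument using the orientation conventions in the definition of $\mathcal A_y$, handling separately the orientable and non-orientable flexible cases, combined with the density assertions of \Factref{X.5} and the transitivity axiom \pitmref{R.1(4)} of \Defnref{R.1} to propagate the identification consistently around each cycle. Once this compatibility is verified, the isomorphism of tree structures follows and repeated applications of \Thmref{T.7} and \Thmref{T.9+} complete the proof.
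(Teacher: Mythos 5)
Your outline reproduces the skeleton of the paper's argument: consolidate $\Theta[{\mathcal R}_G]$ along the maximal V-subtrees using \Thmref{T.7}, decompose each V-tree constituent into its core and arms via \Lemref{V.9}, \Corref{Y.6}, \Lemref{E.4} and \Exref{V.8.2}, reassemble the arms joined through each block ${\mathcal A}_y$ into a punctured circle, and compare the resulting system with $\Theta^r(G)$, whose limit is $\partial G$ by \Thmref{T.9+}. These reductions are exactly the paper's systems $\Theta^V[{\mathcal R}_G]$, $\Theta^V_*[{\mathcal R}_G]$ and $\Theta^V_*[{\mathcal R}_G]^a$, and your ``structural claim'' corresponds to \Lemref{I.1.1} and \Lemref{I.2.1}.

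The gap lies in the final matching step, in three respects. First, there is no \emph{canonical} homeomorphism of an assembled block onto $\partial G(v')$ ``with matching cyclic order'': both sides are punctured circles carrying dense null families of decorated doubletons, and any identification is produced only by a back-and-forth argument; what one can and must show is merely that both are densely decorated punctured circles with the same orientability and the same finite sets of peripheral types (\Lemref{D.4.1}). Second, ``glued in exactly the pattern of $\Theta^r(G)$'' is a quantitative statement you never verify: at a core peripheral of type $\xi$ the number of attached arms of each U-type must equal the index $[G(u_\xi):G(u_\xi)\cap G(x)]$, and when the flexible factor is non-orientable while the edge group is orientable, exactly half of those arms must attach with each orientation. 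These counts (Facts~\factref{S.2}, \factref{S.3}, transported to the consolidated system as Facts~\factref{P.2}, \factref{P.3}) are precisely where the $n_\xi-1$ multiplicities in the extended Whitehead graph and the packet structure of ${\mathcal A}_y$ enter; without them the gluing pattern need not agree with that of $\Theta^r(G)$. Third, even once the local data agree, an isomorphism of the two tree systems does not simply ``follow'': the block-wise identifications are defined only up to decoration-preserving homeomorphism, so one needs a global recursive mechanism to assemble them consistently. The paper supplies this by showing both systems are modelled on the connecting system ${\mathcal P}^r_G$ (\Exref{D.8}, \Obsref{D.8+}, conditions (1)--(4) of \Defnref{D.6}) and invoking the uniqueness statement \Lemref{D.7}, whose proof is a back-and-forth exploiting the regularity (homogeneity) of the decorated pieces. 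Supplying the counting facts and this global recursion is the substantive content separating your sketch from a proof.
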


We finish this section with some further observations concerning the 
edges in the graph $\Gamma$ of the connecting system ${\mathcal R}_G$.
These observations will be used
in the  proof of \Thmref{S.1} given in \Secref{P}.

Let $z\in Z$ be one of the orbit representatives fixed at the beginning of the above 
description of the graphical connecting system ${\mathcal R}_G$. Suppose first that
$z$ is a rigid cluster vertex of $T^r_G$ (the remaining case when $z$ is of type (v2) will be
discussed later). A \emph{bunch} of edges in the extended Whitehead graph
$\widetilde W_z$ is any collection $\{ e_1,\dots,e_{n_\xi-1} \}$ of edges that replace
an edge $e$ of the Whitehead graph $W_z$, as in the description of
$\widetilde W_z$ given in \Secref{E}, where $\xi=\lambda_z(e)\in{\mathcal H}_z$
is the label of $e$. Clearly, all edges in any bunch share the set of endpoints.
We assign to a bunch as above the same label $\xi$.

Now, as mentioned in the above description of ${\mathcal R}_G$,
any bunch of edges $\mathcal M$ labelled with $\xi$ is in a fixed bijective correspondence
(provided by the labelling $\lambda_z^+$) with the vertex set 
$N_{u_\xi}\setminus\{ z \}$. This correspondence allows us to split $\mathcal M$ into subsets
corresponding to the orbits of the action of $G(u_\xi)$ on $N_{u_\xi}\setminus\{ z \}$.
More precisely, observe that due to \Lemref{E.1}, $z$ is a global fixed point of the action
of $G(u_\xi)$ on $N_{u_\xi}$ (and an alternative justification follows by the fact
that $z$ is the unique rigid cluster vertex in $N_{u_\xi}$, due to its isolatedness). Consequently, the $G(u_\xi)$-orbit
$[x]$ of any vertex $x\in N_{u_\xi}\setminus\{ z \}$ is contained in
$N_{u_\xi}\setminus\{ z \}$. The subset of $\mathcal M$ corresponding to any such 
orbit $[x]$ will be called a \emph{packet} of edges in $\widetilde W_z$.

We will later need the following easy but a bit technical observation,
the proof of which we omit. 

\begin{fact}
  \factlabel{S.2}
Let $z\in Z$ be a rigid cluster vertex, let $\xi\in{\mathcal H}_z$, 
let $\mathcal M$ be a bunch of edges in $\Gamma_z=\widetilde W_z$ labelled with $\xi$,
and let $M\subset{\mathcal M}$ be a packet corresponding to the
$G(u_\xi)$-orbit $[x]$ of some vertex $x\in N_{u_\xi}\setminus\{ z \}$.

\begin{enumerate}
\item[(1)]
The cardinality of the packet $M$ coincides with the index
$[G(u_\xi):G(u_\xi)\cap G(x)]=[G(u_\xi):G([u_\xi,x])]$.
\item[(2)]
Having fixed $\mathcal M$, for all $M$ and $x$ as above for which the corresponding
flexible factor $G(x)$ is orientable, we have

\begin{enumerate} 
\item[(a)]
if the group $G(u_\xi)$ is orientable then
the orientations of all (oriented)
edges
$\varepsilon^\#$ with $\varepsilon$ in $M$ (as defined in the above description of the set
of E-connections for ${\mathcal R}_G$) are compatible, i.e., they induce the same order
on the common set of endpoints of these edges;
\item[(b)]
if the group $G(u_\xi)$ is non-orientable 
then the cardinality of the packet $M$ is even,
and half of the orientations of the edges $\{ \varepsilon^\#:\varepsilon\in M \}$
are compatible with one of the orders in the common set of endpoints,
while the other half of these orientations are compatible with the opposite order.
\end{enumerate}
\end{enumerate}
\end{fact}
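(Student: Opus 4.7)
The plan is to treat the three claims of \Factref{S.2} separately. Part (1) is a direct orbit--stabilizer computation, while parts (2) and (3) require a careful analysis of how the $G$-invariant orientation of the flexible factor (or, in case (3), of the edge group) transforms under the action of $G(u_\xi)$ on the doubleton $\partial G(u_\xi)$.

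For part (1), the bijection $\lambda_z^+ \colon \mathcal{M} \to N_{u_\xi}\setminus\{z\}$ restricts to a bijection between the packet $M$ and the $G(u_\xi)$-orbit of $x$. Orbit--stabilizer gives $|M| = [G(u_\xi):\Stab_{G(u_\xi)}(x)]$, and the stabilizer equals $G(u_\xi)\cap G(x) = G([u_\xi,x])$, since any element of $G$ fixing both endpoints of an edge of $T^r_G$ fixes the edge itself.

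For parts (2) and (3), I would first parameterise $[x]\cap N_{u_\xi}$ by cosets of $G([u_\xi,x])$ in $G(u_\xi)$, writing $y_\varepsilon = g\cdot x$ for some $g \in G(u_\xi)$. Unwinding the definition of $\varepsilon^\#$ from \Secref{S}, the order $O_\varepsilon$ induced on $\partial G(u_\xi)$ is the one compatible, in the sense of \Defnref{X.4}, with the $G$-invariantly chosen orientation of the flexible factor $G(y_\varepsilon)$. In case (3) the flexible factor is not orientable, so this part of the construction must be adapted to use instead the $G(u_\xi)$-invariantly chosen orientation of the edge group $G([u_\xi,y_\varepsilon])$; this is well-defined precisely because $G([u_\xi,x])$ is assumed orientable. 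By $G$-invariance the orientation of $G(y_\varepsilon) = gG(x)g^{-1}$ is the $g$-transport of that of $G(x)$, whence $O_\varepsilon = g \cdot O_x$, and so $\varepsilon_1^\#$ and $\varepsilon_2^\#$ induce the same order on the common endpoints $\{v,w\}$ if and only if the transport element $g_{12}\in G(u_\xi)$ with $y_{\varepsilon_2} = g_{12}\cdot y_{\varepsilon_1}$ lies in the orientation-preserving subgroup $G(u_\xi)^+$ of elements fixing both points of $\partial G(u_\xi)$.

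In case (2), orientability of $G(x)$ forces, by \Lemref{X.3}(2), orientability of $G([u_\xi,x])$, hence the inclusion $G([u_\xi,x]) \subseteq G(u_\xi)^+$, and a further check shows that all cosets relevant to the packet land in $G(u_\xi)^+$, yielding compatibility. In case (3), orientability of the edge group together with non-orientability of the flexible factor is to be used to force $G(u_\xi)^+$ to have index exactly $2$ in $G(u_\xi)$ with $G([u_\xi,x]) \subseteq G(u_\xi)^+$, after which the cosets of $G([u_\xi,x])$ split evenly between $G(u_\xi)^+$ and its complement, giving the even packet size and the half-half split. The main obstacle is the bookkeeping in case (3): one must confirm both that the adapted construction of $\varepsilon^\#$ agrees with the one sketched in \Secref{S} and that the index $[G(u_\xi):G(u_\xi)^+]$ really equals $2$, which should follow by tracing an orientation-reversing element of the flexible factor $G(x)$ through the structure of the reduced Bowditch JSJ tree.
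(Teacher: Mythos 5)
The paper states \Factref{S.2} with the proof explicitly omitted, so there is no argument of the authors to compare yours with; what follows assesses your proposal on its own terms. Part (1) is correct and is certainly the intended argument: the packet is the $G(u_\xi)$-orbit of $x$, and orbit--stabilizer together with $\Stab_{G(u_\xi)}(x)=G(u_\xi)\cap G(x)=G([u_\xi,x])$ gives the count. Your framework for (2) and (3) is also the right one: writing $y_\varepsilon=g\cdot x$ with $g\in G(u_\xi)$, equivariance of the chosen orientations gives $O_\varepsilon=g_*(O_x)$ on $\partial G(u_\xi)$, so two edges of a packet are compatibly oriented exactly when the transporting element can be taken in the subgroup $G(u_\xi)^+$ of elements fixing $\partial G(u_\xi)$ pointwise.

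The gaps are at the two places you defer, and they are exactly where the content lies. In (2), the ``further check'' that ``all cosets relevant to the packet land in $G(u_\xi)^+$'' is not a check one can carry out: the packet is parametrised by \emph{all} cosets of $G([u_\xi,x])$ in $G(u_\xi)$, and since $G([u_\xi,x])\subseteq G(u_\xi)^+$ is all that \Lemref{X.3} gives you, the requirement that every transporting element lie in $G(u_\xi)^+$ is, for a non-singleton packet, equivalent to $G(u_\xi)^+=G(u_\xi)$, i.e.\ to orientability of the $2$-ended group $G(u_\xi)$ itself --- which is neither a hypothesis of (2) nor a consequence of orientability of the flexible factor $G(x)$. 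Moreover (2), as it is consumed in \Factref{P.2}(2), compares edges from \emph{different} packets, attached to flexible vertices in different $G$-orbits whose orientations are fixed independently; your transport argument only compares vertices inside one orbit and does not address this at all. In (3), the plan to force $[G(u_\xi):G(u_\xi)^+]=2$ ``by tracing an orientation-reversing element of $G(x)$ through the tree'' does not work: orientation-reversal is a statement about the circle $\partial G(x)$, and such an element need not lie in (or normalise) $G(u_\xi)$, nor produce an element swapping the two points of $\partial G(u_\xi)$; if $G(u_\xi)$ happens to be orientable (for instance $G(u_\xi)=G([u_\xi,x])$ infinite cyclic, which the stated hypotheses of (3) allow), then all induced orders coincide and no evenness or half--half splitting can be extracted. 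What the parity and the half--half statement really use is non-orientability of $G(u_\xi)$ together with orientability of the edge group --- exactly the way the evenness of $[G(u):H_u]$ is obtained in \Exref{D.8}, and matching how the fact is applied via \Factref{P.2} and \Defnref{D.6} --- and this input is not supplied by, and does not follow from, non-orientability of the flexible factor alone. So parts (2) and (3) remain unproved as proposed.
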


Let now $z\in Z$ be a vertex of type (v2). Recall that, by the description of $Z$,
the set $N_z$ of all vertices in $T^r_G$ adjacent to $z$ contains no rigid cluster vertex.
Recall also that the component $\Gamma_z$ of the graph $\Gamma$
is isomorphic to the $\theta$-graph $\Xi_{k(z)}$, where $k(z)$ is the degree
of $z$ in $T^r_G$, and that we have fixed a bijective correspondence between
the edge set $E_{\Gamma_z}$ and the set $N_z$. For any $G(z)$-orbit $[x]$
(where $x$ is any vertex of $N_z$), the corresponding set of edges of the $\theta$-graph $\Gamma_z$
will be again called a \emph{packet} of edges in $\Gamma_z$.

We have the following observation similar to \Factref{S.2}, the proof of which we also skip.

\begin{fact}
  \factlabel{S.3}
Let $z\in Z$ be a vertex of type (v2), and let $M$ be the packet of 
edges in the $\theta$-graph $\Gamma_z$ corresponding to the
$G(z)$-orbit $[x]$ of some vertex $x\in N_z$.

\begin{enumerate}
\item[(1)]
The cardinality of the packet $M$ coincides with the index
\[[G(z):G(z)\cap G(x)]=[G(z):G([z,x])].\]
\item[(2)]
For all $M$ and $x$ as above for which the corresponding
flexible factor $G(x)$ is orientable, we have
\begin{enumerate}
\item[(a)]  
if $G(z)$ is orientable then
the orientations of all (oriented)
edges
$\varepsilon^\#$ with $\varepsilon$ in any such $M$ (as defined in the above description of the set
of E-connections for ${\mathcal R}_G$) are compatible, i.e., they induce the same order
on the common set of endpoints of these edges;
\item[(b)]
if $G(z)$ is non-orientable then
the cardinality of the packet $M$ is even,
and half of the orientations of the edges $\{ \varepsilon^\#:\varepsilon\in M \}$
are compatible with one of the orders in the common set of endpoints,
while the other half of these orientations are compatible with the opposite order.
\end{enumerate}
\end{enumerate}
\end{fact}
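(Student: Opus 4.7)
The plan is to deduce (1) from the orbit--stabilizer theorem and to treat (2) and (3) by tracking how the $G(z)$-action on $N_z$ transports the $G$-equivariantly chosen orientations of the flexible factors in the orbit $y=[x]$.

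For (1), the bijection $\lambda_z^+\colon E_{\Gamma_z}\to N_z$ restricts to a bijection $M\leftrightarrow[x]$, so $|M|=|[x]|$. Since an element $g\in G(z)$ fixes the adjacent vertex $x\in N_z$ iff $g\in G(z)\cap G(x)=G([z,x])$, the orbit--stabilizer theorem gives $|M|=[G(z):G([z,x])]$.

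For (2) and (3), the key first step is to establish an explicit formula for $\varepsilon^\#$. Any $v\in[x]\cap N_z$ can be written $v=gx$ with $g\in G(z)$, and by $G$-equivariance of our fixed orientations the orientation of $G(v)=gG(x)g^{-1}$ is the $g$-transport of the fixed orientation of $G(x)$. Let $(p,q)$ denote the order on $\partial G(z)=\partial G([z,x])$ induced from the orientation of $G(x)$ through its peripheral gap at $G([z,x])$; the identification of these doubletons holds because $G([z,x])$ is finite-index in the $2$-ended group $G(z)$. Applying the same construction to $G(v)$ and its peripheral $G([z,v])=gG([z,x])g^{-1}$ yields the order $(g\cdot p, g\cdot q)$ on $\partial G([z,v])=\partial G(z)$, where $g$ acts via the $G$-action on $\partial G$ (and $\{p,q\}$ is preserved setwise since $g\in G(z)$). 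The orientation $\varepsilon_g^\#$ is then the pull-back of this order via the fixed identification $\iota_z$.

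Given this formula, part (2) reduces to showing that when $G(x)$ is orientable, every $g$ occurring in $M$ preserves the order $(p,q)$ on $\partial G(z)$. The essential input here is \Lemref{X.3}(2): orientability of $G(x)$ forces orientability of the peripheral $G([z,x])$, so elements of $G([z,x])$ already fix $\partial G(z)$ pointwise, and the question collapses to one about the quotient $G(z)/G([z,x])$. Part (3) is then a parity argument derived from the same formula: when $G(x)$ is non-orientable but $G([z,x])$ is orientable, the orders $(g\cdot p, g\cdot q)$ remain constant on $G([z,x])$-cosets and depend only on whether $g$ preserves or reverses $(p,q)$; non-orientability of $G(x)$ produces an element of $G(z)$ that reverses this order, and multiplication by that element is an involution swapping the two fibres of the map $M\to\{(p,q),(q,p)\}$, thereby forcing their sizes to coincide. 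The main obstacle, as in \Factref{S.2}, will be disentangling the three distinct orientability notions at play---of the flexible factor $G(x)$, of the peripheral $G([z,x])$ as a $2$-ended subgroup of $G$, and of the ambient $2$-ended vertex group $G(z)$---and verifying that the hypotheses of (2) and (3) really do force the claimed behaviour on the doubleton $\partial G(z)$.
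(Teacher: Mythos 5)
The paper gives no proof of this fact (it is explicitly skipped, as for \Factref{S.2}), so I can only assess your argument on its own merits. Part (1) is correct: $\lambda_z^+$ identifies $M$ with the $G(z)$-orbit of $x$ and orbit--stabilizer gives $|M|=[G(z):G(z)\cap G(x)]$. Your transport formula is also correct and is the right key observation: for $g\in G(z)$ the order on $\partial G(z)$ induced by the ($G$-equivariantly chosen) orientation of $G(gx)$ is the $g$-image of the order induced by $G(x)$. The trouble is how you finish (2). You reduce (2), via this formula, to the assertion that every $g\in G(z)$ preserves the order $(p,q)$ on $\partial G(z)$, you handle $g\in G([z,x])$ using \Lemref{X.3}(2), and then you stop at ``the question collapses to the quotient $G(z)/G([z,x])$''. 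That remaining step is the entire content, and it does not follow from the stated hypothesis: by your own formula, compatibility of all the orders in the packet is \emph{equivalent} to every element of $G(z)$ fixing $\partial G(z)$ pointwise, i.e.\ to orientability of the $2$-ended group $G(z)$, and orientability of the flexible factor $G(x)$ says nothing about elements of $G(z)\setminus G(x)$. So you must either supply the hypothesis that $G(z)$ is orientable (which is the only regime in which this clause is later used, via Fact P.3(2) in checking condition (2) of \Defnref{D.6} for orientable blocks), or show that the configuration ``$G(z)$ non-orientable, $G(x)$ orientable'' cannot arise; you do neither.

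In (3) the pivotal sentence ``non-orientability of $G(x)$ produces an element of $G(z)$ that reverses this order'' is a non sequitur and is in fact incompatible with your own hypotheses: any element of $G(x)$ preserving the doubleton $\partial G(z)$ setwise lies in $G(x)\cap G(z)=G([z,x])$, so if it swapped the two points it would contradict the assumed orientability of $G([z,x])$. The order-reversing element that your involution/parity argument needs must come from non-orientability of $G(z)$ itself --- this is also the actual source of the evenness of $[G(z):G([z,x])]$, exactly as in the degree computation of \Exref{D.8} and \Obsref{D.8+} --- and that is not among the hypotheses you invoke. There is also a definitional issue you pass over: when $G(x)$ is non-orientable there is no $G$-invariant choice of orientations of the flexible factors in the orbit, so $\varepsilon^\#$ is not defined by the recipe of \Secref{S}, and your transport formula has to be re-founded (say, on orientations induced from the orientable edge group) before the half-and-half count can even be formulated. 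So (1) stands, but (2) and (3) as written contain genuine gaps.
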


\section{Regular tree systems of spaces with doubleton peripherals}
\seclabel{D}

In this section we prepare some tools and terminology needed in our proof of \Thmref{S.1}
(from which we then deduce the implications \pitmref{g_rigid_cluster_factors_vf}$\Rightarrow$\pitmref{bd_g_reg_twocon_tog} in \Mainthmref{1}
and (3)$\Rightarrow$(2) in \Mainthmref{2}).  By a 
\emph{space with doubleton peripherals} we mean
a space with peripherals in which each peripheral subspace consists of precisely
two points (i.e. is a doubleton). We start with a rather long definition.

\begin{defn}
  \defnlabel{D.1}
  \begin{enumerate}
\item A \emph{decoration} $\mathcal D$ of a space with doubleton peripherals $X$ is a collection
of the following data:
\begin{itemize}
\item a labelling $\tau$ which to any peripheral subspace $P$ of $X$ 
associates an element $\tau(P)$ from a finite set $\mathcal T$; $\tau(P)$ is then called
\emph{the type} of $P$; 
\item
for each peripheral $P$, either a single fixed order of the set of two elements of $P$,
called \emph{the orientation} of $P$ (in which case $P$ is called \emph{orientable}),
or an artificial assignment to $P$ of both orders of its two elements
(in which case we say that $P$ is \emph{non-orientable});
we also assume that the peripherals of a fixed type are either all orientable
or all non-orienbtable, so that we can speak of orientable and non-orientable types.
\end{itemize}

A pair $(X,{\mathcal D})$ as above will be called a \emph{decorated space with doubleton peripherals}.

\item Given a space with doubleton peripherals $X$ and its decoration $\mathcal D$,
the set of \emph{directed peripherals} of $(X,{\mathcal D})$ consists of all peripherals
with all assigned orientations (one directed peripheral for each orientable $P$, and two
directed peripherals for each non-orientable $P$). A \emph{type} of a directed peripheral
is the type of its underlying counterpart (i.e. of the same peripheral with
its orientation ignored).

\item A \emph{decoration-preserving} homeomorphism of $(X,{\mathcal D})$
is a homeomorphism $h$ of $X$ which maps peripherals to peripherals,
which preserves the types (i.e. $\tau(h(P))=\tau(P)$ for any peripheral $P$),
and which preserves the orientations of the peripherals. The latter means
that $h$ maps non-orientable peripherals to non-orientable ones, and it maps
orientable peripherals to orientable ones, preserving their assigned orientations
(in other words, the set of all directed peripherals of $(X,{\mathcal D})$ is preserved by $h$).
Note that the inverse $h^{-1}$ of any decoration-preserving homeomorphism $h$
of $(X,{\mathcal D})$ is also a decoration-preserving homeomorphism
of $(X,{\mathcal D})$.

\item A pair $(X,{\mathcal D})$ as above is called a \emph{regular space with doubleton peripherals}
if it satisfies the following \emph{regularity} condition: for any two directed peripherals
of $(X,{\mathcal D})$ having the same type there is a decoration-preserving homeomorphism
of $(X,{\mathcal D})$ which maps the first of these two directed peripherals to the second one.
In particular, if $P$ is a non-orientable peripheral, there is a decoration-preserving
homeomorphism which maps $P$ to $P$ and for which the restriction $h|_P$
is a transposition.
  \end{enumerate}
\end{defn}

We now present a few classes of examples of regular spaces with doubleton peripherals.
These examples will be used in  our later considerations and arguments
concerning
the proof of \Thmref{S.1}.

%

\begin{ex}
  \exlabel{D.3}
Recall that to any abstract factor ${\mathcal F}=(K,\{ H_i \}_{i\in I})$ 
(as in \Ssecref{F})
there is associated the induced space with doubleton peripherals, 
as described in \Defnref{F.1.2}; we denote this space with peripherals by $X_{\mathcal F}$.
We describe a decoration ${\mathcal D}={\mathcal D}_{\mathcal F}$ 
naturally associated to $X_{\mathcal F}$,
which turns out to satisfy the regularity condition of \Defnref{D.1}(4),
and which yields what we call \emph{the regular space with doubleton peripherals
associated to an abstract factor}.

The labelling $\tau$ of $\mathcal D$ 
has values in the set ${\mathcal T}:=I$ and
is given by 
the formula 
\[\tau(\partial(gH_ig^{-1})):=i.\]
The orientability (and non-orientability) data for the peripherals, as well as the assigned
orientations, are described by inducing in the obvious way the similar data for the
corresponding peripheral subgroups (provided by fixed orientation data for the factor,
as given in \Defnref{F.3}). By \Lemref{F.4}, 
the so described decoration ${\mathcal D}_{\mathcal F}$ satisfies the regularity condition.
\end{ex}

\begin{ex}[densely decorated punctured circle]
  \exlabel{D.4}
Let $(K,\Omega)$ be a punctured circle, i.e., a space with peripherals (as described
right before \Defnref{T.2}) in which $K=(S^1)^\circ$, with its standard family
$\Omega$ of peripheral subspaces, which are doubletons.
Fix an arbitrary auxiliary orientation of $(K,\Omega)$ (in the sense of \Rmkref{X.1.2}).
Let ${\mathcal D}$ be any decoration for $(K,\Omega)$ which is \emph{dense},
i.e., satisfies one of the following two conditions:
\begin{enumerate}
\item[(a)]
for each type of peripheral, the union of all
directed peripherals of this type in $((K,\Omega),{\mathcal D})$, 
with orientations compatible (respectively, incompatible)
with the fixed orientation of  $(K,\Omega)$, is dense in $K$;
that is, if a type of peripherals is  non-orientable, we require simply
that the union of peripherals of this type is dense in $K$,
and if a type is orientable, we require both that the union of all those peripherals of this type 
whose orientations are \emph{compatible} with the fixed
orientation of $(K,\Omega)$ is dense in $K$ and that the union of all those peripherals of this type 
whose orientations are \emph{incompatible} with the fixed
orientation of $(K,\Omega)$ is dense in $K$;
\item[(b)]
all peripherals are orientable, and for each type either all peripherals of this type are
oriented consistently with the fixed orientation of $(K,\Omega)$, or they are all oriented inconsistently with this orientation; in any case, the union of the peripherals of any fixed type is dense in $K$.
\end{enumerate}

\noindent 
We then say that the pair $((K,\Omega),{\mathcal D})$ is a \emph{densely decorated punctured circle}.
We will speak of two kinds of densely decorated punctured circles: those satisfying
condition (a) above will be called \emph{non-orientable}, and those satisying condition (b) will be called \emph{orientable}. The next lemma collects some useful properties of densely decorated punctured circles,
and we omit its straightforward proof which can be performed by a standard
back and forth argument.
\end{ex}

\begin{lem}
  \lemlabel{D.4.1}
\begin{enumerate}
\item[(1)] Any densely 
decorated punctured circle is a regular space with doubleton peripherals.
\item[(2)] Any two orientable densely decorated punctured circles with the same number of types of peripherals are isomorphic (via a decoration-preserving homeomorhism $h$), 
after appropriate
identification of the types, and after possibly reversing orientations of all peripherals
of some of the types. Moreover, for any bijection $\beta$ between the sets of types of the
punctured circles above 
(which we view as the identification of the types)
there is a homeomorphism $h$ as above which is compatible with $\beta$
(i.e. such that for any peripheral $P$ we have $\tau(h(P))=\beta(\tau(P))$).
\item[(3)] Any two 
non-orientable densely decorated punctured circles with the same number
of types of orientable peripherals, and the same number 
of types of non-orientable peripherals,
are isomorphic, after appropriate
identification of the types. Moreover, for any bijection $\beta$ between the sets of types of the
punctured circles above, if $\beta$ respects orientability and non-orientability of the types,
then there is an isomorphism $h$ between those decorated punctured circles 
which is compatible with $\beta$.
\end{enumerate}
\end{lem}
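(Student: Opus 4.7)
The plan is to deduce all three parts from a single back-and-forth construction carried out on the natural circle extensions described in \Rmkref{X.1.2}. Regard each densely decorated punctured circle $((K,\Omega),\mathcal{D})$ as sitting inside its natural extension $\widehat{K}\cong S^1$, where every peripheral $P\in\Omega$ is the set of endpoints of a uniquely determined induced arc $I_P\subset\widehat{K}\setminus K$. Fix the auxiliary orientations on $\widehat{K}$ and $\widehat{K}'$ in advance; they induce cyclic orders on $\Omega$ and $\Omega'$, and they turn each directed peripheral into a directed arc in the ambient circle. A decoration-preserving homeomorphism of the punctured circle extends uniquely to a homeomorphism of the extended circle that either preserves or reverses this cyclic order, and it is determined up to isotopy by its restriction to any countable dense subset of $\Omega$ that is closed under the back-and-forth process.

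For part (1), given two directed peripherals $P_0,P_1$ of the same type in $((K,\Omega),\mathcal{D})$, the goal is to construct a decoration-preserving self-homeomorphism sending $P_0$ to $P_1$ with matching orientations. This is the special case of the argument in parts (2) and (3) applied with the same space on both sides (taking $\beta=\mathrm{id}$) and with the matching $P_0\mapsto P_1$ forced at the first step of the back-and-forth. So the whole lemma reduces to the back-and-forth for (2) and (3).

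For parts (2) and (3), enumerate $\Omega=\{P_n\}_{n\ge 1}$ and $\Omega'=\{P'_n\}_{n\ge 1}$. In the orientable case (condition (b)), first perform a preprocessing: for every type $t$ for which all peripherals of type $t$ in $((K,\Omega),\mathcal{D})$ are compatible with the fixed $\widehat{K}$-orientation but all peripherals of type $\beta(t)$ in $((K',\Omega'),\mathcal{D}')$ are incompatible with the fixed $\widehat{K}'$-orientation (or vice versa), reverse the orientation of every peripheral of type $t$ on the first side. After this adjustment, the type-wise compatibility data on the two sides agree under $\beta$. Now construct inductively a sequence $\phi_n\colon S_n\to S'_n$ of bijections between finite subsets $S_n\subset\Omega$, $S'_n\subset\Omega'$ which preserve cyclic order, type (via $\beta$) and orientation. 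At odd steps, pick the smallest-indexed not-yet-matched $P_n\in\Omega$ and extend by sending it to a peripheral of the correct type and orientation lying in the correct open arc of $\widehat{K}'\setminus\bigcup_{P\in S'_{n-1}}I_{\phi_{n-1}(P)}$; density of each type in $\Omega'$ (respecting the $\widehat{K}'$-orientation compatibility, which after preprocessing matches that of $P_n$) guarantees such a peripheral exists. Even steps go the other direction. In the non-orientable case (condition (a)) no preprocessing is needed: for each orientable type both orientation-compatibility classes are dense, and for each non-orientable type the union of the peripherals is dense, so the back-and-forth finds a suitable peripheral in every required arc directly.

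The main obstacle, and the one subtlety that needs checking carefully, is that the constructed bijection $\bigcup_n\phi_n$ on a countable, cyclically dense subset of $\widehat{K}$ extends to a homeomorphism of $\widehat{K}\to\widehat{K}'$ that carries $K$ onto $K'$ and respects all peripherals. Cyclic-order preservation on dense sets of $S^1$ always yields an extension to a homeomorphism of the circles, so the only real content is that the extension sends $K$ to $K'$; this follows because the images of the $I_{P_n}$'s exhaust $\widehat{K}'\setminus K'$ by the back-and-forth (every $P'_m$ is matched at some even step) and similarly on the other side, so the complements of the arc-unions, which are exactly $K$ and $K'$, correspond under the extended homeomorphism. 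Thus the extension is a decoration-preserving homeomorphism of the punctured circles compatible with $\beta$; applied with two copies and a forced initial match this proves (1), and in general it proves (2) and (3).
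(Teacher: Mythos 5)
Your construction is the standard back-and-forth on the circle extensions of \Rmkref{X.1.2}, which is exactly the argument the paper has in mind (the paper omits the proof, saying only that it "can be performed by referring to the standard back and forth argument"), and for parts (2) and (3) your write-up is essentially complete: the per-type orientation reversal matches the freedom allowed in the statement, the density hypotheses do supply a peripheral of the required type and compatibility class inside every gap arc (each unmatched induced arc lies wholly in a single gap arc, and each gap arc meets $K$ in a nonempty open set), and the cyclic-order-preserving matching of complementary arcs extends to a homeomorphism of circles carrying $K$ onto $K'$.

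The one place where the proof as written does not quite cover the statement is part (1) in the non-orientable case (a). Regularity, as defined in part (4) of \Defnref{D.1}, includes in particular a decoration-preserving homeomorphism mapping a non-orientable peripheral $P$ to itself by the transposition, and more generally requires connecting two directed peripherals of the same orientable type whose assigned orientations have \emph{opposite} compatibility with the auxiliary orientation of the circle; any such homeomorphism must reverse the cyclic order, so it cannot arise from the cyclic-order-preserving matching you describe in (2)--(3), and forcing such a first match is inconsistent with "preserving cyclic order" in the subsequent steps. The fix is immediate and should be said explicitly: when the forced initial match demands it, run the mirrored (cyclic-order-reversing) back-and-forth, matching each orientable peripheral of compatibility class $c$ to one of the opposite class; condition (a) guarantees both classes of every orientable type are dense, so the same gap-arc argument applies verbatim. (In case (b) no reversal is ever needed for part (1), since all peripherals of a given type share the same compatibility class.) With that addendum the proposal fully proves the lemma.
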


%

\begin{rmk}
  \rmklabel{D.4.2}
We should note here that if ${\mathcal F}=(K,\{ H_i \}_{i\in I})$ is an abstract
flexible factor, then the space with peripherals  $(X_{\mathcal F},{\mathcal D}_{{\mathcal F}})$
associated to $\mathcal F$ (as described in \Exref{D.3}), is a densely decorated
punctured circle (see \Factref{X.5} for the justification).
This densely decorated punctured circle 
is easily seen to be orientable if and only if the flexible factor $\mathcal F$ itself
is orientable. The numbers of types of orientable and non-orientable peripherals
in $(X_{\mathcal F},{\mathcal D}_{{\mathcal F}})$ coincide then with the numbers of 
(the conjugacy classes of) the peripheral subgroups $H_i$ which are orientable and non-orientable,
respectively.
\end{rmk}

We now proceed one level further, and discuss some tree systems made out of
regular spaces with doubleton peripherals. As we will see, such tree systems
are induced, in an essentially unique way,  by rather simple data described in the next definitions.

\begin{defn}
  \defnlabel{D.0}
Given an arbitrary regular (decorated) space with doubleton peripherals $((K,\Omega),{\mathcal D})$,
a \emph{degree function} for this space is a positive integer valued function $d:\Omega\to \Z$
which is constant on types, i.e., such that 
$d(P_1)=d(P_2)$ for any two peripherals $P_1,P_2$ 
of the same type. We call a collection of data ${\mathcal D}^+=({\mathcal D},d)$ 
consisting of a decoration $\mathcal D$ and
a degree function $d$ an \emph{extra-decoration} for $(K,\Omega)$.
The pair $((K,\Omega),{\mathcal D}^+)$ is then called an \emph{extra-decorated regular space
with doubleton peripherals}.
\end{defn}

\begin{defn}
  \defnlabel{D.5}
Given a finite family ${\mathcal K}=((K_j,\Omega_j),{\mathcal D}_j^+)_{j\in J}$ of regular 
extra-decorated
spaces with doubleton peripherals, denote by 
${\mathcal T}={\mathcal T}_{\mathcal K}=\bigsqcup_{j\in J}{\mathcal T}_j$ the disjoint union of the sets
${\mathcal T}_j$ of types of the peripherals in the families $\Omega_j$.
\emph{A connecting system for regular spaces with doubleton
peripherals} is a pair ${\mathcal P}=({\mathcal K},{\mathcal B})$, where $\mathcal K$ is a family as above,
and where $\mathcal B$, called a \emph{block structure},  consists of the following data:
\begin{enumerate}
\item a partition  ${\mathcal T}_{\mathcal K}=\bigsqcup_{a\in A}B_a$ of 
the set of types ${\mathcal T}_{\mathcal K}$ into subsets $B_a$ called \emph{blocks};  
\item for each block $B_a$, additional on information whether $B_a$ is \emph{orientable}
or \emph{non-orienta\-ble}; we require that if $B_a$ is orientable then it necessarily consists of
orientable types only, but if $B_a$ is non-orientable, there is no restriction on orientability
and non-orientability of the types that it contains.
\end{enumerate}

\noindent
We also require in the definition 
that the block structure $\mathcal B$ fulfills the following \emph{transitivity} condition:
for any two indices $j,j'\in J$ there is a sequence of indices
$j=j_0,j_1,\dots,j_k=j'$ such that for each $i=1,\dots,k$ there exist types
$\tau\in{\mathcal T}_{j_{i-1}}$ and $\tau'\in{\mathcal T}_{j_i}$ which belong
to the same block.
\end{defn}

\begin{defn}
  \defnlabel{D.6}
Let $\Theta=(T, \{ L_t\}), \{ \Sigma_u \}, \{ \varphi_e \})$ be a tree system,
and let \[{\mathcal P}=(((K_j,\Omega_j),{\mathcal D}_j^+)_{j\in J},{\mathcal B})\] be
a connecting system for regular spaces with doubleton peripherals. 
For each $j\in J$, denote by 
$d_j:\Omega_j\to \Z$ the degree function of the extra-decoration ${\mathcal D}_j^+$.
We say that $\Theta$ is \emph{modelled on} $\mathcal P$ if each constituent space $L_t$ of $\Theta$ 
can be equipped with an identification (as a space with peripherals) 
with one of the spaces $K_j$ of $\mathcal P$ (and hence also equipped with the corresponding 
extra-decoration
${\mathcal D}_j^+$) so that
the following conditions are satisfied:

\begin{enumerate}
\item[(1)] for each white vertex $u$ of $T$ there is a block $B$ of ${\mathcal B}$ such that
the set of types of the peripherals of the form $\varphi_{[u,t]}(\Sigma_u)\subset L_t$
(under the identification of $L_t$ with one of the $K_j$, as above),
where $t$ runs through the set of all vertices of $T$ adjacent to $u$, coincides with $B$;

\item[(2)] for each $u$ and $B$ as in (1), if the block $B$ is orientable,
then for any two edges $e_1=[u,t_1]$ and $e_2=[u,t_2]$ adjacent to $u$ the map
$\varphi_{e_2}\varphi_{e_1}^{-1}$ between the appropriate peripherals
in $L_{t_1}$ and $L_{t_2}$ 
preserves the orientations of these peripherals
(provided by the identifications of both $L_{t_i}$'s with the corresponding $K_j$'s, as above);
moreover, we assume in this case that for any $\lambda\in B$, denoting by $K_j$
this unique space of $\mathcal P$ which contains peripherals of type $\lambda$,
the number of (black) vertices $t$ adjacent to $u$ for which the spaces $L_t$ 
are identified with $K_j$ and the peripherals $\varphi_{[u,t]}(\Sigma_u)$ are of type $\lambda$
is equal to the degree $d_j(\varphi_{[u,t]}(\Sigma_u))$ (for any $t$ as above);
\item[(3)] for any $u$ and $B$ as in (1), if the block $B$ is non-orientable,
and if $\lambda\in B$ is a non-orientable type then, denoting by $K_j$
this unique space of $\mathcal P$ which contains peripherals of type $\lambda$,
the number of (black) vertices $t$ adjacent to $u$ for which the spaces $L_t$ 
are identified with $K_j$ and the peripherals $\varphi_{[u,t]}(\Sigma_u)$ are of type $\lambda$
is equal to the degree $d_j(\varphi_{[u,t]}(\Sigma_u))$ (for any $t$ as above);
\item[(4)]
for any $u$ and $B$ as in (3) (i.e. with $B$ non-orientable), 
if $\lambda\in B$ is an orientable type then,
again denoting by $K_j$
this unique space of $\mathcal P$ which contains peripherals of type $\lambda$,
the number of (black) vertices $t$ adjacent to $u$ for which the spaces $L_t$ 
are identified with $K_j$ and the peripherals $\varphi_{[u,t]}(\Sigma_u)$ are of type $\lambda$
is equal to twice the degree, i.e., to $2\cdot d_j(\varphi_{[u,t]}(\Sigma_u))$ (for any $t$ as above);
moreover, if we choose an auxiliary orientation of the doubleton $\Sigma_u$
(i.e. an order for its two points) then for exactly half of the vertices $t$ as above (i.e. for
$d_j(\varphi_{[u,t]}(\Sigma_u))$ of those vertices) the  map $\varphi_{[u,t]}$ preserves
the orientations, and for another half of such vertices it reverses the orientations.
\end{enumerate}
\end{defn}

Informally speaking, the above conditions mean that the constituent spaces of $\Theta$,
after their identifications with the spaces $K_j$,
are glued to each other in $\Theta$ along the peripherals belonging to the same blocks 
$B$ of $\mathcal B$,
in numbers appropriately determined by the degrees.
Moreover, if a block $B$ is orientable, all the gluings preserve the orientations,
and if $B$ is non-orientable, the condition is a bit more complicated, namely:
gluings along non-orientable peripherals are arbitrary, and those along orientable
peripherals are performed in equal numbers with both possible orientations, for each type.

Any tree system $\Theta$ which is modelled on some connecting system for
regular spaces with doubleton peripherals will be called
a \emph{regular tree system of spaces with doubleton peripherals}.
Any family of identifications of the constituent spaces $(L_t,\Omega_t)$ of $\Theta$
with the spaces $(K_j,\Omega_j)$ of $\mathcal P$ for which conditions (1)-(4) above are satisfied
will be called a \emph{$\mathcal P$-atlas} for $\Theta$.

The significance of the concept of being \emph{modelled on a connecting system
for regular spaces with doubleton peripherals} for our arguments in the proof
of \Thmref{S.1}, as given in \Secref{P} below, comes from the following observation.

\begin{lem}
  \lemlabel{D.7}
Any two tree systems modelled on the same connecting system $\mathcal P$
for regular spaces with doubleton peripherals are isomorphic (as tree systems).
\end{lem}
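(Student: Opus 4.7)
The plan is to construct the isomorphism $F=(\lambda,\{f_t\},\{g_u\})$ recursively, expanding along the underlying trees from a chosen base pair of black vertices; the philosophical point is that both $\Theta$ and $\Theta'$ have identical local combinatorial structure at every vertex, since that structure is entirely dictated by $\mathcal{P}$ once the identification of a constituent with a model space $K_j$ is fixed. The two main tools will be the regularity of each $(K_j,\mathcal{D}_j)$, which supplies enough decoration-preserving self-homeomorphisms of $K_j$, and the precise counting built into conditions (1)--(4) of \Defnref{D.6}, which ensures compatible numbers of neighbours of each type (with the correct orientation refinement in the non-orientable case).

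For the base case, first observe that the transitivity hypothesis in \Defnref{D.5} combined with condition (1) of \Defnref{D.6} forces every $K_j$, $j\in J$, to appear as an atlas identification of some constituent of both $\Theta$ and $\Theta'$. Fix any $j_0\in J$, pick black vertices $t_0\in V_T^b$ and $t_0'\in V_{T'}^b$ whose constituents are both identified with $K_{j_0}$, set $\lambda(t_0):=t_0'$, and define $f_{t_0}\colon L_{t_0}\to L'_{t_0'}$ as the composite of the two atlas identifications with $K_{j_0}$. By construction $f_{t_0}$ is decoration-preserving.

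The recursion then alternates between two kinds of steps in a BFS-ordering of the edges emanating from $t_0$. At a \emph{white step}, starting from a matched pair $t\leftrightarrow t'$ with decoration-preserving $f_t$, the unmatched white neighbours of $t$ correspond bijectively to the peripherals of $L_t$, and $f_t$ pushes these forward to peripherals of $L'_{t'}$ of equal type and orientation; the induced bijection with the white neighbours of $t'$ gives the matching $u\leftrightarrow u'$ (setting $\lambda(u):=u'$), after which axiom \pitmref{(I3)} forces $g_u$ to be the unique doubleton bijection making the relevant square commute. At a \emph{black step}, starting from a matched pair $u\leftrightarrow u'$ with $g_u$ defined, the yet-unmatched black endpoints of $u$ and $u'$ have peripherals distributed across the same block $B$ on both sides (by condition (1) of \Defnref{D.6} applied at $u$ and $u'$), with type-by-type multiplicities agreeing by conditions (2)--(4). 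Pick any bijection of these unmatched neighbours respecting types and orientation-splits, and for each matched pair $t\leftrightarrow t'$ observe that $L_t$ and $L'_{t'}$ are identified with a common $K_j$, while the data already fixed on $\varphi_{[u,t]}(\Sigma_u)$ amounts to a pair of directed peripherals of $K_j$ of equal type connected by the identity doubleton map. The regularity of $(K_j,\mathcal{D}_j)$ then supplies a decoration-preserving self-homeomorphism of $K_j$ carrying one such directed peripheral onto the other, and transporting it back gives the required $f_t$ satisfying $f_t\circ\varphi_{[u,t]}=\varphi'_{[u',t']}\circ g_u$.

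The hard part will be the black step in the non-orientable block case (condition (4) of \Defnref{D.6}): there the bijection of unmatched black neighbours of $u$ and $u'$ of each orientable type $\lambda\in B$ must send the orientation-compatible half on one side onto the orientation-compatible half on the other, after an auxiliary order on $\Sigma_u$ is transported via $g_u$ to $\Sigma'_{u'}$. The ``equal halves'' clause of condition (4) is exactly what makes these two-sided cardinalities agree and hence makes the refined bijection available. Once this combinatorial matching is carried through at every step, an expanding sequence of finite b-subtrees exhausts $T$, the parallel construction exhausts $T'$ edge by edge, and the resulting tuple $(\lambda,\{f_t\},\{g_u\})$ satisfies axioms \pitmref{(I1)}--\pitmref{(I3)} by construction.
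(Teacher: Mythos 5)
Your proposal is correct and follows essentially the same route as the paper, which omits the details but indicates precisely this argument: a recursive construction of the isomorphism respecting the $\mathcal P$-atlas identifications, started via the transitivity condition of \Defnref{D.5} and propagated using the regularity property of \Defnref{D.1}(4), with the counting clauses (2)--(4) of \Defnref{D.6} (including the ``equal halves'' clause in the non-orientable block case) guaranteeing the type- and orientation-respecting matchings of neighbours. Your treatment of the delicate case — orientable types inside non-orientable blocks, handled by transporting an auxiliary order of $\Sigma_u$ via $g_u$ — is exactly the point the paper's sketch leaves implicit, and it is handled correctly.
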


We omit a rather straightforward (though technically inconvenient to expose)
proof of this lemma, indicating only that
an isomorphism from the assertion can be constructed recursively,
in a way respecting the identifications from the $\mathcal P$-atlases, by refering
to the regularity of the involved decorated spaces with doubleton peripherals
(see part (4) of \Defnref{D.1}).
To start such a recursive construction, one needs to refer
to the transitivity condition in \Defnref{D.5} to find some initial 
constituent spaces in both systems which are identified with the same space $K_j$
of the corresponding connecting system.

The next example will play a central role in the final step of the proof of \Thmref{S.1},
as given in \Secref{P}.

\begin{ex}[$\Theta^r(G)$ as a regular tree system of spaces with doubleton
peripherals]
  \exlabel{D.8}
We will show that any tree system $\Theta^r(G)$, as described in \Ssecref{reduced_JSJ}, has
a natural structure of a regular tree system of spaces with doubleton peripherals.
We will do this by explicitly describing a connecting system ${\mathcal P}={\mathcal P}^r_G$
on which the corresponding tree system $\Theta^r(G)$ is modelled.
We refer to the terminology and notation as in \Ssecref{reduced_JSJ}.

To describe the appropriate family $(K_j,\Omega_j)_{j\in J}$ of regular spaces with 
doubleton peripherals in the connecting system ${\mathcal P}^r_G$, consider a set $v_j:j\in J$ of
representatives of orbits of the action of $G$ on the set of black vertices of $T^r_G$
(i.e. on the set of flexible vertices and rigid cluster vertices).
For each $j\in J$ put 
\[K_j:=\partial G(v_j)\,\,\,\, \hbox{and}\,\,\,\, 
\Omega_j:=\{ \partial G(u): u\in N^{T^r_G}_{v_j} \},\]
where $N^{T^r_G}_{v_j}$ is the set of (white) vertices in $T^r_G$
adjacent to $v_j$.
(In other words, put $(K_j,\Omega_j)$ to be the space with peripherals associated to
the group $G(v_j)$ viewed as an abstract factor in a way described in \Exref{F.1.1}.)

In order to describe appropriate decorations ${\mathcal D}_j$ for the spaces $(K_j,\Omega_j)$,
we need some preparations. Fix in advance, in a conjugacy invariant way,
orientations for all those 2-ended groups $G(u)$ (at the white vertices $u$ of $T^r_G$) 
which are orientable.
Note that this induces the orientations of all those edge groups $G(e)$ for which the edge $e$
has as its endpoint a white vertex $u$ with orientable $G(u)$.
Fix also in a conjugacy invariant way orientations for all orientable groups $G(e)$
at those edges $e$ whose white endpoint groups $G(u)$ are non-orientable.
Altogether, this results with some conjugacy invariant and consistent choices of orientations for
all edge groups $G(e)$ which are orientable. As a consequence, this induces also
the choices of orientations (in a conjugacy invariant way) for all orientable peripheral subgroups
of all black vertex groups $G(v_j)$ viewed as abstract factors.

For any $(K_j,\Omega_j)$ as above, take as
a decoration ${\mathcal D}_j$  all the data
as in \Exref{D.3}, under viewing $G(v_j)$ as an abstract factor,
with the orientations of orientable peripheral subgroups chosen in a way 
described in the previous paragraph. 
To describe the appropriate degree function $d_j$,
consider any peripheral $\partial G(u)$ in $K_j=\partial G(v_j)$, 
where $u$ is some (white) vertex of $T^r_G$
adjacent to $v_j$. Recall that the peripheral subgroup corresponding
to this peripheral of $K_j$ is $G([u,v_j])=G(u)\cap G(v_j)$, 
which we denote briefly by $H_u$.
If $G(u)$ (which is a 2-ended group) is orientable,
we take as $d_j(\partial G(u))$ the index $[G(u):H_u]$.
If both $G(u)$ and $H_u$ are non-orientable, we similarly put 
$d_j(\partial G(u)):=[G(u):H_u]$. However, if $G(u)$ is non-orientable, and $H_u$
is orientable, we notice that the index $[G(u):H_u]$ is an even number,
and we take as $d_j(\partial G(u))$ half of this number. Note that, due to the final comment
in \Exref{D.3}, with the so described extra-decoration ${\mathcal D}_j^+=({\mathcal D}_j,d_j)$,
each of the spaces
$(K_j,\Omega_j)$ becomes an extra-decorated regular space with doubleton
peripherals.

We now pass to the description of appropriate data ${\mathcal B}=\{ B_a:a\in A \}$.
Take as $A$ the set of $G$-orbits of the white vertices of $T^r_G$.
For any orbit $a\in A$, let $u$ be any vertex in this orbit. 
Note that for any (black) vertex $t$ adjacent to $u$, 
the corresponding space with peripherals $\partial G(t)$
is identified, uniquely up to a decoration preserving homeomorphism,
with the appropriate space $K_j=\partial G(v_j)$, where $v_j$ is in the same orbit as $t$,
via any of the maps from the $G$-action of $G$
on $\partial G$ which sends $\partial G(v_j)$ to $\partial G(t)$.
In particular, under the latter identifications, all peripherals in all such  spaces $\partial G(t)$
have well defined types.
Define $B_a$ to be the set
of all types appearing among the peripherals of the form $\varphi_{[u,t]}(\Sigma_u)$
(in the corresponding spaces $\partial G(t)$),
where $t$ runs through all (black) vertices of $T^r_G$ adjacent to $u$. 
Furthermore, we declare that the block $B_a$ is orientable if and only if
the corresponding 2-ended group $G(u)$ is orientable.

We skip the straightforward verification of the fact that all the data above
are well defined.
It is also not hard to verify that the above described data 
satisfy all requirements for being a connecting system for regular spaces with
doubleton peripherals,  and we denote this system by ${\mathcal P}^r_G$. 

It remains to show that the tree system $\Theta^r(G)$ is indeed modelled on 
the connecting system ${\mathcal P}^r_G$.
To do this, we will now describe some ${\mathcal P}^r_G$-atlas ${\mathbf A}^r_G$ for $\Theta^r(G)$.
Given any constituent space $\partial G(t)$ of $\Theta^r(G)$
(where $t$ is a black vertex of $T^r_G$),
consider the unique vertex $v_j$ which lies in the same $G$-orbit as $t$.
Consider then any $g\in G$ which sends $t$ to $v_j$,
and take as an identification in ${\mathbf A}^r_G$ the restriction to $\partial G(t)$
of the homeomorphism of $\partial G$ induced by $g$ (which sends $\partial G(t)$
to $\partial G(v_j)=K_j$).
The verification that ${\mathbf A}^r_G$ is indeed a ${\mathcal P}^r_G$-atlas (i.e. that it satisfies
conditions (1)-(4) of \Defnref{D.6}) becomes (nearly tautologically) clear in view of
the following observation, the easy proof of which we also skip.
\end{ex}

\begin{obs}
  \obslabel{D.8+}
Under the identifications from the atlas ${\mathbf A}^r_G$, the data from the connecting system
${\mathcal P}^r_G$ can be a posteriori interpreted in the following terms corresponding to the
features of the action of $G$ on $T^r_G$:
\begin{itemize}
\item
the set $\mathcal{T}_G$ of all types of the peripherals in all constituent
spaces in the system $\Theta^r(G)$ can be identified with the set of all $G$-orbits $[e]$
of the edges $e$ of $T^r_G$; under this identification, each peripheral $\varphi_e(\Sigma_u)$
is assigned with the type $[e]\in\mathcal{T}_G$;

\item
a type $[e]\in\mathcal{T}_G$ is orientable iff the corresponding 2-ended group $G(e)$
is orientable;

\item
the set $\mathcal B$ of blocks is indexed by the set of $G$-orbits $[u]$ in the set of all white
vertices $u$ in $T^r_G$, and each block $B_{[u]}$ consists of those types $[e]$ which
correspond to the edges $e$ adjacent to $u$;

\item the set of types $\mathcal{T}_G$ is partitioned into subsets 
\[
{\mathcal T}_j=\{ [e]:\text{ $e$ is an edge of $T^r_G$ adjacent to $v_j$} \},
\]
 and each ${\mathcal T}_j$
is the set of types of peripherals in $(K_j,\Omega_j)$;
\item
  if $[e]\in{\mathcal T}_j$, and if $u$ denotes the white endpoint of $e$, then
  \begin{enumerate}
  \item[(a)] if both groups $G(e),G(u)$ are orientable, or both are
    non-orientable, the degree $d_j([e])$ coincides with the number of
    edges adjacent to $u$ in the $G$-orbit of $e$;
  \item[(b)] if $G(e)$ is orientable and $G(u)$ is non-orientable, the
    degree $d_j([e])$ is equal to half of the number of edges adjacent
    to $u$ in the $G$-orbit of $e$ (where the latter number is easily
    seen to be even).
  \end{enumerate}
\end{itemize}
\end{obs}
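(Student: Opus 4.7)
The plan is to verify each of the four bulleted interpretations in turn, by unpacking the definitions from \Exref{D.8} and transporting them along the atlas $\mathbf{A}^r_G$. The central observation is that every atlas identification is the restriction of a homeomorphism of $\partial G$ induced by some $g\in G$, and thus commutes with the $G$-equivariant structure on $T^r_G$ used to define types, orientations, blocks, and degrees.

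First I would handle the bijection between $\mathcal{T}_G$ and $G$-orbits of edges. For each $j\in J$ and each $u\in N^{T^r_G}_{v_j}$, the abstract factor structure of $(G(v_j),\mathcal{H}_{v_j})$ assigns to the peripheral $\partial G(u)\subset K_j$ a type in $\mathcal{T}_j$; by \Lemref{E.1} and the description of abstract factors, two peripherals $\partial G(u_1),\partial G(u_2)$ carry the same type iff $u_1$ and $u_2$ lie in the same $G(v_j)$-orbit, iff the edges $[u_1,v_j]$ and $[u_2,v_j]$ lie in the same $G$-orbit. Thus $\mathcal{T}_j$ is in canonical bijection with the set of $G$-orbits of edges adjacent to $v_j$. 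Given any black vertex $t$, an atlas identification $\partial G(t)\to K_{j}$ is induced by some $g\in G$ with $g\cdot t=v_j$, and different choices differ by an element of $G(v_j)$; since such elements permute peripherals within the same conjugacy class, the type of a peripheral $\varphi_e(\Sigma_u)\subset\partial G(t)$ is well-defined and coincides with the $G$-orbit $[e]$. Taking the disjoint union over $j$ gives $\mathcal{T}_G$, naturally identified with the set of $G$-orbits of edges of $T^r_G$, and simultaneously gives the partition into subsets $\mathcal{T}_j$.

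Next I would verify orientability and the block description. The type $[e]$ is declared orientable in $\mathcal{D}_j$ exactly when the peripheral subgroup $G(e)$ is orientable in the sense of \Defnref{F.2}, a condition that is conjugation-invariant and thus depends only on $[e]$; this is the second bullet. For blocks, a white vertex $u$ with black neighbors $\{t\}$ contributes exactly the peripherals $\varphi_{[u,t]}(\Sigma_u)=\partial G(u)\subset\partial G(t)$, and as $t$ ranges over $N^{T^r_G}_u$, the corresponding edges $[u,t]$ range over all edges of $T^r_G$ adjacent to $u$. Transporting back to the model via the atlas and noting that the block was defined as the set of types appearing in this family, the block $B_{[u]}$ equals $\{[e]:e\text{ adjacent to some representative of }[u]\}$. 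The block is declared orientable iff $G(u)$ is orientable, which again depends only on $[u]$.

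Finally, I would read off the degree function, which is the main bookkeeping step. Fix $[e]\in\mathcal{T}_j$ with white endpoint $u$ and any representative edge $e'\in[e]$ adjacent to $u$; then $G(u)\cap G(v_j)=G(e')$, and the number of edges adjacent to $u$ in the orbit $[e]$ is the index $[G(u):G(e')]$. By Definition~\defnref{D.6}\pitmref{(1)} together with \Exref{D.8}, this is exactly the count of vertices $t\in N_u$ for which $\varphi_{[u,t]}(\Sigma_u)$ has type $[e]$. If $G(u)$ and $G(e)$ are both orientable or both non-orientable, then $d_j([e])=[G(u):G(e')]$ by construction, yielding case (a). The delicate case is (b), where $G(u)$ is non-orientable but $G(e)$ orientable: here $G(e')$ is the orientation-preserving index-$2$ subgroup of the orientation-reversing action of $G(u)$ on $\partial G(e')$, so the quotient $G(u)/G(e')$ contains an orientation-reversing element, and consequently the set of edges in $[e]$ adjacent to $u$ splits into two $G(e')$-orbits of equal size, one for each orientation of $\Sigma_u$ relative to the atlas identification. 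This is precisely condition \pitmref{(4)} of \Defnref{D.6} and explains the factor of $\tfrac12$, giving (b). The main (modest) obstacle is tracking orientations in this mixed case, but the $\mathbb{Z}/2$-argument above handles it uniformly.
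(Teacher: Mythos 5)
The paper offers no written proof of this observation (it is explicitly skipped as easy), and your proposal is exactly the intended definition-unpacking: transport the data of ${\mathcal P}^r_G$ along the $G$-induced atlas identifications, identify types with $G$-orbits of edges via conjugacy of edge stabilizers, read off blocks from the edges at a white vertex, and compute the degree by orbit–stabilizer, matching it against the definition of $d_j$ in \Exref{D.8}; in substance this is correct. Two small slips are worth fixing: the equivalence ``same type $\Leftrightarrow$ same $G(v_j)$-orbit of $u$'' is not a consequence of \Lemref{E.1} (which concerns rigid cluster vertices only) but of the facts that $\partial H_u=\partial G(u)$ (finite index), that distinct maximal $2$-ended subgroups, hence distinct white-vertex stabilizers, have disjoint boundaries, and that the $G$-action preserves colours; and in case (b) the subgroup $G(e')$ is merely \emph{contained} in the orientation-preserving subgroup $G(u)^{+}\le G(u)$ of index $2$, not equal to it, so the edges at $u$ in the orbit split into two equal halves as $G(u)^{+}$-orbits (equivalently, fibres of the orientation character), not as $G(e')$-orbits; evenness of $[G(u):G(e')]$ then follows from $G(e')\le G(u)^{+}$, and statement (b) itself is immediate from the definition $d_j=\tfrac12[G(u):H_u]$ together with the count $[G(u):G(e')]$, without any further orientation bookkeeping (that bookkeeping belongs to the verification of condition (4) of \Defnref{D.6}, which is the purpose the observation serves).
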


\begin{rmk}
  \rmklabel{D.9}
Note that, under the notation of the above \Exref{D.8}, if $v_j$ is a flexible vertex, 
the decorated space with doubleton peripherals $((K_j,\Omega_j),{\mathcal D}_j)$
is a densely decorated punctured circle, as described in \Exref{D.4}
(for justification, see \Exref{X.1.1} and the last paragraph in \Exref{D.4}).
This densely decorated punctured circle is easily seen to be orientable if and only if the flexible factor corresponding to the group $G(v_j)$
is orientable. The numbers of types of orientable and non-orientable peripherals
in $((K_j,\Omega_j),{\mathcal D}_j)$ coincide with the numbers of conjugacy classes
in $G(v_j)$ among the orientable (and non-orientable, respectively) subgroups
of the form $G(u)\cap G(v_j):u\in N^{T^r_G}_{v_j}$.
As we have already noticed before, the information from the last two sentences determines
the decorated space $((K_j,\Omega_j),{\mathcal D}_j)$ uniquely up to an isomorphism
(see \Lemref{D.4.1}(2)(3)).
\end{rmk}

\section{Trees of internally punctured segments}
\seclabel{I}

In this section we describe one more ingredient necessary in our proof of \Thmref{S.1},
as given in \Secref{P}. This ingredient consists of expressing various decorated punctured circles
as trees of appropriate internally punctured segments. The section is split into two subsections
corresponding to two kinds of the described objects.

\noindent
\subsection{Orientable trees of segments}
\sseclabel{I.1}

Fix any positive integer $m\ge1$, and let ${\mathcal T}$
be a set of cardinality $m$, which we will view as the set of types.
For each type $\tau\in{\mathcal T}$, denote by $I^\circ_\tau$ the internally punctured 
segment $I^\circ_{\rm int}$, as described in \Exref{V.2}, equipped with the
structure consisting of the following data:

\begin{enumerate}
\item[(1)] except for the standard peripheral subspaces in $I^\circ_{\rm int}$,
which are all doubletons, and which we call the \emph{ordinary} peripherals of $I^\circ_\tau$,
we consider also an additional subspace, denoted $\partial I^\circ_{\tau}$,
consisting of the two endpoints of $I^\circ_{\rm int}$,
and we call it \emph{the special peripheral} of $I^\circ_{\tau}$;

\item[(2)] identifying naturally $I^\circ_{\rm int}$ with the punctured circle $(S^1)^\circ$,
and fixing one of its orientations (in the sense of the last sentence of \Defnref{X.2}),
we equip all its peripherals (both ordinary and special) with the orientations compatible
with the just fixed orientation of the whole space;

\item[(3)] we associate the type $\tau$ to the special peripheral of $I^\circ_{\tau}$,
and we do not associate any types to its ordinary peripherals.
\end{enumerate}

Define the tree system $\Psi^{\text{o}}_{m}(I^\circ_{\rm int})$ by requiring
the following (the superscript ``o'' on $\Psi^{\text{o}}_{m}$ indicates ``orientable''):
\begin{enumerate}
\item[(a)] its constituent spaces are the copies of the internally punctured segments
$I_\tau^\circ:\tau\in{\mathcal T}$, with ordinary peripherals as the peripherals of these spaces
(this means that the special peripherals are not among the peripherals of those constituent
spaces);
\item[(b)] each gluing in the system involves precisely two constituent spaces
(equivalently, each white vertex of the underlying tree of the system has 
degree 2);
\item[(c)]  gluing maps \emph{respect} the orientations of the glued peripherals,
which means that those maps reverse the orientations of the glued peripherals;
\item[(d)] for each $\tau,\tau'\in{\mathcal T}$ (not necessarily distinct),
and for any constituent space $K_t$ being a copy
of $I^\circ_\tau$,
the union of those peripherals of $K_t$
along which $K_t$ is glued to a copy of $I^\circ_{\tau'}$, is dense in $K_t$.
\end{enumerate}

The fact that such a tree system is unique up to an isomorphism 
follows by discussion and arguments similar to those in \Exref{D.4}, 
\Lemref{D.4.1} and \Lemref{D.7}, but this fact is not essential for our
further considerations.

\begin{lem}
  \lemlabel{I.1.1}
Consider the limit $X$ of the tree system $\Psi^{\text{o}}_m(I^\circ_{\rm int})$
of punctured segments,
and view it as a space with peripherals, where peripherals are the images of all
special peripherals in all constituent punctured segments. Equip also all those
peripherals in a natural way with the types from the set $\mathcal T$,
and with the naturally induced orientations. Then $X$  
is homeomorphic, as a space with peripherals, to an orientable densely
decorated punctured circle, as described in \Exref{D.4}, and it has precisely $m$
types of peripherals.
\end{lem}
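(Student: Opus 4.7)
My plan is to show that $X$, with its special peripherals, types, and orientations, satisfies the definition of an orientable densely decorated punctured circle with exactly $m$ types in the sense of \Exref{D.4}(b). By \Lemref{D.4.1}(2) such a space is determined up to decoration-preserving homeomorphism by its number of types, which yields the assertion.

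I would begin by verifying the basic structure. Compactness and metrizability of $X$ come from \Propref{T.3}. The family $\Omega$ of special peripherals consists of pairwise disjoint doubletons (distinct constituents have disjoint endpoint pairs, since no tree-system gluing in $\Psi^o_m(I^\circ_{\rm int})$ involves special peripherals) and is null (as the $K_t$ form a null family by \Factref{T.5}). Each $K_t\cong I^\circ_{\rm int}$ is topologically a Cantor set, and since gluings identify only finite doubleton sets and adding $\partial T$ does not introduce continua, $X$ is itself totally disconnected. Lack of isolated points follows from combining the density condition (d) with nullness of $\{K_t\}$. Hence $X$ is a Cantor set, matching the underlying topology of $(S^1)^\circ$. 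Types and their counts come immediately: each special peripheral inherits its type $\tau\in\mathcal{T}$ from its constituent $I^\circ_\tau$, giving exactly $m$ types.

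The main step is constructing a decoration-preserving homeomorphism $X\to (S^1)^\circ$. I would build an extension $\hat X$ of $X$ by attaching to each special peripheral $P\in\Omega$ a new open arc whose closure has $P$ as its boundary doubleton, and then show $\hat X\cong S^1$. The orientation-reversing condition (c) is precisely what causes the constituents' orientations to assemble into a globally consistent cyclic orientation on $\hat X$. Concretely, taking an ascending sequence of finite b-subtrees $F_1\subset F_2\subset\cdots$ exhausting $T$, one can close each partial union $K_{F_n}$ into a circle $C_n$ by attaching arcs at each outward peripheral in $\Omega_{F_n}$ and at the special peripherals of boundary constituents in $F_n$; the orientation-reversing gluings inside $F_n$ guarantee that these arc-closures can be chosen to produce a consistent cyclic ordering. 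The bonding maps $C_{n+1}\to C_n$ are near-homeomorphisms of circles, so by the result of M.~Brown \cite{MBrown:inverse_limits:1960}, $\hat X=\varprojlim C_n$ is homeomorphic to $S^1$. Then $X\subset\hat X$ is the complement of a dense null countable family of disjoint open arcs, giving $X\cong(S^1)^\circ$ with special peripherals corresponding to the standard doubleton peripherals via \Exref{T.1.5}. Orientability (case (b) of \Exref{D.4}) follows because the constituents' fixed orientations align with the single circular orientation of $\hat X\cong S^1$, and density of each type in $X$ follows directly from condition (d): for any $\tau'\in\mathcal{T}$ and any constituent $K_t$, the ordinary peripherals of $K_t$ glued to type-$\tau'$ neighbours are dense in $K_t$, and the special peripherals of those neighbours accumulate near these ordinary peripherals.

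The principal technical obstacle is the inductive circle-closing step: verifying that each partial union $K_{F_n}$ closes into a circle $C_n$ compatible with the bonding maps, and that the inverse limit of these near-homeomorphisms yields $S^1$. The delicate point is that the orientation-reversing gluing at each interior peripheral doubleton — where the two glued constituents traverse the doubleton in opposite directions — is precisely the compatibility condition needed to extend a linear traversal on one constituent to a consistent cyclic traversal on the full circle, rather than producing a branched or M\"obius-like object. This parallels the circle-of-circles argument used in \Lemref{G.5}.
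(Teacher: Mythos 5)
Your proposal is correct and follows essentially the same route as the paper: both close the special peripherals by attaching arcs so as to enlarge the limit to a circle, realize that enlargement as an inverse limit of circles whose bonding maps are near-homeomorphisms, invoke M.~Brown's theorem to get $S^1$, and then recover $X$ as the complement of the dense null family of attached open arcs, with condition (c) giving the consistent orientation and condition (d) giving the density of each type. The only cosmetic difference is that the paper enlarges each constituent $K_t$ to $\overline K_t$ and uses the standard reduced partial unions (shrinking outward peripherals to points), whereas you cap the outward peripherals of each partial union with arcs; both yield the same inverse system of circles up to near-homeomorphism.
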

\begin{proof}
At first, we will ignore the types and the orientations of the peripherals,
and show that, as a pure space with peripherals, 
the limit of the tree system $\Psi^{\text{o}}_m(I^\circ_{\rm int})$ is homeomorphic to the
punctured circle $(S^1)^\circ$. To do this, consider a related tree system
$\overline\Psi^{\text{o}}_m(I^\circ_{\rm int})$ with the same underlying tree $T$,
in which the constituent spaces $\overline K_t$ are obtained from the corresponding
constituent spaces $K_t$ of $\Psi^{\text{o}}_m(I^\circ_{\rm int})$ 
(which are punctured segments) as follows. Glue to each $K_t$ a segment $I_t$ via
a gluing map which sends the special peripheral of $K_t$ bijectively to the 
boundary $\partial I_t$. Identify the (ordinary) peripherals in $K_t$ as subspaces of
$\overline K_t$, and view them as the peripherals of $\overline K_t$.
For the gluing maps (or more formally, connecting maps) in the new system
$\overline\Psi^{\text{o}}_m(I^\circ_{\rm int})$ take then the same maps which appeared
as connecting maps in the system $\Psi^{\text{o}}_m(I^\circ_{\rm int})$.

As in \Ssecref{inverse_associated}, we associate to the tree system
$\overline\Psi:=\overline\Psi^{\text{o}}_m(I^\circ_{\rm int})$ the
inverse system ${\mathcal S}_{\overline \Psi}$ such that, by
\Propref{T.6},
$\lim_\leftarrow{\mathcal S}_{\overline\Psi}\cong\lim\overline\Psi$.
Now, since all the spaces in the inverse system
${\mathcal S}_{\overline \Psi}$ are clearly homeomorphic to the circle
$S^1$, and since all bonding maps in this system consist of shrinking
finitely many pairwise disjoint closed subsegments of $S^1$ to points,
these maps are obviously \emph{near homeomorphisms}, i.e., they can be
approximated by homeomorphisms.  Since, by a result of M. Brown
(\cite[Theorem~4]{MBrown:inverse_limits:1960}), the inverse limit maps
in an inverse system consisting of near homeomorphisms are near
homeomorphisms, it follows that the inverse limit
$\lim_\leftarrow{\mathcal S}_{\overline\Psi}$ is homeomorphic to
$S^1$, and thus we also have $\lim\overline\Psi\cong S^1$.

The segments $I_t$ from the constituent spaces $\overline K_t$ obviously become
subsegments in the limit circle $\lim\overline\Psi$, and moreover they are easily seen
to be pairwise disjoint. A slightly less clear fact, the straightforward proof of which we
omit, is that the union of the segments $I_t$ (viewed as subsegments in the circle
$\lim\overline\Psi$)  is dense in $\lim\overline\Psi$. On the other hand, the limit
$\lim\Psi$ of the original tree system $\Psi=\Psi^{\text{o}}_m(I^\circ_{\rm int})$ can be identified
with a subspace in the limit $\lim\overline\Psi$, namely the subspace obtained by
deleting from $\lim\overline\Psi$ the interiors of all subsegments $I_t$.
Moreover, the peripherals in $\lim\Psi$ correspond to the pairs of endpoints of the
deleted subsegments $I_t$. Consequently, as a space with peripherals, $\lim\Psi$
is homeomorphic to the punctured circle, as required.

It remains to check the conditions related to the types and orientations of the peripherals.
The condition which requires that the orientations of all peripherals in the limit space
$\lim\Psi$ are consistent (as in condition (b) in \Exref{D.4})
follows directly from condition (c) above in this subsection. The condition which requires
that the union of the peripherals of any fixed type is dense follows easily from
condition (d) above.
This completes the proof.
\end{proof}

\subsection{Non-orientable trees of segments}
\sseclabel{I.2}

Fix any non-negative integers $m,n$ such that $m+n\ge1$, and let 
${\mathcal T}={\mathcal T}_1\sqcup{\mathcal T}_2$
be a set of two kinds of types, with cardinalities $|{\mathcal T}_1|=m$ and $|{\mathcal T}_2|=n$.
For each type $\tau\in{\mathcal T}$, denote by $I^\circ_\tau$ the internally punctured 
segment $I^\circ_{\rm int}$, as described in \Exref{V.2}, equipped with the
structure consisting of the following data:

\begin{enumerate}
\item[(1)] as in the orientable case, we consider the ordinary peripherals and the special
peripheral in $I^\circ_{\rm int}$, and we equip the special peripheral with the type $\tau$
(and we do not associate any types to the ordinary peripherals);

\item[(2)] for each type $\tau\in{\mathcal T}_1$, we equip the special peripheral of 
$I^\circ_\tau$ with one of the orientations, and for each type $\tau\in{\mathcal T}_2$
we associate to the special peripheral of $I^\circ_\tau$ both of its orientations.
\end{enumerate}

Define the tree system $\Psi^{\text{n-o}}_{m,n}(I^\circ_{\rm int})$ by requiring
the following (the superscript ``n-o'' on $\Psi^{\text{n-o}}_{m,n}$ indicates ``non-orientable''):
\begin{enumerate}
\item[(a)] its constituent spaces are the copies of the internally punctured segments
$I_\tau^\circ:\tau\in{\mathcal T}$, with ordinary peripherals as the peripherals of these spaces;
\item[(b)] each gluing in the system involves precisely two constituent spaces
(equivalently, each white vertex of the underlying tree of the system has 
degree 2);

\item[(c)] for each type $\tau\in{\mathcal T}$, and each type $\tau'\in{\mathcal T}_2$,
for any constituent space $K_t$ being a copy of $I^\circ_\tau$,
the union of those peripherals of $K_t$
along which $K_t$ is glued to a copy of $I^\circ_{\tau'}$ is dense in $K_t$;

\item[(d)] for each type $\tau\in{\mathcal T}$, and each type $\tau'\in{\mathcal T}_1$,
any constituent space $K_t$  being a copy
of $I^\circ_\tau$, after fixing an auxiliary orientation, satisfies the following condition:
\begin{itemize}
\item[(\#)]
consider any peripheral $P$ of $K_t$ along which $K_t$ is glued to a copy of $I^\circ_{\tau'}$;
then this copy of $I^\circ_{\tau'}$ inherits an orientation from its special peripheral,
and thus induces one of the orientations on the peripheral $P$; we will call the latter
orientation of $P$ \emph{the induced orientation}; we require that the union
of those peripherals of $K_t$
along which $K_t$ is glued to a copy of $I^\circ_{\tau'}$, and whose induced
orientations are compatible (respectively, incompatible) with the earlier fixed
orientation of $K_t$, is dense in $K_t$. 
\end{itemize}
\end{enumerate}

Similarly as we have mentioned in the previous subsection, the fact that 
a tree system as above is unique up to an isomorphism 
follows by discussion and arguments similar to those in \Exref{D.4}, 
\Lemref{D.4.1} and \Lemref{D.7}. However, this fact is not essential for our
further considerations.

\begin{lem}
  \lemlabel{I.2.1}
Consider the limit $X$ of the tree system $\Psi^{\text{n-o}}_{m,n}(I^\circ_{\rm int})$
of punctured segments,
and view it as a space with peripherals, where peripherals are the images of all
special peripherals in all constituent punctured segments. Equip also all those
peripherals in a natural way with the types from the set $\mathcal T$,
and with the naturally induced orientations
(one orientation if $\tau\in{\mathcal T}_1$, and both orientations if $\tau\in{\mathcal T}_2$).
Then $X$
is homeomorphic, as a space with peripherals, to a non-orientable densely
decorated punctured circle, as described in \Exref{D.4}, and it has precisely $m$
types of orientable peripherals, and $n$ types of non-orientable peripherals.
\end{lem}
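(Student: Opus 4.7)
The plan is to follow closely the structure of the proof of \Lemref{I.1.1}, adapting the arguments to account for the mixed presence of orientable and non-orientable types. First I would ignore all types and orientations and show, by exactly the same ``filling in'' trick used in the orientable case, that the underlying topological space of the limit $X$ is a punctured circle. Namely, enlarge the tree system $\Psi = \Psi^{n-o}_{m,n}(I^\circ_{\rm int})$ to a system $\overline{\Psi}$ by glueing a closed interval $I_t$ to each constituent $K_t$ along its special peripheral, and keeping the same connecting maps (which are now inclusions of the ordinary peripherals). The associated inverse system ${\mathcal S}_{\overline\Psi}$ consists of circles (each $\overline{K}_F$ for a finite b-subtree $F$ is a finite connected sum of closed intervals with closed intervals, i.e.\ a circle) and bonding maps that shrink finitely many pairwise disjoint closed subsegments to points. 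By Brown's near-homeomorphism theorem \cite{MBrown:inverse_limits:1960} we obtain $\lim\overline{\Psi}\cong S^1$. Density and pairwise disjointness of the segments $I_t$ in this limit circle gives that $X$, obtained by deleting their interiors, is a punctured circle whose peripherals are exactly the endpoint pairs of the deleted segments, i.e.\ the images of the special peripherals.

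Next I would equip $X$ with its decoration (the types in $\mathcal{T}=\mathcal{T}_1\sqcup\mathcal{T}_2$ and the orientations inherited from the special peripherals) and verify that the resulting decorated punctured circle is non-orientable and densely decorated, with exactly $m$ orientable and $n$ non-orientable types. The type counts are immediate from the bijection between $\mathcal{T}$ and the set of types of special peripherals. To check condition (a) of \Exref{D.4} one fixes an auxiliary orientation of $X$ (viewed as $(S^1)^\circ$), which by \Rmkref{X.1.2}(2) restricts to a well-defined orientation on each constituent punctured segment $K_t\subset X$. Condition (c) of the system directly yields, for each $\tau'\in\mathcal{T}_2$, that the union of peripherals of type $\tau'$ lying in any single $K_t$ is dense in $K_t$, hence the full union is dense in $X$. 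Condition (d), applied with the auxiliary orientation of $K_t$ being the one induced from the fixed orientation of $X$, gives likewise that for each $\tau'\in\mathcal{T}_1$ both the compatibly and the incompatibly oriented peripherals of type $\tau'$ are dense inside each $K_t$, and hence in $X$.

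The main obstacle is the compatibility between the auxiliary orientation chosen on $K_t$ in the local condition (d) and the globally fixed orientation of $X$. I would handle this by a simple dichotomy: for each $K_t$, either the two orientations agree on $K_t$ or they are opposite; in the second case the roles of ``compatible'' and ``incompatible'' in (d) are swapped, but this swap is irrelevant to the density conclusion since (d) requires \emph{both} classes to be dense in $K_t$. Thus the density statement passes to the limit regardless of the choice of $K_t$. A symmetric verification using the analog of (d) with $\tau$ and $\tau'$ exchanged (or simply observing that density in $X$ is witnessed one constituent at a time) completes the check of (a) of \Exref{D.4}.

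Finally I would note that the argument is robust under the discrete data: the only essential use of (b) is to ensure each peripheral of a constituent space is glued to exactly one other constituent space, which is exactly what makes $\overline{\Psi}$ a tree system of circles in the first step. With these observations the limit $X$ is a non-orientable densely decorated punctured circle with the asserted counts of types, which is what \Lemref{I.2.1} asserts.
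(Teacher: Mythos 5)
Your proposal is correct and follows exactly the route the paper intends: the paper omits the proof of \Lemref{I.2.1}, stating it is analogous to that of \Lemref{I.1.1}, and you reproduce that argument (the enlargement $\overline\Psi$, Brown's near-homeomorphism theorem, then the density checks via conditions (c) and (d)). Your extra observation that the auxiliary-orientation ambiguity in condition (d) is harmless because (d) demands density of \emph{both} the compatible and the incompatible classes is precisely the one point where the non-orientable case needs additional care, and you handle it correctly.
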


The proof of this lemma is similar to the proof of \Lemref{I.1.1}, and we omit it.

\section{Proof of \Thmref{S.1} and of the missing parts of Theorems \mainthmref{1} and \mainthmref{2}}
\seclabel{P}

The major part of this section is devoted to the proof of \Thmref{S.1}.
After doing this, we also provide arguments for getting
(as rather immediate corollaries to \Thmref{S.1}) the proofs of the implications
\pitmref{g_rigid_cluster_factors_vf}$\Rightarrow$\pitmref{bd_g_reg_twocon_tog} in \Mainthmref{1}
and (3)$\Rightarrow$(2) in \Mainthmref{2}.

 The outline of the proof of \Thmref{S.1} 
is as follows. Given a group $G$ as in the assumption of \Thmref{S.1}, we have described two
tree systems canonically associated to $G$. The first one is the tree system $\Theta^r(G)$
associated to the reduced Bowditch JSJ splitting of $G$, as described in \Ssecref{reduced_JSJ},
and it has the property that $\lim\Theta^r(G)\cong\partial G$ (see \Thmref{T.9+}).
The second tree system associated to $G$ is the (regular) tree system of punctured
graphs $\Theta[{\mathcal R}_G]$ associated to the graphical connecting system ${\mathcal R}_G$, 
as described in \Secref{tr_gr} (together with the description of ${\mathcal R}_G$ given in
\Secref{S}),
and as appearing in the statement of \Thmref{S.1}.
In view of the equality $\partial G\cong\lim\Theta^r(G)$,
our goal is to show that $\lim\Theta^r(G)\cong\lim\Theta[{\mathcal R}_G]$.

In order to show the latter equality, we shall describe three more tree systems, denoted 
$\Theta^{V}[{\mathcal R}_G]$, $\Theta^{V}_*[{\mathcal R}_G]$ and 
$\Theta^{V}_*[{\mathcal R}_G]^a$, with the following
properties. The tree system $\Theta^{V}[{\mathcal R}_G]$, called \emph{the tree system of V-trees
associated to $G$}, will be described as obtained from $\Theta[{\mathcal R}_G]$
via a certain consolidation (an operation described in \Ssecref{consolidation_of_tree_sys}, which does not
affect the limit space of the system). The constituent spaces of $\Theta^{V}[{\mathcal R}_G]$
will be the V-trees of graphs given as limits of certain V-tree systems of graphs
naturally appearing inside the system $\Theta[{\mathcal R}_G]$.
The tree system $\Theta^{V}_*[{\mathcal R}_G]$,
called \emph{the tree system of cores and arms associated to $G$}, will be described
as obtained from $\Theta^{V}[{\mathcal R}_G]$ by an operation of splitting (i.e. operation 
opposite to some operation of consolidation) consisting of appropriate splittings of all V-tree
constituent spaces of $\Theta^{V}[{\mathcal R}_G]$ into cores and arms
(as described in \Secref{V}).
Finally, the system $\Theta^V_*[{\mathcal R}_G]^a$, called 
\emph{the tree system of cores and consolidated arms associated to $G$},
will be described as obtained from $\Theta^V_*[{\mathcal R}_G]$ via one more
operation of consolidation, which will join together certain configurations of arms in 
the latter system
into bigger constituent spaces which have the form of punctured circles
(here we will refer to the results of \Secref{I}).

By \Lemref{V.9} and \Thmref{T.7}, we will then have the natural identifications
\[\lim\Theta[{\mathcal R}_G]\cong\lim\Theta^V[{\mathcal R}_G]\cong\lim\Theta^V_*[{\mathcal R}_G]
\cong\lim\Theta^V_*[{\mathcal R}_G]^a.
\]
\noindent
As the last step in the proof of \Thmref{S.1}, we will show that the tree system 
$\Theta^V_*[{\mathcal R}_G]^a$ is isomorphic to the tree system $\Theta^r(G)$, 
thus yielding the desired assertion by referring
to the fact that isomorphic tree systems have homeomorphic limits. 
The existence of an isomorphism between the two just mentioned tree systems
will be deduced from \Lemref{D.7}, by showing that 
the system $\Theta^V_*[{\mathcal R}_G]^a$ is modelled on the
same connecting system (for regular spaces with doubleton peripherals)
as the tree system $\Theta^r(G)$, namely on 
the system ${\mathcal P}^r_G$ described in \Exref{D.8}.

We now pass to the details of the above outlined proof.
We split the exposition into subsections which correspond to the steps of this proof.

\subsection{Description of $\Theta^{V}[{\mathcal R}_G]$  - the tree system of V-trees
associated to $G$}

Recall that the tree system $\Theta[{\mathcal R}_G]$ is a tree system of punctured graphs,
with
\[
\Theta[{\mathcal R}_G]=(T, \{ \Gamma^\circ_t \}, \{ \Sigma_u \}, \{ \varphi_e \}  ),
\]
where each graph $\Gamma_t$ is either the extended Whitehead graph 
associated to some rigid cluster factor of $G$ or some $\theta$-graph.
Recall also that the set of white vertices $u$ of the underlying tree $T$ is partitioned
into two subsets according to whether the corresponding subspaces 
$\varphi_e(\Sigma_u)$
are V-peripherals or E-peripherals
(compare the second paragraph after \Defnref{R.1}).
View the tree $T$ as the first barycentric subdivision 
of another tree $T^\bullet$, so that black vertices of $T$ are the vertices of $T^\bullet$
and the white vertices of $T$ are the barycenters of the edges of $T^\bullet$.
Then, by what was said above about white vertices of $T$, we can speak of
V-edges and E-edges of the tree $T^\bullet$. Note also that, under this interpretation of $T$, 
the b-subtrees of $T$ correspond to arbitrary subtrees of $T^\bullet$.

Call any subtree of the tree $T^\bullet$ whose edge set consists only of V-edges 
a \emph{V-subtree} of $T^\bullet$.
Let $\Pi^V$ be the partition of the tree $T$
(in the sense explained in \Ssecref{consolidation_of_tree_sys})  into b-subtrees
which
correspond to all maximal V-subtrees of $T^\bullet$. 
Take as $\Theta^{V}[{\mathcal R}_G]$ the tree system obtained from $\Theta[{\mathcal R}_G]$
by consolidation with respect to the partition $\Pi^V$, i.e.,
\[
\Theta^{V}[{\mathcal R}_G]:=(\Theta[{\mathcal R}_G])^{\Pi^V}.
\]

We will now describe in more detail the constituent spaces, the peripherals, and 
the connecting maps of the tree system $\Theta^{V}[{\mathcal R}_G]$.
Suppose first that a b-subtree $S\in\Pi^V$ contains a (black) vertex $t$ 
for which $\Gamma_t$ is
the extended Whitehead graph, say $\widetilde W_z$, associated to some 
virtually free rigid cluster factor $G(z)$ of $G$. It follows from the form of the V-involution
in the graphical connecting system ${\mathcal R}_G$ that the restriction of the system
$\Theta[{\mathcal R}_G]$ to $S$ is then isomorphic (as a tree system) to the 
V-tree system $\Theta[\widetilde{\mathcal V}_z]$ of the extended Whitehead graph $\widetilde W_z$. In particular,
for any black vertex $s$ of $S$ the corresponding graph $\Gamma_s$
is isomorphic to $\widetilde W_z$, and the constituent space $K_S$ of
the consolidated system $\Theta^{V}[{\mathcal R}_G]$ 
is homeomorphic (as a space with peripherals) to the V-tree 
${\mathcal X}_V((\widetilde W_z,\tilde {\mathbf a}_z))$ 
(where $\tilde{\mathbf a}_z$ is the V-involution canonically associated to $\widetilde W_z$).

Consider now the remaining case when a b-subtree $S\in\Pi^V$ contains a (black) vertex $t$ 
for which $\Gamma_t$ is some $\theta$-graph $\Xi_k$.
 It follows from the form of the V-involution
in the graphical connecting system ${\mathcal R}_G$ that the restriction of the system
$\Theta[{\mathcal R}_G]$ to $S$ is then isomorphic (as a tree system) to the 
V-tree system $\Theta[{\mathcal V}_{\Xi_k}]$
induced by the connecting V-system ${\mathcal V}_{\Xi_k}$
(i.e. the standard connecting V-system for $\Xi_k$, as described in \Exref{V.8.2}).
In particular, for any black vertex $s$ of $S$ the corresponding graph $\Gamma_s$
is isomorhic to $\Xi_k$, and the constituent space $K_S$ of
the consolidated system $\Theta^{V}[{\mathcal R}_G]$ 
is homeomorphic (as a space with peripherals) to the internally punctured graph
$\Xi_k$, i.e., to the space with peripherals $(\Xi_k)^\circ_{int}$
(as in \Exref{V.8.2}).

Recall again that we have two kinds of peripherals in the initial tree system 
$\Theta[{\mathcal R}_G]$,
namely V-peripherals and E-peripherals.
All V-peripherals disappear in the process of passing, via consolidation, 
to the consolidated tree system $\Theta^{V}[{\mathcal R}_G]$, while all E-peripherals
survive this process. Thus, we can naturally identify the peripherals of the system
$\Theta^{V}[{\mathcal R}_G]$ with the E-peripherals of the system $\Theta[{\mathcal R}_G]$.
Under this identification,
the connecting maps
in the system $\Theta^{V}[{\mathcal R}_G]$ are induced in the obvious way from
those maps in the system $\Theta[{\mathcal R}_G]$ which concern E-peripherals.

\subsection{Description of $\Theta^{V}_*[{\mathcal R}_G]$  - the tree system of cores and arms
associated to $G$}

Recall that if $S\in\Pi^V$ is a subtree such that for its black vertices $t$ the graphs
$\Gamma_t$ are the extended Whitehead graphs, then the constituent space  $K_S$ 
of the system $\Theta^{V}[{\mathcal R}_G]$
has the form of a V-tree of extended Whitehead graphs
${\mathcal X}_V(\widetilde{\mathcal V}_z)={\mathcal X}_V((\widetilde W_z,\tilde{\mathbf a}_z))$, 
and that this V-tree can be expressed as the limit
of the tree system $\Psi_S:=\Psi[\widetilde{\mathcal V}_z]$
in which the constituent spaces are the core and the arms
of ${\mathcal X}_V(\widetilde{\mathcal V}_z)$, see 
\Lemref{E.4}(2) and \Lemref{V.9}.
\Factref{V.6} and \Lemref{E.4}(1) explain then the nature of  
the core and the arms appearing in this expression.
In the remaining case when $S\in\Pi^V$ is a subtree with black vertex graphs 
$\Gamma_t$ isomorphic to a $\theta$-graph $\Xi_k$ (for some $k$
depending only on $S$), the constituent space $K_S$ coincides with
the V-tree of $\theta$-graphs ${\mathcal X}_V({\mathcal V}_{\Xi_k})$, as described in 
\Exref{V.8.2}. In particular, in this case 
the constituent space $K_S$ is isomorphic, as a space with peripherals,
to the internally punctured $\theta$-graph $(\Xi_k)^\circ_{int}$.
Consequently, this space $K_S$ can be expressed as the wedge of ($k$ copies of)
punctured segments, 
$K_S=\lim\Psi_w[{\mathcal V}_{\Xi_k}]$,
as described in \Exref{V.9.1}.
For $S$ as in this case,
we put $\Psi_S=\Psi_w[{\mathcal V}_{\Xi_k}]$, 
and we recall
that this system $\Psi_S$ consists of precisely  one white vertex, $u_S$,
having degree $k$ in the corresponding underlying tree, and of precisely $k$ black
vertices, all adjacent to $u_S$, the constituent spaces at which are punctured segments.
We call these constituent spaces the \emph{wedge arms}, and we call their common peripheral,
which is a doubleton, the \emph{wedge core}.


Since for any $S\in\Pi^V$, each peripheral of $K_S$ is contained in precisely one arm
(or wedge arm),
and disjoint with the core (respectively, wedge core), 
we can naturally view the tree system $\Theta^{V}[{\mathcal R}_G]$
as obtained by consolidation from another tree system, denoted 
$\Theta^{V}_*[{\mathcal R}_G]$, for which the constituent spaces are the cores, the arms
and the wedge arms
in all spaces $K_S:S\in\Pi^V$. 
The above described systems $\Psi_S$ correspond then to the restricted subsystems
of such a new system  $\Theta^{V}_*[{\mathcal R}_G]$ along which one consolidates
the latter
to obtain back the system $\Theta^{V}[{\mathcal R}_G]$.   We skip the straightforward
further details of this construction.

Note that all peripherals of the system $\Theta^V[{\mathcal R}_G]$
(which we naturally identify with the E-peripherals of the initial system $\Theta[{\mathcal R}_G]$)
can be naturally
identified as some of the peripherals in the system $\Theta^{V}_*[{\mathcal R}_G]$.
Thus, we will likewise refer to the peripherals in $\Theta^{V}_*[{\mathcal R}_G]$ 
of the latter form as \emph{E-peripherals}. Obviously, there are also some other peripherals
in $\Theta^{V}_*[{\mathcal R}_G]$, and they correspond precisely to the peripherals
in the cores and to the special peripherals in the arms and wedge arms. We will call the peripherals in $\Theta^{V}_*[{\mathcal R}_G]$  of the latter 
forms \emph{C-peripherals}.

\subsection{U-types, Y-labels and $\#$-orientations of the arms and wedge arms of
$\Theta^V_*[{\mathcal R}_G]$}

Before getting to the further modifications of the tree system $\Theta^V_*[{\mathcal R}_G]$,
we need to describe some natural decorations for arms and wedge arms of this system,
and to record some of their properties.
We start with describing the so called U-types.

Let $U$ be the set 
of $G$-orbits in the set of all those edges of the reduced Bowditch JSJ-tree $T^r_G$
which have a flexible vertex as one of their endpoints.
We will use $U$ as a set of types for arms and wedge arms
in $\Theta^V_*[{\mathcal R}_G]$, and we will say that a type $\varepsilon\in U$ is orientable
(respectively, non-orientable)
if the edge stabilizing group $G(e)$ for any $e\in\varepsilon$ 
is orientable (respectively, non-orientable). 
Now, let $A$ be any arm in the tree system $\Theta^{V}_*[{\mathcal R}_G]$.
Recall that $A$ corresponds to a class (or a sequence of copies) of edges
in one of the extended Whitehead graphs $\Gamma_z=\widetilde W_z$, and 
the set of edges in $\widetilde W_z$ whose copies are involved in $A$
(which we denote by $E_A$)
coincides with the preimage $(\lambda_z^+)^{-1}(x)$ for some (precisely one)
vertex $x\in N_z^2$ (see the notation introduced in the fourth paragraph of \Secref{S}).
The vertex $x$ as above is adjacent to precisely one vertex $u_\xi$  (as described
in the same \Secref{S}), and we put $e=[x,u_\xi]$.
We associate to $A$ (and call the \emph{U-type} of $A$) 
the type $\varepsilon=[e]\in U$, i.e., the $G$-orbit of $e$.
This orbit clearly belongs to $U$, since $x$ is a flexible vertex of $T^r_G$
(which follows from the fact that rigid cluster vertices in the tree $T^r_G$
are isolated, see \Factref{3.4}).
Consider now the remaining case when $A$ is a wedge arm
in $\Theta^{V}_*[{\mathcal R}_G]$, and recall that $A$ is composed of
punctured edges obtained by puncturing precisely one edge $\sigma$
in precisely one $\theta$-graph $\Gamma_z=\Xi_{k(z)}$.
(We then put $E_A:=\{ \sigma \}$.)
Since the edges of $\Gamma_z$ are in a fixed bijective correspondence with the
vertices of $T^r_G$ adjacent to $z$ (via the bijection $\lambda_z^+$
described in the fourth paragraph of \Secref{S}), 
we denote by $e_\sigma=[z,\lambda_z^+(\sigma)]$ this edge joining $z$
to the vertex which corresponds to $\sigma$. We associate to $A$ (and call the \emph{U-type} of A) 
the type $\varepsilon=[e_\sigma]\in U$, i.e., the $G$-orbit of $e_\sigma$.

Accordingly with the above described assignment of U-types,
we call an arm or wedge arm \emph{U-orientable} if its U-type is orientable,
and in the remaining case we call it \emph{U-non-orientable}.

If some U-type $\varepsilon\in U$ is orientable, its \emph{orientation}
is a choice of an orientation for any 2-ended group $G(e)$ with $e\in\varepsilon$
(and such a choice extends, via conjugations in $G$, to the choices of orientations 
for all groups $G(e)$ with $e\in\varepsilon$). We will now explain how the choice
of an orientation of a U-type induces some orientation on any arm or wedge arm
equipped with this U-type.
If $\varepsilon$ is the U-type of some arm $A$, there is precisely one rigid
cluster vertex $z\in Z$
such that $A$ is an arm in a V-tree of extended Whitehead graphs $\Gamma_z$.
Moreover, there is precisely one $x\in N^2_z$ 
such that $A$ is composed of copies of edges from $(\lambda_z^+)^{-1}(x)$,
and if $u_\xi$ denotes the white vertex on the path from $x$ to $z$ then
$[u_\xi,x]\in\varepsilon$.
If we fix an orientation of $\varepsilon$, it induces an orientation of the group
$G([u_\xi,x])$, and this induces an order of the doubleton
$P=\partial G([u_\xi,x])=\partial G(u_\xi)=\partial G([u_\xi,z])\subset \partial G(z)=C_z$,
where $C_z$ is the core of the V-tree ${\mathcal X}_V(\widetilde{\mathcal V}_z)$
(for the last equality, compare \Lemref{E.4}).
Note that $P$ coincides with the set of endpoints of precisely one arm $A'$ in 
the V-tree ${\mathcal X}_V(\widetilde{\mathcal V}_z)$ that is made of punctured copies
of the same edges of $\widetilde W_z$ as the arm $A$.
Moreover, the above order of $P$ uniquely determines the order-induced orientations
of all edges of $\widetilde W_z$ whose copies are involved in $A'$
(compare \Rmkdefnref{V.11}, \Lemref{Y.2a} and \Rmkref{E.5}(3)).
Consequently,
the above order-induced orientations of the edges
uniquely induce an orientation of the arm $A$, and we will call it
\emph{the orientation of $A$ induced from the orientation of $\varepsilon$}.
Obviously, any orientation of $\varepsilon$ uniquely induces orientations of all
peripherals in any arm $A$ of the U-type $\varepsilon$.

Now, suppose that $\varepsilon$ is the U-type of some wedge arm $A$ of the system 
$\Theta^V_*[{\mathcal R}_G]$, and that this type is orientable, with one of the orientations fixed.
Let $z\in Z$ be the (unique) vertex of type (v2) for which $A$ is an arm in a V-tree
of $\theta$-graphs $\Gamma_z=\Xi_{k(z)}$. Then there is precisely one $x\in N_z$
such that $A$ is composed of copies of the edge $(\lambda_z^+)^{-1}(x)$.
The fixed orientation of the U-type $\varepsilon$ induces then an orientation
of the group $G([z,x])$, and this induces an order on the doubleton
$\partial G([z,x])=\partial G(z)$. By our convention from \Secref{S}, we have fixed
an identification of the set $\partial G(z)$ with the vertex set of the
$\theta$-graph $\Gamma_z=\Xi_{k(z)}$, so the above orientation of $\varepsilon$
induces an order on the vertex set of $\Gamma_z$. This in turn induces,
in the obvious way, an orientation of the edge $(\lambda_z^+)^{-1}(x)$
in $\Gamma_z$, and since all punctured copies of this edge appearing inside
the arm $A$ are oriented consistently, this induces also an orientation of $A$.

We now pass to describing the \emph{Y-labels} associated to arms and wedge arms.
To describe them, we need to look closer at the set $\mathcal A$
of E-connections in the graphical connecting system ${\mathcal R}_G$. 
Recall that 
$\mathcal A$ 
is given as the disjoint union of the blocks ${\mathcal A}_y:y\in Y$,
where $Y$ is the set of $G$-orbits in the set of flexible vertices of $T^r_G$.
Each block ${\mathcal A}_y$ is described in terms of some set $E_y$,
where the family $E_y:y\in Y$ is a certain partition of the set of edges of $\Gamma$
(see \Secref{S}). It follows from the descriptions of the blocks ${\mathcal A}_y$
and the sets $E_y$ (in \Secref{S}) that for any arm or wedge arm $A$ the corresponding set 
$E_A$ is contained in precisely one of the sets $E_y$; we then say that $y$ is the \emph{Y-label}
associated to the arm $A$. Observe that any two arms (or wedge arms) with the same U-type also
have the same Y-label, so that the partition of the set of arms according to the U-types is a refinement
of the partition according to the Y-labels.
Moreover, it is not hard to see that for any $y\in Y$ the set
\[U_y:=
\{ \varepsilon\in U: \hbox{ $y$ is the Y-label of any arm or wedge arm having U-type 
$\varepsilon$} \}
\]
coincides with the set of $G$-orbits of all those edges of $T^r_G$ which are adjacent
to any fixed vertex $v\in y$.

Now, consider any label $y\in Y$ such that the flexible factors $G(v):v\in y$
(in the reduced Bowditch JSJ splitting of $G$) are orientable. 
Then to any arm or wedge arm labelled with $y$
we associate the so called \emph{$\#$-orientation} in the following way. 
Recall that in our description of the connecting system ${\mathcal R}_G$
as given in \Secref{S}, we fixed in advance, in a $G$-invariant way,
orientations for all flexible factors $G(v)$ with $v\in y$, and that this choice of orientations
led to the assignment of an orientation 
to any edge $\epsilon\in E_y$, yielding a collection of oriented edges
$E_y^\#$. Note that for any arm (or wedge arm) $A$ labelled with $y$, 
the copies of edges from $E_y$ appearing in this arm,
viewed with the orientations as in $E_y^\#$, are oriented consistently.
Consequently, these orientations induce an orientation of the entire arm $A$.
Accordingly, we will denote by $A^\#$ the arm $A$ equipped with the just described 
orientation, and we will call this orientation the \emph{$\#$-orientation} of $A$.
We will then call any arm or wedge arm $A$ as above \emph{$\#$-oriented}.
The arms or wedge arms not equipped with $\#$-orientations 
(i.e. those having the Y-label $y$ such that the groups $G(v):v\in y$ are
non-orientable, as flexible factors)
will be called
\emph{$\#$-non-orientable}.

The next observation follows directly from the description of the connecting system 
${\mathcal R}_G$, and from the above descriptions of U-types, Y-labels and $\#$-orientations.
To justify part (3), one also needs to refer to the density condition (4)
in \Defnref{6.2}, as applied to the tree system $\Theta[{\mathcal R}_G]$
(see also condition (L3) in the construction from the proof of \Lemref{6.3}).
In part (3) we use the following terminological convention: 
under notation as in the whole of \Factref{P.1},
given a constituent
space $K^*_t$ of $\Theta^V_*[{\mathcal R}_G]$ which is an arm or a wedge arm,
and one of its E-peripherals $P=\varphi^*_{[u,t]}(\Sigma^*_u)$,  
\emph{the peripheral adjacent to $P$} is the peripheral $P'=\varphi^*_{[u,t']}(\Sigma^*_u)$
in the constituent space $K^*_{t'}$, where $t'$ is the unique black vertex of $T_*$
adjacent to $u$ and distinct from $t$.

\begin{fact}
  \factlabel{P.1}
Denote the data of the tree system $\Theta^V_*[{\mathcal R}_G]$ by
$(T_*,\{ K_t^* \}, \{ \Sigma_u^* \}, \{ \varphi_e^* \})$. Let $A=K_t^*$ and 
$\hat A=K_{\hat t}^*$ be some two constituent spaces  
in $\Theta^V_*[{\mathcal R}_G]$, each of which is an arm or a wedge arm, and suppose that there is a 
white vertex $u$ in $T_*$ which is adjacent to both $t$ and $\hat t$.

\begin{enumerate}
\item[(1)] If the Y-label of $A$ is $y$, then the Y-label of $\hat A$ is also $y$. 
 
\item[(2)] If $A$ is $\#$-oriented, then $\hat A$ is also $\#$-oriented.
Moreover, if we induce the $\#$-orien\-tations of $A$ and $\hat A$ to the peripherals
$\varphi^*_{[u,t]}(\Sigma^*_u)\subset A$ and 
$\varphi^*_{[u,\hat t]}(\Sigma^*_u)\subset \hat A$, then the gluing map
$\varphi^*_{[u,\hat t]}(\varphi^*_{[u,t]})^{-1}$ between those peripherals
respects the $\#$-orientations, i.e., it reverses the induced orientations of the peripherals.

\item[(3)] If the Y-label of $A$ is $y$, and if $\varepsilon$ is any U-type from $U_y$,
then:

\begin{enumerate}
\item[(A)] the union of all those E-peripherals $P$ of $A$ for which
  the peripheral $P'$ adjacent to $P$ is contained in an arm or a
  wedge arm of U-type $\varepsilon$, is dense in $A$;
\item[(B)] moreover, if $A$ is $\#$-non-orientable and $\varepsilon$
  is orientable then, after fixing an auxiliary orientation of $A$ and
  some orientation for $\varepsilon$, the union of all those
  E-peripherals $P$ of $A$ for which the peripheral $P'$ adjacent to
  $P$ is contained in an arm or wedge arm $A'$ of U-type
  $\varepsilon$, and for which the orientation induced from the fixed
  orientation of $\varepsilon$ (via the induced orientations of $A'$
  and $P'$) is compatible (respectively, incompatible) with the fixed
  orientation of $A$, is dense in $A$.
\end{enumerate}
\end{enumerate}
\end{fact}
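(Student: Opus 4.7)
My approach is to trace each arm (or wedge arm) back to the underlying graphical connecting system $\mathcal{R}_G$, and then exploit the explicit description of the block decomposition $\mathcal{A} = \bigsqcup_{y \in Y} \mathcal{A}_y$ from \Secref{S}, together with the density condition built into Definition \ref{6.2}(4)(b). The tree $T_*$ is obtained from the underlying tree of $\Theta[\mathcal{R}_G]$ by consolidating along maximal V-subtrees, so the white vertex $u$ of $T_*$ corresponds to a white vertex of the original tree which encoded some E-peripheral gluing, recorded by an E-connection $(\epsilon_1,\epsilon_2)\in\mathcal{A}$ with $\epsilon_1\in E_A^{\pm}$ and $\epsilon_2\in E_{\hat A}^{\pm}$.

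For (1), since the family $\{E_y\}_{y\in Y}$ partitions the edge set of $\Gamma$, and since each of $E_A$ and $E_{\hat A}$ is contained in a single $E_y$, the E-connection $(\epsilon_1,\epsilon_2)$ belongs to a unique block $\mathcal{A}_y$, forcing $E_A\subset E_y$ and $E_{\hat A}\subset E_y$ for the same $y$. This gives the equality of Y-labels. Given (1), the $\#$-orientability of $\hat A$ in (2) is immediate, since $\#$-orientability is defined purely in terms of the Y-label. The content of (2) is then the orientation-reversal statement for the gluing map. I would recall that in the $\#$-orientable case the block $\mathcal{A}_y$ has the special form
\[
\mathcal{A}_y = \{ (o_1, \bar{o}_2) : o_1, o_2 \in E_y^\# \} \cup \{ (\bar{o}_1, o_2) : o_1, o_2 \in E_y^\# \},
\]
so every E-connection in $\mathcal{A}_y$ pairs a $\#$-oriented edge with the reverse of a $\#$-oriented edge. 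Since the $\#$-orientation of an arm is induced consistently from the $\#$-orientations of its constituent edges (and propagates to orientations of its E-peripherals), this pairing translates precisely to the statement that the gluing map $\varphi^*_{[u,\hat t]}(\varphi^*_{[u,t]})^{-1}$ reverses the induced orientations on the two peripherals.

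For (3)(A), I would invoke the density condition from Definition \ref{6.2}(4)(b), equivalently condition (L3) in the proof of \Lemref{6.3}: for each oriented edge $\epsilon$ of $\Gamma$ and each $\epsilon'\in A_\epsilon=\{\epsilon'' : (\epsilon,\epsilon'')\in\mathcal{A}\}$, the union of (copies of) E-peripherals in $|\epsilon|$ whose gluing partner is oriented as $\epsilon'$ is dense in $|\epsilon|\cap\Gamma^\circ$. Translating to the arm $A$: as $\epsilon$ ranges over the (finitely many) oriented edges of $E_A$ and as $\epsilon'$ ranges over the oriented edges in the sets $E_{A'}^\pm$ for arms or wedge arms $A'$ of the fixed U-type $\varepsilon\in U_y$ (which are precisely the $\epsilon'\in A_\epsilon$ forced by the definition of $U_y$ and the U-type assignment), the corresponding E-peripherals of $A$ cover a dense subset. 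Summing these finitely many dense sets yields (A).

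For (3)(B), the main extra input is the form of $\mathcal{A}_y$ in the $\#$-non-orientable case, where $\mathcal{A}_y = E_y^\pm \times E_y^\pm$ contains every ordered pair of oriented edges. Thus, for a fixed orientable U-type $\varepsilon\in U_y$, both choices of relative orientation between $E_A$-edges and $E_{A'}$-edges occur among the E-connections; applying the density condition of Definition \ref{6.2}(4)(b) to each such $(\epsilon,\epsilon')$ separately shows that both compatibility classes of E-peripherals of $A$ are dense. The main obstacle, and really the only nontrivial bookkeeping, is to propagate the several layers of orientation inductions---from the orientation of $\varepsilon$ (an edge orbit in $T^r_G$) down to the induced orientations of arms $A'$ of U-type $\varepsilon$, down to their E-peripherals $P'$, and finally to the glued peripherals $P$ of $A$---and to check that this matches the parity/orientation prescription given by the $E_y^\pm\times E_y^\pm$ structure. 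This is essentially a direct translation through the identifications already carried out in Facts \factref{S.2} and \factref{S.3}, and I do not expect any genuinely new difficulty.
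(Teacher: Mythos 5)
Your proposal is correct and takes essentially the same route as the paper, which justifies this fact exactly by the block decomposition $\mathcal{A}=\bigsqcup_{y\in Y}\mathcal{A}_y$ in the description of $\mathcal{R}_G$, the definitions of Y-labels, U-types and $\#$-orientations (giving (1) and (2) from the special form of the orientable blocks), and, for part (3), the density condition of Definition~\ref{6.2}(4)(b), i.e.\ condition (L3) in the proof of \Lemref{6.3}. Note only that, like the paper's intended reading, your argument tacitly takes $\Sigma^*_u$ to be an E-peripheral (so $u$ has degree $2$ and the adjacency is recorded by a single E-connection); this is the reading under which the fact is later applied to arm-subtrees, and it is the only sound one, since for a wedge-core (or core-peripheral) white vertex two adjacent (wedge) arms may well have different Y-labels, so part (1) would fail there.
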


We finish this subsection with another two observations concerning the concepts 
just introduced,
which describe some features of the C-peripherals of the system $\Theta^V_*[{\mathcal R}_G]$.
These observations follow rather easily from Facts \factref{S.2} and \factref{S.3}, respectively,
and we omit their proofs.

\begin{fact}
  \factlabel{P.2}
Under the notation $\Theta^V_*[{\mathcal R}_G]=
(T_*,\{ K_t^* \}, \{ \Sigma_u^* \}, \{ \varphi_e^* \})$,
let $t$ be a vertex of the tree $T_*$ such that $K^*_t$ is a constituent space which is
a copy of the core $C_z$ of the V-tree of extended Whitehead graphs $\widetilde W_z$,
for some rigid cluster vertex $z\in Z$. Let $u$ be any white vertex adjacent to $t$,
and denote by $\xi\in{\mathcal H}_z$ the type of the peripheral $\varphi^*_{[u,t]}(\Sigma^*_u)$ 
(as introduced in \Defnref{E.5}).

\begin{enumerate}
\item[(0)] The degree of the vertex $u$ in $T_*$ is equal to 
$n_\xi$, and there is a natural
bijective correspondence $b:N_u^{T_*}\to N_{u_\xi}^{T^r_G}$ 
between the sets of vertices  adjacent to $u$ and $u_\xi$ in the corresponding trees,
such that $b(t)=z$ and for each arm $K^*_s$ with $s\in N_u^{T_*}\setminus\{ t \}$
the U-type of $K_s^*$ is equal to the $G$-orbit of the edge $[u_\xi,b(s)]$.


\item[(1)] For any $s\in N_u^{T_*}\setminus\{ t \} $,
denoting by $\varepsilon$ the U-type of the arm $K_s^*$, the cardinality of the set
\[
\{ s'\in N_u^{T_*}\setminus\{ t \}: \varepsilon\hbox{ is the U-type of the arm }K^*_{s'} \}
\]
coincides with the index $[G(u_\xi):G([u_\xi,b(s)])]=[G(u_\xi):G(u_\xi)\cap G(b(s))]$.

\item[(2)] Given $s\in N_u^{T_*}\setminus\{ t \} $, consider the subset
$[s]\subset N_u^{T_*}\setminus\{ t \}$ corresponding via the bijection  
$b$ to the $G(u_\xi)$-orbit $[b(s)]$ of the vertex 
$b(s)\in N^{T^r_G}_{u_\xi}\setminus\{z\}$.
(Equivalently, in accordance with the above part (0), $[s]$ is precisely the set of those $s'\in N_u^{T_*}\setminus\{ t \}$ for which
the arm $K_{s'}^*$ has the same U-label as $K^*_s$.)
Suppose also that the group $G(b(s))$ (as well as $G(b(s'))$
for any $s'\in[s]$) is orientable, so that all arms $K^*_{s'}:s'\in[s]$
are $\#$-oriented. Then:

\begin{enumerate}
\item[(A)] if $G(u_\xi)$ is orientable, then the $\#$-orientations
of all arms $K^*_{s'}:s'\in[s]$ are compatible, 
i.e., they all induce 
(via the maps $(\varphi_{[s,u]}^*)^{-1}$) the same
order on the doubleton $\Sigma^*_{u}$;

\item[(B)] if $G(u_\xi)$ is non-orientable, then the cardinality of $[s]$
is even, and the $\#$-orientations of half of the arms $K^*_{s'}:s'\in[s]$
are compatible with one of the orders of the doubleton $\Sigma_u^*$,
while the $\#$-orientations of the other half of these arms
are compatible with the opposite order of $\Sigma^*_{u}$.

\end{enumerate}

\end{enumerate}
\end{fact}

\begin{fact}
  \factlabel{P.3}
Under the notation $\Theta^V_*[{\mathcal R}_G]=
(T_*,\{ K_t^* \}, \{ \Sigma_u^* \}, \{ \varphi_e^* \})$,
let $u$ be a vertex of the tree $T_*$ such that $\Sigma^*_u$ is a peripheral space
of the tree system $\Theta^V_*[{\mathcal R}_G]$ which is a copy of the wedge core
of the V-tree of the $\theta$-graphs $\Gamma_z=\Xi_{k(z)}$, for some $z\in Z$
of type (v2).

\begin{enumerate}
\item[(0)]
The degree of $u$ in $T_*$ coincides with the degree $k(z)$ of $z$ in $T^r_G$,
and there is a natural bijective correspondence $b:N_u^{T_*}\to N_z^{T^r_G}$
such that for each wedge arm $K_s^*$ with $s\in N_u^{T_*}$ 
the U-type of $K_s^*$ is equal to the $G$-orbit of the edge $[z,b(s)]$.


\item[(1)] For any $s\in N_u^{T_*} $,
denoting by $\varepsilon$ the U-type of the arm $K_s^*$, the cardinality of the set
\[
\{ s'\in N_u^{T_*}: \varepsilon\hbox{ is the U-type of the wedge arm }K^*_{s'} \}
\]
coincides with the index $[G(z):G([z,b(s)])]=[G(z):G(z)\cap G(b(s))]$.

\item[(2)] Given $s\in N_u^{T_*}\setminus\{ t \} $, consider the subset
$[s]\subset N_u^{T_*}$ corresponding via the bijection  
$b$ to the $G(z)$-orbit $[b(s)]$ of the vertex 
$b(s)\in N^{T^r_G}_{z}$.
(Equivalently, in accordance with the above part (0), $[s]$ is precisely the set of those $s'\in N_u^{T_*}$ for which
the wedge arm $K_{s'}^*$ has the same U-label as $K^*_s$.)
Suppose also that the group $G(b(s))$ (as well as $G(b(s'))$
for any $s'\in[s]$) is orientable, so that all wedge arms $K^*_{s'}:s'\in[s]$
are $\#$-oriented. Then:

\begin{enumerate}
\item[(A)] if $G(z)$ is orientable, then the $\#$-orientations
of all arms $K^*_{s'}:s'\in[s]$ are compatible, 
i.e., they all induce 
(via the maps $(\varphi_{[s,u]}^*)^{-1}$) the same
order on the doubleton $\Sigma^*_{u}$;

\item[(B)] if $G(z)$ is non-orientable, then the cardinality of $[s]$
is even, and the $\#$-orientations of half of the arms $K^*_{s'}:s'\in[s]$
are compatible with one of the orders of the doubleton $\Sigma_u^*$,
while the $\#$-orientations of the other half of these arms
are compatible with the opposite order of $\Sigma^*_{u}$.

\end{enumerate}

\end{enumerate}
\end{fact}

\subsection{Consolidation $\Theta^{V}_*[{\mathcal R}_G]^a$ of the  system 
$\Theta^{V}_*[{\mathcal R}_G]$ along 
arm-subtrees}

In this step of the proof, we describe some consolidation of the tree system
$\Theta^{V}_*[{\mathcal R}_G]$. The proof of \Thmref{S.1} will be then concluded by showing,
in the next step, that the tree system $\Theta^{V}_*[{\mathcal R}_G]^a$
obtained by this consolidation is isomorphic
to the tree system $\Theta^r(G)$.

Recall that we denote the underlying tree of the system $\Theta^{V}_*[{\mathcal R}_G]$
by $T_*$.
Let $\Pi^a$ be the partition of $T_*$ into b-subtrees described as follows.
The first kind of b-subtrees in $\Pi^a$ are the trivial subtrees $R$
which consist of a single ``core vertex'',
i.e., a vertex $r$ with one of the cores (of a V-tree of extended Whitehead graphs)
as the constituent space $K_r^*$. The b-subtrees in $\Pi^a$ of the second kind are described
as follows.
Cut $T_*$ along all core vertices  (as mentioned above), and along all wedge core vertices
(which are white),
and consider the b-subtrees $R$ spanned on all black vertices in any of
the connected components obtained by cutting $T_*$ in this way.
Note that for such subtrees $R$ all constituent spaces $K_r^*$ 
at the black vertices $r$ in $R$
are arms (in some V-trees of exteneded Whitehead graphs) or wedge arms. 
For this reason the b-subtrees $R\in \Pi^a$ of the latter form will be called  
\emph{arm-subtrees}
of $T_*$.
Define the tree system $\Theta^V_*[{\mathcal R}_G]^a$ as the consolidation of
$\Theta^V_*[{\mathcal R}_G]$ with respect to the partition $\Pi^a$, i.e., as
$\Theta^V_*[{\mathcal R}_G]^a:=(\Theta^V_*[{\mathcal R}_G])^{\Pi^a}$.

An alternative way of describing the arm-subtrees $R$ of $T_*$ is by saying that
they are the maximal b-subtrees of $T_*$ in which all white vertices correspond to
E-peripherals of the system $\Theta^{V}_*[{\mathcal R}_G]$. It becomes clear from this
perspective, that the peripherals of the final tree system $\Theta^{V}_*[{\mathcal R}_G]^a$
correspond precisely to (and are naturally identified with) the C-peripherals of
the system $\Theta^{V}_*[{\mathcal R}_G]$. These C-peripherals can be further split into
peripherals of the following two kinds. 
Peripherals of the first kind are those which appear in the cores (and which are
identified via the connecting maps with the
special peripherals in the arms, which we view as being of the same kind); we will
call them \emph{core-peripherals}. Peripherals of the second kind 
correspond to the wedge cores, and appear as the special peripherals 
in the wedge arms; we will call them \emph{wedge-peripherals}.

We now pass to describing closer those constituent spaces $K_R$ of the system 
$\Theta^{V}_*[{\mathcal R}_G]^a$ which correspond to the arm-subtrees $R$ of $T_*$.
We will show that, after a suitable interpretation of types, the restriction of the system
$\Theta^{V}_*[{\mathcal R}_G]$ to any arm-subtree $R$ is isomorphic
to a tree system of internally punctured segments of one of the kinds described
in \Secref{I}. As a concequence, by referring to \Lemref{I.1.1} or \Lemref{I.2.1},
we will identify each of the constituent spaces $K_R$
corresponding to arm-subtrees with an appropriate densely decorated punctured circle.
More precisely, note first that each arm or wedge arm (viewed as a constituent space in
the system $\Theta^{V}_*[{\mathcal R}_G]$)
contains precisely one C-peripheral. Obviously, we will view this arm (or wedge arm)
as an internally punctured segment, with its C-peripheral viewed as a special peripheral,
and with its E-peripherals viewed as ordinary peripherals. 
We associate to the special peripheral of any arm (or wedge arm) $A$ the type $\tau$
coinciding with the U-type of $A$ (as described in the previous subsection).
Moreover, if the Y-label $y$ associated to $A$ (as in the previous subsection)
is orientable (i.e. if the flexible factor $G(v)$, for any $v\in y$,
is orientable),
we associate to $A$
the orientation which coincides with the $\#$-orientation (as described also in the
previous subsection). Observe that, due to \Factref{P.1}(1), 
for any arm-subtree $R$, all arms and wedge arms
in the restricted tree
system $\Theta^V_*[{\mathcal R}_G]|_R$ have the same Y-label $y$,
which we call therefore the \emph{Y-label associated to $R$}.
It is not hard to observe that the set of U-types of arms and wedge arms
appearing in the tree system $\Theta^V_*[{\mathcal R}_G]|_R$ coincides then
with the set $U_y$ described in the previous subsection.
For any such $R$, we take $U_y$ as the corresponding set $\mathcal T$ of types.
It follows then from \Factref{P.1}(2)(3) 
that for any arm-subtree $R$ the restricted tree system 
$\Theta^V_*[{\mathcal R}_G]|_R$ is isomorphic, as equipped with the above
mentioned types and orientations, to one of the tree systems of internally
punctured segments described in \Secref{I}.
More precisely, if $y$ is the Y-label
associated to $R$, and if the corresponding flexible factor $G(v)$ for
any $v\in y$ is orientable, the restricted tree system $\Theta^V_*[{\mathcal R}_G]|_R$
with the above described types and orientations
is isomorphic to the tree system $\Psi^{\text{o}}_m(I^\circ_{\rm int})$, as described
in \Ssecref{I.1}, where $m$ is the cardinality of the subset $U_y\subset U$.
Similarly,  if the corresponding flexible factor $G(v)$ for any $v\in y$ is non-orientable,
then the restricted tree system $\Theta^V_*[{\mathcal R}_G]|_R$
is isomorphic (via type and orientation respecting isomorphism) with the tree system 
$\Psi^{\text{n-o}}_{m,n}(I^\circ_{\rm int})$, 
as described in \Ssecref{I.2},
where $m$ and $n$ are the cardinalities
of the subsets $U_{y,1},U_{y,2}$ consisting of those types from $U_y$ which are 
orientable and nonorientable, respectively. This completes our analysis
of the constituent spaces $K_R$ corresponding to the arm-subtrees $R$ in the 
consolidated tree system $\Theta^V_*[{\mathcal R}_G]^a$.

\subsection{The final step of the proof of \Thmref{S.1}}

We need to show that the tree system  $\Theta^{V}_*[{\mathcal R}_G]^a$
is isomorphic to the tree system $\Theta^r(G)$.
We will argue by showing that $\Theta^{V}_*[{\mathcal R}_G]^a$ can be naturally viewed as
a regular tree system with doubleton peripherals, as defined in \Secref{D},
and that it is modelled on the same connecting system (for regular spaces
with doubleton peripherals) as the system $\Theta^r(G)$, namely on the connecting
system ${\mathcal P}^r_G$ described in \Exref{D.8}.
The proof will be then concluded by referring to Lemmas \lemref{D.7} and \lemref{T.5.5}.

Let ${\mathcal P}^r_G$ be the connecting system described in \Exref{D.8},
and denote the data which form this system as 
${\mathcal P}^r_G=(((K_j,\Omega_j),({\mathcal D}_j,d_j))_{j\in J},{\mathcal B})$.
In view of what was said in the previous paragraph, to conclude the proof of
\Thmref{S.1} we need to show that the tree system $\Theta^{V}_*[{\mathcal R}_G]^a$
is modelled on ${\mathcal P}^r_G$. To do this, we will explicitly describe some
${\mathcal P}^r_G$-atlas ${\mathbf A}^a$ for $\Theta^{V}_*[{\mathcal R}_G]^a$.
Recall first that we have two kinds of constituent spaces in the tree system
$\Theta^{V}_*[{\mathcal R}_G]^a$. The constituent spaces of the first kind are
(the copies of)
the cores $C_z$ of the V-trees ${\mathcal X}_V(\widetilde{\mathcal V}_z)$
of extended Whitehead graphs $\widetilde W_z$
(these graphs are described in \Secref{S}, as part of the structure of ${\mathcal R}_G$,
and in \Secref{E}). 
By \Lemref{E.4}(1), each such core $C_z$ (and hence each constituent space
of $\Theta^{V}_*[{\mathcal R}_G]^a$ which is its copy)
is canonically homeomorphic to the 
space with peripherals $(X_z,{\mathcal D}_z)$
associated to the corresponding rigid cluster factor $(G(z),{\mathcal H}_z)$ of $G$.
The corresponding rigid cluster vertex $z$ of $T^r_G$ is then in the same $G$-orbit
as precisely one of the vertices $v_j$ mentioned in \Exref{D.8},
and therefore there is $g\in G$ which maps $z$ to $v_j$. This element $g$,
in its action on the boundary $\partial G$, maps the space with peripherals 
$\partial G(z)=X_z$
onto the analogous space with peripherals $\partial G(v_j)=K_j$, and we identify $C_z$
(via $\partial G(z)$ and $\partial G(v_j)$) with the corresponding $K_j$,
using this map (and this identification essentially does not depend on a choice of $g$
as above, since any two such identifications differ by a decoration-preserving
homeomorphism).
Thus, for each constituent space of the first kind we get in this way its identification
with an appropriate space $K_j$, and we take it as the corresponding element
of the atlas ${\mathbf A}^a$.
The constituent spaces of the second kind in the tree system $\Theta^{V}_*[{\mathcal R}_G]^a$
are those spaces $K_R$ which correspond to the arm-subtrees $R\subset T_*$.
By the discussion in the previous subsection, each such $K_R$
is naturally a densely decorated punctured circle, with types in the set $U_y$,
where $y\in Y$ is the Y-label associated to $R$.
Recalling that $y$ is a $G$-orbit in the set of flexible vertices in $T^r_G$,
we note that the decorated space with peripherals $K_j$
with this index $j\in J$ for which $v_j\in y$, is also a densely decorated 
punctured circle.
Recalling further  that
$U_y$ is the set of $G$-orbits of the edges
of $T^r_G$ which are adjacent to any fixed vertex $v\in y$,
we deduce (by referring to \Lemref{D.4.1}(2)(3))
that $K_R$ is then isomorphic to  $K_j$. Moreover,
by referring to \Lemref{D.4.1}(2)(3),
we can (and will) identify $K_R$ with the above mentioned $K_j$,
in an essentially unique way,
via a decoration-preserving homeomorphism which sends each peripheral 
of any type $\varepsilon\in U_y$ in $K_R$ to a peripheral of type $\tau(\partial G(u'))$ in $K_j$,
where $u'$ is any vertex adjacent to $v_j$ such that the edge $[u',v_j]$ 
belongs to the $G$-orbit $\varepsilon$. 
Thus, for each constituent space of the second kind in the tree system 
$\Theta^{V}_*[{\mathcal R}_G]^a$,
this gives us an identification with an appropriate space $K_j$ of ${\mathcal P}^r_G$,
and we take this identification as the corresponding element of the atlas ${\mathbf A}^a$. 

It remains to show that the above described family ${\mathbf A}^a$ of identifications 
satisfies conditions (1)-(4)
of \Defnref{D.6} (as applied to the connecting system ${\mathcal P}^r_G$).
Fix the notation 
$(T^a,\{ K^a_{\tilde t} \},\{ \Sigma^a_{\tilde u} \}, 
\{ \varphi^a_{\tilde e}=\varphi^a_{[\tilde u, \tilde t]} \})$
for the data in $\Theta^V_*[{\mathcal R}_G]^a$.
We pass to checking that condition (1) of the \Defnref{D.6} holds.
Let $\Sigma^a_{\tilde u}$ be some peripheral space in the tree system
$\Theta^{V}_*[{\mathcal R}_G]^a$. We need to show that the set of types 
(induced by the above identifications) of the corresponding peripheral subspaces 
$\varphi^a_{[\tilde u,\tilde t]}(\Sigma^a_{\tilde u})$, where $\tilde t$ runs through
all vertices adjacent to $\tilde u$ in the tree $T^a$, coincides with some single
block of $\mathcal B$. To do this, we consider first the case when $\Sigma^a_{\tilde u}$ is
a core-peripheral (as explained in the previous subsection). 
Denote by $\tilde v$ the (unique) core vertex of $T^a$ adjacent to $\tilde u$,
where by a core vertex we mean here a vertex for which the constituent space
$K^a_{\tilde v}$ is a copy of some core $C_z=\partial G(z)$.
View then the peripheral 
$P=\varphi^a_{[\tilde u,\tilde v]}(\Sigma^a_{\tilde u})$ of $K^a_{\tilde v}$
as a peripheral in the space $\partial G(z)$ (viewed as a space with peripherals
associated to the rigid cluster factor $G(z)$), and denote by $u$ the white vertex of 
$T^r_G$ for which $P=\partial G(u)$. Denoting by $[u]\in A$ the $G$-orbit
of the vertex $u$
(where $A$ is as in \Exref{D.8}), 
we claim that the set of types of the peripherals of the form
$\varphi^a_{[\tilde t,\tilde u]}(\Sigma^a_{\tilde u})$ coincides with the block
$B_{[u]}$ of $\mathcal B$. 
To see this, note that if we interpret the types of the peripherals in ${\mathcal P}^r_G$
as $G$-orbits of edges, as in \Obsref{D.8+}, then the above described
identifications from the atlas ${\mathbf A}^a$ simply preserve those types.
Moreover, since the peripherals in the tree system $\Theta^V_*[{\mathcal R}_G]^a$
correspond to the C-peripherals of the system $\Theta^V_*[{\mathcal R}_G]$,
we may view the observations from Facts \factref{P.2} and \factref{P.3}, after appropriate
reinterpretation, as concerning the
tree system $\Theta^V_*[{\mathcal R}_G]^a$.
Our claim 
follows then fairly directly from \Factref{P.2}(0).
 We pass now to considering the
remaining case when $\Sigma^a_{\tilde u}$ is a wedge-peripheral,
corresponding to some wedge core.
This wedge core is contained in a V-tree of copies of precisely one $\theta$-graph
$\Gamma_z$ from the connecting system ${\mathcal R}_G$, and the vertex $z$ 
is white in $T^r_G$ and not adjacent to any rigid cluster vertex.
We  denote by $[z]\in A$ the $G$-orbit of the vertex $z$,
and we observe (in a way similar as in the previous case) that,
in view of \Factref{P.3}(0), the set of types 
of the peripherals of the form 
$\varphi^a_{[\tilde t,\tilde u]}(\Sigma^a_{\tilde u})$ coincides with the block
$B_{[z]}$ of $\mathcal B$.
This completes the verification of condition (1).

In order to verify the remaining conditions (2)-(4) of \Defnref{D.6},
we again interpret Facts \factref{P.2} and \factref{P.3} as concerning the tree system 
$\Theta^V_*[{\mathcal R}_G]^a$, as in the previous paragraph.
In view of this, condition (2) of \Defnref{D.6} follows directly from Facts \factref{P.2}(1)(2A)
and \factref{P.3}(1)(2A), condition (3) follows from Facts \factref{P.2}(1) and \factref{P.3}(1),
while condition (4) is a consequence of Facts \factref{P.2}(1)(2B) and \factref{P.3}(1)(2B).
We omit further details, thus completing the proof of the fact that the
tree system $\Theta^V_*[{\mathcal R}_G]^a$ is modelled on the connecting system 
${\mathcal P}^r_G$. In view of the comments at the beginning of this section
and at the beginning of this subsection,
this completes also the whole proof of \Thmref{S.1}.


\subsection{Proofs of the missing parts of Theorems \mainthmref{1} and \mainthmref{2}}

We complete the proofs of Theorems \mainthmref{1} and \mainthmref{2} of the introduction. To do this,
we only need to justify the implications \pitmref{g_rigid_cluster_factors_vf}$\Rightarrow$\pitmref{bd_g_reg_twocon_tog} in \Mainthmref{1}
and (3)$\Rightarrow$(2) in \Mainthmref{2}.

\noindent
\begin{proof}[Proof of the implication \pitmref{g_rigid_cluster_factors_vf}$\Rightarrow$\pitmref{bd_g_reg_twocon_tog} in \Mainthmref{1}]

It obviously follows from \Thmref{S.1} that, assuming condition \pitmref{g_rigid_cluster_factors_vf} in \Mainthmref{1},
the Gromov boundary $\partial G$ is homeomorphic to a regular tree of graphs.
It is thus sufficient to show that the graphs $\Gamma_z$ appearing in the
graphical connecting system ${\mathcal R}_G$ are all 2-connected. To do this,
note that if $\Gamma_z$ is the extended Whitehead graph of a rigid cluster factor
$G(z)$ of $G$ (i.e. if $z$ is a rigid cluster vertex of $T^r_G$), then 
$\Gamma_z$ is 2-connected by
\Factref{E.2}. On the other hand, if $z$ is a white vertex of $T^r_G$ not adjacent
to a rigid cluster vertex, $\Gamma_z$ is a $\theta$-graph with $k(z)$ edges,
where $k(z)$ is the degree of the vertex $z$ in the tree $T^r_G$.
Since $k(z)\ge3$, this $\theta$-graph $\Gamma_z$ is thick and 2-connected. It follows that
$\partial G$ is indeed homeomorphic to a regular tree of 2-connected graphs,
as asserted in condition \pitmref{bd_g_reg_twocon_tog} of \Mainthmref{1}, which completes the proof. 
\end{proof}

\begin{proof}[Proof of the part (3)$\Rightarrow$(2) in \Mainthmref{2}]
Suppose that condition (3) of \Mainthmref{2} holds, i.e., $G$ has no rigid factor
in its Bowditch JSJ splitting. Obviously, $G$ has then no rigid cluster factor
in its reduced Bowditch JSJ splitting. As a consequence, all graphs $\Gamma_z$
in the graphical connecting system ${\mathcal R}_G$ are some thick $\theta$-graphs.
It follows then from \Thmref{S.1} that $\partial G$ is a regular tree of 
$\theta$-graphs, as asserted in condition (2) of \Mainthmref{2},
which completes the proof.
\end{proof}

\section{Examples}
\seclabel{examples}

We start this section with the explanations concerning the examples provided in the
introduction (in Figures~1--4). In the later part of the section,
in \Ssecref{ex_cox}, we discuss some Coxeter group examples.

\subsection{Examples from the introduction.}
\sseclabel{ex_intro}

In order to show that the examples from the introduction are correct,
we need to justify that the described splittings of the corresponding groups $G_i$
are indeed their Bowditch (canonical) JSJ splittings. To do this, we will need
several auxilliary results.

The following result characterizes the Bowditch JSJ splittings of 
1-ended hyperbolic groups in terms of the features of their associated graph of groups
decompositions (see Theorem~3.22 in \cite{Dahmani_Touikan:isomorphy_dehn_fillings:2019}, where a more general case
of relatively hyperbolic groups is addressed). Recall that a splitting 
$\mathcal{G}$ of some group $G$,
as a graph of groups, is {\it nontrivial} if no vertex group
of $\mathcal{G}$ coincides with $G$.
A splitting of $G$ is {\it relative to a family $P$ of subgroups of $G$} 
if each subgroup from $P$ conjugates in $G$ into some vertex group of 
$\mathcal{G}$.

\begin{thm}[F. Dahmani, N. Touikan \cite{Dahmani_Touikan:isomorphy_dehn_fillings:2019}]
\thmlabel{EX.1.1}
Let $G$ be a 1-ended hyperbolic group which is not cocompact Fuchsian.
The canonical JSJ splitting (i.e. the Bowditch JSJ splitting) of $G$
is the unique splitting over 2-ended groups that satisfies the following condition:

\noindent {The underlying graph $X$ of the associated graph of groups 
$\mathcal{G}=(\{ G_v \}, \{ G_e \}, \{ \varphi_e \})$
is bipartite, with black and white vertices, so that}
\begin{enumerate}
\item {the white vertex groups $G_u$ are maximal 2-ended subgroups of $G$;}
\item {the black vertex groups $G_t$ are non-elementary 
(i.e. infinite and not 2-ended); 
moreover, the images of the incident edge groups in each such $G_t$
are pairwise distinct, and denoting by 
$P_t$ the family of all such images, the pair $(G_t,P_t)$
 falls into one of the following two categories}
\begin{enumerate} 
\item {$(G_t,P_t)$ is an abstract flexible factor 
(as in \Defnref{X.1}),}
\item {$(G_t,P_t)$ is \hbox{\rm algebraically rigid} 
(i.e. there is no 
nontrivial
splitting of $G_t$ relative to $P_t$ along finite or 2-ended subgroups) 
and $(G_t,P_t)$ is not an abstract flexible factor;
moreover, we require in this case that the images in $G_t$ of any two incident edge groups
are not conjugated in $G_t$ into the same 2-ended subgroup of $G_t$;}
\end{enumerate}

\item {for any white vertex $u$ of degree 1 in $X$, the incident edge group $G_e$
does not coincide with $G_u$ (i.e. it is a proper subgroup of $G_u$);
moreover, if the neighbouring vertex of $u$ in $X$ carries a flexible factor,
then the index of $G_e$ in $G_u$ is at least 3;}
\item {for any white vertex $u$ of degree 2 in $X$, 
if both neighbours of $u$
carry flexible factors then 
at least one of the incident edge groups $G_e$ is a proper
subgroup of $G_u$.} 
\end{enumerate}
\end{thm}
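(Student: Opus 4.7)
The plan is to prove this theorem in two halves: first verify that the classical Bowditch JSJ splitting of $G$ (as recalled in \Ssecref{bowditch_jsj_splitting}) satisfies all four itemized properties, then prove uniqueness by showing that any graph-of-groups decomposition $\mathcal{G}$ satisfying (1)--(4) must have Bass--Serre tree $G$-equivariantly isomorphic to ${T}_G$.

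For the first half, I would traverse the four conditions against the structure of ${T}_G$. The bipartition is built in: white vertices correspond to types \itmref{(v2)} and \itmref{(v4)}, black vertices to types \itmref{(v1)} and \itmref{(v3)}. Condition (1) is then just the observation that white vertex groups are maximal $2$-ended subgroups. For condition (2), the flexible (necklace) factors are abstract flexible factors by \Exref{X.1.1}, and the rigid (star) factors are non-elementary and algebraically rigid relative to their incident edge groups by the definition of a star as a maximal family of pairwise non-separated classes (this rigidity is standard: any relative splitting of a rigid factor over $2$-ended subgroups would produce a further Bowditch cut pair in $\partial G$ splitting the star). Distinctness of the incident edge groups inside each black vertex group follows because distinct white neighbours correspond to distinct local-cutpoint classes. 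Conditions (3) and (4) are reducedness assertions which follow from the canonicity of the association vertex-of-${T}_G$ $\leftrightarrow$ Bowditch class or star: a degenerate configuration excluded by (3) or (4) would contradict maximality of the $2$-ended stabilizer or non-separatedness of the star.

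For the uniqueness half, the plan is to use the universal property of JSJ decompositions over $2$-ended subgroups (in the sense of Guirardel--Levitt / Bowditch). Given a splitting $\mathcal{G}$ satisfying (1)--(4), with Bass--Serre tree $S$, I would compare $S$ with ${T}_G$ by constructing a $G$-equivariant map $f\colon S \to {T}_G$ (using the fact that every $2$-ended edge stabilizer of $S$ fixes either a white vertex or an edge of ${T}_G$, since edge stabilizers of ${T}_G$ are finite-index in maximal $2$-ended subgroups). Then I would show $f$ is both injective and surjective on vertex orbits, using the following dichotomy for each black vertex $t$ of $S$: if $G_t$ is a flexible factor in $\mathcal{G}$ then by (2)(a) its boundary is a punctured circle, forcing $t$ to map to a necklace (type (v1)) vertex of ${T}_G$; if $G_t$ is algebraically rigid relative to $P_t$ and is not a flexible factor, then by (2)(b) it cannot refine any further over $2$-ended groups relative to its peripherals, which forces it to map to a star (type (v3)) vertex. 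Conditions (3) and (4) rule out the two ways $S$ could be a non-trivial collapse/refinement of ${T}_G$: degree-$1$ white vertices with edge group equal to the vertex group would allow a trivial fold, and certain degree-$2$ white vertices between two flexible factors would allow a collapse that glues two flexible factors into one (which would still be flexible), violating maximality.

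The main obstacle will be the uniqueness argument, specifically matching the black vertices correctly. The easy directions are handling the white vertices (which are pinned down by condition (1) via maximality of $2$-ended subgroups) and recognizing flexible factors via (2)(a). The delicate part is showing that an algebraically rigid factor satisfying (2)(b) corresponds exactly to a star vertex, and not, say, to a union of several stars glued along maximal $2$-ended subgroups that happened to be collapsed; this is where the condition in (2)(b) about incident edge groups not pairwise conjugating into the same $2$-ended subgroup, together with reducedness conditions (3) and (4), do the decisive work. In the present paper, rather than redoing this analysis, I would simply cite \cite{Dahmani_Touikan:isomorphy_dehn_fillings:2019}, whose Theorem~3.22 handles the more general relatively hyperbolic setting.
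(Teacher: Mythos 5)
Your final move---citing Theorem~3.22 of Dahmani--Touikan---is exactly what the paper does: this statement is imported from the literature and the paper gives no independent proof, so your approach coincides with the paper's. The preliminary two-part sketch (checking that the Bowditch splitting satisfies (1)--(4), then uniqueness via a $G$-equivariant comparison of Bass--Serre trees) is a plausible outline of what such a proof would involve, but since you ultimately defer to the same citation, nothing beyond that is needed here.
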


The next result, which seems to be well known, 
allows us to recognize the rigid factors of the groups $G_i$ from the introduction. For completeness, we include a sketch of a proof of this result,
since we haven't found an appropriate reference in the literature.

\begin{prop}
\proplabel{EX.1.2}
Let $S$ be a compact surface, possibly with boundary, equipped with a hyperbolic metric (so that the boundary is totally geodesic), and let $L$ be
a finite collection of closed geodesics contained in the interior of $S$.
Suppose that the family $L$ is \hbox{\rm filling}, i.e., any connected component
obtained by cutting $S$ along the union of $L$ is either a disk or an annulus,
one of whose boundary components coincides with some boundary component of $S$.
Let $G=\pi_1S$, and let $P$ be the family of (the conjugacy classes of) the cyclic
subgroups of $G$ corresponding to the curves in $L$ 
and to the boundary components of $S$.
Then $(G,P)$ is algebraically
rigid, i.e., $G$ has no nontrivial splitting relative to $P$ over a finite or 2-ended subgroup.
\end{prop}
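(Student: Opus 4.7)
The plan is to argue by contradiction. Suppose $G = \pi_1 S$ admits a nontrivial splitting relative to $P$ over a finite or $2$-ended subgroup. By Bass--Serre theory this is equivalent to exhibiting a minimal simplicial action of $G$ on a tree $T$, without global fixed point, with edge stabilizers that are finite or $2$-ended, and such that every subgroup in $P$ is elliptic. Since $G$ is torsion-free (being a discrete subgroup of $\mathrm{PSL}(2,\mathbb R)$), edge stabilizers are trivial or infinite cyclic, and the action is in particular a \emph{small} action.

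The main step is to invoke the dictionary between small actions of a surface group on (simplicial or $\mathbb R$-)trees and measured geodesic laminations on the surface, due to Morgan--Shalen and Skora in the closed case and extended by Bestvina--Feighn, Guirardel and Sela via the Rips machine in the setting with boundary. The output is a nonempty $G$-invariant measured geodesic lamination $\widetilde\Lambda$ on $\widetilde S \subset \mathbb H^2$, descending to a measured lamination $\Lambda$ on $S$, with the property that for every $g \in G$, represented by a closed geodesic $\gamma_g$ on $S$, one has $\ell_T(g) = i(\Lambda, \gamma_g)$, where $\ell_T$ is the translation length on $T$ and $i$ is the geometric intersection number.

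Now the filling hypothesis is ready to bite. By construction, each $H \in P$ is elliptic on $T$, so $\ell_T(h_\gamma) = 0$ for every $\gamma \in L \cup \partial S$, and hence $i(\Lambda, \gamma) = 0$ for all such $\gamma$; that is, $\Lambda$ is disjoint from every curve of $L \cup \partial S$. But each connected component of $S \smallsetminus (L \cup \partial S)$ is a disk or an annulus with one boundary on $\partial S$, and neither of these supports a nonempty essential measured lamination. Therefore $\Lambda$ must be empty, which by the dictionary forces every element of $G$ to be elliptic on $T$; minimality then forces a global fixed point, contradicting nontriviality.

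The delicate part will be the invocation of the lamination dictionary in the relative/bounded setting with small (but possibly non-free) edge stabilizers; this is standard though technical, and is handled by Bestvina--Paulin rescalings followed by the Rips machine. A fully elementary alternative is available: one writes $G$ as a graph of groups dual to the decomposition of $S$ along $L \cup \partial S$, in which every vertex group is trivial (for disk pieces) or infinite cyclic generated by a $\partial S$--component of an annulus piece, and every edge group embeds into a subgroup in $P$; combining this decomposition with the hypothetical action on $T$ and using ellipticity of the elements of $P$, one proves inductively that every vertex group of this decomposition is elliptic on $T$, hence so is $G$. The bookkeeping here is the real subtlety, as a single curve of $L$ can occur on the boundary of a complementary piece with multiplicity, but no essentially new idea is required.
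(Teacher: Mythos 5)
Your main argument is correct in outline and ends with the same geometric punchline as the paper, but it travels by a noticeably heavier route. The paper never passes through measured laminations or $\R$-trees: it quotes \Lemref{EX.1.3} (extracted from Dunwoody--Sageev and Morgan--Shalen), by which a minimal splitting of $\pi_1S$ over trivial or infinite cyclic subgroups relative to the boundary is dual to a finite system of pairwise disjoint \emph{essential simple closed curves}, realizable as geodesics or curves arbitrarily close to geodesics; relativity with respect to $P$ then lets one push each such curve into a single complementary component of $\bigcup L$, and a disk or boundary-parallel annulus contains no essential, non-boundary-parallel simple closed curve, so the system is empty and the splitting trivial. Your Skora-type dictionary $\ell_T(g)=i(\Lambda,\gamma_g)$ subsumes this, but the extra generality is not needed (the hypothetical splitting is already simplicial with trivial or infinite cyclic edge groups), and it is exactly what creates the ``delicate part'' you flag: for a surface with boundary the group is free and small actions on trees are \emph{not} dual to laminations in general, so you must use the relative version, which applies only because the boundary subgroups lie in $P$ and are therefore elliptic. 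Two smaller points if you keep this route: from $i(\Lambda,\gamma)=0$ you may only conclude that $\gamma$ misses the support of $\Lambda$ \emph{or} is itself an atomic leaf; the latter is ruled out for a simple $\gamma\in L$ because in a filling family $\gamma$ is crossed by some member of $L$, which would then have positive intersection with $\Lambda$ (an atomic leaf on a boundary geodesic makes all translation lengths vanish, so it causes no harm, but say so). With these touches your principal proof is sound, and its machinery is genuinely different from the paper's, which stays entirely simplicial.

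The ``fully elementary alternative'' you sketch, however, does not work as stated. The curves of $L$ are neither assumed simple nor pairwise disjoint --- indeed no filling family can consist of pairwise disjoint simple closed geodesics, since a disk or boundary-parallel annulus piece would force some curve of $L$ to be null-homotopic or freely homotopic into the boundary --- so cutting $S$ along $L\cup\partial S$ does not give a graph-of-groups decomposition whose vertex groups are trivial or infinite cyclic and whose edge groups conjugate into $P$. Any honest decomposition must include a piece carrying a regular neighbourhood of the (generally four-valent) graph $\bigcup L$, whose fundamental group is free of higher rank, and the boundary circles of that neighbourhood, which would play the role of edge groups, are in general not conjugate into $P$ and need not be elliptic in $T$. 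Moreover, even granted a decomposition with every vertex group elliptic in $T$, that alone does not make $G$ elliptic: a free group of rank two acting on the Bass--Serre tree of its splitting as a free product of two infinite cyclic groups has both vertex groups elliptic and no global fixed point. So this aside should be dropped or substantially reworked; the lamination argument (or, more economically, the paper's dual-curve-system lemma) is the proof.
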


In order to justify the above proposition, we will make use of the
following lemma, which is essentially a restatement of Theorem~III.2.6
in \cite{Morgan_Shalen:valuations_trees:1984} (see also Lemmas 2.2 and
2.3 in \cite{Dunwoody_Sageev:jsj_slender:1999}). Recall that a
splitting of a group $G$ as a graph of groups is {\it minimal} if the
corresponding action of $G$ on the associated Bass-Serre tree $T$ is
minimal, i.e., there is no $G$-invariant proper subtree
$T_0\subset T$, $T_0\ne T$. Recall also that a simple closed curve on
a compact surface is {\it essential} if it is homotopically
non-trivial and not parallel to a boundary component.

\begin{lem}
\lemlabel{EX.1.3}
Let $G=\pi_1S$ be the fundamental group of a compact connected 
hyperbolic surface $S$
(we allow that $S$ has nonempty totally geodesic boundary), 
and let $Q$ be the family of
the conjugacy classes of cyclic subgroups of $G$ corresponding to the
boundary components of $S$. If $G$ splits 
non-trivially and minimally relative to $Q$ as a graph of groups
$\mathcal{G}$ with cyclic edge groups, then there is a collection $\Lambda$
of pairwise disjoint essential simple closed curves contained in the interior of $S$ such that
every edge group of $\mathcal{G}$ is conjugate to the $\pi_1$-image of some
component of $\Lambda$ (in particular, every edge group is infinite cyclic), 
and every vertex group is conjugate to the $\pi_1$-image
of some connected component of the complement $S\setminus\bigcup\Lambda$. 
Moreover,
the components of $\Lambda$  can be realized as geodesics,
except for the families of mutually parallel components, which can be realized as
arbitrarily close to a geodesic.
\end{lem}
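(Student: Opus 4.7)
The plan is to pass from the graph of groups splitting to its Bass-Serre tree $T$, on which $G$ acts with cyclic edge stabilizers and with every cyclic subgroup in $Q$ fixing some vertex, and then to realize this action geometrically via a system of disjoint essential simple closed curves in $S$. The main ingredient is the structure theorem for actions of surface groups on trees with small edge stabilizers, as in Theorem~III.2.6 of \cite{Morgan_Shalen:valuations_trees:1984} or the elementary version in Lemmas~2.2 and~2.3 of \cite{Dunwoody_Sageev:jsj_slender:1999}. These results say that a non-trivial action of a compact surface group on a simplicial tree with cyclic (in particular small) edge stabilizers, such that all peripheral subgroups fix vertices, is dual to a finite collection of pairwise disjoint essential arcs and simple closed curves on $S$; since the peripheral subgroups are required to fix vertices, no arcs with endpoints on $\partial S$ occur, so one obtains a system $\Lambda$ of pairwise disjoint essential simple closed curves contained in the interior of $S$.

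Given such $\Lambda$, the dual graph of groups decomposition has vertex groups equal to the $\pi_1$-images of the components of $S \setminus \bigcup \Lambda$ and edge groups equal to the $\pi_1$-images of the components of $\Lambda$ (all infinite cyclic, since components of $\Lambda$ are simple closed curves). The next step is to match this dual decomposition with the original $\mathcal G$: both give $G$-actions on simplicial trees with cyclic edge stabilizers in which $Q$-subgroups fix vertices, and minimality of $\mathcal G$ ensures that the Bass-Serre tree of $\mathcal G$ coincides with the minimal $G$-invariant subtree of the dual tree. This identification yields the conjugacy statements for the edge and vertex groups of $\mathcal G$ in terms of components of $\Lambda$ and of $S \setminus \bigcup\Lambda$.

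Finally, to realize the curves as geodesics, I would use the standard fact that on a hyperbolic surface every essential simple closed curve has a unique geodesic representative in its free homotopy class, and two disjoint essential simple closed curves have disjoint geodesic representatives unless they are freely homotopic to each other. The mutually parallel curves in $\Lambda$ share a single geodesic representative, so they cannot all be taken to be geodesics simultaneously; however, by a standard small-perturbation argument (take disjoint embedded annular neighbourhoods of the common geodesic and place the parallel copies on concentric core circles inside one of them), they can be isotoped to pairwise disjoint curves arbitrarily close to that geodesic. This gives the last clause of the lemma.

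The main obstacle is the invocation of the Morgan--Shalen/Dunwoody--Sageev structure theorem in the relative (bordered) setting: one must ensure that the hypothesis that elements of $Q$ fix vertices prevents the appearance of dual arcs with endpoints on $\partial S$ and forces $\Lambda$ to sit in the interior. Once this relative version is in hand, the remaining identifications and the geodesic realization are routine.
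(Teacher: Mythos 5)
Your proposal follows essentially the same route as the paper, which states that the lemma "can be deduced fairly directly from Lemmas 2.2 and 2.3 in Dunwoody--Sageev (see also Morgan--Shalen)" and gives no further argument: you invoke the same structure theorem for surface-group actions on trees with cyclic edge stabilizers, use ellipticity of the peripheral subgroups to rule out dual arcs and keep $\Lambda$ in the interior, and handle the geodesic realization and the parallel-copies caveat by the standard hyperbolic-surface facts. The additional detail you supply (matching the dual decomposition with $\mathcal{G}$ via minimality, and the perturbation of parallel copies near the common geodesic) is correct and fills in exactly what the paper leaves implicit.
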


\begin{rmk}
\rmklabel{EX.1.3+}
Note that the above lemma implies in particular a rather well known but useful
fact that the fundamental group of a compact hyperbolic surface is freely
indecomposable relative to its boundary subgroups, i.e., that the pair $(G,Q)$
as in the lemma is freely indecomposable.
\end{rmk}

\begin{proof}[Proof of \Propref{EX.1.2}:]
Suppose, a contrario, that $G$ admits a notrivial splitting $\mathcal{G}$
(as a graph of groups) relative to $P$ over finite or 2-ended subgroups.
We can assume, without loss of generality, that this splitting is minimal.
Since $G$ is torsion-free, this splitting is over infinite cyclic or trivial subgroups.
Let $Q$ denote the set of the conjugacy classes of those cyclic subgroups of $G$
which correspond to the boundary components of $S$. 
Since we have $Q\subset P$, $\mathcal{G}$ is obviously 
a splitting relative to $Q$.
By \Lemref{EX.1.3}, 
we can realize this splitting geometrically, as induced by 
some collection $\Lambda$ of pairwise disjoint essential simple closed curves on $S$.
This shows that the splitting is actually over infinite cyclic subgroups only
(compare \Rmkref{EX.1.3+}).
Furthermore, we may assume that the curves of $\Lambda$ are either geodesic,
or very close to a geodesic. 
Since $\mathcal{G}$ is a relative
splitting (with respect to $P$), an easy geometric observation shows that 
one can slightly perturb $\Lambda$ so that
each curve $C$ of $\Lambda$
is contained in a single connected component of the complement $S\setminus\bigcup L$.
By the fact that $L$ is a filling collection, each connected component of
$S\setminus\bigcup L$ is
either a disk or an annulus adjacent to a boundary component of $S$.
Obviously, being an essential simple closed curve,
$C$ cannot be contained in a disk component since it is homotopically
nontrivial in $S$, but it also cannot be contained in an annulus component
since it is not parallel to a boundary component.
It follows that the family $\Lambda$ is empty. However, if this is the case,
the induced splitting is obviously trivial, which contradicts our assumption
that the splitting was nontrivial. This completes the proof.
\end{proof}

We are now ready to discuss and explain the examples from the introduction.
Note first that for each graph of groups $\mathcal{G}_i$, $i=1,2,3,4$,
the underlying graph $X_i$ carries a natural bipartite structure,
with white vertices corresponding to the curves of the gluings,
and with black vertices corresponding to the surface pieces 
($F_j$ and $H_j$) of $M_i$.
It is also easy to realize that each of the white vertex groups is a maximal cyclic
(and hence also maximal 2-ended) subgroup of the corresponding torsion free group $G_i$. 
On the other hand, each of the black vertex groups is the fundamental group of
a hyperbolic surface with nonempty boundary, so it is a non abelian free group,
and hence it is non-elementary.
Moreover, each of the black vertex groups of the form $\pi_1(F_j)$
is easily seen to be an abstract flexible factor (when equipped with the
family of images of the incident edge groups). We will show that any black vertex
group of the form $G_t=\pi_1(H_j)$, equipped with the corresponding family $P_t$
of images of incident edge groups, satisfies condition (2)(b) of 
\Thmref{EX.1.1}.
The fact that the pair $(G_t,P_t)$ as above is algebraically rigid follows directly
from \Propref{EX.1.2}. 
To see that $(G_t,P_t)$ is not an abstract flexible factor,
note that any flexible abstract factor $(G,P)$ fails to be still flexible after deleting
from $P$ some of the subgroups. On the other hand, if we delete from $P_t$
those subgroups of $G_t=\pi_1H_j$ which correspond to the gluing curves contained
in the interior of $H_j$, the resulting new abstract factor $(G_t,Q_t)$ will be obviously
flexible.  
It remains to check the last assertion of condition (2)(b),
namely that no two incident edge groups from $P_t$ conjugate into the same
2-ended subgroup of $G_t$. This follows however from the fact that these
subgroups (as well as any pair of their conjugates), 
when viewed as groups of isometries of a hyperbolic plane,
are generated by loxodromic isometries having distinct axes,
so their intersection is trivial.

Since the verification of the remaining conditions (3) and (4) 
of \Thmref{EX.1.1}
is straightforward, we omit its details, and we conclude that the splittings
of the groups $G_i$ related to the graph of groups expressions 
$\mathcal{G}_i$
are indeed the Bowditch JSJ splittings.

\medskip
We now pass to a more detailed description of the Gromov boundary in one
of the above examples, namely the Gromov boundary $\partial G_2$. 
We will do this by explicitly describing the graphical connecting system 
$\mathcal{R}_{G_2}$, as introduced in \Secref{S}, and as appearing in the statement of
\Thmref{S.1}. This will do the job, since this graphical connecting system uniquely determines
the corresponding regular tree of graphs $\mathcal{X}(\mathcal{R}_{G_2})$,
and by \Thmref{S.1} we have 
$\partial G_2\cong \mathcal{X}(\mathcal{R}_{G_2})$.

Observe that the set $Z$ of orbit representatives (as in \Secref{S}) 
consists of a single vertex, $z_0$,
which corresponds to the rigid cluster factor $\pi_1H_0$ 
in the splitting $\mathcal{G}_2$. 
This is so since, as it is easy to realize, the Bass-Serre tree 
$T^r_{G_2}$ 
corresponding to $\mathcal{G}_2$
contains no white vertices which are not adjacent to a rigid cluster vertex.
The graph $\Gamma$ in $\mathcal{R}_{G_2}$ coincides then with the graph 
$\Gamma_{z_0}$, and the latter is isomorphic to the extended Whitehead graph
$\widetilde W$ 
of the corresponding rigid cluster factor $\pi_1H_0$. It is also not hard to realize
that the degree in $T^r_{G_2}$ of any (white) vertex adjacent to $z_0$ equals 2,
and hence the extended Whitehead graph $\widetilde W$
coincides with the ordinary Whitehead graph $W$ of this factor.

To get the form of the Whitehead graph $W$, observe that for an appropriately chosen
freely generating set $\{  a,b \}$ in $\pi_1H_0$, the peripheral 
subgroups of $\pi_1H_0$
are generated by the elements $a$, $b$ and $aba^{-1}b^{-1}$, respectively.
As a consequence, the Whitehead graph $W$ has the form of a square with both diagonals
(i.e. $W=K(4)$), where the four boundary edges correspond to the generator
$aba^{-1}b^{-1}$. 
The peripheral subgroups are obviously orientable,
and if we choose their orientations corresponding to the above choices of the generators, the order induced orientations of the edges of $W$ are arranged so that the edges
on the boundary of the square are oriented consistently with one of their cyclic order.
The associated V-involution $\bf a$ is described by transpositions
of the pairs of opposite vertices in the square, and the corresponding maps
$\alpha_v:{\rm Lk}_v\to{\rm Lk}_{a(v)}$ are given by the assignments
of points corresponding to the diagonals to the relevant points corresponding 
to the diagonals, and by the assignments of points corresponding to the remaining edges
in the unique possible way respecting the orientations. 

Let $\varepsilon_1,\varepsilon_2,\varepsilon_3, \varepsilon_4$ be the edges of $W$
on the boundary of the square, and let $\delta_1, \delta_2$ be the diagonals,
with the above mentioned orientations. 
Then the set $\mathcal{A}$ of E-connections
is easily seen to have the following form:
$$
\mathcal{A}=\{ (\delta_1,\delta_1) \}\cup\{ (\delta_2,\delta_2) \}\cup
(\{ \varepsilon_1,\varepsilon_2,\varepsilon_3, \varepsilon_4 \}\times
\{ \varepsilon_1,\varepsilon_2,\varepsilon_3, \varepsilon_4 \}).
$$
 
By the assertion of \Thmref{S.1}, the Gromov boundary $\partial G_2$
is homemorphic to the regular tree of graphs $K(4)$ associated to the above
described graphical connecting system $\mathcal{R}_{G_2}$. However,
due to the large symmetry of the graph $K(4)$, any two tree systems
of punctured graphs $K(4)$ are in fact isomorphic, and consequently their limits 
are homeomorphic,
so we can just say that
$\partial G_2$ is homeomorphic to the (unique) tree of graphs $K(4)$. 

This finishes our discussion of the examples from the introduction.

\subsection{More examples - right angled Coxeter groups
with 1-dimensional nerves.}
\sseclabel{ex_cox}

In this subsection we illustrate 
the results of the present paper by means of various
examples of (hyperbolic) Coxeter groups, or more precisely 
{\it right angled} Coxeter groups
(which we shortly call RACG's).
We assume that the reader is familiar with basic terminology and facts related
to Coxeter groups, and we refer to the book \cite{Davis:coxeter_groups:2008} for
an introduction.

\medskip
In an earlier paper \cite{Swiatkowski:refl_trees_of_graphs:2021} the second author of the present paper has 
shown that for a large class of 1-ended
hyperbolic RACG's their Gromov boundaries are trees of graphs. This class
of RACG's can be described in terms of their defining graphs (or nerves) $N$ as follows:

\medskip
$N$ is a triangulation of a topological graph $X$ such that
\begin{itemize}
\item $X$ is connected and has no cut vertex, and it is distinct from the circle $S^1$;
\item each (essential) edge of $X$, when viewed as a subcomplex of 
$N$, consists of at least 3 edges of $N$.
\end{itemize}

\noindent
We denote the class of graphs $N$ satisfying the above two conditions 
by $\mathcal{C}_{JS}$,
and the corresponding RACG's associated to such graphs by $W_N$. 
The argument in  \cite{Swiatkowski:refl_trees_of_graphs:2021} for showing that Gromov boundaries
$\partial W_N$ of such groups are trees of graphs is different than
the one given in the present paper. The trees of graphs appearing as boundaries 
$\partial W_N$ for $N\in\mathcal{C}_{JS}$ belong to a subclass consisting
of so called {\it reflection trees of graphs},  as mentioned in \Exref{R.4}
in this paper. Strictly speaking, it is shown in \cite{Swiatkowski:refl_trees_of_graphs:2021} that if 
$N\in\mathcal{C}_{JS}$
is a triangulation of a topological graph $X$ then 
$\partial W_N\cong \mathcal{X}^r(X)$
(the reflection tree of graphs $X$).

\medskip
A wider class of examples has been studied in the paper \cite{Dani_Thomas:jsj_coxeter:2017}
by Pallavi Dani and Anne Thomas. That paper contains a complete description
of the Bowditch JSJ splittings for a family of 1-ended hyperbolic RACG's
whose defining graphs $N$ satisfy the following set
of conditions (which appear in \cite{Dani_Thomas:jsj_coxeter:2017} under the name Standing Assumptions~1.1):
\begin{itemize}
\item $N$ has no embedded cycles consisting of less than 5 edges,
\item $N$ is connected and has no cut vertex and no cut edge,
\item $N$ is not a cycle.
\end{itemize}

\noindent
We denote the class of graphs $N$ satisfying the above conditions by
$\mathcal{C}_{DT}$. The class is strictly larger than $\mathcal{C}_{JS}$.
In the light of the results in \cite{Dani_Thomas:jsj_coxeter:2017}, and using Theorems 
\mainthmref{1} and \mainthmref{2} 
of the present paper (or rather
their more precise version given as \Thmref{S.1}), 
one can indicate many more
hyperbolic RACG's $W_N$ with the boundary $\partial W_N$ recognized as
an explicitly given tree of graphs 
(see Examples \exref{EX.2.1} and \exref{EX.2.2} below,
Figures 5 and 6). 
One can also indicate examples with boundaries $\partial W_N$ of topological
dimension 1 and not homeomorphic to any tree of graphs, even the ones 
for which all rigid factors are virtually free
(see e.g. an example presented in \Figref{fig7} below).

\medskip
We now describe and discuss the promised examples
from the class $\mathcal{C}_{DT}$. 
In all of them, as presented in Figures~5--7,
we indicate their graph of groups decompositions related to the
Bowditch JSJ splittings. 
These
decompositions are obtained by applying
the results of \cite{Dani_Thomas:jsj_coxeter:2017}, and we do not provide the details, referring the interested
readers directly to that paper. 
The underlying graphs of the indicated
graph of groups decompositions in all of these examples are
the quotients of the corresponding Bass-Serre trees 
by the actions of the groups, so one can easily deduce various properties
of the corresponding Bowditch JSJ splittings (as well as of their reduced
variants).

\begin{ex}
\exlabel{EX.2.1} 
Let $G=W_N$  be the RACG for 
the defining graph $N$ as presented
in the left part of \Figref{fig5}. The right part of the same figure presents
the graph of groups decomposition $\mathcal{G}$ of $G$ related to the Bowditch JSJ
splitting of this group. Vertices represented by circles are flexible, i.e., of type (v1),
while those represented by little squares are of type (v2). 
The symbols inside circles and squares, as well as those which accompany the edges, 
are the generators
of the corresponding special subgroups of $W_N$
appearing as vertex groups and edge groups of $\mathcal{G}$, respectively.

\begin{figure}[ht]
    \centering
   
        \includegraphics[width=\textwidth]{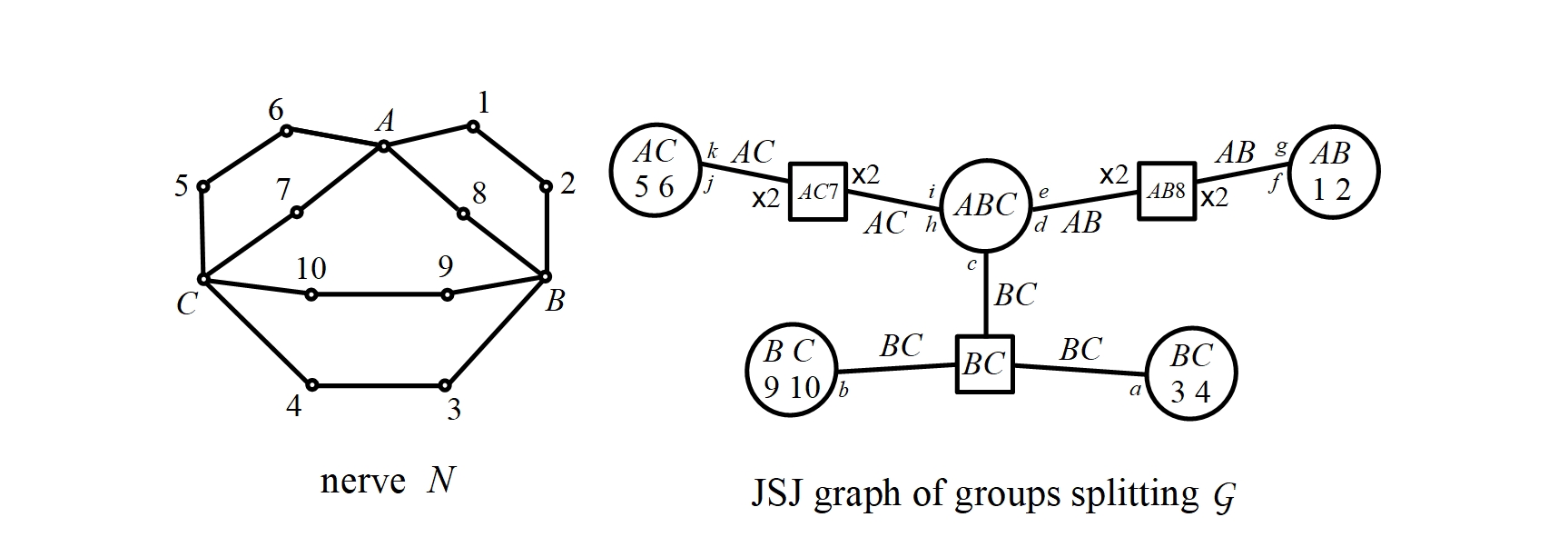}
        \caption{}
        \figlabel{fig5}
\end{figure}

There are no rigid vertices in $\mathcal{G}$, 
so there are also no
rigid vertices in the corresponding Bass-Serre tree $T_G$.
It follows then from \Mainthmref{2} that the boundary $\partial G$ is 
a regular tree of $\theta$-graphs. 
Note that the defining graph $N$ clearly does not belong to the class 
$\mathcal{C}_{JS}$, so this examples goes beyond the class 
studied in an earlier mentioned 
paper \cite{Swiatkowski:refl_trees_of_graphs:2021}. 
(Obviously, it is easy to show many more examples of this sort,
and another one,
whose JSJ splitting contains a rigid factor,
is described below in Example \exref{EX.2.2}, see the group $G_1$ presented 
in \Figref{fig6}.)
To get the explicit form of the regular tree
of $\theta$-graphs homeomorphic to $\partial G$, i.e., to describe the corresponding graphical connecting system
$\mathcal{R}_G$ as in \Secref{S}, we need a closer analysis.

One deduces from the form of the graph of groups $\mathcal{G}$ that
preimages in $T_G$ of the vertices $\langle A,B,8\rangle$ 
and $\langle A,C,7\rangle$ have degree 4,
so that for any such preimage vertex $v$ there are precisely two edges issuing from $v$ that
are mapped to each edge issuing from  $\langle A,B,8\rangle$ or $\langle A,C,7\rangle$
(this is indicated in \Figref{fig5} by little symbols ``$\times2$''). Preimages in
$T_G$ of the vertex $\langle B,C\rangle$ have degree 3. Each of the 2-ended groups
at vertices of type (v2) is non-orientable, because it
is either an infinite dihedral group, or the product of an infinite dihedral group
with a group of order 2. It follows that all flexible factors of the Bowditch
JSJ splitting of $G$ are also non-orientable.

According to our exposition in \Secref{S}, the graphical connecting system
$\mathcal{R}_G=(\Gamma,{\bf a},\mathcal{A})$ is described as follows.
First, $\Gamma$ is the disjoint union of two copies of the $\theta$-graph $\Xi_4$
(corresponding to vertices $\langle A,B,8\rangle$ and $\langle A,C,7\rangle$), 
which we denote $\Gamma_2$
and $\Gamma_3$, respectively, and one copy of the $\theta$-graph $\Xi_3$
(corresponding to the vertex $\langle B,C\rangle$), which we denote $\Gamma_1$.
The $V$-involution ${\bf a}$ of $\mathcal{R}_G$ is just the union of the standard
$V$-involutions (as described in \Exref{V.8.2})
for the $\theta$-graphs $\Gamma_1$, $\Gamma_2$ and $\Gamma_3$.

To describe the set $\mathcal{A}$ of $E$-connections for $\Gamma$,
denote by $a,b,c$ the edges of $\Gamma_1$, by $d,e,f,g$ the edges of
$\Gamma_2$, and by $h,i,j,k$ the edges of $\Gamma_3$,
and for each of those edges fix one of its orientations (so that these letters
actually denote oriented edges). 
According to the description in \Secref{S}, for each $\Gamma_i$
we fix a bijection between the set of edges of $\Gamma_i$ and the set
of vertices in $T_G$ adjacent to the vertex $z\in Z$ representing this component.
We have indicated our choices of these bijections by assigning letters $a,\dots,k$
to flexible vertices of the graph in the right part of \Figref{fig5}. 
Any set $Y$ of representatives of flexible
vertices in $T_G$ can be canonically identified with the set
of vertices represented by circles
in \Figref{fig5}. It follows that the sets $E_y:y\in Y$ coincide with the sets
$\{ a \}$, $\{ b \}$, $\{ c,d,e,h,i \}$, $\{ f,g \}$ and $\{ j,k \}$.
Due to non-orientability of all flexible factors, the set of $E$-connections
has then the form 
$$
\mathcal{A}=\bigcup_{y\in Y}E_y^\pm\times E_y^\pm,
$$
with $E_y:y\in Y$ as mentioned above. 
\end{ex}

\begin{ex}
\exlabel{EX.2.2}
Let $G_1=W_{N_1}$ and $G_2=W_{N_2}$ be the 1-ended hyperbolic 
RACG's for the defining graphs $N_i$ as presented
in the left parts of Figures 6 and 7, respectively. The central parts of these figures present
the graph of groups decompositions $\mathcal{G}_i$ of these groups related
to their Bowditch JSJ splittings. These decompositions have been obtained by referring to
the results in \cite{Dani_Thomas:jsj_coxeter:2017}. Both underlying graphs $X_i$ of $\mathcal{G}_i$ contain
a single vertex related to a rigid factor, namely the vertex
represented by a hexagonal symbol, which corresponds to the special subgroup
$H=\langle A,B,C,D\rangle<G_i$ in each of the two cases.
It is also not hard to deduce from the form of $\mathcal{G}_i$'s that the corresponding
rigid vertices in the Bass-Serre trees $T_{G_i}$ are not isolated, i.e they have
some rigid neighbours at combinatorial distance 2. Thus, in both cases we have 
nontrivial rigid clusters, and we have rigid cluster factors which contain
rigid factors as proper subgroups. 

\begin{figure}[ht]
    \centering
   
        \includegraphics[width=\textwidth]{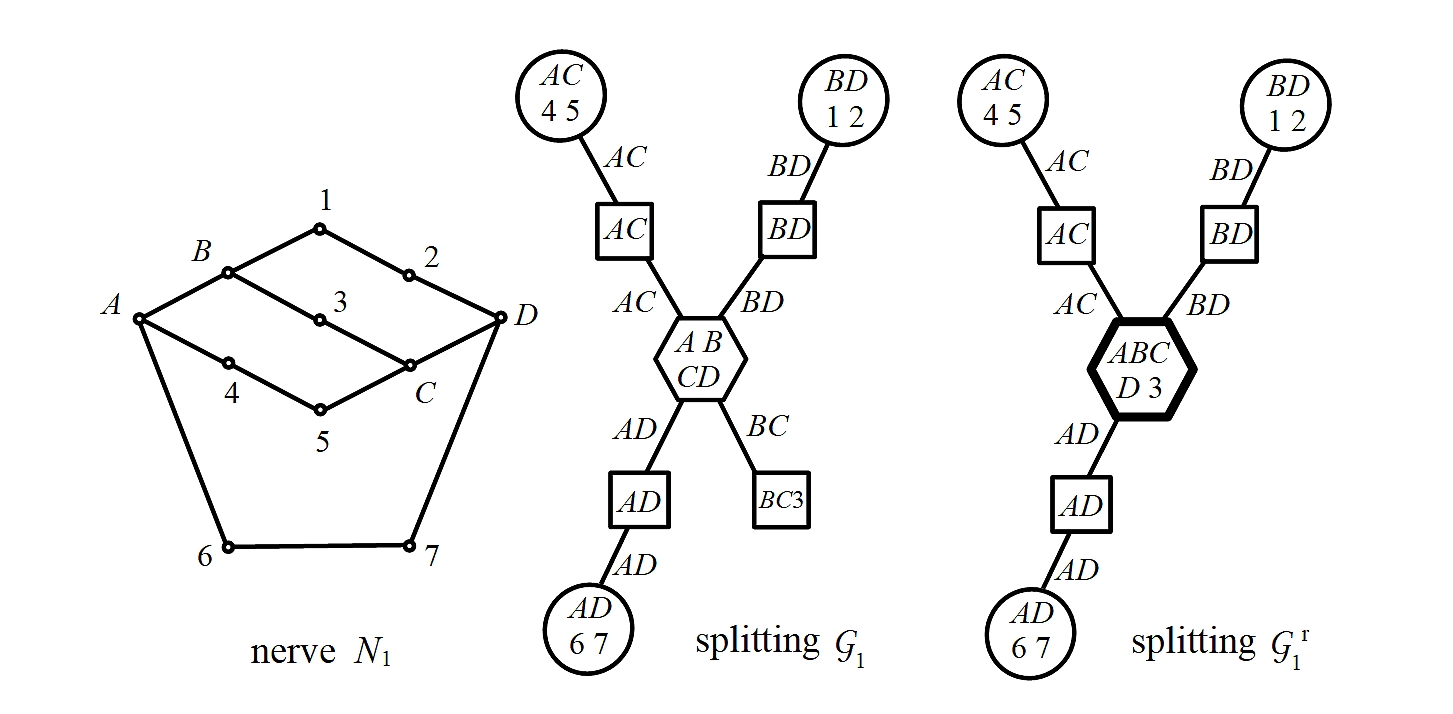}
        \caption{}
        \figlabel{fig6}
\end{figure}

\begin{figure}[ht]
    \centering
   
        \includegraphics[width=\textwidth]{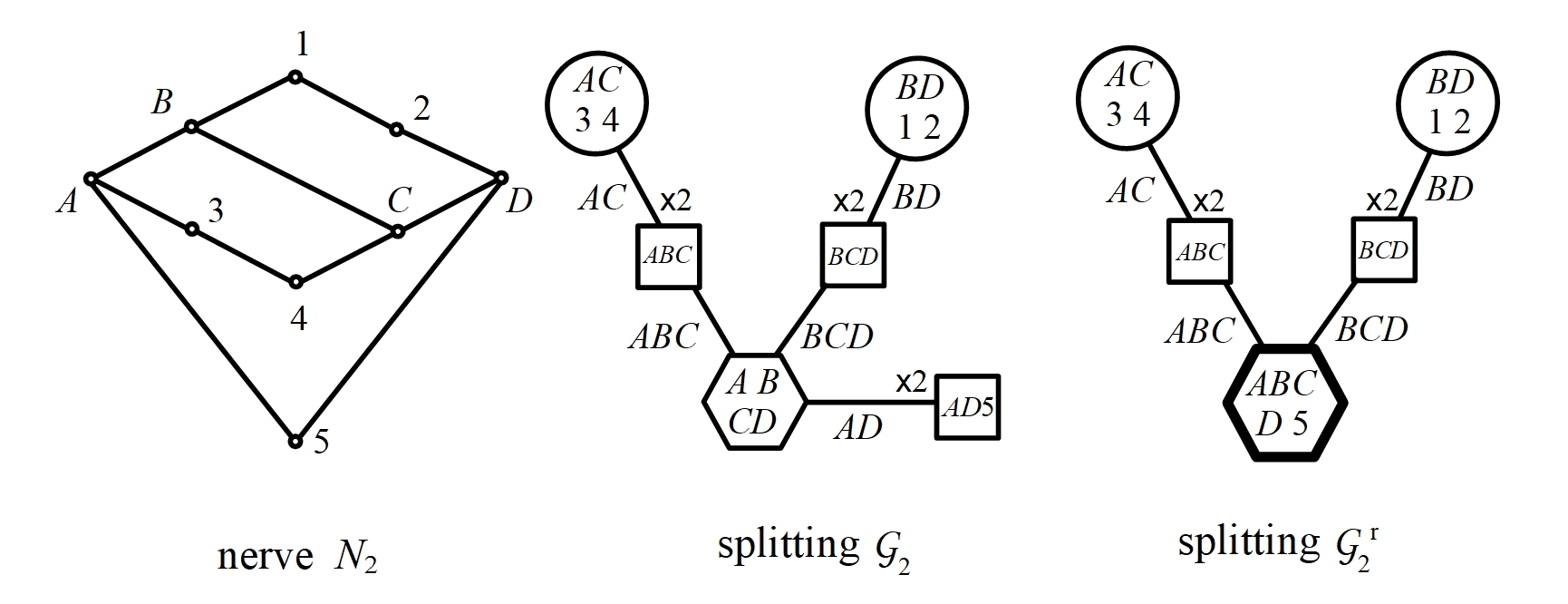}
        \caption{}
        \figlabel{fig7}
\end{figure}

Consider the graph of groups decompositions $\mathcal{G}^r_i$ of the groups $G_i$
related to their reduced Bowditch JSJ splittings. It is not hard to realize
that they have forms as presented in the right parts of Figures 6 and 7.
The rigid cluster factors correspond to the vertices indicated by the symbols
of  the bold hexagons. The rigid cluster factors in the group $G_1$ are the
conjugates of the special subgroup $H_1=\langle A,B,C,D,3\rangle$,
and it is not hard to see that they are virtually free. It follows then from
\Mainthmref{1} that the Gromov boundary $\partial G_1$ is a regular tree of
2-connected graphs. The explicit form of this tree of graphs,
i.e., the corresponding graphical connecting system $\mathcal{R}_{G_1}$,
can be extracted from the data in $\mathcal{G}^r_1$, and the least obvious,
though still starightforward
part of this extraction is the determination of the Whitehead graph associated
to the rigid cluster factor $H_1$. We skip the details.

On the other hand, the rigid cluster factors in the group $G_2$ are the conjugates
of the special subgroup $H_2=\langle A,B,C,D,5 \rangle$ (see \Figref{fig7}).
Since the defining graph of this subgroup is a 5-cycle,
it is a cocompact Fuchsian group (it can be realized e.g. as the group
generated by reflections in the sides of a right angled hyperbolic pentagon).
The Gromov boundary of this subgroup is a circle, so the subgroup is not virtually free,
even though the rigid factors from which it is formed are virtually free.
It follows then from \Mainthmref{1}
that the Gromov boundary $\partial G_2$
is not homeomorphic to any tree of graphs.

\end{ex}

\section{Final remarks and comments}
\seclabel{final}

\begin{rmk}\rmklabel{splitting_generalization}
It is worth noting that the observations and results from 
sections 9-13 above (concerning an explicit expression of the Gromov boundary $\partial G$ as a regular tree of graphs) are still valid in more general situations than those
corresponding  to Bowditch JSJ
or reduced Bowditch JSJ splittings.
We now state a few such generalizations.

\begin{enumerate}
\item
Let ${\mathcal{G}}=(X, \{ G_t \}\sqcup \{ G_u \}, \{ G_e \})$
be a graph of groups such that:
\begin{enumerate}
\item the underlying graph $X$ of $\mathcal{G}$ is bipartite,
and its vertex set is accordingly bipartitioned
into black vertices $t$ and white vertices $u$;
\item the white vertex groups $G_u$ of $\mathcal{G}$ are 2-ended,
and the edge groups $G_e$ of $\mathcal{G}$ are 2-ended as well;
\item each black vertex group $G_t$ of $\mathcal{G}$ is 
non-elementary
word hyperbolic, and its adjacent edge subgroups $G_e$ are
maximal 2-ended and pairwise non-conjugate in $G_t$
(so that the pair $(G_t, \{ \hbox{adjacent $G_e$'s} \})$ is an abstract
factor, in the sense of \Defnref{F.1}).
\end{enumerate}

\noindent
Under these assumptions,
 the fundamental group $G:=\pi_1({\mathcal{G}})$ is hyperbolic
(by Bestvi\-na-Feighn combination theorem), and
the vertex groups of $\mathcal{G}$ are quasi-convex in $G$
(e.g. by Theorem~1.2 in [Kapo]).
Consider the action of $G$ on the Bass-Serre tree $T_{\mathcal{G}}$
of $\mathcal{G}$. The vertex set of $T_{\mathcal{G}}$ is then equipped with the 
induced bipartition into black and white vertices. 
By our assumptions on $\mathcal{G}$, no two distinct white vertices
of $T_{\mathcal{G}}$ are fixed (for the above action) by an infinite
subgroup of $G$, and hence the action (as well as $\mathcal{G}$ itself)
is acylindrical.
If we associate to the splitting $\mathcal{G}$ of $G$ the tree system
$\Theta^{\mathcal{G}}(G)$, in the analogous way as in \Secref{reduced_JSJ} for
Bowditch and reduced Bowditch splittings, 
the arguments as in the proof of \Thmref{T.9}
(based on the discussion in Section~2.6.2 of [CaMa])
still yield that 
$\partial G\cong\lim\Theta^{\mathcal{G}}(G)$. 
In particular, this holds even if $G$ is not 1-ended.

\item 
Now, in addition to the assumptions of (a)-(c) above, suppose that
each black vertex factor $\mathcal{F}_t=(G_t, \hbox{adjacent $G_e$'s})$
which is not an abstract flexible factor (as in \Defnref{X.1})
is virtually free, and isolated in $\mathcal{G}$ (which means that
all black vertex factors $G_{t'}$ corresponding to the vertices of $X$
lying at distance 2 from $t$ are flexible).
Suppose also that to any such factor $\mathcal{F}_t$
we have associated some Whitehead graph $W_t$
(for some choice of a finite index free subgroup $F<G_t$,
and some choice of a free basis $B$ of $F$),
which is not necessarily minimal, but which is essential
(i.e. has no isolated vertex).
Then we can associate to $\mathcal{G}$ an appropriate graphical connecting
system ${\mathcal R}_{\mathcal{G}}$ (in the same way as
the system ${\mathcal R}_G$ associated in \Secref{S} to
the reduced Bowditch splitting of $G$).
By the same arguments as in \Secref{P}, we get that
$\partial G\cong\lim\Theta[\mathcal{R}_{\mathcal{G}}]$.
This means in particular that Gromov boundary $\partial G$
is an appropriate regular tree of extended Whitehead graphs $\widetilde W_t$
of non-flexible factors of $\mathcal{G}$, and of $\theta$-graphs
corresponding to the white vertices in $X$ having no non-flexible
black neighbours. Note however that the above mentioned $\theta$-graphs
in $\mathcal{R}_{\mathcal{G}}$
may have a form $\Xi_k$ with arbitrary $k\ge1$
(and not only with $k\ge3$, as this is the case for the systems $\mathcal{R}_G$
described in \Secref{S}). (We remind that 
this number $k$ is equal to the degree
of any vertex $\tilde u$ of the Bass-Serre tree of $\mathcal{G}$ in the
preimage of the corresponding vertex $u$ of $X$.) 
Note also
that in this generality 
the above mentioned extended Whitehead graphs in $\mathcal{R}_{\mathcal{G}}$
needn't be 2-connected,
nor even connected. As a consequence,
the boundary $\partial G$ needn't be connected as well
(because $G$ needn't be 1-ended).
(This can be compared with our discussions in \Propref{M.2}, \Claimref{9.4},
and in their proofs; see also the comment in part (3) below,
and in \Exref{15.2} below.)

\item 
Finally, in addition to the assumptions of (1) and (2),
suppose that each black vertex factor $G_t$ is {\it weakly rigid},
which means that $G_t$ has no nontrivial splitting along finite subgroups
relative to its incident edge groups $G_e$.
(This assumption is automatically fulfilled by the factors $G_t$
which are flexible, so it is sufficient to demand that all
non-flexible factors of $\mathcal{G}$ are weakly rigid.) 
Suppose also that the corresponding action of $G$ on the 
associated Bass-Serre tree is minimal, 
which is equivalent to saying that for each white vertex $u$ of $X$,
either the degree of $u$ in $X$ is at least 2, or this degree is 1
and the image in $G_u$ of the unique incident edge group $G_e$
is a proper subgroup. 
Then $G$ is 1-ended (e.g. by Corollary~1.5 in [Toui]). 
Note that under the above minimality assumption for the $G$-action
on the Bass-Serre tree, the $\theta$-graphs appearing in $\mathcal{R}_{\mathcal{G}}$
have the forms $\Xi_k$ with $k\ge2$, so they are all 2-connected.
Moreover,
if we choose the Whitehead graphs $W_t$ mentioned above
in part (2) to be minimal, then they are 2-connected as well
(by the argument as in the proof of \Propref{M.2}).
(Such a choice is then analogous to the one declared 
in our previous considerations in \Secref{abs_fac}, as explained in \Rmkref{8.24}.)
Under such a choice of the Whitehead graphs 
(and of their extended variants), the above mentioned equality
$\partial G\cong\lim\Theta[\mathcal{R}_{\mathcal{G}}]$ expresses
the boundary $\partial G$ as an explicit regular tree of 2-connected graphs.
\end{enumerate}
\end{rmk}

\begin{ex}\exlabel{15.2}
We present an example which demonstrates that the discussion
of \Rmkref{splitting_generalization} allows us to easily deduce that boundaries of various
hyperbolic groups have an explicit form of a regular tree of graphs,
without any need to first understand their canonical JSJ decompositions.

Consider the graph of groups $\mathcal{G}$ whose underlying graph $X$
is the barycentric subdivision of a single edge, so that the original vertices
$t,s$ of this edge are black, and the barycenter vertex $u$ is white.
Let $G_t$  be the free group $F_2$ of rank 2, with its standard generators
denoted $x,y$, and let $G_s$  be another free group viewed as
the fundamental group of a compact hyperbolic surface $S$ 
with boundary $\partial S$ consisting
of a single component. Let $G_u$  be the infinite cyclic group,
with fixed generator $a$, and let the edge groups
$G_{[u,t]}$, $G_{[u,s]}$ coincide with $G_u$.
Let the incidence homomorphism $G_{[u,s]}\to G_s$
map the generator $a$ of $G_{[u,s]}$ to some generator
of the subgroup $\pi_1\partial S<\pi_1S=G_s$.
Finally, let the incidence homomorphism $G_{[u,t]}\to G_t$
map the generator $a$ of $G_{[u,t]}$ to the element of $G_t=F_2$
represented by some nontrivial cyclically reduced word 
$w=w(x,y)$ which is not a proper power
of any other word. The fundamental group $G=\pi_1{\mathcal{G}}$ of
this graph of groups, which we will focus on, can be then also viewed
as the fundamental group of the space obtained by gluing $S$ 
along its boundary $\partial S$, via an appropriate map corresponding
to the word $w$, to the wedge of two circles.

Note that any graph of groups $\mathcal{G}$ as above satisfies conditions
(a)-(c) of part (1) of \Rmkref{splitting_generalization}, so that $G=\pi_1{\mathcal{G}}$
is hyperbolic.  
The abstract factor $G_s$ is easily seen to be flexible,
so if we view the other factor $G_t$ as non-flexible
(which we can do even if it is actually flexible), it will be isolated in $\mathcal{G}$.
Moreover, if the word $w$ involves
both generators $x,y$, the Whitehead graph $W_t$ for the basis
$B=\{ x,y \}$ is essential, and the assumtions of part (2) of \Rmkref{splitting_generalization}
are fulfilled. 
The extended Whitehead graph $\widetilde W_t$ coincides then with $W_t$,
and the corresponding graphical connecting system ${\mathcal R}_{\mathcal{G}}$
involves no $\theta$-graphs (because the unique white vertex $u$ of $X$
is adjacent to a non-flexible vertex $t$). By the arguments as in
the proof of \Thmref{S.1} (given in \Secref{P}) we get that the Gromov
boundary $\partial G$ is then a regular tree of the (copies of the) Whitehead
graphs $W_t$.

Finally, if the word $w$ as above is {\it Whitehead-minimal}
(in the sense of the Whitehead algorithm, as described e.g. in Section~3
of [CaMac]),
then the factor $G_t$ is weakly rigid, 
the group $G$ is 1-ended,
and the Whitehead graph $W_t$
is 2-connected, so that $\partial G$ is a regular tree of 2-connected
graphs $W_t$.
\end{ex}

\begin{rmk}\rmklabel{15.3}
  After completing the main part of this paper, we have realized that
  groups $G$ satisfying condition \pitmref{g_rigid_cluster_factors_vf}
  of \Mainthmref{1} (and condition \pitmref{g_no_rigid_factors} of
  \Mainthmref{2}) are actually residually finite, and hence virtually
  torsion free. This has been pointed to us by Marco Linton, and the
  justification goes via virtual specialness of all such $G$,
  guaranteed by the fact that they obviously have quasiconvex virtual
  hierarchies $\mathcal{QVH}$ (compare Theorem~13.1 in [Wise] and
  Theorem~1.3 in [HagW]).  As a consequence, each group $G$ as above
  contains a torsion free subgroup $G'$ of finite index. The
  restricted action of $G'$ on the Bass-Serre tree $T^r_G$ represents
  then the reduced Bowditch JSJ splitting of $G'$, which we denote by
  ${\mathcal{G}}'$.  (The latter follows either by referring to
  Dahmani-Touikan characterization of the canonical JSJ splittings, as
  recalled in \Thmref{EX.1.1} of the present paper, or by using the
  Bowditch approach to canonical JSJ splittings, as presented in
  \cite{Bowditch:cut_points:1998}, where the splitting is expressed in
  terms of configurations of local cut points in the Gromov boundary,
  which is the same for $G$ and $G'$.)  The factors of the reduced
  Bowditch JSJ splitting ${\mathcal{G}}'$ of $G'$ are obviously either
  free or infinite cyclic.  Applying our approach from Sections~6--13
  above to this splitting (for proving the implication
  (3)$\Rightarrow$(1) in \Mainthmref{1}) we would avoid some of the
  technicalities (e.g. the idea of the Whitehead graph for a virtually
  free factor, or many details related to potential non-orientability
  of some peripheral subgroups in the factors).  Since obviously
  $\partial G=\partial G'$, this would lead to a slightly simpler
  proof of the implications (3)$\Rightarrow$(1) in Theorems A and B.
  Some further simplifications could be obtained by passing (via
  residual finiteness of $G$) to an even smaller finite index subgroup
  $G''<G'<G$, for which additionally all flexible factors of the
  corresponding induced splitting ${\mathcal{G}}''$ would be
  orientable (in the sense of \Defnref{X.2}).  However, since the
  passages from $G$ to $G'$ (or to $G''$) and from the reduced
  Bowditch JSJ splitting $\mathcal{G}$ of $G$ to the corresponding
  splitting ${\mathcal{G}}'$ of $G'$, or to ${\mathcal{G}}''$ of
  $G''$, (and, as a consequence, the passages from the connecting
  systems $\mathcal{R}_G$ to $\mathcal{R}_{G'}$ or to
  $\mathcal{R}_{G''}$) do not seem amenable to easy explicit
  calculation, we have decided to leave this part of the paper (and
  the current form of \Thmref{S.1} and its proof) in the current form.
  We believe that the possibility of directly expressing $\partial G$
  as an explicit regular tree of graphs (in the form
  $\partial G\cong\mathcal{X}(\mathcal{R}_G)$, as in \Thmref{S.1} and
  \Rmkref{splitting_generalization}) justifies our decision.
\end{rmk}

\begin{rmk}
As we have noted in \Rmkref{15.3}, we can always pass from a group $G$ satisfying
condition (3) of \Mainthmref{1} to its finite index subgroup $G''$ which is 
torsion free and for which all flexible factors of the corresponding
reduced Bowditch JSJ splitting ${\mathcal{G}}''$ are orientable.
This allows us, by referring to \Thmref{S.1}, to express the Gromov boundary
$\partial G=\partial G''$ of any such $G$ 
as a regular tree of graphs $\mathcal{X}(\mathcal{R}_{G''})$ of a more restricted form than the corresponding tree of graphs $\mathcal{X}(\mathcal{R}_{G})$. 
More precisely, 
in the corresponding graphical connecting systems $\mathcal{R}_{G''}$
the set $\mathcal{A}$ of $E$-connections has a simpler form,
and corresponds to connected sum operations of the involved graphs
which always respect the distinguished 
orientations $e^\#$ of the edges in the graphs
(such orientations are in this case assigned to all edges of the involved graphs, 
see \Secref{S}). This observation suggests the following question.
\end{rmk}

\begin{que}
Does there exist a regular tree of graphs which cannot be realized 
as the Gromov boundary of a hyperbolic group?
\end{que}

\begin{rmk}\rmklabel{15.6}

  We briefly comment on how the ideas and results of this paper
  contribute to the problem of classifying (up to homeomorphism)
  Gromov boundaries of topological dimension 1.

\begin{enumerate}

\item
Recall that on the opposite extreme to trees of graphs among
1-dimensional Gromov boundaries $\partial G$ we have those boundaries
corresponding to 1-ended groups $G$ that do not split over
2-ended subgroups. In these cases the boundary has no local cut points
(cf. \cite{Bowditch:cut_points:1998}), and it is known to be homeomorphic either to the Sierpi\'nski
curve or to the Menger curve (cf. \cite{Kapovich_Kleiner:low_dim_boundary:2000}).
We call the corresponding groups $G$ {\it carpet groups}
and {\it Menger groups}, respectively.

\item
Observe also that a 1-ended hyperbolic group $G$ which is not 
cocompact Fuchsian has 1-dimensional Gromov boundary $\partial G$
if and only if the boundaries $\partial G_v$ of all rigid factors $G_v$ 
(in the Bowditch JSJ splitting of $G$) satisfy $\dim(\partial G_v)\le1$.
To justify this, note that in view of \Thmref{T.9} (which allows us to
express $\partial G$ as the limit of the tree system $\Theta(G)$ composed of
boundaries of the factors in the Bowditch JSJ splitting of $G$), and in view
of the fact that peripheral spaces in the tree system $\Theta(G)$ are 
all doubletons, this observation follows fairly directly from Proposition 2.D.5 in \cite{Swiatkowski:trees_metric_compacta:2020}.

\item
In view of the above observation (2), Theorem A of the present paper
shows that trees of graphs are
the simplest 1-dimensional Gromov boundaries,
and that there is a large variety of 1-dimensional Gromov boundaries that 
are not trees of graphs. 
It is in particular worth observing that
the latter Gromov boundaries include,
among others, a vast class of examples in which the corresponding
groups $G$ have no carpet factors or Menger factors 
(or factors containing such groups as subgroups) in their JSJ splittings.
For instance, it is possible that (as in the example $G_4$ in the introduction)
a group has all rigid factors virtually free, but some of its rigid cluster factors
are not virtually free. Furthermore, there are obviously groups $G$ with 
some rigid factors having boundaries of topological dimension 1 and
not coinciding with a carpet group ar a Menger group (or a group containing such a group as subgroup).
It is possible e.g. that a rigid factor $G_v$ of $G$
is itself a group with $\partial G_v$ homeomorphic to some tree
of graphs.
This shows that the zoo of 1-dimensional connected Gromov boundaries
$\partial G$ is
quite rich and consists of quite complex species and we believe it deserves
further study from the topological perspective.

\end{enumerate}

\end{rmk}

\bibliographystyle{alpha}
\bibliography{nima,hypbdry}
\end{document}